\numberwithin{section}{chapter} \numberwithin{equation}{chapter}
\newtheorem{thm}{Theorem}[chapter]
\newtheorem{rem}[thm]{Remark}
\newtheorem{defn}[thm]{Definition}
\newtheorem{Lemma}[thm]{Lemma}
\newtheorem{prop}[thm]{Proposition}
\newtheorem{corol}[thm]{Corollary}
\newcommand{\R}{\mathbb{R}}
\newcommand{\C}{\mathbb{C}}
\renewcommand{\P}{\mathbb{P}}
\begin{document}

\frontmatter

\thispagestyle{empty}\vspace*{2pc}

\begin{center}
\Large{\textit{\'{A}urea Casinhas Quintino}}
\end{center}
$\newline$ $\newline$ $\newline$ $\newline$ $\newline$
\begin{center}
\textit{\Huge{Constrained Willmore Surfaces}}
\end{center}
\begin{center}
\textit{\Large{Symmetries of a M\"{o}bius Invariant Integrable
System}}
\end{center}
\begin{center}
$\newline$$\newline\newline\newline\newline
\newline\newline\newline\newline\newline\newline\newline\newline\newline$
$\newline\newline$ \Large{Based on the author's PhD Thesis}
\end{center}

\clearpage

\thispagestyle{empty} \vspace*{8pc}
\begin{center}
$\,\,\,\,\,\,\,\,\,\,\,\,\,\,\,\,
\,\,\,\,\,\,\,\,\,\,\,\,\,\,\,\,\,\,\,\,\,\,\,\,\,\,\,\,\,\,\,\,\,
\,\,\,\,\,\,\,\,\,\,\,\,\,\,\,\,\,\,\,\,\,\,\,\,\,\,\,\,\,
\,\,\,\,\,\,\,\,\,\,\,\,\,\,\,\,\,\,\,\,\,\,\,\,\,\,\,\,\,\textit{To
my parents}$
\end{center}
\cleardoublepage

\thispagestyle{empty} \vspace*{23pc}

$\textit{\textbf{Acknowledgements.}}\,\,$ \textit{My deepest
gratitude goes to Fran, Prof. Francis Burstall, my PhD supervisor,
for his constant support and attention, for sharing with me some of
his outstanding knowledge of mathematics and promising ideas, and
for doing so with the enthusiasm, the articulation and the warmth
that are characteristic of him. Fran is a great mathematician and an
exceptional human being and it was a privilege and a pleasure to
have worked with him for all these years.}

\textit{Special thanks are due to Prof. John C. Wood and Dr. Udo
Hertrich-Jeromin, my PhD examiners, for their thorough reading of my
 thesis and constructive comments to it and for the friendliness.}

\textit{A special mention to all those whose path has crossed mine
at some point of this long journey, who have greatly enriched this
experience to me. And to the squirrels, ducks, spiders, the
occasional mouse, et al., in Hyde Park, London, for the touch of
innocence to my days.}

\textit{A very special mention to my parents, for always being
there, together and well, and to Tiago, my baby nephew, who,
blissfully unaware of it, has changed the lives of so many people
for so much better.}

\textit{My PhD studies were financially supported by
Funda\c{c}\~{a}o para a Ci\^{e}ncia e a Tecnologia, Portugal, with
the scholarship with reference SFRH/BD/6356/2001.}

\cleardoublepage

\thispagestyle{empty} \vspace*{15pc}

$\textit{\textbf{Preface.}}\,\,$ This work is based on the PhD
Thesis with the same title submitted to the University of Bath,
Department of Mathematical Sciences, on September 15, 2008 and
defended viva voce on the (rather snowy) $2^{\mathrm{nd}}$ of
February 2009. The contents of the Thesis available from the
University of Bath has been preserved, with the exception of
Sections \ref{CWspdeform} and \ref{BTCMC}, where the original study
of constrained Willmore spectral deformation and B\"{a}cklund
transformation of CMC surfaces has been extended to a general
multiplier. This version has benefited from the rephrasing or
reformulation of parts of some sections.

$\newline$

$\,\,\,\,\,\,\,\,\,\,\,\,\,\,\,\,\,\,\,\,\,\,\,\,\,\,\,\,\,\,\,\,\,\,\,\,\,\,\,\,\,\,\,\,\,\,\,\,$
Lisbon, on the $29^{\mathrm{th}}$ December 2009

\cleardoublepage

\thispagestyle{empty} \vspace*{16pc}

$\textbf{Abstract.}\,\,$ This work is dedicated to the study of the
M\"{o}bius invariant class of constrained Willmore surfaces and its
symmetries. We define a spectral deformation by the action of a loop
of flat metric connections; \textit{B\"{a}cklund transformations},
by applying a dressing action; and, in $4$-space, \textit{Darboux
transformations}, based on the solution of a Riccati equation. We
establish a permutability between spectral deformation and
B\"{a}cklund transformation and prove that non-trivial Darboux
transformation of constrained Willmore surfaces in $4$-space can be
obtained as a particular case of B\"{a}cklund transformation. All
these transformations corresponding to the zero multiplier preserve
the class of Willmore surfaces. We verify that, for special choices
of parameters, both spectral deformation and B\"{a}cklund
transformation preserve the class of constrained Willmore surfaces
admitting a \textit{conserved quantity}, and, in particular, the
class of CMC surfaces in $3$-dimensional space-form.

\tableofcontents

\chapter*{Introduction}

\markboth{\tiny{A. C. QUINTINO}}{\tiny{CONSTRAINED WILLMORE
SURFACES}}

Among the classes of Riemannian submanifolds, there is that of
Willmore surfaces, named after T. Willmore \cite{willmore2} (1965),
although the topic was mentioned by W. Blaschke \cite{blaschke}
(1929) and by G. Thomsen \cite{thomsen} (1923). Early in the
nineteenth century, S. Germain \cite{germain1}, \cite{germain2}
studied elastic surfaces. On her pioneering analysis, she claimed
that the elastic force of a thin plate is proportional to its mean
curvature. Since then, the mean curvature remains a key concept in
theory of elasticity. In modern literature on the elasticity of
membranes (see, for example, \cite{landau+lifschitz} and
\cite{lipowsky}), a weighted sum of the total mean curvature, the
total squared mean curvature and the total Gaussian curvature is
considered the elastic energy of a membrane. By neglecting the total
mean curvature (by physical considerations) and having in
consideration that the total Gaussian curvature of compact
orientable Riemannian surfaces without boundary is a topological
invariant, T. Willmore defined the Willmore energy of a compact
oriented Riemannian surface, without boundary, isometrically
immersed in $\R^{3}$, to be
$$\mathcal{W}=\int H^{2}dA.$$ The Willmore functional ``extends"
to isometric immersions of compact oriented Riemannian surfaces in
Riemannian manifolds by means of half of the total squared norm of
the trace-free part of the second fundamental form, which, in fact,
amongst surfaces in $\R^{3}$, differs from $\mathcal{W}$ by the
total Gaussian curvature, but still shares then the critical points
with $\mathcal{W}$. Willmore surfaces are the extremals of the
Willmore functional - just like harmonic maps are the extremals of
the energy functional.

Conformal invariance motivates us to move from Riemannian to
M\"{o}bius geometry. Our study is a study of geometrical aspects
that are invariant under M\"{o}bius transformations, with the
exception of the study of constant mean curvature surfaces, in
Sections \ref{sec:casecodim1} and \ref{sec:CMC}. This work restricts
to the study of surfaces conformally immersed in $n$-dimensional
space-forms with $n\geq3$. It starts with a M\"{o}bius description
of space-forms in the projectivized light-cone, following \cite{IS}.
Such description is based on the model of the conformal $n$-sphere
on the projective space $\mathbb{P}(\mathcal{L})$ of the light-cone
$\mathcal{L}$ of $\R^{n+1,1}$,
$$S^{n}\cong\mathbb{P}(\mathcal{L}),$$
due to Darboux \cite{darbouxsphere}, which, in particular, yields a
conformal description of Euclidean $n$-spaces and hyperbolic
$n$-spaces as submanifolds of $\mathbb{P}(\mathcal{L})$. We approach
then a surface conformally immersed in $n$-space as a null line
subbundle $\Lambda$ of $\underline{\R}^{n+1,1}=M\times\R^{n+1,1}$
defining an immersion
$$\Lambda:M\rightarrow\mathbb{P}(\mathcal{L})$$
of an oriented surface $M$, which we provide with the conformal
structure induced by $\Lambda$, into the projectivized light-cone.
The realization of all space-forms as submanifolds of the
projectivized light-cone arises from the realization, cf. \cite{IS},
of all $n$-dimensional space-forms as connected components of conic
sections
$$S_{v_{\infty}}:=\{v\in\mathcal{L}:(v,v_{\infty})=-1\}$$
of the light-cone, with $v_{\infty}\in\mathbb{R}^{n+1,1}$ non-zero.
$S_{v_{\infty}}$ inherits a positive definite metric of constant
sectional curvature $-(v_{\infty},v_{\infty})$ from $\R^{n+1,1}$ and
is either a copy of a sphere, a copy of Euclidean space or two
copies of hyperbolic space, according to the sign of
$(v_{\infty},v_{\infty})$. For each $v_{\infty}$, the canonical
projection $\pi:\mathcal{L}\rightarrow\mathbb{P}(\mathcal{L})$
defines a diffeomorphism
\begin{equation}\label{eq:Svinfdiffeos}
\pi\vert_{S_{v_{\infty}}}:S_{v_{\infty}}\rightarrow \mathbb{P}
(\mathcal {L})\backslash\mathbb{P} (\mathcal {L}\cap \langle
v_{\infty}\rangle ^{\perp}).
\end{equation}
We provide $\mathbb{P}(\mathcal{L})$ with the conformal structure of
the metric induced by $\pi\vert_{S_{v_{\infty}}}$, fixing
$v_{\infty}$ time-like, independently of the choice of $v_{\infty}$,
and, in this way, identify $\mathbb{P}(\mathcal{L})$ with the
conformal $n$-sphere and make each diffeomorphism
\eqref{eq:Svinfdiffeos} - for a general $v_{\infty}$, not
necessarily time-like - into a conformal diffeomorphism.

In this work, we restrict to surfaces $\Lambda$ in $S^{n}$ which are
not contained in any subsphere of $S^{n}$. Such a surface $\Lambda$
defines a surface in any given space-form, by means of a lift, whose
study is M\"{o}bius equivalent to the study of $\Lambda$ and which
will often be considered. Namely, given $v_{\infty}\in\R^{n+1,1}$
non-zero, we have, locally, $(\sigma,v_{\infty})\neq 0$, and
$\Lambda$ defines then a local immersion
$$\sigma_{\infty}:=(\pi\vert_{{S_{v_{\infty}}}})^{-1}\circ
\Lambda=\frac{-1}{(\sigma,v_{\infty})}\,\sigma:M\rightarrow
S_{v_{\infty}},$$ of $M$ into the space-form $S_{v_{\infty}}$.

Having presented our setup, in Chapter \ref{chaptercsc}, we
introduce, following \cite{SD}, the central sphere congruence, a
fundamental construction of M\"{o}bius invariant surface geometry
which will be basic to our study of surfaces. The concept has its
origins in the nineteenth century with the introduction of the mean
curvature sphere of a surface at a point, by S. Germain
\cite{germain3}. By the turn of the century, the family of the mean
curvature spheres of a surface was known as the central sphere
congruence, cf. W. Blaschke \cite{blaschke}. Nowadays, after R.
Bryant's paper \cite{bryant}, it goes as well by the name conformal
Gauss map. The central sphere congruence of a surface in $n$-space,
$$S:M\rightarrow Gr_{(3,1)}(\R^{n+1,1}),$$  defines a decomposition
\begin{equation}\label{eq:dcurlyDcurlyNintrodversio}
d=\mathcal{D}+\mathcal{N},
\end{equation}
of the trivial flat connection on $\underline{\R}^{n+1,1}$ into the
sum of a connection $\mathcal{D}$, with respect to which $S$ and
$S^{\perp}$ are parallel, and a $1$-form $\mathcal{N}$ with values
in $S\wedge S^{\perp}$. Explicitly, $$\mathcal{D}
:=\nabla^{S}+\nabla^{S^{\perp}},\,\,\,\,\mathcal{N}
:=d-\mathcal{D},$$ for $\nabla^{S}$ and $\nabla^{S^{\perp}}$ the
connections induced by $d$ on $S$ and $S^{\perp}$, respectively.
Under the standard identification
$$S^{*}TGr_{(3,1)}(\R^{n+1,1})\cong\mathrm{Hom}(S,S^{\perp})\cong
S\wedge S^{\perp},$$ of bundles provided with a metric and a
connection, we have
\begin{equation}\label{eq:dS=curlyN}
dS=\mathcal{N},
\end{equation}
which establishes a characterization of the harmonicity of $S$ by
$$d^{\mathcal{D}}*\mathcal{N}=0.$$

Chapter 4 is dedicated to the class of Willmore surfaces in
space-forms and its link to the class of harmonic maps into
Grassmannian manifolds via the central sphere congruence. W.
Blaschke \cite{blaschke} established the M\"{o}bius invariance of
the Willmore energy of a surface in spherical $3$-space. B.-Y. Chen
\cite{chen} generalized it to surfaces in constant curvature
Riemnannian manifolds. We present a manifestly conformally invariant
formulation of the Willmore energy of a surface $\Lambda$ in
$n$-dimensional space-form,
$$\mathcal {W}(\Lambda )=\frac{1}{2}\int _{M}\left(\mathcal {N}\wedge *\mathcal{N}\right),$$
following the definition of energy of the mean curvature sphere
congruence of a surface in spherical $4$-space, presented in
\cite{quaternionsbook}. The class of Willmore surfaces in $n$-space
is then established as invariant under the group of M\"{o}bius
transformations of $S^{n}$. As immediately established by
\eqref{eq:dS=curlyN}, and already known to Blaschke \cite{blaschke}
for the particular case of spherical $3$-space, the Willmore energy
of a surface in a space-form coincides with the energy of its
central sphere congruence. Furthermore, a result by Blaschke
\cite{blaschke} (for $n=3$) and N. Ejiri \cite{ejiri} (for general
$n$) characterizes Willmore surfaces in spherical $n$-space by the
harmonicity of the central sphere congruence. Via this
characterization, the class of Willmore surfaces in space-forms is
associated to a class of harmonic maps into Grassmannians. This will
enable us to apply to this class of surfaces the well-developed
integrable systems theory of harmonic maps into Grassmannian
manifolds, with a spectral deformation and B\"{a}cklund
transformations, cf. \cite{uhlenbeck} and \cite{uhlenbeck 89}.

In many occasions throughout this work, we use an interpretation of
loop group theory by F. Burstall and D. Calderbank
\cite{burstall+calderbank} and produce transformations of surfaces
by the action of loops of flat metric connections. Specifically, by
replacing the trivial flat connection by another flat metric
connection $\tilde{d}$ on $\underline{\R}^{n+1,1}$, we transform (in
certain cases) a surface $\Lambda\subset \underline{\R}^{n+1,1}$
into a $\tilde{d}$-\textit{surface} $\tilde{\Lambda}$, or,
equivalently, into another surface $\tilde{\phi}\Lambda$, defined,
up to a M\"{o}bius transformation, for
$$\tilde{\phi}:(\underline{\R}^{n+1,1},\tilde{d})\rightarrow(\underline{\R}^{n+1,1},d)$$
an isomorphism of bundles provided with a metric and a connection.
Many will be the examples in this work of such transformations
preserving the geometrical aspects of a class, i.e., establishing
symmetries of integrable systems. Symmetries of integrable systems
will arise from other constructions, as well. Throughout this work,
by transformation/deformation of a class of surfaces shall be
understood a symmetry of the system, i.e., a
transformation/deformation of the surfaces in the class into new
ones (possibly isomorphic) still in the class.

Chapter $3$ is introductory of the idea of a surface under change of
flat metric connection. A first example, due to F. Burstall and D.
Calderbank \cite{burstall+calderbank}, of a symmetry of an
integrable system arising from the action of a loop of flat metric
connections establishes the class of Willmore surfaces in
space-forms as an integrable system with a spectral deformation, a
fact that was already known to F. Burstall et al. \cite{SD}.
According to K. Uhlenbeck \cite{uhlenbeck 89}, the harmonicity of
$S$ is characterized by the flatness of the real metric connection
$d^{\lambda}:=\mathcal{D} +\lambda ^{-1}\mathcal{N}^{1,0}+\lambda
\mathcal{N}^{0,1}$ on $(\underline{\R}^{n+1,1})^{\C}$, for each
$\lambda \in S^{1}$. The action of this loop of curvature-free
connections defines a $S^{1}$-deformation of $S$ into harmonic maps,
which, as we verify, is the family of central sphere congruences
corresponding to the $S^{1}$-deformation of $\Lambda$ defined by the
action of the loop. The characterization of Willmore surfaces in
space-forms in terms of the harmonicity of the central sphere
congruence gives rise, in this way, to a spectral deformation of
Willmore surfaces. This deformation coincides, up to
reparametrization, with the one presented in \cite{SD}.

B\"{a}cklund transformations of Willmore surfaces will arise from a
more complex construction, following the work of C.-L. Terng and K.
Uhlenbeck \cite{uhlenbeck}.

In Chapter \ref{sec:CWseq}, we introduce constrained Willmore
surfaces, the generalization of Willmore surfaces that arises when
we consider extremals of the Willmore functional with respect to
\textit{infinitesimally conformal} variations,\footnote{To which
references as \textit{conformal variations} can be found in the
literature.} rather than with respect to all variations. A variation
$(\Lambda_{t})_{t}$ of a surface $\Lambda$ through null line
subbundles of $\underline{\R}^{n+1,1}$ defining immersions of $M$
into $\mathbb{P}(\mathcal{L})$ is said to be infinitesimally
conformal if, fixing $Z\in\Gamma(T^{1,0}M)$ (respectively,
$Z\in\Gamma(T^{0,1}M)$), locally never-zero, and, for each $t$, $g
_{t}$ in the conformal class of metrics induced in $M$ by
$\Lambda_{t}$, we have
$$\frac{d}{dt}_{\vert_{t=0}} g_{t}(Z,Z)=0,$$
Conformal variations, characterized by the $g _{t}$-isotropy of
$T^{1,0}M$ (respectively, $T^{0,1}M$), for all $t$, are, in
particular, infinitesimally conformal variations. Constrained
Willmore surfaces form a M\"{o}bius invariant class of surfaces with
strong links to the theory of integrable systems, as we shall
explore in this work.

F. Burstall et al. \cite{SD} established a manifestly conformally
invariant characterization of constrained Willmore surfaces in
space-forms, which, in particular, extended the concept of
constrained Willmore to surfaces that are not necessarily compact.
Chapter 5 is dedicated to deriving from the variational problem the
reformulation of this characterization, by F. Burstall and D.
Calderbank \cite{burstall+calderbank}, presented below. The argument
consists of a generalization to $n$-space of the argument presented
in \cite{christoph2} for the particular case of $n=3$. Set
$$\Lambda^{1,0}:=\Lambda\oplus
d\sigma(T^{1,0}M)\,\,\,\,\mathrm{and}\,\,\,\,\Lambda^{0,1}:=\Lambda\oplus
d\sigma(T^{0,1}M),$$ independently of $\sigma\in\Gamma(\Lambda)$
never-zero, and then
$$\Lambda^{(1)}:=\Lambda^{1,0}+\Lambda^{0,1}.$$
Cf. \cite{burstall+calderbank}, $\Lambda$ is a constrained Willmore
surface if and only if there exists a real form $q\in\Omega
^{1}(\Lambda\wedge\Lambda ^{(1)})$ with
\begin{equation}\label{eq:dcurlyDq0}
d^{\mathcal{D}}q=0,
\end{equation}
such that
\begin{equation}\label{eq:CWeqggah55dkl??pojh}
d^{\mathcal{D}}*\mathcal{N}=2\,[q\wedge *\mathcal{N}].
\end{equation}
In this case, we may refer to $\Lambda$ as, specifically, a
$q$-constrained Willmore surface and to $q$ as a [Lagrange]
multiplier to $\Lambda$. Willmore surfaces are the $0$-constrained
Willmore surfaces. The zero multiplier is not necessarily the only
multiplier to a constrained Willmore surface with no constraint on
the conformal structure. In fact, in Chapter \ref{isoCW}, we
characterize isothermic constrained Willmore surfaces in space-forms
by the non-uniqueness of multiplier. On the other hand, a classical
result by Thomsen \cite{thomsen} characterizes isothermic Willmore
surfaces in $3$-space as minimal surfaces in some space-form.

A multiplier to a surface $\Lambda$ in the projectivized light-cone
is, in particular, a real form
$q\in\Omega^{1}(\Lambda\wedge\Lambda^{(1)})$. For such a $q$,
equations \eqref{eq:dcurlyDq0} and \eqref{eq:CWeqggah55dkl??pojh},
together, encode the flatness of the metric connection
$$d^{\lambda}_{q}:=\mathcal{D}+\lambda^{-1}\mathcal{N}^{1,0}+\lambda\mathcal{N}^{0,1}+(\lambda^{-2}-1)\,q^{1,0}+(\lambda^{2}-1)\,q^{0,1},$$
on $(\underline{\R}^{n+1,1})^{\C}$, for all
$\lambda\in\C\backslash\{0\}$, or, equivalently, for all $\lambda\in
S^{1}$. Constrained Willmore surfaces in space-forms, admitting $q$
as a multiplier, are characterized by the flatness of the
$S^{1}$-family of metric connections $d^{\lambda}_{q}$ on
$\underline{\R}^{n+1,1}$, in an integrable systems interpretation
due to F. Burstall and D. Calderbank \cite{burstall+calderbank}.
This characterization will enable us, in Chapter
\ref{transformsofCW}, to define a spectral deformation of
constrained Willmore surfaces in space-forms, by the action of the
loop of flat metric connections $d^{\lambda}_{q}$, as well as a
\textit{B\"{a}cklund transformation}, by applying a dressing action.

Our transformations of constrained Willmore surfaces will be based
on the \textit{constrained harmonicity} of the central sphere
congruence. Given $\hat{d}$ a flat metric connection on
$\underline{\C}^{n+2}$ and $V$ a non-degenerate subbundle of
$\underline{\C}^{n+2}$, we generalize naturally the decomposition
\eqref{eq:dcurlyDcurlyNintrodversio} to a decomposition
$$\hat{d}=\hat{\mathcal{D}}_{V}+\hat{\mathcal{N}}_{V}$$
and, given $q\in\Omega^{1}(\wedge^{2}V\oplus\wedge^{2}V^{\perp})$,
define then, for each $\lambda\in\C\backslash\{0\}$, a connection
$$\hat{d}^{\lambda,q}_{V}:=\hat{\mathcal{D}}_{V}+\lambda^{-1}\hat{\mathcal{N}}_{V}^{1,0}+\lambda\hat{\mathcal{N}}_{V}^{0,1}+(\lambda^{-2}-1)q^{1,0}+(\lambda^{2}-1)q^{0,1},$$
on $\underline{\C}^{n+2}$, generalizing
$d^{\lambda}_{q}=d^{\lambda,q}_{S}$. We define the bundle $V$ to be
$(q,\hat{d})$-\textit{constrained harmonic} if
$\hat{d}^{\lambda,q}_{V}$ is flat, for all
$\lambda\in\C\backslash\{0\}$, or, equivalently, for all $\lambda\in
S^{1}$. A simple, yet crucial, observation is that, given
$\tilde{d}$ another flat metric connection on $\underline{\C}^{n+2}$
and $\phi: (\underline{\C}^{n+2}, \tilde{d})\rightarrow
(\underline{\C}^{n+2},\hat{d})$ an isomorphism of bundles provided
with a metric and a connection, $V$ is $(q,\tilde{d})$-constrained
harmonic if and only if $\phi V$ is
$(\mathrm{Ad}_{\phi}q,\hat{d})$-constrained harmonic.  The
constrained harmonicity of a bundle applies to the central sphere
congruence, providing a characterization of constrained Willmore
surfaces in space-forms.

The transformations of a constrained Willmore surface $\Lambda$ in
the projectivized light-cone we present are, in particular, pairs
$((\Lambda^{1,0})^{*},(\Lambda^{0,1})^{*})$ of transformations
$(\Lambda^{1,0})^{*}$ and $(\Lambda^{0,1})^{*}$ of $\Lambda^{1,0}$
and $\Lambda^{0,1}$, respectively. The fact that $\Lambda^{1,0}$ and
$\Lambda^{0,1}$ intersect in a rank $1$ bundle will ensure that
$(\Lambda^{1,0})^{*}$ and $(\Lambda^{0,1})^{*}$ have the same
property. The isotropy of $\Lambda^{1,0}$ and $\Lambda^{0,1}$ will
ensure that of $(\Lambda^{1,0})^{*}$ and $(\Lambda^{0,1})^{*}$ and,
therefore, of their intersection. The reality of the bundle
$\Lambda^{1,0}\cap\Lambda^{0,1}$ and the fact that it defines an
immersion of $M$ into $\mathbb{P}(\mathcal{L})$ are preserved by the
spectral deformation, but it is not clear that the same is
necessarily true for B\"{a}cklund transformations. This motivates us
to define \textit{complexified surface}  and, thereafter,
\textit{complexified constrained Willmore surface}.

The spectral deformation defined by the action of the loop of flat
metric connections $d^{\lambda}_{q}$ coincides, up to
reparametrization, with the one presented in \cite{SD}. More
interestingly, we define a B\"{a}cklund transformation of
constrained Willmore surfaces in space-forms. We use a version of
the dressing action theory of C.-L. Terng and K. Uhlenbeck
\cite{uhlenbeck}. We start by defining a local action of a group of
rational maps on the set of flat metric connections of the type
$\hat{d}^{\lambda,q}_{S}$, with $\hat{d}$ flat metric connection on
$\underline{\C}^{n+2}$ and
$q\in\Omega^{1}(\wedge^{2}S\oplus\wedge^{2}S^{\perp})$. Namely,
given $r=r(\lambda)\in\Gamma(O(\underline{\C}^{n+2}))$ holomorphic
at $\lambda=0$ and $\lambda=\infty$ and twisted in the sense that
$\rho r(\lambda)\rho=r(-\lambda)$, for $\rho$ reflection across $S$,
we define a $1$-form $\hat{q}$ with values in $\wedge^{2}S$ (note
that the fact that $r(\lambda)$ is twisted establishes that both
$r(0)$ and $r(\infty)$ preserve $S$) by
$$\hat{q}^{1,0}:=\mathrm{Ad}_{r(0)}q^{1,0},\,\,\,\,\hat{q}\,^{0,1}:=\mathrm{Ad}_{r(\infty)}q^{0,1},$$
and a new family of metric connections from $d^{\lambda,q}_{S}$ by
$\hat{d}^{\lambda,\hat{q}}_{S}:=r(\lambda)\circ
d^{\lambda,q}_{S}\circ r(\lambda)^{-1}$. Obviously, for each
$\lambda$, the flatness of $\hat{d}^{\lambda,\hat{q}}_{S}$ is
equivalent to that of $d^{\lambda,q}_{S}$. Crucially, if
$\hat{d}^{\lambda,\hat{q}}_{S}$ admits a holomorphic extension to
$\lambda\in\C\backslash\{0\}$ through metric connections on
$\underline{\C}^{n+2}$, then the notation
$\hat{d}^{\lambda,\hat{q}}_{S}$ proves to be not merely formal, for
$\hat{d}:=\hat{d}^{1,\hat{q}}_{S}$. In that case, it follows that,
if $\Lambda$ is $q$-constrained Willmore, then $S$ is
$(\hat{q},\hat{d})$-constrained harmonic and, therefore, in the case
$1\in\mathrm{dom} (r)$, $S^{*}:=r(1)^{-1}S$ is $q^{*}$-constrained
harmonic, for
$$q^{*}:=\mathrm{Ad}_{r(1)^{-1}}\hat{q}.$$
The transformation of $S$ into $S^{*}$, preserving constrained
harmonicity, leads, furthermore, to a transformation of $\Lambda$
into a new constrained Willmore surface, provided
that\begin{equation}\label{eq:detr0infty}
\mathrm{det}\,r(0)\vert_{S}=\mathrm{det}\,r(\infty)\vert_{S}.
\end{equation}
Set
$$(\Lambda^{*})^{1,0}:=r(1)^{-1}r(\infty)\Lambda^{1,0},\,\,\,\,(\Lambda^{*})^{0,1}:=r(1)^{-1}r(0)\Lambda^{0,1}$$
and $$\Lambda^{*}:=(\Lambda^{*})^{1,0}\cap (\Lambda^{*})^{0,1}.$$
Condition \eqref{eq:detr0infty} establishes $\Lambda^{*}$ as a line
bundle (the argument is based on the two families of lines on the
Klein quadric). The isotropy of $\Lambda^{1,0}$ and $\Lambda^{0,1}$
ensures that of $\Lambda^{*}$. It is not clear, though, that
$\Lambda^{*}$ is a real bundle. If $\Lambda^{*}$ is a \textit{real
surface}, one proves that $S^{*}$ is the central sphere congruence
of $\Lambda^{*}$ and that the bundles $(\Lambda^{*})^{1,0}$ and
$(\Lambda^{*})^{0,1}$ defined above are not merely formal. The fact
that $q$ is a multiplier to $\Lambda$ establishes, furthermore,
$q^{1,0}\in\Omega^{1,0}(\wedge^{2}\Lambda^{0,1})$ and, therefore,
$(q^{*})^{1,0}\in\Omega^{1,0}(\wedge^{2}(\Lambda^{*})^{0,1})\subset\Omega^{1,0}(\Lambda^{*}\wedge(\Lambda^{*})^{(1)})$.
We conclude that, if, furthermore, $q^{*}$ is real, then
$\Lambda^{*}$ is a $q^{*}$-constrained Willmore surface.

We then construct rational maps $r(\lambda)$ satisfying the
hypothesis of the dressing action, together with reality preserving
conditions. As the philosophy underlying the work of C.-L. Terng and
K. Uhlenbeck \cite{uhlenbeck} suggests, we consider linear
fractional transformations. We define two different types of such
transformations, \textit{type p} and \textit{type q}, each one of
them satisfying the hypothesis of the dressing action with the
exception of condition \eqref{eq:detr0infty}. Iterating the
procedure, in a $2$-step process composing the two different types
of transformations, will produce a desired $r(\lambda)$. A
\textit{Bianchi permutability} of type $p$ and type $q$
transformations of constrained harmonic bundles is established. For
special choices of parameters, the reality of $\Lambda$ as a bundle
proves to establish that of $\Lambda^{*}$, whilst the reality of $q$
establishes that of $q^{*}$. For such a choice of parameters,
$\Lambda^{*}$ is said to be a \textit{B\"{a}cklund transform} of
$\Lambda$, provided that it immerses. For future reference, it is
useful to know that B\"{a}cklund transformation parameters are pairs
$\alpha,L^{\alpha}$ with, in particular, $\alpha\in\C$ and
$L^{\alpha}\subset\underline{\C}^{n+2}$, and that parameters
$\alpha,L^{\alpha}$ and $-\alpha,\rho L^{\alpha}$ give rise to the
same transform. Note that both B\"{a}cklund transformation and
spectral deformation corresponding to the zero multiplier preserve
the class of Willmore surfaces.

In Chapter \ref{transformsofCW}, we define, more generally, a
spectral deformation and a B\"{a}cklund transformation of
constrained harmonic bundles and of complexified constrained
Willmore surfaces. We complete the chapter by establishing a
permutability between spectral deformation and B\"{a}cklund
transformation.

In Chapter \ref{chaptercq}, we introduce the concept of
\textit{conserved quantity} of a constrained Willmore surface in a
space-form, an idea by F. Burstall and D. Calderbank. A conserved
quantity of $\Lambda$ consists of a Laurent polynomial
$$p(\lambda)=\lambda^{-1}v+v_{0}+\lambda\overline{v}$$ with $v_{0}\in\Gamma(S)$ real, $v\in\Gamma(S^{\perp})$
and $p(1)=v_{0}+v+\overline{v}\in\Gamma(\underline{\R}^{n+1,1})$
non-zero, which is $d^{\lambda,q}_{S}$-parallel, for all
$\lambda\in\C\backslash\{0\}$ and some multiplier $q$ to $\Lambda$.
Constrained Willmore surfaces in space-forms admitting a conserved
quantity form a subclass of constrained Willmore surfaces preserved
by both spectral deformation and B\"{a}cklund transformation, for
special choices of parameters.

The existence of a conserved quantity $p(\lambda)$ of $\Lambda$
establishes, in particular, the constancy of $p(1)$. In the
particular case of $n=3$, we verify that $\Lambda$ has constant mean
curvature in the space-form $S_{p(1)}$, that is, the surface defined
by $\Lambda$ in the space-form $S_{p(1)}$ has constant mean
curvature. In fact, in codimension $1$, the class of constrained
Willmore surfaces in space-forms admitting a conserved quantity
consists of the class of constant mean curvature surfaces in
space-forms. Another example of constrained Willmore surfaces
admitting a conserved quantity is that of surfaces with holomorphic
mean curvature vector in some space-form in codimension $2$. In
codimension $2$, the complexification of $S^{\perp}$ admits a unique
decomposition into the direct sum of two null complex lines, complex
conjugate of each other: given $v\in\Gamma(S^{\perp})$ null,
$S^{\perp}=\langle v\rangle\oplus \langle\overline{v}\rangle$. Such
a $v$ defines an almost-complex structure
$$J_{v}:= I\left\{
\begin{array}{ll} i & \mbox{$\mathrm{on}\,\langle v\rangle$}\\ -i &
\mbox{$\mathrm{on}\,\langle\overline{v}\rangle$}\end{array}\right.$$
on $S^{\perp}$. F. Burstall and D. Calderbank
\cite{burstall+calderbank} proved that a codimension $2$ surface in
a space-form, with holomorphic mean curvature vector with respect to
the complex structure induced by $\nabla^{S^{\perp}}$, is
constrained Willmore. We prove it in our setting, proving,
furthermore, that, in $4$-dimensional space-form, the constrained
Willmore surfaces admitting a conserved quantity
$p(\lambda)=\lambda^{-1}v+v_{0}+\lambda\overline{v}$ with $v$ null
are the surfaces with holomorphic mean curvature vector in the
space-form $S_{p(1)}$, with respect to the complex structure on
$(S^{\perp},J_{v},\nabla^{S^{\perp}})$ determined by
Koszul-Malgrange Theorem.

Chapter 8 is dedicated to relating the class of constrained Willmore
surfaces to the isothermic condition. The class of isothermic
surfaces is a M\"{o}bius invariant class of surfaces. A manifestly
conformally invariant formulation of the isothermic condition, by F.
Burstall and U. Pinkall \cite{FPink}, characterizes isothermic
surfaces by the existence of a non-zero real closed $1$-form $\eta$
with values in a certain subbundle of the skew-symmetric
endomorphisms of $\underline{\R}^{n+1,1}$. We establish the set of
multipliers to an isothermic $q$-constrained Willmore surface
$(\Lambda,\eta)$ as the $1$-dimensional affine space $q+\langle
*\eta\rangle_{\R}$. The constrained Willmore spectral deformation is
known to preserve the isothermic condition, cf. \cite{SD}. We derive
it in our setting. As for B\"{a}cklund transformation of constrained
Willmore surfaces, we believe it does not necessarily preserve the
isothermic condition. This shall be the subject of further work.

Following the work of F. Burstall, D. Calderbank and U. Pinkall
\cite{burstall+calderbank}, \cite{FPink}, we characterize isothermic
surfaces by the flatness of a certain $\R$-family of metric
connections on $\underline{\R}^{n+1,1}$ and define, in terms of this
family of connections, both the isothermic spectral deformation,
discovered in the classical setting by Calapso and, independently,
by Bianchi; and the isothermic Darboux transformation.

We dedicate a section to the special class of constant mean
curvature (CMC) surfaces in $3$-dimensional space-forms. CMC
surfaces in $3$-dimensional space-forms are examples of isothermic
constrained Willmore surfaces, as proven by J. Richter
\cite{richter}, with constrained Willmore B\"{a}cklund
transformations; both constrained Willmore and isothermic spectral
deformations; as well as a spectral deformation of their own and, in
the Euclidean case,
 isothermic Darboux transformations and Bianchi-B\"{a}cklund
transformations. The isothermic spectral deformation is known to
preserve the constancy of the mean curvature of a surface in some
space-form, cf. \cite{SD}. Characterized as the class of constrained
Willmore surfaces in $3$-dimensional space-forms admitting a
conserved quantity, the class of CMC surfaces in $3$-space is known
to be preserved by both constrained Willmore spectral deformation
and B\"{a}cklund transformation, for special choices of parameters.
We verify that both the space-form and the mean curvature are
preserved by constrained Willmore B\"{a}cklund transformation and
investigate how these change under constrained Willmore and
isothermic spectral deformation. We present the classical CMC
spectral deformation by means of the action of a loop of flat metric
connections on the class of CMC surfaces in $3$-space (preserving
the space-form and the mean curvature) and observe that the
classical CMC spectral deformation can be obtained as composition of
isothermic and constrained Willmore spectral deformation. These
spectral deformations of CMC surfaces in $3$-space are, in this way,
all closely related and, therefore, closely related to constrained
Willmore B\"{a}cklund transformation. S. Kobayashi and J.-I.
Inoguchi \cite{kobayashi} proved that isothermic Darboux
transformation of CMC surfaces in Euclidean $3$-space is equivalent
to Bianchi-B\"{a}cklund transformation. We believe isothermic
Darboux transformation of a CMC surface in Euclidean $3$-space can
be obtained as a particular case of constrained Willmore
B\"{a}cklund transformation. This shall be the subject of further
work.

We present a $1$-form $\eta_{\infty}$, derived by F. Burstall and D.
Calderbank from a surface $\Lambda$ with constant mean curvature
$H_{\infty}$ in a space-form $S_{v_{\infty}}$, which establishes
$(\Lambda,\eta_{\infty})$ as an isothermic surface and for which
scaling by $H_{\infty}$ provides a multiplier,
$$q_{\infty}:=H_{\infty}\eta_{\infty},$$to $\Lambda$; and,
for each $t\in\R$ and
$$q_{\infty}^{t}:=q_{\infty}+t*\eta_{\infty},$$ we establish a
$q_{\infty}^{t}$-conserved quantity to $\Lambda$.

Lastly, we dedicate Chapter \ref{surfacesinS4} to the special case
of surfaces in $4$-space. Our approach is quaternionic, based on the
model of the conformal $4$-sphere on the quaternionic projective
space, and follows the work of F. Burstall et al.
\cite{quaternionsbook}. We consider the natural identification of
$\mathbb{H}$ with $\R^{4}$ and then the natural identification of
$\mathbb{H}^{2}$ with $\langle 1,i\rangle^{4}=\C^{4}$. We provide
$\wedge^{2}\C^{4}$ with the real structure $\wedge^{2}j$ and define
a metric on $\wedge^{2}\C^{4}$ by $(v_{1}\wedge v_{2},v_{3}\wedge
v_{4}):=-\mathrm{det}(v_{1}, v_{2},v_{3},v_{4})$, for $v_{1},
v_{2},v_{3}, v_{4}\in\C^{4}$, with $\mathrm{det}(v_{1}, v_{2},v_{3},
v_{4})$ denoting the determinant of the matrix whose columns are the
components of $v_{1}, v_{2},v_{3}$ and $v_{4}$, respectively, on the
canonical basis of $\C^{4}$. This metric induces a metric with
signature $(5,1)$ on the space of real vectors of
$\wedge^{2}\C^{4}$, $$\mathrm{Fix}(\wedge^{2}j)=\R^{5,1}.$$ Via the
Pl\"{u}cker embedding, we identify a $j$-stable $2$-plane $L$ in
$\C^{4}$ with the real null line $\wedge^{2}L$ in
$(\mathrm{Fix}(\wedge^{2}j))^{\C}$, presenting, in this way, the
quaternionic projective space $\mathbb{H}P^{1}$ as a model for the
conformal $4$-sphere,
$$\mathbb{H}P^{1}\cong S^{4}.$$Surfaces in $S^{4}$ are described
in this model as the immersed bundles
$$L\cong\wedge^{2}L:M\rightarrow S^{4}$$ of $j$-stable $2$-planes in
$\C^{4}$. As we are in codimension $2$, the complexification of
$S^{\perp}$ admits a unique decomposition
$S^{\perp}=S^{\perp}_{+}\oplus S^{\perp}_{-}$ into the direct sum of
two null complex lines, complex conjugate of each other. Via the
Pl\"{u}cker embedding, we identify $S_{+}^{\perp}$ with some bundle
$S_{+}$ of $2$-planes in $\C^{4}$, and write then
$$S=S_{+}\wedge jS_{+}.$$ We define a $j$-commuting complex
structure on $\underline{\C}^{4}$, which we still denote by $S$, by
the condition of admitting $S_{+}$ as the eigenspace associated to
the eigenvalue $i$ (and, therefore, $jS_{+}$ as the eigenspace
associated to $-i$), together with a certain condition on the sign.

Following the characterization of the harmonicity of $S$ presented
in \cite{quaternionsbook}, we establish, more generally, a
characterization of constrained Willmore surfaces in $S^{4}$ in
terms of the closeness of a certain form, as follows. Under the
standard identification $sl(\C^{4})\cong o(\wedge^{2}\C^{4})$,
$1$-forms with values in $\Lambda\wedge\Lambda^{(1)}$ correspond to
$S$-commuting $1$-forms with values in
$\mathrm{End}_{j}(\underline{\C}^{4}/L,L)$. For such a form $q$,
condition $d^{\mathcal{D}}q=0$ establishes $Sq=*q$. A surface $L$ in
$S^{4}$ is a $q$-constrained Willmore surface, for some $1$-form $q$
with values in $\mathrm{End}_{j}(\underline{\C}^{4}/L,L)$ such that
$Sq=*\,q=qS$, if and only if $$d*(Q+q)=0,$$ for the Hopf field
$Q\in\Omega^{1}(\mathrm{End}_{j}(\underline{\C}^{4}))$. The
closeness of the $1$-form $*(Q+q)$ ensures the existence of
$G\in\Gamma(\mathrm{End}_{j}(\underline{\C}^{4}))$ with
$dG=2*(Q+q)$, as well as the integrability of the Riccati equation
$$dT=\rho T(dG)T-dF+4\rho qT,$$ for each $\rho\in\R\backslash\{0\}$,
fixing such a $G$ and setting $F:=G-S$. For a local solution
$T\in\Gamma(Gl_{j}(\underline{\C}^{4}))$ of the $\rho$-Ricatti
equation, we define the \textit{Darboux transform} of $L$ of
parameters $\rho,T$ by setting $$\hat{L}:=T^{-1}L,$$ extending, in
this way, the Darboux transformation of Willmore surfaces in $S^{4}$
presented in \cite{quaternionsbook} to a transformation of
constrained Willmore surfaces in $4$-space.

We apply, yet again, the dressing action presented in Chapter
\ref{transformsofCW} to define another transformation of constrained
Willmore surfaces in $4$-space, the \textit{untwisted B\"{a}cklund
transformation}, referring then to the original one as the
\textit{twisted B\"{a}cklund transformation}. We verify that, when
both are defined, twisted and untwisted B\"{a}cklund transformations
coincide. We establish a correspondence between Darboux
transformation parameters $\rho,T$ with $\rho > 1$ and pairs
$\alpha,L^{\alpha};\,-\alpha,\rho_{V} L^{\alpha}$ of untwisted
B\"{a}cklund transformation parameters with $\alpha^{2}$ real, and
show that the corresponding transformations coincide. Darboux
transformation of constrained Willmore surfaces with respect to
parameters $\rho,T$ with $\rho \leq 1$ is trivial. Non-trivial
Darboux transformation of constrained Willmore surfaces in $4$-space
is, in this way, established as a particular case of constrained
Wilmore B\"{a}cklund transformation.

The main new or, at least, unpublished\footnote{With the trivial
exception of the PhD Thesis on which this work is based.} (to my
knowledge) notions and results presented in this work can be listed
as follows:

\begin{itemize}
\item Section \ref{sec:cchb} (we define \textit{constrained}
\textit{harmonicity} of a bundle, which will apply to the central
sphere congruence to provide a characterization of constrained
Willmore surfaces in space-forms);

\item Sections \ref{complexifiedsurfacessect} and \ref{CCWSgd}
(we define \textit{complexified surface}, in generalization of
surface in a space-form, defining, thereafter, \textit{complexified
constrained Willmore surface});

\item Section \ref{subsec:cwillmfam} (we define a
$\C\backslash\{0\}$-spectral deformation of complexified constrained
Willmore surfaces by the action of a family of flat metric
connections; for unit parameter, this deformation preserves reality
conditions and, when restricted to \textit{real} surfaces, it
coincides, up to reparametrization, with the one presented in
\cite{SD});

\item Sections \ref{sec:dress} and \ref{sec:tranfscentr} (we
define a \textit{B\"{a}cklund transformation} of complexified
constrained Willmore surfaces, by applying a dressing action; for
special choices of parameters, this transformation preserves reality
conditions);

\item Section \ref{BTvsSp}  (we establish a permutability between
B\"{a}cklund transformation and spectral deformation of complexified
constrained Willmore surfaces);

\item Proposition \ref{quaddiffscoincide} (we prove that the
quadratic differential is preserved under the corresponding
B\"{a}cklund transformation);

\item Section \ref{cqofCWs} (we introduce the concept of
\textit{conserved quantity} of a constrained Willmore surface in a
space-form, an idea by F. Burstall and D. Calderbank);

\item Theorems \ref{specCWwcq} and \ref{BTsofCMCs} (we prove that
the class of constrained Willmore surfaces in $3$-dimensional
space-forms admitting a conserved quantity is preserved by both
spectral deformation and B\"{a}cklund transformation, for special
choices of parameters);

\item Section \ref{HMCsurfs} (we prove that, in codimension $2$,
surfaces with holomorphic mean curvature vector in some space-form
(already known to be constrained Willmore, cf.
\cite{burstall+calderbank}) are examples of constrained Willmore
surfaces admitting a conserved quantity);

\item Proposition \ref{CMChasCQ} and Theorem \ref{CMCsareCWswithCQ} (we establish the class of
CMC surfaces in $3$-dimensional space-forms as the class of
constrained Willmore surfaces in $3$-space admitting a conserved
quantity);

\item Sections \ref{CWspdeform} and \ref{BTCMC} (we verify that both the space-form and the mean
curvature are preserved by constrained Willmore B\"{a}cklund
transformation of CMC surfaces in $3$-dimensional space-forms and
investigate how these change under constrained Willmore spectral
deformation; we verify that the classical CMC spectral deformation
of a CMC surface in $3$-space can be obtained as composition of
isothermic and constrained Willmore spectral deformation);

\item Section \ref{nonisoCW} (we characterize isothermic
constrained Willmore surfaces in space-forms by the non-uniqueness
of multiplier and establish the set of multipliers to an isothermic
$q$-constrained Willmore surface $(\Lambda,\eta)$ as the affine
space $q+\langle
*\eta\rangle_{\R}$);

\item Theorems \ref{CWinS4vkutywsfgdredhgvcertyq2932h} and \ref{CWinS4charact} (we provide
two (equivalent) characterizations of constrained Willmore surfaces
in $4$-space in the quaternionic setting);

\item Section \ref{DTsCWS4} (we extend the Darboux transformation
of Willmore surfaces in $S^{4}$ presented in \cite{quaternionsbook}
to a transformation of constrained Willmore surfaces in $4$-space);

\item Section \ref{BTvsDT} (we prove that Darboux transformation
of parameters $\rho,T$ with $\rho > 1$ is equivalent to B\"{a}cklund
transformation of parameters $\alpha,L^{\alpha}$ with $\alpha^{2}$
real; Darboux transformation of parameters $\rho,T$ with $\rho \leq
1$ is trivial).
\end{itemize}

Our theory is local and, throughout the text, with no need for
further reference, restriction to a suitable non-empty open set
shall be underlying.

\textit{Foundations of Differential Geometry}, Vol.s 1,2, by S.
Kobayashi and K. Nomizu \cite{kobayashi2}, \textit{Riemannian
Geometry}, by T. Willmore \cite{willmore}, and \textit{Selected
topics in Harmonic maps}, by J. Eells and L. Lemaire
\cite{eells+lemaire}, are good references to the basic background.

\mainmatter

\chapter{A bundle approach to conformal surfaces in space-forms}

\markboth{\tiny{A. C. QUINTINO}}{\tiny{CONSTRAINED WILLMORE
SURFACES}}

\thispagestyle{plain}

\pagenumbering{arabic}

Our study is a study of geometrical aspects that are invariant under
M\"{o}bius transformations.\footnote{With the exception of the study
of constant mean curvature surfaces, in Sections
\ref{sec:casecodim1} and \ref{sec:CMC} below.} We present a
M\"{o}bius description of space-forms in the projectivized
light-cone, following \cite{IS}. Such description is based on the
model of the conformal $n$-sphere on the projective space
$\mathbb{P}(\mathcal{L})$ of the light-cone $\mathcal{L}$ of
$\R^{n+1,1}$, due to Darboux \cite{darbouxsphere}, which, in
particular, yields a conformal description of Euclidean $n$-spaces
and hyperbolic $n$-spaces as submanifolds of
$\mathbb{P}(\mathcal{L})$. With this, we approach a surface
conformally immersed in $n$-space as a null line subbundle $\Lambda$
of $\underline{\R}^{n+1,1}=M\times\R^{n+1,1}$ defining an immersion
$\Lambda:M\rightarrow\mathbb{P}(\mathcal{L})$ of an oriented surface
$M$, which we provide with the conformal structure induced by
$\Lambda$, into the projectivized light-cone. The realization of all
space-forms as submanifolds of the projectivized light-cone
considered in this text arises from the realization, cf. \cite{IS},
of all $n$-dimensional space-forms as conic sections
$S_{v_{\infty}}:=\{v\in\mathcal{L}:(v,v_{\infty})=-1\}$ of the
light-cone, with $v_{\infty}\in\mathbb{R}^{n+1,1}$ non-zero.
$S_{v_{\infty}}$ inherits a positive definite metric of constant
sectional curvature $-(v_{\infty},v_{\infty})$ from $\R^{n+1,1}$ and
is either a copy of a sphere, a copy of Euclidean space or two
copies of hyperbolic space, according to the sign of
$(v_{\infty},v_{\infty})$. For each $v_{\infty}$, the canonical
projection $\pi:\mathcal{L}\rightarrow\mathbb{P}(\mathcal{L})$
defines a diffeomorphism
$\pi\vert_{S_{v_{\infty}}}:S_{v_{\infty}}\rightarrow \mathbb{P}
(\mathcal {L})\backslash\mathbb{P} (\mathcal {L}\cap \langle
v_{\infty}\rangle ^{\perp})$. We provide $\mathbb{P}(\mathcal{L})$
with the conformal structure of the (positive definite) metric
induced by $\pi\vert_{S_{v_{\infty}}}$, fixing $v_{\infty}$
time-like, independently of the choice of $v_{\infty}$, and, in this
way, identify $\mathbb{P}(\mathcal{L})$ with the conformal
$n$-sphere and make each diffeomorphism $\pi\vert_{S_{v_{\infty}}}$
- for a general $v_{\infty}$, not necessarily time-like - into a
conformal diffeomorphism.\newline

By \textit{metric} on a $\mathbb{K}$-vector bundle $P$, with
$\mathbb{K}\in\{\R,\C\}$,  we mean a non-degenerate section of
$S^{2}(P,\mathbb{K})$.  With no need for further reference, given a
vector bundle $P$, provided with a metric, the metric considered on
the complexification $P^{\C}$ of $P$ shall be the complex bilinear
extension of the metric on $P$. In particular,
$$\overline{(\alpha_{1}+i\beta_{1},\alpha_{2}+i\beta_{2})}=(\,\overline{\alpha_{1}+i\beta_{1}},\overline{\alpha_{2}+i\beta_{2}}\,),$$
for $\alpha_{i},\beta_{i}\in\Gamma(P)$, for $i=1,2$. More generally,
throughout this text, when appropriated and unless indication to the
contrary is provided, we will move from real tensors to complex
tensors by complex multilinear extension, preserving, therefore,
reality conditions and, in general, notation. With no need for
further reference, given $V$ and $W$ vector bundles provided with a
metric, we shall consider $\mathrm{Hom}(V,W)$ provided with the
canonical metric induced by $V$ and $W$, the metric given by
$$(\alpha,\beta):=\mathrm{tr}\,(\beta^{t}\alpha),$$ for $\alpha,
\beta\in\Gamma(\mathrm{Hom}(V,W))$, with $\beta^{t}$ denoting the
transpose of $\beta$.

As usual, by metric on a manifold $M$ we mean a metric on the
tangent bundle $TM$,  as a  $\R$-bundle. Two positive definite
metrics $g$ and $g'$ on a manifold $M$ are said to be
\textit{conformally equivalent} if $g'=e^{u}g$, for some $u\in
C^{\infty}(M;\R)$. A class of conformally equivalent Riemannian
metrics on $M$ is said to be a \textit{conformal structure} on $M$.
When provided with a conformal structure, $M$ is said to be a
\textit{conformal manifold}. A mapping $\phi$ of $M$ into a manifold
$N$ provided with a metric $(,)_{N}$ defines a section $g_{\phi}$ of
$S^{2}(TM,\R)$, given by
$$g_{\phi}(X,Y):=(d_{X}\phi , d_{Y}\phi )_{N},$$ for $X,Y\in\Gamma
(TM)$, which we refer to as the \textit{metric induced in $M$ by
$\phi$}. If the metric $(,)_{N}$ is positive definite and $\phi$ is
an immersion, then the metric $g_{\phi}$ is positive definite. In
the case $M$ is provided with a Riemannian metric $g$,
$\phi:(M,g)\rightarrow (N,(,)_{N})$ is said to be \textit{conformal}
if $g_{\phi}$ is a positive definite metric conformally equivalent
to $g$.

Our study is a study of geometrical aspects that are invariant under
conformal diffeomorphisms or \textit{M\"{o}bius transformations}. It
is, in particular, a study of angle-preserving transformations,
dedicated to submanifolds of an ambient space equipped with a
conformal class of metrics but not carrying a distinguished metric.
Our point of view is that of \textit{M\"{o}bius geometry}, where
there is an angle measurement, but, in contrast to Euclidean
geometry, there is no measurement of distances. This lack of length
measurement has interesting consequences. For example, from the
point of view of M\"{o}bius geometry, $S^{n}\backslash\{x\}$ and
$\R^{n}$ are no longer distinguished, as the stereographic
projection of pole $x\in S^{1}$, of $S^{n}\backslash\{x\}$ onto
$\R^{n}$, is a conformal diffeomorphism. We refer to invariance
under M\"{o}bius transformations as \textit{M\"{o}bius invariance}
or \textit{conformal invariance}.

For $m\in\mathbb{N}$, let $\R^{m,1}$ be the $(m+1)$-dimensional
Minkowski space with signature $(m,1)$, i.e., a real
$(m+1)$-dimensional vector space equipped with a metric $(\,,\,)$
with respect to which there exists an orthogonal basis
$e_{1},...,e_{m+1}$ with
$$(e_{i},e_{i})
=\left\{
\begin{array}{ll} 1 & \mbox{$i<m+1$}\\ -1 &
\mbox{$i=m+1$}\end{array}\right..$$
 As usual, we refer to a vector $v\in\R^{m,1}$
such that $(v,v)=0$ (respectively, $(v,v)<0$, $(v,v)>0$) as a
light-like (respectively, time-like, space-like) vector. Let
$\mathcal{L}$ be the light-cone in $\R^{m,1}$,
$$\mathcal{L}=\{v\in\R^{m,1}\backslash\{0\}:(v,v)=0\},$$an
$m$-dimensional submanifold of $\R^{m,1}$, and, in the case $m>1$,
let $\mathcal{L}^{+}$ and $\mathcal{L}^{-}$ be the connected
components of $\mathcal{L}$.\footnote{Non-collinear elements
$v_{0},v_{\infty}\in\mathcal{L}$ are in the same component if and
only if $(v_{0},v_{\infty})<0$.} Let $\mathbb{P}(\mathcal{L})$ be
the projectivized light-cone,
$$\mathbb{P}(\mathcal{L})=\{\langle v\rangle :v\in\mathcal{L}\},$$
an $(m-1)$-dimensional submanifold of $\mathbb{P}(\R^{m,1})$.

Recall the sectional curvature $K(\wp)$ of a $2$-plane $\wp$ in the
tangent space $T_{x}M$ to a Riemannian manifold $M$ at a point $x\in
M$,
$$K(\wp)=-(R(X_{1},X_{2})X_{1},X_{2}),$$for $R$
the curvature tensor of $M$ (provided with the Levi-Civita
connection), independently of the choice of an orthonormal basis
$X_{1},X_{2}$ of $T_{x}M$. It is opportune to establish some (usual)
notation: given $X,Y,Z,W$ in $\Gamma(TM)$,
$$R(X,Y,Z,W):=-(R(X,Y)Z,W).$$ If $K(\wp)$ is constant for all planes
$\wp$ in $T_{x}M$ and for all points $x\in M$, then $M$ is said to
be a space of constant curvature. If $K(\wp)$ is constant for all
planes $\wp$ in $T_{x}M$ but possibly depends on $x\in M$, we say
that $M$ has constant sectional curvature $K=K(x)$, with $x\in M$.
That is always the case when $M$ is $2$-dimensional, in which case
$K$ is famously said to be the Gaussian curvature of the surface
$M$.

By \textit{space-form} we mean a connected and simply connected
complete Riemannian manifold of constant sectional curvature. For
simplicity, we may  use $n$-\textit{space} to refer to
$n$-dimensional space-form. Two space-forms with the same curvature
are isometric to each other. Fix $n\in\mathbb{N}$. Our model of flat
$n$-space is the Euclidean space $\R^{n}$. Given $r\in\R^{+}$, our
model of $n$-space of curvature $1/r^{2}$ is the $n$-sphere
$$S^{n}(r):=\{x\in \mathbb{R}^{n+1}:(x,x)=r^{2}\},$$whereas that of
$n$-space of curvature $-1/r^{2}$ is the hyperbolic $n$-space
consisting of either of the two connected components of
$$H^{n}(r):=\{x\in \mathbb{R}^{n,1}:(x,x)=-r^{2}\}.$$

\section{Space-forms in the
conformal projectivized light-cone}\label{subsec:hyper}

\markboth{\tiny{A. C. QUINTINO}}{\tiny{CONSTRAINED WILLMORE
SURFACES}}

Following \cite{IS}, we present a M\"{o}bius description of spheres,
Euclidean spaces and hyperbolic spaces in the projectivized
light-cone. We start by realizing all $n$-dimensional space-forms as
connected components of conic sections of $\mathcal{L}\subset
\R^{n+1,1}$.\newline

Let $\mathcal{L}$ be the light-cone in $\R^{n+1,1}$. Fix
$v_{\infty}\in\mathbb{R}^{n+1,1}$ non-zero. Set
$$S_{v_{\infty}}:=\{v\in\mathcal{L}:(v,v_{\infty})=-1\},$$the conic
section of the light-cone given by the intersection of $\mathcal{L}$
with the hyperplane $\{v\in\R^{n+1,1}:(v,v_{\infty})=-1\}$ of
$\R^{n+1,1}$, which is an $n$-dimensional submanifold of
$\R^{n+1,1}$. Given $v\in\mathcal{L}$,
$$T_{v}\mathcal{L}=\langle v\rangle^{\perp}$$ and,
for $v\in S_{v_{\infty}}$,
$$T_{v}S_{v_{\infty}}=\langle v,v_{\infty}\rangle^{\perp}.$$
The fact that $(v,v_{\infty})\neq 0$ establishes the non-degeneracy
of the subspace $\langle
 v,v_{\infty}\rangle$ of $\R^{n+1,1}$, or, equivalently, a decomposition
\begin{equation}\label{eq:decompwithTvSvinfty}
\R^{n+1,1}=\langle v,v_{\infty}\rangle\oplus T_{v}S_{v_{\infty}}.
\end{equation}
The nullity of $v$ establishes then $\langle
 v,v_{\infty}\rangle$ as a $2$-dimensional space with a metric with signature $(1,1)$, establishing,
therefore, $T_{v}S_{v_{\infty}}$ as isometric to $\R^{n}$.
$S_{v_{\infty}}$ inherits a positive definite metric from
$\R^{n+1,1}$.

\begin{prop}
$S_{v_{\infty}}$ is either a copy of an $n$-sphere, a copy of
Euclidean $n$-space or two copies of hyperbolic $n$-space, according
to the sign of $(v_{\infty},v_{\infty})$.
\end{prop}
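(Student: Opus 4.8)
The plan is to realize $S_{v_\infty}$ explicitly, up to an affine isometry of $\R^{n+1,1}$, as one of the model space-forms introduced above, splitting into three cases according to the causal character of $v_\infty$. In each case the idea is the same: decompose a general $v\in\R^{n+1,1}$ along $v_\infty$ and a suitable complement, impose the two defining conditions $(v,v_\infty)=-1$ and $(v,v)=0$, and read off the locus swept out by $v$, together with the induced metric.

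When $v_\infty$ is not null, I would use the orthogonal decomposition $v=a\,v_\infty+w$ with $w\in\langle v_\infty\rangle^\perp$. The condition $(v,v_\infty)=-1$ forces $a=-1/(v_\infty,v_\infty)$, a nonzero constant on $S_{v_\infty}$, so the map $v\mapsto w$ is an affine translation; being the identity on $\langle v_\infty\rangle^\perp$, where the metric is the one induced from $\R^{n+1,1}$, it restricts to an isometry of $S_{v_\infty}$ onto its image. The nullity condition $(v,v)=0$ then reads $(w,w)=-a^2(v_\infty,v_\infty)$. If $v_\infty$ is time-like, $\langle v_\infty\rangle^\perp$ is positive definite, a copy of $\R^{n+1}$, and $(w,w)=1/|(v_\infty,v_\infty)|>0$, so $w$ ranges over a round sphere of radius $r=|(v_\infty,v_\infty)|^{-1/2}$; thus $S_{v_\infty}\cong S^{n}(r)$, of curvature $1/r^{2}=-(v_\infty,v_\infty)$. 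If $v_\infty$ is space-like, then $\langle v_\infty\rangle^\perp\cong\R^{n,1}$ and $(w,w)=-1/(v_\infty,v_\infty)<0$, so $w$ ranges over the two-sheeted hyperboloid $\{(w,w)=-r^{2}\}$ with $r=(v_\infty,v_\infty)^{-1/2}$; thus $S_{v_\infty}$ is two copies of hyperbolic $n$-space $H^{n}(r)$, of curvature $-1/r^{2}=-(v_\infty,v_\infty)$.

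The remaining, and genuinely different, case is $v_\infty$ null, where the obstacle is that $\langle v_\infty\rangle^\perp$ is degenerate (it contains $v_\infty$ itself), so the orthogonal decomposition breaks down. Here I would instead complete $v_\infty$ to a null pair, choosing $v_{0}\in\mathcal{L}$ with $(v_{0},v_\infty)=-1$, so that $\langle v_{0},v_\infty\rangle$ is a hyperbolic plane and $\langle v_{0},v_\infty\rangle^\perp$ is a positive definite copy of $\R^{n}$. Writing $v=\alpha v_{0}+\beta v_\infty+w$ with $w\in\langle v_{0},v_\infty\rangle^\perp$, the two defining conditions give $\alpha=1$ and $\beta=\tfrac12(w,w)$, so that $v=v_{0}+\tfrac12(w,w)\,v_\infty+w$ parametrizes $S_{v_\infty}$ by $w\in\R^{n}$. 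Differentiating this parametrization and using that $v_\infty$ is null and orthogonal to $w$, all cross terms drop out and the induced metric equals $(X,Y)$ on $\R^{n}$; hence $S_{v_\infty}$ is isometric to flat Euclidean $n$-space, of curvature $0=-(v_\infty,v_\infty)$. Since the three causal characters of $v_\infty$ correspond exactly to the three signs of $(v_\infty,v_\infty)$, this exhausts all cases and proves the claim; I expect the null case, where the degeneracy of $\langle v_\infty\rangle^\perp$ forces the passage to a null frame and the explicit flatness computation, to be the only real point of care.
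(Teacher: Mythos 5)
Your proof is correct and follows essentially the same route as the paper: the orthogonal decomposition along $\langle v_{\infty}\rangle$ in the non-null cases, and a null pair $v_{0},v_{\infty}$ in the light-like case, where your parametrization $w\mapsto v_{0}+\tfrac{1}{2}(w,w)v_{\infty}+w$ is exactly the inverse of the isometry the paper writes down. The only difference is presentational — you derive the parametrization from the decomposition, while the paper states the map and its inverse directly.
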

In the proof of the proposition, we will show, in particular, that
$S_{v_{\infty}}$ has constant sectional curvature
$-(v_{\infty},v_{\infty})$, and that, if $v_{\infty}$ is space-like,
then $S_{v_{\infty}}\cap \mathcal{L}^{+}$ is a copy of hyperbolic
$n$-space, as well as $S_{v_{\infty}}\cap \mathcal{L}^{-}$.
\begin{proof}
If $v_{\infty}$ is light-like, then, choosing $v_{0}\in
S_{v_{\infty}}$, the map $v\mapsto v-v_{0}+(v,v_{0})v_{\infty}$
defines an isometry
$$S_{v_{\infty}}\rightarrow \langle
v_{0},v_{\infty}\rangle^{\perp}\cong\R^{n}$$whose inverse is given
by $x\rightarrow x+v_{0}+\frac {1}{2}(x,x)v_{\infty}$.

Now contemplate the case $v_{\infty}$ is not light-like. In that
case, $\langle v_{\infty}\rangle$ is non-degenerate, so we have a
decomposition $\mathbb{R}^{n+1,1}=\langle
v_{\infty}\rangle\oplus\langle v_{\infty}\rangle ^{\perp}$. Set
$r':=(v_{\infty},v_{\infty})^{-1}$. For $v\in S_{v_{\infty}}$, write
$v=-r'v_{\infty}+v^{\perp}$ with $v^{\perp}\perp v_{\infty}$ and
note that $0=(v,v)=r'+(v^{\perp},v^{\perp})$. In the particular case
$v_{\infty}$ is time-like,  the projection $v\mapsto v^{\perp}$
defines a diffeomorphism
$$S_{v_{\infty}}\rightarrow S^{n}(r)\subset \langle v_{\infty}\rangle ^{\perp}\cong\R^{n+1}$$
onto the sphere of radius $r:=\sqrt{-r'}$ in $\langle
v_{\infty}\rangle ^{\perp}\cong\R^{n+1},$ which we easily check to
be an isometry. In the case $v_{\infty}$ is space-like, the
projection $v\mapsto v^{\perp}$  defines an isometry
$$S_{v_{\infty}}\rightarrow H^{n}(r)\subset\langle
v_{\infty}\rangle ^{\perp}\cong\R^{n,1},$$for $r:=\sqrt{r'}$.
\end{proof}

Now contemplate the canonical projection
$\pi:\mathcal{L}\rightarrow\mathbb{P}(\mathcal{L})$, which we may,
alternatively, denote by $\pi_{\mathcal{L}}$. Note that, given
$v\in\mathcal{L}$,
\begin{equation}\label{eq:kerdpi}
\mathrm{Ker}\,d\pi_{v}=\langle v \rangle,
\end{equation}
so we have an isomorphism $d\pi_{v}:\langle v\rangle
^{\perp}/\langle v\rangle\cong d\pi_{v}(T_{v}\mathcal{L})$, given by
$u+\langle v\rangle\mapsto d\pi_{v}(u),$ for $u\in\langle v\rangle
^{\perp}$. As $d\pi_{v}(T_{v}\mathcal{L})$ is an $n$-dimensional
subspace of $T_{\langle v\rangle}\mathbb{P}(\mathcal{L})$, we
conclude that $T_{\langle
v\rangle}\mathbb{P}(\mathcal{L})=d\pi_{v}(T_{v}\mathcal{L})$,
\begin{equation}\label{eq:Tangenttoprojectlightcne}
d\pi_{v}:\langle v\rangle ^{\perp}/\langle v\rangle\cong T_{\langle
v\rangle}\mathbb{P}(\mathcal{L})
\end{equation}
and, therefore, that $\pi$ is a submersion.

The map $\pi$ defines a diffeomorphism
\begin{equation}\label{eq:pirestSvinf}
\pi\vert_{S_{v_{\infty}}}:S_{v_{\infty}}\rightarrow \mathbb{P}
(\mathcal {L})\backslash\mathbb{P} (\mathcal {L}\cap \langle
v_{\infty}\rangle ^{\perp})
\end{equation}
whose inverse is given by $$\langle v\rangle \mapsto
S_{v_{\infty}}\cap \langle v\rangle = \frac{-1}{(v,v_{\infty})}\, v
.$$
\begin{Lemma}\label{mathcalLcapvinfperp}
If $v_{\infty}$ is time-like, then $\mathcal{L}\cap \langle
v_{\infty}\rangle^{\perp}=\emptyset$. If $v_{\infty}$ is light-like,
then $\mathcal{L}\cap \langle v_{\infty}\rangle^{\perp}=\langle
v_{\infty}\rangle.$
\end{Lemma}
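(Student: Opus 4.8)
The plan is to treat the two cases separately, each reducing to a signature computation for the restriction of the ambient metric to the hyperplane $\langle v_{\infty}\rangle^{\perp}$. Throughout I would keep in mind that $\mathcal{L}$ excludes the origin, so that the expression ``$\langle v_{\infty}\rangle$'' in the light-like case should really be read as $\langle v_{\infty}\rangle\backslash\{0\}$, a harmless abuse I would flag once.

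For the time-like case, I would observe that $(v_{\infty},v_{\infty})<0$ makes $\langle v_{\infty}\rangle$ a non-degenerate negative-definite line, so that $\R^{n+1,1}=\langle v_{\infty}\rangle\oplus\langle v_{\infty}\rangle^{\perp}$ with the restriction of $(\,,\,)$ to $\langle v_{\infty}\rangle^{\perp}$ positive definite --- the single negative direction of the signature $(n+1,1)$ being absorbed by $v_{\infty}$. Hence every non-zero $v\in\langle v_{\infty}\rangle^{\perp}$ satisfies $(v,v)>0$ and so cannot be light-like, giving $\mathcal{L}\cap\langle v_{\infty}\rangle^{\perp}=\emptyset$.

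For the light-like case, the inclusion $\langle v_{\infty}\rangle\backslash\{0\}\subseteq\mathcal{L}\cap\langle v_{\infty}\rangle^{\perp}$ is immediate, since $(cv_{\infty},v_{\infty})=0=(cv_{\infty},cv_{\infty})$ for every scalar $c$. The substantive direction is the reverse inclusion. Here $\langle v_{\infty}\rangle^{\perp}$ is degenerate, so the clean route is to choose a null vector $u$ with $(u,v_{\infty})=1$ --- which exists by non-degeneracy of the ambient metric, upon replacing any $x$ with $(x,v_{\infty})=1$ by $x-\tfrac{1}{2}(x,x)v_{\infty}$ --- making $\langle v_{\infty},u\rangle$ a hyperbolic plane of signature $(1,1)$ and its orthogonal complement $W:=\langle v_{\infty},u\rangle^{\perp}$ positive definite of dimension $n$. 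A direct computation then gives $\langle v_{\infty}\rangle^{\perp}=\langle v_{\infty}\rangle\oplus W$, and for $v=av_{\infty}+w$ with $w\in W$ one finds $(v,v)=(w,w)$, the $a^{2}$-term vanishing since $v_{\infty}$ is null and the cross term since $w\perp v_{\infty}$. Positive-definiteness of $W$ forces $w=0$ whenever $v$ is null, so any light-like $v\in\langle v_{\infty}\rangle^{\perp}$ is a non-zero multiple of $v_{\infty}$, as required.

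The only real obstacle is the degeneracy of $\langle v_{\infty}\rangle^{\perp}$ in the light-like case, which blocks a naive orthogonal-splitting argument; introducing the dual null vector $u$ (equivalently, identifying the radical of the restricted form as exactly $\langle v_{\infty}\rangle$ and noting that the induced form on $\langle v_{\infty}\rangle^{\perp}/\langle v_{\infty}\rangle$ is positive definite) circumvents it. Everything else is routine bilinear algebra.
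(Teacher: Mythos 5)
Your proof is correct, and while your time-like case coincides with the paper's (both observe that $\langle v_{\infty}\rangle^{\perp}$ is space-like, hence contains no null vectors), your light-like case takes a genuinely different route. The paper disposes of it in one line by invoking the standard fact that $\R^{n+1,1}$ contains no null subspaces of dimension greater than $1$: if some light-like $v\notin\langle v_{\infty}\rangle$ were orthogonal to the light-like $v_{\infty}$, then $\langle v,v_{\infty}\rangle$ would be a $2$-dimensional null subspace, which is impossible in Lorentzian signature. You instead construct a dual null vector $u$ with $(u,v_{\infty})=1$, split off the hyperbolic plane $\langle v_{\infty},u\rangle$, identify $\langle v_{\infty}\rangle^{\perp}=\langle v_{\infty}\rangle\oplus W$ with $W$ positive definite, and read off the conclusion from $(av_{\infty}+w,av_{\infty}+w)=(w,w)$. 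The two arguments are morally equivalent --- your computation is in effect a proof of the paper's cited fact in the case at hand, by exhibiting $\langle v_{\infty}\rangle$ as the radical of the restricted form and the induced form on the quotient as positive definite --- but yours is self-contained and makes the degenerate geometry of $\langle v_{\infty}\rangle^{\perp}$ explicit, at the cost of the auxiliary construction of $u$, whereas the paper's is shorter provided one accepts the standard fact about maximal isotropic subspaces. Your remark that the statement should strictly read $\langle v_{\infty}\rangle\backslash\{0\}$, since $0\notin\mathcal{L}$, is a fair observation about a harmless abuse of notation in the paper.
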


\begin{proof}
If $v_{\infty}$ is time-like, then $\langle
v_{\infty}\rangle^{\perp}$ is space-like and, therefore, contains no
light-like vectors.

On the other hand, recalling that there are no null subspaces of
$\R^{n+1,1}$ with dimension higher than $1$, we conclude that, if
$v_{\infty}$ is light-like, then $(v,v_{\infty})\neq 0$, for all $v$
in $\mathcal{L}\backslash\langle v_{\infty}\rangle$, completing the
proof.
\end{proof}

Hence:
\begin{prop}\label{Svinfprojectivized}
Once restricted to $S_{v_{\infty}}$, the canonical projection $\pi
:\mathcal {L}\rightarrow \mathbb{P} (\mathcal {L})$ defines a
diffeomorphism onto $\mathbb{P} (\mathcal {L})$, $\mathbb{P}
(\mathcal {L})\backslash\{\langle v_{\infty}\rangle\}$ or
$\mathbb{P} (\mathcal {L})\backslash\mathbb{P} (\mathcal {L}\cap
\langle v_{\infty}\rangle ^{\perp})$, according as $v_{\infty}$ is
time-like, light-like or space-like, respectively.
\end{prop}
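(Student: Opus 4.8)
The plan is to read the statement off directly from the diffeomorphism \eqref{eq:pirestSvinf}, which already carries the substantive content: it asserts that $\pi\vert_{S_{v_{\infty}}}$ is a diffeomorphism onto $\mathbb{P}(\mathcal{L})\backslash\mathbb{P}(\mathcal{L}\cap\langle v_{\infty}\rangle^{\perp})$, with explicit two-sided inverse $\langle v\rangle\mapsto S_{v_{\infty}}\cap\langle v\rangle=\frac{-1}{(v,v_{\infty})}\,v$. The only task remaining is to identify the excised set $\mathbb{P}(\mathcal{L}\cap\langle v_{\infty}\rangle^{\perp})$ in each of the three signature cases, and this is precisely what Lemma \ref{mathcalLcapvinfperp} supplies. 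So the proposition is a corollary obtained by case bookkeeping, and I would present it as such.

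First I would treat the time-like case. By Lemma \ref{mathcalLcapvinfperp}, $\mathcal{L}\cap\langle v_{\infty}\rangle^{\perp}=\emptyset$, hence $\mathbb{P}(\mathcal{L}\cap\langle v_{\infty}\rangle^{\perp})=\emptyset$ and the target of \eqref{eq:pirestSvinf} is all of $\mathbb{P}(\mathcal{L})$. Next, for $v_{\infty}$ light-like, the same lemma gives $\mathcal{L}\cap\langle v_{\infty}\rangle^{\perp}=\langle v_{\infty}\rangle$; projectivizing this punctured line yields the single point $\{\langle v_{\infty}\rangle\}$, so the target becomes $\mathbb{P}(\mathcal{L})\backslash\{\langle v_{\infty}\rangle\}$. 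Finally, in the space-like case there is nothing further to compute: the excised set is genuinely $\mathbb{P}(\mathcal{L}\cap\langle v_{\infty}\rangle^{\perp})$, and \eqref{eq:pirestSvinf} already records the claim verbatim.

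Since each case is an immediate specialization of \eqref{eq:pirestSvinf}, there is no real obstacle here; the genuine work lives upstream, in establishing \eqref{eq:pirestSvinf} itself (that $\pi$ is a submersion, via \eqref{eq:kerdpi} and \eqref{eq:Tangenttoprojectlightcne}, together with the verification that $\langle v\rangle\mapsto S_{v_{\infty}}\cap\langle v\rangle$ furnishes a smooth inverse) and in the signature dichotomy of Lemma \ref{mathcalLcapvinfperp}. The one point I would state with a word of care is the light-like case, where one must observe that $\langle v_{\infty}\rangle\subset\mathcal{L}\cup\{0\}$, so that its image under $\mathbb{P}$ is exactly the single point $\langle v_{\infty}\rangle$ and not something larger or empty; this is immediate from $v_{\infty}$ itself being null.
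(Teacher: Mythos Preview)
Your proposal is correct and matches the paper's approach exactly: the paper simply writes ``Hence:'' after establishing the diffeomorphism \eqref{eq:pirestSvinf} and Lemma \ref{mathcalLcapvinfperp}, treating the proposition as an immediate corollary obtained by specializing the excised set $\mathbb{P}(\mathcal{L}\cap\langle v_{\infty}\rangle^{\perp})$ in each signature case. Your added remark about the light-like case (that $\langle v_{\infty}\rangle\subset\mathcal{L}\cup\{0\}$ projectivizes to a single point) is a harmless clarification the paper leaves implicit.
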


This will lead us to a conformal description of spheres, Euclidean
spaces and hyperbolic spaces in the projectivized light-cone. Our
next step is to provide the projectivized light-cone with a
conformal structure.

We provide $\mathbb{P}(\mathcal{L})$ with the conformal structure
$\mathcal{C}_{\mathbb{P}(\mathcal{L})}$ defined by the metric
$g_{\sigma}$, fixing $\sigma$ a never-zero section of the
tautological bundle
$\mathbb{P}(\mathcal{L})_{\mathbb{P}(\mathcal{L})}=(l)_{l\in\mathbb{P}(\mathcal{L})}$.
This is well-defined and, furthermore, makes each diffeomorphism
\eqref{eq:pirestSvinf} into a conformal diffeomorphism. Indeed,
given $\sigma,\sigma':\mathbb{P}(\mathcal{L})\rightarrow \R^{n+1,1}$
never-zero sections of
$\mathbb{P}(\mathcal{L})_{\mathbb{P}(\mathcal{L})}$, we have
$\sigma'=f\sigma$, for some never-zero $f\in
C^{\infty}(\mathbb{P}(\mathcal{L})_{\mathbb{P}(\mathcal{L})},\R)$,
and then
$$g_{\sigma'}(X,Y)=(fd_{X}\sigma+(d_{X}f)\sigma,fd_{Y}\sigma+(d_{Y}f)\sigma)=f^{2}g_{\sigma}(X,Y),$$
as $(\sigma,\sigma)$ vanishes and, therefore, so does
$(\sigma,d\sigma)$. On the other hand, for non-zero
$v_{\infty}\in\R^{n+1,1}$, the diffeomorphism
$(\pi\vert_{S_{v_{\infty}}})^{-1}:\mathbb{P} (\mathcal
{L})\backslash\mathbb{P} (\mathcal {L}\cap \langle v_{\infty}\rangle
^{\perp})\rightarrow S_{v_{\infty}}$ is a never-zero section of
$\mathbb{P}(\mathcal{L})_{\mathbb{P}(\mathcal{L})}$ inducing in
$\mathbb{P}(\mathcal{L})$ a positive definite metric from the one in
$S_{v_{\infty}}$.

We refer to the unit sphere $S^{n}$ in $\mathbb{R}^{n+1}$, when
provided by the conformal class of the round metric, as the
conformal n-sphere. By providing $\mathbb{P}(\mathcal{L})$ with the
conformal structure $\mathcal{C}_{\mathbb{P}(\mathcal{L})}$, we make
the map $\pi\vert_{S_{v_{\infty}}}:S^{n}\cong
S_{v_{\infty}}\rightarrow \mathbb{P}(\mathcal{L})$, defined by
$$x\mapsto\langle v_{\infty}+x\rangle,$$ for $v_{\infty}$ unit
time-like, into a conformal diffeomorphism, identifying the
conformal $n$-sphere with the conformal projectivized light-cone,
$$S^{n}\cong\mathbb{P}(\mathcal{L}),$$in a model due to Darboux
\cite{darbouxsphere}. In this model of the conformal $n$-sphere, the
$k$-spheres $S^{k}$ in $S^{n}$ (intersections of $S^{n}$ with
$(k+1)$-dimensional affine spaces), for $0\leq k\leq n$, are
described as the submanifolds $\mathbb{P} (\mathcal {L}\cap V)$ with
$V$ non-degenerate $(k+2)$-dimensional subspace of
$\mathbb{R}^{n+1,1}$ with signature $(k+1,1)$ (see, for example,
\cite{susana}, Appendix A); hyperbolic $n$-spaces are described as
the submanifolds consisting of either of the two connected
components of
$$S^{n}\backslash S^{n-1}\cong\mathbb{P} (\mathcal
{L})\backslash\mathbb{P} (\mathcal {L}\cap \langle v_{\infty}\rangle
^{\perp})$$ with $v_{\infty}$ space-like; and Euclidean $n$-spaces
are described as those consisting of
$$S^{n}\backslash\{x\}\cong\mathbb{P} (\mathcal
{L})\backslash\{\langle v_{\infty}\rangle\}$$ with $x\in S^{n}$ and
$v_{\infty}$ light-like.

\section{Conformal surfaces in space-forms}\label{sec:imsurf}

\markboth{\tiny{A. C. QUINTINO}}{\tiny{CONSTRAINED WILLMORE
SURFACES}}

Throughout this text, let  $n\geq 3$, $\mathcal{L}$ be the
light-cone of $\R^{n+1,1}$, $M$ be an oriented surface and
$\underline{\mathbb{R}}^{n+1,1}$ denote the trivial bundle $M\times
\mathbb{R}^{n+1,1}$.

\subsection{Oriented conformal surfaces:
generalities}\label{Riemannsurfacegeneralities}

Suppose $M$ is provided with a conformal structure $\mathcal{C}$.

\begin{rem}\label{L-C}
The Levi-Civita connection is not a conformal invariant. In fact,
(see, for example, \cite{willmore}, \S$3.12$), given
$g,\,g':=e^{2u}g\in\mathcal{C}$, for some $u\in C^{\infty}(M,\R)$,
the Levi-Civita connections $\nabla$ and $\nabla '$, on $(M,g)$ and
$(M,g')$, respectively, are related by
\begin{equation}\label{eq:relLCs}
\nabla'_{X}Y=\nabla_{X}Y+(Xu)Y+(Yu)X-g(X,Y)(du)^{*},
\end{equation}
for all $X,Y\in\Gamma(TM)$, with $(du)^{*}$ denoting the
contravariant form of du with respect to $g$,
$(du)^{*}\in\Gamma(TM)$ defined by $g(X,(du)^{*})=d_{X}u$, for all
$X\in\Gamma(TM)$. The conformal variance of the Levi-Civita
connection will, however, as we shall see, constitute, no limitation
to the development of our theory of conformal submanifold geometry.
\end{rem}

As an oriented conformal surface, $M$ is canonically provided with a
complex structure and the corresponding $(1,0)$- and
$(0,1)$-decomposition. Unless indicated otherwise, these shall be
the underlying structures on $(M,\mathcal{C})$. In this section we
recall these basic concepts and a few basic facts on Riemann
surfaces. \newline

\textbf{The canonical complex structure.} The fact that $M$ is an
oriented Riemannian surface enables us to refer to $90^{\circ}$
rotations in the tangent spaces to $M$, a notion that is obviously
invariant under conformal changes of the metric. We define then what
we will refer to as the $\textit{canonical}$ $\textit{complex}$
$\textit{structure}$ on $(M,\mathcal{C})$, $J\in\Gamma
(\mathrm{End}(TM))$, by $90^{\circ}$ rotation in the positive
direction. This terminology is not casual: $J$ is an almost-complex
structure, compatible with the conformal structure and, as it is
well-known in the ambit of Riemannian geometry, parallel with
respect to the connection induced in $\mathrm{End}(TM)$ by the
Levi-Civita connection of $(M,g)$, for each $g\in\mathcal{C}$;
making $(M,J)$ into a complex manifold, according to
Newlander-Nirenberg Theorem.\newline

\textbf{The (1,0)- and (0,1)-decomposition.} The almost-complex
structure $J$ gives rise to a decomposition
\begin{equation}\label{eq:TMCdecomp}
TM^{\C} =T^{1,0}M\oplus T^{0,1}M,
\end{equation}
for $T^{1,0}M$ and $T^{0,1}M$ the eigenspaces associated to $i$ and
$-i$, respectively, of the complex linear extension of $J$
 to $TM^{\C}$, $$J=I\left\{
\begin{array}{ll} i & \mbox{$\mathrm{on}\,T^{1,0}M$}\\ -i &
\mbox{$\mathrm{on}\,T^{0,1}M$}\end{array}\right.;$$ that is,
$$T^{1,0}M=\langle X-iJX\rangle,\,\,\,\,T^{0,1}M=\langle
X+iJX\rangle,$$fixing $X\in \Gamma(TM)$ locally never-zero.  We
refer to a section of $T^{1,0}M$ as a \textit{$(1,0)$-vector field}
and to a section of $T^{0,1}M$ as a \textit{$(0,1)$-vector field}.
The bundles $T^{1,0}M$ and $T^{0,1}M$ are maximal (with respect to
the inclusion relation) isotropic subbundles of $TM^{\C}$. It is
immediate and useful to observe that, in view of the parallelness of
$J$,
\begin{equation}\label{eq:TijMpreservedbyconnection}
\nabla\,\Gamma(T^{1,0}M)\subset\Omega^{1}(T^{1,0}M)\,\,\,,\,\,\,\nabla\,\Gamma(T^{0,1}M)\subset\Omega^{1}(T^{0,1}M),
\end{equation}
for $\nabla$ the Levi-Civita connection on $(M,g)$, fixing
$g\in\mathcal{C}$.

\textit{N.B.:} The decomposition \eqref{eq:TMCdecomp} is a
particular case of a canonical decomposition, which is worth
recalling: given $V$ a non-degenerate complex $2$-plane, $V$ admits
a unique decomposition $V=V_{+}\oplus V_{-}$ into the direct sum of
two null complex lines. Namely, fixing an orthonormal basis
$v_{1},v_{2}$ of $V$, the set of lines $\{V_{+}, V_{-}\}$ coincides
with the set of lines $\{\langle v_{1}+iv_{2}\rangle,\langle
v_{1}-iv_{2}\rangle\}$, $$V=\langle v_{1}+iv_{2}\rangle\oplus\langle
v_{1}-iv_{2}\rangle.$$ The non-degeneracy of $V$ establishes
$V_{+}\cap (V_{-})^{\perp}=\{0\}$. In the particular case $V$ is
real, we can choose the vectors $v_{1}$ and $v_{2}$ to be real, in
which case the spaces $V_{+}$ and $V_{-}$ are complex conjugate of
each other.\newline

\textbf{Conformality.} Given $X\in\Gamma(TM)$ never-zero, $(X, JX)$
constitutes a direct orthogonal frame of $TM$ with $(X,X)=(JX,JX)$,
for all $(\,,\,)\in\mathcal{C}$. Hence a positive definite metric
$(\,,\,)$ on $M$ is in the conformal class $\mathcal{C}$ if and only
if, fixing $X\in\Gamma( TM)$ locally never-zero,
$$(X,X)=(JX,JX),\,\,\,\,(X,JX)=0;$$
or, equivalently, $$(Z,Z)=0$$ (respectively, $(\bar{Z},\bar{Z})=0$),
fixing $Z\in\Gamma(T^{1,0}M)$
(respectively,$\bar{Z}\in\Gamma(T^{0,1}M)$), locally never-zero. The
conformality of a metric on $M$ is equivalent to the isotropy of
$T^{1,0}M$ or, equivalently, of $T^{0,1}M$, with respect to that
metric. In particular, given an immersion $\phi$ of $M$ into a
Riemannian manifold and $i\neq j\in\{0,1\}$, the conformality of
$\phi$ [with respect to any metric in $\mathcal{C}$],
$g_{\phi}\in\mathcal{C}$, can be characterized by
$$(d^{i,j}\phi,d^{i,j}\phi)=0,$$ fixing $i\neq j\in \{0,1\}$, where
we use $(d^{i,j}\phi,d^{i,j}\phi)$ to denote
$g_{\phi}\vert_{T^{i,j}M\times T^{i,j}M}$. In the paragraph below, a
characterization of the conformality of $\phi$ in the light of a
holomorphic chart of $M$ is presented.
\newline

\textbf{Holomorphicity.} Let $z=x+iy:M\rightarrow \C$ be a chart of
$M$. Throughout this text, let $g_{z}$ denote the (positive
definite) metric induced in $M$ by $z:M\rightarrow
(\C,\mathrm{Re}(\,,\,)_{\C})$,
$$g_{z}=dx^{2}+dy^{2}.$$
We set, as usual,
$$\delta _{x}:=dz^{-1}(1),\,\,\,\,\,\delta _{y}:=dz^{-1}(i),$$ defining an orthonormal frame $\delta _{x},\delta _{y}$ of $TM$
with respect to $g_{z}$ ($(dx,dy)$ is the frame of $(TM)^{*}$ dual
to $(\delta_{x},\delta_{y})$). The coordinates $x,y$ are said to be
\textit{conformal coordinates} if the metric $g_{z}$ is in the
conformal class $\mathcal{C}$, or, equivalently,
$$J\delta_{x}=\pm\delta_{y},$$ the chart
$z:(M,J)\rightarrow\C$ is either holomorphic or anti-holomorphic,
respectively, depending on the orientation on $M$.

Suppose $z$ is a holomorphic chart of $(M,J)$. In that case,
$J\delta_{x}=\delta_{y}$, and, therefore,
$$\delta _{z}:=\frac{1}{2}\,(\delta _{x}-i\delta _{y}),\,\,\,\,\,\delta _{\bar {z}}:=\frac{1}{2}\,(\delta _{x}+i\delta _{y})$$defines
a never-zero $(1,0)$-vector field and, respectively, a never-zero
$(0,1)$-vector field. Hence
$$T^{1,0}M=\langle \delta _{z}\rangle,\,\,\,\,\,T^{0,1}M=\langle
\delta _{\bar {z}}\rangle,$$ and, therefore,
$$TM^{\C}=\langle\delta _{z},\delta _{\bar {z}}\rangle .$$
It is, perhaps, worth remarking that, as $(1,0)$- and $(0,1)$-vector
fields, respectively, $\delta_{z}$ and $\delta_{\bar{z}}$ verify
\begin{equation}\label{eq:Jdeltazbarz}
J\delta_{z}=i\delta_{z},\,\,\,\,\,\,J\delta_{\bar{z}}=-i\delta_{\bar{z}}.
\end{equation}
The fact that $\delta _{z},\delta _{\bar {z}}$ constitutes a frame
of $TM^{\C}$ with
\begin{equation}\label{eq:LiBrdelta0}
[\delta _{z},\delta _{\bar {z}}]=0
\end{equation}
will be useful on many occasions.

Given a mapping $\eta$ of $M$, we shall, alternatively, write $\eta
_{x}$, $\eta _{y}$, $\eta _{z}$ and $\eta _{\bar {z}}$ for,
respectively, $d\eta (\delta _{x})$, $d\eta (\delta _{y})$, $d\eta
(\delta _{z})$ and $d\eta (\delta _{\bar{z}})$. In view of the
holomorphicity of $z$, the conformality of a mapping $\eta$ of $M$
into a Riemannian manifold can be characterized by
$$(\eta_{z},\eta_{z})=0,$$ or, equivalently,
$$(\eta_{\bar{z}},\eta_{\bar{z}})=0;$$ whereas the holomorphicity of
a map $\eta\in C^{\infty}(M,\C)$ can be characterized by
$$\eta_{\bar{z}}=0.$$

We complete this section with a basic remark on change of
holomorphic coordinates. Let $\omega$ be another holomorphic chart
of $(M,J)$. The fact that $(dz,\,d\bar{z})$ is the frame of
$((TM)^{\C})^{*}$ dual to $(\delta_{z},\delta_{\bar{z}})$ makes
clear that
$$d\omega =\omega _{z}\,dz$$
and, in particular, that the metric induced in $M$ by $\omega$
relates to the one induced by $z$ by
\begin{equation}\label{eq:gomegagz}
g_{\omega}=\mid\omega_{z}\mid^{2}g_{z}.
\end{equation}
$\newline$

\textbf{The Hodge $*$-operator.} Let $P$ be a vector bundle over
$M$, provided with a metric, and $p\in\{0,1,2\}$.  Given
$\mu\in\Omega^{p}(P)$ and $\eta \in\Omega ^{2-p}(P)$, we define a
$2$-form $(\mu \wedge\eta)\in \Omega ^{2}(\underline {\mathbb
{\C}})$ by, in the case $p=1$,
$$ (\mu\wedge \eta)_{(X,Y)}:=(\mu _{X},\eta _{Y})-(\mu
_{Y},\eta _{X})$$and, in the case $p=2$, $$(\mu\wedge \eta)
_{(X,Y)}:=(\mu _{(X,Y)},\eta),$$ for $X,Y\in\Gamma(TM)$; and, in the
case $p=0$, $(\mu\wedge \eta):=(\eta\wedge\mu)$. Note that, in the
case $p=1$, $(\mu\wedge\eta)=-(\eta\wedge\mu)$. Fix a metric
$g\in\mathcal{C}$ and provide $L(TM,P)$ with the metric canonically
induced by $(TM,g)$ and $P$. Given $k\in\{0,1,2\}$, define a metric
on $A^{k}(TM,P)$ by setting
$$(\xi_{1}\wedge...\wedge\xi_{k},\gamma_{1}\wedge...\wedge\gamma_{k}):=\mathrm{det}((\xi_{i},\gamma_{j})),$$
for $\xi_{i},\gamma_{i}\in\Gamma(L(TM,P))$.  Recall the Hodge
$*$-operator on $p$-forms over $M$, when provided with the metric
$g$, transforming a form $\mu\in\Omega^{p}(P)$ into the form
$*\mu\in\Omega^{2-p}(P)$ defined by the relation
$$(\mu\wedge\eta)=(*\mu,\eta)\,dA,$$for all
$\eta\in\Omega^{2-p}(P)$, with $dA$ denoting the area element of
$(M,g)$. Recall that
$$**\mu=(-1)^{p(2-p)+s}\mu,$$
for $s$ the number of negative eigenvalues of the metric tensor of
$P$. It is useful to observe that, in the particular case $P$ is the
trivial bundle
$\mathrm{End}(\underline{\R}^{n+1,1}):=M\times\mathrm{End}(\R^{n+1,1})$,
$s=2(n+1)$.
\begin{Lemma}\label{hodgeconfinv}
The Hodge $*$-operator on $1$-forms over a surface is invariant
under conformal changes of the metric on the surface.
\end{Lemma}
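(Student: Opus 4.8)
The plan is to verify directly that the Hodge $\ast$-operator on $1$-forms is unchanged when we replace the metric $g$ by a conformally equivalent metric $g'=e^{2u}g$, using the characterization of $\ast$ provided above through the relation $(\mu\wedge\eta)=(\ast\mu,\eta)\,dA$. The essential point is to track how each of the three ingredients in this defining relation scales: the pointwise pairing $(\,,\,)$ on $P$-valued $1$-forms (which involves the induced metric on $L(TM,P)$ and hence the metric on $TM$), the area element $dA$, and the wedge product $\mu\wedge\eta$ on the left-hand side (which, for $p=1$, does \emph{not} involve the metric on $TM$ at all, only the fixed metric on $P$).

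First I would fix a local oriented orthonormal frame $\delta_x,\delta_y$ of $TM$ for $g$ (as set up above via a holomorphic chart), so that $\delta_x'=e^{-u}\delta_x$, $\delta_y'=e^{-u}\delta_y$ is the corresponding orthonormal frame for $g'=e^{2u}g$. The area element then scales as $dA'=e^{2u}\,dA$. Next I would compute how the induced metric on $P$-valued $1$-forms changes: a $1$-form $\mu\in\Omega^1(P)$ is a section of $L(TM,P)$, and since the metric on $L(TM,P)$ is the one canonically induced by $(TM,g)$ and $P$, raising the $TM$-index introduces a factor of $e^{-2u}$ under the rescaling. Concretely, writing the pairing $(\mu,\eta)$ in the orthonormal frame as $(\mu_{\delta_x},\eta_{\delta_x})+(\mu_{\delta_y},\eta_{\delta_y})$ for $g$, the $g'$-version uses $\delta_x',\delta_y'$ and hence equals $e^{-2u}$ times the $g$-version. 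So the pairing on $1$-forms scales by $e^{-2u}$.

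The crux is then to combine these. Since for $p=1$ the left-hand side $(\mu\wedge\eta)$ is defined purely through the metric on $P$ (via $(\mu_X,\eta_Y)-(\mu_Y,\eta_X)$ evaluated on tangent vectors) and is a genuine $2$-form on $M$ independent of the choice of $g\in\mathcal{C}$, it is a conformal invariant. Writing the defining relation for both metrics,
\begin{equation*}
(\mu\wedge\eta)=(\ast\mu,\eta)_g\,dA=(\ast'\mu,\eta)_{g'}\,dA',
\end{equation*}
and substituting $(\,,\,)_{g'}=e^{-2u}(\,,\,)_g$ together with $dA'=e^{2u}\,dA$, the two exponential factors cancel, yielding $(\ast\mu,\eta)_g\,dA=(\ast'\mu,\eta)_g\,dA$ for all $\eta$. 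By non-degeneracy of the metric on $1$-forms and of $dA$, this forces $\ast'\mu=\ast\mu$.

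The step I expect to require the most care is the scaling of the induced metric on $L(TM,P)$-valued forms: one must be careful that it is only the $TM$-slot that rescales (giving $e^{-2u}$), while the $P$-slot metric is held fixed, and that the $2$-form $(\mu\wedge\eta)$ on the left genuinely carries no hidden dependence on $g$. Once the bookkeeping of the factors $e^{-2u}$ (from the pairing) and $e^{2u}$ (from $dA$) is confirmed to cancel exactly, the conclusion is immediate. It is worth noting that this cancellation is special to $1$-forms on a surface, which is precisely why the lemma is stated for that case.
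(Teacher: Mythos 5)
Your proposal is correct and follows essentially the same route as the paper: both arguments rest on the metric-independence of the $2$-form $(\mu\wedge\eta)$, the scaling $dA'=e^{2u}dA$ of the area element (the paper's Lemma \ref{volconfchanges}), and the inverse scaling $(\,,\,)_{g'}=e^{-2u}(\,,\,)_{g}$ of the Hilbert--Schmidt pairing on $1$-forms coming from rescaled orthonormal frames, with the two factors cancelling. The only cosmetic difference is that the paper phrases the conclusion as $*_{2}\mu$ satisfying the defining relation of $*_{1}$, whereas you equate the two defining relations and invoke non-degeneracy explicitly; these are the same argument.
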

Before proceeding to the proof of the lemma, it is opportune and
useful to observe how the volume element changes under conformal
changes of the metric.
\begin{Lemma}\label{volconfchanges}
Let $N$ be an $n$-dimensional oriented manifold and $g$ and
$g':=e^{u}g$, for some $u\in C^{\infty}(N,\R)$, be conformally
equivalent positive definite metrics in $N$. Then:
$$\mathrm{d}\mathrm{vol}_{(N,g')}=(e^{u})^{\frac{n}{2}}\,\mathrm{d}\mathrm{vol}_{(N,g)},$$
\end{Lemma}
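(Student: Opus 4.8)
The plan is to reduce the claim to a pointwise statement about orthonormal frames, where the scaling factor can be read off directly. The Riemannian volume element of an oriented manifold is the unique top-degree form that evaluates to $1$ on every positively oriented orthonormal frame, so it suffices to manufacture a $g'$-orthonormal frame out of a $g$-orthonormal one and keep track of the induced rescaling of the dual coframe.

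First I would work on a coordinate patch and fix a positively oriented $g$-orthonormal local frame $E_{1},\dots,E_{n}$ of $TN$, with dual coframe $\theta^{1},\dots,\theta^{n}$, so that $\mathrm{d}\mathrm{vol}_{(N,g)}=\theta^{1}\wedge\dots\wedge\theta^{n}$ there. Setting $E_{i}':=e^{-u/2}E_{i}$, the relation $g'=e^{u}g$ gives $g'(E_{i}',E_{j}')=e^{u}e^{-u}g(E_{i},E_{j})=\delta_{ij}$, so $(E_{i}')$ is $g'$-orthonormal; it is moreover positively oriented, since the change of frame $(E_{i})\mapsto(E_{i}')$ is scalar multiplication by $e^{-u/2}>0$. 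Its dual coframe is $\theta'^{\,i}=e^{u/2}\theta^{i}$, whence $\mathrm{d}\mathrm{vol}_{(N,g')}=\theta'^{\,1}\wedge\dots\wedge\theta'^{\,n}=e^{nu/2}\,\theta^{1}\wedge\dots\wedge\theta^{n}=(e^{u})^{n/2}\,\mathrm{d}\mathrm{vol}_{(N,g)}$ on the patch. As this is an identity of globally defined $n$-forms that holds on each coordinate patch, it holds on all of $N$.

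There is no genuine obstacle here; the only point requiring a little care is the bookkeeping that produces the exponent $n/2$. It arises because each of the $n$ coframe vectors acquires a single factor $e^{u/2}$ --- equivalently, in local coordinates one has $\det(g'_{ij})=e^{nu}\det(g_{ij})$ together with $\mathrm{d}\mathrm{vol}=\sqrt{\det(g_{ij})}\,dx^{1}\wedge\dots\wedge dx^{n}$, and the square root turns $e^{nu}$ into $(e^{u})^{n/2}$. This is exactly the conformal weight that will be needed in the proof of Lemma \ref{hodgeconfinv} to isolate the dimension-dependent part of the rescaling and thereby exhibit the invariance of the Hodge $*$-operator on $1$-forms over a surface, where $2-p=1$ and the relevant powers cancel.
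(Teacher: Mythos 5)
Your proof is correct and follows essentially the same route as the paper: both rescale a direct $g$-orthonormal frame by $e^{-u/2}$ to obtain a direct $g'$-orthonormal frame, observe that the dual coframe scales by $e^{u/2}$, and read off the factor $(e^{u})^{n/2}$ from the wedge of the $n$ rescaled coframe elements. The extra remarks on patching and on the coordinate formula $\sqrt{\det(g'_{ij})}=e^{nu/2}\sqrt{\det(g_{ij})}$ are fine but not needed beyond what the paper's argument already contains.
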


\begin{proof}
Let $(X_{i})_{i}$ be a direct orthonormal frame of $(TN,g)$ and
$(\xi_{i})_{i}$ be the frame of $(TN)^{*}$ dual to $(X_{i})_{i}$.
Then
$$\mathrm{d}\mathrm{vol}_{(N,g)}=
\xi_{1}\wedge.....\wedge\xi_{n}.$$ On the other hand, clearly,
$(\frac{1}{\sqrt{e^{u}}}\,X_{i})_{i}$ is a direct orthonormal frame
of $(TN,g')$ with dual frame $(\sqrt{e^{u}}\,\xi_{i})_{i}$, and,
therefore,
$\mathrm{d}\mathrm{vol}_{(N,g')}=(\sqrt{e^{u}})^{n}\,\xi_{1}\wedge.....\wedge\xi_{n}$.
\end{proof}
Now we proceed to the proof of Lemma \ref{hodgeconfinv}.
\begin{proof}
Let $g_{1}$ and $g_{2}:=e^{u}g_{1}$, for some $u\in
C^{\infty}(M,\R)$, be conformally equivalent positive definite
metrics in $M$. For $i=1,2$, let $*_{i}$ and $dA_{i}$ denote,
respectively, the Hodge $*$-operator and the area element of
$(M,g_{i})$, and $(\,,)_{i}$ denote the Hilbert-Schmidt metric in
$L((TM,g_{i}),P)$.
 Fix $\mu,\eta\in\Omega^{1}(P)$. The proof will consist of showing that
\begin{equation}\label{eq:hodge21}
(\mu\wedge\eta)=(*_{2}\,\mu,\eta)_{1}\,dA_{1}.
\end{equation}

By definition of $*_{2}$, and according to Lemma
\ref{volconfchanges},
$(\mu\wedge\eta)=(*_{2}\,\mu,\eta)_{2}\,e^{u}dA_{1}$. On the other
hand, given an orthonormal frame $X_{1},X_{2}$ of $(TM,g_{1})$,
$\frac{1}{\sqrt{e^{u}}}\,X_{1},\frac{1}{\sqrt{e^{u}}}\,X_{2}$ is an
orthonormal frame of $(TM,g_{2})$ and, therefore,
$(*_{2}\,\mu,\eta)_{2}=\frac{1}{e^{u}}\,(*_{2}\,\mu,\eta)_{1}$,
completing the proof.
\end{proof}

The Hodge $*$-operator on $1$-forms over $(M,\mathcal{C})$ is
closely related to the canonical complex structure in
$(M,\mathcal{C})$ in a well-known result in the ambit of Riemannian
geometry: $*$ acts on forms $\omega\in\Omega ^{1}(P)$ by
\begin{equation}\label{eq:*vsJ}
*\omega =-\omega J.
\end{equation}

\textbf{(1,0)-,(0,1)- and (1,1)-forms.} The decomposition
\eqref{eq:TMCdecomp} provides the (standard) decomposition of a
$1$-form into its $(1,0)$ and $(0,1)$ parts, as well as that of a
$2$-form into its $(2,0)$, $(1,1)$ and $(0,2)$ parts. Recall that
there are no non-zero $(2,0)$- or $(0,2)$-forms on a surface.

Let $P$ be a vector bundle over $M$, provided with a metric, and
$\xi$ be a $1$-form with values in $P$. Since $\xi^{1,0}$ and
$\xi^{0,1}$ are the complexifications of, respectively, a complex
linear and a complex anti-linear sections of $\mathrm{Hom}(TM,P)$,
we have, by equation \eqref{eq:*vsJ},
$$*\,\xi ^{1,0}=-i\,\xi ^{1,0},\,\,\,\,\,*\,\xi ^{0,1}=i\,\xi ^{0,1},$$ so that $*\,\xi =-i(\xi ^{1,0}-\xi
^{0,1})$ and, therefore,
$$\xi ^{1,0}=\frac{1}{2}\,(\xi+i*\,\xi),\,\,\,\,\,\xi
^{0,1}=\frac{1}{2}\,(\xi-i*\,\xi),$$having in consideration that
$\overline{*\,\xi}=*\,\overline{\xi}$. Note that $\overline{\xi
^{1,0}}=\overline{\xi}\, ^{0,1}$. In particular, $\xi$ is real
($\overline{\xi}=\xi$) if and only if
$\xi^{0,1}=\overline{\xi^{1,0}}$.

\subsection{Conformal immersions of surfaces in the projectivized
light-cone}\label{subsec:confimm}

Having modeled the conformal $n$-sphere on the projectivized
light-cone of $\R^{n+1,1}$, and, in this way, all $n$-dimensional
space-forms on submanifolds of $\mathbb{P}(\mathcal{L})$, we
approach a surface conformally immersed in a space-form as a null
line bundle $\Lambda$ defining an immersion of an oriented surface,
which we provide with the conformal structure induced by $\Lambda$,
into the projectivized light-cone. In this work, we restrict to
surfaces in $S^{n}$ which are not contained in any subsphere of
$S^{n}$. Such a surface defines a surface in any given space-form,
by means of a lift, whose study is M\"{o}bius equivalent to the
study of the surface and which will be often considered. Namely,
given $v_{\infty}\in\R^{n+1,1}$ non-zero, we have, locally,
$(\sigma,v_{\infty})\neq 0$, and $\Lambda$ defines then a local
immersion $\sigma_{\infty}:=(\pi\vert_{S_{v_{\infty}}})^{-1}\circ
\Lambda=\frac{-1}{(\sigma,v_{\infty})}\,\sigma:M\rightarrow
S_{v_{\infty}}$, of $M$ into the space-form
$S_{v_{\infty}}$.\newline

Let $\Lambda$ be a null line subbundle of
$\underline{\mathbb{R}}^{n+1,1}$. In particular, $\Lambda$ defines a
smooth map $\Lambda:M\rightarrow \mathbb{P} (\mathcal {L})$, by
assigning to each $p$ in $M$ the corresponding fibre, $\Lambda_{p}$.
If $\Lambda$ is an immersion, then $\Lambda $ induces naturally,
from the conformal structure on $\mathbb{P}(\mathcal{L})$, a
conformal structure in $M$, which we denote by
$\mathcal{C}_{\Lambda}$, making
$$\Lambda:(M,\mathcal{C}_{\Lambda})\rightarrow\mathbb{P} (\mathcal
{L})$$ into a conformal immersion of $M$ into the projectivized
light-cone.

\begin{prop}
If $\Lambda :M\rightarrow \mathbb{P} (\mathcal {L})$ is an
immersion, then, given a never-zero section $\sigma$ of $\Lambda$,
the metric $g_{\sigma}$ induced in $M$ by $\sigma:M\rightarrow
\mathbb{R}^{n+1,1}$ is in the conformal class $\mathcal
{C}_{\Lambda}$ of metrics,
$$g_{\sigma}\in\mathcal {C}_{\Lambda}.$$
\end{prop}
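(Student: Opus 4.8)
The plan is to reduce the statement to the well-definedness computation already performed for the conformal structure $\mathcal{C}_{\mathbb{P}(\mathcal{L})}$ on the projectivized light-cone. By construction $\mathcal{C}_\Lambda$ is the conformal structure pulled back by $\Lambda$ from $\mathcal{C}_{\mathbb{P}(\mathcal{L})}$, so it is represented by $\Lambda^* g$ for any $g\in\mathcal{C}_{\mathbb{P}(\mathcal{L})}$; it therefore suffices to produce one such representative and verify that $g_\sigma$ is a positive multiple of it.

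Concretely, I would fix locally a never-zero section $\tau$ of the tautological bundle over $\mathbb{P}(\mathcal{L})$, so that $g_\tau$ represents $\mathcal{C}_{\mathbb{P}(\mathcal{L})}$, and pull it back along $\Lambda$. The chain rule gives $d_X(\tau\circ\Lambda)=d_{d\Lambda(X)}\tau$ for $X\in\Gamma(TM)$, whence $\Lambda^*g_\tau=g_{\tau\circ\Lambda}$, the metric induced in $M$ by the never-zero section $\tau\circ\Lambda$ of $\Lambda$. Since $\Lambda$ is an immersion and $g_\tau$ is positive definite, $g_{\tau\circ\Lambda}=\Lambda^*g_\tau$ is a positive definite representative of $\mathcal{C}_\Lambda$.

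Finally I would compare $\sigma$ with $\tau\circ\Lambda$. As both are never-zero sections of the line bundle $\Lambda$, we have $\sigma=f\,(\tau\circ\Lambda)$ for some never-zero $f\in C^\infty(M,\R)$. Expanding $g_\sigma(X,Y)=(d_X\sigma,d_Y\sigma)$ by the Leibniz rule produces, besides the term $f^2\,(d_X(\tau\circ\Lambda),d_Y(\tau\circ\Lambda))$, terms carrying a factor $(\tau\circ\Lambda,\tau\circ\Lambda)$ or $(\tau\circ\Lambda,d(\tau\circ\Lambda))=\tfrac12\,d(\tau\circ\Lambda,\tau\circ\Lambda)$. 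Both vanish because $\Lambda$ is \emph{null}, exactly as in the cancellation establishing the well-definedness of $\mathcal{C}_{\mathbb{P}(\mathcal{L})}$; hence $g_\sigma=f^2\,\Lambda^*g_\tau$. Writing $f^2=e^{2u}$ with $u=\log|f|$ exhibits $g_\sigma$ as positive definite and conformally equivalent to the representative $\Lambda^*g_\tau$ of $\mathcal{C}_\Lambda$, so $g_\sigma\in\mathcal{C}_\Lambda$.

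I do not anticipate a genuine obstacle: the computation is routine and essentially identical to the one already in the text. The one point that must be handled with care is that the cancellation of the $df$-terms rests entirely on the nullity of $\Lambda$ (equivalently on $(\sigma,\sigma)\equiv 0$), so this hypothesis, rather than any feature of a generic line subbundle, is what must be invoked.
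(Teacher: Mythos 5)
Your proof is correct and takes essentially the same route as the paper's: both arguments pull back a metric induced by a never-zero (tautological) section of $\mathbb{P}(\mathcal{L})$ along $\Lambda$ to obtain a representative of $\mathcal{C}_{\Lambda}$, and then compare it with $g_{\sigma}$ via the Leibniz rule, with the cross terms killed exactly by $(\sigma,\sigma)=0=(\sigma,d\sigma)$. The only cosmetic difference is that the paper works with the concrete section $(\pi_{\mathcal{L}}\vert_{S_{t_{0}}})^{-1}$ attached to a unit time-like vector $t_{0}$, so the conformal factor appears explicitly as $(\sigma,t_{0})^{2}$, whereas you use an arbitrary local tautological section $\tau$ and the abstract relation $\sigma=f\,(\tau\circ\Lambda)$.
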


\begin{proof}
For $t_{0}$ unit time-like vector, $\langle t_{0}\rangle ^{\perp}$
is an Euclidean space and, therefore, $(\sigma,t_{0})$ is
never-zero. Furthermore, $(\pi_{\mathcal{L}}\vert
_{S_{t_{0}}})^{-1}\Lambda =-(\sigma,t_{0})^{-1}\,\sigma,$ and,
therefore,
$$d((\pi_{\mathcal{L}}\vert _{S_{t_{0}}})^{-1}\,\Lambda)=\frac{(d\sigma,t_{0})}{(\sigma
,t_{0})^{2}}\,\sigma -\frac{1}{(\sigma ,t_{0})}\,d\sigma.$$ As
$(\sigma,\sigma)=0=(\sigma,d\sigma)$, we conclude that, for
$X,Y\in\Gamma (TM)$,
$$( d_{X}((\pi_{\mathcal{L}}\vert _{S_{t_{0}}})^{-1}\,\Lambda) ,
d_{Y}((\pi_{\mathcal{L}}\vert _{S_{t_{0}}})^{-1}\,\Lambda))
=\frac{1}{(\sigma ,t_{0})^{2}}\,(d_{X}\sigma ,d_{Y}\sigma)$$or,
equivalently,
$$g_{\sigma}(X,Y)=(\sigma ,t_{0})^{2}( d_{X}\Lambda, d_{Y}\Lambda )
_{t_{0}}=(\sigma ,t_{0})^{2}g_{\Lambda}^{t_{0}}( X,Y),$$ for
$(\,,\,) _{t_{0}}$ the (positive definite) metric induced in
$\mathbb{P} (\mathcal {L})$ by $(\pi_{\mathcal{L}}\vert
_{S_{t_{0}}})^{-1}$ and $g_{\Lambda}^{t_{0}}$ the (positive
definite) metric induced in $M$ by $\Lambda$ from $(\,,\,)
_{t_{0}}$. We conclude that $g_{\sigma}$ is a positive definite
metric conformally equivalent to $g_{\Lambda}^{t_{0}}\in\mathcal
{C}_{\Lambda}$.
\end{proof}

Given $\sigma$ a never-zero section of $\Lambda$,
$\Lambda=\pi_{\mathcal{L}}\,\sigma$ and, therefore, $d\Lambda
=d(\pi_{\mathcal{L}}) _{\sigma}\, d\sigma $. Hence, if $\Lambda$ is
an immersion, then so is $\sigma$, and, conversely, if $\sigma$ is
an immersion, then, in view of \eqref{eq:kerdpi}, $\Lambda$ is an
immersion if and only if $\Lambda$ is in direct sum with
$d\sigma(TM)$. Set
$$\Lambda ^{(1)}:= \langle\sigma ,d\sigma (e_{1}),d\sigma (e_{2})\rangle =\Lambda +d\sigma(TM) ,$$
independently of the choices of a never-zero section $\sigma$ of
$\Lambda$ and of a frame $e_{1},e_{2}$ of $TM$. This is, indeed,
well-defined: given a never-zero $\lambda\in\Gamma
(\underline{\R})$,
$$d_{X}(\lambda\sigma )=\lambda d_{X}\sigma +(d_{X}\lambda )\sigma
=\lambda d_{X}\sigma \,\, \mathrm{mod}\langle\sigma\rangle.$$

\begin{prop}
$\Lambda :M\rightarrow \mathbb{P} (\mathcal {L})$ is an immersion if
and only if $\mathrm{rank}\,\Lambda ^{(1)}=3$.
\end{prop}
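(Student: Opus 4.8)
The plan is to prove the equivalence by unwinding the definitions established just above the statement. Recall that $\Lambda^{(1)} = \Lambda + d\sigma(TM)$, where $\sigma$ is any never-zero section of $\Lambda$, and that $\Lambda$ is a null \emph{line} bundle, so $\mathrm{rank}\,\Lambda = 1$. Since $\Lambda^{(1)} = \langle \sigma, d\sigma(e_1), d\sigma(e_2)\rangle$ for a frame $e_1, e_2$ of $TM$, the rank of $\Lambda^{(1)}$ is at most $3$, and it equals $3$ precisely when the three vectors $\sigma, \sigma_1 := d\sigma(e_1), \sigma_2 := d\sigma(e_2)$ are pointwise linearly independent.

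First I would recall the characterization of immersivity already assembled in the excerpt: from $\Lambda = \pi_{\mathcal{L}}\,\sigma$ we have $d\Lambda = d(\pi_{\mathcal{L}})_\sigma\, d\sigma$, and by \eqref{eq:kerdpi} the kernel of $d(\pi_{\mathcal{L}})_\sigma$ is exactly $\langle\sigma\rangle$. The text already concludes that, when $\sigma$ is itself an immersion, $\Lambda$ is an immersion if and only if $\Lambda$ is in direct sum with $d\sigma(TM)$. So the logical backbone is there; the task is to translate ``$\Lambda$ immerses'' into the rank condition. I would argue as follows. If $\Lambda$ is an immersion, then $d\Lambda_p$ is injective on $T_pM$; since $d\Lambda = d(\pi_{\mathcal{L}})_\sigma \, d\sigma$ and $\ker d(\pi_{\mathcal{L}})_\sigma = \langle \sigma\rangle$, injectivity of $d\Lambda_p$ forces $d\sigma(e_1), d\sigma(e_2)$ to be linearly independent modulo $\langle\sigma\rangle$, hence $\sigma, \sigma_1, \sigma_2$ are independent and $\mathrm{rank}\,\Lambda^{(1)} = 3$. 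Conversely, if $\mathrm{rank}\,\Lambda^{(1)} = 3$, then $d\sigma(e_1), d\sigma(e_2)$ are independent modulo $\langle\sigma\rangle$, so $d\Lambda(e_1) = d(\pi_{\mathcal{L}})_\sigma\,\sigma_1$ and $d\Lambda(e_2) = d(\pi_{\mathcal{L}})_\sigma\,\sigma_2$ are independent in $T_{\Lambda_p}\mathbb{P}(\mathcal{L})$ (as their preimages differ by something outside $\ker d(\pi_{\mathcal{L}})_\sigma = \langle\sigma\rangle$), giving injectivity of $d\Lambda_p$, i.e. $\Lambda$ is an immersion.

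The one point requiring care, and the likely main obstacle, is the well-definedness of the rank condition independently of the choice of section $\sigma$ and frame $e_1, e_2$: this is already addressed in the preceding paragraph of the excerpt via the computation $d_X(\lambda\sigma) = \lambda\, d_X\sigma \ \mathrm{mod}\,\langle\sigma\rangle$, so I would simply invoke it. The only genuinely substantive step is the clean identification of $\mathrm{rank}\,\Lambda^{(1)} = 3$ with the linear independence of $\{\sigma_1, \sigma_2\}$ modulo $\langle\sigma\rangle$, which is immediate once one notes $\mathrm{rank}\,\Lambda^{(1)} = 1 + \dim\big(d\sigma(TM)/(d\sigma(TM)\cap\langle\sigma\rangle)\big)$ and that $\sigma$ spans the rank-one bundle $\Lambda$. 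I would present the whole argument as a short chain of equivalences, leaning throughout on \eqref{eq:kerdpi} and the relation $d\Lambda = d(\pi_{\mathcal{L}})_\sigma\, d\sigma$, so that no new computation beyond the isomorphism \eqref{eq:Tangenttoprojectlightcne} is needed.
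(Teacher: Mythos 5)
Your proof is correct, and its backbone --- the factorization $d\Lambda = d(\pi_{\mathcal{L}})_{\sigma}\,d\sigma$ together with $\mathrm{Ker}\,d\pi_{\sigma}=\langle\sigma\rangle$ --- is the same one the paper leans on. Where you genuinely diverge is in the direction ``$\Lambda$ immersion $\Rightarrow\mathrm{rank}\,\Lambda^{(1)}=3$''. You argue purely linearly: injectivity of $d\Lambda_{p}$, read through the kernel identification, forces $d\sigma(e_{1}),d\sigma(e_{2})$ to be linearly independent modulo $\langle\sigma\rangle$, whence the rank is $3$. The paper argues metrically instead: if $\Lambda$ immerses then so does $\sigma$, and by the preceding proposition $g_{\sigma}\in\mathcal{C}_{\Lambda}$ is positive definite, so $d\sigma(TM)$ is a bundle of Euclidean $2$-planes; since $\sigma$ is null and never-zero, the null line $\Lambda$ cannot lie in a Euclidean plane, so the sum $\Lambda + d\sigma(TM)$ is direct and the rank is $3$. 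Your route is more elementary and self-contained --- it needs neither the previous proposition nor any metric consideration --- while the paper's route is shorter given that proposition and makes explicit the geometric reason the sum is automatically direct: the image of $d\sigma$ is spacelike whereas $\Lambda$ is null. In the other direction (rank $3$ $\Rightarrow$ immersion) the two arguments coincide in substance: rank $3$ says $d\sigma(e_{1}),d\sigma(e_{2})$ are independent modulo $\langle\sigma\rangle$, which by the kernel identification is exactly injectivity of $d\Lambda$.
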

\begin{proof}
Fix $\sigma$ a never-zero section of $\Lambda$. If
$\mathrm{rank}\,\Lambda ^{(1)}=3$, then $\mathrm{rank}\,d\sigma
(TM)=2$, or, equivalently, $\sigma $ is an immersion, and $d\sigma
(TM)$ is in direct sum with $\Lambda$. Hence $\Lambda$ is an
immersion.

Conversely, if $\Lambda$ is an immersion, then so is $\sigma$, in
which case $d\sigma(TM)$ is a bundle of Euclidean $2$-planes and,
therefore,  $\mathrm{rank}\,\Lambda ^{(1)}=3$.
\end{proof}

In the case $\Lambda :M\rightarrow \mathbb{P} (\mathcal {L})$ is an
immersion, $\Lambda (M)$ consists of a surface [conformally
immersed] in $\mathbb{P}(\mathcal{L})$,\footnote{If $\Lambda
:M\rightarrow \mathbb{P} (\mathcal {L})$ is an immersion, then
$\Lambda$ defines a surface in spherical $n$-space. Since $n\geq 3$,
we can choose $v_{\infty}\in\mathcal{L}$ such that $\Lambda_{p}\neq
\langle v_{\infty}\rangle $, $$\Lambda _{p}\in
\mathbb{P}(\mathcal{L})\backslash\{\langle v_{\infty}\rangle\}\cong
S^{n}\backslash\{x_{0}\},$$for all $p\in M$, showing that $\Lambda$
defines, furthermore, a surface in Euclidean $n$-space. In the case
$M$ is compact, $\Lambda (M)\subset S^{n}$ is a compact
$2$-dimensional manifold and, therefore, $S^{n}\backslash \Lambda
(M)$ is a non-empty open subset. This shows that $\Lambda (M)$
avoids some hypersphere in $S^{n}$, or, equivalently, that we can
choose a space-like vector $v_{\infty}$ for which
$\Lambda_{p}\nsubseteq \langle v_{\infty}\rangle ^{\perp},\forall
p\in M$. For such a $v_{\infty}$, $\Lambda$ defines a surface in
$\mathbb{P} (\mathcal {L})\backslash\mathbb{P} (\mathcal {L}\cap
\langle v_{\infty}\rangle ^{\perp})$ and, locally (on a connected
component of $M$), in hyperbolic $n$-space.} which we refer to,
alternatively, as the surface $\Lambda$. We restrict ourselves to
surfaces $\Lambda$ in $S^{n}$ for which
\begin{equation}\label{eq:notinsmallerspheres}
\Lambda(M)\nsubseteq S^{k},
\end{equation}
for all $k<n$, and, in particular, to surfaces which do not lie in
any $2$-sphere. This ensures, in particular, that, for a general
non-zero $v_{\infty}\in\R^{n+1,1}$, given $\sigma\in\Gamma(\Lambda)$
never-zero, we have, locally, $(\sigma,v_{\infty})\neq 0$. In fact,
if $(\sigma_{p},v_{\infty})=0$ for all $p\in M$, then
$\Lambda(M)\subset\mathbb{P}(\mathcal{L}\cap \langle
v_{\infty}\rangle^{\perp}),$ which, in the case $v_{\infty}$ is
time-light or light-like, is impossible, according to Lemma
\ref{mathcalLcapvinfperp}, and, in the case $v_{\infty}$ is
space-like, contradicts \eqref{eq:notinsmallerspheres} (in the case
$v_{\infty}$ is space-like, $\mathbb{P}(\mathcal{L}\cap \langle
v_{\infty}\rangle^{\perp})$ is an hypersphere in $S^{n}$).

A surface $\Lambda$ defines a local immersion of $M$ into
$\mathbb{P}(\mathcal{L})\backslash\mathbb{P}(\mathcal{L}\cap\langle
v_{\infty}\rangle ^{\perp})$ and then, via the diffeomorphism
$\pi_{\mathcal{L}}\vert_{S_{v_{\infty}}}:S_{v_{\infty}}\rightarrow
\mathbb{P}(\mathcal{L})\backslash\mathbb{P}(\mathcal{L}\cap\langle
v_{\infty}\rangle^{\perp})$, a local immersion
$$\sigma_{\infty}:=(\pi_{\mathcal{L}}\vert_{S_{v_{\infty}}})^{-1}\circ \Lambda=\frac{-1}{(\sigma,v_{\infty})}\,\sigma:M\rightarrow S_{v_{\infty}},$$of $M$ into the space-form $S_{v_{\infty}}$, which we refer to as
\textit{ the surface defined by $\Lambda$ in $S_{v_{\infty}}$}. The
surfaces $\sigma_{\infty}$, with $v_{\infty}\in\R^{n+1,1}$ non-zero,
defined by $\Lambda$, form a family of M\"{o}bius equivalent
surfaces, whose study is M\"{o}bius equivalent to the study of the
surface $\Lambda$.

\chapter{The central sphere congruence}\label{chaptercsc}

\markboth{\tiny{A. C. QUINTINO}}{\tiny{CONSTRAINED WILLMORE
SURFACES}}

Following \cite{SD}, we introduce the central sphere congruence, a
fundamental construction of M\"{o}bius invariant surface geometry,
which will be basic to our study of surfaces. The concept has its
origins in the nineteenth century with the introduction of the mean
curvature sphere of a surface at a point, by S. Germain
\cite{germain3}. The terminology reflects the central role played by
the mean curvature of a surface. By the turn of the century, the
family of the mean curvature spheres of a surface was known as the
central sphere congruence, cf. W. Blaschke \cite{blaschke}.
Nowadays, after R. Bryant's paper \cite{bryant}, it goes as well by
the name conformal Gauss map.
\newline

We start by recalling some basic concepts in Riemannian geometry.
Given a Riemannian manifold  $\bar{M}$ and an immersion
$f:M\to\bar{M}$, the pull-back bundle $f^*T\bar{M}$ splits into the
direct sum $T_{f}\oplus N_{f}$, where $T_{f}$ and $N_{f}$ denote,
respectively, the tangent bundle, $df(TM)$, and the normal bundle,
$(df(TM))^\perp$, to $f$. Let $\pi _{T_{f}}$ and $\pi _{N_{f}}$
denote the orthogonal projections of $f^*T\bar{M}$ onto $T_{f}$ and
$N_{f}$, respectively. The normal bundle is provided with the
connection
$$\nabla ^{N_{f}}=\pi _{N_{f}}\circ\nabla ^{f^*T\bar{M}}\vert
_{\Gamma (N_{f})},$$ where $\nabla ^{f^*T\bar{M}}$ denotes the
pull-back connection by $f$ of the Levi-Civita connection on
$T\bar{M}$. Recall the second fundamental form of $f$,
$\Pi\in\Gamma(L^2(TM,N_{f}))$, defined by
$$\Pi (X,Y)=\pi _{N_{f}}(\nabla ^{f^*T\bar{M}}_{X}d_{Y}f),$$ and the
mean curvature vector of $f$ (or of $f(M)\subset\bar{M}$, the
surface $M$ immersed in $\bar{M}$ by $f$),
$$\mathcal{H} =\frac{1}{2}\,\mathrm{tr}_{g_{_{f}}}\Pi\in\Gamma (N_{f}),$$
with $\mathrm{tr}_{g_{_{f}}}$ indicating trace computed with respect
to $g_{f}$. Recall that
$$\nabla^{f^{*}T\bar{M}}_{X}d_{Y}f-\nabla^{f^{*}T\bar{M}}_{Y}d_{X}f=d_{[X,Y]}f,$$
for all $X,Y\in\Gamma(TM)$, establishing, in particular, the
symmetry of $\Pi$. Recall the fundamental equation in Riemannian
Geometry:
\begin{equation}\label{eq:isometryvsconnectionsandsff}
\pi_{T_{f}}(\nabla ^{f^*T\bar{M}}_{X}df(Y))=df(\nabla^{TM}_{X}Y),
\end{equation}
for $\nabla^{TM}$ the Levi-Civita connection of $M$ when provided
with the metric induced by $f$ from the one on $\bar{M}$; for all
$X,Y\in\Gamma(TM)$. Fix a unit $\xi\in\Gamma (N_{f})$ and recall the
shape operator $A^{\xi}\in\Gamma (\mathrm {End}\,(TM,T_{f}))$, of
$f$ with respect to $\xi$, given by
$$A^{\xi}(X)=-\pi _{T_{f}}(\nabla ^{f^*T\bar{M}}_{X}\xi),$$ and
the mean curvature of $f$ (or of $f(M)$), with respect to $\xi$,
$$H^{\xi}=\frac{1}{2}\,\mathrm{tr}A^{\xi}\in\Gamma (\underline {\mathbb{R}}).$$
For an isometric immersion, the second fundamental form and the
shape operator with respect to $\xi$ are related by
\begin{equation}\label{eq:secffvsshapeop}
(\Pi(X,Y),\xi) =(A^{\xi}(X),Y).
\end{equation}
In particular, for an isometric immersion,
$$H^{\xi }=(\mathcal{H},\xi).$$
Equation \eqref{eq:secffvsshapeop} establishes, on the other hand,
the symmetry - and consequent diagonalizability - of the shape
operator of an isometric immersion. The shape operators $A^{\xi}$
and $A^{-\xi}$ are symmetrical and, therefore, share eigenspaces and
have symmetrical eigenvalues. Recall that, in the case $f$ is an
isometric immersion of $M$ into $\bar{M}=\R^{3}$, the eigenvalues
$k_{1}$ and $k_{2}$ of $A^{\xi}$ are called the principal curvatures
of $f$, defined up to sign; that a point $p$ in $M$ at which the
principal curvatures are the same is said to be umbilic; and that,
away from umbilic points, the directions defined by the two lines
consisting of the common eigenspaces of $A^{\xi}$ and $A^{-\xi}$ are
called the principal directions of $f$, whilst for umbilic points
all directions are said to be principal. In particular, the mean
curvature of $f:M\rightarrow \R^{3}$ with respect to either of the
two unit normal vector fields to $f$ is given, up to sign, by
\begin{equation}\label{eq:mcineucld3}
H=\frac{k_{1}+k_{2,}}{2},
\end{equation}
the arithmetic mean of the principal curvatures. It is opportune to
recall that, in the case $f$ defines an isometric immersion of $M$
in Euclidean $3$-space, the Gaussian curvature of the surface
$f(M)$, or, equally (cf. \textit{theorema egregium} of Gauss), that
of $M$, can be obtained as
\begin{equation}\label{GaussiancurvinEucli3}
K=\mathrm{det}A^{\xi}=k_{1}k_{2},
\end{equation}
fixing $\xi$ a unit normal vector field to $f$. For that, and for
further reference, recall the Gauss equation,
$$R(X,Y,Z,W)-\bar{R}(df(X),df(Y),df(Z),df(W))$$ $$=(\Pi(Y,W),\Pi
(X,Z))-(\Pi (X,W),\Pi (Y,Z)),$$ for $X,Y,Z,W\in\Gamma(TM)$; relating
the curvature tensors $R$ and $\bar{R}$ of $(M,g_{f})$ and
$\bar{M}$, respectively. It establishes, in particular, that, if
$\bar{M}$ has constant sectional curvature $\bar{K}=\bar{K}(x)$, for
$x\in\bar{M}$, then the Gaussian curvature $K$ of $M$ relates to
$\bar{K}$ by
\begin{equation}\label{eq:KvsbarK}
K-\bar{K}=(\Pi(X_{1},X_{1}),\Pi (X_{2},X_{2}))-(\Pi
(X_{1},X_{2}),\Pi (X_{1},X_{2})),
\end{equation}
fixing an orthonormal frame $X_{1},X_{2}$ of $(TM,g_{f})$. Now
consider the particular case $\bar{M}=\R^{3}$, fix
$\xi\in\Gamma(N_{f})$ unit and consider $X_{1},X_{2}$ to be a frame
along principal directions of $f$,  say $A^{\xi}X_{i}=k_{i}X_{i}$,
for $i=1,2$, (whose existence is established by the symmetry of
$A^{\xi}$). Then, according to \eqref{eq:secffvsshapeop},
$\Pi(X_{i},X_{i})=k_{i}\xi$, for $i=1,2$, and $\Pi(X_{1},X_{2})=0$
and the conclusion follows then from \eqref{eq:KvsbarK}.

\begin{rem}\label{mcvnotconfinv}
Suppose $g$ and $g'$ are conformally equivalent positive definite
metrics on $\bar{M}$, say $g'=e^{2u}g$, for some $u\in
C^{\infty}(\bar{M},\R)$. Following \eqref{eq:relLCs}, we get that
the connections $\nabla$ and $\nabla'$, induced in the pull-back
bundle $f^{*}T\bar{M}$ by the Levi-Civita connections on
$(\bar{M},g)$ and $(\bar{M},g')$, respectively, are related by
$$\nabla'_{X}Y=\nabla_{X}Y+((f^{*}du)Y)df(X)+((f^{*}du)df(X))Y-g(df(X),Y)f^{*}(du)^{*},$$
for $X\in\Gamma(TM)$, $Y\in\Gamma(f^{*}T\bar{M})$, $f^{*}du$ the
pull-back by $f$ of $du$,
$$(f^{*}du)Z:=(x\mapsto du_{f(x)}(Z_{x}))\in C^{\infty}(M,\R),$$
given $Z\in\Gamma(f^{*}T\bar{M})$; and, analogously, $f^{*}(du)^{*}$
the pull-back by $f$ of $(du)^{*}$. It follows that the second
fundamental forms $\Pi$ and $\Pi '$ of the immersions by $f$ of $M$
into $(\bar{M},g)$ and $(\bar{M},g')$, respectively, are related by
\begin{equation}\label{eq:howPichanges}
\Pi '(X,Y)=\Pi (X,Y)-g_{f}(X,Y)\,\pi_{N_{f}}(f^{*}(du)^{*}),
\end{equation}
for all $X,Y\in\Gamma(TM)$, with $g_{f}$ denoting the metric induced
in $M$ by $f$ from the metric $g$. Consequently,
\begin{equation}\label{eq:conformalvarianceoftheMCV}
\mathcal{H}'=e^{-2u\circ f}\mathcal{H}-e^{-2u\circ
f}\pi_{N_{f}}(f^{*}(du)^{*}),
\end{equation}
relating the mean curvature vectors $\mathcal{H}'$ and $\mathcal{H}$
of $f:M\rightarrow (\bar{M},g)$ and $f:M\rightarrow (\bar{M},g')$,
respectively. The conformal variance of the mean curvature vector
will, however, constitute no limitation to the development of our
conformal submanifold theory, as we shall see.
\end{rem}

Let $\Lambda\subset\underline{\R}^{n+1,1}$ be a surface in the
projectivized light-cone.

\section{Central sphere congruence and mean curvature}\label{sec:csc}

\markboth{\tiny{A. C. QUINTINO}}{\tiny{CONSTRAINED WILLMORE
SURFACES}}

Bryant \cite{bryant} established the existence of a congruence of
$2$-spheres, named the conformal Gauss map, tangent to a Riemann
surface isometrically immersed in $S^{3}$, sharing the mean
curvature at each point with the surface. This congruence of spheres
can be generalized to surfaces conformally immersed in $S^{n}$ by
means of the central sphere congruence, which we present in this
section, following \cite{SD}.\newline

A bundle $V\subset\underline{\mathbb{R}}^{n+1,1}$ of $(3,1)$-planes
is said to be an \textit{enveloping} $2$-\textit{sphere}
\textit{congruence} of the surface $\Lambda$ if the $2$-spheres
$\mathbb{P}(\mathcal{L}\cap V_{p})\subset
\mathbb{P}(\mathcal{L})\cong S^{n}$, for $p\in M$, have first order
contact with $\Lambda$, i.e., $\Lambda ^{(1)}\subset V$.

\begin {defn}We define an enveloping $2$-sphere congruence to $\Lambda$,
said to be the $\emph{central}$ $\emph{sphere}$ $\emph{congruence}$,
by
$$S:=\langle \sigma ,d_{e_{1}}\sigma ,d_{e_{2}}\sigma ,\sum _{i}
d_{e_{i}}d_{e_{i}}\sigma\rangle \subset
\underline{\mathbb{R}}^{n+1,1},$$ independently of the choices of a
never-zero $\sigma\in\Gamma (\Lambda)$ and of a local orthonormal
frame $(e_{i})_{i}$ of $TM$ with respect to the metric $g_{\sigma}$.
We may, alternatively, use the notation $S_{\Lambda}$ for $S$.
\end {defn}

We shall now recognize that this is, in fact, well-defined. Fix
$\sigma$ a never-zero section of $\Lambda$ and $(e_{i})_{i}$ an
orthonormal frame of $(TM,g_{\sigma})$. First of all, observe that,
by differentiating $(\sigma , d_{e_{i}}\sigma )=0$ we get $(\sigma
,d_{e_{i}}d_{e_{i}}\sigma )=-( d_{e_{i}}\sigma ,d_{e_{i}}\sigma
)=-1$, for $i=1,2$, and, consequently,
\begin{equation}\label{eq:8765redcvbnml987654redfghjkihgfdew3456789o3456knjhghg}
(\sigma ,\sum _{i}d_{e_{i}}d_{e_{i}}\sigma)=-2,
\end{equation}
which, together with $(\sigma ,\sigma )=0=(\sigma, d\sigma)$, shows
that $\sum _{i} d_{e_{i}}d_{e_{i}}\sigma$ is not a section of
$\Lambda ^{(1)}$. Thus $S =\Lambda ^{(1)}\oplus \langle\sum _{i}
d_{e_{i}}d_{e_{i}}\sigma\rangle.$ Let us now show that $\Lambda
^{(1)}\oplus \langle\sum _{i} d_{e_{i}}d_{e_{i}}\sigma\rangle$ does
not depend on the choices of $\sigma$ and $(e_{i})_{i}$. Consider
the Hessian of $\sigma :(M,g_{\sigma})\rightarrow\R^{n+1,1}$, the
section $\nabla d\sigma$ of $S^2(TM,\underline{\R}^{n+1,1})$, given
by $\nabla d\sigma(X,Y)=d_{X}d_{Y}\sigma-d\sigma(\nabla
^{\sigma}_{X}Y)$, for $\nabla ^{\sigma}$ the Levi-Civita connection
on $(M,g_{\sigma})$. Writing $\mathrm{tr}_{g_{\sigma}}$ for the
trace with respect to the metric $g_{\sigma}$, we have
$$\sum _{i}d_{e_{i}}d_{e_{i}}\sigma=\mathrm{tr}_{g_{\sigma}}\nabla
d\sigma +d\sigma(\sum_{i}\nabla ^{\sigma}
_{e_{i}}e_{i})=(\mathrm{tr}_{g_{\sigma}}\nabla d\sigma)\,\,
\mathrm{mod}\,\Lambda ^{(1)},$$ showing that $\Lambda ^{(1)}\oplus
\langle\sum _{i} d_{e_{i}}d_{e_{i}}\sigma\rangle$ does not depend on
the choice of $(e_{i})_{i}$. On the other hand, given
$\lambda\in\Gamma (\underline {\R})$ never-zero, the metrics induced
by $\sigma$ and $\sigma ':=\lambda\sigma $ are related by $g_{\sigma
'}=\lambda ^{2}g_{\sigma}$, so that $(\lambda ^{-1}e_{i})_{i}$
constitutes an orthonormal frame of $(TM,g_{\sigma '})$. Now
\begin{eqnarray*}
\sum _{i}d_{\lambda ^{-1}e_{i}}d_{\lambda ^{-1}e_{i}}\sigma
'&=&\lambda ^{-1}\sum _{i}d_{e_{i}}d_{e_{i}}\sigma +\lambda
^{-2}\sum _{i}((d_{e_{i}}\lambda )(d_{e_{i}}\sigma
)+(d_{e_{i}}d_{e_{i}}\lambda )\sigma)\\&=& (\lambda ^{-1}\sum
_{i}d_{e_{i}}d_{e_{i}}\sigma )\,\mathrm{mod}\,\Lambda
^{(1)}\end{eqnarray*} shows that $$\Lambda ^{(1)}\oplus\langle\sum
_{i}d_{e_{i}}d_{e_{i}}\sigma \rangle =\Lambda
^{(1)}\oplus\langle\sum _{i}d_{\lambda ^{-1}e_{i}}d_{\lambda
^{-1}e_{i}}\sigma ' \rangle ,$$ showing the independence of $\Lambda
^{(1)}\oplus\langle\sum _{i}d_{e_{i}}d_{e_{i}}\sigma \rangle$ with
respect to the choice of $\sigma$.

To recognize that $S$ consists of a bundle of $(3,1)$-spaces in
$\mathbb{R}^{n+1,1}$, we just need to verify that it is a
non-degenerate $\mathrm {rank}\,\,4$ bundle, for $\sigma\in\Gamma
(S)$ is light-like. The fact that $\Lambda$ is an immersion gives
$\mathrm{rank}\,\Lambda ^{(1)}=3$, whereas, by equation
\eqref{eq:8765redcvbnml987654redfghjkihgfdew3456789o3456knjhghg},
$\mathrm{rank}\,\langle\sum _{i} d_{e_{i}}d_{e_{i}}\sigma\rangle=1$.
Thus $\mathrm {rank}\,S=4$. On the other hand, given
$i,j\in\{1,2\}$,
\begin{eqnarray*}
(d_{e_{j}}\sigma,d_{e_{i}}d_{e_{i}}\sigma)&=&d_{e_{i}}(d_{e_{j}}\sigma,d_{e_{i}}\sigma)-(d_{e_{i}}d_{e_{j}}\sigma,d_{e_{i}}\sigma)\\
&=&d_{e_{i}}\delta_{i,j}-(d_{e_{j}}d_{e_{i}}\sigma,d_{e_{i}}\sigma)\\&=&-\frac{1}{2}\,\,d_{e_{j}}(d_{e_{i}}\sigma,d_{e_{i}}\sigma)\\&=&0,\end{eqnarray*}
and, therefore,
$(d_{e_{j}}\sigma,\sum_{k}d_{e_{k}}d_{e_{k}}\sigma)=0$. It follows
that the matrix of the metric on $S$ in the frame $\sigma
,d_{e_{1}}\sigma ,d_{e_{2}}\sigma ,\sum _{i}
d_{e_{i}}d_{e_{i}}\sigma$ is
$$\begin{pmatrix}
0&0&0&-2\\0&1&0&0\\0&0&1&0\\-2&0&0&a\end{pmatrix}$$ for some
$a\in\R$, whose determinant is $-4$, which shows that $S$ is
non-degenerate.

Lastly, and explicitly, we have $\Lambda ^{(1)}\subset S$, which
completes this verification.

\begin{rem}
Observe from the previous verification that the definition of $S$
is, furthermore, independent of conformal changes of the metric
$g_{\sigma}$. In particular, given a holomorphic chart $z=x+iy$ of
$(M,\mathcal{C}_{\Lambda})$, we have
$$S=\langle\sigma,\sigma_{x},\sigma_{y},\sigma
_{xx}+\sigma _{yy}\rangle.$$ Note that $\sigma _{xx}+\sigma _{yy}
=4\sigma _{z\bar{z}}.$ It follows that the complexification of the
central sphere congruence of $\Lambda$ is given by $$S=\langle
\sigma ,\sigma _{z},\sigma _{\bar{z}},\sigma _{z\bar{z}}\rangle
\subset(\underline{\R}^{n+1,1})^{\C}.$$
\end{rem}

Although the mean curvature vector is not a conformal invariant (cf.
Remark \ref{mcvnotconfinv}), under a conformal change of the metric
on $S^{n}$, it changes in the same way for the surface $M$ immersed
in $S^{n}$ by $\Lambda$ and each $2$-sphere
$\mathbb{P}(\mathcal{L}\cap S_{p})$, with $p\in M$, canonically
immersed in $S^{n}$. The condition $\sigma_{z\bar{z}}\in\Gamma(S)$
establishes that, at each point, the surface $\Lambda$ and the
sphere $\mathbb{P}(\mathcal{L}\cap S_{p})$ share the same mean
curvature vector. $S$ is, in this way, a congruence of
\textit{osculating} spheres to the surface $\Lambda$: for each $p\in
M$, $\mathbb{P}(\mathcal{L}\cap S_{p})$ is the unique $2$-sphere in
$S^{n}$ tangent to $\Lambda(M)$ at $\Lambda(p)$ whose mean curvature
vector at $\Lambda(p)$ is the mean curvature vector of $M$ at $p$.

The central sphere congruence of $\Lambda$ defines naturally a map
$$S:M\rightarrow \mathcal {G}:=Gr_{(3,1)}(\R^{n+1,1})$$ into the connected
component of the Grassmannian of order $4$ of $\R^{n+1,1}$
constituted by the subspaces with signature $(3,1)$. Throughout this
text, let $\pi _{S}$ and $\pi _{S^{\perp}}$ denote the orthogonal
projections of $\underline{\mathbb{R}}^{n+1,1}$ onto $S$ and
$S^{\perp}$, respectively. Let $T$ and $T^{\perp}$ be the bundles
over $\mathcal{G}$ whose fibres at each $V\in\mathcal{G}$ are,
respectively, $V$ and $V^{\perp}$. The tangent bundle to $\mathcal
{G}$ at $T$ is identified with the bundle
$\mathrm{Hom}(T,T^{\perp})$ via the bundle isomorphism given by
\begin{equation}\label{eq:identificTGwithHom}
X\mapsto (\rho\mapsto \pi_{T^{\perp}}(d_{X}\rho)),
\end{equation}
for $\pi_{T^{\perp}}$ the orthogonal projection of $\R^{n+1,1}$ onto
$V^{\perp}$. This is indeed well-defined, as, given $\rho \in\Gamma
(T)$, $(\rho\mapsto \pi _{T^{\perp}}(d_{X}\rho ))$ is tensorial.
This identification makes $\mathcal {G}$ into a pseudo-Riemannian
manifold.\footnote{This identification is the particular case
$\mathcal{G}=Gr_{(3,1)}(\R^{n+1,1})$ of a standard procedure to
induce a pseudo-Riemannian structure in a general Grassmannian
$\mathcal{G}=Gr_{(r,s)}(\R^{p,q})$.} Under this identification, the
pull-back bundle $S^{*}T\mathcal {G}$ is identified with the
pull-back by $S$ of $\mathrm {Hom}(T,T^{\perp})$,$$S^{*}T\mathcal
{G}\cong\mathrm{Hom}(S,S^{\perp}).$$

\begin{prop}\label{conformalcsc}
The central sphere congruence of $\Lambda$ is conformal with respect
to the conformal structure $\mathcal{C}_{\Lambda}$ induced in $M$ by
$\Lambda$.
\end{prop}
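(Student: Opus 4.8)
The plan is to reduce the statement to the isotropy criterion of Section \ref{Riemannsurfacegeneralities}: the central sphere congruence $S\colon M\to\mathcal{G}$ is conformal with respect to $\mathcal{C}_\Lambda$ precisely when $T^{1,0}M$ is isotropic for the section $g_S$ of $S^2(TM,\R)$ induced by $S$, i.e. when $(dS(\delta_z),dS(\delta_z))=0$ for a holomorphic chart $z$ of $(M,\mathcal{C}_\Lambda)$, the inner product being the Hilbert--Schmidt metric on $\mathrm{Hom}(S,S^\perp)\cong S^{*}T\mathcal{G}$. Fixing a never-zero $\sigma\in\Gamma(\Lambda)$, I would work throughout with the complexified frame $S=\langle\sigma,\sigma_z,\sigma_{\bar z},\sigma_{z\bar z}\rangle$ of the Remark above and with the description $dS(X)\colon\rho\mapsto\pi_{S^\perp}(d_X\rho)$ of the differential furnished by \eqref{eq:identificTGwithHom}.

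First I would compute the action of $\alpha:=dS(\delta_z)\in\mathrm{Hom}(S,S^\perp)$ on this frame. Since $\sigma_z$ and $\sigma_{z\bar z}$ are themselves sections of $S$, we get $\alpha(\sigma)=\pi_{S^\perp}(\sigma_z)=0$ and $\alpha(\sigma_{\bar z})=\pi_{S^\perp}(\sigma_{z\bar z})=0$; that is, $\alpha$ annihilates $\Lambda^{0,1}=\langle\sigma,\sigma_{\bar z}\rangle$. Next I would observe that $\Lambda^{0,1}$ is an isotropic $2$-plane of the $(3,1)$-space $S$: indeed $(\sigma,\sigma)=0=(\sigma,d\sigma)$, and by conformality of $\sigma$ (equivalently $g_\sigma\in\mathcal{C}_\Lambda$) one has $(\sigma_{\bar z},\sigma_{\bar z})=0$. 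Being $2$-dimensional and isotropic in a non-degenerate rank-$4$ bundle, $\Lambda^{0,1}$ is maximal isotropic, whence its orthogonal complement inside $S$ coincides with itself: $S\cap(\Lambda^{0,1})^\perp=\Lambda^{0,1}$.

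The heart of the argument is then a linear-algebraic observation about the Hilbert--Schmidt norm $(\alpha,\alpha)=\mathrm{tr}(\alpha^{t}\alpha)$, where $\alpha^{t}\colon S^\perp\to S$ is the adjoint determined by the metrics on $S$ and $S^\perp$. For $w\in S^\perp$ and $v\in\Lambda^{0,1}$ one has $(\alpha^{t}w,v)=(w,\alpha v)=0$, so $\mathrm{im}(\alpha^{t})\subseteq S\cap(\Lambda^{0,1})^\perp=\Lambda^{0,1}$; since $\alpha$ kills $\Lambda^{0,1}$, this forces $\alpha\circ\alpha^{t}=0$ on $S^\perp$. Hence $(\alpha,\alpha)=\mathrm{tr}(\alpha^{t}\alpha)=\mathrm{tr}(\alpha\,\alpha^{t})=0$, which is exactly $g_S(\delta_z,\delta_z)=0$. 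Thus $T^{1,0}M$ is $g_S$-isotropic and $S$ is conformal with respect to $\mathcal{C}_\Lambda$.

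I expect the only delicate point to be the bookkeeping around the indefinite Hilbert--Schmidt metric: one must keep track of the two metrics (on $S$ and on $S^\perp$) entering the definition of $\alpha^{t}$ and justify $S\cap(\Lambda^{0,1})^\perp=\Lambda^{0,1}$ from the maximal isotropy of $\Lambda^{0,1}$ in the $(3,1)$-space $S$. An equivalent, more computational route would give the same vanishing and could serve as a cross-check: write the metric matrix of $S$ in the frame $\sigma,\sigma_z,\sigma_{\bar z},\sigma_{z\bar z}$, whose only non-zero off-diagonal pairings are $(\sigma_z,\sigma_{\bar z})$ and $(\sigma,\sigma_{z\bar z})$, invert it, and check that the reciprocal basis vectors dual to $\sigma_z$ and $\sigma_{z\bar z}$ lie in $\ker\alpha=\Lambda^{0,1}$, so that $\mathrm{tr}(\alpha^{t}\alpha)$ again collapses to zero.
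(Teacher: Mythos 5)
Your argument is correct and is essentially the paper's own proof: both reduce conformality to the vanishing of $\mathrm{tr}\,(S_{z}^{t}S_{z})$, show that $\Lambda^{0,1}=\langle\sigma,\sigma_{\bar z}\rangle\subset\ker S_{z}$, use the orthogonality relations to identify $\Lambda^{0,1}$ as maximal isotropic in $S$ (so that $\langle\sigma,\sigma_{\bar z}\rangle^{\perp}\cap S=\langle\sigma,\sigma_{\bar z}\rangle$), and conclude $\mathrm{Im}\,S_{z}^{t}\subset\Lambda^{0,1}\subset\ker S_{z}$, whence $S_{z}S_{z}^{t}=0$ and the trace vanishes. The only difference is presentational: you spell out the maximal-isotropy dimension count and the adjoint bookkeeping slightly more explicitly, which the paper leaves to its preceding discussion of the frame's orthogonality relations.
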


Before we proceed to the proof of the proposition, fix $\sigma$ a
never-zero section of $\Lambda$ and $z$ a holomorphic chart of
$(M,\mathcal{C}_{\Lambda})$ and let us spend some moments
contemplating the orthogonality relations of the frame
$\{\sigma,\sigma _{z},\sigma _{\bar{z}},\sigma _{z\bar{z}}\}$ of
$S$, beyond the very well-known $(\sigma,\sigma)=0$ and subsequent
$$(\sigma , \sigma _{z})=0=(\sigma , \sigma _{\bar{z}}).$$
The conformality of $\sigma :(M,\mathcal{C}_{\Lambda})\rightarrow
\R^{n+1,1}$ gives $$(\sigma _{z},\sigma _{z})=0=(\sigma
_{\bar{z}},\sigma _{\bar{z}}),$$and differentiation shows then that
$$(\sigma _{z},\sigma _{z\bar{z}})=0=(\sigma _{\bar{z}},\sigma
_{z\bar{z}}).$$The fact that $g_{\sigma}$ is conformally equivalent
to $g_{z}$ ensures that
\begin{equation}\label{eq:sigmazsigmazbar}
(\sigma _{z},\sigma
_{\bar{z}})=\frac{1}{4}\,(g_{\sigma}(\delta_{x},\delta_{x})+g_{\sigma}(\delta_{y},\delta_{y}))
\end{equation}
is never-zero, whilst differentiation of $(\sigma , \sigma _{z})=0$
shows that
$$(\sigma ,\sigma _{z\bar{z}})=-(\sigma _{z},\sigma
_{\bar{z}}).$$ As for $(\sigma _{z\bar{z}},\sigma _{z\bar{z}}),$
nothing can be ensured in a general situation. As a final remark, we
introduce a specific choice of a never-zero section of $\Lambda$, in
relation to the chart $z$, that provides an extra specificity on the
orthogonality relations presented above and that will, for that
reason, be useful in some moments in the future. As we know, each
never-zero section of $\Lambda$ induces in $M$ a metric conformally
equivalent to the metric $g_{z}$ induced by $z$. If we make a choice
of one of the two components of the light-cone, say
$\mathcal{L}^{+}$, then there is a unique section $\sigma
^{z}:M\rightarrow \mathcal{L}^{+}$ of $\Lambda$ whose induced metric
coincides with the metric induced by $z$,$$g_{\sigma^{z}}=g_{z}.$$
We refer to $\sigma ^{z}$ as the $\textit{normalized}$
\textit{section} of $\Lambda$ with respect to $z$. According to
\eqref{eq:sigmazsigmazbar}, $(\sigma^{z}_{z},\sigma^{z}_{\bar{z}})$
is constantly equal to $\frac{1}{2},$
$$(\sigma^{z}_{z},\sigma^{z}_{\bar{z}})=\frac{1}{2}.$$
In particular,
$(\sigma^{z}_{z},\sigma^{z}_{\bar{z}})_{z}=0=(\sigma^{z}_{z},\sigma^{z}_{\bar{z}})_{\bar{z}}$
or, equivalently,
$$(\sigma_{z}^{z},\sigma_{\bar{z}\bar{z}}^{z})=0.$$

Now we proceed to the proof of Proposition \ref{conformalcsc}.
\begin{proof}Fix a holomorphic chart $z=x+iy$ of
$(M,\mathcal{C}_{\Lambda})$. The proof will consist of showing that
$(S_{z},S_{z})=0$, or, equivalently, that $\mathrm{tr}(S_{z}^{t}\,
S_{z})=0$.

Fix a never-zero section $\sigma$ of $\Lambda$. According to the
orthogonality relations of the frame $\sigma,\sigma _{z},\sigma
_{\bar{z}},\sigma _{z\bar{z}}$ of $S$, we have $\langle\sigma,\sigma
_{\bar{z}}\rangle ^{\perp}\cap S=\langle\sigma,\sigma
_{\bar{z}}\rangle$. On the other hand,
\begin{equation}\label{eq:34567gfs4335678wsqw4d4330912ertyuihg5tgt654321111211112?}
S_{z}(\sigma)=\pi_{S^{\perp}}(\sigma
_{z})=0=\pi_{S^{\perp}}(\sigma_{\bar{z}z})=S_{z}(\sigma_{\bar{z}}),
\end{equation}
and, therefore, $\langle\sigma,\sigma _{\bar{z}}\rangle\subset
\mathrm{ker}\, S_{z}$. It follows that
$\mathrm{Im}\,S_{z}^{t}\subset(\ker S_{z})^{\perp}\cap
S\subset\langle\sigma,\sigma _{\bar{z}}\rangle$ and, consequently,
by
\eqref{eq:34567gfs4335678wsqw4d4330912ertyuihg5tgt654321111211112?},
that $$S_{z}\,S_{z}^{t}=0,$$which completes the proof.
\end{proof}

It was Bryant \cite{bryant} who established the existence of a
congruence of $2$-spheres tangent to a Riemann surface isometrically
immersed in $S^{3}$ and with the same mean curvature as the surface
at each point. Bryant named it the conformal Gauss map. This
justifies the alternative terminology, after Bryant's paper
\cite{bryant}, of conformal Gauss map for the central sphere
congruence, although the central sphere congruence carries, not only
first order contact information, but second order as well.

\section{The normal bundle to the central sphere
congruence}\label{normalbundle}

\markboth{\tiny{A. C. QUINTINO}}{\tiny{CONSTRAINED WILLMORE
SURFACES}}

The bundle $S^{\perp}$, normal to the central sphere congruence of
$\Lambda$, can be identified with the normal bundle to $\Lambda$,
when regarded as a surface in a space-form, via an isometric
isomorphism of bundles with connections, as we present next,
following \cite{SD}.\newline

We provide $S$ and $S^{\perp}$ with the connections $\nabla ^{S}$
and $\nabla^{S^{\perp}}$, respectively, defined by orthogonal
projection of the trivial flat connection $d$ on
$\underline{\R}^{n+1,1}$,
$$\nabla
^{S}:=\pi _{S}\circ d\vert _{\Gamma (S)},\,\,\,\,\,\,\nabla
^{S^{\perp}}:=\pi _{S^{\perp}}\circ d\vert _{\Gamma
(S^{\perp})},$$which we immediately verify to be metric connections.

Fix a non-zero $v_{\infty}$ in $\mathbb{R}^{n+1,1}$ and consider the
surface $\sigma _{\infty}:M\rightarrow S_{v_{\infty}}$, in the
space-form
 $S_{v_{\infty}}$, defined by $\Lambda$. For simplicity, we write
$g_{\infty}$, rather than $g_{\sigma _{\infty}}$, for the metric
induced in $M$ by $\sigma _{\infty}$, as well as $N_{\infty}$ for
the normal bundle to $\sigma _{\infty}$. Let $\Pi_{\infty}$  and
$\mathcal{H}_{\infty}$ denote, respectively, the second fundamental
form and the mean curvature vector of $\sigma _{\infty}$. We shall
keep this notation throughout this text.

\begin{prop}\label{identofnormals}
The normal bundle $N_{\infty}$ to $\sigma _{\infty}$ is identified
with the bundle $S^{\perp}$ normal to the central sphere congruence
of $\Lambda$,
$$N_{\infty}\cong S^{\perp},$$ by the isomorphism $$\mathcal {Q}:N_{\infty}\rightarrow
S^{\perp}$$ of bundles provided with a metric and a connection
defined by
$$\xi\mapsto \xi+(\xi,\mathcal {H_{\infty}})\sigma _{\infty}.$$
\end{prop}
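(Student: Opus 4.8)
The plan is to work throughout with the distinguished section $\sigma_\infty\in\Gamma(\Lambda)$, which satisfies $(\sigma_\infty,\sigma_\infty)=0$ and $(\sigma_\infty,v_\infty)=-1$, together with a local frame $(e_i)_i$ orthonormal for $g_\infty=g_{\sigma_\infty}$. Since $S$ is independent of the choice of section and frame, I may compute it as $S=\langle\sigma_\infty,d_{e_1}\sigma_\infty,d_{e_2}\sigma_\infty,\sum_i d_{e_i}d_{e_i}\sigma_\infty\rangle$ and use $(\sigma_\infty,\sum_i d_{e_i}d_{e_i}\sigma_\infty)=-2$. There are four things to establish: that $\mathcal{Q}$ lands in $S^\perp$; that it is injective, hence, by the rank count $\mathrm{rank}\,N_\infty=n-2=\mathrm{rank}\,S^\perp$, an isomorphism; that it is an isometry; and that it intertwines $\nabla^{N_\infty}$ with $\nabla^{S^\perp}$.

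The crux is the first point, and it is what forces the precise correction term $(\xi,\mathcal{H}_\infty)\sigma_\infty$. First I would record the second fundamental form of the inclusion $S_{v_\infty}\hookrightarrow\R^{n+1,1}$: differentiating the defining relations $(v,v)=0$ and $(v,v_\infty)=-1$ shows that, for $A,B$ tangent to $S_{v_\infty}$ at $\sigma_\infty$, the $\langle\sigma_\infty,v_\infty\rangle$-component of the flat derivative is $(A,B)\big((v_\infty,v_\infty)\sigma_\infty+v_\infty\big)$. Splitting the flat Hessian $\nabla d\sigma_\infty(X,Y)=d_Xd_Y\sigma_\infty-d\sigma_\infty(\nabla^{g_\infty}_XY)$ into its $N_\infty$-part $\Pi_\infty(X,Y)$ and this $\langle\sigma_\infty,v_\infty\rangle$-part, and then tracing against $(e_i)_i$, yields $\sum_i d_{e_i}d_{e_i}\sigma_\infty\equiv 2\mathcal{H}_\infty+2(v_\infty,v_\infty)\sigma_\infty+2v_\infty\pmod{d\sigma_\infty(TM)}$. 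For $\xi\in N_\infty$ — which is orthogonal to $\sigma_\infty$, to $v_\infty$, and to $d\sigma_\infty(TM)$ — this gives $(\xi,\sum_i d_{e_i}d_{e_i}\sigma_\infty)=2(\xi,\mathcal{H}_\infty)$. Pairing $\mathcal{Q}(\xi)$ against the frame of $S$ then gives zero in every slot: the $\sigma_\infty$- and $d_{e_j}\sigma_\infty$-entries vanish from $(\xi,\sigma_\infty)=0=(\xi,d_{e_j}\sigma_\infty)$ and $(\sigma_\infty,\sigma_\infty)=0=(\sigma_\infty,d_{e_j}\sigma_\infty)$, while the last entry is $2(\xi,\mathcal{H}_\infty)+(\xi,\mathcal{H}_\infty)(-2)=0$, exactly cancelling. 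Hence $\mathcal{Q}(\xi)\in S^\perp$.

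Injectivity and the isometry property are then short. $\mathcal{Q}(\xi)=0$ forces $\xi\in\langle\sigma_\infty\rangle\cap N_\infty=\{0\}$, since $(\sigma_\infty,v_\infty)\neq 0$ keeps $\sigma_\infty$ out of $\langle\sigma_\infty,v_\infty\rangle^\perp\supset N_\infty$; and because $(\xi,\sigma_\infty)=(\sigma_\infty,\sigma_\infty)=0$, the correction term contributes nothing to $(\mathcal{Q}(\xi),\mathcal{Q}(\xi'))$, so $(\mathcal{Q}(\xi),\mathcal{Q}(\xi'))=(\xi,\xi')$. The same relations give a useful identity, $\pi_{S^\perp}|_{N_\infty}=\mathcal{Q}$: for $\eta\in N_\infty$ one has $\eta=\mathcal{Q}(\eta)-(\eta,\mathcal{H}_\infty)\sigma_\infty$ with $\mathcal{Q}(\eta)\in S^\perp$ and $\sigma_\infty\in\Lambda\subset S$, so $\pi_{S^\perp}(\eta)=\mathcal{Q}(\eta)$.

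For the connection, I expect the main subtlety and the need to keep three connections straight (the flat $d$, the Levi-Civita connection of $S_{v_\infty}$, and the induced normal connections). Differentiating $\mathcal{Q}(\xi)=\xi+(\xi,\mathcal{H}_\infty)\sigma_\infty$ and projecting to $S^\perp$, the two terms carrying $\sigma_\infty$ and $d_X\sigma_\infty$ are annihilated by $\pi_{S^\perp}$ (both lie in $S$), leaving $\nabla^{S^\perp}_X\mathcal{Q}(\xi)=\pi_{S^\perp}(d_X\xi)$. The key step is to verify that for $\xi\in\Gamma(N_\infty)$ the flat derivative $d_X\xi$ has no component normal to $S_{v_\infty}$: the pairings $(d_X\xi,\sigma_\infty)$ and $(d_X\xi,v_\infty)$ both vanish by Leibniz, using $(\xi,\sigma_\infty)\equiv 0\equiv(\xi,v_\infty)$, $(\xi,d_X\sigma_\infty)=0$, and $d_Xv_\infty=0$. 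Thus $d_X\xi\in T_{\sigma_\infty}S_{v_\infty}=d\sigma_\infty(TM)\oplus N_\infty$, so $d_X\xi$ agrees with its $N_\infty$-component $\nabla^{N_\infty}_X\xi$ modulo $d\sigma_\infty(TM)\subset S$; applying $\pi_{S^\perp}$ and the identity $\pi_{S^\perp}|_{N_\infty}=\mathcal{Q}$ gives $\nabla^{S^\perp}_X\mathcal{Q}(\xi)=\mathcal{Q}(\nabla^{N_\infty}_X\xi)$, as required. Everything beyond that single observation is bookkeeping with the orthogonality relations already assembled for the frame of $S$.
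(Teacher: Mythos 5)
Your proof is correct and follows essentially the same route as the paper's: the same key identities $(\xi,\sum_i d_{e_i}d_{e_i}\sigma_\infty)=2(\xi,\mathcal{H}_\infty)$ and $(\sigma_\infty,\sum_i d_{e_i}d_{e_i}\sigma_\infty)=-2$ to land in $S^\perp$, the same rank count $n-2$ for the isomorphism, and the same Leibniz argument showing $d_X\xi\in\langle\sigma_\infty,v_\infty\rangle^\perp$ so that only a $d\sigma_\infty(TM)\subset S$ discrepancy separates $d_X\xi$ from $\nabla^{N_\infty}_X\xi$. Your detour through the explicit second fundamental form of the inclusion $S_{v_\infty}\hookrightarrow\R^{n+1,1}$ is a harmless repackaging of the paper's direct use of $\Pi_\infty(X,Y)=\pi_{N_\infty}(d_Xd_Y\sigma_\infty)$.
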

\begin{proof}
The pull-back bundle by $\sigma_{\infty}$ of the tangent bundle
$TS_{v_{\infty}}$ consists of the orthogonal complement in
$\underline{\R}^{n+1,1}$ of the non-degenerate bundle
$\langle\sigma_{\infty},v_{\infty}\rangle$ (cf. section
\ref{subsec:hyper}). Let $\pi _{N_{\infty}}$ denote the orthogonal
projection of
$$\underline {\mathbb{R}}^{n+1,1}=d\sigma _{\infty}(TM)\oplus N
_{\infty}\oplus \langle v_{\infty},\sigma _{\infty}\rangle $$ onto
$N_{\infty}$. Since the metric in $S_{v_{\infty}}$ is the one
inherited from $\mathbb{R}^{n+1,1}$, $\Pi _{\infty}$ is simply given
by
$$\Pi _{\infty}(X,Y)=\pi _{N_{\infty}}(d_{X}d_{Y}\sigma
_{\infty}),$$ for $X,Y\in \Gamma (TM)$, so that, for $\xi\in\Gamma
(N_{\infty})$ and $(e_{i})_{i}$ an orthonormal frame of
$(TM,g_{\infty})$, $(\xi,\sum _{i} d_{e_{i}}d_{e_{i}}\sigma
_{\infty})=2(\xi,\mathcal {H}_{\infty})$ and, therefore, by
\eqref{eq:8765redcvbnml987654redfghjkihgfdew3456789o3456knjhghg},
$(\xi+(\xi,\mathcal {H_{\infty}})\sigma _{\infty},\sum _{i}
d_{e_{i}}d_{e_{i}}\sigma _{\infty})=0$. Together with the fact that
$$N_{\infty}\subset \sigma_{\infty}^{*}TS_{v_{\infty}}=\langle\sigma _{\infty},v_{\infty}\rangle
^{\perp},$$  this shows that $\xi+(\xi,\mathcal {H_{\infty}})\sigma
_{\infty}$ is, in fact, a section of $S^{\perp}$.

Clearly, $\mathcal {Q}$ is isometric, and, therefore, injective, as
$N_{\infty}$ is non-degenerate. Now
$$\mathrm {rank}\, N_{\infty}=\mathrm {rank}\,\sigma_{\infty}^{*}TS_{v_{\infty}}-\mathrm {rank}\,d\sigma
_{\infty}(TM)=n-2=\mathrm {rank}\,S^{\perp}$$ shows that $\mathcal
{Q}$ is an isometric isomorphism. Furthermore, for $\xi\in\Gamma
(N_{\infty})$,
$$ \nabla
^{S^{\perp}}(\mathcal {Q}(\xi))=\pi
_{S^{\perp}}(d\xi)+d(\xi,\mathcal {H_{\infty}})\pi
_{S^{\perp}}\sigma_{\infty}+(\xi,\mathcal {H_{\infty}})\pi
_{S^{\perp}}(d\sigma _{\infty})=\pi _{S^{\perp}}(d\xi),$$ whilst
\begin{eqnarray*}
\mathcal {Q}(\nabla ^{N_{\infty}}\xi)=\pi _{N_{\infty}}(d\xi)+(\pi
_{N_{\infty}}(d\xi),\mathcal {H_{\infty}})\sigma _{\infty}\in\Gamma
(S^{\perp}).
\end{eqnarray*}
To show that $\mathcal {Q}$ preserves connections, we just need to
verify that $d\xi -\pi _{N_{\infty}}(d\xi)\in\Gamma (S)$. That is
immediate: for $\xi \in \Gamma (N_{\infty})\subset \Gamma
(\langle\sigma _{\infty},v_{\infty}\rangle ^{\perp})$, $d\xi$ is
still a section of $\langle\sigma _{\infty},v_{\infty}\rangle
^{\perp}$,
$$(d\xi,\sigma_{\infty})=(d\xi,\sigma_{\infty})+(\xi,d\sigma_{\infty})=0=(d\xi,v_{\infty})+(\xi,dv_{\infty})=(d\xi,v_{\infty});$$
and, therefore,
$$d\xi -\pi _{N_{\infty}}(d\xi)=\pi _{d\sigma
_{\infty}(TM)}(d\xi).$$
\end{proof}

\section{The Gauss-Ricci and Codazzi equations}\label{subsec:grc}
\subsection{The exterior power $\wedge^{2}\R^{n+1,1}$ et al.: a few
utilities}\label{extalgebra}

\markboth{\tiny{A. C. QUINTINO}}{\tiny{CONSTRAINED WILLMORE
SURFACES}}

This section consists of a collection of useful, well-known facts
involving exterior products. \newline

The space $\wedge ^{2}\mathbb{R}^{n+1,1}$ can be identified with the
orthogonal algebra $o(\mathbb{R}^{n+1,1})$,
\begin{equation}\label{eq:wedgeRn+11andorthogonalalgebar}
\wedge^{2}\R^{n+1,1}\cong o(\mathbb{R}^{n+1,1}),
\end{equation}
via
\begin{equation}\label{eq:isomwedge}
w\mapsto (v_{1}\wedge v_{2})(w):=(w, v_{1})v_{2}-(w,v_{2})v_{1},
\end{equation}
which assigns to $v_{1}\wedge v_{2}$ a skew-symmetric
transformation. We shall consider this identification throughout
this work.

Under the identification \eqref{eq:wedgeRn+11andorthogonalalgebar}
defined by \eqref{eq:isomwedge}, given a non-degenerate subbundle
$V$ of $\underline{\mathbb{R}}^{n+1,1}$, we have
$$\Gamma (V\wedge V^{\perp})=\{\xi \in \Gamma
(o(\underline{\mathbb{R}}^{n+1,1})): \xi (V)\subset V^{\perp}, \xi
(V^{\perp})\subset V\},$$as well as $$\Gamma (\wedge ^{2}V\oplus
\wedge^{2}V^{\perp})=\{\xi \in \Gamma
(o(\underline{\mathbb{R}}^{n+1,1})): \xi (V)\subset V, \xi
(V^{\perp})\subset V^{\perp}\},$$and
\begin{equation}\label{eq:PoplusM}
o(\underline{\mathbb{R}}^{n+1,1})=\wedge^{2}V\oplus
\wedge^{2}V^{\perp}\oplus V\wedge V^{\perp},
\end{equation}
for the trivial bundle $o(\underline{\mathbb{R}}^{n+1,1}):=M\times
o(\mathbb{R}^{n+1,1})$. It is immediate and very useful to observe
that, given $\xi\in\Gamma(V\wedge V^{\perp})$,
\begin{equation}\label{eq:transposeVwedgeVperp}
\xi_{\vert_{V^{\perp}}}=-(\xi_{\vert_{V}})^{t},
\end{equation}
for $(\xi_{\vert_{V}})^{t}$ the transpose of
$\xi_{\vert_{V}}\in\Gamma(\mathrm{Hom}(V,V^{\perp}))$ with respect
to the metric on $\underline{\R}^{n+1,1}$.

Given $V$ and $W$ subbundles of $\underline{\R}^{n+1,1}$, provided
with connections $\nabla^{V}$ and $\nabla^{W}$, respectively, the
bundle $V\wedge W$ is provided with the metric induced from the one
on $\mathrm{End}(\underline{\R}^{n+1,1})$ and, canonically, with the
connection given by
$$\nabla(v\wedge w):=(\nabla^{V} v)\wedge w+v\wedge(\nabla^{W}
w),$$for $v\in\Gamma(V),w\in\Gamma(W)$.  In the particular case
$V=\underline{\R}^{n+1,1}=W$ and $\nabla^{V}=\nabla^{W}$ is a metric
connection, $\nabla$ coincides with the connection canonically
induced by $\nabla^{V}$ in $\mathrm{End}(\underline{\R}^{n+1,1})$,
over sections of the trivial bundle
$M\times\wedge^{2}\R^{n+1,1}=:\wedge^{2}\underline{\R}^{n+1,1}\cong
o(\underline{\mathbb{R}}^{n+1,1})$. If $\nabla^{V}$ is metric and
$V^{\perp}$ is provided with a connection, the correspondence
\begin{equation}\label{eq:VwedgeVperpHomVVperp}
\eta\mapsto\eta\vert_{V}
\end{equation}
defines a bundle isomorphism $V\wedge
V^{\perp}\rightarrow\mathrm{Hom}(V,V^{\perp})$ preserving metrics
and connections, providing an identification
$$V\wedge V^{\perp}\cong\mathrm{Hom}(V,V^{\perp})$$ of bundles
provided with a metric and a connection. In fact, given
$v,v^{*}\in\Gamma(V)$,
$v^{\perp},v^{\perp}_{*}\in\Gamma(V^{\perp})$, $\eta=v\wedge
v^{\perp}$ and $\eta^{*}=v^{*}\wedge v^{\perp}_{*}$,
\begin{eqnarray*}
(\nabla \eta\vert_{V})v^{*}&=&\nabla^{V^{\perp}}(\eta
v^{*})-\eta(\nabla^{V}v^{*})\\&=&\nabla^{V^{\perp}}(v,v^{*})v^{\perp}-(v,\nabla^{V}v^{*})v^{\perp}\\&=&
(v,v^{*})\nabla^{V^{\perp}}v^{\perp}+d(v,v^{*})v^{\perp}-(v,\nabla^{V}v^{*})v^{\perp}
\end{eqnarray*}
and, therefore, as $\nabla^{V}$ is metric,
$$
(\nabla
\eta\vert_{V})v^{*}=(v,v^{*})\nabla^{V^{\perp}}v^{\perp}+(\nabla^{V}v,v^{*})v^{\perp}=
(\nabla \eta)v^{*};$$whilst, on the other hand,
$(\eta^{*}\vert_{V})^{t}\circ
\eta\vert_{V^{\perp}}=(\eta{*})^{t}\vert_{V^{\perp}}\circ
\eta\vert_{V^{\perp}}=0$ and, therefore,
$$(\eta\vert_{V},\eta^{*}\vert_{V})=\mathrm{tr}((\eta^{*}\vert_{V})^{t}\circ\eta\vert_{V})=\mathrm{tr}((
\eta^{*})^{t}\circ\eta)=(\eta,\eta^{*}).$$

It will be useful to note that, as a straightforward computation
shows, given $a,b\in\R^{n+1,1}$ and $T\in o(\mathbb{R}^{n+1,1})$,
\begin{equation}\label{eq:wegdebrack}
[T,a\wedge b]=(Ta)\wedge b+a\wedge (Tb),
\end{equation}
for the Lie bracket in $o(\mathbb{R}^{n+1,1})$; as well as, for
$T\in O(\mathbb{R}^{n+1,1})$,
\begin{equation}\label{eq:adjwedge}
\mathrm{Ad}_{T}(a\wedge b)=Ta\wedge Tb.
\end{equation}

Given a vector bundle $P$ over $M$ whose fibres are Lie algebras and
$\mu ,\eta \in\Omega ^{1}(P)$, we define a $2$-form $[\mu
\wedge\eta]\in \Omega ^{2}(P)$ by
$$ [\mu\wedge \eta] _{(X,Y)}:=[\mu _{X},\eta _{Y}]-[\mu
_{Y},\eta _{X}]$$ for $X,Y\in\Gamma (TM)$. Note that $$[\eta\wedge
\mu]=[\mu\wedge \eta]$$ and that $$[\mu\wedge \mu]_{(X,Y)}=2[\mu
_{X},\mu_{Y}].$$ In the case $P$ is a bundle of endomorphisms, we
define another $2$-form with values in $P$,
$\mu\wedge\eta\in\Omega^{2}(P)$, using composition of endomorphisms
to multiply coefficients in the exterior product:
$$\mu\wedge\eta _{(X,Y)}:=\mu_{X}\eta_{Y}-\mu_{Y}\eta_{X},$$for all
$X,Y$. Note that, in that case,
\begin{equation}\label{eq:[wedge]vswedge}
[\mu\wedge \eta]=\mu\wedge\eta+\eta\wedge\mu.
\end{equation}
Suppose $M$ is provided with a conformal structure $\mathcal{C}$.
Observe that
\begin{equation}\label{eq:starseliebracks}
[\mu\wedge *\,\eta]=-[*\,\mu\wedge\eta ].
\end{equation}
For this, fix a (locally) never-zero $Z\in\Gamma(T^{1,0}M)$ and
verify that $[\mu\wedge *\,\eta](Z,\overline{Z})=-[*\,\mu\wedge\eta
](Z,\overline {Z})$, or, equivalently, that $-[\mu\wedge (\eta
J)](Z,\overline{Z})=[(\mu J)\wedge\eta ](Z,\overline {Z})$, which
is, in fact, an immediate consequence of the fact that $Z$ and
$\overline{Z}$ are eigenvectors of $J$ associated to the eigenvalues
$i$ and $-i$, respectively. In particular,
\begin{equation}\label{eq:bracstarstarbrac}
[*\mu\wedge *\,\eta]=[\,\mu\wedge\eta ].
\end{equation}
Note that, if $\mu$ and $\eta$ are both either $(1,0)$-forms or
$(0,1)$-forms, then $[\mu\wedge \eta]$ vanishes:
\begin{equation}\label{eq:ijwedgeij0}
[\mu ^{1,0}\wedge \eta ^{1,0}]=0=[\mu ^{0,1}\wedge \eta ^{0,1}],
\end{equation}
for all $\mu$ and $\eta$. Note also that, given
$\xi_{1},\xi_{2}\in\Omega^{1}(o(\underline{\mathbb{R}}^{n+1,1}))$
and $T\in\mathrm{End}(\R^{n+1,1})$,
\begin{equation}\label{eq:Adwedge}
[\,\mathrm{Ad}_{T}\,\xi_{1}\,\wedge\,\mathrm{Ad}_{T}\,\xi_{2}\,]=\mathrm{Ad}_{T}\,[\,\xi_{1}\wedge\xi_{2}\,].
\end{equation}
Lastly, suppose that $\nabla ^{1}$ and $\nabla ^{2}$ are connections
on $\underline{\R}^{n+1,1}$ related by
$$\nabla ^{1}=\nabla ^{2}+A,$$ for some $A\in\Omega
^{1}(\mathrm{End}(\underline{\R}^{n+1,1}))$. The respective
curvature tensors, $R^{\nabla ^{1}}$ and $R^{\nabla ^{2}}$, are
related by
\begin{equation}\label{eq:curvtens}
R^{\nabla ^{1}}=R^{\nabla ^{2}}+d^{\nabla
^{2}}A+\frac{1}{2}\,[A\wedge A],
\end{equation}
whilst the corresponding exterior derivatives, $d^{\nabla ^{1}}$ and
$d^{\nabla ^{2}}$, relate by
\begin{equation}\label{eq:ext}
d^{\nabla ^{1}}\xi=d^{\nabla ^{2}}\xi+[A\wedge \xi],
\end{equation}
for $\xi\in\Omega ^{1}(\mathrm{End}(\underline{\R}^{n+1,1}))$. As a
final remark, note that, in the case $\nabla^{2}$ is a metric
connection, the connection $\nabla^{1}$ is metric if and only if $A$
is skew-symmetric.

\subsection{The Gauss-Ricci and Codazzi equations}
The central sphere congruence $S:M\rightarrow
Gr_{(3,1)}(\R^{n+1,1})$ of a surface in $n$-space defines a
decomposition $d=\mathcal{D}+\mathcal{N}$ of the trivial flat
connection on $\underline{\R}^{n+1,1}$ into the sum of a connection
$\mathcal{D}$, with respect to which $S$ and $S^{\perp}$ are
parallel, and a $1$-form $\mathcal{N}$ with values in $S\wedge
S^{\perp}$. Explicitly, $\mathcal{D}
:=\nabla^{S}+\nabla^{S^{\perp}}$, for $\nabla^{S}$ and
$\nabla^{S^{\perp}}$ the connections on $S$ and $S^{\perp}$,
respectively, defined by orthogonal projection of $d$, and
$\mathcal{N} :=d-\mathcal{D}$. The flatness of $d$ encodes two
structure equations on $\mathcal{D}$ and $\mathcal{N}$.
\newline

Define a connection $\mathcal {D}$ on
$\underline{\mathbb{R}}^{n+1,1}$ by
$$\mathcal {D}:=\nabla ^{S}\circ \pi _{S}+
\nabla ^{S^{\perp}}\circ \pi _{S^{\perp}}.$$ For simplicity, and
only temporarily, denote $\pi _{S}$ and $\pi _{S^{\perp}}$ by
$(\,)^{T}$ and $(\,)^{\perp}$, respectively. Given
$\mu\in\Gamma(\underline{\mathbb{R}}^{n+1,1})$,
$$d\mu=(d\mu ^{T})^{T}+(d\mu ^{T})^{\perp}+(d\mu ^{\perp})^{T}+(d\mu
^{\perp})^{\perp}=\mathcal{D}\mu+(d\mu ^{T})^{\perp}+(d\mu
^{\perp})^{T}$$ and, given
$\eta\in\Gamma(\underline{\mathbb{R}}^{n+1,1})$, $$((d\mu
^{T})^{\perp},\eta)=(d\mu
^{T},\eta^{\perp})=-(\mu^{T},d\eta^{\perp})=-(\mu,(d\eta^{\perp})^{T}).$$
It is then clear that, for
$\mu,\eta\in\Gamma(\underline{\mathbb{R}}^{n+1,1})$,
$$d(\mu ,\eta )=(d\mu ,\eta )+(\mu ,d\eta )=(\mathcal {D}\mu
,\eta)+(\mu ,\mathcal {D}\eta);$$ $\mathcal {D}$ is a metric
connection. Thus
\begin{equation}\label{eq:dcurlyDcurlyN}
d=\mathcal {D}+\mathcal {N}
\end{equation}
defines a $1$-form $\mathcal
{N}\in\Omega^{1}(o(\underline{\mathbb{R}}^{n+1,1}))$. In fact,
$$\mathcal {N}=\pi _{S^{\perp}}\circ d \circ \pi _{S}+\pi _{S}\circ
d\circ \pi _{S^{\perp}}\in\Omega ^{1}(S\wedge S^{\perp}).$$ We may,
alternatively, use, specifically, the notations $\mathcal{D}_{S}$
and $\mathcal{N}_{S}$ for, respectively, $\mathcal{D}$ and
$\mathcal{N}$. It is very simple but very useful to note that
\begin{equation}\label{eq:calNLambda0}
\mathcal {N}\Lambda=0.
\end{equation}
Indeed, given $\sigma\in\Gamma(\Lambda)$ never-zero,
$\mathcal{N}\sigma=\pi _{S^{\perp}}\circ d\sigma$ and
$d\sigma\in\Omega^{1}(S)$. By the skew-symmetry of $\mathcal{N}$, it
follows, in particular, that $\mathrm{Im}\,\mathcal{N}\subset
\Lambda^{\perp}$ and, consequently, that
$\mathcal{N}S^{\perp}\subset S\cap\Lambda^{\perp}=\Lambda^{(1)}$.
Hence
\begin{equation}\label{eq:caklNLambdaperpSperp}
\mathcal{N}\in\Omega^{1}(\Lambda^{(1)}\wedge S^{\perp}).
\end{equation}

The flatness of $d$, characterized by
$$0=R^{\mathcal{D}}+d^{\mathcal{D}}\mathcal{N}+\frac{1}{2}\,[\mathcal{N}\wedge \mathcal{N}],$$
encodes two structure equations, as follows. The
$\mathcal{D}$-parallelness of $S$ and $S^{\perp}$ establishes
$\mathcal{D}(\Gamma (S\wedge S^{\perp}))\subset \Omega ^{1} (S\wedge
S^{\perp})$ and, therefore,
$d^{\mathcal{D}}\mathcal{N}\in\Omega^{2}(S\wedge S^{\perp})$.
Together with the fact that $\mathcal{D}$ is metric, it establishes,
on the other hand, $R^{\mathcal{D}}\in\Omega^{2}(\wedge^{2}S\oplus
\wedge^{2}S^{\perp})$. By equation \eqref{eq:wegdebrack}, we verify
that $[\mathcal{N}\wedge \mathcal{N}]\in\Omega
^{2}(\wedge^{2}S\oplus \wedge^{2}S^{\perp})$. According to the
decomposition \eqref{eq:PoplusM}, it follows then that:

\begin{prop}(Gauss-Ricci equation)
$$R^{\mathcal{D}}+\frac{1}{2}\,[\mathcal{N}\wedge \mathcal{N}]=0;$$
\end{prop}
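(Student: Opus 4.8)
The plan is to read the equation off from the flatness of $d$ together with the direct sum decomposition \eqref{eq:PoplusM}. Applying the curvature comparison formula \eqref{eq:curvtens} with $\nabla^{1}=d$, $\nabla^{2}=\mathcal{D}$ and $A=\mathcal{N}$, and using that $d$ is flat, so that $R^{d}=0$, I obtain the single identity
$$0=R^{\mathcal{D}}+d^{\mathcal{D}}\mathcal{N}+\frac{1}{2}\,[\mathcal{N}\wedge\mathcal{N}]$$
in $\Omega^{2}(o(\underline{\mathbb{R}}^{n+1,1}))$. The whole argument then consists of separating this identity along $o(\underline{\mathbb{R}}^{n+1,1})=\wedge^{2}S\oplus\wedge^{2}S^{\perp}\oplus S\wedge S^{\perp}$ and projecting onto the summand $\wedge^{2}S\oplus\wedge^{2}S^{\perp}$.

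Next I would locate each of the three terms within \eqref{eq:PoplusM}. Since $\mathcal{D}$ preserves both $S$ and $S^{\perp}$ and is metric, its curvature $R^{\mathcal{D}}$ preserves $S$ and $S^{\perp}$, hence lies in $\Omega^{2}(\wedge^{2}S\oplus\wedge^{2}S^{\perp})$. The bracket term lies in the same summand, as I discuss below. By contrast, the $\mathcal{D}$-parallelness of $S$ and $S^{\perp}$ gives $\mathcal{D}\,\Gamma(S\wedge S^{\perp})\subset\Omega^{1}(S\wedge S^{\perp})$, so that $d^{\mathcal{D}}\mathcal{N}$ remains valued in $S\wedge S^{\perp}$, i.e.\ in the complementary summand. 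Projecting the displayed identity onto $\wedge^{2}S\oplus\wedge^{2}S^{\perp}$ therefore annihilates $d^{\mathcal{D}}\mathcal{N}$ and leaves exactly $R^{\mathcal{D}}+\frac{1}{2}\,[\mathcal{N}\wedge\mathcal{N}]=0$, which is the Gauss-Ricci equation. (Projecting onto $S\wedge S^{\perp}$ instead yields $d^{\mathcal{D}}\mathcal{N}=0$, the Codazzi equation, so both structure equations fall out of the same splitting.)

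The argument has essentially no obstacle, since the three membership statements are already in place; the only point that requires a line of computation is confirming that $[\mathcal{N}\wedge\mathcal{N}]$ has no component in $S\wedge S^{\perp}$. For this I would use that $[\mathcal{N}\wedge\mathcal{N}]_{(X,Y)}=2[\mathcal{N}_{X},\mathcal{N}_{Y}]$ and expand locally: writing $\mathcal{N}_{Y}$ as a sum of terms $a\wedge b$ with $a\in\Gamma(S)$ and $b\in\Gamma(S^{\perp})$, formula \eqref{eq:wegdebrack} gives $[\mathcal{N}_{X},a\wedge b]=(\mathcal{N}_{X}a)\wedge b+a\wedge(\mathcal{N}_{X}b)$. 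Since $\mathcal{N}_{X}$ interchanges $S$ and $S^{\perp}$, the first summand lies in $\wedge^{2}S^{\perp}$ and the second in $\wedge^{2}S$, placing the whole bracket in $\Omega^{2}(\wedge^{2}S\oplus\wedge^{2}S^{\perp})$ as required.
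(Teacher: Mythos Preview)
Your proof is correct and follows essentially the same approach as the paper: start from the flatness identity $0=R^{\mathcal{D}}+d^{\mathcal{D}}\mathcal{N}+\tfrac{1}{2}[\mathcal{N}\wedge\mathcal{N}]$, locate each term in the decomposition \eqref{eq:PoplusM} using $\mathcal{D}$-parallelness of $S$, $S^{\perp}$ and formula \eqref{eq:wegdebrack}, and then project. The only cosmetic difference is that the paper invokes the flatness identity directly rather than citing \eqref{eq:curvtens}, and simply asserts the bracket membership ``by \eqref{eq:wegdebrack}'' where you spell out the computation.
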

and
\begin{prop}(Codazzi equation)
$$d^{\mathcal{D}}\mathcal{N}=0.$$
\end{prop}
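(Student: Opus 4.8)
The plan is to read off the Codazzi equation directly from the flatness of the trivial connection $d$, exploiting the type decomposition \eqref{eq:PoplusM}. First I would apply the curvature comparison formula \eqref{eq:curvtens} to the pair $\nabla^{2}=\mathcal{D}$, $\nabla^{1}=d$, with $A=\mathcal{N}$. Since $d$ is flat, $R^{d}=0$, this yields the single identity
$$R^{\mathcal{D}}+d^{\mathcal{D}}\mathcal{N}+\tfrac{1}{2}\,[\mathcal{N}\wedge\mathcal{N}]=0.$$
The entire proof then reduces to splitting this identity along the orthogonal direct sum $o(\underline{\R}^{n+1,1})=\wedge^{2}S\oplus\wedge^{2}S^{\perp}\oplus S\wedge S^{\perp}$.

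Next I would pin down the type of each summand. Because $\mathcal{D}$ makes both $S$ and $S^{\perp}$ parallel, it preserves the subbundle $S\wedge S^{\perp}$ of $o(\underline{\R}^{n+1,1})$; hence for the $(S\wedge S^{\perp})$-valued $1$-form $\mathcal{N}$ the exterior derivative $d^{\mathcal{D}}\mathcal{N}$ again takes values in $S\wedge S^{\perp}$, that is, $d^{\mathcal{D}}\mathcal{N}\in\Omega^{2}(S\wedge S^{\perp})$. For the remaining two terms, the metricity of $\mathcal{D}$ places $R^{\mathcal{D}}$ in $\Omega^{2}(o(\underline{\R}^{n+1,1}))$, while the $\mathcal{D}$-parallelness of $S$ and $S^{\perp}$ forces $R^{\mathcal{D}}$ to preserve each of them; thus $R^{\mathcal{D}}\in\Omega^{2}(\wedge^{2}S\oplus\wedge^{2}S^{\perp})$. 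Finally, using \eqref{eq:wegdebrack}, the bracket of two endomorphisms of type $S\wedge S^{\perp}$ (each interchanging $S$ and $S^{\perp}$) sends $S$ to $S$ and $S^{\perp}$ to $S^{\perp}$, so $[\mathcal{N}\wedge\mathcal{N}]\in\Omega^{2}(\wedge^{2}S\oplus\wedge^{2}S^{\perp})$ as well.

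With these types established, the conclusion is immediate: in the displayed identity the term $d^{\mathcal{D}}\mathcal{N}$ is the only one lying in $\Omega^{2}(S\wedge S^{\perp})$, whereas $R^{\mathcal{D}}$ and $\tfrac{1}{2}[\mathcal{N}\wedge\mathcal{N}]$ both lie in the complementary summand $\Omega^{2}(\wedge^{2}S\oplus\wedge^{2}S^{\perp})$. Projecting the identity onto $\Omega^{2}(S\wedge S^{\perp})$ therefore gives $d^{\mathcal{D}}\mathcal{N}=0$, which is the Codazzi equation; projecting onto the complementary summand simultaneously delivers the Gauss-Ricci equation.

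I do not expect a genuine obstacle here, since the substance of the argument has already been carried out above the statement in verifying the three type assignments. The only point requiring real care is the claim $d^{\mathcal{D}}\mathcal{N}\in\Omega^{2}(S\wedge S^{\perp})$, which rests entirely on the defining property of $\mathcal{D}$ — that it renders $S$ and $S^{\perp}$ parallel — and hence on the fact that the induced connection on $o(\underline{\R}^{n+1,1})$ restricts to a connection on the subbundle $S\wedge S^{\perp}$.
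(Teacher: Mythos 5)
Your proof is correct and follows exactly the paper's own argument: expand the flatness of $d=\mathcal{D}+\mathcal{N}$ via \eqref{eq:curvtens}, verify that $d^{\mathcal{D}}\mathcal{N}$ takes values in $S\wedge S^{\perp}$ while $R^{\mathcal{D}}$ and $[\mathcal{N}\wedge\mathcal{N}]$ take values in $\wedge^{2}S\oplus\wedge^{2}S^{\perp}$, and project along the decomposition \eqref{eq:PoplusM}. The type assignments and the splitting into Codazzi and Gauss--Ricci components are precisely those in the text preceding the proposition.
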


\chapter{Surfaces under change of flat metric
connection}\label{subsec:tranfsfmc}

\markboth{\tiny{A. C. QUINTINO}}{\tiny{CONSTRAINED WILLMORE
SURFACES}}

In many occasions throughout this work, we use an interpretation of
loop group theory by F. Burstall and D. Calderbank
\cite{burstall+calderbank} and produce transformations of surfaces
by the action of loops of flat metric connections. Specifically, by
replacing the trivial flat connection by another flat metric
connection $\tilde{d}$ on $\underline{\R}^{n+1,1}$, we transform (in
certain cases) a surface $\Lambda\subset \underline{\R}^{n+1,1}$
into a $\tilde{d}$-\textit{surface} $\tilde{\Lambda}$, or,
equivalently, into another surface $\tilde{\phi}\Lambda$, defined,
up to a M\"{o}bius transformation,\footnote{At some point, we shall
omit the indication "up to a M\"{o}bius transformation" and assume a
M\"{o}bius geometry point of view.} for
$\tilde{\phi}:(\underline{\R}^{n+1,1},\tilde{d})\rightarrow(\underline{\R}^{n+1,1},d)$
an isomorphism of bundles provided with a metric and a connection.
Many will be the examples in this work of such transformations
preserving the geometrical aspects of a class, i.e., establishing
symmetries of integrable systems. This tiny chapter is merely
introductory of the concept of $\tilde{d}$-surface.
\newline

Recall that the flatness of a bundle (over $M$) ensures the
existence of a local frame (defined on a simply connected component
of $M$) made up of parallel sections. Recall as well that, if the
bundle is also provided with a metric with respect to which the
connection is a metric connection, then there exists an orthonormal
local frame constituted by parallel sections.

Let $\tilde{d}$ be a flat metric connection on
$\underline{\R}^{n+1,1}$. Let $\mathcal{B}=(e_{i})_{i}$ and
$\tilde{\mathcal{B}}=(\tilde{e}_{i})_{i}$ be orthonormal local
frames of $\underline{\R}^{n+1,1}$, parallel with respect to $d$ and
$\tilde{d}$, respectively, $\tilde{d}\,\tilde{e}_{i}=0=d\,e_{i},$
for $i=1,...,n+2$; with $e_{n+2}$ and $\tilde{e}_{n+2}$ time-like.
Define an isometry
$\phi_{\tilde{\mathcal{B}}\,\mathcal{B}}:(\underline{\R}^{n+1,1},\tilde{d})\rightarrow
(\underline{\R}^{n+1,1},d)$ of bundles, preserving connections,
$$\phi_{\tilde{\mathcal{B}}\,\mathcal{B}}
\circ\tilde{d}=d\circ\phi_{\tilde{\mathcal{B}}\,\mathcal{B}},$$by
setting
$\phi_{\tilde{\mathcal{B}}\,\mathcal{B}}(\tilde{e}_{i})=e_{i}$, for
$i=1,...,n+2$. Observe that, although
$\phi_{\tilde{\mathcal{B}}\,\mathcal{B}}$ depends on the choice of
the frames $\mathcal{B}$ and $\tilde{\mathcal{B}}$, it is uniquely
determined up to a M\"{o}bius transformation. For that, first
suppose $\mathcal{B}'=(e'_{i})_{i}$ is another $d$-parallel
orthonormal local frame of $\underline{\R}^{n+1,1}$, with $e'_{n+2}$
time-like, define $T\in\Gamma (O(\underline {\R}^{n+1,1}))$ by
$T(e_{i})=e'_{i}$, for all $i$, and note that
$\phi_{\tilde{\mathcal{B}}\,\mathcal{B}'}=T\phi_{\tilde{\mathcal{B}}\,\mathcal{B}}$
and that $T$ is constant:
$$T\circ d=\phi_{\tilde{\mathcal{B}}\,\mathcal{B}'}
\phi_{\tilde{\mathcal{B}}\,\mathcal{B}}^{-1}\circ
d=\phi_{\tilde{\mathcal{B}}\,\mathcal{B}'}\circ
\tilde{d}\circ\phi_{\tilde{\mathcal{B}}\,\mathcal{B}}^{-1}=d\circ\phi_{\tilde{\mathcal{B}}\,\mathcal{B}'}\,
\phi_{\tilde{\mathcal{B}}\,\mathcal{B}}^{-1}=d\circ T.$$A similar
argument shows the independence, up to a M\"{o}bius transformation,
of $\phi_{\tilde{\mathcal{B}}\,\mathcal{B}}$ with respect to the
choice of the frame $\tilde{\mathcal{B}}$. Observe, on the other
hand, that any isomorphism
$\phi:(\underline{\R}^{n+1},\tilde{d})\rightarrow(\underline{\R}^{n+1},d)$
of bundles provided with a metric and a connection is of the form
$\phi_{\tilde{\mathcal{B}}\mathcal{B}}$ for some $\mathcal{B}$ and
$\tilde{\mathcal{B}}$: fixing an orthonormal $\tilde{d}$-parallel
local frame $\tilde{\mathcal{B}}=(\tilde{e}_{i})_{i}$ of
$\underline{\R}^{n+1,1}$ and setting $\mathcal{B}:=(\phi
(\tilde{e}_{i}))_{i}$, we define an orthonormal $d$-parallel local
frame of $\underline{\R}^{n+1,1}$ such that
$\phi=\phi_{\tilde{\mathcal{B}}\mathcal{B}}$.

Throughout this text, given a vector bundle $P$, provided with a
metric, and connections $\nabla$ and $\nabla'$ on $P$, by
isomorphism $(P,\nabla)\rightarrow (P,\nabla')$ shall be understood
isomorphism of bundles provided with a metric and a connection.

\begin{defn}
Given $V\subset\underline{\R}^{n+1,1}$ and an isomorphism
$$\phi_{\tilde{d}}:(\underline{\R}^{n+1,1},\tilde{d})\rightarrow
(\underline{\R}^{n+1,1},d),$$ the bundle
$\tilde{V}:=\phi_{\tilde{d}}\,V$ is said to be \emph{the
transformation of $V$ defined, up to a M\"{o}bius transformation, by
the flat metric connection $\tilde{d}$}.
\end{defn}

Henceforth, we shall omit the indication "up to a M\"{o}bius
transformation" and assume a M\"{o}bius geometry point of view.

\begin{rem}
Obviously, given $\nabla$ and $\nabla'$ connections on
$\underline{\R}^{n+1,1}$ and $\phi$ an endomorphism of
$\underline{\R}^{n+1,1}$, the hypothesis $\phi\circ\nabla\circ
\phi^{-1}=d=\phi\circ\nabla'\circ \phi^{-1}$ forces
$\nabla=\nabla'$. In particular, given a flat metric connection
$\hat{d}\neq\tilde{d}$ on $\underline{\R}^{n+1,1}$ and isomorphisms
$\phi_{\hat{d}}:(\underline{\R}^{n+1,1},\hat{d})\rightarrow
(\underline{\R}^{n+1,1},d)$ and
$\phi_{\tilde{d}}:(\underline{\R}^{n+1,1},\tilde{d})\rightarrow
(\underline{\R}^{n+1,1},d)$,
$$\phi_{\hat{d}}\neq\phi_{\tilde{d}}.$$
Of course, this does not exclude the possibility of, given a
subbundle $V$ of $\underline{\R}^{n+1,1}$, the transformations of
$V$ defined by $\phi_{\tilde{d}}$ and $\phi_{\hat{d}}$ being the
same.
\end{rem}

Let us concentrate on the particular case $V=\Lambda,$ a null line
bundle, not necessarily defining an immersion into the projectivized
light-cone; and on its transformation
$$\tilde{\Lambda}:=\phi_{\tilde{d}}\,\Lambda,$$into another null
line subbundle of $\underline{\R}^{n+1,1}$.

\begin{defn}
We say that $\Lambda$ is a $\tilde{d}$-surface if
$\mathrm{rank}\,\Lambda^{(1)}_{\tilde{d}}=3$, for
$$\Lambda^{(1)}_{\tilde{d}}:=\langle\sigma,\tilde{d}_{e_{1}}\sigma,\tilde{d}_{e_{2}}\sigma\rangle,$$
defined independently of the choices of a never-zero
$\sigma\in\Gamma (\Lambda)$ and of a local frame $(e_{i})_{i}$ of
$TM$. In the particular case $\tilde{d}=d$, we shall, alternatively,
omit the reference to $\tilde{d}$.
\end{defn}

It is, perhaps, worth remarking that a $\tilde{d}$-surface is not
necessarily a surface.

The fact that $\phi_{\tilde{d}}$ preserves the connections
$\tilde{d}$ and $d$ establishes
\begin{equation}\label{eq:phi1}
(\phi_{\tilde{d}}\,\Lambda)^{(1)}=\phi_{\tilde{d}}\,(\Lambda^{(1)}_{\tilde{d}}),
\end{equation}
and, therefore,
$\mathrm{rank}\,\tilde{\Lambda}^{(1)}=\mathrm{rank}\,\Lambda^{(1)}_{\tilde{d}}$,
and, ultimately:
\begin{prop}
$\tilde{\Lambda}$ is a surface if and only if $\Lambda$ is a
$\tilde{d}$-surface.
\end{prop}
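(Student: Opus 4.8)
The plan is to reduce the statement to the rank characterization of surfaces established earlier and to the already-derived identity \eqref{eq:phi1}, both of which do all the real work. Recall that, for a null line bundle, being a surface (that is, defining an immersion into $\mathbb{P}(\mathcal{L})$) was characterized by the condition that the associated first prolongation have rank $3$. First I would note that $\tilde{\Lambda}=\phi_{\tilde{d}}\,\Lambda$ is again a null line subbundle of $\underline{\R}^{n+1,1}$: the isomorphism $\phi_{\tilde{d}}$ is isometric, so it carries the null line $\Lambda$ to a null line, and it is fibrewise injective, so the image is a line bundle. Consequently the rank characterization applies verbatim to $\tilde{\Lambda}$, and $\tilde{\Lambda}$ is a surface if and only if $\mathrm{rank}\,\tilde{\Lambda}^{(1)}=3$.

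Next I would use \eqref{eq:phi1} together with the fact that a bundle isomorphism preserves the rank of any subbundle. Applying $\phi_{\tilde{d}}$ to $\Lambda^{(1)}_{\tilde{d}}$ and invoking \eqref{eq:phi1} gives
$$\mathrm{rank}\,\tilde{\Lambda}^{(1)}=\mathrm{rank}\,\phi_{\tilde{d}}\bigl(\Lambda^{(1)}_{\tilde{d}}\bigr)=\mathrm{rank}\,\Lambda^{(1)}_{\tilde{d}}.$$
Since, by definition, $\Lambda$ is a $\tilde{d}$-surface exactly when $\mathrm{rank}\,\Lambda^{(1)}_{\tilde{d}}=3$, chaining the equivalences yields the claim: $\tilde{\Lambda}$ is a surface if and only if $\mathrm{rank}\,\tilde{\Lambda}^{(1)}=3$, if and only if $\mathrm{rank}\,\Lambda^{(1)}_{\tilde{d}}=3$, if and only if $\Lambda$ is a $\tilde{d}$-surface.

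I do not expect any genuine obstacle, because the substantive step, \eqref{eq:phi1}, has already been verified from the fact that $\phi_{\tilde{d}}$ intertwines $\tilde{d}$ and $d$: for a never-zero $\sigma\in\Gamma(\Lambda)$, the section $\tilde{\sigma}:=\phi_{\tilde{d}}\,\sigma$ of $\tilde{\Lambda}$ satisfies $d_{X}\tilde{\sigma}=\phi_{\tilde{d}}(\tilde{d}_{X}\sigma)$, whence $\langle\tilde{\sigma},d_{e_{1}}\tilde{\sigma},d_{e_{2}}\tilde{\sigma}\rangle=\phi_{\tilde{d}}\langle\sigma,\tilde{d}_{e_{1}}\sigma,\tilde{d}_{e_{2}}\sigma\rangle$. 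The only point requiring a moment's care is the legitimacy of applying the earlier rank characterization to $\tilde{\Lambda}$, and this is secured by the preliminary observation that $\tilde{\Lambda}$ is itself a null line bundle.
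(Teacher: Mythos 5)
Your proposal is correct and follows essentially the same route as the paper: the identity \eqref{eq:phi1}, which the paper derives from the fact that $\phi_{\tilde{d}}$ preserves connections, gives $\mathrm{rank}\,\tilde{\Lambda}^{(1)}=\mathrm{rank}\,\Lambda^{(1)}_{\tilde{d}}$, and the conclusion then follows from the rank-$3$ characterization of surfaces together with the definition of $\tilde{d}$-surface. Your preliminary remark that $\tilde{\Lambda}$ is again a null line bundle, so that the rank characterization legitimately applies to it, is a detail the paper leaves implicit but is entirely in the same spirit.
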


An alternative perspective on the transformation of $\Lambda$ into
$\phi_{\tilde{d}}\,\Lambda$, is, in this way, that of the
transformation
$$\Lambda\subset(\underline{\R}^{n+1,1},d)\mapsto\Lambda\subset(\underline{\R}^{n+1,1}, \tilde{d}),$$
consisting of the change of the trivial flat connection on
$\underline{\R}^{n+1,1}$ into the flat metric connection
$\tilde{d}$.

In what little is left in this section, we introduce a few concepts
on $\tilde{d}$-surfaces. Suppose $\Lambda$ is a $\tilde{d}$-surface.
In that case, given a never-zero section $\sigma$ of $\Lambda$, we
define a positive definite metric $g_{\sigma}^{\tilde{d}}$ by
$$g_{\sigma}^{\tilde{d}}(X,Y):=(\tilde{d}_{X}\sigma
,\tilde{d}_{Y}\sigma ),$$ for $X,Y\in\Gamma (TM)$. Indeed,
\begin{equation}\label{eq:metricaeconformalclasstilde}
g_{\sigma}^{\tilde{d}}=g_{\phi_{\tilde{d}}\sigma}\in
\mathcal{C}_{\tilde{\Lambda}}=:\mathcal{C}_{\Lambda}^{\tilde{d}}.
\end{equation}

\begin{defn}
We define the $\tilde{d}$-$\emph{central}$ $\emph{sphere}$
$\emph{congruence}$ of $\Lambda$ by
\begin{equation}\label{eq:csc}
S^{\tilde{d}}:=\langle \sigma ,\tilde{d}_{e_{1}}\sigma
,\tilde{d}_{e_{2}}\sigma ,\sum _{i}
\tilde{d}_{e_{i}}\tilde{d}_{e_{i}}\sigma\rangle=\phi_{\tilde{d}}^{-1}\,S_{\tilde{\Lambda}},
\end{equation}
independently of the choices of a never-zero $\sigma\in\Gamma
(\Lambda)$ and of a local orthonormal frame $(e_{i})_{i}$ of $TM$
with respect to $g_{\sigma} ^{\tilde{d}}$.
\end {defn}

The non-degeneracy of $S_{\tilde{\Lambda}}$ ensures that of
$S^{\tilde{d}}$. Let $\pi _{S^{\tilde{d}}}$ and $\pi
_{(S^{\tilde{d}})^{\perp}}$ be the orthogonal projections of
$$\underline{\R}^{n+1,1}=S^{\tilde{d}}\oplus\,
(S^{\tilde{d}})^{\perp}$$ onto $S^{\tilde{d}}$ and
$(S^{\tilde{d}})^{\perp}$, respectively. We define a connection
$\mathcal{D}^{\tilde{d}}$ on $\underline{\R}^{n+1,1}$ by
$$\mathcal{D}^{\tilde{d}}:=\pi _{S^{\tilde{d}}}\circ
\tilde{d}\circ\pi_{S^{\tilde{d}}}+\pi
_{(S^{\tilde{d}})^{\perp}}\circ
\tilde{d}\circ\pi_{(S^{\tilde{d}})^{\perp}}$$ and a $1$-form
$\mathcal{N}^{\tilde{d}}\in\Omega^{1}(\mathrm{End}(\underline{\R}^{n+1,1}))$
by $$\mathcal{N}^{\tilde{d}}:=\tilde{d}-\mathcal{D}^{\tilde{d}}.$$
Note that
\begin{equation}\label{eq:DeNtildevsDLambdatilde}
\mathcal{D}^{\tilde{d}}=\phi_{\tilde{d}}^{-1}\circ
\mathcal{D}_{\tilde{\Lambda}}\circ\phi_{\tilde{d}},\,\,\,\,\mathcal{N}^{\tilde{d}}=\phi_{\tilde{d}}^{-1}
\mathcal{N}_{\tilde{\Lambda}}\phi_{\tilde{d}}.
\end{equation}

\chapter{Willmore surfaces}\label{willmsurf}

\markboth{\tiny{A. C. QUINTINO}}{\tiny{CONSTRAINED WILLMORE
SURFACES}}

Among the classes of Riemannian submanifolds, there is that of
Willmore surfaces, named after T. Willmore \cite{willmore2} (1965),
although the topic was mentioned by W. Blaschke \cite{blaschke}
(1929) and by G. Thomsen \cite{thomsen} (1923). Early in the
nineteenth century, S. Germain \cite{germain1}, \cite{germain2}
studied elastic surfaces. On her pioneering analysis, she claimed
that the elastic force of a thin plate is proportional to its mean
curvature. Since then, the mean curvature remains a key concept in
theory of elasticity. In modern literature on the elasticity of
membranes (see, for example, \cite{landau+lifschitz} and
\cite{lipowsky}), a weighted sum of the total mean curvature, the
total squared mean curvature and the total Gaussian curvature is
considered the elastic energy of a membrane. By neglecting the total
mean curvature (by physical considerations) and having in
consideration that the total Gaussian curvature of compact
orientable Riemannian surfaces without boundary is a topological
invariant, T. Willmore defined the Willmore energy of a compact
oriented Riemannian surface, without boundary, isometrically
immersed in $\R^{3}$, to be $\mathcal{W}=\int H^{2}dA$. The Willmore
functional ``extends" to isometric immersions of compact oriented
Riemannian surfaces in Riemannian manifolds by means of half of the
total squared norm of the trace-free part of the second fundamental
form, which, in fact, amongst surfaces in $\R^{3}$, differs from
$\mathcal{W}$ by the total Gaussian curvature, but still shares then
the critical points with $\mathcal{W}$. Willmore surfaces are the
extremals of the Willmore functional. W. Blaschke \cite{blaschke}
established the M\"{o}bius invariance of the Willmore energy of a
surface in spherical $3$-space. B.-Y. Chen \cite{chen} generalized
it to surfaces in constant curvature Riemnannian manifolds. We
present a manifestly conformally invariant formulation of the
Willmore energy of a surface in $n$-dimensional space-form,
$\mathcal {W}(\Lambda )=\frac{1}{2}\int _{M}(\mathcal {N}\wedge
*\mathcal{N})$. The class of Willmore surfaces in $n$-space is then
established as invariant under the group of M\"{o}bius
transformations of $S^{n}$.\footnote{In fact, we verify the
M\"{o}bius invariance of the Willmore energy of a general surface
and establish then the M\"{o}bius invariance of the class of
Willmore surfaces.}  As already known to Blaschke \cite{blaschke}
for the particular case of spherical $3$-space, the Willmore energy
of a surface in a space-form coincides with the energy of its
central sphere congruence. Furthermore, a result by Blaschke
\cite{blaschke} (for $n=3$) and N. Ejiri \cite{ejiri} (for general
$n$) characterizes Willmore surfaces in spherical $n$-space by the
harmonicity of the central sphere congruence. Via this
characterization, the class of Willmore surfaces in space-forms is
then associated to a class of harmonic maps into Grassmannians. This
enables us to apply to this class of surfaces the well-developed
integrable systems theory of harmonic maps into Grassmannian
manifolds, with a spectral deformation and B\"{a}cklund
transformations, cf. \cite{uhlenbeck} and \cite{uhlenbeck 89}. We
define in this way a spectral deformation of Willmore surfaces,
which we verify to coincide, up to reparametrization, with the one
presented in \cite{SD}, as well as \textit{B\"{a}cklund
transformations}, the latter arising from a more complex
construction, presented in a chapter below.\newline

\section{The Willmore functional}\label{willmfunct}

\markboth{\tiny{A. C. QUINTINO}}{\tiny{CONSTRAINED WILLMORE
SURFACES}}

In this section, we present a manifestly conformally invariant
formulation of the Willmore energy of a surface in a
space-form.\newline

We start by recalling the classical concept of Willmore energy of an
isometric immersion of a Riemannian surface into a Riemannian
manifold.

Consider a Riemannian manifold $(\bar{M},g)$ and an immersion
$f:M\to\bar{M}$. Provide $M$ with the metric $g_{_{f}}$ induced by
$f$ from $g$, making $f$ into an isometric immersion. Recall the
trace-free part of the second fundamental form of $f$,
$$\Pi ^{0}=\Pi
-g_{_{f}}\otimes\mathcal{H}\in\Gamma(L^2(TM,N_{f})).$$Suppose $M$ is
compact. The Willmore energy of $f$ is defined to be\footnote{In
fact, the Willmore energy is classically defined as \textit{half} of
the total squared norm of the trace-free part of the second
fundamental form. In either case, it generalizes the Willmore energy
of a surface in $\R^{3}$, as defined by T. Willmore, only up to some
constant and, in this case, some scaling (as we shall verify later
on). Although not sharing extremes, all these different energies
share extremals. The reason for this scaling of the Willmore energy
by $2$ is avoiding some scaling when comparing the Willmore energy
of a surface to the energy of its central sphere congruence, to take
place in section \ref{subsec:willmenergy} below.}
$$\mathcal {W}(f):=\int_{M}|\Pi^{0}|^2\mathrm{d}A,$$ for $$|\Pi
^{0}|^2:=\sum _{i,j}( \Pi ^{0}(X_{i},X_{j}),\Pi
^{0}(X_{i},X_{j})),$$ defined independently of the choice of a local
orthonormal frame $(X_{i})_{i=1,2}$ of $TM$, and $\mathrm{d}A$ the
area element of $M$. Let $g'$ be a metric on $\bar{M}$ conformally
equivalent to $g$, $g'=e^{2u}g$, for some $u\in
C^{\infty}(\bar{M},\R)$. Let $g'_{f}$ denote the metric induced in
$M$ by $f$ from $g'$ and $\Pi'$ denote the second fundamental form
of $f:M\rightarrow (\bar{M},g')$. Following
\eqref{eq:conformalvarianceoftheMCV}, we conclude that the
trace-free part of the second fundamental form is invariant under
conformal changes of the metric,
$$\Pi^{0}=(\Pi^{0})',$$ for $\Pi^{0}$ and $(\Pi^{0})'$ the
trace-free parts of $\Pi$ and $\Pi'$, respectively. Ultimately, we
conclude that
$$(|(\Pi^{0})'|')^2=e^{-2u\circ f}\,|\Pi^{0}|^{2},$$with $'$ indicating, yet
again, ``with respect to $g'$ ". On the other hand, according to
Lemma \ref{volconfchanges},
$$dA'=e^{2u\circ f}dA,$$relating the area element $dA'$ of $(M,g'_{f})$ to the area element
of $(M,g_{f})$. Under a conformal change of the metric, the square
of the length of $\Pi^{0}$ and the area element of $M$ change in an
inverse way, leaving the Willmore energy unchanged. It follows that:
\begin{thm}\label{Wenpres}
The Willmore energy is a M\"{o}bius invariant.
\end{thm}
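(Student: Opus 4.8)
The plan is to reduce the statement to a pointwise computation showing that the integrand $|\Pi^{0}|^{2}\,\mathrm{d}A$ is unchanged under a conformal rescaling $g'=e^{2u}g$ of the ambient metric on $\bar{M}$, after which the invariance of $\mathcal{W}(f)=\int_{M}|\Pi^{0}|^{2}\,\mathrm{d}A$ follows immediately by integration over the compact surface $M$. The two ingredients are the conformal behaviour of the trace-free second fundamental form and the compensating behaviour of the area element.

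First I would establish that $\Pi^{0}$ is a genuine conformal invariant, $\Pi^{0}=(\Pi^{0})'$. This is the key conceptual step and follows by combining \eqref{eq:howPichanges} and \eqref{eq:conformalvarianceoftheMCV}. Writing $(\Pi^{0})'=\Pi'-g'_{f}\otimes\mathcal{H}'$ with $g'_{f}=e^{2u\circ f}g_{f}$, the factor $e^{2u\circ f}$ in $g'_{f}$ cancels the factor $e^{-2u\circ f}$ appearing in $\mathcal{H}'$, while the two occurrences of the normal projection $\pi_{N_{f}}(f^{*}(du)^{*})$ — one coming from the change in $\Pi$ via \eqref{eq:howPichanges}, the other from the change in $\mathcal{H}$ via \eqref{eq:conformalvarianceoftheMCV} — cancel against each other. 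What remains is exactly $\Pi-g_{f}\otimes\mathcal{H}=\Pi^{0}$.

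Next I would track the conformal weights in the squared norm. If $(X_{i})_{i}$ is a $g_{f}$-orthonormal frame of $TM$, then $(e^{-u\circ f}X_{i})_{i}$ is $g'_{f}$-orthonormal, and the induced metric on the normal bundle $N_{f}$ scales by $e^{2u\circ f}$. Inserting these into the definition $|\Pi^{0}|^{2}=\sum_{i,j}(\Pi^{0}(X_{i},X_{j}),\Pi^{0}(X_{i},X_{j}))$ and using $\Pi^{0}=(\Pi^{0})'$: each of the two arguments $(\Pi^{0})'(e^{-u\circ f}X_{i},e^{-u\circ f}X_{j})$ acquires a factor $e^{-2u\circ f}$ by bilinearity, and the normal-bundle metric contributes $e^{2u\circ f}$, for a net factor $e^{-4u\circ f}\cdot e^{2u\circ f}=e^{-2u\circ f}$; hence $(|(\Pi^{0})'|')^{2}=e^{-2u\circ f}|\Pi^{0}|^{2}$. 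On the other hand, Lemma \ref{volconfchanges} applied to the surface $(M,g_{f})$ with $g'_{f}=e^{2u\circ f}g_{f}$ and $n=2$ yields $\mathrm{d}A'=e^{2u\circ f}\,\mathrm{d}A$. Multiplying the two relations gives $(|(\Pi^{0})'|')^{2}\,\mathrm{d}A'=|\Pi^{0}|^{2}\,\mathrm{d}A$, so that $\mathcal{W}(f)$ is independent of the choice of metric in the conformal class.

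The main obstacle is purely bookkeeping: arranging for the two conformal weights to appear with exactly opposite powers. The factor $e^{-2u\circ f}$ in the squared norm must precisely balance the factor $e^{2u\circ f}$ in the area element, and it is here that the surface dimension $n=2$ is decisive — in higher dimensions the corresponding integrand would fail to be conformally invariant. The genuine conceptual content, namely the invariance $\Pi^{0}=(\Pi^{0})'$, is already encoded in \eqref{eq:howPichanges} and \eqref{eq:conformalvarianceoftheMCV}, so once that cancellation is observed the remainder of the argument is routine.
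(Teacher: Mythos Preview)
Your proof is correct and follows essentially the same route as the paper: establish $\Pi^{0}=(\Pi^{0})'$ from \eqref{eq:howPichanges} and \eqref{eq:conformalvarianceoftheMCV}, compute the conformal weight of $|\Pi^{0}|^{2}$, and cancel it against the area element via Lemma~\ref{volconfchanges}. Your treatment of the norm scaling is in fact slightly more explicit than the paper's, but the argument is the same.
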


The M\"{o}bius invariance of the Willmore energy of a surface in
spherical $3$-space was first established by W. Blaschke
\cite{blaschke}. B.-Y. Chen \cite{chen} generalized it to surfaces
in constant curvature Riemannian manifolds\footnote{We shall compute
the Willmore energy in this special case later on in this section.}.

Next we present a manifestly conformally invariant formulation of
the Willmore energy of a surface in a space-form. Let
$\Lambda\subset\underline{\R}^{n+1,1}$ be a surface in the
projectivized light-cone. We consider $o(\underline
{\mathbb{R}}^{n+1,1})$ provided with the metric induced by
$\mathrm{End}(\underline{\R}^{n+1,1})$: given $\alpha,\beta\in o
(\mathbb{R}^{n+1,1})$, $(\alpha,\beta)=-\mathrm{tr}\,\alpha\beta$.
Fixing a conformal structure in $M$, and having in consideration the
invariance of the Hodge $*$-operator on $1$-forms over $M$ under
conformal changes of the metric on $M$, we have well-defined a
$2$-form $(\mathcal {N}\wedge *\mathcal{N})$ over $M$ with values in
$\R$.

\begin{defn}\label{defWenofLambda} We define
the \emph{Willmore energy} of $\Lambda$ to be
$$\mathcal {W}(\Lambda ):=\frac{1}{2}\int _{M}(\mathcal {N}\wedge *\mathcal{N}),$$
with respect to the conformal structure induced in $M$ by $\Lambda$.
\end {defn}

The previous definition follows the definition of energy of the mean
curvature sphere congruence\footnote{For the relationship between
the mean curvature sphere congruence and the central sphere
congruence, see Section \ref{mcscVScsc}.} of a surface in spherical
$4$-space, presented in \cite{quaternionsbook}. Now fix a non-zero
$v_{\infty}$ in $\mathbb{R}^{n+1,1}$ and consider the surface
$\sigma _{\infty}:M\rightarrow S_{v_{\infty}}$, in the space-form
$S_{v_{\infty}}$, defined by $\Lambda$. The immersions $\Lambda$ and
$\sigma_{\infty}$ are related by
$$\Lambda=\pi_{\mathcal{L}}\vert_{S_{v_{\infty}}}\circ \sigma_{\infty}$$ via the
conformal diffeomorphism
$\pi_{\mathcal{L}}\vert_{S_{v_{\infty}}}:S_{v_{\infty}}\rightarrow
\mathbb{P}(\mathcal{L})\backslash\mathbb{P}(\mathcal{L}\cap\langle
v_{\infty}\rangle^{\perp})$. Hence the Willmore energy of
$\Lambda:M\rightarrow(\mathbb{P}(\mathcal{L}),h)$, fixing
$h\in\mathcal{C}_{\mathbb{P}(\mathcal{L})}$, coincides with the
Willmore energy of $\sigma_{\infty}$. The Willmore energy of the
conformal immersion $\Lambda:(M,\mathcal{C}_{\Lambda})\rightarrow
(\mathbb{P}(\mathcal{L}),\mathcal{C}_{\mathbb{P}(\mathcal{L})})$
consists of the Willmore energy of $\Lambda$ as an immersion of $M$
into the projectivized light-cone provided with a metric in
$\mathcal{C}_{\mathbb{P}(\mathcal{L})}$ (chosen arbitrarily), as
established next:

\begin{thm}\label{WsigmaeWLambda}
The Willmore energy of the conformal immersion $\Lambda$ coincides
with the Willmore energy of $\sigma_{\infty}$,
\begin{equation}\label{eq:relationbetweenWillmoreenergies}
\mathcal {W}(\Lambda )=\mathcal {W}(\sigma _{\infty}).
\end{equation}
\end{thm}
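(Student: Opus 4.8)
The plan is to compute the conformally invariant integrand $\mathcal{N}\wedge *\mathcal{N}$ pointwise in the metric $g_{\infty}:=g_{\sigma_{\infty}}$, which represents $\mathcal{C}_{\Lambda}$, and to recognize it as a constant multiple of the squared norm of the trace-free second fundamental form $\Pi_{\infty}^{0}=\Pi_{\infty}-g_{\infty}\otimes\mathcal{H}_{\infty}$ of $\sigma_{\infty}$. Since $\mathcal{N}\wedge *\mathcal{N}$ is a genuine real-valued $2$-form and, by Lemma \ref{hodgeconfinv}, the Hodge $*$-operator on $1$-forms is conformally invariant, its integral is unambiguous and may be evaluated on any positively oriented $g_{\infty}$-orthonormal frame $e_{1},e_{2}=Je_{1}$ of $TM$, integrating against the area element $dA_{\infty}$ of $(M,g_{\infty})$. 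It therefore suffices to prove the pointwise identity $(\mathcal{N}\wedge *\mathcal{N})(e_{1},e_{2})=2\,|\Pi_{\infty}^{0}|^{2}$.

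The first and crucial step is to identify $\mathcal{N}$, acting on tangent directions, with $\Pi_{\infty}^{0}$ through the isomorphism $\mathcal{Q}$ of Proposition \ref{identofnormals}. Since $d_{X}\sigma_{\infty}\in\Gamma(\Lambda^{(1)})\subset\Gamma(S)$ and $\mathcal{N}=d-\mathcal{D}$, tensoriality gives $\mathcal{N}_{Y}(d_{X}\sigma_{\infty})=\pi_{S^{\perp}}(d_{Y}d_{X}\sigma_{\infty})$. Differentiating $(\sigma_{\infty},\sigma_{\infty})=0$ and $(\sigma_{\infty},v_{\infty})=-1$ twice yields $(d_{Y}d_{X}\sigma_{\infty},\sigma_{\infty})=-g_{\infty}(X,Y)$ and $(d_{Y}d_{X}\sigma_{\infty},v_{\infty})=0$, so that in $\underline{\R}^{n+1,1}=d\sigma_{\infty}(TM)\oplus N_{\infty}\oplus\langle\sigma_{\infty},v_{\infty}\rangle$ the vector $d_{Y}d_{X}\sigma_{\infty}$ has tangential part in $\Lambda^{(1)}$, normal part $\Pi_{\infty}(X,Y)\in N_{\infty}$, and $\langle\sigma_{\infty},v_{\infty}\rangle$-part $g_{\infty}(X,Y)\big((v_{\infty},v_{\infty})\sigma_{\infty}+v_{\infty}\big)$. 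Projecting onto $S^{\perp}$ kills the tangential and $\sigma_{\infty}$ pieces, the normal piece maps to $\mathcal{Q}(\Pi_{\infty}(X,Y))$, and the main obstacle is the $v_{\infty}$-contribution. I would dispatch it by checking, against an arbitrary $\mathcal{Q}(\xi)\in\Gamma(S^{\perp})$ and using the isometry of $\mathcal{Q}$ together with $(\mathcal{Q}(\xi),v_{\infty})=-(\xi,\mathcal{H}_{\infty})$, that $\pi_{S^{\perp}}(v_{\infty})=\mathcal{Q}(-\mathcal{H}_{\infty})$. This is exactly the trace correction: it converts $\Pi_{\infty}$ into its trace-free part and gives $\mathcal{N}_{Y}(d_{X}\sigma_{\infty})=\mathcal{Q}\big(\Pi_{\infty}(X,Y)-g_{\infty}(X,Y)\mathcal{H}_{\infty}\big)=\mathcal{Q}(\Pi_{\infty}^{0}(X,Y))$, reflecting that the central sphere congruence absorbs the mean curvature.

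Next I would assemble the integrand. From the action just computed, together with $\mathcal{N}\sigma_{\infty}=0$ (equation \eqref{eq:calNLambda0}), the inclusion $\mathcal{N}\in\Omega^{1}(\Lambda^{(1)}\wedge S^{\perp})$ (equation \eqref{eq:caklNLambdaperpSperp}), and skew-symmetry, it follows that $\mathcal{N}_{e_{i}}=\sum_{j}d\sigma_{\infty}(e_{j})\wedge\mathcal{Q}(\Pi_{\infty}^{0}(e_{i},e_{j}))$, these values on $\sigma_{\infty}$, on the $d\sigma_{\infty}(e_{k})$ and on $\mathcal{Q}(N_{\infty})$ determining $\mathcal{N}_{e_{i}}$ since it annihilates the orthogonal complement of $\Lambda^{(1)}\oplus S^{\perp}$. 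Using $*\mathcal{N}=-\mathcal{N}J$ (equation \eqref{eq:*vsJ}) and $Je_{1}=e_{2}$, $Je_{2}=-e_{1}$, one obtains $(\mathcal{N}\wedge *\mathcal{N})(e_{1},e_{2})=\sum_{i}(\mathcal{N}_{e_{i}},\mathcal{N}_{e_{i}})$. Evaluating the $o(\underline{\R}^{n+1,1})$-inner product via $(a\wedge b,c\wedge d)=2[(a,c)(b,d)-(a,d)(b,c)]$ — the factor $2$ arising from the $-\mathrm{tr}$ convention under $\wedge^{2}\R^{n+1,1}\cong o(\R^{n+1,1})$ — the cross terms drop out by $d\sigma_{\infty}(e_{j})\perp S^{\perp}$, while orthonormality of the $d\sigma_{\infty}(e_{j})$ and the isometry of $\mathcal{Q}$ give $\sum_{i}(\mathcal{N}_{e_{i}},\mathcal{N}_{e_{i}})=2\sum_{i,j}(\Pi_{\infty}^{0}(e_{i},e_{j}),\Pi_{\infty}^{0}(e_{i},e_{j}))=2\,|\Pi_{\infty}^{0}|^{2}$.

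Putting this together, $\mathcal{W}(\Lambda)=\frac{1}{2}\int_{M}(\mathcal{N}\wedge *\mathcal{N})=\frac{1}{2}\int_{M}2\,|\Pi_{\infty}^{0}|^{2}\,dA_{\infty}=\int_{M}|\Pi_{\infty}^{0}|^{2}\,dA_{\infty}=\mathcal{W}(\sigma_{\infty})$. The factor $2$ produced by the wedge metric is exactly cancelled by the normalizing $\frac{1}{2}$ in the definition of $\mathcal{W}(\Lambda)$, which is precisely the scaling convention anticipated in the footnote to the definition of the classical Willmore energy $\mathcal{W}(f)$.
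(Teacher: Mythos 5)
Your reduction to a pointwise identity and your two key computations --- $\pi_{S^{\perp}}(v_{\infty})=-\mathcal{Q}(\mathcal{H}_{\infty})$ and $\mathcal{N}_{X}(d_{Y}\sigma_{\infty})=\mathcal{Q}(\Pi^{0}_{\infty}(X,Y))$ --- are exactly the paper's own steps (equation \eqref{eq:piperpvinfisQHinf} and the displayed identity in the proof of Theorem \ref{WsigmaeWLambda}), proved the same way. Where you diverge from the paper is the assembly step, and there you have a genuine gap: the claimed expansion $\mathcal{N}_{e_{i}}=\sum_{j}d\sigma_{\infty}(e_{j})\wedge\mathcal{Q}(\Pi^{0}_{\infty}(e_{i},e_{j}))$ is not justified by the data you invoke, and it is in general false. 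A section of $\Lambda^{(1)}\wedge S^{\perp}$ has the general form $\sigma_{\infty}\wedge\mu_{0}+\sum_{j}d_{e_{j}}\sigma_{\infty}\wedge\mu_{j}$ with $\mu_{0},\mu_{j}\in\Gamma(S^{\perp})$, and the summand $\sigma_{\infty}\wedge\mu_{0}$ annihilates all of $\Lambda^{(1)}$ and maps $S^{\perp}$ into $\Lambda$, because $\sigma_{\infty}$ is null and orthogonal to $d\sigma_{\infty}(TM)$ and to $S^{\perp}$. Hence the action of $\mathcal{N}_{e_{i}}$ on $\Lambda^{(1)}$ determines the $\mu_{j}$ but is completely blind to $\mu_{0}$, and skew-symmetry then pins down $\mathcal{N}_{e_{i}}$ on $S^{\perp}$ only modulo $\Lambda$, since $(\Lambda^{(1)})^{\perp}\cap S=\Lambda$. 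Your remark that $\mathcal{N}_{e_{i}}$ ``annihilates the orthogonal complement of $\Lambda^{(1)}\oplus S^{\perp}$'' does not close this: that orthogonal complement \emph{is} $\Lambda$, which already lies inside $\Lambda^{(1)}$, so $\Lambda^{(1)}+S^{\perp}+\Lambda$ still misses one direction of $\underline{\R}^{n+1,1}$, namely a fourth frame vector $\hat{\sigma}_{\infty}$ of $S$ (null, with $(\hat{\sigma}_{\infty},\sigma_{\infty})=-1$ and orthogonal to $d\sigma_{\infty}(TM)$). In fact $\mathcal{N}_{e_{i}}=\sum_{j}d_{e_{j}}\sigma_{\infty}\wedge\mathcal{Q}(\Pi^{0}_{\infty}(e_{i},e_{j}))-\sigma_{\infty}\wedge\pi_{S^{\perp}}(d_{e_{i}}\hat{\sigma}_{\infty})$, and the last term has no reason to vanish in general.

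Fortunately the gap is localized and repairable, because the omitted term is invisible to the norm: every inner product it enters involves $(\sigma_{\infty},\sigma_{\infty})=0$, $(\sigma_{\infty},d_{e_{j}}\sigma_{\infty})=0$ or $(\sigma_{\infty},S^{\perp})=0$, so with $(a\wedge b,c\wedge d)=2\left[(a,c)(b,d)-(a,d)(b,c)\right]$ (your factor of $2$ is correct) one still obtains $(\mathcal{N}_{e_{i}},\mathcal{N}_{e_{i}})=2\sum_{j}(\Pi^{0}_{\infty}(e_{i},e_{j}),\Pi^{0}_{\infty}(e_{i},e_{j}))$ whatever $\mu_{0}$ is. So either insert the missing term into your expansion and verify explicitly that it pairs to zero with itself and with all retained terms, or argue as the paper does: write $\mathcal{N}=\alpha-\alpha^{t}$ with $\alpha=\pi_{S^{\perp}}\circ d\circ\pi_{S}$ (using \eqref{eq:transposeVwedgeVperp}), reduce $(\mathcal{N}_{e_{i}},\mathcal{N}_{e_{i}})$ to $2\,\mathrm{tr}(\alpha^{t}_{e_{i}}\alpha_{e_{i}}\vert_{S})$, and evaluate that trace in the dual frames $(\sigma_{\infty},d_{e_{1}}\sigma_{\infty},d_{e_{2}}\sigma_{\infty},\hat{\sigma}_{\infty})$ and $(-\hat{\sigma}_{\infty},d_{e_{1}}\sigma_{\infty},d_{e_{2}}\sigma_{\infty},-\sigma_{\infty})$; there the only term involving the unknown $\alpha_{e_{i}}(\hat{\sigma}_{\infty})$ is $(\alpha_{e_{i}}(\hat{\sigma}_{\infty}),-\alpha_{e_{i}}(\sigma_{\infty}))$, which vanishes because $\alpha_{e_{i}}(\sigma_{\infty})=0$, with no need to know $\alpha_{e_{i}}(\hat{\sigma}_{\infty})$ at all. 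With either repair, the rest of your argument --- the conformal invariance of the integrand, the identity $(\mathcal{N}\wedge *\mathcal{N})(e_{1},e_{2})=\sum_{i}(\mathcal{N}_{e_{i}},\mathcal{N}_{e_{i}})$ via \eqref{eq:*vsJ}, and the final bookkeeping of constants --- is correct and the theorem follows.
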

\begin{proof}
The Willmore energy of $\sigma_{\infty}$ is given by
$\int_{M}|\Pi^{0}_{\infty}|^2\mathrm{d}A_{\infty}$, for $\Pi
^{0}_{\infty}$ the trace-free part of $\Pi _{\infty}$ and
$\mathrm{d}A_{\infty}$ the area element of $M$ when provided with
the metric $g_{\infty}$. On the other hand, $(\mathcal {N}\wedge
*\mathcal{N})=-(*\mathcal{N}\wedge\mathcal{N})$ is a conformally
invariant way of writing $(\mathcal {N},\mathcal {N})_{g}dA_{g}$,
$$(\mathcal {N}\wedge *\mathcal{N})=(\mathcal {N},\mathcal
{N})_{g}dA_{g},$$for $g$ in $\mathcal {C}_{\Lambda}$, with $dA_{g}$
denoting the area element of $(M,g)$ and $(\,,\,)_{g}$ denoting the
Hilbert-Schmidt metric on $L((TM,g), o(\underline
{\mathbb{R}}^{n+1,1}))$. In particular, $(\mathcal {N}\wedge
*\mathcal{N})=(\mathcal {N},\mathcal
{N})_{g_{_{\infty}}}dA_{\infty}$. The proof of the theorem will
consist of showing that $(\mathcal {N},\mathcal
{N})_{g_{\infty}}=2|\Pi ^{0}_{\infty}|^2.$

For simplicity, set $\alpha :=\pi _{S^{\perp}}\circ d \circ \pi
_{S}\in\Omega ^{1}(\mathrm {End}(\underline{\mathbb{R}}^{n+1,1}))$.
According to equation \eqref{eq:transposeVwedgeVperp},
$$\mathcal{N}=\mathcal{N}\vert_{S}+\mathcal{N}\vert_{S^{\perp}}=\alpha -\alpha ^{t},$$ where $t$ indicates the transpose with
respect to the metric on $\R ^{n+1,1}$. Fix a local orthonormal
frame $\{X_{i}\}_{i}$ of $TM$ with respect to $g_{\infty}$.
Obviously, $\alpha\alpha=0$, so
$$(\mathcal {N},\mathcal {N})_{g_{\infty}}=\sum _{i}(\mathcal
{N}_{X_{i}},\mathcal {N}_{X_{i}})=-\sum _{i}\mathrm {tr}(\mathcal
{N}_{X_{i}}\mathcal {N}_{X_{i}}) =\sum _{i}(\mathrm {tr}(\alpha
_{X_{i}}\alpha ^{t}_{X_{i}}) +\mathrm {tr}(\alpha ^{t}_{X_{i}}
\alpha _{X_{i}}))$$ and, therefore,
$$(\mathcal {N},\mathcal {N})_{g_{\infty}}=2\sum _{i}\mathrm {tr}(\alpha
^{t}_{X_{i}} \alpha _{X_{i}}\vert _{S}),$$ having in consideration
that $\alpha ^{t}_{X_{i}} \alpha _{X_{i}}$ vanishes on $S^{\perp}$.
Recall that, if $(e_{i})_{i}$ and $(\hat e_{i})_{i}$ are dual basis
of a vector space $E$ provided with a metric $(\,,\,)$ (possibly
with signature), i.e., bases related by $(e_{i},\hat e_{j})=\delta
_{ij},\forall i,j$, then, given $\mu\in \mathrm {End}(E)$, $\mathrm
{tr}(\mu)=\sum _{i}(\mu (e_{i}),\hat e_{i})$. We shall now get two
dual frames of $S$, in order to compute $\mathrm {tr}(\alpha
^{t}_{X_{i}}\alpha _{X_{i}}\vert _{S}).$ Observe that, together, the
conditions $(\hat {\sigma}_{\infty},\hat {\sigma}_{\infty})=0$,
$(\sigma_{\infty},\hat {\sigma}_{\infty})=-1$ and $(\hat
{\sigma}_{\infty},d\sigma _{\infty})=0$ determine uniquely a section
$\hat {\sigma}_{\infty}$ of $S$. In  fact, as $d\sigma
_{\infty}(TM)$ is a bundle of $(2,0)$-planes, its orthogonal
complement in $S$ is a bundle of $(1,1)$-planes,
$$(d\sigma _{\infty}(TM))^{\perp}\cap S=
\underline{\R}^{1,1},$$ which, by the nullity of $\hat
{\sigma}_{\infty}$, restricts $\hat {\sigma}_{\infty}$ to two light
lines, one of which is $\langle\sigma_{\infty}\rangle$. The
condition $(\sigma_{\infty},\hat {\sigma}_{\infty})=-1$ shows that
$\hat {\sigma}_{\infty}$ is not in $\langle\sigma_{\infty}\rangle$
and, ultimately, determines $\hat {\sigma}_{\infty}$. The metric
relation between $\sigma _{\infty}$ and $\hat \sigma _{\infty}$
shows that $\hat \sigma _{\infty}\notin\Lambda ^{(1)}$, telling us
that $(\sigma _{\infty},d_{X_{1}}\sigma _{\infty}, d_{X_{2}}\sigma
_{\infty}, \hat\sigma _{\infty})$ forms a frame of $S$. The dual
frame of $S$ is the frame $(-\hat \sigma _{\infty},d_{X_{1}}\sigma
_{\infty}, d_{X_{2}}\sigma _{\infty}, -\sigma _{\infty})$. Thus
\begin{eqnarray*}
\mathrm {tr}(\alpha ^{t}_{X_{i}}\alpha _{X_{i}}\vert _{S})&=&
(\alpha ^{t}_{X_{i}}\alpha _{X_{i}}(\sigma _{\infty}),-\hat \sigma
_{\infty})+(\alpha _{X_{i}}(d_{X_{1}}\sigma _{\infty}),\alpha
_{X_{i}}(d_{X_{1}}\sigma _{\infty}))\\ & & \mbox{}+ (\alpha
_{X_{i}}(d_{X_{2}}\sigma _{\infty}),\alpha _{X_{i}}(d_{X_{2}}\sigma
_{\infty}))+(\alpha _{X_{i}}(\hat \sigma _{\infty}),-\alpha
_{X_{i}}(\sigma _{\infty}))
\end{eqnarray*}
and, consequently,
\begin{eqnarray*}
\mathrm {tr}(\alpha ^{t}_{X_{i}}\alpha _{X_{i}}\vert _{S})
 &=&\sum
_{j}(\alpha _{X_{i}}(d_{X_{j}}\sigma _{\infty}),\alpha
_{X_{i}}(d_{X_{j}}\sigma _{\infty}))\\&=&\sum _{j}(\mathcal
{N}_{X_{i}}(d_{X_{j}}\sigma _{\infty}),\mathcal
{N}_{X_{i}}(d_{X_{j}}\sigma _{\infty})).
\end{eqnarray*}

Now recall the identification between $N_{\infty}$ and $S^{\perp}$
via the isometric isomorphism $\mathcal{Q}$, presented in
Proposition \ref{identofnormals}. For $\xi\in
N_{\infty}\subset\langle\sigma_{\infty},v_{\infty}\rangle ^{\perp}$,
$$(\mathcal {Q}(\xi ),\pi _{S^{\perp}}(v_{\infty}))= (\mathcal
{Q}(\xi ),v_{\infty})=(\xi ,v_{\infty})+(\xi ,\mathcal
{H}_{\infty})(\sigma _{\infty},v_{\infty})= -(\mathcal
{Q}(\xi),\mathcal {Q}(\mathcal {H}_{\infty})),$$ establishing
$(\mu,\pi _{S^{\perp}}(v_{\infty})+\mathcal {Q}(\mathcal
{H}_{\infty}))=0$, for all $\mu\in S^{\perp},$ and, therefore,
\begin{equation}\label{eq:piperpvinfisQHinf}
\pi _{S^{\perp}}(v_{\infty})=-\mathcal {Q}(\mathcal {H}_{\infty}).
\end{equation}
Let $\pi _{N_{\infty}}$ denote the orthogonal projection of
$\underline {\mathbb{R}}^{n+1,1}=d\sigma _{\infty}(TM)\oplus N
_{\infty}\oplus \langle v_{\infty},\sigma _{\infty}\rangle$ onto
$N_{\infty}$. For arbitrary $X,Y\in \Gamma (TM)$, write
$d_{X}d_{Y}\sigma _{\infty}=\gamma +\eta +\beta v_{\infty}+\lambda
\sigma _{\infty}$, with $\gamma\in\Gamma (d\sigma _{\infty}(TM)),
\eta \in\Gamma (N _{\infty})$ and $\beta ,\lambda\in \Gamma
(\underline {\mathbb {R}})$. In fact, we can be more precise:
$$\beta =-(d_{X}d_{Y}\sigma _{\infty},\sigma
_{\infty})=(d_{Y}\sigma _{\infty},d_{X}\sigma
_{\infty})=g_{\infty}(X,Y).$$By equation
\eqref{eq:piperpvinfisQHinf}, $\pi _{S^{\perp}}(d_{X}d_{Y}\sigma
_{\infty})=\pi _{S^{\perp}}(\eta)-g_{\infty}(X,Y)\mathcal
{Q}(\mathcal {H}_{\infty})$. On the other hand, $\eta-\mathcal
{Q}(\eta)=(\eta ,\mathcal {H}_{\infty})\sigma _{\infty}\in \Gamma
(S)$ and, therefore, $\pi _{S^{\perp}}(\eta)=\mathcal {Q}\,(\pi
_{N_{\infty}}(d_{X}d_{Y}\sigma _{\infty}))$. Thus $\pi
_{S^{\perp}}(d_{X}d_{Y}\sigma _{\infty})=\mathcal {Q}( \pi _{N
_{\infty}}(d_{X}d_{Y}\sigma _{\infty})-g_{\infty}(X,Y)\mathcal
{H}_{\infty})$, or, equivalently, $$\mathcal {N}_{X}(d_{Y}\sigma
_{\infty})=\mathcal {Q}(\Pi ^{0}_{\infty}(X,Y)).$$  It follows that
\begin{equation}\label{eq:formtracefreeofsffofsigmainf}
|\Pi ^{0}_{\infty}|^2=\sum _{i,j}(\mathcal
{N}_{X_{i}}(d_{X_{j}}\sigma _{\infty}),\mathcal
{N}_{X_{i}}(d_{X_{j}}\sigma _{\infty})),
\end{equation}
completing the proof.
\end{proof}

We complete this section by computing the Willmore energy of a
compact surface immersed in $\mathbb{R} ^{3}$, specially popular in
the literature. Consider an immersion $f$ of $M$ into $\R^{3}$ and
provide $M$ with the metric $g_{f}$ induced by $f$. Fixing an
orthonormal frame $X_{1},X_{2}$ of $TM$, we have
\begin{eqnarray*}
|\Pi ^{0}|^2&=&\sum _{i,j}(\Pi (X_{i},X_{j})-\delta
_{ij}\mathcal {H},\Pi (X_{i},X_{j})-\delta _{ij}\mathcal {H}) \\
&=&\sum _{i,j}(\Pi (X_{i},X_{j}),\Pi (X_{i},X_{j})) -2(\sum
_{i} \Pi (X_{i},X_{i}),\mathcal {H}) +2|\mathcal{H}|^2 \\
&=&\sum _{i,j}(\Pi (X_{i},X_{j}),\Pi (X_{i},X_{j}))
-2|\mathcal{H}|^2,
\end{eqnarray*}
and, consequently,
\begin{eqnarray*}
|\Pi ^{0}|^2-2|\mathcal{H}|^2&=&\sum _{i,j}( \Pi (X_{i},X_{j}),\Pi
(X_{i},X_{j})) -\sum _{i, j}( \Pi (X_{i},X_{i}),\Pi
(X_{j},X_{j}))\\&=&\sum _{i\neq j}( \Pi (X_{i},X_{j}),\Pi
(X_{i},X_{j})) -\sum _{i\neq j}( \Pi (X_{i},X_{i}),\Pi
(X_{j},X_{j}))
\\&=&2( \Pi (X_{1},X_{2}),\Pi (X_{1},X_{2})) -2( \Pi
(X_{1},X_{1}),\Pi (X_{2},X_{2})).
\end{eqnarray*}
Hence, by equation \eqref{eq:KvsbarK},
$$|\Pi ^{0}|^2=2(|\mathcal{H}|^2-K+\bar{K})$$and,
therefore,
$$\mathcal {W}(f)=2\int_{M}(|\mathcal{H}|^2-K+\bar{K})\,\mathrm{d}A.$$
In the particular case $\bar {M}=\R ^{3}$, we get twice as much the
famous Willmore energy of a compact surface immersed in $\mathbb{R}
^{3}$:
$$\mathcal {W}(f)=2\int_{M}(H^2-K)\,\mathrm{d}A,$$ where $H^{2}$ denotes
the square of the mean curvature of $f$ (with respect to either of
the two unit normal vector fields to $f$). Amongst compact surfaces
without boundary in $\R^{3}$, and since for these the total Gaussian
curvature is a topological invariant (cf. Gauss-Bonnet theorem), the
Willmore functional shares critical points with the functional
$\tilde{\mathcal{W}}$ given by
$$\tilde{\mathcal{W}}(f):=\int_{M}H^2\,\mathrm{d}A,$$ which is what T.
Willmore \cite{willmore2} defined as the Willmore energy of a
compact surface, without boundary, immersed in $\R^{3}$.

\section{Willmore surfaces: definition and examples}

\markboth{\tiny{A. C. QUINTINO}}{\tiny{CONSTRAINED WILLMORE
SURFACES}}

Willmore surfaces are the critical points of the Willmore
functional. In view of the M\"{o}bius invariance of the Willmore
energy, Willmore surfaces form a M\"{o}bius invariant class of
surfaces. Minimal surfaces in $3$-dimensional space-forms, and so
their M\"{o}bius transforms, are examples of Willmore
surfaces.\newline

Let $\Lambda\subset\underline{\R}^{n+1,1}$ be a surface in the
projectivized light-cone. Suppose $M$ is compact.
\begin {defn}
$\Lambda$ is said to be a \emph{Willmore surface} if
$$\frac{d}{dt}_{\mid_{t=0}}\mathcal{W}(\Lambda _{t})=0$$ for every variation
$(\Lambda _{t})_{t}$ of $\Lambda$  through immersions of $M$ in
$\mathbb{P}(\mathcal{L})$.
\end {defn}

Classically, a Willmore surface is defined to be an immersion
$f:M\rightarrow \bar{M}$ of $M$ into a Riemannian manifold
$\bar{M}$, for which $\frac{d}{dt}_{\mid_{t=0}}\mathcal{W}(f
_{t})=0$, for every variation $(f _{t})_{t}$ of $f$ through
immersions $f_{t}:M\rightarrow \bar{M}$. It is immediate from
Theorem \ref{Wenpres} that conformal diffeomorphisms transform
Willmore surfaces into Willmore surfaces.
\begin{thm}\label{Willmandconfdiffeom}
The class of Willmore surfaces is M\"{o}bius invariant.
\end{thm}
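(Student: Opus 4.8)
The plan is to deduce the result directly from the Möbius invariance of the Willmore energy (Theorem \ref{Wenpres}) by transporting variations back and forth along the Möbius transformation. A Möbius transformation of $\mathbb{P}(\mathcal{L})\cong S^{n}$ is a conformal diffeomorphism $T$; in the light-cone model it is induced by an isometry of $\R^{n+1,1}$, so it carries null line subbundles of $\underline{\R}^{n+1,1}$ defining immersions into $\mathbb{P}(\mathcal{L})$ to null line subbundles of the same kind. In particular $T\Lambda$ is again a surface, and both $T$ and $T^{-1}$ send immersions to immersions.

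First I would fix a Willmore surface $\Lambda$ (with $M$ compact, as in this section) and an arbitrary variation $(\Psi_{t})_{t}$ of $T\Lambda$ through immersions of $M$ into $\mathbb{P}(\mathcal{L})$, with $\Psi_{0}=T\Lambda$. Setting $\Lambda_{t}:=T^{-1}\Psi_{t}$ and using that $T^{-1}$ is a diffeomorphism, each $\Lambda_{t}$ is an immersion, $\Lambda_{0}=\Lambda$, and $(\Lambda_{t})_{t}$ is a variation of $\Lambda$ through immersions. The correspondence $\Psi_{t}\leftrightarrow\Lambda_{t}=T^{-1}\Psi_{t}$ is a bijection between variations of $T\Lambda$ and variations of $\Lambda$ (both through immersions), which is the point that lets the argument range over \emph{all} variations of $T\Lambda$.

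Next I would invoke Theorem \ref{Wenpres}: since $T$ is a Möbius transformation and $\Psi_{t}=T\Lambda_{t}$, the Willmore energy is preserved for each $t$, so $\mathcal{W}(\Psi_{t})=\mathcal{W}(T\Lambda_{t})=\mathcal{W}(\Lambda_{t})$. Differentiating at $t=0$ gives $\frac{d}{dt}_{\mid_{t=0}}\mathcal{W}(\Psi_{t})=\frac{d}{dt}_{\mid_{t=0}}\mathcal{W}(\Lambda_{t})$, and the right-hand side vanishes because $\Lambda$ is Willmore and $(\Lambda_{t})_{t}$ is a variation of $\Lambda$ through immersions. As $(\Psi_{t})_{t}$ was an arbitrary such variation of $T\Lambda$, it follows that $T\Lambda$ is Willmore.

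The argument is essentially formal, so there is no serious analytic obstacle; the only point requiring care is the bijection of variations, namely that applying $T$ to any variation of $\Lambda$ through immersions yields a variation of $T\Lambda$ through immersions, and conversely that every variation of $T\Lambda$ arises this way, so that the vanishing derivative is established against all competitors. Both facts are immediate once $T$ is recognised as a diffeomorphism of $\mathbb{P}(\mathcal{L})$ preserving the class of immersions. Alternatively one may bypass variations and argue at the level of the invariant formula $\mathcal{W}(\Lambda)=\frac{1}{2}\int_{M}(\mathcal{N}\wedge *\mathcal{N})$, observing that $S_{T\Lambda}=T\,S_{\Lambda}$, whence $\mathcal{N}_{T\Lambda}=\mathrm{Ad}_{T}\mathcal{N}$ by \eqref{eq:adjwedge}, so that $\mathrm{Ad}$-invariance of the trace form on $o(\underline{\R}^{n+1,1})$ together with the conformal invariance of $*$ gives $\mathcal{W}(T\Lambda)=\mathcal{W}(\Lambda)$; but the variational route above is the cleanest given Theorem \ref{Wenpres}.
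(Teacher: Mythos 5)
Your proposal is correct and is exactly the paper's argument: the paper declares the theorem "immediate from Theorem \ref{Wenpres}", and your transport-of-variations argument (pulling an arbitrary variation of $T\Lambda$ back through $T^{-1}$, applying the invariance of the Willmore energy, and differentiating at $t=0$) is precisely the deduction the paper leaves implicit. Nothing is missing; you have simply spelled out the details.
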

In particular:
\begin{prop}\label{Willmtheo}
$\Lambda$ is a Willmore surface if and only if, fixing
$v_{\infty}\in\mathbb{R}^{n+1,1}$ non-zero, so is the surface in
$S_{v_{\infty}}$ defined by $\Lambda$.
\end{prop}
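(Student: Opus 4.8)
The plan is to read the proposition as the specialisation of the M\"{o}bius invariance of the Willmore class (Theorem~\ref{Willmandconfdiffeom}) to the conformal diffeomorphism relating $\mathbb{P}(\mathcal{L})$ and the space-form $S_{v_{\infty}}$. Recall that, writing $\pi:=\pi_{\mathcal{L}}\vert_{S_{v_{\infty}}}$, we have $\Lambda=\pi\circ\sigma_{\infty}$, where $\pi$ is a conformal diffeomorphism of $S_{v_{\infty}}$ onto the open set $\mathbb{P}(\mathcal{L})\backslash\mathbb{P}(\mathcal{L}\cap\langle v_{\infty}\rangle^{\perp})$ of $\mathbb{P}(\mathcal{L})$, and that, by Theorem~\ref{WsigmaeWLambda}, the Willmore energies of a surface and of the surface it defines in $S_{v_{\infty}}$ agree. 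The two ingredients I would combine are this equality of energies, applied not only to $\Lambda$ but to each member of a variation, and the bijection between admissible variations induced by $\pi$.

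First I would treat the implication ``$\Lambda$ Willmore $\Rightarrow\sigma_{\infty}$ Willmore''. Given any variation $(\sigma_{t})_{t}$ of $\sigma_{\infty}$ through immersions of $M$ into $S_{v_{\infty}}$, the family $(\pi\circ\sigma_{t})_{t}$ is a variation of $\Lambda$ through immersions into $\mathbb{P}(\mathcal{L})$; moreover the surface $\pi\circ\sigma_{t}$ defines $\sigma_{t}$ back in $S_{v_{\infty}}$, so Theorem~\ref{WsigmaeWLambda} gives $\mathcal{W}(\pi\circ\sigma_{t})=\mathcal{W}(\sigma_{t})$ for every $t$. Differentiating at $t=0$ and using that $\Lambda$ is Willmore yields $\frac{d}{dt}_{\mid_{t=0}}\mathcal{W}(\sigma_{t})=0$; as the variation was arbitrary, $\sigma_{\infty}$ is Willmore. (Equivalently, one may cite the conformal invariance of the energy, Theorem~\ref{Wenpres}, in place of Theorem~\ref{WsigmaeWLambda}.)

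For the converse I would run the same argument backwards, turning a variation $(\Lambda_{t})_{t}$ of $\Lambda$ into the variation $(\pi^{-1}\circ\Lambda_{t})_{t}$ of $\sigma_{\infty}$. This is where the one genuine subtlety arises, and the step I expect to require the most care: $\pi$ is only a diffeomorphism onto the \emph{open} subset $\mathbb{P}(\mathcal{L})\backslash\mathbb{P}(\mathcal{L}\cap\langle v_{\infty}\rangle^{\perp})$, so a priori $\Lambda_{t}$ could leave its image. I would dispose of this by compactness: since $M$ is compact and $\Lambda(M)$ is contained in the open set $\pi(S_{v_{\infty}})$, continuity of the variation forces $\Lambda_{t}(M)\subset\pi(S_{v_{\infty}})$ for all sufficiently small $t$, which is all that the derivative at $t=0$ sees. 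Granting this, $\pi^{-1}\circ\Lambda_{t}$ is a genuine variation of $\sigma_{\infty}$ in $S_{v_{\infty}}$, Theorem~\ref{WsigmaeWLambda} again gives $\mathcal{W}(\Lambda_{t})=\mathcal{W}(\pi^{-1}\circ\Lambda_{t})$, and differentiating shows that $\sigma_{\infty}$ Willmore forces $\Lambda$ Willmore, completing the equivalence.
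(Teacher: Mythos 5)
Your proof is correct and takes essentially the same route as the paper, which presents the proposition as an immediate specialisation of the M\"{o}bius invariance of the Willmore class (Theorem \ref{Willmandconfdiffeom}, itself derived from the conformal invariance of the energy, Theorem \ref{Wenpres}) to the conformal diffeomorphism $\pi_{\mathcal{L}}\vert_{S_{v_{\infty}}}$. Your writeup simply supplies the variational bookkeeping the paper leaves implicit --- transporting variations through $\pi$, invoking Theorem \ref{WsigmaeVLambda} energy equality (Theorem \ref{WsigmaeWLambda}) at each $t$, and the compactness remark ensuring small perturbations of $\Lambda$ stay in the open image of $\pi$ --- all of which is sound.
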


In Section \ref{Wilmmunderchange}, we extend the concept of Willmore
surface to surfaces that are, in particular, not necessarily
compact. We verify that minimal surfaces in space-forms, and so
their M\"{o}bius transforms, are examples of Willmore surfaces (see
Section \ref{sec:CMC}). In particular, the stereographic projection
of a minimal surface in $S^{n}$ is a Willmore surface in $\R^{n}$.
The Clifford torus  is embedded in $S^{3}$ as a minimal surface. It
projects stereographically onto the $\sqrt{2}$ anchor ring, which is
then a Willmore surface in $\R^{3}$ (as well as its M\"{o}bius
transforms).

B. Lawson \cite{lawson} proved that there are minimal embeddings
into $S^{3}$ of surfaces of arbitrary genus. Thus there exist in
$\R^{3}$ Willmore surfaces of arbitrary genus. But are all Willmore
surfaces in $\R^{3}$ obtainable as stereographics projections of
minimal surfaces in $S^{3}$? The answer is ``no". In fact, J. Langer
and D. Singer \cite{langer+singer} showed that there are infinitely
many closed curves on $S^{2}$ whose corresponding \textit{Hopf
torus} is a Willmore surface in $S^{3}$. And U. Pinkall
\cite{pinkall} showed then that, with one exception, the Willmore
surfaces obtained by stereographic projection of these Willmore tori
in $S^{3}$ cannot possibly be obtained by stereographic projection
of minimal surfaces in $S^{3}$. The exception is the $\sqrt{2}$
anchor ring.

\section{Willmore energy vs. energy of the central sphere congruence}\label{subsec:willmenergy}

\markboth{\tiny{A. C. QUINTINO}}{\tiny{CONSTRAINED WILLMORE
SURFACES}}

Under the standard identification
$S^{*}T\mathcal{G}\cong\mathrm{Hom}(S,S^{\perp})\cong S\wedge
S^{\perp}$, of bundles provided with a metric, $dS= \mathcal{N}$,
which establishes the Willmore energy of a surface conformally
immersed in a space-form as the energy of its central sphere
congruence.
\newline

Recall that, given $P$ a compact oriented Riemannian manifold and
$Q$ a pseudo-Riemannian manifold, the energy of a smooth map $\phi
:P\to Q$ is defined as
$$E(\phi)=\frac{1}{2}\int_{P}|d\phi|^2\mathrm{d}\mathrm{vol}_{P},$$
for the Hilbert-Schmidt norm $|d\phi|$ of $d\phi\in \Gamma
(\mathrm{Hom}(TP,\phi^*TQ))$ induced by the pseudo-Riemannian
structures of $P$ and $Q$, and $\mathrm{d}\mathrm{vol}_{P}$ the
volume element of $P$. If we think of the map $\phi$ as a way to
confine and stretch an elastic $P$ inside $Q$, then $E(\phi)$
represents an elastic deformation energy. As observed, in
particular, by J. Eells and J. Sampson \cite{eells+sampson}:
\begin{Lemma}
In the case $P$ is 2-dimensional, the energy of $\phi :P\to Q$ can
be invariantly defined with respect to a conformal class of metrics
in $P$.
\end{Lemma}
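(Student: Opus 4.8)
The plan is to recast the energy in a manifestly conformally invariant form, exactly as the Willmore energy was written above as $\frac{1}{2}\int_M(\mathcal{N}\wedge *\mathcal{N})$. First I would observe that $d\phi\in\Omega^1(\phi^*TQ)$ and that the energy integrand can be repackaged using the $\phi^*TQ$-valued wedge pairing $(\,\cdot\wedge\cdot\,)$ introduced above, namely
$$E(\phi)=\frac{1}{2}\int_P (d\phi\wedge *\,d\phi).$$
To establish this reformulation, fix $g$ in the conformal class and a positively oriented orthonormal frame $X_1,X_2$ of $(TP,g)$, so that $JX_1=X_2$ and $JX_2=-X_1$. Using $*\omega=-\omega J$ from \eqref{eq:*vsJ} gives $(*\,d\phi)(X_1)=-d\phi(X_2)$ and $(*\,d\phi)(X_2)=d\phi(X_1)$, whence, by the definition of the pairing in the $p=1$ case,
$$(d\phi\wedge *\,d\phi)(X_1,X_2)=(d\phi(X_1),*\,d\phi(X_2))_Q-(d\phi(X_2),*\,d\phi(X_1))_Q=\sum_i(d\phi(X_i),d\phi(X_i))_Q=|d\phi|^2.$$
Since $\mathrm{d}\mathrm{vol}_{(P,g)}(X_1,X_2)=1$, this yields $(d\phi\wedge *\,d\phi)=|d\phi|^2\,\mathrm{d}\mathrm{vol}_{(P,g)}$, establishing the claimed identity. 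I would note that evaluating on a frame sidesteps any double-Hodge-star sign subtlety, so the computation is insensitive to the signature of the target.

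Next I would argue that the right-hand side is independent of the choice of $g$ within its conformal class. The crucial point is that the pairing $(\mu\wedge\eta)_{(X,Y)}=(\mu_X,\eta_Y)_Q-(\mu_Y,\eta_X)_Q$ of two $\phi^*TQ$-valued $1$-forms uses only the fibre metric of the target bundle $\phi^*TQ$, and is therefore completely insensitive to the metric on $P$. Hence the entire $g$-dependence of $(d\phi\wedge *\,d\phi)$ is carried by the Hodge $*$-operator acting on the $1$-form $d\phi$. By Lemma \ref{hodgeconfinv}, the Hodge $*$-operator on $1$-forms over a surface is invariant under conformal changes of the metric; consequently $(d\phi\wedge *\,d\phi)$, and with it $E(\phi)$, is a conformal invariant, which completes the proof.

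As a cross-check, and to locate precisely where two-dimensionality enters, I would also run the direct scaling argument. Writing $g'=e^{2u}g$, the frame $e^{-u}X_i$ is orthonormal for $g'$, so the energy density scales as $|d\phi|^2_{g'}=e^{-2u}|d\phi|^2_{g}$, while by Lemma \ref{volconfchanges} with $n=2$ the volume element scales as $\mathrm{d}\mathrm{vol}_{(P,g')}=e^{2u}\,\mathrm{d}\mathrm{vol}_{(P,g)}$; the two factors cancel, leaving the integrand pointwise unchanged. The main (and essentially only) obstacle is this balancing act: each factor transforms nontrivially, and it is exactly the coincidence $n/2=1$ in dimension $n=2$ that makes the conformal weights cancel. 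In any other dimension the integrand would acquire a nontrivial factor $e^{(n-2)u}$ and the energy would fail to be conformally invariant, so it is precisely the hypothesis that $P$ is $2$-dimensional that the lemma exploits. No further delicacy arises, since $Q$ being merely pseudo-Riemannian plays no role in the cancellation.
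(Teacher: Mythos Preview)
Your proof is correct, and your cross-check via direct scaling is exactly the paper's argument: the paper simply observes that under $g'=e^{u}g$ the volume element and $|d\phi|^2$ scale inversely (via Lemma \ref{volconfchanges}) and stops there. Your primary argument, rewriting the energy as $\frac{1}{2}\int_P(d\phi\wedge *\,d\phi)$ and invoking the conformal invariance of the Hodge $*$ on $1$-forms, is a genuinely different route; it is the same device the paper uses to make the Willmore energy manifestly conformally invariant (Definition \ref{defWenofLambda} and the proof of Theorem \ref{willmmigualenergy}), but the paper does not deploy it for this lemma. Your approach has the advantage of making the invariance structural rather than a numerical coincidence of scaling weights, at the cost of a short frame computation to establish the reformulation; the paper's argument is shorter but less conceptual.
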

\begin{proof}
If $P$ is $2$-dimensional, then under conformal changes of the
metric on $P$, the volume element and the square norm of $d\phi$
vary in an inverse way. In fact, given $g$ and $g':=e^{u}g$, for
some $u\in\Gamma(\underline{\R})$, conformally equivalent metrics on
$P$, we have, according to Lemma \ref{volconfchanges},
$$\mathrm{d}\mathrm{vol}_{(P,g')}=e^{u}\mathrm{d}\mathrm{vol}_{(P,g)},$$
whereas, clearly, $$|d\phi|_{g'}^2=\frac{1}{e^{u}}\,|d\phi|_{g}^2.$$
\end{proof}
We are then, in the case $M$ is compact, in a position to discuss
the energy $E(S,\mathcal{C}_{\Lambda})$ of the central sphere
congruence $S:M\rightarrow \mathcal {G}$ of a surface
$\Lambda:M\rightarrow\mathbb{P}(\mathcal{L})$, when providing $M$
with the conformal structure $\mathcal{C}_{\Lambda}$ induced in $M$
by $\Lambda$. As already known to Blaschke \cite{blaschke} in the
particular case of spherical $3$-space:

\begin{thm}\label{willmmigualenergy}
The Willmore energy of a surface $\Lambda:M\rightarrow
\mathbb{P}(\mathcal{L})$ conformally immersed in the projectivized
light-cone coincides with the energy of its central sphere
congruence $S:M\rightarrow \mathcal{G}$,
$$\mathcal {W}(\Lambda)=E(S,\mathcal{C}_{\Lambda}).$$
\end{thm}

The proof of the theorem will be immediate after a few
considerations, as follows.

Under the identification $S^{*}T\mathcal{G}\cong
\mathrm{Hom}(S,S^{\perp})$, of bundles provided with a metric,
defined in Section \ref{sec:csc}, we have
$(d_{X}S)\xi=\pi_{S^{\perp}}(d_{X}\xi)=\mathcal{N}_{X}\xi$, for
$\xi\in\Gamma (S)$, and, therefore,
$d_{X}S=\mathcal{N}_{X}\vert_{S}$, given $X\in\Gamma(TM)$. Under,
furthermore, the identification $\mathrm{Hom}(S,S^{\perp})\cong
S\wedge S^{\perp}$, of bundles provided with a metric, defined in
Section \ref{extalgebra}, we have $d_{X}S=\mathcal{N}_{X}$, for all
$X\in\Gamma(TM)$.  Hence, under the identification
$$S^{*}T\mathcal{G}\cong\mathrm{Hom}(S,S^{\perp})\cong
S\wedge S^{\perp},$$ of bundles provided with a metric, we have
\begin{equation}\label{eq:dS=N}
dS=\mathcal{N}
\end{equation}
and, therefore, fixing a metric on $M$,
$$|dS|^{2}=|\mathcal{N}|^{2}.$$

The proof of Theorem \ref{willmmigualenergy} is now immediate:
\begin{proof}
Fixing a metric in $\mathcal{C}_{\Lambda}$,
$$(\mathcal{N}\wedge
*\mathcal{N})=-(*\mathcal{N}\wedge\mathcal{N})=(\mathcal{N},\mathcal{N})\,dA=|dS|^{2}dA.$$
\end{proof}

\section{Willmore surfaces and harmonicity}\label{subsec:willmharm}

\markboth{\tiny{A. C. QUINTINO}}{\tiny{CONSTRAINED WILLMORE
SURFACES}}

Willmore surfaces are the extremals of the Willmore functional, just
like harmonic maps are the extremals of the energy functional. The
Willmore energy of a surface conformally immersed in a space-form
coincides with the energy of its central sphere congruence.
Furthermore, a result by Blaschke \cite{blaschke} (for $n=3$) and N.
Ejiri \cite{ejiri} (for general $n$) characterizes Willmore surfaces
isometrically immersed in spherical $n$-space by the harmonicity of
the central sphere congruence. This characterization will enable us,
in the sections below, to apply to the class of Willmore surfaces
conformally immersed in space-forms the well-developed integrable
systems theory of harmonic maps into Grassmanian manifolds and to
prove that Willmore surfaces constitute an integrable
system.\newline

Recall that, given $P$ a compact oriented Riemannian manifold and
$Q$ a pseudo-Riemannian manifold, a smooth map $\phi :P\to Q$ is
said to be harmonic if it extremises the energy functional,
$$\frac{d}{dt}_{\mid_{t=0}}E(\phi _{t})=0,$$ for every variation
$(\phi_{t})_{t}$ of $\phi$ through smooth maps from $P$ to $Q$. The
associated Euler-Lagrange equation is
\begin{equation}\label{eq:ELforharm}
\mathrm{tr}\,\nabla d\phi =0,
\end{equation}
where $\nabla d\phi$ denotes the Hessian of $\phi$, the section of
$S^2(TP,\phi^*TQ)$ defined by
$$\nabla d\phi(X,Y):=\nabla ^{\phi ^*TQ}_{X}d\phi(Y)-d\phi(\nabla
^{TP}_{X}Y),$$ for the Levi-Civita connection $\nabla ^{TP}$ on $P$
and the connection $\nabla ^{\phi ^*TQ}$ induced in the pull-back
bundle $\phi ^*TQ$ by the Levi-Civita connection on $Q$. The section
$\tau_{\phi}:=\mathrm{tr}\,\nabla d\phi$ of $\phi^{-1}TQ$ is called
the tension field of $\phi$. Behind the characterization of the
harmonicity of $\phi$ provided by equation \eqref{eq:ELforharm} is
the classical formula
\begin{equation}\label{eq:enandvariational}
\frac{d}{dt}_{\mid_{t=0}}E(\phi _{t})=-\int _{P}(\dot
{\phi},\mathrm{tr}\nabla d\phi )\mathrm{dvol}_{P},
\end{equation}
relating the variation of energy through a variation of $\phi$ to
the respective variational vector field, $$\dot
{\phi}:=\frac{d}{dt}_{\mid_{t=0}}\phi _{t}\in\Gamma (\phi ^{*}TQ).$$
Equation \eqref{eq:enandvariational} will be useful in the future.
It will also be useful to recall that, as we travel along all the
variations of $\phi$, the variational vector field $\dot {\phi}$
travels along all the sections of $\phi ^*TQ$: given $\eta\in\Gamma
(\phi ^*TQ)$, by setting
$$\phi _{t}(p):=\mathrm{exp}^{Q}_{\phi (p)}(t\eta _{p})$$ for $t\in\R, p\in P$,
we define a variation $(\phi _{t})_{t}$ of $\phi$ through smooth
maps from $P$ to $Q$ for which $\dot {\phi}=\eta$.

Although the Levi-Civita connection is not a conformal invariant,
the harmonicity of a map defined on a $2$-dimensional manifold is
preserved by conformal changes of the metric on that manifold. In
fact, as observed by J. Eells and J. Sampson \cite{eells+sampson},
energy and, therefore, harmonicity of a map of a surface are
preserved by conformal diffeomorphisms. It is well known that
\begin{equation}\label{eq:tracoehessiana}
d^{\nabla}*d\phi =-*(\mathrm{tr}\nabla d\phi ),
\end{equation}
denoting $\nabla ^{\phi ^*TQ}$ by $\nabla$, which leads us to the
following:

\begin{Lemma} In the case $P$ is $2$-dimensional, the equation
\begin{equation}\label{eq:harmconfequiveq}
d^{\nabla}*d\phi=0
\end{equation}
constitutes a characterization of the harmonicity of $\phi$,
manifestly invariant under a conformal change of the metric on $P$.
\end{Lemma}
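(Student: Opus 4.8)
The plan is to read off the equivalence from the two facts already assembled just before the statement, and then to inspect which ingredients of the equation depend on the metric. First I would recall that, by the Euler--Lagrange equation \eqref{eq:ELforharm}, $\phi$ is harmonic precisely when its tension field vanishes, $\mathrm{tr}\,\nabla d\phi=0$, denoting $\nabla^{\phi^{*}TQ}$ by $\nabla$. The bridge to \eqref{eq:harmconfequiveq} is the classical identity \eqref{eq:tracoehessiana}, $d^{\nabla}*d\phi=-*(\mathrm{tr}\,\nabla d\phi)$.

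The key observation is then that the Hodge $*$-operator is invertible: since $**=\pm\,\mathrm{id}$ on forms of each degree (cf. the recalled formula $**\mu=(-1)^{p(2-p)+s}\mu$), applying $*$ to a section of $\phi^{*}TQ$ and obtaining zero forces that section itself to vanish. Consequently $d^{\nabla}*d\phi=0$ holds if and only if $*(\mathrm{tr}\,\nabla d\phi)=0$, i.e. if and only if $\mathrm{tr}\,\nabla d\phi=0$. This establishes \eqref{eq:harmconfequiveq} as a characterization of harmonicity.

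For the conformal invariance, I would argue that each constituent of $d^{\nabla}*d\phi$ is insensitive to the choice of metric within $\mathcal{C}$. The $\phi^{*}TQ$-valued $1$-form $d\phi$ and the pull-back connection $\nabla=\nabla^{\phi^{*}TQ}$, hence also the exterior derivative $d^{\nabla}$, are built only from the differential of $\phi$ and the Levi-Civita connection on $Q$; none of these references the metric on $P$. The only remaining ingredient is the Hodge $*$-operator acting on the $1$-form $d\phi$, and this is exactly where Lemma \ref{hodgeconfinv} applies: on a surface, $*$ on $1$-forms is invariant under conformal changes of the metric. Therefore the whole expression $d^{\nabla}*d\phi$, and a fortiori the equation \eqref{eq:harmconfequiveq}, is unchanged under conformal rescalings, which is precisely the \emph{manifest} conformal invariance asserted.

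I do not anticipate a genuine obstacle here, since the statement is essentially a packaging of \eqref{eq:ELforharm}, \eqref{eq:tracoehessiana}, and Lemma \ref{hodgeconfinv}. The one point that deserves care is keeping track of what is being claimed: the tension field $\mathrm{tr}\,\nabla d\phi$ is itself \emph{not} conformally invariant (it involves the trace with respect to $g$ and the Levi-Civita connection on $P$), so the content of the lemma is that multiplying by $*$ and reorganizing via \eqref{eq:tracoehessiana} produces an equivalent equation in which the metric-dependence has cancelled, upgrading the already-noted conformal invariance of harmonicity of a surface to a manifestly invariant equation.
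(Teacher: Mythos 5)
Your proposal is correct and follows essentially the same route as the paper's proof: the characterization comes from the identity \eqref{eq:tracoehessiana} (with the trivial observation that $*$ is invertible), and the manifest conformal invariance comes from Lemma \ref{hodgeconfinv} together with the fact that $d^{\nabla}$ and $d\phi$ depend only on $\phi$ and the pseudo-Riemannian structure of $Q$. Your closing remark that the tension field itself is not conformally invariant is a worthwhile clarification, but it does not change the argument.
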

\begin{proof}
Equation \eqref{eq:harmconfequiveq} provides a characterization of
the harmonicity of $\phi$, according to equation
\eqref{eq:tracoehessiana}. On the other hand, in the case $P$ is
$2$-dimensional, the Hodge $*$-operator on $1$-forms over $P$ is a
conformal invariant. Since $d^{\nabla}$ depends only on the
pseudo-Riemannian structure on $Q$, we conclude that, under a
conformal change of the metric on $P$, $d^{\nabla}*d\phi$ remains
invariant (and so does then the harmonicity of $\phi$).
\end{proof}

There is then no ambiguity in the following statement. Let
$\Lambda\subset\underline{\R}^{n+1,1}$ be a surface in the
projectivized light-cone. Suppose $M$ is compact.

\begin{thm}\label{willmorevsharmonicitytheorem}
$\Lambda$ is a Willmore surface if and only if its central sphere
congruence $S:M\rightarrow \mathcal {G}$ is harmonic  with respect
to the conformal structure induced in $M$ by $\Lambda$.
\end{thm}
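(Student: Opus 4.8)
The plan is to combine the two results just established: that the Willmore energy of $\Lambda$ equals the energy of its central sphere congruence, $\mathcal{W}(\Lambda)=E(S_{\Lambda})$ (Theorem \ref{willmmigualenergy}), and that, on a surface, harmonicity of a map is the conformally invariant condition $\mathrm{tr}\,\nabla dS=0$, equivalently $d^{\mathcal{D}}*\mathcal{N}=0$, via \eqref{eq:dS=N} and \eqref{eq:tracoehessiana}. A variation $(\Lambda_{t})_{t}$ of $\Lambda$ through immersions induces both a variation $(S_{\Lambda_{t}})_{t}$ of the central sphere congruence, with variational field $\dot{S}\in\Gamma(S^{*}T\mathcal{G})\cong\Gamma(\mathrm{Hom}(S,S^{\perp}))$, and a variation of the induced conformal structures $\mathcal{C}_{\Lambda_{t}}$, with respect to which $E(S_{\Lambda_{t}})$ is computed throughout. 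Here one must handle the moving conformal structure: since $S$ is conformal (Proposition \ref{conformalcsc}), the trace-free part of its stress--energy tensor (its Hopf differential) vanishes, so $S$ is stationary for the energy under variations of the domain conformal structure, and at $t=0$ only the variation of the map survives. The first variation formula \eqref{eq:enandvariational} then gives
$$\frac{d}{dt}_{\mid_{t=0}}\mathcal{W}(\Lambda_{t})=-\int_{M}(\dot{S},\mathrm{tr}\,\nabla dS)\,dA.$$
From this the direction ``$S$ harmonic $\Rightarrow\Lambda$ Willmore'' is immediate: if $\mathrm{tr}\,\nabla dS=0$, the right-hand side vanishes for every variation of $\Lambda$.

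The substance of the theorem is the converse, and the obstacle is that variations of the null line $\Lambda$ do \emph{not} induce all fields $\dot{S}\in\Gamma(\mathrm{Hom}(S,S^{\perp}))$. Writing $\sigma$ for a never-zero section of $\Lambda$ and $\dot{\sigma}\in\Gamma(\Lambda^{\perp})$ for its infinitesimal variation (note $(\sigma,\dot{\sigma})=0$, since $\Lambda$ stays null), the field $\dot{S}$ is a second-order differential expression in $\dot{\sigma}$ and ranges only over a proper subspace. The plan is to insert this expression into $\int_{M}(\dot{S},\mathrm{tr}\,\nabla dS)\,dA$ and integrate by parts, transferring all derivatives onto the tension field so as to obtain a pairing of the form $\int_{M}(\dot{\sigma},L\,\mathrm{tr}\,\nabla dS)\,dA$ for a suitable second-order operator $L$. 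Since $\Lambda$ is Willmore, this vanishes for all admissible $\dot{\sigma}$, which forces $L\,\mathrm{tr}\,\nabla dS$ to lie in $\Lambda$.

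The main obstacle is the accompanying structural analysis needed to upgrade this to $\mathrm{tr}\,\nabla dS=0$. Using the decompositions of Section \ref{extalgebra} together with $\mathcal{N}\Lambda=0$ and $\mathcal{N}\in\Omega^{1}(\Lambda^{(1)}\wedge S^{\perp})$ from \eqref{eq:calNLambda0}--\eqref{eq:caklNLambdaperpSperp}, I would show that the tension field of the central sphere congruence is \emph{a priori} constrained, namely that $d^{\mathcal{D}}*\mathcal{N}$ takes values in $\Lambda\wedge S^{\perp}$; this subbundle has rank $n-2$, matching exactly the genuine (normal) variations $\dot{\sigma}\in\Gamma(S^{\perp})$, the tangential variations $\dot{\sigma}\in\Gamma(\Lambda^{(1)})$ amounting only to reparametrization and contributing trivially. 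One checks that $d^{\mathcal{D}}*\mathcal{N}\in\Omega^{2}(\Lambda\wedge S^{\perp})$ is precisely the component paired non-degenerately with the fields $\dot{S}$ produced by the normal variations, so that injectivity of the relevant part of $L$ converts ``$L\,\mathrm{tr}\,\nabla dS\in\Lambda$ for all $\dot{\sigma}$'' into $\mathrm{tr}\,\nabla dS=0$, i.e.\ $d^{\mathcal{D}}*\mathcal{N}=0$. Since every object involved is conformally invariant, the choice of metric within $\mathcal{C}_{\Lambda}$ is immaterial, and the equivalence holds with respect to the conformal structure induced by $\Lambda$, as claimed.
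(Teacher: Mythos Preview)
Your overall strategy is correct and matches the paper: you use $\mathcal{W}(\Lambda)=E(S)$, handle the moving conformal structure via conformality of $S$ (this is the paper's Lemma \ref{dotWanddotE}), obtain the first variation $\dot{\mathcal{W}}=-\int_M(\dot S,\tau)\,dA$, and observe that the converse is the nontrivial direction because not every $\dot S$ arises from a variation of $\Lambda$. You also correctly identify the key structural constraint $d^{\mathcal{D}}*\mathcal{N}\in\Omega^{2}(\Lambda\wedge S^{\perp})$.

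Where you take an unnecessary detour is in the converse. You propose to write $\dot S$ as a second-order expression in $\dot\sigma$, integrate by parts to obtain $\int_M(\dot\sigma,L\,\tau)\,dA$, and then invoke an unproven ``injectivity of the relevant part of $L$''. The paper avoids all of this by pushing the structural constraint one step further: since $\Lambda^{(1)}\subset\ker\tau_z$ and hence $\mathrm{Im}\,\tau_z^{t}\subset\Lambda$ (Lemma \ref{tauzallaboutit}), the trace $\mathrm{tr}(\tau_z^{t}\dot S)$ depends only on $\dot S|_{\Lambda}$. But $\dot S(\sigma)=\pi_{S^{\perp}}\dot\sigma$ (Lemma \ref{variationalsrelated446190}) is \emph{zeroth-order} in $\dot\sigma$; so the pairing $(\dot S,\tau_z)$ is already a pointwise pairing of $\pi_{S^{\perp}}\dot\sigma$ against $\tau_z$, with no derivatives to move. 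The argument then finishes by contradiction: if $\tau_z\neq 0$, pick $\dot\sigma\in\Gamma(S^{\perp})$ with $\tau_z^{t}(\pi_{S^{\perp}}\dot\sigma)=\lambda\sigma$ for some $\lambda>0$, whence $\int_M\mathrm{tr}(\tau_z^{t}\dot S)\,dA>0$, contradicting the Willmore condition. No operator $L$ and no injectivity statement are needed; the constraint on $\tau_z$ you already identified does all the work.
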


The characterization of Willmore surfaces isometrically immersed in
spherical $n$-space by the harmonicity of the central sphere
congruence is due to  W. Blaschke \cite{blaschke} (for $n=3$) and N.
Ejiri \cite{ejiri} (for general $n$). The proof of Theorem
\ref{willmorevsharmonicitytheorem} we present next is a
generalization in the light-cone picture of the proof presented in
\cite{quaternionsbook}, in the quaternionic setting, for the
particular case of surfaces in $S^{4}$. The conclusion will follow
easily from three useful lemmas we present next.
\begin{Lemma}\label{dotWanddotE}
Let $(\Lambda_{t})_{t}$ be a variation of $\Lambda$ through
immersions of $M$ in $\mathbb{P}(\mathcal{L})$ and $(S_{t})_{t}$ be
the corresponding variation of $S$ through central sphere
congruences. Then
\begin{equation}\label{eq:derivwillmvsderivenergy}
\frac{d}{dt}_{\vert_{t=0}}\mathcal
{W}(\Lambda_{t})=\frac{d}{dt}_{\mid_{t=0}}E(S_{t},\mathcal
{C}_{\Lambda}),
\end{equation}
for $E(S_{t},\mathcal {C}_{\Lambda})$ the energy of
$S_{t}:M\rightarrow \mathcal{G}$ when providing $M$ with the
conformal structure $\mathcal {C}_{\Lambda}$.
\end{Lemma}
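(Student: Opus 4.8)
First I would exploit the single-surface identity of Theorem~\ref{willmmigualenergy}, which, applied to each immersion $\Lambda_t$ in the variation, reads $\mathcal{W}(\Lambda_t)=E(S_t,\mathcal{C}_{\Lambda_t})$. The essential point — the reason \eqref{eq:derivwillmvsderivenergy} is not a triviality — is that here \emph{both} the map $S_t$ and the induced conformal structure $\mathcal{C}_{\Lambda_t}$ move with $t$, whereas the right-hand side of \eqref{eq:derivwillmvsderivenergy} measures energy against the \emph{fixed} structure $\mathcal{C}_\Lambda=\mathcal{C}_{\Lambda_0}$. I would carry out the whole computation through the $o(\underline{\R}^{n+1,1})$-valued $1$-form $\mathcal{N}_t=d-\mathcal{D}_{S_t}$ associated with $\Lambda_t$ by \eqref{eq:dcurlyDcurlyN}, which under \eqref{eq:dS=N} represents $dS_t$ inside the \emph{fixed} ambient bundle $o(\underline{\R}^{n+1,1})$ and is therefore unproblematic to differentiate in $t$ (this also sidesteps differentiating the moving identification $S_t^{*}T\mathcal{G}\cong S_t\wedge S_t^{\perp}$). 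Writing $J_t$ for the canonical complex structure of $\mathcal{C}_{\Lambda_t}$, and recalling from Lemma~\ref{hodgeconfinv} and \eqref{eq:*vsJ} that the Hodge $*$-operator on $1$-forms over a surface depends on the conformal class only, through $*\omega=-\omega J_t$, I have, using the identity $(\mu\wedge *\mu)=(\mu,\mu)\,dA$ from the proof of Theorem~\ref{willmmigualenergy},
\[
\mathcal{W}(\Lambda_t)=\tfrac12\int_M(\mathcal{N}_t\wedge *_{J_t}\mathcal{N}_t),\qquad
E(S_t,\mathcal{C}_\Lambda)=\tfrac12\int_M(\mathcal{N}_t\wedge *_{J_0}\mathcal{N}_t),
\]
the second identity being exactly the content of that proof with the fixed structure $\mathcal{C}_\Lambda$.

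Then I would differentiate the first integral at $t=0$ and, using the bilinearity of the wedge pairing $(\,\cdot\wedge\cdot\,)$, split $\frac{d}{dt}|_{t=0}\mathcal{W}(\Lambda_t)$ into the contribution from varying $\mathcal{N}_t$ (with $*$ frozen at $*_{J_0}$) and the contribution from varying $*_{J_t}$ (with the form frozen at $\mathcal{N}_0$). Since $\frac{d}{dt}|_{0}(*_{J_t}\omega)=-\omega\dot J$ with $\dot J=\frac{d}{dt}|_{t=0}J_t$, this gives
\[
\frac{d}{dt}\Big|_{t=0}\mathcal{W}(\Lambda_t)
=\frac{d}{dt}\Big|_{t=0}E(S_t,\mathcal{C}_\Lambda)
-\tfrac12\int_M\big(\mathcal{N}_0\wedge(\mathcal{N}_0)\dot J\big).
\]
Thus the lemma is equivalent to the vanishing of the last integral, i.e.\ to the assertion that the energy of the fixed, \emph{conformal} central sphere congruence $S_0$ is stationary under the variation of the conformal structure on $M$.

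To establish this vanishing I would evaluate the $2$-form $\big(\mathcal{N}_0\wedge(\mathcal{N}_0)\dot J\big)$ on a frame $(Z,\overline Z)$ with $Z\in\Gamma(T^{1,0}M)$ locally never-zero. Differentiating $J_t^2=-\mathrm{Id}$ gives $\dot J J_0+J_0\dot J=0$, so $\dot J$ interchanges the eigenbundles $T^{1,0}M$ and $T^{0,1}M$; writing $\dot J\overline Z=aZ$ and $\dot J Z=b\overline Z$ for functions $a,b$ and using the definition of the pairing,
\[
\big(\mathcal{N}_0\wedge(\mathcal{N}_0)\dot J\big)_{(Z,\overline Z)}
=a\,\big((\mathcal{N}_0)_Z,(\mathcal{N}_0)_Z\big)-b\,\big((\mathcal{N}_0)_{\overline Z},(\mathcal{N}_0)_{\overline Z}\big).
\]
By Proposition~\ref{conformalcsc}, $S_0$ is conformal with respect to $\mathcal{C}_\Lambda$, which is precisely the vanishing $\big((\mathcal{N}_0)_Z,(\mathcal{N}_0)_Z\big)=0=\big((\mathcal{N}_0)_{\overline Z},(\mathcal{N}_0)_{\overline Z}\big)$ (the relation $(S_z,S_z)=0$ of its proof, transported through $dS=\mathcal{N}$). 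Hence the integrand vanishes pointwise and \eqref{eq:derivwillmvsderivenergy} follows. The main obstacle I anticipate is bookkeeping rather than conceptual: making the differentiation of $\mathcal{N}_t$ and of the $t$-dependent splitting $\underline{\R}^{n+1,1}=S_t\oplus S_t^{\perp}$ rigorous — which is exactly why I would work throughout inside the fixed bundle $o(\underline{\R}^{n+1,1})$ — while the absence of boundary terms is automatic since $M$ is compact and we only differentiate integrands of smooth $2$-forms.
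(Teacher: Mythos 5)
Your proposal is correct and follows essentially the same route as the paper's own proof: both reduce, via Theorem~\ref{willmmigualenergy} and the splitting of $\frac{d}{dt}\big\vert_{t=0}\mathcal{W}(\Lambda_t)$ into a fixed-$*$ term plus a $\dot{*}$-term, to showing that the $\dot{J}$-contribution vanishes, using $\dot J J=-J\dot J$ together with the conformality of $S$ (Proposition~\ref{conformalcsc}). Your choice to phrase everything through $\mathcal{N}_t$ inside the fixed bundle $o(\underline{\R}^{n+1,1})$ rather than through $dS_t$ under the moving identification is only a presentational variant of the paper's argument, not a different method.
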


\begin{proof}
Write $\mathcal {C}_{t}$ for the conformal structure induced in $M$
by $\Lambda_{t}$, writing also $\mathcal {C}_{\Lambda}$ for
$\mathcal {C}_{0}$. According to Theorem \ref{willmmigualenergy},
for each $t$, $\mathcal {W}(\Lambda_{t})=E(S_{t},\mathcal {C}_{t})$.
The proof will consist of showing that
$$\frac{d}{dt}_{\mid_{t=0}}E(S_{t},\mathcal
{C}_{t})=\frac{d}{dt}_{\mid_{t=0}}E(S_{t},\mathcal {C}_{\Lambda}).$$

For each $t$,
$$E(S_{t},\mathcal
{C}_{t})=\int_{M}\frac{1}{2}\,(dS_{t},dS_{t})_{t}\mathrm{dA}_{t}=\int_{M}\frac{1}{2}\,(dS_{t}\wedge
*_{t}dS_{t}),$$ for $(,)_{t}$ the Hilbert-Schmidt metric on
$\mathrm{Hom}((TM,g_{t}),\mathrm{Hom}(S_{t},S_{t}^{\perp}))$,
$\mathrm{dA}_{t}$ and $*_{t}$ the area element and the Hodge
$*$-operator of $(M,g_{t})$, respectively, fixing
$g_{t}\in\mathcal{C}_{t}$. Thus
$$\frac{d}{dt}_{\mid_{t=0}}E(S_{t},\mathcal
{C}_{t})=\frac{1}{2}\int _{M}((d\dot{S}\wedge
*dS)+(dS\wedge\dot{*}dS)+(dS\wedge*d\dot{S})),$$ abbreviating
$\frac{d}{dt}_{\mid_{t=0}}$ by a dot and writing $*$ for $*_{0}$.
Now we verify that
\begin{equation}\label{eq:dSwedgdedS=0}
(dS\wedge\dot{*}dS)=0.
\end{equation}
Fix $X\in\Gamma(TM)$ locally never-zero, so that $X,JX$ provides a
local frame of $TM$, for $J$ the canonical complex structure in
$(M,\mathcal {C}_{\Lambda})$. The $2$-form $(dS\wedge\dot{*}dS)$
vanishes if and only if $(dS\wedge\dot{*}dS)(X,JX)=0$, or,
equivalently, $(dS\wedge\dot{*}dS)(X+iJX,X-iJX)=0$. For each $t$,
let $J_{t}\in\Gamma (\mathrm{End}(TM))$ be the canonical complex
structure in $(M,\mathcal {C}_{t})$, writing also $J$ for $J_{0}$.
Differentiation at $t=0$ of $*_{t}dS_{t}=-(dS_{t}) J_{t}$ gives
$\dot{*}dS+*d\dot{S}=-(d\dot{S})
J-(dS)\dot{J}=*d\dot{S}-(dS)\dot{J}$ and, therefore,
$$\dot{*}dS=-(dS)\dot{J}.$$ Hence equation \eqref{eq:dSwedgdedS=0} holds if
and only if
$-(d_{X^{0,1}}S,d_{\dot{J}X^{1,0}}S)+(d_{X^{1,0}}S,d_{\dot{J}X^{0,1}}S)=0$,
for $X^{0,1}:=X+iJX$ and $X^{1,0}:=X-iJX$. Now differentiation at
$t=0$ of $J_{t}^2=-I$ gives
$$\dot{J} J=-J\dot{J}$$and, consequently, that $\dot{J}$ intertwines the
eigenspaces of $J$:  given $X\in\Gamma (TM)$,
$$J(\dot{J}(X\pm iJX))=-\dot{J}(J(X\pm iJX))=\pm i\dot{J}(X\pm
iJX),$$respectively, showing that $\dot{J}(T^{1,0}M)\subset
T^{0,1}M,\,\,\,\dot{J}(T^{0,1}M)\subset T^{1,0}M$. By the
conformality of $S:(M,\mathcal {C}_{\Lambda})
\rightarrow\mathcal{G}$, it follows that
$(d_{X^{0,1}}S,d_{\dot{J}X^{1,0}}S)=0=(d_{X^{1,0}}S,d_{\dot{J}X^{0,1}}S)$.
We establish  \eqref{eq:dSwedgdedS=0} and, consequently, that
\begin{eqnarray*}
\frac{d}{dt}_{\mid_{t=0}}E(S_{t},\mathcal {C}_{t})&=&\frac{1}{2}\int
_{M}((d\dot{S}\wedge
*dS)+(dS\wedge*d\dot{S}))\\&=&\frac{d}{dt}_{\mid _{t=0}}\int _{M}
\frac{1}{2}\,(dS_{t}\wedge
*dS_{t})\\&=&\frac{d}{dt}_{\mid_{t=0}}E(S_{t},\mathcal
{C}_{\Lambda}),
\end{eqnarray*}
completing the proof.
\end{proof}

Lemma \ref{dotWanddotE} establishes, in particular, that $\Lambda$
is a Willmore surface as soon as
$S:(M,\mathcal{C}_{\Lambda})\rightarrow \mathcal{G}$ is harmonic.
However, it does not prove Theorem
\ref{willmorevsharmonicitytheorem}, as a variation of the central
sphere congruence is not necessarily a variation through central
sphere congruences.

\begin{Lemma}\label{variationalsrelated446190}
Let $(\Lambda_{t})_{t}$ be a variation of $\Lambda$ through
immersions of $M$ in $\mathbb{P}(\mathcal{L})$ and $(S_{t})_{t}$ be
the corresponding variation of $S$ through central sphere
congruences. For each $t$, let $\sigma_{t}$ be a never-zero section
of $\Lambda_{t}$, writing also $\sigma$ for $\sigma_{0}$. The
variational vector fields of $(\sigma_{t})_{t}$ and $(S_{t})_{t}$
are related by
$$\dot{S}(\sigma)=\pi_{S^{\perp}}(\dot{\sigma}).$$
\end{Lemma}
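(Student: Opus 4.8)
The plan is to read off the identity directly from the identification $S^{*}T\mathcal{G}\cong\mathrm{Hom}(S,S^{\perp})$ of Section \ref{sec:csc}, applied to the velocity of the path $t\mapsto S_{t}$ rather than to a spatial direction in $M$. Recall that, by \eqref{eq:identificTGwithHom}, a tangent vector to $\mathcal{G}$ at a plane $V$ is identified with the homomorphism $\rho\mapsto\pi_{V^{\perp}}(d\rho)$, where $\rho$ is any local section of the tautological bundle and $d\rho$ is its derivative in the given tangent direction. This identification is intrinsic to the Grassmannian $\mathcal{G}$ and its tautological bundle, so it applies equally to the variational vector field $\dot{S}=\frac{d}{dt}_{\vert_{t=0}}S_{t}\in\Gamma(S^{*}T\mathcal{G})$: for $\rho\in\Gamma(S)$ and any smooth family $\rho_{t}\in\Gamma(S_{t})$ with $\rho_{0}=\rho$, one has $\dot{S}(\rho)=\pi_{S^{\perp}}\big(\frac{d}{dt}_{\vert_{t=0}}\rho_{t}\big)$, the right-hand side being manifestly tensorial in $\rho$ and independent of the chosen family.

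The key observation is then that the prescribed section $\sigma_{t}$ of $\Lambda_{t}$ is automatically a section of $S_{t}$. Indeed, $S_{t}$ is the central sphere congruence of $\Lambda_{t}$, so it envelops $\Lambda_{t}$, giving $\Lambda_{t}\subset\Lambda_{t}^{(1)}\subset S_{t}$ and hence $\sigma_{t}\in\Gamma(S_{t})$ with $\sigma_{0}=\sigma$. We may therefore take $\rho_{t}:=\sigma_{t}$ in the formula above and conclude
$$\dot{S}(\sigma)=\pi_{S^{\perp}}\Big(\tfrac{d}{dt}_{\vert_{t=0}}\sigma_{t}\Big)=\pi_{S^{\perp}}(\dot{\sigma}),$$
which is exactly the claimed relation. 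Note that both sides are, consistently, sections of $S^{\perp}$.

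The proof is thus essentially a one-line verification, and the only point that genuinely requires care is the legitimacy of applying \eqref{eq:identificTGwithHom}, originally phrased for tangent vectors to $\mathcal{G}$ coming from directions on $M$, to the velocity of the deformation in the parameter $t$. I would address this by emphasising that \eqref{eq:identificTGwithHom} is a statement purely about the geometry of $\mathcal{G}$, so that any tangent vector to $\mathcal{G}$, including $\frac{d}{dt}_{\vert_{t=0}}(S_{t})_{p}$, is covered; and that $\sigma_{t}$ furnishes a bona fide family of tautological sections along the curve $t\mapsto(S_{t})_{p}$ precisely because of the enveloping inclusion $\Lambda_{t}\subset S_{t}$. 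Everything else is immediate.
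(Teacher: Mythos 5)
Your proof is correct and follows essentially the same route as the paper's: both apply the identification \eqref{eq:identificTGwithHom} to the velocity of $t\mapsto S_{t}$ in the deformation parameter, using the fact that $\sigma_{t}\in\Gamma(S_{t})$ (since $S_{t}$ envelops $\Lambda_{t}$) as the test section. The paper merely packages the same idea by working with the map $\mathcal{S}:M\times\,]-\varepsilon,\varepsilon[\,\rightarrow\mathcal{G}$, $\mathcal{S}(p,t):=S_{t}(p)$, and evaluating the differential in the direction $(0,d/dt)$, which is a cosmetic rather than substantive difference from your pointwise curve argument.
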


\begin{proof}
Let $\mathcal{S}:M\times ]-\varepsilon,\varepsilon[\rightarrow
\mathcal{G}$ be defined by $\mathcal{S}(p,t):=S_{t}(p)$. Under the
identification of $S^{*}T\mathcal{G}$ with
$\mathrm{Hom}(S,S^{\perp})$ defined by \eqref{eq:identificTGwithHom}
for the case $T=S$, given $\xi:M\times
]-\varepsilon,\varepsilon[\rightarrow \R^{n+1,1}$ such that
$\xi_{0}:=(x\mapsto \xi(x,0))$ is a section of $S$, we have
$d\mathcal{S}_{(p,0)}(u,k)(\xi_{0}(p))=\pi_{S^{\perp}}(d\xi_{(p,0)}(u,k))$,
for all $p\in M$, $u\in T_{p}M$ and $k\in \R$. In particular, for
$\xi$ defined by $\xi(p,t):=\sigma_{t}(p)$, for $u=0$ and
$k=(d/dt)_{t=0}$, we get
$$d(\mathcal{S}^{p})_{0}((d/dt)_{t=0})\sigma
(p)=\pi_{S^{\perp}}(d(\xi^{p})_{0}((d/dt)_{t=0}),$$ for
$\mathcal{S}^{p}:=(t\mapsto \mathcal{S}(p,t))$ and
$\xi^{p}:=(t\mapsto \xi(p,t))$. Equivalently,
$$(d/dt)_{t=0}\,S_{t}(p)\sigma(p)=\pi_{S^{\perp}}((d/dt)_{t=0}\,\sigma_{t}(p)).$$
\end{proof}

\begin{rem}\label{dotsigmaindepi?ofsigmatuptoLambda}
Lemma \ref{variationalsrelated446190} establishes, in particular,
that $\pi_{S^{\perp}}\dot{\sigma}$ does not depend on the variation
$(\sigma_{t})_{t}$ of $\sigma$, only on the variation
$(\langle\sigma_{t}\rangle)_{t}=(\Lambda_{t})_{t}$ of
$\langle\sigma\rangle=\Lambda$. Furthermore, given a variation
$(\sigma'_{t})_{t}=(\lambda_{t}\sigma_{t})_{t}$ of $\sigma$, with
$\lambda_{t}\in\Gamma(\underline{\R})$ never-zero for all $t$, the
respective variational vector field $\dot{\sigma}'$ relates to
$\dot{\sigma}$ by
$\dot{\sigma}'=(\frac{d}{dt}_{\vert_{t=0}}\lambda_{t})\sigma'_{0}+\lambda_{0}\dot{\sigma}$
and, therefore,
$$\dot{\sigma}'=\dot{\sigma}\,\mathrm{mod}\Lambda.$$
\end{rem}

Given $z$ a holomorphic chart of $(M,\mathcal{C}_{\Lambda})$, we use
$\tau_{z}$ to denote the tension field of $S:(M,g_{z})\rightarrow
\mathcal{G}$,
$$\tau _{z}=\mathrm{tr}_{z}\nabla ^{z}dS\in\Gamma(S^{*}T\mathcal{G}),$$for $\nabla ^{z}dS$ the Hessian of
$S:(M,g_{z})\rightarrow \mathcal{G}$ and $\mathrm{tr}_{z}$
indicating trace computed with respect to
$g_{z}\in\mathcal{C}_{\Lambda}$. For simplicity, let  $\nabla$
denote the pull-back connection on $S^*T\mathcal {G}$ induced by the
Levi-Civita connection on $\mathcal {G}$ (when provided with the
pseudo-Riemannian structure defined in Section \ref{sec:csc}).

\begin{Lemma}\label{tauzallaboutit}
Given $z$ a holomorphic chart of $(M,\mathcal{C}_{\Lambda})$,
\begin{equation}\label{eq:tauinnablaS}
4\,\nabla _{\delta _{z}}S_{\bar{z}}=\tau_{z}=4\,\nabla _{\delta
_{\bar{z}}}S_{z}.
\end{equation}
It follows that, under the usual identification
$S^{*}T\mathcal{G}\cong\mathrm {Hom}(S,S^{\perp})$,
\begin{equation}\label{eq:kertauz}
\Lambda^{(1)}\subset \ker\tau_{z}
\end{equation}
and, consequently,
\begin{equation}\label{eq:ImTsubsetperpKer}
\mathrm{Im}\,\tau_{z}^{t}\subset \Lambda.
\end{equation}
\end{Lemma}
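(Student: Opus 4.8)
The plan is to prove the three displayed identities and inclusions in Lemma \ref{tauzallaboutit} by exploiting the holomorphic chart $z$ together with the conformality and harmonicity-adjacent structure already established for $S$. First I would recall that, by equation \eqref{eq:dS=N}, under the standard identification $S^{*}T\mathcal{G}\cong\mathrm{Hom}(S,S^{\perp})$ we have $dS=\mathcal{N}$, so that $S_{z}=\mathcal{N}_{\delta_{z}}$ and $S_{\bar{z}}=\mathcal{N}_{\delta_{\bar{z}}}$. Since $z$ is holomorphic, $g_{z}=dx^{2}+dy^{2}$ and the trace with respect to $g_{z}$ of the Hessian is computed via the frame $\delta_{x},\delta_{y}$, which in complex form reads $\mathrm{tr}_{z}\nabla^{z}dS = \nabla^{z}dS(\delta_{x},\delta_{x})+\nabla^{z}dS(\delta_{y},\delta_{y})$. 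Converting to $\delta_{z},\delta_{\bar{z}}$ and using the fact that $z$ is a conformal (indeed flat) coordinate so that $\nabla^{z}_{\delta_{z}}\delta_{\bar{z}}=0$ (a consequence of \eqref{eq:LiBrdelta0} and the flatness of $g_{z}$), the Hessian term $\nabla^{z}dS(\delta_{z},\delta_{\bar{z}})$ reduces to the covariant derivative $\nabla_{\delta_{z}}S_{\bar{z}}$, with the correction term $-dS(\nabla^{z}_{\delta_{z}}\delta_{\bar{z}})$ dropping out.

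\textbf{Proving the first display.} The identity $4\,\nabla_{\delta_{z}}S_{\bar{z}}=\tau_{z}=4\,\nabla_{\delta_{\bar{z}}}S_{z}$ I expect to follow from expanding $\sum_{i}\nabla^{z}dS(\delta_{i},\delta_{i})$ in the real frame and re-expressing it in the complex frame. Concretely, since $\delta_{x}=\delta_{z}+\delta_{\bar{z}}$ and $\delta_{y}=i(\delta_{z}-\delta_{\bar{z}})$, one computes $\delta_{x}\otimes\delta_{x}+\delta_{y}\otimes\delta_{y}=2(\delta_{z}\otimes\delta_{\bar{z}}+\delta_{\bar{z}}\otimes\delta_{z})$, so that $\tau_{z}=2(\nabla^{z}dS(\delta_{z},\delta_{\bar{z}})+\nabla^{z}dS(\delta_{\bar{z}},\delta_{z}))$. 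The symmetry of the Hessian then gives $\tau_{z}=4\,\nabla^{z}dS(\delta_{z},\delta_{\bar{z}})$, and since the bracket $[\delta_{z},\delta_{\bar{z}}]=0$ makes $\nabla^{z}dS$ symmetric and kills the connection-coefficient correction, this equals $4\,\nabla_{\delta_{z}}S_{\bar{z}}=4\,\nabla_{\delta_{\bar{z}}}S_{z}$. Here I would take care that the factor $4$ matches the normalization $\delta_{z}=\tfrac12(\delta_{x}-i\delta_{y})$.

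\textbf{The inclusions.} For \eqref{eq:kertauz}, I would identify $\tau_{z}$ with an element of $\mathrm{Hom}(S,S^{\perp})$ and show it kills $\Lambda^{(1)}$. Writing $\tau_{z}=4\,\nabla_{\delta_{\bar z}}S_z$ and recalling from the proof of Proposition \ref{conformalcsc}, specifically from equation \eqref{eq:34567gfs4335678wsqw4d4330912ertyuihg5tgt654321111211112�}, that $S_{z}$ annihilates $\sigma$ and $\sigma_{\bar{z}}$, I would differentiate these relations along $\delta_{\bar{z}}$. Since $\langle\sigma,\sigma_{z},\sigma_{\bar{z}}\rangle=\Lambda^{(1)}$ (using $\Lambda^{(1)}=\Lambda+d\sigma(TM)$ and the fact that $\sigma_{z},\sigma_{\bar{z}}$ span $d\sigma(TM)^{\C}$), it suffices to check $\tau_{z}$ vanishes on each of $\sigma,\sigma_{z},\sigma_{\bar{z}}$. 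The vanishing on $\sigma$ and $\sigma_{\bar z}$ should come from differentiating $S_{z}(\sigma)=0=S_{z}(\sigma_{\bar z})$ and invoking $S_z(\sigma_z)=\pi_{S^\perp}(\sigma_{zz})$ together with $\sigma_z\in\Gamma(S)$; the vanishing on $\sigma_z$ should follow from $4\nabla_{\delta_{\bar z}}S_z(\sigma_z)=\tau_z(\sigma_z)$ and the conformality relation $(S_z,S_z)=0$ established earlier. Finally, \eqref{eq:ImTsubsetperpKer} is purely formal: $\tau_{z}$ is (up to the identification) skew-adjoint as a section of $S\wedge S^{\perp}$, so $\mathrm{Im}\,\tau_{z}^{t}\subset(\ker\tau_{z})^{\perp}$ within the relevant decomposition, and combined with \eqref{eq:kertauz} this forces $\mathrm{Im}\,\tau_{z}^{t}\subset(\Lambda^{(1)})^{\perp}\cap S=\Lambda$, the last equality being exactly the relation $S\cap\Lambda^{\perp}=\Lambda^{(1)}$ dualized.

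\textbf{Main obstacle.} The step I expect to be most delicate is the careful bookkeeping in passing from the real frame $\delta_{x},\delta_{y}$ to the complex frame $\delta_{z},\delta_{\bar{z}}$ while tracking the Christoffel-type correction terms in the Hessian: one must verify that the flatness of $g_z$ together with \eqref{eq:LiBrdelta0} genuinely kills the term $dS(\nabla^{z}_{\delta_{z}}\delta_{\bar{z}})$, so that $\nabla^{z}dS(\delta_z,\delta_{\bar z})$ collapses to the plain covariant derivative $\nabla_{\delta_z}S_{\bar z}$. The rest is a sequence of differentiations of orthogonality relations already catalogued in Section \ref{sec:csc}, so the conceptual work is confined to getting this identification and the factor of $4$ exactly right.
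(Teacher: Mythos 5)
Your overall strategy mirrors the paper's: you establish \eqref{eq:tauinnablaS} by converting the $g_{z}$-trace of the Hessian to the complex frame (your factor-of-$4$ bookkeeping is right, and killing the correction term $dS(\nabla^{g_{z}}_{\delta_{z}}\delta_{\bar{z}})$ via the vanishing of the Christoffel symbols of $g_{z}=dx^{2}+dy^{2}$ in the coordinates $x,y$ is a legitimate substitute for the paper's K\"{a}hler-identity argument), you then plan to prove \eqref{eq:kertauz} by evaluating $\tau_{z}$ on the frame $\sigma,\sigma_{z},\sigma_{\bar{z}}$ of $\Lambda^{(1)}$, and you close with the transpose argument $\mathrm{Im}\,\tau_{z}^{t}\subset(\ker\tau_{z})^{\perp}\cap S\subset(\Lambda^{(1)})^{\perp}\cap S=\Lambda$, which is word for word the paper's last line. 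The first and third steps are correct.

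The gap is in the middle step, and it is exactly the delicate point of the lemma. The vanishing $\tau_{z}(\sigma)=0$ does follow, as you say, from covariant differentiation of $S_{z}\sigma=0$ together with $S_{z}\sigma_{\bar{z}}=0$. But your stated reasons for $\tau_{z}(\sigma_{z})=0$ and $\tau_{z}(\sigma_{\bar{z}})=0$ do not deliver them: the conformality of $S$, i.e. $(S_{z},S_{z})=0$, is not the relevant input, and neither is ``$S_{z}(\sigma_{z})=\pi_{S^{\perp}}(\sigma_{zz})$ together with $\sigma_{z}\in\Gamma(S)$''. Computing with $\tau_{z}=4\,\nabla_{\delta_{z}}S_{\bar{z}}$, and using $S_{\bar{z}}\sigma_{z}=\pi_{S^{\perp}}(\sigma_{z\bar{z}})=0$, one finds
\[
\tfrac{1}{4}\,\tau_{z}(\sigma_{z})=\nabla^{S^{\perp}}_{\delta_{z}}(S_{\bar{z}}\sigma_{z})-S_{\bar{z}}(\nabla^{S}_{\delta_{z}}\sigma_{z})
=-\,\pi_{S^{\perp}}\bigl((\pi_{S}(\sigma_{zz}))_{\bar{z}}\bigr),
\]
so what must be shown is that the $\bar{z}$-derivative of $\pi_{S}(\sigma_{zz})$ stays inside $S$ --- which is false for an arbitrary section of $S$ and requires locating $\pi_{S}(\sigma_{zz})$ first. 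The conformality of $\sigma$ (not of $S$) gives $(\sigma_{zz},\sigma)=0=(\sigma_{zz},\sigma_{z})$, whence $\pi_{S}(\sigma_{zz})\in\Gamma((\Lambda^{1,0})^{\perp}\cap S)=\Gamma(\Lambda^{1,0})=\Gamma(\langle\sigma,\sigma_{z}\rangle)$ by maximal isotropy, and $\bar{z}$-derivatives of sections of $\langle\sigma,\sigma_{z}\rangle$ do remain in $S$, killing the right-hand side above; the paper achieves the same end by working with the normalized section $\sigma^{z}$, for which the additional relation $(\sigma^{z}_{zz},\sigma^{z}_{\bar{z}})=0$ gives the stronger $\pi_{S}(\sigma^{z}_{zz})\in\Gamma(\Lambda)$, so that $(\pi_{S}(\sigma^{z}_{zz}))_{\bar{z}}\in\Gamma(\Lambda^{(1)})\subset\Gamma(S)$. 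The same localization, conjugated ($\pi_{S}(\sigma_{\bar{z}\bar{z}})\in\Gamma(\Lambda^{0,1})$), is what is needed for $\tau_{z}(\sigma_{\bar{z}})=0$. Without it, two of the three vanishings --- and hence \eqref{eq:kertauz} and \eqref{eq:ImTsubsetperpKer} --- remain unproved.
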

\begin{proof}
Fix a holomorphic chart $z=x+iy$ of $(M,\mathcal{C}_{\Lambda})$.
First of all, note that, as $\delta_{x},\delta_{y}$ is an
orthonormal frame of $(TM,g_{z})$, we have
$$\tau _{z}=\nabla _{\delta
_{x}}S_{x}-dS(\nabla^{g_{z}}_{\delta _{x}}\delta _{x})+\nabla
_{\delta _{y}}S_{y}-dS(\nabla^{g_{z}}_{\delta _{y}}\delta _{y}),$$
for $\nabla^{g_{z}}$ the Levi-Civita connection on $(M,g_{z})$. On
the other hand, for $J$ the canonical complex-structure in
$(M,\mathcal{C}_{\Lambda})$, $(M,g_{z},J)$ is a K\"{a}hler manifold
and, therefore,
$$J(\nabla^{g_{z}}_{\delta _{x}}\delta _{x}+\nabla^{g_{z}}_{\delta
_{y}}\delta _{y})=\nabla^{g_{z}}_{\delta _{x}}J\delta
_{x}+\nabla^{g_{z}}_{\delta _{y}}J\delta _{y}=\nabla^{g_{z}}_{\delta
_{x}}\delta _{y}-\nabla^{g_{z}}_{\delta _{y}}\delta _{x}=[\delta
_{x},\delta _{y}]=0.$$ Thus
$$\tau_{z}=\nabla
_{\delta _{x}}S_{x}+\nabla _{\delta _{y}}S_{y}.$$By the symmetry of
the Hessian,
$$\nabla_{\delta_{x}}S_{y}-dS(\nabla^{g_{z}}_{\delta_{x}}\delta_{y})=\nabla_{\delta_{y}}S_{x}-dS(\nabla^{g_{z}}_{\delta_{y}}\delta_{x}),$$
or, equivalently, by the torsion-free property of the Levi-Civita
connection,
$$\nabla_{\delta_{x}}S_{y}-\nabla_{\delta_{y}}S_{x}=dS([\delta
_{x},\delta _{y}])=0.$$ This establishes \eqref{eq:tauinnablaS}.
Next observe that, whilst, for a never-zero section $\sigma$ of
$\Lambda$,
$(\sigma_{zz},\sigma)=(\sigma,\sigma_{z})_{z}-(\sigma_{z},\sigma_{z})=0$,
as well as $ (\sigma
_{zz},\sigma_{z})=\frac{1}{2}\,(\sigma_{z},\sigma_{z})_{z}=0$; for
the particular case of the normalized section $\sigma ^{z}$ of
$\Lambda$ with respect to $z$, we have, furthermore $ (\sigma^{z}
_{zz},\sigma^{z}_{\bar{z}})=(\sigma ^{z}_{z},\sigma
^{z}_{\bar{z}})_{z}-(\sigma^{z} _{z},\sigma^{z} _{z\bar{z}})=0$, as
$(\sigma^{z}_{z},\sigma^{z}_{\bar{z}})$ is constant. Hence $\pi
_{S}(\sigma ^{z}_{zz})$ is orthogonal to $\sigma^{z}$,
$\sigma^{z}_{z}$ and $\sigma^{z} _{\bar{z}}$, so that $ \pi
_{S}(\sigma ^{z}_{zz})\in\Gamma (\Lambda)$ and, therefore, $(\pi
_{S}(\sigma ^{z}_{zz}))_{\bar{z}}\in\Gamma (S)$. It follows that
\begin{eqnarray*}
\tau _{z}(\sigma ^{z}_{z})&=&4\,(\nabla _{\delta
_{z}}S_{\bar{z}})\sigma^{z}_{z}\\&=&4\,(\nabla
^{S^{\perp}}_{\delta_{z}}(S_{\bar{z}}\sigma
^{z}_{z})-S_{\bar{z}}(\nabla^{S}_{\delta_{z}}\sigma^{z}_{z}))\\&=&4\,(\nabla
^{S^{\perp}}_{\delta_{z}}(\pi _{S^{\perp}}(\sigma^{z}
_{z\bar{z}}))-S_{\bar{z}}(\pi _{S}(\sigma^{z}_{zz})))\\&=&-4\,\pi
_{S^{\perp}}((\pi
_{S}(\sigma^{z}_{zz}))_{\bar{z}})\\&=&0.\end{eqnarray*} The
orthogonality of  $\pi _{S}(\sigma ^{z}_{zz})$ to $\sigma^{z}$,
$\sigma^{z}_{z}$ and $\sigma^{z} _{\bar{z}}$ establishes that of
$\pi _{S}(\sigma^{z}_{\bar{z}\bar{z}})$, establishing, ultimately,
$$\tau _{z}(\sigma ^{z}_{\bar{z}})=4\,(\nabla _{\delta
_{\bar{z}}}S_{z})\sigma^{z}_{\bar{z}}=-4\,\pi _{S^{\perp}}((\pi
_{S}(\sigma^{z}_{\bar{z}\bar{z}}))_{z})=0.$$On the other hand,
$$\tau _{z}(\sigma ^{z})=4\,(\nabla _{\delta
_{z}}^{S^{\perp}}(\pi_{S^{\perp}}(\sigma^{z}_{\bar{z}}))-\pi_{S^{\perp}}(\sigma
^{z}_{z\bar{z}}))=0.$$We conclude that $\Lambda^{(1)}\subset
\ker\tau_{z}$ and, consequently, that $$\mathrm{Im}\,\tau
_{z}^{t}\subset(\ker\tau_{z})^{\perp}\cap S\subset(\Lambda^{(1)})
^{\perp}\cap S=\Lambda.$$
\end{proof}

Now we proceed to the proof of Theorem
\ref{willmorevsharmonicitytheorem}.

\begin{proof}
Suppose $S:(M,\mathcal{C}_{\Lambda})\rightarrow\mathcal{G}$ is
harmonic. Then, in particular, given an arbitrary variation
$(\Lambda_{t})_{t}$ of $\Lambda$ through surfaces in the
projectivized light-cone, we have
\begin{equation}\label{eq:sfdshp'029uyhn xcqt}
\frac{d}{dt}_{\mid_{t=0}}E(S_{t},\mathcal {C}_{\Lambda})=0,
\end{equation}
for the corresponding variation $(S_{t})_{t}$ of $S$ through central
sphere congruences. By equation \eqref{eq:derivwillmvsderivenergy},
we conclude that $\Lambda$ is a Willmore surface.

Conversely, suppose that $\Lambda$ is a Willmore surface. Fix a
holomorphic chart $z$ of $(M,\mathcal{C}_{\Lambda})$. To prove that
$S:(M,\mathcal{C}_{\Lambda})\rightarrow\mathcal{G}$ is harmonic, we
consider the usual identification $S^{*}T\mathcal{G}\cong\mathrm
{Hom}(S,S^{\perp})$ and show that $\tau _{z}\in\Gamma(\mathrm
{Hom}(S,S^{\perp}))$ vanishes. For that, and aiming for a
contradiction, suppose that $\tau _{z}$ is non-zero. Then so is
$\tau _{z}^{t}\in\Gamma (\mathrm {Hom}(S^{\perp},\Lambda))$. Fix a
never-zero section $\sigma$ of $\Lambda$ and a variation $(\sigma
_{t})_{t}$ of $\sigma$ through smooth maps $\sigma _{t}:M\rightarrow
\mathcal{L}$ with
$\dot{\sigma}\in\Gamma(\langle\sigma\rangle^{\perp})=\Gamma(\Lambda^{(1)}\oplus
S^{\perp})$ a section of $S^{\perp}$ such that $\tau
_{z}^{t}(\pi_{S^{\perp}}\dot {\sigma})=\lambda \sigma$ for some
positive $\lambda\in C ^{\infty}(M,\R)$. Define a variation of
$\Lambda$ through surfaces in the projectivized light-cone by
setting $\Lambda _{t}:=\langle \sigma _{t}\rangle$, for each $t$.
Let $(S_{t})_{t}$ be the corresponding variation of $S$ through
central sphere congruences and $\dot {S}$ be the corresponding
variational vector field. According to Lemma
\ref{variationalsrelated446190}, $\tau _{z}^{t}\, \dot
{S}(\sigma)=\lambda \sigma$. On the other hand,  yet again according
to \eqref{eq:ImTsubsetperpKer}, $\mathrm{tr}(\tau _{z}^{t}\, \dot
{S})$ is simply the component of $\tau _{z}^{t}\,\dot {S}(\sigma)$
with respect to $\sigma$. Hence $\mathrm{tr}(\tau _{z}^{t}\,\dot
{S})=\lambda$ is positive. Lastly, the fact that $\Lambda$ is a
Willmore surface intervenes to establish  \eqref{eq:sfdshp'029uyhn
xcqt}. On the other hand, according to equation
\eqref{eq:enandvariational},
\begin{equation}\label{eq:gafrr41bb?pagcl?sbtce9}
\frac{d}{dt}_{\mid_{t=0}}E(S_{t},\mathcal {C}_{\Lambda})=- \int
_{M}(\dot {S},\tau _{z} )\,\mathrm {dA}_{z}=-\int
_{M}\mathrm{tr}(\tau _{z}^{t}\, \dot {S})\,\mathrm {dA}_{z},
\end{equation}
for $dA_{z}$ the area element of $(M,g_{z})$. It follows that
\begin{equation}\label{eq:ca329nv62392mv96321fn p098543sghj}
\int _{M}\mathrm{tr}(\tau _{z}^{t}\, \dot {S})\,\mathrm {dA}_{z}=0,
\end{equation}
which contradicts the conclusion of the positiveness of
$\mathrm{tr}(\tau _{z}^{t}\,\dot {S})$, completing the proof.
\end{proof}

\section{The Willmore surface equation}\label{subsec:willmeq}

\markboth{\tiny{A. C. QUINTINO}}{\tiny{CONSTRAINED WILLMORE
SURFACES}}

Having characterized conformal Willmore surfaces in the
projectivized light-cone by the harmonicity of the central sphere
congruence, we have, in particular, deduced the Willmore surface
equation for a conformal immersion
$\Lambda:M\rightarrow\mathbb{P}(\mathcal{L})$:
$$d^{\nabla ^{S^*T\mathcal {G}}}*dS=0,$$
for $\nabla ^{S^*T\mathcal {G}}$ the pull-back connection on
$S^*T\mathcal {G}$ induced by the Levi-Civita connection on
$\mathcal {G}$ (when provided with the pseudo-Riemannian structure
defined in Section \ref{sec:csc}). It is well-known (see, for
example, \cite{burstall+rawnsley}) that the usual identification
$S^{*}T\mathcal{G}\cong\mathrm{Hom}(S,S^{\perp})$, of bundles
provided with a metric,  respects connections,\footnote{This is the
particular case $\phi=S$ and
$\mathcal{G}=\mathrm{Gr}_{(3,1)}(\R^{n+1,1})$ of a fact regarding  a
general map $\phi:M\rightarrow\mathcal{G}$ into a general
Grassmannian $\mathcal{G}=Gr_{(r,s)}(\R^{p,q})$.} i.e., $\nabla
^{S^*T\mathcal {G}}$ consists of the connection induced canonically
in $\mathrm{Hom}(S,S^{\perp})$ by $\nabla^{S}$ and
$\nabla^{S^{\perp}}$,
$$\nabla ^{S^*T\mathcal {G}}\xi=\nabla^{S^{\perp}}\circ\xi-\xi\circ\nabla^{S}=
\mathcal{D}\circ\xi-\xi\circ\mathcal {D},$$ for all
$\xi\in\Gamma(\mathrm{Hom}(S,S^{\perp}))$. That is,
\begin{equation}\label{eq:nablaS+iscurlyD}
\nabla ^{S^*T\mathcal {G}}=\mathcal{D},
\end{equation}
for the connection induced naturally in $\mathrm{Hom}(S,S^{\perp})$
by the connection $\mathcal{D}$ on $\underline{\R}^{n+1,1}$. Note
that the connection induced naturally in $S\wedge S^{\perp}$ by
$\mathcal{D}$ coincides with the one induced naturally by
$\nabla^{S}$ and $\nabla^{S^{\perp}}$. By \eqref{eq:dS=N}, we
conclude that, under the usual identification
\begin{equation}\label{eq:bbusualidentificbbvvvvv}
S^{*}T\mathcal{G}\cong\mathrm{Hom}(S,S^{\perp})\cong S\wedge
S^{\perp},
\end{equation}
of bundles provided with a metric and a connection, $d^{\mathcal
{D}}*\mathcal{N} =0$, or, equivalently (cf. \eqref{eq:ext}),
$d*\mathcal{N} =0$, provides a characterization of Willmore surfaces
in the projectivized light-cone. In view of
$\mathcal{N}^{1,0}=\frac{1}{2}\,(\mathcal{N}+i*\mathcal{N})$ (and,
therefore,
$\mathcal{N}^{0,1}=\frac{1}{2}\,(\mathcal{N}-i*\mathcal{N})$),
Codazzi equation establishes
\begin{equation}\label{eq:codcurlyN}
d^{\mathcal{D}}\mathcal{N}^{1,0}=\frac{i}{2}\,d^{\mathcal{D}}*\mathcal{N}=-d^{\mathcal{D}}\mathcal{N}^{0,1}.
\end{equation}
It follows that:
\begin{thm}\label{Willmoreeq}
Willmore surfaces in the projectivized light-cone are characterized,
equivalently, by any of the following equations:

$i)$\,\,$d*\mathcal{N} =0;$

$ii)$\,\,$d^{\mathcal {D}}*\mathcal{N}=0;$

$iii)$\,\,$d^{\mathcal {D}}\mathcal{N}^{1,0}=0;$

$iv)$\,\,$d^{\mathcal {D}}\mathcal{N}^{0,1}=0.$
\end{thm}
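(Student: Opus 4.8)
The plan is to take $(ii)$, the condition $d^{\mathcal{D}}*\mathcal{N}=0$, as the hinge and to show it is simultaneously the Willmore condition and equivalent to each of $(i)$, $(iii)$, $(iv)$. First I would identify $(ii)$ with harmonicity of the central sphere congruence. By Theorem \ref{willmorevsharmonicitytheorem}, $\Lambda$ is Willmore if and only if $S$ is harmonic, and by the conformally invariant characterization \eqref{eq:harmconfequiveq} this harmonicity reads $d^{\nabla^{S^{*}T\mathcal{G}}}*dS=0$. Feeding in the identifications $dS=\mathcal{N}$ from \eqref{eq:dS=N} and $\nabla^{S^{*}T\mathcal{G}}=\mathcal{D}$ from \eqref{eq:nablaS+iscurlyD}, this is exactly $d^{\mathcal{D}}*\mathcal{N}=0$. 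Thus $(ii)$ already \emph{is} the Willmore condition, and the whole theorem reduces to the equivalence of the four displayed equations.

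Next I would prove $(i)\Leftrightarrow(ii)$. Writing $d=\mathcal{D}+\mathcal{N}$ as in \eqref{eq:dcurlyDcurlyN} and applying \eqref{eq:ext} with $\nabla^{1}=d$, $\nabla^{2}=\mathcal{D}$ and $A=\mathcal{N}$ to the $1$-form $*\mathcal{N}$, I get
$$d*\mathcal{N}=d^{\mathcal{D}}*\mathcal{N}+[\mathcal{N}\wedge *\mathcal{N}].$$
So the equivalence comes down to the vanishing of the quadratic term $[\mathcal{N}\wedge *\mathcal{N}]$, which is the only real (if minor) obstacle here. This I would dispatch purely algebraically: the identity \eqref{eq:starseliebracks}, taken with $\mu=\eta=\mathcal{N}$, gives $[\mathcal{N}\wedge *\mathcal{N}]=-[*\mathcal{N}\wedge\mathcal{N}]$, while the symmetry $[*\mathcal{N}\wedge\mathcal{N}]=[\mathcal{N}\wedge *\mathcal{N}]$ of the bracket-wedge forces $[\mathcal{N}\wedge *\mathcal{N}]=-[\mathcal{N}\wedge *\mathcal{N}]$, hence $[\mathcal{N}\wedge *\mathcal{N}]=0$. (Alternatively one splits $\mathcal{N}=\mathcal{N}^{1,0}+\mathcal{N}^{0,1}$ and uses \eqref{eq:ijwedgeij0} to kill the diagonal terms, the cross terms cancelling by the same symmetry.) Consequently $d*\mathcal{N}=d^{\mathcal{D}}*\mathcal{N}$ identically, so $(i)$ and $(ii)$ hold or fail together.

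Finally I would read off $(ii)\Leftrightarrow(iii)\Leftrightarrow(iv)$ from the Codazzi equation. Since $\mathcal{N}^{1,0}=\frac{1}{2}(\mathcal{N}+i*\mathcal{N})$ and $\mathcal{N}^{0,1}=\frac{1}{2}(\mathcal{N}-i*\mathcal{N})$, and $d^{\mathcal{D}}\mathcal{N}=0$ by Codazzi, one obtains \eqref{eq:codcurlyN}, namely $d^{\mathcal{D}}\mathcal{N}^{1,0}=\frac{i}{2}\,d^{\mathcal{D}}*\mathcal{N}=-d^{\mathcal{D}}\mathcal{N}^{0,1}$. The nonzero scalar factor $i/2$ shows at once that $d^{\mathcal{D}}*\mathcal{N}$, $d^{\mathcal{D}}\mathcal{N}^{1,0}$ and $d^{\mathcal{D}}\mathcal{N}^{0,1}$ vanish simultaneously, giving the remaining equivalences and completing the chain $(i)\Leftrightarrow(ii)\Leftrightarrow(iii)\Leftrightarrow(iv)$.
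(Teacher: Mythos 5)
Your proposal is correct and follows essentially the same route as the paper: harmonicity of the central sphere congruence plus the identifications $dS=\mathcal{N}$ and $\nabla^{S^{*}T\mathcal{G}}=\mathcal{D}$ give $(ii)$, equation \eqref{eq:ext} gives $(i)\Leftrightarrow(ii)$, and the Codazzi equation via \eqref{eq:codcurlyN} gives $(iii)$ and $(iv)$. Your explicit verification that $[\mathcal{N}\wedge *\mathcal{N}]=0$ is a welcome detail that the paper leaves implicit in its citation of \eqref{eq:ext}, but it does not change the argument's structure.
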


\begin{rem}\label{taucharactofWillm}
According to Lemma \ref{tauzallaboutit}, together with
\eqref{eq:nablaS+iscurlyD}, the harmonicity of the central sphere
congruence $S:(M,\mathcal{C}_{\Lambda})\rightarrow \mathcal{G}$ of
$\Lambda$ can be characterized by $\mathcal{D} _{\delta
_{z}}S_{\bar{z}}=0$, or, equivalently, $(\mathcal{D}_{\delta
_{z}}S_{\bar{z}})\,\sigma_{z\bar{z}}=0$, fixing a never-zero section
$\sigma$ of $\Lambda$ and a holomorphic chart $z$ of
$(M,\mathcal{C}_{\Lambda})$.
\end{rem}

\begin{rem}
Let $v_{\infty}\in\R^{n+1,1}$ be non-zero and
$\sigma_{\infty}:M\rightarrow S_{v_{\infty}}$ be the surface defined
by $\Lambda$ in the space-form $S_{v_{\infty}}$. Let
$\Delta_{\infty}$ be the Laplacian in $N_{\infty}$ and
$\tilde{A}_{\infty}:=A_{\infty}^{*}\circ A_{\infty}$, for
$A_{\infty}$ mapping a unit $\xi\in\Gamma(N_{\infty})$ to
$A_{\infty}^{\xi}$, the shape operator of $\sigma_{\infty}$ with
respect to $\xi$.  Cf. \cite{weiner},
$$\Delta_{\infty}\mathcal{H}_{\infty}-2\vert\mathcal{H}_{\infty}\vert^{2}\mathcal{H}_{\infty}+\tilde{A}_{\infty}(\mathcal{H}_{\infty})=0$$
is a Willmore surface equation for $\sigma_{\infty}$, providing,
therefore, yet another Willmore surface equation for $\Lambda$. One
which takes us out of the path of this text, though.
\end{rem}

We dedicate what is left in this section to contemplating the
variational Willmore energy, supposing $M$ is compact. Let
$(\Lambda_{t})_{t}$ be a variation of $\Lambda$ through immersions
of $M$ in $\mathbb{P}(\mathcal{L})$ and $\dot{\mathcal{W}}$ be the
corresponding variational Willmore energy,
$$\dot{\mathcal{W}}=\frac{d}{dt}_{\vert_{t=0}}\mathcal {W}(\Lambda_{t}).$$
For each $t$, let $\sigma_{t}$ be a never-zero section of
$\Lambda_{t}$, writing also $\sigma$ for $\sigma_{0}$. Let
$\dot{\sigma}$ be the variational vector field of the variation
$(\sigma_{t})_{t}$. Differentiation of $(\sigma_{t},\sigma_{t})=0$
establishes $(\sigma,\dot{\sigma})=0$. In view of Remark
\ref{dotsigmaindepi?ofsigmatuptoLambda}, define
\begin{equation}\label{eq:dotLambda}
\dot{\Lambda}\in\Gamma(\mathrm{Hom}(\Lambda,\Lambda^{\perp}/\Lambda))
\end{equation}
by $\dot{\Lambda}\,\sigma:=\dot{\sigma}\,\mathrm{mod}\Lambda$. The
notation is not casual. In fact, under the isomorphism
$$d\pi_{\sigma}:\Lambda^{\perp}/\Lambda\cong T_{\Lambda}\mathbb{P}(\mathcal{L})$$
(cf. \eqref{eq:Tangenttoprojectlightcne}), the variational vector
field of $(\Lambda_{t})_{t}$ is
$\dot{\sigma}\,\mathrm{mod}\Lambda\in\Gamma(\mathrm{Hom}(\Lambda^{\perp}/\Lambda))$.
Set
$$\nu:=\pi\dot{\Lambda}\in \Gamma(\mathrm{Hom}(\Lambda,S^{\perp})),$$
for the canonical projection
$$\pi:\Gamma(\mathrm{Hom}(\Lambda,\Lambda^{\perp}/\Lambda)=\mathrm{Hom}(\Lambda,\Lambda^{(1)}/\Lambda))\oplus
\Gamma(\mathrm{Hom}(\Lambda,S^{\perp}))\rightarrow
\Gamma(\mathrm{Hom}(\Lambda,S^{\perp})).$$ Observe that
$$\nu\sigma=\pi_{S^{\perp}}(\dot{\Lambda}\sigma)=\pi_{S^{\perp}}\dot{\sigma}.$$

Having said so, let $(S_{t})_{t}$ be the variation of $S$ through
central sphere congruences corresponding to the variation
$(\Lambda_{t})_{t}$ of $\Lambda$ and $\dot{S}$ be the corresponding
variational central sphere congruence. Fix a holomorphic chart $z$
of $(M,\mathcal{C}_{\Lambda})$. According to  Lemma
\ref{dotWanddotE}, together with \eqref{eq:gafrr41bb?pagcl?sbtce9},
$$\dot{\mathcal{W}}=-\int _{M}\mathrm{tr}(\tau _{z}^{t}\, \dot
{S})\,\mathrm {dA}_{z}$$ and, therefore, by Lemma
\ref{tauzallaboutit}, followed by Lemma
\ref{variationalsrelated446190},
$$\dot{\mathcal{W}}=-\int _{M}(\tau _{z}^{t}\, \dot
{S}\,\sigma,\sigma_{z\bar{z}})(\sigma,\sigma_{z\bar{z}})^{-1}\,\mathrm
{dA}_{z}=-\int_{M}(\dot{\sigma},\tau_{z}\sigma_{z\bar{z}})(\sigma,\sigma_{z\bar{z}})^{-1}dA_{z}.$$
The skew-symmetry of
$\tau_{z}\in\Gamma(\mathrm{Hom}(S,S^{\perp})\cong S\wedge
S^{\perp})$ establishes then
$$\dot{\mathcal{W}}=\int_{M}(\tau_{z}\pi_{S^{\perp}}\dot{\sigma},\sigma_{z\bar{z}})(\sigma,\sigma_{z\bar{z}})^{-1}dA_{z}.$$
Let $*_{z}$ be the Hodge $*$-operator on forms over $(M,g_{z})$.
According to equation \eqref{eq:tracoehessiana}, $d^{\nabla
^{S^*T\mathcal {G}}}*dS=-*_{z}\tau_{z}$ and, therefore, under the
identification \eqref{eq:bbusualidentificbbvvvvv},
\begin{equation}\label{eq:tensionfieldvsdcurlyD*curlyN}
\tau_{z}=*_{z}\,d^{\mathcal{D}}*\mathcal{N}\in\Gamma(S\wedge
S^{\perp}).
\end{equation}
It follows that
$$\dot{\mathcal{W}}=\int_{M}((*_{z}\,d^{\mathcal{D}}*\mathcal{N})\nu\sigma,\sigma_{z\bar{z}})(\sigma,\sigma_{z\bar{z}})^{-1}dA_{z}.$$
As  we know, since $\mathcal{N}$ takes values in $S\wedge S^{\perp}$
and $S$ and $S^{\perp}$ are $\mathcal{D}$-parallel, the $2$-form
$d^{\mathcal{D}}*\mathcal{N}$ takes values in $S\wedge S^{\perp}$,
and so does, therefore, $*_{z}\,d^{\mathcal{D}}*\mathcal{N}$. In
view of equations \eqref{eq:kertauz} and
\eqref{eq:tensionfieldvsdcurlyD*curlyN}, we conclude, furthermore,
that
\begin{equation}\label{eq:starblablastar}
*_{z}\,d^{\mathcal{D}}*\mathcal{N}\in\Omega^{0}(\Lambda\wedge S^{\perp}).
\end{equation}
Hence $(*_{z}\,d^{\mathcal{D}}*\mathcal{N})\circ
\nu\in\Gamma(\mathrm{End}(\Lambda))$ and
\begin{eqnarray*}
\dot{\mathcal{W}}&=&\int_{M}\mathrm{tr}\,((*_{z}\,d^{\mathcal{D}}*\mathcal{N})\circ\nu)\,dA_{z}\\&=&
\int_{M}(\nu,(*_{z}\,d^{\mathcal{D}}*\mathcal{N})^{t})\,dA_{z}\\&=&-\int_{M}(*_{z}\,d^{\mathcal{D}}*\mathcal{N},\nu)\,dA_{z}
\end{eqnarray*}
and, ultimately,
$$\dot{\mathcal{W}}=-\int_{M}((d^{\mathcal{D}}*\mathcal{N})\wedge\nu).$$
As $*_{z}\,d^{\mathcal{D}}*\mathcal{N}$ vanishes on $\Lambda^{(1)}$,
we have
$$(*_{z}\,d^{\mathcal{D}}*\mathcal{N},\dot{\Lambda}-\nu)=\mathrm{tr}\,((*_{z}\,d^{\mathcal{D}}*\mathcal{N})^{T}(\dot{\Lambda}-\nu))=-\mathrm{tr}\,(*_{z}\,d^{\mathcal{D}}*\mathcal{N}\circ (\dot{\Lambda}-\nu))=0$$
and we conclude that the variational Willmore energy relates to the
variational surface by
\begin{equation}\label{eq:dotWvsdotLambda}
\dot{\mathcal{W}}=-\int_{M}((d^{\mathcal{D}}*\mathcal{N})\wedge\dot{\Lambda}).
\end{equation}
As a final remark, note that, according to Lemma \ref{dotWanddotE}
and \eqref{eq:gafrr41bb?pagcl?sbtce9}, on the other hand,
$\dot{\mathcal{W}}=- \int _{M}(\dot {S},\tau _{z} )\,\mathrm
{dA}_{z}$ and, therefore, by
\eqref{eq:tensionfieldvsdcurlyD*curlyN},
$$\dot{\mathcal{W}}=-\int_{M}((d^{\mathcal{D}}*\mathcal{N})\wedge \dot{S}).$$

\section{Willmore surfaces under change of flat metric
connection}\label{Wilmmunderchange}

\markboth{\tiny{A. C. QUINTINO}}{\tiny{CONSTRAINED WILLMORE
SURFACES}}

Let $\Lambda$ be a null line subbundle of the trivial bundle
$M\times\R^{n+1,1}$, not necessarily defining an immersion into
$\mathbb{P}(\mathcal{L})$. Let $\tilde{d}$ be a flat metric
connection on $\underline{\R}^{n+1,1}$.
\begin{defn}\label{eq:tildewilmm}
Suppose $\Lambda$ is a $\tilde{d}$-surface. $\Lambda$ is said to be
a \emph{Willmore $\tilde{d}$-surface} if
$$d^{\mathcal{D}^{\tilde{d}}}*_{\tilde{d}}\mathcal{N}^{\tilde{d}}=0,$$
where $*_{\tilde{d}}$ denotes the Hodge $*$-operator on $1$-forms
over $(M,\mathcal{C}_{\Lambda}^{\tilde{d}})$.
\end {defn}

This definition is a generalization of the characterization of a
Willmore surface in the projectivized light-cone provided by
$d^{\mathcal{D}}*\mathcal{N}=0$, corresponding to the particular
case $\tilde{d}=d$ and $M$ is compact.

Let $\tilde{\phi}:(\underline{\R}^{n+1,1},\tilde{d})\rightarrow
(\underline{\R}^{n+1,1},d)$ be an isomorphism. As observed in
Section \ref{subsec:tranfsfmc}, $\Lambda$ is a $\tilde{d}$-surface
if and only if $\tilde{\phi}\Lambda$ is a surface. Furthermore:

\begin{prop}\label{prop4.6.1} Suppose $\Lambda$ is a
$\tilde{d}$-surface (or, equivalently, $\tilde{\phi}\Lambda$ is a
surface). In that case, $\Lambda$ is a Willmore $\tilde{d}$-surface
if and only if $\tilde{\phi}\Lambda$ is a Willmore surface.
\end{prop}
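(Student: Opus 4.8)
The plan is to transport the Willmore $\tilde{d}$-surface condition for $\Lambda$ through the isomorphism $\tilde{\phi}$ and recognize the result as the Willmore surface equation for $\tilde{\phi}\Lambda$. The mechanism is that $\tilde{\phi}$, being an isometric isomorphism intertwining $\tilde{d}$ and $d$, conjugates all the data attached to $\Lambda$ as a $\tilde{d}$-surface into the corresponding data attached to $\tilde{\phi}\Lambda$ as a surface, whereas the Hodge $*$-operator is insensitive to such a pointwise conjugation.

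First I would record three intertwining facts. By \eqref{eq:metricaeconformalclasstilde} the conformal structure $\mathcal{C}_{\Lambda}^{\tilde{d}}$ coincides with $\mathcal{C}_{\tilde{\phi}\Lambda}$; hence, by Lemma \ref{hodgeconfinv}, $*_{\tilde{d}}$ is exactly the Hodge $*$-operator on $1$-forms appearing in the Willmore surface equation for $\tilde{\phi}\Lambda$, and the canonical complex structure $J$ on $M$ is the same for both. Next, by the $\tilde{\phi}$-version of \eqref{eq:DeNtildevsDLambdatilde}, one has $\mathcal{D}^{\tilde{d}}=\tilde{\phi}^{-1}\circ\mathcal{D}_{\tilde{\phi}\Lambda}\circ\tilde{\phi}$ and $\mathcal{N}^{\tilde{d}}=\mathrm{Ad}_{\tilde{\phi}^{-1}}\mathcal{N}_{\tilde{\phi}\Lambda}$; these are obtained exactly as in Section \ref{subsec:tranfsfmc}, since $\tilde{\phi}$ carries the defining frame of $S^{\tilde{d}}$ onto that of $S_{\tilde{\phi}\Lambda}$ and therefore conjugates the orthogonal projections onto $S^{\tilde{d}}$ and $(S^{\tilde{d}})^{\perp}$, hence conjugates $\mathcal{D}$ and $\mathcal{N}$.

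Then I would assemble the computation. Because $\mathrm{Ad}_{\tilde{\phi}}$ acts pointwise on endomorphism values while, by \eqref{eq:*vsJ}, $*=-(\cdot)J$ acts only on the form index, the two commute: $*_{\tilde{d}}(\mathrm{Ad}_{\tilde{\phi}^{-1}}\omega)=\mathrm{Ad}_{\tilde{\phi}^{-1}}(*\,\omega)$ for every $\mathrm{End}(\underline{\R}^{n+1,1})$-valued $1$-form $\omega$. Moreover, since $\tilde{\phi}$ preserves connections, $\mathrm{Ad}_{\tilde{\phi}}$ preserves the induced connections on $\mathrm{End}(\underline{\R}^{n+1,1})$ and so intertwines the exterior covariant derivatives, giving $d^{\mathcal{D}^{\tilde{d}}}=\mathrm{Ad}_{\tilde{\phi}^{-1}}\circ d^{\mathcal{D}_{\tilde{\phi}\Lambda}}\circ\mathrm{Ad}_{\tilde{\phi}}$. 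Combining these with $\mathrm{Ad}_{\tilde{\phi}}\mathcal{N}^{\tilde{d}}=\mathcal{N}_{\tilde{\phi}\Lambda}$ and $*_{\tilde{d}}=*$ yields
$$d^{\mathcal{D}^{\tilde{d}}}*_{\tilde{d}}\mathcal{N}^{\tilde{d}}=\mathrm{Ad}_{\tilde{\phi}^{-1}}\left(d^{\mathcal{D}_{\tilde{\phi}\Lambda}}*\mathcal{N}_{\tilde{\phi}\Lambda}\right).$$
As $\mathrm{Ad}_{\tilde{\phi}^{-1}}$ is a fibrewise isomorphism, the left-hand side vanishes if and only if $d^{\mathcal{D}_{\tilde{\phi}\Lambda}}*\mathcal{N}_{\tilde{\phi}\Lambda}=0$, which by Theorem \ref{Willmoreeq}(ii) is precisely the condition that $\tilde{\phi}\Lambda$ be a Willmore surface.

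I expect no genuine obstacle here, only two points requiring care in the bookkeeping. The first is the point-dependence of $\tilde{\phi}$: one must check that $*$ and $\mathrm{Ad}_{\tilde{\phi}}$ commute even though $\tilde{\phi}$ is not constant, which holds because $*$ is $C^{\infty}(M)$-linear in the form index while $\mathrm{Ad}_{\tilde{\phi}}$ is a pointwise algebra automorphism. The second is that the conjugation identities \eqref{eq:DeNtildevsDLambdatilde} be valid for the general isomorphism $\tilde{\phi}$ and not merely for the particular $\phi_{\tilde{d}}$ of Section \ref{subsec:tranfsfmc}; this follows since any two such isomorphisms differ by a constant element of $O(\R^{n+1,1})$, under which both the relations above and the Willmore condition are invariant.
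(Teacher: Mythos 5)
Your proposal is correct and follows essentially the same route as the paper's proof: both conjugate $\mathcal{D}^{\tilde{d}}$ and $\mathcal{N}^{\tilde{d}}$ into $\mathcal{D}_{\tilde{\phi}\Lambda}$ and $\mathcal{N}_{\tilde{\phi}\Lambda}$ via \eqref{eq:DeNtildevsDLambdatilde}, obtain $d^{\mathcal{D}_{\tilde{\phi}\Lambda}}*_{\tilde{d}}\mathcal{N}_{\tilde{\phi}\Lambda}=\mathrm{Ad}_{\tilde{\phi}}\bigl(d^{\mathcal{D}^{\tilde{d}}}*_{\tilde{d}}\mathcal{N}^{\tilde{d}}\bigr)$, and invoke $\mathcal{C}_{\Lambda}^{\tilde{d}}=\mathcal{C}_{\tilde{\phi}\Lambda}$ to identify the Hodge operators. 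Your additional remarks on the commutation of $\mathrm{Ad}_{\tilde{\phi}}$ with $*$ and on the validity of \eqref{eq:DeNtildevsDLambdatilde} for an arbitrary isomorphism merely make explicit what the paper leaves implicit.
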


\begin{proof}
Set $\tilde{\Lambda}=\tilde{\phi}\Lambda$. By
\eqref{eq:DeNtildevsDLambdatilde}, relating
$\mathcal{D}_{\tilde{\Lambda}}$ to $\mathcal{D}^{\tilde{d}}$ and
$\mathcal{N}_{\tilde{\Lambda}}$ to $\mathcal{N}^{\tilde{d}}$, we
have, given $X,Y\in\Gamma(TM)$,
$$d^{\mathcal{D}_{\tilde{\Lambda}}}*_{\tilde{d}}\mathcal{N}_{\tilde{\Lambda}}(X,Y)=
d^{\mathcal{D}_{\tilde{\Lambda}}}\,\tilde{\phi}\,(*_{\tilde{d}}\,\mathcal{N}^{\tilde{d}})\,\tilde{\phi}^{-1}(X,Y)=
\tilde{\phi}\,\,(d^{\mathcal{D}^{\tilde{d}}}*_{\tilde{d}}\mathcal{N}^{\tilde{d}}(X,Y))\,\tilde{\phi}^{-1}.$$
The fact that
$\mathcal{C}_{\Lambda}^{\tilde{d}}=\mathcal{C}_{\tilde{\Lambda}}$
(cf. \eqref{eq:metricaeconformalclasstilde}) completes the proof.
\end{proof}

\section{Spectral deformation of Willmore
surfaces}\label{subsec:willmfam}

\markboth{\tiny{A. C. QUINTINO}}{\tiny{CONSTRAINED WILLMORE
SURFACES}}

Let $\phi :M\rightarrow Gr_{(r,s)}(\R^{p,q})$ be a map into the
Grassmannian $Gr_{(r,s)}(\R^{p,q})$. Let $\pi_{\phi}$ and
$\pi_{\phi^{\perp}}$ be the orthogonal projections of
$\underline{\R} ^{p,q}$ onto $\phi$ and $\phi^{\perp}$,
respectively. Provide $\phi$ and $\phi^{\perp}$ with the connections
$\nabla^{\phi}:=\pi _{\phi}\circ d\circ\pi_{\phi}$ and
$\nabla^{\phi^{\perp}}:=\pi _{\phi ^{\perp}}\circ d\circ\pi_{\phi
^{\perp}}$, respectively. Set $\mathcal{D}_{\phi}
:=\nabla^{\phi}+\nabla^{\phi^{\perp}}$ and $\mathcal{N}_{\phi}
:=d-\mathcal{D}_{\phi}$. Under the standard identification
$\phi^{*}TGr_{(r,s)}(\R^{p,q})\cong\mathrm{Hom}(\phi,\phi^{\perp})\cong
\phi\wedge\phi^{\perp}$, of bundles provided with a metric and a
connection, $d\phi=\mathcal{N}_{\phi}$, so that the harmonicity of
$\phi$ with respect to a given conformal structure in $M$, in the
case $M$ is compact, is characterized by
$d^{\mathcal{D}_{\phi}}*\mathcal{N}_{\phi} =0$ (noting that the
connection induced naturally in $\phi\wedge \phi^{\perp}$ by
$\nabla^{\phi}$ and $\nabla^{\phi^{\perp}}$ coincides with the one
induced naturally by $\mathcal{D}_{\phi}$). K. Uhlenbeck
\cite{uhlenbeck 89} proved that the harmonicity of $\phi$ is
characterized, equivalently, by the flatness of the metric
connection $d^{\lambda}_{\phi}:=\mathcal{D}_{\phi} +\lambda
^{-1}\mathcal{N}_{\phi} ^{1,0}+\lambda \mathcal{N}_{\phi} ^{0,1}$,
for each $\lambda \in S^{1}$. Furthermore,  such loop of flat metric
connections gives rise to a $S^{1}$-family of harmonic maps into
$Gr_{(r,s)}(\R^{p,q})$, cf. \cite{uhlenbeck 89}. Harmonic maps into
Grassmannian manifolds come in $S^{1}$-families. In this section, we
show that, if $S:M\rightarrow \mathcal{G}$ is harmonic, then the
$S^{1}$-deformation of $S$ defined by the loop of flat metric
connections $d^{\lambda}_{S}$, with $\lambda\in S^{1}$, is the
family of (harmonic) central sphere congruences corresponding to the
$S^{1}$-deformation of $\Lambda$ defined by the loop
$d^{\lambda}_{S}$. The characterization of Willmore surfaces in
space-forms in terms of the harmonicity of the central sphere
congruence gives rise, in this way, to a spectral deformation of
Willmore surfaces in space-forms. As we shall verify in section
\ref{realspecofCW} below, this deformation coincides, up to
reparametrization, with the one presented in \cite{SD}.
\newline

Let $\Lambda\subset\underline{\R}^{n+1,1}$ be a surface in the
projectivized light-cone. Consider $M$ provided with the conformal
structure $\mathcal{C}_{\Lambda}$. For each $\lambda\in S^{1}$,
define a connection on $\underline{\R}^{n+1,1}$ by
$$d^{\lambda}:=\mathcal{D}+\lambda ^{-1}\mathcal{N}^{1,0}+\lambda
\mathcal{N}^{0,1},$$ noting that, $d^{\lambda}$ is, indeed, real:
$\mathcal{D}$ and $\mathcal{N}$ are real and, as $\lambda$ is unit,
$\overline{\lambda}=\lambda ^{-1}$, so that, given
$\mu\in\Gamma(\underline{\R}^{n+1,1})$,
$\overline{d^{\lambda}\mu}=d^{\lambda}\mu$.

The next result is, in view of Theorem
\ref{willmorevsharmonicitytheorem}, the particular case $\phi=S$ of
the characterization of the harmonicity of a map
$\phi:M\rightarrow\mathcal{G}$ in terms of the flatness of the
$S^{1}$-family of metric connections $d^{\lambda}_{\phi}$, by K.
Uhlenbeck \cite{uhlenbeck 89}.

\begin{thm}\label{willmoreiffdlambdaflat}
$\Lambda$ is a Willmore surface if and only if $d^{\lambda}$ is a
flat connection, for each $\lambda\in S^{1}$.
\end{thm}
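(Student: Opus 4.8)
The plan is to compute the curvature $R^{d^{\lambda}}$ explicitly and read off flatness from the Gauss-Ricci and Codazzi equations, thereby reducing the statement to the characterization of Willmore surfaces in Theorem \ref{Willmoreeq}. It suffices to show that $d^{\lambda}$ is flat for all $\lambda\in S^{1}$ if and only if $d^{\mathcal{D}}\mathcal{N}^{1,0}=0$, the condition singled out in Theorem \ref{Willmoreeq}(iii).

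First I would write $d^{\lambda}=\mathcal{D}+A_{\lambda}$ with $A_{\lambda}:=\lambda^{-1}\mathcal{N}^{1,0}+\lambda\mathcal{N}^{0,1}$ and apply the curvature formula \eqref{eq:curvtens} with $\nabla^{1}=d^{\lambda}$ and $\nabla^{2}=\mathcal{D}$, obtaining
\begin{equation}
R^{d^{\lambda}}=R^{\mathcal{D}}+d^{\mathcal{D}}A_{\lambda}+\frac{1}{2}\,[A_{\lambda}\wedge A_{\lambda}].
\end{equation}
The key simplification is in the quadratic term. Expanding $[A_{\lambda}\wedge A_{\lambda}]$ and invoking \eqref{eq:ijwedgeij0}, which kills $[\mathcal{N}^{1,0}\wedge\mathcal{N}^{1,0}]$ and $[\mathcal{N}^{0,1}\wedge\mathcal{N}^{0,1}]$, the two surviving cross terms carry coefficient $\lambda^{-1}\lambda=1$ and combine, by the symmetry $[\mu\wedge\eta]=[\eta\wedge\mu]$, to $[\mathcal{N}^{1,0}\wedge\mathcal{N}^{0,1}]=\frac{1}{2}\,[\mathcal{N}\wedge\mathcal{N}]$. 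Hence $\frac{1}{2}[A_{\lambda}\wedge A_{\lambda}]$ loses its $\lambda$-dependence, and $R^{\mathcal{D}}+\frac{1}{2}[A_{\lambda}\wedge A_{\lambda}]=R^{\mathcal{D}}+\frac{1}{2}[\mathcal{N}\wedge\mathcal{N}]=0$ by the Gauss-Ricci equation.

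There then remains only the middle term $d^{\mathcal{D}}A_{\lambda}=\lambda^{-1}d^{\mathcal{D}}\mathcal{N}^{1,0}+\lambda\,d^{\mathcal{D}}\mathcal{N}^{0,1}$. The Codazzi equation $d^{\mathcal{D}}\mathcal{N}=0$ gives $d^{\mathcal{D}}\mathcal{N}^{0,1}=-d^{\mathcal{D}}\mathcal{N}^{1,0}$, so that
\begin{equation}
R^{d^{\lambda}}=(\lambda^{-1}-\lambda)\,d^{\mathcal{D}}\mathcal{N}^{1,0}.
\end{equation}

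Finally I would conclude as follows. Since the scalar $\lambda^{-1}-\lambda=\overline{\lambda}-\lambda$ is non-zero for every $\lambda\in S^{1}\backslash\{\pm1\}$, flatness of $d^{\lambda}$ for all $\lambda\in S^{1}$ forces $d^{\mathcal{D}}\mathcal{N}^{1,0}=0$; conversely, this single equation makes $R^{d^{\lambda}}$ vanish identically in $\lambda$. By Theorem \ref{Willmoreeq}(iii), the condition $d^{\mathcal{D}}\mathcal{N}^{1,0}=0$ characterizes Willmore surfaces, which completes the argument. There is no serious obstacle beyond careful bookkeeping of the bracket identities; the one point demanding attention is the cancellation that strips the $\lambda$-dependence from $\frac{1}{2}[A_{\lambda}\wedge A_{\lambda}]$, since it is precisely this—together with Gauss-Ricci—that leaves a curvature linear in $(\lambda^{-1}-\lambda)$ and hence controlled solely by the Codazzi-type Willmore equation.
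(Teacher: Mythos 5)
Your proof is correct and follows essentially the same route as the paper's: apply \eqref{eq:curvtens}, kill the $\lambda^{\pm 2}$ terms (the paper cites the absence of non-zero $(2,0)$- and $(0,2)$-forms on a surface, you cite \eqref{eq:ijwedgeij0}), invoke the Gauss-Ricci and Codazzi equations to reduce to $R^{d^{\lambda}}=(\lambda^{-1}-\lambda)\,d^{\mathcal{D}}\mathcal{N}^{1,0}$, and conclude by Theorem \ref{Willmoreeq}. One small wording slip: the two surviving cross terms \emph{sum} to $2[\mathcal{N}^{1,0}\wedge\mathcal{N}^{0,1}]=[\mathcal{N}\wedge\mathcal{N}]$, not to $[\mathcal{N}^{1,0}\wedge\mathcal{N}^{0,1}]$, which is precisely what your displayed identity $\tfrac{1}{2}[A_{\lambda}\wedge A_{\lambda}]=\tfrac{1}{2}[\mathcal{N}\wedge\mathcal{N}]$ requires, so the conclusion stands as written.
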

\begin{proof}
According to \eqref{eq:curvtens}, and having in consideration that
there are no non-zero $(2,0)$- or $(0,2)$-forms over a surface, the
curvature tensor of $d^{\lambda}$ is given by
$$R^{d^{\lambda}}=R^{\mathcal{D}}+\lambda
^{-1}d^{\mathcal{D}}\mathcal{N}^{1,0}+\lambda\,
d^{\mathcal{D}}\mathcal{N}^{0,1}+\frac{1}{2}\,[\mathcal{N}\wedge\mathcal{N}].$$
By Gauss-Ricci and Codazzi equations, it follows that
$R^{d^{\lambda}}=(\lambda
^{-1}-\lambda)\,d^{\mathcal{D}}\mathcal{N}^{1,0}$.  We conclude that
$d^{\lambda}$ is flat for all $\lambda$ in $S^{1}$ if and only if
$d^{\mathcal{D}}\mathcal{N}^{1,0}=0$, which, according to Theorem
\ref{Willmoreeq}, completes the proof.
\end{proof}

Since $\mathcal{N}$ is skew-symmetric, the fact that $\mathcal{D}$
is a metric connection ensures that so is $d^{\lambda}$. Therefore,
if $\Lambda$ is a Willmore surface, the family of connections
$d^{\lambda}$, with $\lambda\in S^{1}$, consists of a $S^{1}$-family
of flat metric connections, defining then a $S^{1}$-family of
transformations of $\Lambda$, by setting, for each $\lambda\in
S^{1}$,
$$\Lambda _{\lambda}:=\phi _{d^{\lambda}}\Lambda ,$$ for
some isomorphism $\phi
_{d^{\lambda}}:(\underline{\R}^{n+1,1},d^{\lambda})\rightarrow
(\underline{\R}^{n+1,1},d)$. Observe that, for each $\lambda\in
S^{1}$, $\Lambda _{\lambda}$ consists of a transformation of
$\Lambda$ into another surface. In fact, given a never-zero section
$\sigma$ of $\Lambda$, we have
\begin{equation}\label{eq:dd}
d^{\lambda}\sigma=\mathcal{D}\sigma =d\sigma,
\end{equation}
and, therefore, $\Lambda^{(1)}_{d^{\lambda}}=\Lambda^{(1)}$, showing
that, $\Lambda$ is a $d^{\lambda}$-surface for all $\lambda\in
S^{1}$. Furthermore:
\begin{thm}
If $\Lambda$ is a Willmore surface, then so is the transformation
$\Lambda _{\lambda}$ of $\Lambda$ defined by the flat metric
connection $d^{\lambda}$, for each $\lambda\in S^{1}$.
\end{thm}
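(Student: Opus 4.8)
The plan is to reduce the statement, via Proposition~\ref{prop4.6.1}, to showing that $\Lambda$ is a Willmore $d^{\lambda}$-surface, and then to establish the decisive structural fact that the $d^{\lambda}$-central sphere congruence of $\Lambda$ coincides with its ordinary central sphere congruence $S$. By Theorem~\ref{willmoreiffdlambdaflat} the connection $d^{\lambda}$ is flat, and it is metric since $\mathcal{N}$ is skew-symmetric and $\mathcal{D}$ is metric; moreover \eqref{eq:dd} shows $d^{\lambda}\sigma=d\sigma$ for every never-zero $\sigma\in\Gamma(\Lambda)$, so $\Lambda$ is a $d^{\lambda}$-surface and $\Lambda_{\lambda}=\phi_{d^{\lambda}}\Lambda$ is indeed a surface. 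Thus $\Lambda_{\lambda}$ is Willmore if and only if $d^{\mathcal{D}^{d^{\lambda}}}*_{d^{\lambda}}\mathcal{N}^{d^{\lambda}}=0$.

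First I would record that the induced conformal structures agree: from $d^{\lambda}\sigma=d\sigma$ we get $g_{\sigma}^{d^{\lambda}}=g_{\sigma}$, hence $\mathcal{C}_{\Lambda}^{d^{\lambda}}=\mathcal{C}_{\Lambda}$ and $*_{d^{\lambda}}=*$; in particular a holomorphic chart $z$ and the normalized section $\sigma^{z}$ for $\mathcal{C}_{\Lambda}$ serve for both connections. The key step is then to compute $S^{d^{\lambda}}$. The first-order part is unchanged, $\langle\sigma,d^{\lambda}_{e_1}\sigma,d^{\lambda}_{e_2}\sigma\rangle=\Lambda^{(1)}$, again by \eqref{eq:dd}. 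For the second-order term I would write, as in the $d$-case, $\sum_i d^{\lambda}_{e_i}d^{\lambda}_{e_i}\sigma=2\big(d^{\lambda}_{\delta_z}\sigma_{\bar z}+d^{\lambda}_{\delta_{\bar z}}\sigma_z\big)$ and expand each term using $d^{\lambda}=\mathcal{D}+\lambda^{-1}\mathcal{N}^{1,0}+\lambda\mathcal{N}^{0,1}$ together with $\mathcal{N}^{0,1}_{\delta_z}=0=\mathcal{N}^{1,0}_{\delta_{\bar z}}$. The $\mathcal{D}$-parts reproduce $\sigma_{z\bar z}$, while the $\mathcal{N}$-corrections are multiples of $\mathcal{N}_{\delta_z}\sigma_{\bar z}=\pi_{S^{\perp}}(\sigma_{z\bar z})$ and $\mathcal{N}_{\delta_{\bar z}}\sigma_z=\pi_{S^{\perp}}(\sigma_{z\bar z})$, both of which vanish because $\sigma_{z\bar z}\in\Gamma(S)$. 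Hence $\sum_i d^{\lambda}_{e_i}d^{\lambda}_{e_i}\sigma=4\sigma_{z\bar z}$ exactly as for $d$, and therefore $S^{d^{\lambda}}=S$. I expect this vanishing — that the $\lambda$-dependent corrections to the mean curvature sphere cancel — to be the main point to get right; everything after it is formal.

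Granting $S^{d^{\lambda}}=S$, the connection $\mathcal{D}$ is block-diagonal and $\mathcal{N}^{1,0},\mathcal{N}^{0,1}$ are block-off-diagonal with respect to $S\oplus S^{\perp}$, so splitting $d^{\lambda}$ into its $S$-preserving and $S\wedge S^{\perp}$ parts gives $\mathcal{D}^{d^{\lambda}}=\mathcal{D}$ and $\mathcal{N}^{d^{\lambda}}=\lambda^{-1}\mathcal{N}^{1,0}+\lambda\mathcal{N}^{0,1}$, whence $(\mathcal{N}^{d^{\lambda}})^{1,0}=\lambda^{-1}\mathcal{N}^{1,0}$. Using $*_{d^{\lambda}}=*$ and $*\mathcal{N}^{1,0}=-i\mathcal{N}^{1,0}$, $*\mathcal{N}^{0,1}=i\mathcal{N}^{0,1}$, I would then compute
\[
d^{\mathcal{D}^{d^{\lambda}}}*_{d^{\lambda}}\mathcal{N}^{d^{\lambda}}=-i\lambda^{-1}\,d^{\mathcal{D}}\mathcal{N}^{1,0}+i\lambda\,d^{\mathcal{D}}\mathcal{N}^{0,1},
\]
which vanishes since $\Lambda$ is Willmore, by Theorem~\ref{Willmoreeq}(iii),(iv). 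Thus $\Lambda$ is a Willmore $d^{\lambda}$-surface, and Proposition~\ref{prop4.6.1} concludes that $\Lambda_{\lambda}$ is Willmore.
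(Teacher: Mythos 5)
Your proof is correct and follows the paper's own argument step for step: the reduction via Proposition~\ref{prop4.6.1}, the crucial identity $S^{d^{\lambda}}=S$, the consequent formulas $\mathcal{D}^{d^{\lambda}}=\mathcal{D}$ and $\mathcal{N}^{d^{\lambda}}=\lambda^{-1}\mathcal{N}^{1,0}+\lambda\mathcal{N}^{0,1}$, and the final computation $d^{\mathcal{D}^{d^{\lambda}}}*\mathcal{N}^{d^{\lambda}}=-i\lambda^{-1}d^{\mathcal{D}}\mathcal{N}^{1,0}+i\lambda\,d^{\mathcal{D}}\mathcal{N}^{0,1}$, annihilated by Theorem~\ref{Willmoreeq}. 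The only cosmetic difference is that you verify $S^{d^{\lambda}}=S$ in a holomorphic chart, where the corrections $\mathcal{N}_{\delta_{z}}\sigma_{\bar{z}}=\pi_{S^{\perp}}(\sigma_{z\bar{z}})=0$ vanish on sight, whereas the paper runs the same cancellation through a real orthonormal frame and the symmetry of the tensor $(X,Y)\mapsto\mathcal{N}_{X}(d_{Y}\sigma)$.
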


\begin{proof}Suppose $\Lambda$ is a Willmore surface, in which case such a
transformation $\Lambda _{\lambda}$ of $\Lambda$ is defined. Fix
$\lambda\in S^{1}$ and $\sigma$ a never-zero section of $\Lambda$.
First of all, note that, according to \eqref{eq:dd},
$g_{\sigma}^{d^{\lambda}}=g_{\sigma}$ and, therefore,
$\mathcal{C}_{\Lambda}^{d^{\lambda}}=\mathcal{C}_{\Lambda}$. For
simplicity, write $S^{\lambda}$, $\mathcal{D}^{\lambda}$ and
$\mathcal{N}^{\lambda}$ for, respectively, $S^{d^{\lambda}}$,
$\mathcal{D}^{d^{\lambda}}$ and $\mathcal{N}^{d^{\lambda}}$. The
proof will consist of showing that
$d^{\mathcal{D}^{\lambda}}*\mathcal{N}^{\lambda}=0$, for $*$ the
Hodge $*$-operator on $1$-forms over $(M,\mathcal{C}_{\Lambda})$.
The result will then follow from Proposition \ref{prop4.6.1}.

The crucial observation is that the $d^{\lambda}$-central sphere
congruence of $\Lambda $ coincides with the central sphere
congruence of $\Lambda$,
\begin{equation}\label{eq:Slambda=S}
S^{\lambda}=S,
\end{equation}
as we shall see next. For that, it is enough to fix an orthonormal
frame $(e_{i})_{i}$ of $TM$ with respect to $g_{\sigma}$ and to show
that $\sum _{i}d^{\lambda}_{e_{i}}d_{e_{i}}\sigma=\sum
_{i}d_{e_{i}}d_{e_{i}}\sigma $, or, equivalently, that $\sum
_{i}(\lambda ^{-1}\mathcal{N}^{1,0}_{e_{i}}d_{e_{i}}\sigma
+\lambda\mathcal{N}^{0,1}_{e_{i}}d_{e_{i}}\sigma)=0$, having in
consideration that $\sum _{i}\mathcal{N}_{e_{i}}d_{e_{i}}\sigma=0$.
Set $Z:=e_{1}-i\,e_{2}$. Note that, given a $2$-tensor $T$ on $M$,
$\sum
_{i}T(e_{i},e_{i})=\frac{1}{2}\,(T(Z,\overline{Z})+T(\overline{Z},Z))$,
so that, in particular, if $T$ is symmetric, $T(Z,\overline{Z})=\sum
_{i}T(e_{i},e_{i})=T(\overline{Z},Z)$. Note that the $2$-tensor
$((X,Y)\mapsto \mathcal{N}_{X}(d_{Y}\sigma))$ is symmetric: given
$X,Y\in\Gamma (TM)$,
$$\mathcal{N}_{Y}(d_{X}\sigma)-\mathcal{N}_{X}(d_{Y}\sigma)=-\pi_{S^{\perp}}(d_{[X,Y]}\sigma)=0.$$
Choosing the frame $(e_{1},e_{2})$ to be direct, we have
$Je_{1}=e_{2}$ and $Je_{2}=-e_{1}$, for $J$ the canonical complex
structure in $(M,\mathcal{C}_{\Lambda})$, and, therefore,
$Z\in\Gamma (T^{1,0})$. It follows, on the one hand, that
$$\sum
_{i}\mathcal{N}^{1,0}_{e_{i}}d_{e_{i}}\sigma=\frac{1}{2}\,(\mathcal{N}^{1,0}_{Z}d_{\overline{Z}}\sigma+\mathcal{N}^{1,0}_{\overline{Z}}d_{Z}\,\sigma)=\frac{1}{2}\,\mathcal{N}^{1,0}_{Z}d_{\overline{Z}}\sigma,$$
and, on the other hand,
$$\mathcal{N}^{1,0}_{Z}d_{\overline{Z}}\sigma=\mathcal{N}_{e_{1}}d_{e_{1}}\sigma+i\mathcal{N}_{e_{1}}d_{e_{2}}\sigma-i\mathcal{N}_{e_{2}}d_{e_{1}}\sigma+\mathcal{N}_{e_{2}}d_{e_{2}}\sigma=\sum
_{i}\mathcal{N}^{1,0}_{e_{i}}d_{e_{i}}\sigma.$$ Analogously,
$$\frac{1}{2}\,\mathcal{N}^{0,1}_{\overline{Z}}d_{Z}\,\sigma=\sum
_{i}\mathcal{N}^{0,1}_{e_{i}}d_{e_{i}}\sigma=\mathcal{N}^{0,1}_{\overline{Z}}d_{Z}\,\sigma.$$
Thus $$\sum _{i}\mathcal{N}^{1,0}_{e_{i}}d_{e_{i}}\sigma=0=\sum
_{i}\mathcal{N}^{0,1}_{e_{i}}d_{e_{i}}\sigma,$$completing the
verification of \eqref{eq:Slambda=S}. Now note that, as
$\mathcal{N}$ intertwines $S$ and $S^{\perp}$, we have
$$\pi_{S}\circ d^{\lambda}\circ\pi_{S}=\pi_{S}\circ
d\circ\pi_{S},\,\,\,\,\,\pi_{S^{\perp}}\circ
d^{\lambda}\circ\pi_{S^{\perp}}=\pi_{S^{\perp}}\circ
d\circ\pi_{S^{\perp}}.$$ We conclude that
$$\mathcal{D}^{\lambda}=\mathcal{D}$$ and, consequently,
$$\mathcal{N}^{\lambda}=\lambda^{-1}\mathcal{N}^{1,0}+\lambda
\mathcal{N}^{0,1}.$$ Hence
\begin{eqnarray*}
d^{\mathcal{D}^{\lambda}}*\mathcal{N}^{\lambda}&=&\lambda^{-1}d^{\mathcal{D}}*\mathcal{N}^{1,0}+\lambda\,
d^{\mathcal{D}}*\mathcal{N}^{0,1}=\\&=&-i\lambda^{-1}d^{\mathcal{D}}\mathcal{N}^{1,0}+i\lambda\,
d^{\mathcal{D}}\mathcal{N}^{0,1}=\\&=&-i(\lambda^{-1}+\lambda
)d^{\mathcal{D}}\mathcal{N}^{1,0},\end{eqnarray*}which, according to
Theorem \ref{Willmoreeq}, completes the proof.
\end{proof}

The harmonicity of $S:(M,\mathcal{C}_{\Lambda})\rightarrow
\mathcal{G}$, characterized, cf. K. Uhlenbeck \cite{uhlenbeck 89},
by the flatness of $d^{\lambda}_{S}$ (spelt out with respect to
$\mathcal{C}_{\Lambda}$), for all $\lambda\in S^{1}$, establishes,
equivalently, the flatness of $$d^{\mu}_{\phi _{d^{\lambda}}S}=\phi
_{d^{\lambda}}\circ d^{\lambda\mu}_{S} \circ \phi
_{d^{\lambda}}^{-1},$$ for all $\lambda,\mu\in S^{1}$, or,
equivalently, the harmonicity of $\phi
_{d^{\lambda}}S:(M,\mathcal{C}_{\Lambda})\rightarrow\mathcal{G}$,
for all $\lambda$. According to \eqref{eq:Slambda=S}, the
$S^{1}$-deformation $\phi _{d^{\lambda}}S$ of $S$ is the family of
central sphere congruences corresponding to the $S^{1}$-deformation
$\phi _{d^{\lambda}}\Lambda$ of $\Lambda$. On the other hand, in
view of \eqref{eq:dd}, $g_{\phi _{d^{\lambda}}\Lambda}=g_{\sigma}$
and, therefore,
$$\mathcal{C}_{\phi _{d^{\lambda}}\Lambda}=\mathcal{C}_{\Lambda}.$$Hence the
harmonicity of $S$ with respect to $\mathcal{C}_{\Lambda}$
establishes the harmonicity of $\phi _{d^{\lambda}}S$ with respect
to $\mathcal{C}_{\phi _{d^{\lambda}}\Lambda}$, for all $\lambda$.
The loop of flat metric connections $d^{\lambda}$ defines, in this
way, a conformal $S^{1}$-deformation of a Willmore surface into a
family of Willmore surfaces. As we shall verify in Section
\ref{realspecofCW} below, this deformation coincides, up to
reparametrization, with the one presented in \cite{SD}.

\chapter{The constrained Willmore surface equation}\label{sec:CWseq}

\markboth{\tiny{A. C. QUINTINO}}{\tiny{CONSTRAINED WILLMORE
SURFACES}}

In this chapter, we introduce constrained Willmore surfaces, the
generalization of Willmore surfaces that arises when we consider
extremals of the Willmore functional with respect to infinitesimally
conformal variations,\footnote{To which references as
\textit{conformal variations} can be found in the literature.}
rather than with respect to all variations.  The topic is mentioned
very briefly in \cite{willmore}. Some results on constrained
Willmore surfaces are contained in \cite{richter}, \cite{SD},
\cite{christoph2} and \cite{burstall+calderbank}. Constrained
Willmore surfaces in space forms form a M\"{o}bius invariant class
of surfaces, with strong links to the theory of integrable systems,
which we shall explore throughout this work. F. Burstall et al.
\cite{SD} established a manifestly conformally invariant
characterization of constrained Willmore surfaces in space-forms,
which, in particular, extended the concept of constrained Willmore
to surfaces that are not necessarily compact. This chapter is
dedicated to deriving the characterization mentioned above, or
rather a reformulation of it by F. Burstall and D. Calderbank
\cite{burstall+calderbank}, from the variational problem.\newline

Let $\Lambda\subset\underline{\R}^{n+1,1}$ be a surface in the
projectivized light-cone.  Provide $M$ with the conformal structure
$\mathcal{C}_{\Lambda}$, induced by $\Lambda$. Suppose $M$ is
compact.

\section{Constrained Willmore surfaces: definition and examples}

\markboth{\tiny{A. C. QUINTINO}}{\tiny{CONSTRAINED WILLMORE
SURFACES}}

Constrained Willmore surfaces are the critical points of the
Willmore functional with respect to infinitesimally conformal
variations. They form a M\"{o}bius invariant class of surfaces.
Willmore surfaces and constant mean curvature surfaces in
$3$-dimensional space-forms, and so their M\"{o}bius transforms, are
examples of constrained Willmore surfaces. \newline

A variation $(\Lambda_{t})_{t}$ of $\Lambda$ through immersions of
$M$ in $\mathbb{P}(\mathcal{L})$ is said to be a \emph{conformal
variation} if it preserves the conformal structure induced in $M$,
or, equivalently, it preserves the isotropy of $T^{1,0}M$
(respectively, $T^{0,1}M$), i.e., fixing $Z\in\Gamma(T^{1,0}M)$
(respectively, $Z\in\Gamma(T^{0,1}M)$), locally never-zero, and, for
each $t$, $g _{t}$ in the conformal class of metrics induced in $M$
by $\Lambda_{t}$,
$$g_{t}(Z,Z)=0.$$The constraint on the conformal structure we are interested
in is weaker than conformality.

\begin{defn}
A variation $(\Lambda_{t})_{t}$ of $\Lambda$ through immersions of
$M$ in $\mathbb{P}(\mathcal{L})$ is said to be an
\emph{infinitesimally conformal variation} if, fixing
$Z\in\Gamma(T^{1,0}M)$ (respectively, $Z\in\Gamma(T^{0,1}M)$),
locally never-zero,  and, for each $t$, $g _{t}$ in the conformal
class of metrics induced in $M$ by $\Lambda_{t}$, we have
$$\frac{d}{dt}_{\mid_{t=0}} g_{t}(Z,Z)=0.$$
\end {defn}

Note that this is, indeed, a good definition, as, given
$\lambda=(t\mapsto\lambda_{t})$, with $\lambda_{t}:M\rightarrow \R$
positive for each $t$,
$$\frac{d}{dt}_{\mid_{t=0}} \lambda_{t}g_{t}(Z,Z)=\lambda'(0)g(Z,Z)+\lambda(0)\frac{d}{dt}_{\mid_{t=0}} g_{t}(Z,Z),$$
with $g\in\mathcal{C}_{\Lambda}$, so that $\frac{d}{dt}_{\mid_{t=0}}
\lambda_{t}g_{t}(Z,Z)=\lambda(0)\frac{d}{dt}_{\mid_{t=0}}
g_{t}(Z,Z)$, which vanishes if and only if
$\frac{d}{dt}_{\mid_{t=0}} g_{t}(Z,Z)$ does.

\begin{defn}
The surface $\Lambda$ is said to be a \emph{constrained Willmore
surface} if
$$\frac{d}{dt}_{\mid_{t=0}}W(\Lambda _{t})=0$$ for every infinitesimally conformal variation
$(\Lambda _{t})_{t}$ of $\Lambda$ through immersions of $M$ in
$\mathbb{P} (\mathcal {L})$.
\end{defn}

It is immediate from Theorem \ref{Wenpres} that conformal
diffeomorphisms transform constrained Willmore surfaces into
constrained Willmore surfaces.

\begin{thm}\label{Willmandconfdiffeom}
The class of constrained Willmore surfaces is M\"{o}bius invariant.
\end{thm}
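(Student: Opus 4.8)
The plan is to realize every M\"{o}bius transformation of $S^{n}\cong\mathbb{P}(\mathcal{L})$ as the map induced by a fixed linear isometry $T\in O(\mathbb{R}^{n+1,1})$, acting on a surface $\Lambda$ by $\Lambda\mapsto T\Lambda$, and to transport variations back and forth along $T$. First I would observe that $T\Lambda$ is again a null line subbundle defining an immersion into $\mathbb{P}(\mathcal{L})$: since $T$ is constant, for a never-zero section $\sigma\in\Gamma(\Lambda)$ we have $(T\Lambda)^{(1)}=T(\Lambda^{(1)})$, so $\mathrm{rank}\,(T\Lambda)^{(1)}=3$. Moreover, because $T$ is a (constant) isometry, $g_{T\sigma}(X,Y)=(d_{X}(T\sigma),d_{Y}(T\sigma))=(T\,d_{X}\sigma,T\,d_{Y}\sigma)=(d_{X}\sigma,d_{Y}\sigma)=g_{\sigma}(X,Y)$, so the induced metrics agree and hence $\mathcal{C}_{T\Lambda}=\mathcal{C}_{\Lambda}$.

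Next I would set up the bijection between variations. Given a variation $(\tilde{\Lambda}_{t})_{t}$ of $T\Lambda$ through immersions of $M$ in $\mathbb{P}(\mathcal{L})$, define $\Lambda_{t}:=T^{-1}\tilde{\Lambda}_{t}$; this is a variation of $\Lambda$ through immersions (with $T^{-1}$ again a constant isometry), and the assignments $\tilde{\Lambda}_{t}\mapsto T^{-1}\tilde{\Lambda}_{t}$ and $\Lambda_{t}\mapsto T\Lambda_{t}$ are mutually inverse. The crucial point is that this correspondence preserves the infinitesimally conformal property: if $g_{t}$ denotes a metric in the conformal class induced by $\Lambda_{t}$ and $\tilde{g}_{t}$ one in the class induced by $\tilde{\Lambda}_{t}$, then, exactly as above, we may take $\tilde{g}_{t}=g_{t}$ for every $t$ (choosing sections related by $T$), whence $\frac{d}{dt}_{\mid_{t=0}}\tilde{g}_{t}(Z,Z)=\frac{d}{dt}_{\mid_{t=0}}g_{t}(Z,Z)$; so $(\tilde{\Lambda}_{t})_{t}$ is infinitesimally conformal if and only if $(\Lambda_{t})_{t}$ is.

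Finally, by Theorem \ref{Wenpres} the Willmore energy is a M\"{o}bius invariant, so $\mathcal{W}(\tilde{\Lambda}_{t})=\mathcal{W}(\Lambda_{t})$ for each $t$; differentiating at $t=0$ gives $\frac{d}{dt}_{\mid_{t=0}}\mathcal{W}(\tilde{\Lambda}_{t})=\frac{d}{dt}_{\mid_{t=0}}\mathcal{W}(\Lambda_{t})$. If $\Lambda$ is constrained Willmore, the right-hand side vanishes for every infinitesimally conformal $(\Lambda_{t})_{t}$, hence the left-hand side vanishes for every infinitesimally conformal $(\tilde{\Lambda}_{t})_{t}$, i.e. $T\Lambda$ is constrained Willmore. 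Since the argument is genuinely an equivalence, it also yields the converse, so $\Lambda$ is constrained Willmore if and only if $T\Lambda$ is. I expect the only point requiring care---rather than a genuine obstacle---to be the verification that the infinitesimally conformal constraint is preserved; this is clean here precisely because M\"{o}bius transformations are represented by \emph{constant} isometries of $\mathbb{R}^{n+1,1}$, so induced metrics are preserved exactly (not merely up to a conformal factor), and the derivative condition transports verbatim.
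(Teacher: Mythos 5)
Your proposal is correct and takes essentially the same approach as the paper: there, the result is stated as immediate from Theorem \ref{Wenpres} (the M\"{o}bius invariance of the Willmore energy), which is exactly the engine of your argument. What you add---representing the M\"{o}bius transformation by a constant isometry $T\in O(\mathbb{R}^{n+1,1})$, checking $\mathcal{C}_{T\Lambda}=\mathcal{C}_{\Lambda}$, and transporting infinitesimally conformal variations back and forth along $T$---is precisely the bookkeeping the paper leaves implicit.
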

In particular:
\begin{prop}\label{LambdaconstWillmiffsigmaconstWillm}
$\Lambda$ is a constrained Willmore surface if and only if, fixing a
non-zero $v_{\infty}\in\mathbb{R}^{n+1,1}$, so is the surface in
$S_{v_{\infty}}$ defined by $\Lambda$.
\end{prop}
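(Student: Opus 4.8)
The plan is to exploit the conformal diffeomorphism $\pi_{\mathcal{L}}\vert_{S_{v_{\infty}}}\colon S_{v_{\infty}}\rightarrow\mathbb{P}(\mathcal{L})\backslash\mathbb{P}(\mathcal{L}\cap\langle v_{\infty}\rangle^{\perp})$ relating $\Lambda$ to the surface $\sigma_{\infty}=(\pi_{\mathcal{L}}\vert_{S_{v_{\infty}}})^{-1}\circ\Lambda$ it defines in $S_{v_{\infty}}$, setting up a correspondence of variations that simultaneously preserves the Willmore energy and the infinitesimal conformality condition. First I would observe that, restriction to a suitable non-empty open set being underlying throughout, any variation $(\Lambda_{t})_{t}$ of $\Lambda$ through immersions of $M$ in $\mathbb{P}(\mathcal{L})$ has, for small $t$, image inside the open set $\mathbb{P}(\mathcal{L})\backslash\mathbb{P}(\mathcal{L}\cap\langle v_{\infty}\rangle^{\perp})$, so that $\sigma_{\infty,t}:=(\pi_{\mathcal{L}}\vert_{S_{v_{\infty}}})^{-1}\circ\Lambda_{t}$ is a well-defined variation of $\sigma_{\infty}$ through immersions of $M$ in $S_{v_{\infty}}$; this assignment is a bijection onto such variations.

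Next I would check the two compatibility properties under this correspondence. For the energy, applying Theorem \ref{WsigmaeWLambda} at each $t$ yields $\mathcal{W}(\Lambda_{t})=\mathcal{W}(\sigma_{\infty,t})$ and hence $\frac{d}{dt}_{\vert_{t=0}}\mathcal{W}(\Lambda_{t})=\frac{d}{dt}_{\vert_{t=0}}\mathcal{W}(\sigma_{\infty,t})$. For the constraint, the crucial observation is that, $\pi_{\mathcal{L}}\vert_{S_{v_{\infty}}}$ being a conformal diffeomorphism, the metric induced on $M$ by $\sigma_{\infty,t}$ and the metric induced by $\Lambda_{t}$ from a fixed $h\in\mathcal{C}_{\mathbb{P}(\mathcal{L})}$ are conformally equivalent, and therefore determine one and the same conformal structure $\mathcal{C}_{t}$ on $M$. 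As a consequence, the defining condition $\frac{d}{dt}_{\vert_{t=0}}g_{t}(Z,Z)=0$ of infinitesimal conformality is literally the same whether $g_{t}$ is taken in the conformal class induced by $\Lambda_{t}$ or by $\sigma_{\infty,t}$, so that $(\Lambda_{t})_{t}$ is infinitesimally conformal if and only if $(\sigma_{\infty,t})_{t}$ is.

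Combining the two properties, $\frac{d}{dt}_{\vert_{t=0}}\mathcal{W}(\Lambda_{t})$ vanishes for every infinitesimally conformal variation of $\Lambda$ exactly when $\frac{d}{dt}_{\vert_{t=0}}\mathcal{W}(\sigma_{\infty,t})$ vanishes for every infinitesimally conformal variation of $\sigma_{\infty}$, which is the asserted equivalence. The only genuinely delicate step is the transfer of infinitesimal conformality across the correspondence, but since this reduces entirely to the conformality of $\pi_{\mathcal{L}}\vert_{S_{v_{\infty}}}$ established in Section \ref{subsec:hyper}, no real obstacle arises. Alternatively, the proposition can be read off Theorem \ref{Willmandconfdiffeom} directly, viewing $\sigma_{\infty}$ as the image of $\Lambda$ under the conformal diffeomorphism $(\pi_{\mathcal{L}}\vert_{S_{v_{\infty}}})^{-1}$ and invoking the \emph{M\"{o}bius invariance} of the constrained Willmore class.
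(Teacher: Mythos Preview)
Your proposal is correct, and the alternative you mention at the end is precisely the paper's own argument: the proposition is stated immediately after Theorem \ref{Willmandconfdiffeom} with the words ``In particular:'', i.e., it is recorded as a direct instance of the M\"{o}bius invariance of the constrained Willmore class via the conformal diffeomorphism $(\pi_{\mathcal{L}}\vert_{S_{v_{\infty}}})^{-1}$. Your more detailed correspondence-of-variations argument simply unpacks what that invocation means, and is a valid elaboration of the same idea.
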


In Section \ref{CCWS}, we extend the concept of constrained Willmore
surface to surfaces that are, in particular, not necessarily
compact.

Willmore surfaces are, obviously, examples of constrained Willmore
surfaces. But there are more: constant mean curvature (CMC) surfaces
in $3$-dimensional space-forms are constrained Willmore (and also
isothermic). Section \ref{sec:CMC} is dedicated to this special
class of surfaces. We believe one can obtain non-isothermic,
non-Willmore constrained Willmore surfaces as \textit{B\"{a}cklund
transforms} of non-minimal CMC surfaces in space-forms, following
Section \ref{isoCWtransforms} below, but this is not clear, though
(it shall be the subject of further work). Section \ref{HMCsurfs} is
dedicated to another class of constrained Willmore surfaces, that of
codimension $2$ surfaces with holomorphic mean curvature vector in
space-forms.\newline

\section{The Hopf differential and the Schwarzian derivative}

\markboth{\tiny{A. C. QUINTINO}}{\tiny{CONSTRAINED WILLMORE
SURFACES}}

In \cite{SD}, a characterization of constrained Willmore surfaces,
isothermic surfaces and constant mean curvature surfaces in
space-forms in terms of the Hopf differential and the Schwarzian
derivative is established. It is a uniform characterization of these
three classes of surfaces and, for this reason, it will be presented
in this text, in parallel to what is our main approach. In this
section, we introduce the Hopf differential and the Schwarzian
derivative, cf. \cite{SD}.\newline

Fix $z=x+iy$ a holomorphic chart of $M$ and consider $\sigma^{z}$,
the normalized section of $\Lambda$ with respect to $z$. Write
$$\sigma_{zz}^{z}=a\sigma^{z}+b\sigma_{z}^{z}+c\sigma_{\bar{z}}^{z}+d\sigma^{z}_{z\bar{z}}+\pi_{S^{\perp}}\sigma_{zz}^{z},$$
with $a,b,c,d\in\Gamma(\underline{\C})$. By the orthogonality
relations of the frame $\{\sigma,\sigma _{z},\sigma
_{\bar{z}},\sigma _{z\bar{z}}\}$, we have
$$b\frac{1}{2}=(\sigma^{z}_{zz},
\sigma^{z}_{\bar{z}})=(\sigma^{z}_{z},
\sigma^{z}_{\bar{z}})_{z}-(\sigma_{z}^{z},\sigma_{z\bar{z}})=0$$
and, therefore, $b=0$; on the other hand,
$$c\frac{1}{2}=(\sigma_{zz}^{z},
\sigma_{z}^{z})=\frac{1}{2}\,(\sigma_{z}^{z},\sigma_{z}^{z})_{z}=0$$
and, therefore, $c=0$; and
$$-d\frac{1}{2}=(\sigma_{zz}^{z},
\sigma^{z})=(\sigma_{z}^{z},\sigma^{z})_{z}-(\sigma_{z}^{z},
\sigma_{z}^{z})=0,$$showing that $d=0$. Thus $\sigma_{zz}^{z}$
satisfies an equation
\begin{equation}\label{eq:sigmazzck}
\sigma_{zz}^{z}=-\frac{1}{2}\,c^{z}\sigma^{z}+k^{z},
\end{equation}
defining a complex function
$$c^{z}:=4(\sigma_{zz}^{z},\sigma_{z\bar{z}}^{z})\in\Gamma(\underline{\C}),$$
the \textit{Schwarzian derivative} with respect to $z$, and a
section
$$k^{z}:=\pi_{S^{\perp}}(\sigma_{zz}^{z})\in\Gamma(S^{\perp}),$$of the complexification of the normal bundle to the central
sphere congruence of $\Lambda$, called the \textit{Hopf}
\textit{differential} of $\Lambda$ with respect to $z$.\footnote{The
terminology for the latter is motivated by the relation established
in Lemma \ref{hopfdiffervsclassical} below, in Appendix
\ref{appHopf+umbilics}.}

It is useful to understand how the Hopf differential changes under a
change of holomorphic coordinate. Let $\omega$ be another
holomorphic chart of $M$. Following equation \eqref{eq:gomegagz}, we
have $$\sigma^{w}=\mid\omega_{z}\mid\sigma^{z},$$ so that
$\sigma^{\omega}_{\omega}=\omega_{z}^{-1}\mid\omega_{z}\mid_{z}\sigma^{z}+\omega_{z}^{-1}\mid\omega_{z}\mid\sigma^{z}_{z}$
and, consequently,
\begin{eqnarray*}
\sigma^{\omega}_{\omega\omega}&=&
\frac{\mid\omega_{z}\mid}{\omega_{z}^{2}}\left(\left(\frac{\omega_{zz}}{2\omega_{z}}\right)_{z}-\left(\frac{\omega_{zz}}{2\omega_{z}}\right)^{2}-\frac{1}{2}\,c^{z}\right)
\sigma^{z} +\frac{\mid\omega_{z}\mid}{\omega_{z}^{2}}\,k^{z}.
\end{eqnarray*}
Hence
\begin{equation}\label{eq:komega}
k^{\omega}=\frac{\mid\omega_{z}\mid}{\omega_{z}^{2}}\,k^{z}
\end{equation}

We complete this section with a fundamental result of conformal
surface theory. As established in \cite{burstall+calderbank},
\begin{Lemma}\label{thmonHopfeSchwarz}
If $\Lambda_{1},\Lambda_{2}:M\rightarrow
\mathbb{P}(\mathcal{L})\cong S^{n}$ are two conformal immersions
inducing the same Hopf differential, Schwarzian derivative and
normal connection $\nabla^{S^{\perp}}$, then there is a M\"{o}bius
transformation $T$ such that $\Lambda_{2}=T\Lambda_{1}$. (In the
particular case of codimension $1$ (i.e., $n=3$), the condition on
the normal connection is vacuous.)
\end{Lemma}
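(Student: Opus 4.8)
The plan is to run a Bonnet-type congruence argument adapted to the light-cone model: I would encode each immersion in a moving frame of the trivial bundle, show that the connection form of that frame is completely determined by the prescribed invariants, and conclude that the two frames differ by a constant element of $O(n+1,1)$, which is precisely a M\"{o}bius transformation of $\mathbb{P}(\mathcal{L})$. Throughout I restrict to a simply connected chart and fix a holomorphic coordinate $z$, passing to the normalized sections $\sigma_1:=\sigma_1^{z}$ and $\sigma_2:=\sigma_2^{z}$.

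For a single surface $\Lambda=\langle\sigma^{z}\rangle$, the vectors $\sigma,\sigma_{z},\sigma_{\bar z},\sigma_{z\bar z}$ frame $S^{\C}$ and, together with a locally defined real orthonormal frame $n_{1},\dots,n_{n-2}$ of $S^{\perp}$, they give a frame of $(\underline{\R}^{n+1,1})^{\C}$ adapted to the real structure, since $\sigma$ and $\sigma_{z\bar z}$ are real while $\overline{\sigma_{z}}=\sigma_{\bar z}$. The orthogonality relations recorded in Section \ref{sec:csc}, together with the normalization $(\sigma_{z},\sigma_{\bar z})=\tfrac12$, fix every entry of the Gram matrix of this frame except $(\sigma_{z\bar z},\sigma_{z\bar z})$. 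Differentiating $(\sigma_{\bar z},\sigma_{z\bar z})=0$, substituting \eqref{eq:sigmazzck}, and using the reality of $\sigma$ (so that $\sigma_{\bar z\bar z}=\overline{\sigma_{zz}}$) yields
\[
(\sigma_{z\bar z},\sigma_{z\bar z})=(k^{z},\overline{k^{z}}),
\]
so the Gram matrix is itself determined by the Hopf differential.

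The heart of the argument is that the derivative of the adapted frame in the directions $\delta_{z},\delta_{\bar z}$ is expressible in the frame with coefficients built solely from $c^{z}$, $k^{z}$, $\overline{k^{z}}$, the connection coefficients of $\nabla^{S^{\perp}}$, and the (now determined) Gram matrix. Writing $d=\mathcal{D}+\mathcal{N}$, the form $\mathcal{N}$ is governed entirely by the Hopf differential: $\mathcal{N}_{z}\sigma_{z}=k^{z}$, while $\mathcal{N}_{z}$ annihilates $\sigma$ and $\sigma_{\bar z}$ and sends $\sigma_{z\bar z}\mapsto\nabla^{S^{\perp}}_{\bar z}k^{z}$, its restriction to $S^{\perp}$ being forced by the skew-symmetry $\mathcal{N}\in\Omega^{1}(S\wedge S^{\perp})$, cf. \eqref{eq:transposeVwedgeVperp}. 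The tangential part $\nabla^{S}=\pi_{S}\circ d$ acts on the frame of $S$ through the Schwarzian, via $\nabla^{S}_{z}\sigma_{z}=-\tfrac12 c^{z}\sigma$ and the analogous relations obtained by projecting the derivatives of \eqref{eq:sigmazzck}, while $\nabla^{S^{\perp}}$ is prescribed. I would then read ``inducing the same Hopf differential and normal connection'' as the existence of a bundle isometry $S_{1}^{\perp}\cong S_{2}^{\perp}$ intertwining the normal connections under which the Hopf differentials correspond, and choose the normal frames $n^{1}_{\alpha},n^{2}_{\alpha}$ matched by this isometry; this makes the two connection forms $\omega_{1},\omega_{2}$ literally equal. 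At a base point $p_{0}$ the two adapted frames have equal Gram matrices, so a unique $T_{0}\in O(n+1,1)$ carries the second frame to the first there, and the real-structure-adaptedness forces $T_{0}$ to be real. Finally $F_{1}$ and $T_{0}F_{2}$ solve the same linear first-order system $dF=F\omega$ and agree at $p_{0}$, hence coincide, giving $\sigma_{1}=T_{0}\sigma_{2}$ and $\Lambda_{1}=T_{0}\Lambda_{2}$. In codimension $1$ the bundle $S^{\perp}$ has rank $1$, its metric connection is canonically trivial, and $k^{z}$ is a scalar multiple of a unit normal, so the normal-connection hypothesis carries no information, which explains the parenthetical remark.

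The step I expect to be the main obstacle is the bookkeeping in the third paragraph: making precise the identification of the two normal bundles so that ``same Hopf differential'' and ``same $\nabla^{S^{\perp}}$'' are simultaneously meaningful, and then verifying exhaustively that no entry of $\omega$ escapes being pinned down by the listed invariants, in particular that the $\pi_{S}$-components of the tangential derivatives of $k^{z}$ are recoverable from $k^{z}$ and the Gram data through the skew-symmetry of $\mathcal{N}$.
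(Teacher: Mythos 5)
There is nothing in the paper to compare your argument against: the lemma is not proved there, but quoted from Burstall--Calderbank's \emph{Conformal Submanifold Geometry} (note the sentence ``As established in \cite{burstall+calderbank}'' immediately preceding the statement). So your proposal has to stand on its own, and it does: it is the classical Bonnet-type rigidity argument transplanted to the light-cone model, and the two computations on which it hinges check out. First, the Gram matrix claim is correct: the paper already records $(\sigma^{z}_{z},\sigma^{z}_{\bar z})=\tfrac12$, $(\sigma^{z},\sigma^{z}_{z\bar z})=-\tfrac12$ and $(\sigma^{z}_{z},\sigma^{z}_{\bar z\bar z})=0$ for the normalized section, and differentiating $(\sigma_{z},\sigma_{z\bar z})=0$ in $\bar z$ together with $(\sigma_{zz},\sigma_{\bar z\bar z})=(k^{z},\overline{k^{z}})$ (the cross terms die because $(\sigma,\sigma)=0=(\sigma,k^{z})$) gives exactly $(\sigma_{z\bar z},\sigma_{z\bar z})=(k^{z},\overline{k^{z}})$. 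Second, the connection form really is pinned down: the only nonzero $S^{\perp}$-components of derivatives of the $S$-frame are $\mathcal{N}_{\delta_{z}}\sigma_{z}=k^{z}$, $\mathcal{N}_{\delta_{z}}\sigma_{z\bar z}=\nabla^{S^{\perp}}_{\delta_{\bar z}}k^{z}$ and their conjugates, so by the skew-symmetry \eqref{eq:transposeVwedgeVperp} every pairing of $\pi_{S}(dk^{z})$ or $\pi_{S}(dn_{\alpha})$ against the $S$-frame is an expression in $c^{z}$, $k^{z}$, $\overline{k^{z}}$, the normal connection coefficients and the Gram entries, and non-degeneracy of $S$ converts these pairings into frame coefficients; your reading of the hypothesis (an isometry $S_{1}^{\perp}\cong S_{2}^{\perp}$ intertwining the connections and matching Hopf differentials, used to match normal frames) is indeed the only way the hypothesis is meaningful, and with matched frames $\omega_{1}=\omega_{2}$, so $d\bigl(F_{1}(T_{0}F_{2})^{-1}\bigr)=0$ yields the constant real orthogonal transformation, i.e.\ a M\"obius transformation. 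Two minor points to tidy: your argument lives on a simply connected chart, which is consistent with the paper's standing convention that the theory is local (on connected $M$ the local constants glue by uniqueness, since a constant is determined by its action on a frame over an open set); and the codimension-$1$ remark deserves its one-line justification, namely that for a unit section $n$ of the rank-one bundle $S^{\perp}$ one has $\nabla^{S^{\perp}}n=\alpha n$ with $\alpha=(\nabla^{S^{\perp}}n,n)=\tfrac12 d(n,n)=0$, so the normal connection carries no information.
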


\section{The Euler-Lagrange constrained Willmore surface
equation}\label{subsec:cwseq}

\markboth{\tiny{A. C. QUINTINO}}{\tiny{CONSTRAINED WILLMORE
SURFACES}}

F. Burstall et al. \cite{SD} established a manifestly conformally
invariant characterization of constrained Willmore surfaces in
space-forms, which, in particular, extended the concept of
constrained Willmore to surfaces that are not necessarily compact.
In this section, we derive, from the variational problem, a
reformulation of this characterization, due to F. Burstall and D.
Calderbank \cite{burstall+calderbank}. The argument consists of a
generalization to $n$-space of the argument presented in
\cite{christoph2} for the particular case of $n=3$.\newline

As established in \cite{burstall+calderbank}:

\begin{thm}\label{CWeq}
The surface $\Lambda$ is a constrained Willmore surface if and only
if there exists a real form $q\in\Omega ^{1}(\Lambda\wedge\Lambda
^{(1)})$ with
\begin{equation}\label{eq:curlyDextderivofq}
d^{\mathcal{D}}q=0
\end{equation}
such that
\begin{equation}\label{eq:mainCWeq}
d^{\mathcal{D}}*\mathcal{N}=2\,[q\wedge *\mathcal{N}].
\end{equation}
In this case, we may refer to $\Lambda$ as, specifically, a
\emph{$q$-constrained Willmore surface} and to $q$ as a
\emph{[Lagrange] multiplier}\footnote{Named after the method of
Lagrange multipliers for finding the critical points of a function
subject to a constraint.} to $\Lambda$.
\end{thm}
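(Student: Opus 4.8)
The plan is to run the standard Lagrange-multiplier argument of the calculus of variations, taking as input the first-variation formula already derived for the Willmore functional. By \eqref{eq:dotWvsdotLambda}, any variation $(\Lambda_t)_t$ through immersions satisfies $\dot{\mathcal{W}}=-\int_M((d^{\mathcal{D}}*\mathcal{N})\wedge\dot{\Lambda})$, and since $*_z\,d^{\mathcal{D}}*\mathcal{N}$ takes values in $\Lambda\wedge S^{\perp}$ by \eqref{eq:starblablastar}, this depends only on the normal component $\nu=\pi\dot{\Lambda}\in\Gamma(\mathrm{Hom}(\Lambda,S^{\perp}))$ of the variation. Thus $\Lambda$ is constrained Willmore precisely when the linear functional $\dot{\Lambda}\mapsto\int_M((d^{\mathcal{D}}*\mathcal{N})\wedge\dot{\Lambda})$ annihilates every infinitesimally conformal variation, and the whole proof consists of identifying this annihilator.

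First I would describe the infinitesimally conformal variations explicitly. Fixing a holomorphic chart $z$ and the normalized section $\sigma^{z}$, the constraint $\frac{d}{dt}_{\vert_{t=0}}g_t(\delta_z,\delta_z)=0$ reads $(\dot{\sigma}_z,\sigma_z)=0$; differentiating and using the orthogonality relations together with the Hopf differential from \eqref{eq:sigmazzck} rewrites this as a first-order relation coupling $\nu$ to the tangential (reparametrization) part of $\dot{\Lambda}$. On the compact surface $M$ the tangential freedom solves this relation up to a finite-dimensional obstruction valued in holomorphic quadratic differentials (by Hodge theory and Riemann--Roch), so the infinitesimally conformal variations form a finite-codimension subspace cut out by pairing $\nu$ against these quadratic differentials.

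For the easy ($\Leftarrow$) implication, given $q$ I would substitute the hypothesised equation to get $\dot{\mathcal{W}}=-2\int_M([q\wedge*\mathcal{N}]\wedge\dot{\Lambda})$. Using the $\mathrm{ad}$-invariance of the trace form to move the bracket, together with $\mathcal{N}\Lambda=0$ \eqref{eq:calNLambda0} and $\mathcal{N}\in\Omega^{1}(\Lambda^{(1)}\wedge S^{\perp})$ \eqref{eq:caklNLambdaperpSperp}, this integral rewrites as the pairing of the closed multiplier $q$ against the $(2,0)$-part of the induced-metric variation; an integration by parts licensed by $d^{\mathcal{D}}q=0$ then shows it vanishes on every infinitesimally conformal variation, so $\Lambda$ is constrained Willmore. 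The reality of $q$ and its taking values in $\Lambda\wedge\Lambda^{(1)}$ are exactly what make this pairing land in the quadratic-differential pairing that the constraint kills.

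The hard part is the converse. Here $\dot{\mathcal{W}}=0$ on the finite-codimension subspace of infinitesimally conformal variations forces $d^{\mathcal{D}}*\mathcal{N}$, viewed through the pairing, to lie in the span of the constraint functionals, i.e. to be accounted for by a Lagrange multiplier living in holomorphic quadratic differentials. The delicate step is to convert this abstract multiplier into a genuine real form $q\in\Omega^{1}(\Lambda\wedge\Lambda^{(1)})$ satisfying $d^{\mathcal{D}}q=0$ and reproducing the precise bracket normalization $d^{\mathcal{D}}*\mathcal{N}=2[q\wedge*\mathcal{N}]$; this amounts to inverting, on $\Lambda\wedge\Lambda^{(1)}$, the map $q\mapsto[q\wedge*\mathcal{N}]$ and checking that closedness of the multiplier corresponds to $d^{\mathcal{D}}q=0$. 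I expect this identification --- carried out for $n=3$ in \cite{christoph2} and to be generalized here to arbitrary codimension --- to be the main obstacle, since it requires both the surjectivity of the constraint differential onto its finite-dimensional target and a careful bookkeeping of the normal-bundle pieces that appear only when $n>3$.
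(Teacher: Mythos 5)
Your proposal is correct and follows essentially the same route as the paper's proof: the first-variation formula \eqref{eq:dotWvsdotLambda}, infinitesimal conformality recast as a $\bar{\partial}$-equation coupling $\nu$ to the tangential part of $\dot{\Lambda}$, Weyl's lemma (your Hodge theory/Riemann--Roch) identifying the finite-dimensional obstruction space $H^{0}(K)$ of holomorphic quadratic differentials, the double-annihilator Lagrange-multiplier step made valid by that finite-dimensionality, and the final translation of a holomorphic quadratic differential into a closed real form $q\in\Omega^{1}(\Lambda\wedge\Lambda^{(1)})$, which the paper carries out via the correspondence \eqref{eq:correspondenec between qzand qsharp} together with Lemma \ref{holomvsextderivq}. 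The only cosmetic difference is that you phrase this last step as inverting $q\mapsto[q\wedge*\mathcal{N}]$, whereas the paper sets up the purely algebraic, $\mathcal{N}$-independent correspondence with quadratic differentials beforehand and then computes the adjoint $\mathcal{A}^{*}$ of the constraint map; both treatments leave the same final pairing computation implicit.
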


Willmore surfaces are the $0$-constrained Willmore surfaces. The
zero multiplier is not necessarily the only multiplier to a
constrained Willmore surface with no constraint on the conformal
structure. The uniqueness of multiplier is discussed in section
\ref{nonisoCW}.

\begin{rem}\label{remmuyutilcurlyDNqCWeq}
Let $q$ be a $1$-form with values in $\Lambda\wedge\Lambda^{(1)}$.
According to \eqref{eq:starblablastar},
\begin{equation}\label{eq:dddfddvlllaalaooaoajjj}
d^{\mathcal{D}}*\mathcal{N}\in\Omega^{2}(\Lambda\wedge S^{\perp}).
\end{equation}
On the other hand, in view of \eqref{eq:wegdebrack}, we conclude
from \eqref{eq:caklNLambdaperpSperp} that
\begin{equation}\label{eq:????????????b}
[q\wedge *\mathcal{N}]\in\Omega^{2}(\Lambda\wedge S^{\perp}).
\end{equation}
In particular, both $d^{\mathcal{D}}*\mathcal{N}$ and $[q\wedge
*\mathcal{N}]$ vanish on $\Lambda^{(1)}$ and are, therefore,
determined by the respective restrictions to $\langle u\rangle\oplus
S^{\perp}$, fixing $u\in S\backslash\Lambda^{\perp}$ (in particular,
for $u=\sigma_{z\bar{z}}$, fixing $\sigma\in\Gamma(\Lambda)$
never-zero and $z$ a holomorphic chart of $M$). Equation \eqref
{eq:transposeVwedgeVperp} establishes, furthermore, that both
$d^{\mathcal{D}}*\mathcal{N}$ and $[q\wedge *\mathcal{N}]$ are
determined by the respective restrictions to $\langle u\rangle$,
fixing $u\in S\backslash\Lambda^{\perp}$. In particular, equation
\eqref{eq:mainCWeq} holds if and only if, fixing such a $u$,
$(d^{\mathcal{D}}*\mathcal{N})u=2\,[q\wedge *\mathcal{N}]u$.
\end{rem}

The proof of Theorem \ref{CWeq} we present follows essentially from
the so called Weyl's lemma (see, for example, \cite{livroweyl}, \S
$9$), which, in particular, establishes the image of the operator
$\bar{\partial}$ as the orthogonal space to the vector space of the
holomorphic quadratic differentials, with respect to some
non-degenerate pairing. We start by establishing a $1-1$
correspondence between quadratic differentials and real $1$-forms
taking values in $\Lambda\wedge\Lambda^{(1)}$ whose $(1,0)$-part
takes values in $\Lambda\wedge\Lambda^{0,1}$ (for $\Lambda^{0,1}$ as
defined next). We verify that condition \eqref{eq:curlyDextderivofq}
on $q$ forces $q^{1,0}$ to take values in
$\Lambda\wedge\Lambda^{0,1}$ and that, under the correspondence
above, condition \eqref{eq:curlyDextderivofq} is equivalent to the
holomorphicity of the quadratic differential $q$. And then we
proceed to the proof of the theorem, showing that $\Lambda$ is a
constrained Willmore surface if and only if there exists a
holomorphic quadratic differential $q$ satisfying equation
\eqref{eq:mainCWeq}, under the correspondence mentioned above.

First of all, fix a never-zero section $\sigma$ of $\Lambda$ and a
holomorphic chart $z$ of $M$. Independently of the choice of such a
$\sigma$ and such a $z$, set
$$\Lambda ^{1,0}:=\langle \sigma ,\sigma _{z}\rangle$$ and
$$\Lambda ^{0,1}:=\langle \sigma ,\sigma _{\bar{z}}\rangle
,$$ defining in this way two isotropic complex rank $2$ subbundles
of $S$, complex conjugate of each other. In view of the
non-degeneracy of $S$, given $i\neq j\in\{0,1\}$,
$\mathrm{rank}\,S=\mathrm{rank}\,\Lambda^{i,j}+\mathrm{rank}\,(\Lambda^{i,j})^{\perp}$,
showing that the isotropy of $\Lambda^{1,0}$ and $\Lambda^{0,1}$
establishes, furthermore, their maximal isotropy in $S$,
$$(\Lambda^{1,0})^{\perp}\cap S=\Lambda^{1,0},\,\,\,(\Lambda^{0,1})^{\perp}\cap S=\Lambda^{0,1}.$$
Note that
$$\Lambda =\Lambda ^{1,0}\cap\Lambda ^{0,1}.$$Clearly, $$\Lambda ^{1,0}\wedge \Lambda ^{1,0}=\langle \sigma \wedge
\sigma _{z}\rangle =\Lambda \wedge \Lambda ^{1,0},$$ as well as
$$\Lambda ^{0,1}\wedge\Lambda ^{0,1}=\langle \sigma \wedge \sigma
_{\bar{z}}\rangle =\Lambda \wedge \Lambda ^{0,1}.$$The orthogonality
relations of the frame $\sigma,\sigma _{z},\sigma _{\bar{z}}$ of
$$\Lambda ^{(1)}=\langle\sigma ,\sigma _{z},\sigma
_{\bar{z}}\rangle$$ show that $\sigma \wedge \sigma _{z}$ and
$\sigma \wedge \sigma _{\bar{z}}$ are linearly independent and,
therefore, that
$$\Lambda \wedge\Lambda ^{(1)}=\Lambda \wedge \Lambda
^{1,0}\oplus\Lambda \wedge \Lambda ^{0,1}.$$

\begin{rem}\label{e10determinedby}
Given $\xi$ a section of $\Lambda\wedge\Lambda^{1,0}$ (respectively,
$\Lambda\wedge\Lambda^{0,1}$),  $\xi$ vanishes everywhere outside
$\langle\sigma_{\bar{z}},\sigma_{z\bar{z}}\rangle$ (respectively,
$\langle\sigma_{z},\sigma_{z\bar{z}}\rangle$) and
$$\xi\sigma_{\bar{z}}=\lambda\sigma,\,\,\,\,\,\xi\sigma_{z\bar{z}}=\lambda\sigma_{z},$$
(respectively,
$\xi\sigma_{z}=\lambda\sigma,\,\,\,\xi\sigma_{z\bar{z}}=\lambda\sigma_{\bar{z}}$),
for some $\lambda\in C^{\infty}(M,\R)$. In particular, $\xi$ is
determined by $\xi\sigma_{\bar{z}}$ (respectively, $\xi
\sigma_{z}$), or, equivalently, by $\xi\sigma_{z\bar{z}}$, or, yet
again, equivalently, by $\xi u$, fixing $u\in S\backslash
\Lambda^{\perp}$.
\end{rem}

Observe that
\begin{equation}\label{eq:mathcalDijpreservesLambdaij}
\mathcal{D}^{1,0}\,\Gamma (\Lambda ^{1,0})\subset \Omega
^{1,0}(\Lambda ^{1,0}),\,\,\,\,\,\mathcal{D}^{0,1}\,\Gamma (\Lambda
^{0,1})\subset \Omega ^{0,1}(\Lambda ^{0,1}).
\end{equation}
Indeed, given $\lambda,\mu\in\Gamma (\underline{\C})$,
$\mathcal{D}_{\delta _{z}}(\lambda\sigma^{z} +\mu\sigma
_{z}^{z})=\lambda _{z}\,\sigma^{z} +(\lambda+\mu _{z})\,\sigma^{z}
_{z}+\mu\,\pi _{S}(\sigma^{z} _{zz})\in\Gamma(\Lambda^{1,0})$ and,
similarly, we verify that $\mathcal{D}_{\delta
_{\bar{z}}}(\lambda\sigma^{z} +\mu\sigma
_{\bar{z}}^{z})\in\Gamma(\Lambda^{0,1})$.
 Following
\eqref{eq:mathcalDijpreservesLambdaij}, we get
\begin{equation}\label{eq:curlyD1oLambdaijblablalbla}
\mathcal{D}^{1,0}\,\Gamma (\Lambda \wedge\Lambda^{1,0})\subset\Omega
^{1,0} (\Lambda \wedge\Lambda
^{1,0}),\,\,\,\,\,\mathcal{D}^{0,1}\,\Gamma (\Lambda \wedge\Lambda ^
{0,1})\subset\Omega ^{0,1} (\Lambda \wedge\Lambda ^{0,1}).
\end{equation}
Observe, on the other hand, that, as $\sigma_{\bar{z}}$ is a section
of $S$,
$\mathcal{N}_{\delta_{z}}\sigma_{\bar{z}}=\pi_{S^{\perp}}(d_{\delta_{z}}\sigma_{\bar{z}})=0$,
and, therefore, in view of \eqref{eq:calNLambda0},
$\mathcal{N}^{1,0}\Lambda^{0,1}=0$, or, equivalently,
$\mathcal{N}^{0,1}\Lambda^{0,1}=0$,
\begin{equation}\label{eq:curlyN10Lambda01bbbbb09rgg9987654}
\mathcal{N}^{1,0}\Lambda^{0,1}=0=\mathcal{N}^{0,1}\Lambda^{0,1}.
\end{equation}
It is opportune to observe that, in view of the skew-symmetry of
$\mathcal{N}$ and of the maximal isotropy of $\Lambda^{1,0}$ and
$\Lambda^{0,1}$ in $S$, it follows, in particular,
\begin{equation}\label{eq:curlyN10SperpinLambda01}
\mathcal{N}^{1,0}S^{\perp}\subset
\Lambda^{0,1},\,\,\,\,\mathcal{N}^{0,1}S^{\perp}\subset
\Lambda^{1,0}.
\end{equation}

Now recall that a quadratic differential is a  $2$-tensor
represented locally, in the domain of $z$, as
$Q:=f^{z}dz^{2}+\overline{f^{z}}d\bar{z}^{2}$, with $f^{z}\in
C^{\infty}(M,\C)$. We may refer to $Q$ as \textit{the quadratic
differential defined locally by $f^{z}dz^{2}$}. The transformation
rule for $f^{z}$ under change of holomorphic coordinates is
\begin{equation}\label{eq:qwvsqz}
f^{\omega}=\omega_{z}^{-2}\,f^{z},
\end{equation}
given another holomorphic chart $\omega$. $Q$ is said to be
holomorphic if $f^{z}$ is holomorphic.\footnote{Noting that, given
another holomorphic chart $\omega$,
$f^{\omega}_{\bar{z}}=\omega_{z}^{-2}f^{z}_{\bar{z}}$, so that
$f^{\omega}_{\bar{z}}$ vanishes if and only if so does
$f^{z}_{\bar{z}}$.} We denote the vector space of holomorphic
quadratic differentials on $M$ by $H^{0}(K)$. It is well-known that
the vector space of holomorphic quadratic differentials on a compact
Riemann surface is finite dimensional (see, for example,
\cite{jost}).

Given a $1$-form $q\in\Omega^{1}(\Lambda\wedge\Lambda^{(1)})$, we
define a quadratic differential by
$$q_{Q}:=q^{z}dz^{2}+\overline{q^{z}}d\bar{z}^{2},$$ for $q^{z}\in C^{\infty}(M,\C)$
defined by\footnote{Scaling $q_{\delta_{z}}\sigma_{z}$ by $-2$ will
avoid some extra scaling in future equations.}
\begin{equation}\label{eq:defdeQforrealcase}
q^{z}\sigma:=-2\,q_{\delta_{z}}\sigma_{z}.
\end{equation}
We shall verify that this is, indeed, well-defined. The independence
of \eqref {eq:defdeQforrealcase} with respect to $\sigma$ is a
consequence of the fact that $q\Lambda=0$. On the other hand, the
fact that $\sigma_{z}$ is a section of $\Lambda^{\perp}$ ensures
that $q_{\delta_{z}}\sigma_{z}\in\Gamma(\langle\sigma\rangle)$,
determining $q^{z}\in\Gamma(\underline{\C})$.

A simple, yet crucial, observation is that, in view of Remark
\ref{e10determinedby}, in the case $q^{1,0}\in\Omega ^{1,0}(\Lambda
\wedge\Lambda ^{0,1})$, $q^{z}$ is, equivalently, defined by
\begin{equation}\label{eq:newdefofQqrealcase}
q^{z}\tau=-2\,q_{\delta_{z}}(\mathcal{D}_{\delta_{z}}\tau),
\end{equation}
for every $\tau\in\Gamma(\Lambda^{0,1})$.

Conversely, a quadratic differential
$Q=f^{z}dz^{2}+\overline{f^{z}}d\bar{z}^{2}$ determines a real
$1$-form $q\in\Omega^{1}(\Lambda\wedge\Lambda^{(1)})$ with
$q^{1,0}\in\Omega^{1,0}(\Lambda\wedge\Lambda^{0,1})$ and $Q=q_{Q}$
by setting $q_{\delta_{z}}\sigma_{z}:=-\frac{1}{2}\,f^{z}\sigma$. We
have established a $1-1$ correspondence
\begin{equation}\label{eq:correspondenec between qzand qsharp}
q\leftrightarrow q_{Q}
\end{equation}
between real forms $q\in\Omega^{1}(\Lambda\wedge\Lambda^{(1)})$ with
$q^{1,0}\in\Omega^{1,0}(\Lambda\wedge\Lambda^{0,1})$ and quadratic
differentials.

\begin{Lemma}\label{holomvsextderivq}
Suppose $q\in\Omega^{1}(\Lambda\wedge\Lambda^{(1)})$ is real and
$q^{1,0}\in\Omega ^{1,0}(\Lambda \wedge\Lambda ^{0,1})$. In that
case, the quadratic differential $q_{Q}$ is holomorphic if and only
if $d^{\mathcal{D}}q=0$.
\end{Lemma}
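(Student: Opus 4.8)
The plan is to test the condition $d^{\mathcal{D}}q=0$ against the frame $\delta_z,\delta_{\bar z}$ of $TM^{\C}$ and to match the result with the Cauchy--Riemann equation $q^z_{\bar z}=0$ that defines holomorphicity of $q_Q$. Since both $q_Q$ and $d^{\mathcal{D}}q$ are intrinsic, I would fix the normalized section $\sigma^z$ of $\Lambda$ with respect to $z$, for which $(\sigma^z_z,\sigma^z_{\bar z})=\tfrac12$. Reality of $q$ together with the hypothesis $q^{1,0}\in\Omega^{1,0}(\Lambda\wedge\Lambda^{0,1})$ gives $q_{\delta_z}\in\Gamma(\Lambda\wedge\Lambda^{0,1})$ and $q_{\delta_{\bar z}}=\overline{q_{\delta_z}}\in\Gamma(\Lambda\wedge\Lambda^{1,0})$. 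Writing $q_{\delta_z}=\mu\,\sigma^z\wedge\sigma^z_{\bar z}$ with $\mu\in C^\infty(M,\C)$ (recall $\Lambda\wedge\Lambda^{0,1}=\langle\sigma^z\wedge\sigma^z_{\bar z}\rangle$), a direct evaluation using \eqref{eq:isomwedge} yields $q_{\delta_z}\sigma^z_z=-\mu(\sigma^z_z,\sigma^z_{\bar z})\sigma^z=-\tfrac12\mu\sigma^z$, so that by \eqref{eq:defdeQforrealcase} one has $q^z=\mu$.

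First I would reduce $d^{\mathcal{D}}q=0$ to a single scalar equation. Using $[\delta_z,\delta_{\bar z}]=0$ from \eqref{eq:LiBrdelta0}, we have $(d^{\mathcal{D}}q)(\delta_z,\delta_{\bar z})=\mathcal{D}_{\delta_z}(q_{\delta_{\bar z}})-\mathcal{D}_{\delta_{\bar z}}(q_{\delta_z})$. By \eqref{eq:curlyD1oLambdaijblablalbla}, the first term lies in $\Lambda\wedge\Lambda^{1,0}$ and the second in $\Lambda\wedge\Lambda^{0,1}$; since these subbundles are in direct sum, the vanishing of $(d^{\mathcal{D}}q)(\delta_z,\delta_{\bar z})$ forces each term to vanish separately. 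Moreover, reality of $q$ and of $\mathcal{D}$ makes the two terms complex conjugate to one another, so they vanish simultaneously. Hence $d^{\mathcal{D}}q=0\iff\mathcal{D}_{\delta_{\bar z}}(q_{\delta_z})=0$.

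The core computation is then $\mathcal{D}_{\delta_{\bar z}}(\sigma^z\wedge\sigma^z_{\bar z})=0$. Indeed $\mathcal{D}_{\delta_{\bar z}}\sigma^z=\pi_S(\sigma^z_{\bar z})=\sigma^z_{\bar z}$ annihilates the first Leibniz term, because $\sigma^z_{\bar z}\wedge\sigma^z_{\bar z}=0$, while $\mathcal{D}_{\delta_{\bar z}}\sigma^z_{\bar z}=\pi_S(\sigma^z_{\bar z\bar z})=-\tfrac12\overline{c^z}\,\sigma^z$, the $S$-component of the complex conjugate of \eqref{eq:sigmazzck}, makes the second term $\sigma^z\wedge(-\tfrac12\overline{c^z}\sigma^z)=0$. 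Therefore $\mathcal{D}_{\delta_{\bar z}}(q_{\delta_z})=(\delta_{\bar z}\mu)\,\sigma^z\wedge\sigma^z_{\bar z}=q^z_{\bar z}\,\sigma^z\wedge\sigma^z_{\bar z}$, which vanishes if and only if $q^z_{\bar z}=0$, that is, if and only if $q_Q$ is holomorphic. Combined with the reduction of the previous paragraph, this establishes the equivalence.

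I do not expect a deep obstacle: once the frame $\delta_z,\delta_{\bar z}$ and the normalized section are fixed, the argument is essentially bookkeeping with the $(1,0)$/$(0,1)$ splitting of $\Lambda\wedge\Lambda^{(1)}$. The only step requiring genuine care is verifying that $\pi_S(\sigma^z_{\bar z\bar z})$ is proportional to $\sigma^z$, so that it wedges to zero against $\sigma^z$; this is precisely the content of the $S$-part of the conjugate of \eqref{eq:sigmazzck}, and it is where the Hopf--Schwarzian structure of $\sigma^z_{zz}$ feeds into the proof.
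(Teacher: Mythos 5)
Your proof is correct, and it organizes the argument differently from the paper's. The paper proves the lemma by evaluating the skew endomorphism $d^{\mathcal{D}}q\,(\delta_{z},\delta_{\bar{z}})$ on a frame of $\underline{\C}^{n+2}$: the evaluations on $\sigma^{z}$ and on $S^{\perp}$ vanish trivially, the evaluation on $\sigma^{z}_{z}$ produces $\tfrac{1}{2}\,q^{z}_{\bar{z}}\sigma^{z}$ via \eqref{eq:newdefofQqrealcase}, and the evaluation on $\sigma^{z}_{z\bar{z}}$ --- the most laborious step, which requires expanding $\pi_{S}(\sigma^{z}_{zz\bar{z}})$ through the Schwarzian equation and showing $\pi_{S}k^{z}_{\bar{z}}\in\Gamma(\Lambda^{1,0})$ --- produces $-\tfrac{1}{2}\,\overline{q^{z}_{\bar{z}}}\,\sigma^{z}_{z}+\tfrac{1}{2}\,q^{z}_{\bar{z}}\,\sigma^{z}_{\bar{z}}$. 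You instead work at the level of the bundle $\Lambda\wedge\Lambda^{(1)}$ itself: the direct sum $\Lambda\wedge\Lambda^{(1)}=\Lambda\wedge\Lambda^{1,0}\oplus\Lambda\wedge\Lambda^{0,1}$, together with \eqref{eq:curlyD1oLambdaijblablalbla} and the conjugation symmetry coming from the reality of $q$ and of $\mathcal{D}$, reduces $d^{\mathcal{D}}q=0$ to the single equation $\mathcal{D}_{\delta_{\bar{z}}}(q_{\delta_{z}})=0$; the Leibniz computation then shows that the frame section $\sigma^{z}\wedge\sigma^{z}_{\bar{z}}$ of the line bundle $\Lambda\wedge\Lambda^{0,1}$ is $\mathcal{D}_{\delta_{\bar{z}}}$-parallel, which needs only $\pi_{S}(\sigma^{z}_{\bar{z}\bar{z}})\in\Gamma(\Lambda)$, i.e.\ the conjugate of \eqref{eq:sigmazzck}, so that $\mathcal{D}_{\delta_{\bar{z}}}(q_{\delta_{z}})=q^{z}_{\bar{z}}\,\sigma^{z}\wedge\sigma^{z}_{\bar{z}}$. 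Your route is shorter and never touches the Hopf differential beyond the fact that it is normal to $S$, at the price of using the type decomposition of the form as a structural input (an observation the paper deploys only afterwards, in the proof of Lemma \ref{withvswithoutdecomps}); the paper's frame-by-frame evaluation is heavier, but it makes the whole endomorphism $d^{\mathcal{D}}q\,(\delta_{z},\delta_{\bar{z}})$ explicit rather than just detecting its vanishing. One small point you use implicitly and should cite: the Leibniz rule $\mathcal{D}(v\wedge w)=(\mathcal{D}v)\wedge w+v\wedge(\mathcal{D}w)$ agrees with the connection induced on $o(\underline{\R}^{n+1,1})\cong\wedge^{2}\underline{\R}^{n+1,1}$ (which is what $d^{\mathcal{D}}$ refers to) precisely because $\mathcal{D}$ is a metric connection, as recorded in Section \ref{extalgebra}.
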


\begin{proof}
In view of equation \eqref{eq:LiBrdelta0}, the $2$-form
$d^{\mathcal{D}}q$ vanishes if and only if
$$d^{\mathcal{D}}q\,(\delta_{z},\delta_{\bar{z}})=\mathcal{D}_{\delta_{z}}\circ
q_{\delta_{\bar{z}}}-q_{\delta_{\bar{z}}}\circ\mathcal{D}_{\delta_{z}}-\mathcal{D}_{\delta_{\bar{z}}}\circ
q_{\delta_{z}}+q_{\delta_{z}}\circ\mathcal{D}_{\delta_{\bar{z}}}$$
does. As $q$ is real and $q^{1,0}$ takes values in $\Lambda
\wedge\Lambda ^{0,1}$,
$q^{0,1}=\overline{q^{1,0}}\in\Omega^{0,1}(\Lambda\wedge\Lambda^{1,0})$
and, therefore,
$$q^{1,0}\Lambda^{0,1}=0=q^{0,1}\Lambda^{1,0};$$
and, on the other hand, $$qS^{\perp}=0.$$ In particular, as
$S^{\perp}$ is $\mathcal{D}$-parallel,
$$d^{\mathcal{D}}q\,(\delta_{z},\delta_{\bar{z}})\,\sigma^{z}=0=d^{\mathcal{D}}q\,(\delta_{z},\delta_{\bar{z}})\,s^{\perp},$$
for all $s^{\perp}\in\Gamma(S^{\perp})$. On the other hand, having
in consideration that $\pi_{S}(\sigma_{zz}^{z})\in\Gamma(\Lambda)$
and according to equation \eqref{eq:newdefofQqrealcase},
$$d^{\mathcal{D}}q\,(\delta_{z},\delta_{\bar{z}})\,\sigma_{z}^{z}=-\mathcal{D}_{\delta_{\bar{z}}}(
q_{\delta_{z}}\sigma_{z}^{z})+q_{\delta_{z}}\sigma_{z\bar{z}}^{z}=\frac{1}{2}\,\mathcal{D}_{\delta_{\bar{z}}}(q^{z}\sigma^{z})-\frac{1}{2}\,q^{z}\sigma_{\bar{z}}^{z}=
\frac{1}{2}\,q^{z}_{\bar{z}}\sigma^{z}.$$ We conclude that $q_{Q}$
is holomorphic, $q^{z}_{\bar{z}}=0$, if and only if
$d^{\mathcal{D}}q\,(\delta_{z},\delta_{\bar{z}})\,\sigma_{z}^{z}$
vanishes. In its turn, by the reality of $q$ (and that of
$\mathcal{D}$ and of $\sigma^{z}$),
$$d^{\mathcal{D}}q(\delta_{z},\delta_{\bar{z}})\,\sigma_{\bar{z}}^{z}=-\overline{d^{\mathcal{D}}q(\delta_{z},\delta_{\bar{z}})\,\sigma_{z}^{z}}.$$
Lastly, we contemplate
$$d^{\mathcal{D}}q(\delta_{z},\delta_{\bar{z}})\,\sigma_{z\bar{z}}^{z}=
\overline{\mathcal{D}_{\delta_{\bar{z}}}(q_{\delta_{z}}\sigma_{z\bar{z}}^{z})}-q_{\delta_{\bar{z}}}(\pi_{S}(\sigma_{zz\bar{z}}^{z}))-\mathcal{D}_{\delta_{\bar{z}}}(q_{\delta_{z}}\sigma_{z\bar{z}}^{z})+\overline{q_{\delta_{\bar{z}}}(\pi_{S}(\sigma_{zz\bar{z}}^{z}))}.$$
First of all,
$$\pi_{S}(\sigma_{zz\bar{z}}^{z})=\pi_{S}(-\frac{1}{2}\,c^{z}\sigma^{z}+k^{z})_{\bar{z}}=-\frac{1}{2}\,c^{z}\sigma^{z}_{\bar{z}}-\frac{1}{2}\,c^{z}_{\bar{z}}\,\sigma^{z}+\pi_{S}(k^{z}_{\bar{z}}),$$
so that
$$q_{\delta_{\bar{z}}}(\pi_{S}(\sigma_{zz\bar{z}}^{z}))=-\frac{1}{2}\,c^{z}q_{\delta_{\bar{z}}}\sigma^{z}_{\bar{z}}+q_{\delta_{\bar{z}}}k^{z}_{\bar{z}}=\frac{1}{4}\,c^{z}\,\overline{q^{z}}\,\sigma^{z}+q_{\delta_{\bar{z}}}k^{z}_{\bar{z}}.$$
Together with
$(k^{z},\sigma^{z}_{\bar{z}})=0=(k^{z},\sigma^{z}_{z\bar{z}})$,
differentiation of $(k^{z},\sigma^{z})=0=(k^{z},\sigma^{z}_{z})$
shows that
$(k^{z}_{\bar{z}},\sigma^{z})=0=(k^{z}_{\bar{z}},\sigma^{z}_{z})$,
or, equivalently, that
\begin{equation}\label{eq:piSkbarzinLambda10}
\pi_{S}k^{z}_{\bar{z}}\in\Gamma(\Lambda^{1,0}).
\end{equation}
Hence $q_{\delta_{\bar{z}}}k^{z}_{\bar{z}}=0$. On the other hand,
$$\mathcal{D}_{\delta_{\bar{z}}}(q_{\delta_{z}}\sigma_{z\bar{z}}^{z})=-\frac{1}{2}\,q^{z}\pi_{S}(\sigma_{\bar{z}\bar{z}}^{z})-\frac{1}{2}\,q^{z}_{\bar{z}}\,\sigma_{\bar{z}}^{z}=\frac{1}{4}\,\overline{c^{z}}\,q^{z}\sigma^{z}-\frac{1}{2}\,q^{z}_{\bar{z}}\,\sigma_{\bar{z}}^{z}.$$
We conclude that
$$d^{\mathcal{D}}q(\delta_{z},\delta_{\bar{z}})\,\sigma_{z\bar{z}}^{z}=-\frac{1}{2}\,\overline{q^{z}_{\bar{z}}}\,\sigma_{z}^{z}+\frac{1}{2}\,q^{z}_{\bar{z}}\,\sigma_{\bar{z}}^{z}$$
vanishes if and only if $q^{z}_{\bar{z}}$ does, which completes the
proof.
\end{proof}

The next result establishes, in particular, that if $q$ is a
multiplier to $\Lambda$ then $q^{1,0}$ takes values in $\Lambda
\wedge\Lambda ^{0,1}$.
\begin{Lemma}\label{withvswithoutdecomps}
Given $q\in\Omega ^{1}(\Lambda\wedge\Lambda ^{(1)})$ real,

$i)$\,\,if $d^{\mathcal{D}}q=0$ then $q^{1,0}\in\Omega
^{1,0}(\Lambda \wedge\Lambda ^{0,1})$ or, equivalently,
$q^{0,1}\in\Omega ^{0,1}(\Lambda \wedge\Lambda ^{1,0})$;

$ii)$\,\,$d^{\mathcal{D}}q=0$ if and only if
$d^{\mathcal{D}}q^{1,0}=0$, or, equivalently,
$d^{\mathcal{D}}q^{0,1}=0;$

$iii)$\,\,$d^{\mathcal{D}}q=0$ if and only if $d^{\mathcal{D}}*q=0$.
\end{Lemma}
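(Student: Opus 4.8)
The plan is to derive all three equivalences from one local computation. Fix a holomorphic chart $z$ of $(M,\mathcal{C}_{\Lambda})$ and the normalised (real) section $\sigma=\sigma^{z}$ of $\Lambda$, so that $\{\sigma\wedge\sigma_{z},\,\sigma\wedge\sigma_{\bar{z}}\}$ frames $\Lambda\wedge\Lambda^{(1)}=\Lambda\wedge\Lambda^{1,0}\oplus\Lambda\wedge\Lambda^{0,1}$. Since a $1$-form over a surface is determined by its values on $\delta_{z},\delta_{\bar{z}}$, and $q^{0,1}_{\delta_{z}}=0=q^{1,0}_{\delta_{\bar{z}}}$, I would write $q_{\delta_{z}}=\alpha\,\sigma\wedge\sigma_{z}+\beta\,\sigma\wedge\sigma_{\bar{z}}$ with $\alpha,\beta\in\Gamma(\underline{\C})$; reality of $q$ (together with $\overline{\sigma\wedge\sigma_{z}}=\sigma\wedge\sigma_{\bar{z}}$) then gives $q_{\delta_{\bar{z}}}=\overline{q_{\delta_{z}}}=\bar{\beta}\,\sigma\wedge\sigma_{z}+\bar{\alpha}\,\sigma\wedge\sigma_{\bar{z}}$.

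The heart of the matter is $d^{\mathcal{D}}q(\delta_{z},\delta_{\bar{z}})=\mathcal{D}_{\delta_{z}}q_{\delta_{\bar{z}}}-\mathcal{D}_{\delta_{\bar{z}}}q_{\delta_{z}}$ (using $[\delta_{z},\delta_{\bar{z}}]=0$). I would first record, from $\mathcal{D}_{\delta_{z}}\sigma=\sigma_{z}$, $\mathcal{D}_{\delta_{z}}\sigma_{z}=\pi_{S}(\sigma_{zz})\in\Gamma(\Lambda)$ (as $\sigma_{zz}=-\tfrac12 c^{z}\sigma+k^{z}$ with $k^{z}\in\Gamma(S^{\perp})$), $\mathcal{D}_{\delta_{z}}\sigma_{\bar{z}}=\sigma_{z\bar{z}}$ and their conjugates, the four frame identities $\mathcal{D}_{\delta_{z}}(\sigma\wedge\sigma_{z})=0$, $\mathcal{D}_{\delta_{\bar{z}}}(\sigma\wedge\sigma_{\bar{z}})=0$, $\mathcal{D}_{\delta_{z}}(\sigma\wedge\sigma_{\bar{z}})=\sigma_{z}\wedge\sigma_{\bar{z}}+\sigma\wedge\sigma_{z\bar{z}}$ and $\mathcal{D}_{\delta_{\bar{z}}}(\sigma\wedge\sigma_{z})=-\sigma_{z}\wedge\sigma_{\bar{z}}+\sigma\wedge\sigma_{z\bar{z}}$. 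The crux, and the main obstacle, is precisely that $\mathcal{D}$ does \emph{not} preserve $\Lambda\wedge\Lambda^{(1)}$: the two cross-terms carry components along $\sigma_{z}\wedge\sigma_{\bar{z}}$ and $\sigma\wedge\sigma_{z\bar{z}}$, which are linearly independent of $\sigma\wedge\sigma_{z},\sigma\wedge\sigma_{\bar{z}}$ inside $\wedge^{2}S$, and it is the vanishing of these transverse components that carries the content of $(i)$.

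Substituting, $d^{\mathcal{D}}q(\delta_{z},\delta_{\bar{z}})$ expands in the independent family $\{\sigma\wedge\sigma_{z},\sigma\wedge\sigma_{\bar{z}},\sigma_{z}\wedge\sigma_{\bar{z}},\sigma\wedge\sigma_{z\bar{z}}\}$, with the transverse coefficients along $\sigma_{z}\wedge\sigma_{\bar{z}}$ and $\sigma\wedge\sigma_{z\bar{z}}$ equal to $\alpha+\bar{\alpha}$ and $\bar{\alpha}-\alpha$; their simultaneous vanishing forces $\alpha=0$, which is exactly $q^{1,0}\in\Omega^{1,0}(\Lambda\wedge\Lambda^{0,1})$, giving $(i)$ (the equivalent $q^{0,1}$ statement following by conjugation), and the two remaining coefficients then collapse to the single condition $\beta_{\bar{z}}=0$ and its conjugate. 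Thus $d^{\mathcal{D}}q=0\iff(\alpha=0\text{ and }\beta_{\bar{z}}=0)$. For $(ii)$ I would run the same computation on $q^{1,0}$ alone, where $q^{1,0}_{\delta_{\bar{z}}}=0$ gives $d^{\mathcal{D}}q^{1,0}(\delta_{z},\delta_{\bar{z}})=-\mathcal{D}_{\delta_{\bar{z}}}q_{\delta_{z}}$; reading off coefficients shows $d^{\mathcal{D}}q^{1,0}=0\iff(\alpha=0\text{ and }\beta_{\bar{z}}=0)$, the very same condition, so $d^{\mathcal{D}}q=0\iff d^{\mathcal{D}}q^{1,0}=0$, and the equivalence with $d^{\mathcal{D}}q^{0,1}=0$ follows by conjugation. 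Finally $(iii)$ needs no fresh work: $*q$ is again a real $1$-form valued in $\Lambda\wedge\Lambda^{(1)}$, so $(ii)$ applies to it, and since $(*q)^{1,0}=*q^{1,0}=-i\,q^{1,0}$ we get $d^{\mathcal{D}}*q=0\iff d^{\mathcal{D}}(*q)^{1,0}=0\iff d^{\mathcal{D}}q^{1,0}=0\iff d^{\mathcal{D}}q=0$, which completes the proof.
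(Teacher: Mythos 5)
Your proof is correct: the frame identities for $\mathcal{D}$ on $\sigma\wedge\sigma_{z}$ and $\sigma\wedge\sigma_{\bar{z}}$, the linear independence of the four wedge vectors inside $\wedge^{2}S$, and the reduction of $d^{\mathcal{D}}q=0$ to the pair of conditions $\alpha=0$, $\beta_{\bar{z}}=0$ all check out, and your use of the normalized section is exactly what makes $\pi_{S}(\sigma^{z}_{zz})\in\Gamma(\Lambda)$ and hence $\mathcal{D}_{\delta_{z}}(\sigma\wedge\sigma_{z})=0$. The route, though, differs genuinely from the paper's in execution. The paper never expands $q$ in the frame $\{\sigma\wedge\sigma_{z},\sigma\wedge\sigma_{\bar{z}}\}$: for $(i)$ it evaluates the endomorphism-valued $2$-form $d^{\mathcal{D}}q(\delta_{z},\delta_{\bar{z}})$ on the test sections $\sigma^{z}$ and $\sigma^{z}_{z}$, writes $q_{\delta_{\bar{z}}}\sigma^{z}_{z}=\lambda\sigma^{z}$, and derives the two relations $\lambda=-\mu_{\delta_{z}}$ and $\lambda=\mu_{\delta_{z}}$, forcing $\lambda=0$; for $(ii)$ it argues structurally from $(i)$, noting that $d^{\mathcal{D}}q^{1,0}$ and $d^{\mathcal{D}}q^{0,1}$ take values in the complementary bundles $\Lambda\wedge\Lambda^{0,1}$ and $\Lambda\wedge\Lambda^{1,0}$ (by \eqref{eq:curlyD1oLambdaijblablalbla}), so their sum vanishes only if each does, reality giving the converse. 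Your single master computation replaces both arguments at once, identifying $d^{\mathcal{D}}q=0$ and $d^{\mathcal{D}}q^{1,0}=0$ with the same explicit conditions $\alpha=0$, $\beta_{\bar{z}}=0$; in particular your $(ii)$ is independent of $(i)$. What your version buys is uniformity and transparency: since $\beta=q^{z}$ (by \eqref{eq:defdeQforrealcase} and $(\sigma^{z}_{z},\sigma^{z}_{\bar{z}})=\frac{1}{2}$), the residual condition $\beta_{\bar{z}}=0$ is exactly holomorphicity of the quadratic differential, so the content of Lemma \ref{holomvsextderivq} drops out of the same computation. What the paper's version buys is a frame-free part $(ii)$ that reuses structural facts serving elsewhere in the text. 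Part $(iii)$ is identical in both: apply $(ii)$ to the real form $*q\in\Omega^{1}(\Lambda\wedge\Lambda^{(1)})$ and use $(*q)^{1,0}=*q^{1,0}=-iq^{1,0}$.
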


Before we proceed to the proof of the lemma, observe that a section
$\xi$ of $\Lambda \wedge\Lambda ^{(1)}$ is a section of $\Lambda
\wedge \Lambda ^{1,0}$, or, equivalently, $\overline{\xi}$ is a
section of $\Lambda \wedge \Lambda ^{0,1}$, if and only if
$\xi(\sigma _{z})=0$.

Now we proceed to the proof of Lemma \ref{withvswithoutdecomps}.
\begin{proof}
To prove $i)$, we prove, equivalently, that, if
$d^{\mathcal{D}}q=0$, then $q_{\delta_{\bar{z}}}\sigma_{z}^{z}=0$.
First, if $d^{\mathcal{D}}q=0$, then, in particular,
$d^{\mathcal{D}}q\,(\delta_{z},\delta_{\bar{z}})\,\sigma^{z}=0$, or,
equivalently, $$\mathcal{D}_{\delta_{z}}(q_{\delta
_{\bar{z}}}\sigma^{z})-q_{\delta_{\bar{z}}}(\mathcal{D}_{\delta_{z}}\sigma^{z})-\mathcal{D}_{\delta_{\bar{z}}}(q_{\delta_{z}}\sigma^{z})+q_{\delta_{z}}(\mathcal{D}_{\delta_{\bar{z}}}\sigma^{z})=0,$$
establishing
\begin{equation}\label{eq:qsigmazdeltabarzigualqsigmabarzdeltaz}
q_{\delta_{z}}\sigma^{z}_{\bar{z}}=q_{\delta_{\bar{z}}}\sigma_{z}^{z}.
\end{equation}
In its turn,
 $d^{\mathcal{D}}q\,(\delta_{z},\delta_{\bar{z}})\,\sigma_{z}^{z}=0$ implies
$$\mathcal{D}_{\delta_{z}}(q_{\delta
_{\bar{z}}}\sigma_{z}^{z})-q_{\delta_{\bar{z}}}(\pi_{S}(\sigma^{z}
_{zz}))-\mathcal{D}_{\delta_{\bar{z}}}(q_{\delta_{z}}\sigma_{z}^{z})+q_{\delta_{z}}
\sigma_{z\bar{z}}^{z}=0,$$or, equivalently,
$$\mathcal{D}_{\delta_{z}}(q_{\delta
_{\bar{z}}}\sigma_{z}^{z})-\mathcal{D}_{\delta_{\bar{z}}}(q_{\delta_{z}}\sigma_{z}^{z})+q_{\delta_{z}}
\sigma_{z\bar{z}}^{z}=0.$$ On the other hand, the orthogonality
relations between $\sigma^{z}$, $\sigma_{z}^{z}$,
$\sigma_{\bar{z}}^{z}$ and $\sigma_{z\bar{z}}^{z}$ show that
\begin{equation}\label{eq:que}
q\sigma_{z\bar{z}}^{z}=\mu\sigma_{z}+\eta\sigma_{\bar{z}}^{z},
\end{equation}
for some $\mu,\eta\in\Omega^{1}(\underline{\C})$. Hence
$$\mathcal{D}_{\delta_{z}}(q_{\delta
_{\bar{z}}}\sigma_{z}^{z})+\mu_{\delta
_{z}}\sigma_{z}^{z}=\mathcal{D}_{\delta_{\bar{z}}}(q_{\delta_{z}}
\sigma_{z}^{z})-\eta_{\delta_{z}}\sigma_{\bar{z}}^{z}.$$ It is
obvious that a section of $\Lambda\wedge\Lambda ^{(1)}$ transforms
sections of $\Lambda^{(1)}$ into sections of $\Lambda$, so that, in
particular, both $q_{\delta _{\bar{z}}}\sigma_{z}^{z}$ and
$q_{\delta_{z}} \sigma_{z}^{z}$ are sections of $\Lambda$. By
\eqref{eq:mathcalDijpreservesLambdaij}, we conclude that
$\mathcal{D}_{\delta_{z}}(q_{\delta
_{\bar{z}}}\sigma_{z}^{z})+\mu_{\delta _{z}}\sigma_{z}^{z}$ is a
section of $\Lambda^{1,0}\cap\Lambda^{0,1}=\Lambda$. Write
\begin{equation}\label{eq:ri}
q_{\delta _{\bar{z}}}\sigma_{z}^{z}=\lambda\sigma^{z},
\end{equation}
with $\lambda\in\Gamma(\underline{\C})$. Then $\lambda_{z}\sigma^{z}
+(\lambda +\mu_{\delta _{z}})\sigma_{z}^{z}=\gamma\sigma^{z}$, for
some $\gamma\in\Gamma (\underline{\C})$. In particular,
$\lambda=-\mu_{\delta _{z}}$. Equation \eqref{eq:que} shows, on the
other hand, that $q=-2\mu\,
\sigma^{z}\wedge\sigma^{z}_{z}-2\eta\,\sigma^{z}\wedge\sigma^{z}_{\bar{z}}$
and, consequently, equations
\eqref{eq:qsigmazdeltabarzigualqsigmabarzdeltaz} and \eqref{eq:ri}
together give $\lambda=\mu_{\delta_{z}}$, completing the proof of
$i)$.

Next we prove $ii)$. In view of
\eqref{eq:curlyD1oLambdaijblablalbla}, and following $i)$, we have
$$d^{\mathcal{D}}q^{1,0}\in\Omega ^{2}(\Lambda\wedge\Lambda
^{0,1}),\,\,\,\,\,d^{\mathcal{D}}q^{0,1}\in\Omega ^{2}(\Lambda
\wedge\Lambda ^{1,0}).$$ Hence, as $\Lambda \wedge\Lambda ^{1,0}$
and $\Lambda \wedge\Lambda ^{0,1}$ are complementary in
$\Lambda\wedge\Lambda^{(1)}$, $d^{\mathcal{D}}q=0$ forces
$d^{\mathcal{D}}q^{1,0}$ and $d^{\mathcal{D}}q^{0,1}$ to vanish
separately, $d^{\mathcal{D}}q^{1,0}=0=d^{\mathcal{D}}q^{0,1}$. On
the other hand, as $q$ is real,
$$d^{\mathcal{D}}q^{0,1}(\delta_{z},\delta_{\bar{z}})=-\overline{d^{\mathcal{D}}q^{1,0}(\delta_{z},\delta_{\bar{z}})},$$
so that $d^{\mathcal{D}}q^{1,0}$ vanishes if and only if
$d^{\mathcal{D}}q^{0,1}$ does. In particular, if
$d^{\mathcal{D}}q^{1,0}$ vanishes, then, obviously, so does
$d^{\mathcal{D}}q=d^{\mathcal{D}}q^{1,0}+d^{\mathcal{D}}q^{0,1}$.

As for $iii)$, it is immediate from $ii)$, as
$*q=-iq^{1,0}+iq^{0,1}\in\Omega^{1}(\Lambda\wedge\Lambda^{(1)})$ is
real, as well as $q$.
\end{proof}

The $1-1$ correspondence given by \eqref{eq:correspondenec between
qzand qsharp} establishes, in particular, a correspondence between
holomorphic quadratic differentials and real forms
$q\in\Omega^{1}(\Lambda\wedge\Lambda^{(1)})$ with
$d^{\mathcal{D}}q=0$.

Lastly, we proceed to the proof of Theorem \ref{CWeq}.

\begin{proof}
Let $(\Lambda_{t})_{t}$ be a variation of $\Lambda$ through
immersions of $M$ into the projectivized light-cone and
$\dot{\Lambda}\in\Gamma(\mathrm{Hom}(\Lambda,\Lambda^{\perp}/\Lambda))$
be the corresponding variational vector field, cf.
\eqref{eq:dotLambda}. The variation $(\Lambda_{t})_{t}$ is
infinitesimally conformal if and only if
$$\frac{d}{dt}_{\mid_{t=0}} (d(\Lambda_{t})(\delta
_{\bar{z}}),d(\Lambda_{t})(\delta _{\bar{z}}))=0,$$ or,
equivalently,
$$(\dot{\Lambda}_{\bar{z}},\Lambda_{\bar{z}})=0,$$
noting that $\frac{d}{dt}_{\mid_{t=0}}$ and $d_{\delta_{\bar{z}}}$
commute, as $z$ is independent of $t$. Write
$$\dot{\Lambda}=d\Lambda(X)+\nu$$ with $X\in\Gamma(TM)$ and
$\nu\in\Gamma(\mathrm{Hom}(\Lambda,S^{\perp}))$ as defined in
Section \ref{subsec:willmeq}.  Let $\pi_{T_{\Lambda}}$ and
$\pi_{N_{\Lambda}}$ denote the orthogonal projections of
$\Lambda^{*}T\mathbb{P}(\mathcal{L})=T_{\Lambda}\oplus N_{\Lambda}$
onto $T_{\Lambda}$ and $N_{\Lambda}$, respectively. By equation
\eqref{eq:isometryvsconnectionsandsff},
$$\dot{\Lambda}_{\bar{z}}=d\Lambda(\bar{\partial}_{\delta_{\bar{z}}}X)+\pi_{N_{\Lambda}}(d\Lambda(X))_{\bar{z}}+\pi_{T_{\Lambda}}(\nu_{\bar{z}})+\pi_{N_{\Lambda}}(\nu_{\bar{z}}),$$
and, therefore,
$(\dot{\Lambda}_{\bar{z}},\Lambda_{\bar{z}})=(d\Lambda(\bar{\partial}_{\delta_{\bar{z}}}X),
\Lambda_{\bar{z}})-( A^{\nu}(\delta_{\bar{z}}),\Lambda_{\bar{z}}).$
Let $X^{1,0}$ be the projection of $X$ onto $T^{1,0}M$. By the
isotropy of $T^{0,1}M$ and in view of
\eqref{eq:TijMpreservedbyconnection}, it follows that
$(\Lambda_{t})_{t}$
 is infinitesimally conformal if and only if
\begin{equation}\label{eq:nnabg4bn0a3viARE}
(d\Lambda(\bar{\partial}_{\delta_{\bar{z}}}X^{1,0})-A_{0}^{\nu}(\delta_{\bar{z}}),\Lambda_{\bar{z}})=0,
\end{equation}
for $A_{0}^{\nu}:=A^{\nu}-H^{\nu}I$, the trace-free part of the
shape operator $A^{\nu}$. Equation \eqref{eq:secffvsshapeop} shows
that, as the second fundamental form is symmetric, so is the shape
operator,
$$(A^{\nu}X,Y)=(X,A^{\nu}Y),$$for all $X,Y\in\Gamma(TM)$; and,
therefore, so is as well the trace-free part of the shape operator.
It follows that $$A^{\nu}_{0}J=-JA^{\nu}_{0},$$showing that
$A^{\nu}_{0}$ intertwines the eigenspaces of $J$. In view of the
isotropy of $T^{1,0}M$, we conclude that equation
\eqref{eq:nnabg4bn0a3viARE} holds if and only if
$(d\Lambda(\bar{\partial}_{\delta_{\bar{z}}}X^{1,0})-A_{0}^{\nu}(\delta_{\bar{z}}),d\Lambda
(Y))=0$, for all $Y\in\Gamma(TM)$, or, equivalently,
$$d\Lambda(\bar{\partial}X^{1,0})=(A_{0}^{\nu})^{0,1}.$$
According to \eqref{eq:curlyN10SperpinLambda01},
$\mathcal{N}^{0,1}\nu\in\Omega^{0,1}(\mathrm{Hom}(\Lambda,\Lambda^{1,0}))=\Omega^{0,1}(\mathrm{Hom}(\Lambda,d\Lambda(T^{1,0}M)/\Lambda)).$
Thus
$(\mathcal{N}_{\delta_{\bar{z}}}\nu,\Lambda_{\bar{z}})=(-A^{\nu}_{0}(\delta_{\bar{z}}),\Lambda_{\bar{z}}).$
We conclude that infinitesimally conformal variations through
immersions are characterized by the equation
$$d\Lambda(\bar{\partial}X^{1,0})=-\mathcal{N}^{0,1}\nu;$$
or, equivalently,
$$\mathcal{A}\nu\in\Omega^{2}( \mathrm{Im}\,\bar{\partial}),$$
for $\mathcal{A}:=-d\Lambda^{-1}\circ\mathcal{N}^{0,1}$.

Having said so, recall equation \eqref{eq:dotWvsdotLambda},
establishing
$$\dot{W}=\ll d^{\mathcal{D}}*\mathcal{N},\dot{\Lambda}\gg=\ll d^{\mathcal{D}}*\mathcal{N},\nu\gg$$ for $\ll\,,\,\gg$ the non-degenerate pairing between $2$-forms over $M$ and normal variations
defined in \cite{christoph}. We conclude that $\Lambda$ is
constrained Willmore if and only
$d^{\mathcal{D}}*\mathcal{N}\perp_{\ll\,,\,\gg}
\{\nu:\mathcal{A}\nu\in\Omega^{2}(
\mathrm{Im}\,\bar{\partial})\}$.\footnote{The Willmore surface
equation follows immediately as the particular case where no
constraint is considered.} The reference to the pairing shall be
omitted from now on.

From this point, the proof of the theorem consists of a
straightforward generalization to $n$-space of the argument
presented in \cite{christoph2} for the particular case of $n=3$.
Basically, in \cite{christoph2}, it is presented a pairing between
the space of quadratic differentials and the space of
$J$-anti-commuting endomorphisms of $TM$, with respect to which, by
Weyl's lemma,
\begin{equation}\label{eq:IM=perp}
\mathrm{Im}\,\bar{\partial}=(H^{0}K)^{\perp}.
\end{equation}
In view of equation \eqref{eq:IM=perp}, $\Lambda$ is constrained
Willmore if and only if $\nu\in(\mathcal{A}^{*}H^{0}(K))^{\perp}$,
or, equivalently,
$$d^{\mathcal{D}}*\mathcal{N}\in\Omega^{2}((\mathcal{A}^{*}H^{0}(K))^{\perp\perp}).$$
As $M$ is compact,  $H^{0}(K)$ is finite dimensional and, therefore,
$$(\mathcal{A}^{*}H^{0}(K))^{\perp\perp}=\mathcal{A}^{*}H^{0}(K).$$
We conclude that the surface $\Lambda$ is constrained Willmore if
and only if there exists some homomorphic quadratic differential $q$
such that $d^{\mathcal{D}}*\mathcal{N}=\mathcal{A}^{*}q$. The final
conclusion follows then, after some computation involving the
pairings mentioned above.
\end{proof}

\section{A constrained Willmore surface equation on the Hopf
differential and the Schwarzian derivative}\label{cW+kc}

\markboth{\tiny{A. C. QUINTINO}}{\tiny{CONSTRAINED WILLMORE
SURFACES}}

Theorem \ref{CWeq} consists of a reformulation of the
characterization of constrained Willmore surfaces in space-forms
established in \cite{SD}, which we deduce in this section.\newline

Let $q\in\Omega^{1}(\Lambda\wedge\Lambda^{(1)})$ be real with
$q^{1,0}\in\Omega^{1,0}(\Lambda\wedge\Lambda^{0,1})$. According to
Remark \ref{remmuyutilcurlyDNqCWeq}, equation \eqref{eq:mainCWeq}
holds if and only if, fixing a holomorphic chart $z$ of $M$,
$$d^{\mathcal{D}}*\mathcal{N}(\delta_{z},\delta_{\bar{z}})\,\sigma^{z}_{z\bar{z}}=2[q\wedge
*\mathcal{N}](\delta_{z},\delta_{\bar{z}})\,\sigma^{z}_{z\bar{z}},$$
or, equivalently,
$$(\mathcal{D}_{\delta_{z}}\circ\mathcal{N}_{\delta_{\bar{z}}}-\mathcal{N}_{\delta_{\bar{z}}}\circ\mathcal{D}_{\delta_{z}}+\overline{\mathcal{D}_{\delta_{z}}\circ\mathcal{N}_{\delta_{\bar{z}}}-\mathcal{N}_{\delta_{\bar{z}}}\circ\mathcal{D}_{\delta_{z}}})\,\sigma^{z}_{z\bar{z}}=2[q_{\delta_{z}},\mathcal{N}_{\delta_{\bar{z}}}]\sigma^{z}_{z\bar{z}}+2\overline{[q_{\delta_{z}},\mathcal{N}_{\delta_{\bar{z}}}]\sigma^{z}_{z\bar{z}}}.$$
On the one hand,
$$\mathcal{D}_{\delta_{z}}\circ\mathcal{N}_{\delta_{\bar{z}}}\sigma^{z}_{z\bar{z}}=\mathcal{D}_{\delta_{z}}\circ\pi_{S^{\perp}}\overline{(\sigma_{zz}^{z})_{\bar{z}}}=\mathcal{D}_{\delta_{z}}\circ\pi_{S^{\perp}}(\,\overline{k^{z}_{\bar{z}}}\,)=\overline{\nabla^{S^{\perp}}_{\delta_{\bar{z}}}\nabla^{S^{\perp}}_{\delta_{\bar{z}}}k^{z}}.$$
On the other hand,
$$\mathcal{N}_{\delta_{\bar{z}}}\circ\mathcal{D}_{\delta_{z}}\sigma_{z\bar{z}}^{z}=\mathcal{N}_{\delta_{\bar{z}}}\circ\pi_{S}(\sigma_{zz}^{z})_{\bar{z}}=
-\frac{1}{2}\,c^{z}\mathcal{N}_{\delta_{\bar{z}}}\sigma_{\bar{z}}^{z}+\mathcal{N}_{\delta_{\bar{z}}}\circ\pi_{S}k^{z}_{\bar{z}}.$$
Note that, as $\sigma_{z}^{z}$ and $\sigma_{z\bar{z}}^{z}$ are
sections of $S$,
$\mathcal{N}_{\delta_{\bar{z}}}\sigma_{z}^{z}=\pi_{S^{\perp}}(d_{\delta_{\bar{z}}}\sigma_{z}^{z})=0$.
Together with \eqref{eq:calNLambda0}, this establishes
$\mathcal{N}^{0,1}\Lambda^{1,0}=0$. By
\eqref{eq:piSkbarzinLambda10}, it follows that
$\mathcal{N}_{\delta_{\bar{z}}}\circ\mathcal{D}_{\delta_{z}}\sigma_{z\bar{z}}^{z}=-\frac{1}{2}\,c^{z}\overline{k^{z}}$.
Lastly,
$$[q_{\delta_{z}},\mathcal{N}_{\delta_{\bar{z}}}]\sigma^{z}_{z\bar{z}}=-\mathcal{N}_{\delta_{\bar{z}}}q_{\delta_{z}}\sigma^{z}_{z\bar{z}}=\frac{1}{2}\,q^{z}\mathcal{N}_{\delta_{\bar{z}}}\sigma_{\bar{z}}^{z}=\frac{1}{2}\,q^{z}\,\overline{\pi_{S^{\perp}}\sigma_{zz}^{z}}=\frac{1}{2}\,q^{z}\,\overline{k^{z}}.$$
We conclude that equation \eqref{eq:mainCWeq} holds if and only if
$$\mathrm{Re}(\nabla^{S^{\perp}}_{\delta_{\bar{z}}}\nabla^{S^{\perp}}_{\delta_{\bar{z}}}k^{z}+\frac{1}{2}\,\overline{c^{z}}k^{z})=\mathrm{Re}(\overline{q^{z}}\,k^{z}).$$
Equation (33b) in \cite{SD}, establishing the reality of
$\nabla^{S^{\perp}}_{\delta_{\bar{z}}}\nabla^{S^{\perp}}_{\delta_{\bar{z}}}k^{z}+\frac{1}{2}\,\overline{c^{z}}k^{z}$,
leads us to the following constrained Willmore surface equation in
terms of the Hopf differential and the Schwarzian derivative,
presented in \cite{SD}:\footnote{The argument above shows that the
constrained Willmore surface equation on a holomorphic quadratic
differential, presented in \cite{SD} with no reference to a
multiplier, is equivalent to equation \eqref{eq:mainCWeq}, under the
correspondence given by \eqref{eq:correspondenec between qzand
qsharp}.}

\begin{Lemma}\label{ckCWeq} Let
$q$ be a real $1$-form with values in $\Lambda\wedge\Lambda^{(1)}$
with $q^{1,0}\in\Omega^{1,0}(\Lambda\wedge\Lambda^{0,1})$. $\Lambda$
is a $q$-constrained Willmore surface if and only if, fixing a
holomorphic chart $z$ of $M$, $q^{z}$ is holomorphic and
\begin{equation}\label{CWviacharts}
\nabla^{S^{\perp}}_{\delta_{\bar{z}}}\nabla^{S^{\perp}}_{\delta_{\bar{z}}}k^{z}+\frac{\overline{c^{z}}}{2}\,k^{z}=\mathrm{Re}\,(\overline{q^{z}}\,k^{z}).
\end{equation}
\end{Lemma}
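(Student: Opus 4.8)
The plan is to read the statement off Theorem \ref{CWeq} by translating its two coordinate-free conditions into the chart $z$. By Theorem \ref{CWeq}, $\Lambda$ is a $q$-constrained Willmore surface exactly when the real form $q\in\Omega^{1}(\Lambda\wedge\Lambda^{(1)})$ satisfies $d^{\mathcal{D}}q=0$ together with \eqref{eq:mainCWeq}. The first condition is disposed of immediately: since $q^{1,0}\in\Omega^{1,0}(\Lambda\wedge\Lambda^{0,1})$ by hypothesis, Lemma \ref{holomvsextderivq} identifies $d^{\mathcal{D}}q=0$ with the holomorphicity of the associated quadratic differential, that is, with $q^{z}_{\bar{z}}=0$. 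Thus everything reduces to showing that, under this hypothesis, \eqref{eq:mainCWeq} is equivalent to \eqref{CWviacharts}.

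To unpack \eqref{eq:mainCWeq} I would first invoke Remark \ref{remmuyutilcurlyDNqCWeq}: both $d^{\mathcal{D}}*\mathcal{N}$ and $2\,[q\wedge *\mathcal{N}]$ take values in $\Lambda\wedge S^{\perp}$, vanish on $\Lambda^{(1)}$, and are hence determined by their action on any single $u\in S\setminus\Lambda^{\perp}$. Choosing $u=\sigma^{z}_{z\bar{z}}$ for the normalized section $\sigma^{z}$, and substituting $*\mathcal{N}=-\mathcal{N}J$ split into its $(1,0)$- and $(0,1)$-parts, the equation \eqref{eq:mainCWeq} becomes a single identity of the form (expression) $+$ (its complex conjugate) $=$ (another expression) $+$ (its complex conjugate), evaluated on $\sigma^{z}_{z\bar{z}}$.

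The heart of the argument is then the evaluation of the three building blocks $\mathcal{D}_{\delta_{z}}\mathcal{N}_{\delta_{\bar{z}}}\sigma^{z}_{z\bar{z}}$, $\mathcal{N}_{\delta_{\bar{z}}}\mathcal{D}_{\delta_{z}}\sigma^{z}_{z\bar{z}}$ and $[q_{\delta_{z}},\mathcal{N}_{\delta_{\bar{z}}}]\sigma^{z}_{z\bar{z}}$. Using the definition $k^{z}=\pi_{S^{\perp}}(\sigma^{z}_{zz})$, the normalization \eqref{eq:sigmazzck} for $\sigma^{z}_{zz}$ (which is where the Schwarzian $c^{z}$ enters), the vanishing relation $\mathcal{N}^{0,1}\Lambda^{1,0}=0$, and the fact \eqref{eq:piSkbarzinLambda10} that $\pi_{S}k^{z}_{\bar{z}}\in\Gamma(\Lambda^{1,0})$ (so that $q$ annihilates it), I expect these three terms to evaluate to $\overline{\nabla^{S^{\perp}}_{\delta_{\bar{z}}}\nabla^{S^{\perp}}_{\delta_{\bar{z}}}k^{z}}$, $-\tfrac{1}{2}\,c^{z}\,\overline{k^{z}}$, and $\tfrac{1}{2}\,q^{z}\,\overline{k^{z}}$ respectively. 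Collecting them and accounting for the common factor of $2$ on both sides, \eqref{eq:mainCWeq} becomes $\mathrm{Re}\bigl(\nabla^{S^{\perp}}_{\delta_{\bar{z}}}\nabla^{S^{\perp}}_{\delta_{\bar{z}}}k^{z}+\tfrac{1}{2}\,\overline{c^{z}}\,k^{z}\bigr)=\mathrm{Re}\bigl(\overline{q^{z}}\,k^{z}\bigr)$.

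Finally, the main obstacle is the passage from this real-part identity to the full equation \eqref{CWviacharts}: one needs that $\nabla^{S^{\perp}}_{\delta_{\bar{z}}}\nabla^{S^{\perp}}_{\delta_{\bar{z}}}k^{z}+\tfrac{1}{2}\,\overline{c^{z}}\,k^{z}$ is itself a real section, so that the $\mathrm{Re}$ on the left is redundant. This reality is the genuinely non-trivial input; rather than re-derive it, I would import it from equation (33b) of \cite{SD}, where it follows from the Codazzi-type integrability already encoded in the structure equations relating $k^{z}$ and $c^{z}$. With that in hand the $\mathrm{Re}$ may be dropped on the left, giving exactly \eqref{CWviacharts}; and since every step is an equivalence, reading the computation backwards recovers \eqref{eq:mainCWeq} from \eqref{CWviacharts}. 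Hence the two conditions of Theorem \ref{CWeq} are matched precisely by holomorphicity of $q^{z}$ together with \eqref{CWviacharts}, completing the proof.
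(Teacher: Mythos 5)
Your proposal is correct and follows essentially the same route as the paper: Theorem \ref{CWeq} plus Lemma \ref{holomvsextderivq} to dispose of $d^{\mathcal{D}}q=0$, Remark \ref{remmuyutilcurlyDNqCWeq} to reduce \eqref{eq:mainCWeq} to its evaluation on $\sigma^{z}_{z\bar{z}}$, the same three-term computation with the same values, and the same appeal to equation (33b) of \cite{SD} for the reality of $\nabla^{S^{\perp}}_{\delta_{\bar{z}}}\nabla^{S^{\perp}}_{\delta_{\bar{z}}}k^{z}+\tfrac{1}{2}\,\overline{c^{z}}\,k^{z}$. The only quibble is the parenthetical claiming $q$ annihilates $\pi_{S}k^{z}_{\bar{z}}$: in fact it is $\mathcal{N}^{0,1}$ that kills this term via $\mathcal{N}^{0,1}\Lambda^{1,0}=0$ (which you also cite), while the role of $q$-annihilation is $qS^{\perp}=0$ in the bracket term.
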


The surface $\Lambda$ is constrained Willmore if and only there
exists a holomorphic map $q^{z}\in C^{\infty}(M,\C)$ satisfying
equation \eqref{CWviacharts}, in which case, the quadratic
differential defined locally by $q^{z}dz^{2}$ is, under the
correspondence given by \eqref{eq:correspondenec between qzand
qsharp}, a multiplier to $\Lambda$.

\chapter{Constrained Willmore
surfaces: spectral deformation and B\"{a}cklund
transformation}\label{transformsofCW}

\markboth{\tiny{A. C. QUINTINO}}{\tiny{CONSTRAINED WILLMORE
SURFACES}}

In \cite{burstall+calderbank}, F. Burstall and D. Calderbank present
a characterization of constrained Willmore surfaces in spherical
space in terms of the flatness of a certain loop of metric
connections. In view of this characterization, we characterize
constrained Willmore surfaces in space-forms in terms of the
\textit{constrained} \textit{harmonicity} of the central sphere
congruence, generalizing the characterization of Willmore surfaces
in space-forms in terms of the harmonicity of the central sphere
congruence.  This will enable us to define a spectral deformation of
constrained Willmore surfaces in space-forms, by the action of the
loop of flat metric connections above, as well as a
\textit{B\"{a}cklund transformation}, by applying a dressing action.
Both B\"{a}cklund transformation and spectral deformation
corresponding to the zero multiplier preserve the class of Willmore
surfaces. We establish a permutability between spectral deformation
and B\"{a}cklund transformation of constrained Willmore surfaces in
space-forms.\newline

Let $\tilde{d}$ be a flat metric connection on
$\underline{\C}^{n+2}$. It is obvious, but, nevertheless, opportune
to remark that $\tilde{d}$ defines a connection on
$\underline{\R}^{n+1,1}$, i.e., $\tilde{d}$ preserves the reality of
sections of $\underline{\C}^{n+2}=(\underline{\R}^{n+1,1})^{\C}$, if
and only if $\tilde{d}$ is real,
$$\overline{\tilde{d}}=\tilde{d}.$$Equally basic is to remark that,
if $\hat{d}_{1}$ and $\hat{d}_{2}$ are real connections on
$\underline{\C}^{n+2}$, then an isomorphism
$\phi:(\underline{\C}^{n+2},\hat{d}_{1})\rightarrow(\underline{\C}^{n+2},\hat{d}_{2})$
is real, $$\overline{\phi}=\phi,$$ if and only if it defines an
isomorphism
$\phi:(\underline{\R}^{n+1,},\hat{d}_{1})\rightarrow(\underline{\R}^{n+1,1},\hat{d}_{2})$.
Note also that the real bundles in $\underline{\C}^{n+2}$ - those
preserved by complex conjugation - are the complexifications of
bundles in $\underline{\R}^{n+1,1}$: given
$W\subset\underline{\C}^{n+2}$ real,
$$W=W\cap \underline{\R}^{n+1,1}\oplus W\cap
i\underline{\R}^{n+1,1}=(W\cap \underline{\R}^{n+1,1})^{\C}.$$

Throughout this chapter, let $V$ be a non-degenerate subbundle of
$\underline{\C}^{n+2}$, $$\underline{\C}^{n+2}=V\oplus V^{\perp},$$
and $\pi_{V}$ and $\pi_{V^{\perp}}$ denote the orthogonal
projections of $\underline{\C}^{n+2}$ onto $V$ and $V^{\perp}$,
respectively.

Fix a conformal structure $\mathcal{C}$ in $M$.

\section{Constrained harmonicity of bundles}\label{sec:cchb}

\markboth{\tiny{A. C. QUINTINO}}{\tiny{CONSTRAINED WILLMORE
SURFACES}}

A multiplier to a surface $\Lambda$ in the projectivized light-cone
is, in particular, a real form
$q\in\Omega^{1}(\Lambda\wedge\Lambda^{(1)})$. For such a $q$,
equations \eqref{eq:curlyDextderivofq} and \eqref{eq:mainCWeq},
together, encode the flatness of the connection
$d^{\lambda}_{q}:=\mathcal{D}+\lambda^{-1}\mathcal{N}^{1,0}+\lambda\mathcal{N}^{0,1}+(\lambda^{-2}-1)\,q^{1,0}+(\lambda^{2}-1)\,q^{0,1}$,
on $(\underline{\R}^{n+1,1})^{\C}$, for all
$\lambda\in\C\backslash\{0\}$, or, equivalently, for all $\lambda\in
S^{1}$. Constrained Willmore surfaces in space-forms, admitting $q$
as a multiplier, are characterized by the flatness of the
$S^{1}$-family of metric connections $d^{\lambda}_{q}$ on
$\underline{\R}^{n+1,1}$, in an integrable systems interpretation
due to F. Burstall and D. Calderbank \cite{burstall+calderbank}.
This characterization will enable us to define a spectral
deformation of constrained Willmore surfaces in space-forms, by the
action of the loop of flat metric connections $d^{\lambda}_{q}$, as
well as a \textit{B\"{a}cklund transformation}, by applying a
dressing action. Our transformations of constrained Willmore
surfaces will be based on the \textit{constrained harmonicity} of
the central sphere congruence. Given $\hat{d}$ a flat metric
connection on $\underline{\C}^{n+2}$ and $V$ a non-degenerate
subbundle of $\underline{\C}^{n+2}$, we generalize naturally the
decomposition \eqref{eq:dcurlyDcurlyN} to a decomposition
$\hat{d}=\hat{\mathcal{D}}_{V}+\hat{\mathcal{N}}_{V}$ and, given
$q\in\Omega^{1}(\wedge^{2}V\oplus\wedge^{2}V^{\perp})$, define then,
for each $\lambda\in\C\backslash\{0\}$, a connection
$\hat{d}^{\lambda,q}_{V}:=\hat{\mathcal{D}}_{V}+\lambda^{-1}\hat{\mathcal{N}}_{V}^{1,0}+\lambda\hat{\mathcal{N}}_{V}^{0,1}+(\lambda^{-2}-1)q^{1,0}+(\lambda^{2}-1)q^{0,1}$,
on $\underline{\C}^{n+2}$, generalizing
$d^{\lambda}_{q}=d^{\lambda,q}_{S}$. We define $V$ to be
$(q,\hat{d})$-\textit{constrained harmonic} if
$\hat{d}^{\lambda,q}_{V}$ is flat, for all
$\lambda\in\C\backslash\{0\}$, or, equivalently, for all $\lambda\in
S^{1}$. A simple, yet crucial, observation is that, given
$\tilde{d}$ another flat metric connection on $\underline{\C}^{n+2}$
and $\phi: (\underline{\C}^{n+2}, \tilde{d})\rightarrow
(\underline{\C}^{n+2},\hat{d})$ an isomorphism of bundles provided
with a metric and a connection, $V$ is $(q,\tilde{d})$-constrained
harmonic if and only if $\phi V$ is
$(\mathrm{Ad}_{\phi}q,\hat{d})$-constrained harmonic.  The
constrained harmonicity of a bundle applies to the central sphere
congruence, providing a characterization of constrained Willmore
surfaces in space-forms.\newline

Consider the decomposition
$$\tilde{d}=\mathcal{D}_{V}^{\tilde{d}}+\mathcal{N}_{V}^{\tilde{d}}$$ for $\mathcal{D}_{V}^{\tilde{d}}$ the connection on $\underline{\C}^{n+2}$ defined by
$$\mathcal{D}_{V}^{\tilde{d}}:=\pi
_{V}\circ \tilde{d} \circ\pi _{V}+\pi _{V^{\perp}}\circ \tilde{d}
\circ\pi _{V^{\perp}}.$$ Note that, as $\tilde{d}$ is a metric
connection, so is $\mathcal{D}_{V}^{\tilde{d}}$: given
$\eta,\mu\in\Gamma(\underline{\C}^{n+2})$,
\begin{eqnarray*}
d(\eta,\mu)&=&d(\pi_{V}\eta,\pi_{V}\mu)+d(\pi_{V^{\perp}}\eta,\pi_{V^{\perp}}\mu)\\&=&
(\tilde{d}(\pi_{V}\eta),\pi_{V}\mu)+(\pi_{V}\eta,\tilde{d}(\pi_{V}\mu))+(\tilde{d}(\pi_{V^{\perp}}\eta),\pi_{V^{\perp}}\mu)+(\pi_{V^{\perp}}\eta,\tilde{d}(\pi_{V^{\perp}}\mu))\\&=&
(\mathcal{D}_{V}^{\tilde{d}}\eta,\mu)+(\eta,\mathcal{D}_{V}^{\tilde{d}}\mu).
\end{eqnarray*}
Equivalently,
$$\mathcal{N}_{V}^{\tilde{d}}:=d-\mathcal{D}_{V}^{\tilde{d}}=\pi
_{V^{\perp}}\circ \tilde{d} \circ\pi _{V}+\pi _{V}\circ \tilde{d}
\circ\pi _{V^{\perp}}$$ is skew-symmetric,
$$\mathcal{N}_{V}^{\tilde{d}}\in\Omega^{1}(V\wedge V^{\perp}).$$ In
the particular case $\tilde{d}=d$, the trivial flat connection, we
shall omit the reference to $\tilde{d}$.

Next we present the concept of \textit{constrained}
\textit{harmonicity} of a bundle, which will apply to the central
sphere congruence to provide a characterization of constrained
Willmore surfaces, in view of the characterization of these surfaces
in terms of the flatness of a certain loop of metric connections
presented in \cite{burstall+calderbank}, which we shall address
later on. In fact, the following definition encodes the
characterization of constrained harmonicity for a general bundle in
terms of the flatness of a loop of metric connections generalizing
the one above, as we shall verify later on.

Let $q$ be a $1$-form with values in $\wedge^{2}V\oplus
\wedge^{2}V^{\perp}\subset o(\underline{\C}^{n+2})$.
\begin{defn}\label{cccchadddafhh}
$V$ is said to be \emph{$(q,\tilde{d})$-constrained harmonic}
if\newline

$i)$\,\,$d^{\mathcal{D}_{V}^{\tilde{d}}}q^{1,0}=\frac{1}{2}\,[q\wedge
q]=d^{\mathcal{D}_{V}^{\tilde{d}}}q^{0,1};$\newline

$ii)$\,\,$d^{\mathcal{D}_{V}^{\tilde{d}}}*\mathcal{N}_{V}^{\tilde{d}}=2\,[q\wedge*\mathcal{N}_{V}^{\tilde{d}}].$
\end{defn}

By $\tilde{d}$-\textit{constrained} \textit{harmonicity} of $V$ we
mean the existence of a $\textit{multiplier}$ to $V$ with respect to
$\tilde{d}$, i.e., a $1$-form $q$ with values in $\wedge^{2}V\oplus
\wedge^{2}V^{\perp}$ for which $V$ is $(q,\tilde{d})$-constrained
harmonic. In the particular case of $\tilde{d}=d$, we shall,
alternatively, omit the reference to $\tilde{d}$ and refer simply to
\textit{constrained} \textit{harmonicity} or, when specifying
$q\in\Omega^{1}(\wedge^{2}V\oplus \wedge^{2}V^{\perp})$, to
$q$-\textit{constrained} \textit{harmonicity}. We shall refer to
$(0,\tilde{d})$-constrained harmonic bundles, alternatively, as
$\tilde{d}$-\textit{harmonic} \textit{bundles}.

It is useful to observe that, as $\mathcal{N}^{\tilde{d}}_{V}$ and
$q$ take values in $V\wedge V^{\perp}$ and $\wedge^{2}V\oplus
\wedge^{2}V^{\perp}$, respectively, and $V$ and $V^{\perp}$ are
$\mathcal{D}^{\tilde{d}}_{V}$-parallel,
\begin{equation}\label{eq:cdfghyjuklguyeifrojhgfdxmnbvcxcghyrd5tr3276y348iu549549886}
d^{\mathcal{D}^{\tilde{d}}_{V}}*\mathcal{N}^{\tilde{d}}_{V},\,
[q\wedge *\,\mathcal{N}^{\tilde{d}}_{V}]\in\Omega^{2}(V\wedge
V^{\perp}),
\end{equation}
whereas
\begin{equation}\label{eq:11111111qwertyuiopmjsj55s}
d^{\mathcal{D}^{\tilde{d}}_{V}}q^{1,0},
d^{\mathcal{D}^{\tilde{d}}_{V}}q^{0,1},[q\wedge
q]\in\Omega^{2}(\wedge^{2}V\oplus \wedge^{2}V^{\perp}).
\end{equation}
The same argument establishes both $R^{\mathcal{D}^{\tilde{d}}_{V}}$
and $[\mathcal{N}^{\tilde{d}}_{V}\wedge\mathcal{N}^{\tilde{d}}_{V}]$
as $2$-forms with values in $\Omega^{2}(\wedge^{2}V\oplus
\wedge^{2}V^{\perp})$ and
$d^{\mathcal{D}^{\tilde{d}}_{V}}\mathcal{N}^{\tilde{d}}_{V}$ as a
$2$-form with values in $\Omega^{2}(V\wedge V^{\perp})$. We conclude
that the flatness of $\tilde{d}$ encodes both

\begin{prop}(Gauss-Ricci equation)
$$R^{\mathcal{D}^{\tilde{d}}_{V}}+\frac{1}{2}\,[\mathcal{N}^{\tilde{d}}_{V}\wedge
\mathcal{N}^{\tilde{d}}_{V}]=0;$$
\end{prop}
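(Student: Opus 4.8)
The plan is to derive the Gauss-Ricci equation by decomposing the flatness condition $R^{\tilde{d}}=0$ according to the splitting \eqref{eq:PoplusM} of $o(\underline{\C}^{n+2})$, exactly mirroring the argument given earlier for the trivial connection $d$ in Section \ref{subsec:grc}. Since $\tilde{d}=\mathcal{D}^{\tilde{d}}_{V}+\mathcal{N}^{\tilde{d}}_{V}$ with $\mathcal{D}^{\tilde{d}}_{V}$ a metric connection (verified immediately above) and $\mathcal{N}^{\tilde{d}}_{V}\in\Omega^{1}(V\wedge V^{\perp})$, I would invoke the curvature-comparison formula \eqref{eq:curvtens}, which gives
\begin{equation}\label{eq:GRproof}
0=R^{\tilde{d}}=R^{\mathcal{D}^{\tilde{d}}_{V}}+d^{\mathcal{D}^{\tilde{d}}_{V}}\mathcal{N}^{\tilde{d}}_{V}+\frac{1}{2}\,[\mathcal{N}^{\tilde{d}}_{V}\wedge\mathcal{N}^{\tilde{d}}_{V}].
\end{equation}

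The key step is then the type analysis already recorded in the excerpt: because $\mathcal{D}^{\tilde{d}}_{V}$ is metric and preserves both $V$ and $V^{\perp}$, its curvature $R^{\mathcal{D}^{\tilde{d}}_{V}}$ lies in $\Omega^{2}(\wedge^{2}V\oplus\wedge^{2}V^{\perp})$; the same parallelness forces $d^{\mathcal{D}^{\tilde{d}}_{V}}\mathcal{N}^{\tilde{d}}_{V}\in\Omega^{2}(V\wedge V^{\perp})$; and by the bracket identity \eqref{eq:wegdebrack} the term $[\mathcal{N}^{\tilde{d}}_{V}\wedge\mathcal{N}^{\tilde{d}}_{V}]$ again takes values in $\wedge^{2}V\oplus\wedge^{2}V^{\perp}$. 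This is precisely the computation asserted in the sentence immediately preceding the proposition, so I would simply cite it.

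Finally, I would project \eqref{eq:GRproof} onto the two complementary summands of the decomposition \eqref{eq:PoplusM}. The component lying in $V\wedge V^{\perp}$ yields $d^{\mathcal{D}^{\tilde{d}}_{V}}\mathcal{N}^{\tilde{d}}_{V}=0$ (the Codazzi equation, presumably the companion proposition to follow), while the component in $\wedge^{2}V\oplus\wedge^{2}V^{\perp}$ yields exactly the claimed identity
$$R^{\mathcal{D}^{\tilde{d}}_{V}}+\frac{1}{2}\,[\mathcal{N}^{\tilde{d}}_{V}\wedge\mathcal{N}^{\tilde{d}}_{V}]=0,$$
since both surviving terms live in that summand. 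I anticipate no genuine obstacle here: the entire content is bookkeeping with the type decomposition \eqref{eq:PoplusM}, and every ingredient — the metricity of $\mathcal{D}^{\tilde{d}}_{V}$, the location of $\mathcal{N}^{\tilde{d}}_{V}$, and formulas \eqref{eq:curvtens} and \eqref{eq:wegdebrack} — has already been established. The only point demanding a little care is confirming that the three terms distribute cleanly across the two summands with no cross terms, which is guaranteed by the directness of the sum \eqref{eq:PoplusM}.
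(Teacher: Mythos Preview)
Your proposal is correct and matches the paper's own argument essentially verbatim: the paper simply observes (in the sentence immediately preceding the proposition) that the type analysis used for the trivial connection in Section~\ref{subsec:grc} carries over unchanged, so that the flatness of $\tilde{d}$, expanded via \eqref{eq:curvtens}, splits along \eqref{eq:PoplusM} into the Gauss--Ricci and Codazzi equations. There is nothing to add.
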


and

\begin{prop}(Codazzi equation)
$$d^{\mathcal{D}^{\tilde{d}}_{V}}\mathcal{N}^{\tilde{d}}_{V}=0.$$
\end{prop}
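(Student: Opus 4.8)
The plan is to read off the Codazzi equation (together with its Gauss-Ricci companion) from the single identity that expresses the curvature of $\tilde{d}$ in terms of the curvature of $\mathcal{D}^{\tilde{d}}_V$ and the $1$-form $\mathcal{N}^{\tilde{d}}_V$, and then to separate components according to the orthogonal decomposition \eqref{eq:PoplusM}. Since $\tilde{d} = \mathcal{D}^{\tilde{d}}_V + \mathcal{N}^{\tilde{d}}_V$, equation \eqref{eq:curvtens} applied with $\nabla^2 = \mathcal{D}^{\tilde{d}}_V$ and $A = \mathcal{N}^{\tilde{d}}_V$ gives $R^{\tilde{d}} = R^{\mathcal{D}^{\tilde{d}}_V} + d^{\mathcal{D}^{\tilde{d}}_V}\mathcal{N}^{\tilde{d}}_V + \frac{1}{2}[\mathcal{N}^{\tilde{d}}_V \wedge \mathcal{N}^{\tilde{d}}_V]$. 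The hypothesis that $\tilde{d}$ is flat forces $R^{\tilde{d}} = 0$, so the three terms on the right must sum to zero.

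The key step is to locate each of these three terms inside the splitting $o(\underline{\C}^{n+2}) = \wedge^{2}V \oplus \wedge^{2}V^{\perp} \oplus V \wedge V^{\perp}$ of \eqref{eq:PoplusM}, and this is exactly the bookkeeping carried out in the paragraph preceding the statement. Because $\mathcal{D}^{\tilde{d}}_V$ is a metric connection with respect to which both $V$ and $V^{\perp}$ are parallel, its curvature $R^{\mathcal{D}^{\tilde{d}}_V}$ takes values in $\wedge^{2}V \oplus \wedge^{2}V^{\perp}$; because $\mathcal{N}^{\tilde{d}}_V \in \Omega^{1}(V \wedge V^{\perp})$ and $\mathcal{D}^{\tilde{d}}_V$ preserves $V \wedge V^{\perp}$, the term $d^{\mathcal{D}^{\tilde{d}}_V}\mathcal{N}^{\tilde{d}}_V$ takes values in $V \wedge V^{\perp}$; and the bracket formula \eqref{eq:wegdebrack} shows that $[\mathcal{N}^{\tilde{d}}_V \wedge \mathcal{N}^{\tilde{d}}_V]$, being built from brackets of sections of $V \wedge V^{\perp}$, lands in $\wedge^{2}V \oplus \wedge^{2}V^{\perp}$.

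With this in place I would conclude by projecting the identity $0 = R^{\mathcal{D}^{\tilde{d}}_V} + d^{\mathcal{D}^{\tilde{d}}_V}\mathcal{N}^{\tilde{d}}_V + \frac{1}{2}[\mathcal{N}^{\tilde{d}}_V \wedge \mathcal{N}^{\tilde{d}}_V]$ onto the two complementary summands of \eqref{eq:PoplusM}: the $\wedge^{2}V \oplus \wedge^{2}V^{\perp}$-component gives the Gauss-Ricci equation $R^{\mathcal{D}^{\tilde{d}}_V} + \frac{1}{2}[\mathcal{N}^{\tilde{d}}_V \wedge \mathcal{N}^{\tilde{d}}_V] = 0$, while the $V \wedge V^{\perp}$-component gives precisely the Codazzi equation $d^{\mathcal{D}^{\tilde{d}}_V}\mathcal{N}^{\tilde{d}}_V = 0$ to be proved. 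There is no genuine obstacle: this is the verbatim analogue of the proof of the Gauss-Ricci and Codazzi equations for the central sphere congruence in Section \ref{subsec:grc}, the only point needing care being the verification — already supplied above — that the exterior-derivative term and the curvature-plus-bracket term occupy complementary pieces of the decomposition, so that vanishing of their sum forces each to vanish separately.
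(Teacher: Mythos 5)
Your proposal is correct and follows essentially the same route as the paper: the paper likewise writes $0=R^{\tilde{d}}=R^{\mathcal{D}^{\tilde{d}}_{V}}+d^{\mathcal{D}^{\tilde{d}}_{V}}\mathcal{N}^{\tilde{d}}_{V}+\frac{1}{2}[\mathcal{N}^{\tilde{d}}_{V}\wedge\mathcal{N}^{\tilde{d}}_{V}]$, observes that $R^{\mathcal{D}^{\tilde{d}}_{V}}$ and $[\mathcal{N}^{\tilde{d}}_{V}\wedge\mathcal{N}^{\tilde{d}}_{V}]$ take values in $\wedge^{2}V\oplus\wedge^{2}V^{\perp}$ while $d^{\mathcal{D}^{\tilde{d}}_{V}}\mathcal{N}^{\tilde{d}}_{V}$ takes values in $V\wedge V^{\perp}$, and then splits along \eqref{eq:PoplusM} exactly as in the central sphere congruence case of Section \ref{subsec:grc}. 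Nothing is missing.
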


According to the Codazzi equation,
$$d^{\mathcal{D}^{\tilde{d}}_{V}}(\mathcal{N}^{\tilde{d}}_{V})^{1,0}=-d^{\mathcal{D}^{\tilde{d}}_{V}}(\mathcal{N}^{\tilde{d}}_{V})^{0,1},$$
making clear that:

\begin{prop}\label{complharm}
The $\tilde{d}$-harmonicity of $V$ is characterized, equivalently,
by any of the following equations:\newline

$i)$\,\,$d^{\mathcal
{D}_{V}^{\tilde{d}}}*\mathcal{N}_{V}^{\tilde{d}}=0;$\newline

$ii)$\,\,$d^{\mathcal
{D}_{V}^{\tilde{d}}}(\mathcal{N}_{V}^{\tilde{d}})^{1,0}=0;$\newline

$iii)$\,\,$d^{\mathcal
{D}_{V}^{\tilde{d}}}(\mathcal{N}_{V}^{\tilde{d}})^{0,1}=0.$

\end{prop}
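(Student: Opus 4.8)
The plan is to show that the three conditions in Proposition \ref{complharm} are pairwise equivalent by exploiting the Codazzi equation together with the relation between $*$ and the $(1,0)$-$(0,1)$ decomposition of $1$-forms. The central identity I would invoke is that, on a surface, a $1$-form $\xi$ with values in any bundle satisfies $*\xi = -i(\xi^{1,0} - \xi^{0,1})$, equivalently $\xi^{1,0} = \tfrac{1}{2}(\xi + i*\xi)$ and $\xi^{0,1} = \tfrac{1}{2}(\xi - i*\xi)$, exactly as established in the discussion preceding equation \eqref{eq:*vsJ} and in the paragraph on $(1,0)$-, $(0,1)$- and $(1,1)$-forms. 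Applying this to $\xi = \mathcal{N}_{V}^{\tilde{d}}$ and noting that $d^{\mathcal{D}_{V}^{\tilde{d}}}$ commutes with taking the $(1,0)$- and $(0,1)$-parts only up to the Codazzi constraint, I would rewrite $*\mathcal{N}_{V}^{\tilde{d}}$ in terms of $(\mathcal{N}_{V}^{\tilde{d}})^{1,0}$ and $(\mathcal{N}_{V}^{\tilde{d}})^{0,1}$.

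The key computational step, already flagged in the excerpt immediately before the proposition, is the Codazzi-derived identity
$$d^{\mathcal{D}_{V}^{\tilde{d}}}(\mathcal{N}_{V}^{\tilde{d}})^{1,0} = -d^{\mathcal{D}_{V}^{\tilde{d}}}(\mathcal{N}_{V}^{\tilde{d}})^{0,1},$$
which follows from $d^{\mathcal{D}_{V}^{\tilde{d}}}\mathcal{N}_{V}^{\tilde{d}} = 0$ by splitting $\mathcal{N}_{V}^{\tilde{d}} = (\mathcal{N}_{V}^{\tilde{d}})^{1,0} + (\mathcal{N}_{V}^{\tilde{d}})^{0,1}$ and using that there are no nonzero $(2,0)$- or $(0,2)$-forms on a surface (so the exterior derivatives of the two pieces are both $(1,1)$-forms and add to zero). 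This immediately gives the equivalence of conditions $ii)$ and $iii)$. This mirrors precisely equation \eqref{eq:codcurlyN} in the Willmore case.

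From there I would close the loop by relating $i)$ to $ii)$ and $iii)$. Using $*\mathcal{N}_{V}^{\tilde{d}} = -i((\mathcal{N}_{V}^{\tilde{d}})^{1,0} - (\mathcal{N}_{V}^{\tilde{d}})^{0,1})$, I would compute
$$d^{\mathcal{D}_{V}^{\tilde{d}}}*\mathcal{N}_{V}^{\tilde{d}} = -i\left(d^{\mathcal{D}_{V}^{\tilde{d}}}(\mathcal{N}_{V}^{\tilde{d}})^{1,0} - d^{\mathcal{D}_{V}^{\tilde{d}}}(\mathcal{N}_{V}^{\tilde{d}})^{0,1}\right) = -2i\,d^{\mathcal{D}_{V}^{\tilde{d}}}(\mathcal{N}_{V}^{\tilde{d}})^{1,0},$$
the last equality again using the Codazzi identity. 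Hence $d^{\mathcal{D}_{V}^{\tilde{d}}}*\mathcal{N}_{V}^{\tilde{d}}$ vanishes if and only if $d^{\mathcal{D}_{V}^{\tilde{d}}}(\mathcal{N}_{V}^{\tilde{d}})^{1,0}$ does, and symmetrically for the $(0,1)$-part, establishing $i)\Leftrightarrow ii)\Leftrightarrow iii)$.

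I do not anticipate a serious obstacle here: the proposition is the direct analogue of Theorem \ref{Willmoreeq} for the general bundle $V$ and connection $\tilde{d}$, and every ingredient (the Hodge-star formula for $(1,0)$/$(0,1)$-forms, the Codazzi equation just proved, the absence of $(2,0)$- and $(0,2)$-forms on a surface) is already in place. The only point requiring mild care is confirming that $d^{\mathcal{D}_{V}^{\tilde{d}}}$ of a $(p,q)$-form lands where expected, but since $\mathcal{D}_{V}^{\tilde{d}}$ is a genuine connection on a $2$-dimensional base this is immediate. The proof is therefore essentially a transcription of the Willmore argument with $\mathcal{D}, \mathcal{N}$ replaced by $\mathcal{D}_{V}^{\tilde{d}}, \mathcal{N}_{V}^{\tilde{d}}$, and I would present it in two or three short lines invoking the Codazzi equation directly.
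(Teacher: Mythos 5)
Your proof is correct and follows essentially the same route as the paper: the paper likewise derives $d^{\mathcal{D}^{\tilde{d}}_{V}}(\mathcal{N}^{\tilde{d}}_{V})^{1,0}=-d^{\mathcal{D}^{\tilde{d}}_{V}}(\mathcal{N}^{\tilde{d}}_{V})^{0,1}$ from the Codazzi equation and reads the three equivalences off this identity together with $*\xi=-i(\xi^{1,0}-\xi^{0,1})$, exactly as in the Willmore case \eqref{eq:codcurlyN}. No gaps; your write-up is simply a more explicit transcription of the paper's one-line argument.
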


Define, for each $\lambda\in\C\backslash \{0\}$, a metric connection
on $\underline{\C}^{n+2}$ by
$$\tilde{d}^{\lambda ,q}_{V}:=\mathcal{D}_{V}^{\tilde{d}}+\lambda ^{-1}(\mathcal{N}_{V}^{\tilde{d}})^{1,0}+\lambda(\mathcal{N}_{V}^{\tilde{d}})^{0,1}+(\lambda ^{-2}-1)q^{1,0}+(\lambda ^{2}-1)q^{0,1}.$$
In the particular case of $q=0$, we shall omit the reference to $q$
in $\tilde{d}^{\lambda ,q}_{V}$.

In the particular case $\tilde{d}=d$ and $V=S$, the complexification
of the central sphere congruence of a surface $\Lambda$, we shall,
alternatively, omit the reference to $S$ and refer to
$\tilde{d}^{\lambda ,q}_{V}$ as $d^{\lambda}_{q}$,
$$d^{\lambda}_{q}:=\mathcal{D}+\lambda
^{-1}\mathcal{N}^{1,0}+\lambda\mathcal{N}^{0,1}+(\lambda
^{-2}-1)q^{1,0}+(\lambda ^{2}-1)q^{0,1}.$$ Of course, the set of
connections $d^{\lambda}{q}$, with $\lambda\in S^{1}$, defines a
group, using multiplication of parameters to define the group law.
According to \cite{burstall+calderbank}, if $q$ is a real form
taking values in $\Lambda\wedge\Lambda^{(1)}$, then the flatness of
the loop of metric connections $d^{\lambda}_{q}$ characterizes
$\Lambda$ as a $q$-constrained Willmore surface.

Before proceeding any further, we dedicate a few moments to some
useful general remarks about the family of connections
$\tilde{d}^{\lambda ,q}_{V}$. First note that
$$\tilde{d}^{1,q}_{V}=\tilde{d}.$$
Note, on the other hand, that, in view of the
$\mathcal{D}_{V}^{\tilde{d}}$-parallelness of $V$ and $V^{\perp}$,
together with the intertwining of $V$ and $V^{\perp}$ by
$\mathcal{N}_{V}^{\tilde{d}}$, and the fact that $q$ preserves $V$
and $V^{\perp}$, we have
$$\mathcal{D}^{\tilde{d}^{\lambda
,q}_{V}}_{V}=\mathcal{D}^{\tilde{d}}_{V}+(\lambda
^{-2}-1)q^{1,0}+(\lambda^{2}-1)q^{0,1},\,\,\,\,
\mathcal{N}_{V}^{\tilde{d}^{\lambda
,q}_{V}}=\lambda^{-1}(\mathcal{N}_{V}^{\tilde{d}})^{1,0}+\lambda(\mathcal{N}_{V}^{\tilde{d}})^{0,1}.$$
Set $$q_{\lambda}:=\lambda ^{-2}q^{1,0}+\lambda^{2}q^{0,1}.$$Then
\begin{eqnarray*}
(\tilde{d}^{\lambda
,q}_{V})^{\mu,q_{\lambda}}_{V}&=&\mathcal{D}^{\tilde{d}^{\lambda
,q}_{V}}_{V}+\mu^{-1}(\mathcal{N}^{\tilde{d}^{\lambda
,q}_{V}}_{V})^{1,0}+\mu(\mathcal{N}^{\tilde{d}^{\lambda
,q}_{V}}_{V})^{0,1}+(\mu^{-2}-1)q^{1,0}_{\lambda}+(\mu^{2}-1)q^{0,1}_{\lambda}\\&=&
\mathcal{D}^{\tilde{d}}_{V}+(\lambda\mu)^{-1}(\mathcal{N}^{\tilde{d}}_{V})^{1,0}+
\lambda\mu(\mathcal{N}^{\tilde{d}}_{V})^{0,1}+((\lambda\mu)^{-2}-1)q^{1,0}+((\lambda\mu)^{2}-1)q^{0,1}
\end{eqnarray*}
and, ultimately,
\begin{equation}\label{eq:conndeconn}
(\tilde{d}^{\lambda
,q}_{V})^{\mu,q_{\lambda}}_{V}=\tilde{d}^{\lambda\mu ,q}_{V}.
\end{equation}
It will also be useful to observe that, given another flat metric
connection $\hat{d}$ on $\underline{\C}^{n+2}$ and an isomorphism
$\psi:(\underline{\C}^{n+2}, \hat{d})\rightarrow
(\underline{\C}^{n+2}, \tilde{d})$ of bundles preserving
connections, we have
\begin{equation}\label{eq:isomV}
\mathcal{D}_{\psi
V}^{\tilde{d}}=\psi\circ\mathcal{D}_{V}^{\hat{d}}\circ\psi^{-1},\,\,\,\,\,\,\mathcal{N}_{\psi
V}^{\tilde{d}}=\psi\,\mathcal{N}_{V}^{\hat{d}}\,\psi^{-1}
\end{equation}
and, therefore,
\begin{equation}\label{eq:isomVd}
\tilde{d}_{\psi
V}^{\lambda,q}=\psi\circ\hat{d}\,^{\lambda,\mathrm{Ad}_{\psi^{-1}}q}_{V}\circ\psi^{-1}.
\end{equation}

\begin{rem}\label{s1familyofconnections}

If $V$ is real, then, obviously, so is $V^{\perp}$, so that, in
particular, $\overline{\pi_{V}}=\pi
_{V},\,\,\,\overline{\pi_{V^{\perp}}}=\pi _{V^{\perp}}$ and
therefore, if $\tilde{d}$ is real, then
$\overline{\mathcal{D}_{V}^{\tilde{d}}}=\mathcal{D}_{V}^{\tilde{d}},\,\,\,\,\,\,\overline{\mathcal{N}_{V}^{\tilde{d}}}=\mathcal{N}_{V}^{\tilde{d}}$.
If $V$, $q$ and $\tilde{d}$ are real, then so is $\tilde{d}^{\lambda
,q}_{V}$, for all $\lambda\in S^{1}$.
\end{rem}

The characterization of the harmonicity of $V$, as a map into a
Grassmannian, in terms of the flatness of the $S^{1}$-family of
metric connections $d^{\lambda}_{V}$, cf. K. Uhlenbeck
\cite{uhlenbeck 89}, generalizes to a characterization of
$\tilde{d}$-constrained harmonicity, as follows (see also Remark
\ref{CoS1}):

\begin{thm}\label{thm5.2}
$V$ is $(q,\tilde{d})$-constrained harmonic if and only if
$\tilde{d}^{\lambda ,q}_{V}$ is a flat connection, for each
$\lambda\in\C\backslash \{0\}$.
\end{thm}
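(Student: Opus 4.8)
The plan is to compute the curvature tensor $R^{\tilde{d}^{\lambda,q}_{V}}$ as a Laurent polynomial in $\lambda$ and show that the vanishing of all its coefficients, as $\lambda$ ranges over $\C\backslash\{0\}$, is equivalent to the two conditions in Definition \ref{cccchadddafhh}. This is exactly the strategy already used to prove Theorem \ref{willmoreiffdlambdaflat} (the $q=0$, $V=S$ case), and the present statement is its natural generalization; the work lies in bookkeeping the extra $q$-terms.

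First I would write $\tilde{d}^{\lambda,q}_{V}=\mathcal{D}^{\tilde{d}}_{V}+A_{\lambda}$, where
$$A_{\lambda}=\lambda^{-1}(\mathcal{N}_{V}^{\tilde{d}})^{1,0}+\lambda(\mathcal{N}_{V}^{\tilde{d}})^{0,1}+(\lambda^{-2}-1)q^{1,0}+(\lambda^{2}-1)q^{0,1},$$
and apply formula \eqref{eq:curvtens}, namely $R^{\tilde{d}^{\lambda,q}_{V}}=R^{\mathcal{D}^{\tilde{d}}_{V}}+d^{\mathcal{D}^{\tilde{d}}_{V}}A_{\lambda}+\frac{1}{2}[A_{\lambda}\wedge A_{\lambda}]$. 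Since $d^{\mathcal{D}^{\tilde{d}}_{V}}$ is complex-linear in its coefficient and there are no nonzero $(2,0)$- or $(0,2)$-forms on a surface, the middle term expands cleanly. For the bracket term I would use bilinearity together with \eqref{eq:ijwedgeij0}, which kills brackets of two $(1,0)$-forms or two $(0,1)$-forms, so that only mixed-type pairings survive. Collecting powers of $\lambda$ gives a finite Laurent expansion; the overall constant ($\lambda^{0}$) coefficient must reproduce $R^{\tilde{d}}=0$ (the flatness hypothesis on $\tilde{d}=\tilde{d}^{1,q}_{V}$), which I would use to eliminate the $q$-independent pieces via the Gauss-Ricci and Codazzi equations stated just above the theorem.

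The key identities to deploy are \eqref{eq:starseliebracks} (so that $*\mathcal{N}$-brackets convert to $\mathcal{N}^{1,0}$/$\mathcal{N}^{0,1}$ statements) and the Codazzi consequence $d^{\mathcal{D}^{\tilde{d}}_{V}}(\mathcal{N}^{\tilde{d}}_{V})^{1,0}=-d^{\mathcal{D}^{\tilde{d}}_{V}}(\mathcal{N}^{\tilde{d}}_{V})^{0,1}$. After substituting Gauss-Ricci and Codazzi, the surviving coefficients should organize into exactly two independent tensorial expressions: one collecting the $\lambda^{\pm 2}$ (and the purely-$q$) terms, whose vanishing is condition $i)$, $d^{\mathcal{D}^{\tilde{d}}_{V}}q^{1,0}=\frac{1}{2}[q\wedge q]=d^{\mathcal{D}^{\tilde{d}}_{V}}q^{0,1}$; and one collecting the $\lambda^{\pm 1}$ terms (the $\mathcal{N}$-$q$ cross brackets together with $d^{\mathcal{D}^{\tilde{d}}_{V}}\mathcal{N}^{\tilde{d}}_{V}$), whose vanishing, after rewriting via $*$, is condition $ii)$, $d^{\mathcal{D}^{\tilde{d}}_{V}}*\mathcal{N}_{V}^{\tilde{d}}=2[q\wedge *\mathcal{N}_{V}^{\tilde{d}}]$. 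Conversely, assuming $i)$ and $ii)$, all coefficients vanish, so $\tilde{d}^{\lambda,q}_{V}$ is flat for every $\lambda$. The equivalence ``for all $\lambda\in\C\backslash\{0\}$ iff for all $\lambda\in S^{1}$'' is automatic since a Laurent polynomial in $\lambda$ vanishing on the circle vanishes identically.

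The main obstacle I anticipate is not conceptual but organizational: correctly separating the cross-terms $[(\mathcal{N}^{\tilde{d}}_{V})^{1,0}\wedge q^{0,1}]$, $[(\mathcal{N}^{\tilde{d}}_{V})^{0,1}\wedge q^{1,0}]$ and their $*$-counterparts so that they assemble into the single bracket $[q\wedge *\mathcal{N}^{\tilde{d}}_{V}]$ with the right factor of $2$, and verifying that the $q$-only brackets $[q^{1,0}\wedge q^{0,1}]$ combine with the $d^{\mathcal{D}^{\tilde{d}}_{V}}q^{1,0}$ terms to give precisely $\tfrac{1}{2}[q\wedge q]$ rather than some multiple of it. I would pin down these constants by evaluating the $2$-forms on the frame $(Z,\overline{Z})$ for a local never-zero $Z\in\Gamma(T^{1,0}M)$, exactly as in the proofs of \eqref{eq:starseliebracks} and Theorem \ref{willmoreiffdlambdaflat}, which reduces each bracket to an ordinary Lie bracket of endomorphisms and makes the coefficient-matching transparent.
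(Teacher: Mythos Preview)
Your proposal is correct and follows essentially the same route as the paper: write $\tilde{d}^{\lambda,q}_{V}=\mathcal{D}^{\tilde{d}}_{V}+A_{\lambda}$, apply \eqref{eq:curvtens}, kill the $(2,0)$/$(0,2)$ brackets via \eqref{eq:ijwedgeij0}, substitute Gauss--Ricci and Codazzi, and then read off the conditions by powers of $\lambda$. The one organizational point worth noting is that the paper separates the two conditions not only by $\lambda$-degree but by invoking \eqref{eq:cdfghyjuklguyeifrojhgfdxmnbvcxcghyrd5tr3276y348iu549549886} and \eqref{eq:11111111qwertyuiopmjsj55s}, i.e.\ the fact that the $\mathcal{N}$-terms land in $V\wedge V^{\perp}$ while the $q$-terms land in $\wedge^{2}V\oplus\wedge^{2}V^{\perp}$; this bundle splitting is what guarantees the two groups of coefficients vanish independently, and it makes the constant-matching you were worried about automatic rather than something to be checked by hand on a frame.
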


The proof of the theorem will consist of showing that the
$(q,\tilde{d})$-constrained harmonicity of $V$ establishes and, in
fact, encodes, in view of the flatness of $\tilde{d}$ (Gauss-Ricci
and Codazzi equations), the flatness of $\tilde{d}^{\lambda,q}_{V}$,
for all $\lambda\in\C\backslash\{0\}$.
\begin{proof}
For simplicity, write $\tilde{\mathcal{D}}_{V}$ and
$\tilde{\mathcal{N}}_{V}$ for $\mathcal{D}_{V}^{\tilde{d}}$ and
$\mathcal{N}_{V}^{\tilde{d}}$. The curvature tensor of
$\tilde{d}^{\lambda ,q}_{V}$ is given by
$$R^{\tilde{d}^{\lambda ,q}_{V}}=R^{\tilde{\mathcal{D}}_{V}}+d^{\tilde{\mathcal{D}}_{V}}\delta
^{\lambda}_{q}+\frac{1}{2}\,[\delta ^{\lambda}_{q}\wedge\delta
^{\lambda}_{q}],$$ for $\delta ^{\lambda}_{q}:=\tilde{d}^{\lambda
,q}_{V}-\tilde{\mathcal{D}}_{V}$. Since there are no non-zero
$(2,0)$- or $(0,2)$-forms over a surface,
$$\frac{1}{2}[\delta ^{\lambda}_{q}\wedge\delta
^{\lambda}_{q}]=[\tilde{\mathcal{N}}_{V}^{1,0}\wedge
\tilde{\mathcal{N}}_{V}^{0,1}]+(\lambda
^{-1}-\lambda)([q^{1,0}\wedge \tilde{\mathcal{N}}_{V}^{0,1} ]-[
q^{0,1}\wedge\tilde{\mathcal{N}}_{V}^{1,0}])+(2-\lambda
^{-2}-\lambda ^{2})[q^{1,0}\wedge q^{0,1}],$$ and Gauss-Ricci
equation establishes then
$$R^{\tilde{d}^{\lambda ,q}_{V}}=d^{\tilde{\mathcal{D}}_{V}}\delta ^{\lambda}_{q}+(\lambda
^{-1}-\lambda)([q^{1,0}\wedge\tilde{\mathcal{N}}_{V}^{0,1}
]-[q^{0,1}\wedge\tilde{\mathcal{N}}_{V}^{1,0}
])+\frac{1}{2}\,(2-\lambda ^{-2}-\lambda ^{2})[q\wedge q].$$In its
turn, Codazzi equation gives
$$d^{\tilde{\mathcal{D}}_{V}}\tilde{\mathcal{N}}_{V}^{1,0}=\frac{i}{2}\,d^{\tilde{\mathcal{D}}_{V}}*\tilde{\mathcal{N}}_{V}=-d^{\tilde{\mathcal{D}}_{V}}\tilde{\mathcal{N}}_{V}^{0,1},$$
We conclude that
\begin{eqnarray*}
R^{\tilde{d}^{\lambda ,q}_{V}}&=&\frac{\lambda
^{-1}-\lambda}{2}\,i\,(d^{\tilde{\mathcal{D}}_{V}}*\tilde{\mathcal{N}}_{V}-2[q\wedge
*\tilde{\mathcal{N}}_{V}])\\
&& \mbox{}+(\lambda
^{-2}-1)\,d^{\tilde{\mathcal{D}}_{V}}q^{1,0}+(\lambda
^{2}-1)\,d^{\tilde{\mathcal{D}}_{V}}q^{0,1}+\frac{1}{2}\,(2-\lambda
^{-2}-\lambda ^{2})\,[q\wedge q].
\end{eqnarray*}
By
\eqref{eq:cdfghyjuklguyeifrojhgfdxmnbvcxcghyrd5tr3276y348iu549549886}
and \eqref{eq:11111111qwertyuiopmjsj55s}, it follows that
$R^{\tilde{d}^{\lambda ,q}_{V}}=0$ if and only if both
\begin{equation}\label{eq:CWeqlambda1}
\frac{\lambda
^{-1}-\lambda}{2}\,i\,(d^{\tilde{\mathcal{D}}_{V}}*\tilde{\mathcal{N}}_{V}-2[q\wedge
*\tilde{\mathcal{N}}_{V}])=0
\end{equation}
and
\begin{equation}\label{eq:CWeqlambda2}
(\lambda ^{-2}-1)\,d^{\tilde{\mathcal{D}}_{V}}q^{1,0}+(\lambda
^{2}-1)\,d^{\tilde{\mathcal{D}}_{V}}q^{0,1}+\frac{1}{2}\,(2-\lambda
^{-2}-\lambda ^{2})\,[q\wedge q]=0
\end{equation}
hold. Organizing equations \eqref{eq:CWeqlambda1} and
\eqref{eq:CWeqlambda2} by powers of $\lambda$ leads us to the final
conclusion of the equivalence between the flatness of
$\tilde{d}^{\lambda ,q}_{V}$ for all $\lambda\in \C\backslash \{0\}$
and the $(q,\tilde{d})$-constrained complex-harmonicity of $V$.
\end{proof}

\begin{rem}\label{CoS1}
From the proof of Theorem \ref{thm5.2}, we readily verify that the
$(q,\tilde{d})$-constrained harmonicity of $V$ is equivalently
characterized by the flatness of $\tilde{d}^{\lambda ,q}_{V}$ for
all $\lambda\in S^{1}$.
\end{rem}

Theorem \ref{thm5.2} will play a crucial role in what follows in the
chapter.

\subsection{Spectral deformation of constrained harmonic
bundles}

Having observed that the $(q,\tilde{d})$-constrained harmonicity of
$V$ ensures the flatness of the metric connection
$\tilde{d}^{\lambda ,q}_{V}$ on $\underline{\C}^{n+2}$, it is
natural to ask about the $\tilde{d}^{\lambda ,q}_{V}$-constrained
harmonicity of $V$.

\begin{thm}\label{th5.0.4}
If  $V$ is $(q,\tilde{d})$-constrained harmonic then $V$ is
$(q_{\lambda},\tilde{d}^{\lambda ,q}_{V})$-constrained harmonic, for
all $\lambda\in\C\backslash\{0\}$.
\end{thm}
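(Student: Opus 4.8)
The plan is to deduce this entirely from the spectral characterization of constrained harmonicity (Theorem \ref{thm5.2}) together with the composition identity \eqref{eq:conndeconn}, so that essentially no new computation is required. First I would observe that, since $V$ is $(q,\tilde{d})$-constrained harmonic, Theorem \ref{thm5.2} guarantees that $\tilde{d}^{\nu,q}_{V}$ is a flat connection for \emph{every} $\nu\in\C\backslash\{0\}$. In particular, for any fixed $\lambda\in\C\backslash\{0\}$, the connection $\hat{d}:=\tilde{d}^{\lambda,q}_{V}$ is itself a flat metric connection on $\underline{\C}^{n+2}$, so it is a legitimate base connection to which the whole formalism of Section \ref{sec:cchb} applies.

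Next I would check that $q_{\lambda}=\lambda^{-2}q^{1,0}+\lambda^{2}q^{0,1}$ is an admissible multiplier relative to $\hat{d}$ and $V$, i.e. that it is a $1$-form with values in $\wedge^{2}V\oplus\wedge^{2}V^{\perp}$. This is immediate: $q$ takes values in $\wedge^{2}V\oplus\wedge^{2}V^{\perp}$, and forming the $(1,0)$- and $(0,1)$-parts and scaling by the constants $\lambda^{-2}$, $\lambda^{2}$ does not alter the values. Moreover $(q_{\lambda})^{1,0}=\lambda^{-2}q^{1,0}$ and $(q_{\lambda})^{0,1}=\lambda^{2}q^{0,1}$, which is exactly the data entering the definition of the deformed connection $\hat{d}^{\mu,q_{\lambda}}_{V}$.

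Then I would invoke Theorem \ref{thm5.2} a second time, now with base connection $\hat{d}=\tilde{d}^{\lambda,q}_{V}$ and multiplier $q_{\lambda}$: the bundle $V$ is $(q_{\lambda},\tilde{d}^{\lambda,q}_{V})$-constrained harmonic if and only if $(\tilde{d}^{\lambda,q}_{V})^{\mu,q_{\lambda}}_{V}$ is flat for every $\mu\in\C\backslash\{0\}$. By the composition identity \eqref{eq:conndeconn} we have
$$(\tilde{d}^{\lambda,q}_{V})^{\mu,q_{\lambda}}_{V}=\tilde{d}^{\lambda\mu,q}_{V},$$
and since $\lambda\neq 0$ is fixed, as $\mu$ ranges over $\C\backslash\{0\}$ the product $\lambda\mu$ ranges over all of $\C\backslash\{0\}$. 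By the first step every $\tilde{d}^{\lambda\mu,q}_{V}$ is flat, hence every $(\tilde{d}^{\lambda,q}_{V})^{\mu,q_{\lambda}}_{V}$ is flat, and the second application of Theorem \ref{thm5.2} yields the conclusion.

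The argument is almost purely formal, so I do not expect any genuine analytic obstacle; the only point demanding care is the bookkeeping that Theorem \ref{thm5.2} is applied with the correct base connection and multiplier, and that the decompositions $\mathcal{D}^{\tilde{d}^{\lambda,q}_{V}}_{V}$, $\mathcal{N}^{\tilde{d}^{\lambda,q}_{V}}_{V}$ feeding into \eqref{eq:conndeconn} are precisely the ones already computed in the derivation of that identity. Once that is confirmed, the double application of the spectral characterization closes the proof.
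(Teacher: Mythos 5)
Your proof is correct and is precisely the paper's argument: the paper proves this theorem by citing Theorem \ref{thm5.2} together with the composition identity \eqref{eq:conndeconn}, exactly the two ingredients you combine. Your write-up simply makes explicit the bookkeeping (flatness of the new base connection, admissibility of $q_{\lambda}$, surjectivity of $\mu\mapsto\lambda\mu$) that the paper leaves as ``immediate.''
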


\begin{proof}
It is immediate from Theorem \ref{thm5.2} and equation
\eqref{eq:conndeconn}.
\end{proof}

\begin{corol}\label{cor5.0.1}
If $V$ is $\tilde{d}$-harmonic then $V$ is
$\tilde{d}^{\lambda}_{V}$-harmonic for any $\lambda\in\C\backslash
\{0\}$.
\end{corol}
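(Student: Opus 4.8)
The plan is to obtain the corollary as the specialization $q=0$ of Theorem \ref{th5.0.4}, being careful to unwind the notational conventions set up for the zero multiplier. Recall that, by definition, $V$ being $\tilde{d}$-harmonic means precisely that $V$ is $(0,\tilde{d})$-constrained harmonic; this is the reformulation I would start from, since it puts the hypothesis in the exact form required to feed into Theorem \ref{th5.0.4}.

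Next I would apply Theorem \ref{th5.0.4} with $q=0$. It asserts that, for every $\lambda\in\C\backslash\{0\}$, the bundle $V$ is $(0_{\lambda},\tilde{d}^{\lambda,0}_{V})$-constrained harmonic, where $0_{\lambda}:=\lambda^{-2}\cdot 0+\lambda^{2}\cdot 0=0$. Since the connection $\tilde{d}^{\lambda,0}_{V}$ is, by the convention of omitting the reference to $q$ when $q=0$, exactly $\tilde{d}^{\lambda}_{V}$, this says that $V$ is $(0,\tilde{d}^{\lambda}_{V})$-constrained harmonic, that is, $\tilde{d}^{\lambda}_{V}$-harmonic. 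That is the desired conclusion.

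Alternatively, and equivalently, I could argue directly through the flat-connection characterization: by Theorem \ref{thm5.2} (together with Remark \ref{CoS1}), the $\tilde{d}$-harmonicity of $V$ is the flatness of $\tilde{d}^{\mu}_{V}$ for all $\mu\in\C\backslash\{0\}$, whereas $\tilde{d}^{\lambda}_{V}$-harmonicity is the flatness of $(\tilde{d}^{\lambda}_{V})^{\mu,0}_{V}$ for all $\mu$. Equation \eqref{eq:conndeconn} with $q=0$ gives $(\tilde{d}^{\lambda}_{V})^{\mu,0}_{V}=\tilde{d}^{\lambda\mu}_{V}$, and as $\mu$ ranges over $\C\backslash\{0\}$ so does $\lambda\mu$; hence flatness for all $\mu$ follows at once from the hypothesis.

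There is essentially no real obstacle here: the only thing to watch is the bookkeeping of the conventions, namely that the zero multiplier transforms to the zero multiplier ($0_{\lambda}=0$) and that $\tilde{d}^{\lambda,0}_{V}$ and $\tilde{d}^{\lambda}_{V}$ denote the same connection, so that the general statement of Theorem \ref{th5.0.4} collapses exactly onto the harmonic (rather than merely constrained harmonic) case.
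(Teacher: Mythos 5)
Your proposal is correct and matches the paper exactly: the corollary is obtained there as the specialization $q=0$ of Theorem \ref{th5.0.4}, with the same bookkeeping that $q_{\lambda}=0$ and $\tilde{d}^{\lambda,0}_{V}=\tilde{d}^{\lambda}_{V}$. Your alternative argument via Theorem \ref{thm5.2} and equation \eqref{eq:conndeconn} is likewise just the paper's own proof of Theorem \ref{th5.0.4} unwound in the case $q=0$, so it is the same approach as well.
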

For a general flat metric connection $\hat{d}$ on
$\underline{\C}^{n+2}$, the $\hat{d}$-harmonicity of $V$ follows
from its $\tilde{d}$-harmonicity if $V=\phi\,V$ for some isomorphism
$\phi:(\underline{\C}^{n+2},\tilde{d})\rightarrow
(\underline{\C}^{n+2},\hat{d})$, as established, in particular, in
the following theorem, which constitutes a simple, yet crucial,
result.

\begin{thm}\label{prop5.5}
Let $\hat{d}$ be a flat metric connection on $\underline{\C}^{n+2}$
and $$\phi:(\underline{\C}^{n+2},\tilde{d})\rightarrow
(\underline{\C}^{n+2},\hat{d})$$ be an isomorphism. The bundle $V$
is $(q,\tilde{d})$-constrained harmonic if and only if $\phi\,V$ is
$(\mathrm{Ad}_{\phi}\,q,\hat{d})$-constrained harmonic.
\end{thm}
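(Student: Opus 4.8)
The plan is to reduce the claimed equivalence entirely to the loop-connection characterization of constrained harmonicity in Theorem \ref{thm5.2} together with the conjugation formula \eqref{eq:isomVd}. Before anything else I would check that the statement is even well-posed, i.e.\ that $\mathrm{Ad}_\phi q$ genuinely takes values in $\wedge^2(\phi V)\oplus\wedge^2(\phi V)^\perp$, so that $(\mathrm{Ad}_\phi q,\hat d)$-constrained harmonicity of $\phi V$ makes sense as a notion. Since $\phi$ preserves the metric we have $(\phi V)^\perp=\phi(V^\perp)$, and \eqref{eq:adjwedge} gives $\mathrm{Ad}_\phi(a\wedge b)=\phi a\wedge\phi b$; hence $\mathrm{Ad}_\phi$ carries $\wedge^2 V\oplus\wedge^2 V^\perp$ onto $\wedge^2(\phi V)\oplus\wedge^2(\phi V)^\perp$, exactly as required.

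The heart of the argument is the relation between the two loops of metric connections. I would apply \eqref{eq:isomVd} with the isomorphism $\psi:=\phi^{-1}:(\underline{\C}^{n+2},\hat d)\to(\underline{\C}^{n+2},\tilde d)$ and the bundle $\phi V$, so that $\psi(\phi V)=V$ and $\mathrm{Ad}_{\psi^{-1}}=\mathrm{Ad}_\phi$; this yields
\begin{equation*}
\tilde{d}^{\lambda,q}_{V}=\phi^{-1}\circ\hat{d}^{\lambda,\mathrm{Ad}_\phi q}_{\phi V}\circ\phi
\end{equation*}
for every $\lambda\in\C\backslash\{0\}$. Thus $\tilde{d}^{\lambda,q}_V$ and $\hat{d}^{\lambda,\mathrm{Ad}_\phi q}_{\phi V}$ are conjugate by the fixed bundle isomorphism $\phi$, and their curvature tensors are related by $R^{\tilde{d}^{\lambda,q}_V}=\mathrm{Ad}_{\phi^{-1}}R^{\hat d^{\lambda,\mathrm{Ad}_\phi q}_{\phi V}}$. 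Since $\mathrm{Ad}_{\phi^{-1}}$ is fibrewise invertible, one connection is flat if and only if the other is, and this holds simultaneously for all $\lambda$.

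Finally I would invoke Theorem \ref{thm5.2} in both directions: $V$ is $(q,\tilde d)$-constrained harmonic precisely when $\tilde{d}^{\lambda,q}_V$ is flat for all $\lambda$, and $\phi V$ is $(\mathrm{Ad}_\phi q,\hat d)$-constrained harmonic precisely when $\hat{d}^{\lambda,\mathrm{Ad}_\phi q}_{\phi V}$ is flat for all $\lambda$; by the displayed conjugation these two flatness conditions coincide, giving the equivalence. I do not expect a genuine obstacle here, since the result is a formal consequence of the gauge-theoretic set-up; the only real care needed is bookkeeping. Specifically, one must get the direction of $\phi$ right (hence the choice $\psi=\phi^{-1}$ when citing \eqref{eq:isomVd}), note that $\mathrm{Ad}_\psi$ commutes with the $(1,0)/(0,1)$ type decomposition because it acts on the fibre while the bidegree concerns $TM$, and confirm via \eqref{eq:isomV} that $\phi$ intertwines $\mathcal{D}^{\tilde d}_{\,\cdot}$ and $\mathcal{N}^{\tilde d}_{\,\cdot}$ with their hatted counterparts so that \eqref{eq:isomVd} is indeed applicable.
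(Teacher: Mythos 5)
Your proposal is correct and follows exactly the paper's own route: the paper proves Theorem \ref{prop5.5} by combining Theorem \ref{thm5.2} with the conjugation identity \eqref{eq:isomVd}, which is precisely what you do, with the choice $\psi=\phi^{-1}$ handling the direction of the isomorphism correctly. The extra bookkeeping you supply (well-posedness of $\mathrm{Ad}_{\phi}q$ via \eqref{eq:adjwedge}, and the curvature conjugation $R^{\tilde{d}^{\lambda,q}_{V}}=\mathrm{Ad}_{\phi^{-1}}R^{\hat{d}^{\lambda,\mathrm{Ad}_{\phi}q}_{\phi V}}$) is exactly what the paper leaves implicit in calling the result "immediate."
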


\begin{proof}
It is immediate from theorem \ref{thm5.2} and equation
\eqref{eq:isomVd}.
\end{proof}

Theorem \ref {prop5.5} combines with Theorem \ref {th5.0.4} to
provide the definition, up to isomorphisms of bundles with a metric
and a connection, of a $\C\backslash\{0\}$-deformation of
$\tilde{d}$-constrained harmonic bundles. In fact, if $V$ is
$\tilde{d}$-constrained harmonic with multiplier $q$, then so is the
transformation $\tilde{\phi}_{\lambda}^{q}\,V$ of $\,V$, for
$\tilde{\phi}_{\lambda}^{q}:(\underline{\C}^{n+2},\tilde{d}^{\lambda,q}_{V})\rightarrow
(\underline{\C}^{n+2},\tilde{d})$ an isomorphism and $\lambda$ in
$\C\backslash \{0\}$. Note that this spectral deformation of
$\tilde{d}$-constrained harmonic bundles provides, in particular, a
$\C\backslash\{0\}$-deformation of $\tilde{d}$-harmonic bundles into
$\tilde{d}$-harmonic bundles.

\section{Complexified surfaces}\label{complexifiedsurfacessect}

\markboth{\tiny{A. C. QUINTINO}}{\tiny{CONSTRAINED WILLMORE
SURFACES}}

The transformations of a constrained Willmore surface $\Lambda$ in
the projectivized light-cone we present in this chapter are, in
particular, pairs $((\Lambda^{1,0})^{*},(\Lambda^{0,1})^{*})$ of
transformations $(\Lambda^{1,0})^{*}$ and $(\Lambda^{0,1})^{*}$ of
$\Lambda^{1,0}$ and $\Lambda^{0,1}$, respectively. The fact that
$\Lambda^{1,0}$ and $\Lambda^{0,1}$ intersect in a rank $1$ bundle
will ensure that $(\Lambda^{1,0})^{*}$ and $(\Lambda^{0,1})^{*}$
have the same property. The isotropy of $\Lambda^{1,0}$ and
$\Lambda^{0,1}$ will ensure that of $(\Lambda^{1,0})^{*}$ and
$(\Lambda^{0,1})^{*}$ and, therefore, of their intersection. The
reality of the bundle $\Lambda^{1,0}\cap\Lambda^{0,1}$ and the fact
that it defines an immersion of $M$ into $\mathbb{P}(\mathcal{L})$
are preserved by the spectral deformation, but it is not clear that
the same is necessarily true for B\"{a}cklund transformation. This
motivates us to define \textit{complexified surface}.
\newline

In the case $\tilde{d}$ is real, and given a $\tilde{d}$-surface
$\Lambda$, set
$$\Lambda ^{1,0}_{\tilde{d}}:=\langle\sigma ,\tilde{d}_{\delta
_{z}}\sigma\rangle,\,\,\,\, \,\,\,\Lambda
^{0,1}_{\tilde{d}}:=\langle\sigma ,\tilde{d}_{\delta
_{\bar{z}}}\sigma\rangle=\overline{\Lambda
^{1,0}_{\tilde{d}}},$$independently of the choices of a never-zero
section $\sigma$ of $\Lambda$ and of a holomorphic chart $z$ of
$(M,\mathcal{C}_{\Lambda}^{\tilde{d}})$, defining in this way two
subbundles of the bundle
$$\Lambda^{(1)}_{\tilde{d}}=\langle\sigma ,\tilde{d}_{\delta
_{z}}\sigma ,\tilde{d}_{\delta _{\bar{z}}}\sigma\rangle$$in
$$S^{\tilde{d}}=\langle\sigma ,\tilde{d}_{\delta _{z}}\sigma
,\tilde{d}_{\delta _{\bar{z}}}\sigma ,\tilde{d}_{\delta
_{\bar{z}}}\tilde{d}_{\delta
_{z}}\sigma\rangle\subset\underline{\C}^{n+2},$$ the
complexification of the $\tilde{d}$-central sphere congruence of
$\Lambda$. The $\tilde{d}$-surface condition on $\Lambda$,
$\mathrm{rank}\,_{\C}\,\Lambda^{(1)}_{\tilde{d}}=3$, shows that
$\Lambda ^{1,0}_{\tilde{d}}$ and $\Lambda ^{0,1}_{\tilde{d}}$ are
complex $\mathrm{rank}$ $2$ bundles. On the other hand, the fact
that $\tilde{d}$ is metric connection gives
$$(\tilde{d}_{\delta_{z}}\sigma,\sigma)=0=(\tilde{d}_{\delta_{\bar{z}}}\sigma,\sigma),$$
whereas the conformality of
$\tilde{\phi}\sigma:(M,\mathcal{C}_{\tilde{\phi}\sigma}=\mathcal{C}_{\Lambda}^{\tilde{d}})\rightarrow\R^{n+1,1}$,
fixing an isomorphism
$\tilde{\phi}:(\R^{n+1,1},\tilde{d})\rightarrow(\R^{n+1,1},d)$,
gives
$$(\tilde{d}_{\delta_{z}}\sigma,\tilde{d}_{\delta_{z}}\sigma)=0=(\tilde{d}_{\delta_{\bar{z}}}\sigma,\tilde{d}_{\delta_{\bar{z}}}\sigma).$$
We conclude that $\Lambda ^{1,0}_{\tilde{d}}$ and $\Lambda
^{0,1}_{\tilde{d}}$ are isotropic. The fact that $S^{\tilde{d}}$ has
complex rank $4$ shows that $\Lambda ^{1,0}_{\tilde{d}}$ and
$\Lambda ^{0,1}_{\tilde{d}}$ intersect as the complexification
$$\Lambda=\Lambda
^{1,0}_{\tilde{d}}\cap\Lambda ^{0,1}_{\tilde{d}},$$ of $\Lambda$.

Notation: given $i\neq j\in\{0,1\}$,
$\tilde{d}^{i,j}:=\tilde{d}\vert_{\Gamma(T^{i,j}M)}.$
\begin{defn}\label{defcomplexsurface}
We define a \emph{complexified} $\tilde{d}$-\emph{surface} to be a
pair $(\Delta^{1,0},\Delta^{0,1})$ of isotropic rank $2$ subbundles
of $\underline{\C}^{n+2}$ intersecting in a rank $1$ bundle
$$\Delta:=\Delta^{1,0}\cap\Delta^{0,1}$$ such that
\begin{equation}\label{eq:defcomplsurf}
\tilde{d}^{1,0}\Gamma(\Delta)\subset\Omega^{1,0}(\Delta^{1,0}),\,\,\,\,\,\,\tilde{d}^{0,1}\Gamma(\Delta)\subset\Omega^{0,1}(\Delta^{0,1}).
\end{equation}
In the particular case of $\tilde{d}=d$, we shall, alternatively,
omit the reference to $\tilde{d}$.
\end{defn}

It is, perhaps, worth remarking that a complexified surface does not
necessarily define an immersion $\Delta$ of $M$ in
$\mathbb{P}(\mathcal{L})$.

Obviously, if $\tilde{d}$ is real, then, given a $\tilde{d}$-surface
$\Lambda$, the pair
$(\Lambda^{1,0}_{\tilde{d}},\Lambda^{0,1}_{\tilde{d}})$ consists of
a complexified $\tilde{d}$-surface with respect to
$\mathcal{C}_{\Lambda}^{\tilde{d}}$. Observe that the isotropy of
$\Delta^{1,0}$ establishes, in particular,
\begin{equation}\label{eq:tildedsigma10}
(\tilde{d}^{1,0}\sigma,\tilde{d}^{1,0}\sigma)=0,
\end{equation}
fixing $\sigma\in\Gamma(\Delta)$ never-zero. In the particular case
$\tilde{d}$ is real and $\Delta$ is a $\tilde{d}$-surface, (we may
refer to $\mathcal{C}_{\Delta}^{\tilde{d}}$ and) equation
\eqref{eq:tildedsigma10} is equivalent to
$$\mathcal{C}=\mathcal{C}_{\Delta}^{\tilde{d}},$$ which, together with
$\Delta^{1,0}\supset\langle\sigma\rangle\,+\,\tilde{d}^{1,0}\sigma(TM)$
and with
$\Delta^{0,1}\supset\langle\sigma\rangle\,+\,\tilde{d}^{0,1}\sigma(TM)$,
shows that
$$\Delta^{1,0}=\Delta^{1,0}_{\tilde{d}},\,\,\,\,\,\,\Delta^{0,1}=\Delta^{0,1}_{\tilde{d}}$$
and, in particular, that  $(\Delta^{1,0},\Delta^{0,1})$ is uniquely
determined by $\Delta^{1,0}\cap\Delta^{0,1}$. We conclude that, for
$\tilde{d}$ real and under the correspondence given by
$$(\Delta^{1,0},\Delta^{0,1})\leftrightarrow
\Delta^{1,0}\cap\Delta^{0,1},$$ $\tilde{d}$-surfaces are the
complexified $\tilde{d}$-surfaces given by a pair intersecting as a
real line subbundle $\Delta$ of $\underline{\C}^{n+2}$,
$$\Delta=\langle\sigma\rangle ^{\C},$$ for some
$\sigma\in\Gamma(\underline{\R}^{n+1,1})$, such that
$$\mathrm{rank}(\langle\sigma\rangle+\tilde{d}^{1,0}\sigma
(TM)+\tilde{d}^{0,1}\sigma(TM))=3.$$In particular, $d$-surfaces are
the complexified $d$-surfaces given by a pair intersecting as a real
line subbundle of $\underline{\C}^{n+2}$ defining an immersion of
$M$ into the projectivized light-cone. Henceforth, we drop the term
``complexified", referring, when necessary, to \textit{real}
$\tilde{d}$-surfaces, in order to make a distinction.

In what follows in this section, let $(\Delta^{1,0},\Delta^{0,1})$
be a $\tilde{d}$-surface.

\begin{defn}
A non-degenerate rank $\,4$ subbundle $V$ of $\underline{\C}^{n+2}$
is said to be an \emph{enveloping}
 \emph{sphere} \emph{congruence} of
$(\Delta^{1,0},\Delta^{0,1})$ if $\Delta^{1,0}+\Delta^{0,1}\subset
V$.
\end{defn}

Note that, if $V$ is an enveloping sphere congruence of
$(\Delta^{1,0},\Delta^{0,1})$, then
\begin{equation}\label{eq:NLambda0}
\mathcal{N}^{\tilde{d}}_{V}\,\Delta=0.
\end{equation}

\begin{prop}\label{curlyDpreservam1001}
If $V$ is an enveloping sphere congruence of
$(\Delta^{1,0},\Delta^{0,1})$, then
$$(\mathcal{D}_{V}^{\tilde{d}})^{1,0}\Gamma(\Delta^{1,0})\subset\Omega^{1,0}(\Delta^{1,0}),\,\,\,\,\,\,(\mathcal{D}_{V}^{\tilde{d}})^{0,1}\,\Gamma(\Delta^{0,1})\subset\Omega^{0,1}(\Delta^{0,1}).$$
\end{prop}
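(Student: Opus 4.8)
The plan is to fix a holomorphic chart $z$ of $(M,\mathcal{C})$ and prove the first inclusion, namely that $(\mathcal{D}_{V}^{\tilde{d}})_{\delta_{z}}s\in\Gamma(\Delta^{1,0})$ for every $s\in\Gamma(\Delta^{1,0})$, which is the content of $(\mathcal{D}_{V}^{\tilde{d}})^{1,0}\Gamma(\Delta^{1,0})\subset\Omega^{1,0}(\Delta^{1,0})$; the second inclusion then follows by the symmetric argument with $\delta_{\bar{z}}$ and $\Delta^{0,1}$ in place of $\delta_{z}$ and $\Delta^{1,0}$. Since $\Delta^{1,0}\subset\Delta^{1,0}+\Delta^{0,1}\subset V$, for $s\in\Gamma(\Delta^{1,0})$ we have $\pi_{V}s=s$ and $\pi_{V^{\perp}}s=0$, so directly from the definition of $\mathcal{D}_{V}^{\tilde{d}}$,
\[
(\mathcal{D}_{V}^{\tilde{d}})_{\delta_{z}}s=\pi_{V}(\tilde{d}_{\delta_{z}}s).
\]
First I would record that $\Delta^{1,0}$ is a maximal isotropic subbundle of $V$: as $\Delta^{1,0}$ is isotropic of rank $2$ inside the non-degenerate rank $4$ bundle $V$, its orthogonal complement within $V$ has rank $2$ and contains $\Delta^{1,0}$, whence $(\Delta^{1,0})^{\perp}\cap V=\Delta^{1,0}$ (exactly as for $\Lambda^{1,0}$ in $S$). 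Since $\pi_{V}(\tilde{d}_{\delta_{z}}s)$ automatically lies in $V$, it suffices to show it is orthogonal to $\Delta^{1,0}$; and because $\pi_{V}$ is self-adjoint with $\pi_{V}s'=s'$ for $s'\in\Gamma(V)$, this reduces the whole statement to the claim that $\tilde{d}_{\delta_{z}}\Gamma(\Delta^{1,0})\perp\Delta^{1,0}$.

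To establish this claim I would first note that, by the isotropy of $\Delta^{1,0}$, the pairing $(s,s')\mapsto(\tilde{d}_{\delta_{z}}s,s')$ is tensorial on $\Gamma(\Delta^{1,0})\times\Gamma(\Delta^{1,0})$: indeed $(\tilde{d}_{\delta_{z}}(fs),s')=df(\delta_{z})(s,s')+f(\tilde{d}_{\delta_{z}}s,s')=f(\tilde{d}_{\delta_{z}}s,s')$ since $(s,s')=0$. Hence it is enough to check vanishing on a local frame $\sigma,\tau$ of $\Delta^{1,0}$, where $\sigma\in\Gamma(\Delta)$ is never-zero and $\tau$ is any local complement. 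Metricity of $\tilde{d}$ together with isotropy gives $(\tilde{d}_{\delta_{z}}s,s')=-(s,\tilde{d}_{\delta_{z}}s')$ for $s,s'\in\Gamma(\Delta^{1,0})$, so the $2\times2$ matrix of the pairing in this frame is antisymmetric; its diagonal entries vanish automatically, and it remains only to kill the single off-diagonal entry $(\tilde{d}_{\delta_{z}}\sigma,\tau)$. This is where the defining condition \eqref{eq:defcomplsurf} enters: it gives $\tilde{d}_{\delta_{z}}\sigma\in\Gamma(\Delta^{1,0})$, and then isotropy of $\Delta^{1,0}$ forces $(\tilde{d}_{\delta_{z}}\sigma,\tau)=0$. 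Thus the entire pairing vanishes and the claim follows.

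The step I expect to be the only real subtlety is precisely this reduction to a single matrix entry. A complexified surface need not immerse, so I cannot assume that $\sigma$ and $\tilde{d}_{\delta_{z}}\sigma$ span $\Delta^{1,0}$, and in particular I have no control over $\tilde{d}_{\delta_{z}}\tau$ for a general complement $\tau$. The antisymmetry coming from metricity plus isotropy is what circumvents this obstacle: it expresses the whole pairing in terms of $(\tilde{d}_{\delta_{z}}\sigma,\tau)$ alone, so that \eqref{eq:defcomplsurf} is needed only for the distinguished line $\Delta$, exactly where it is available. Assembling the pieces, $\pi_{V}(\tilde{d}_{\delta_{z}}s)\in(\Delta^{1,0})^{\perp}\cap V=\Delta^{1,0}$ for every $s\in\Gamma(\Delta^{1,0})$, giving the first inclusion; the second is identical with the roles of $(1,0)$ and $(0,1)$ interchanged.
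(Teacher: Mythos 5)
Your proof is correct and follows essentially the same route as the paper's: both arguments rest on the maximal isotropy $(\Delta^{1,0})^{\perp}\cap V=\Delta^{1,0}$, use metricity of the connection together with isotropy to flip the derivative onto the distinguished section $\sigma\in\Gamma(\Delta)$, and then invoke condition \eqref{eq:defcomplsurf} exactly there. The only cosmetic difference is that you phrase the computation upstairs with $\tilde{d}$ and the self-adjointness of $\pi_{V}$ (packaged as an antisymmetric $2\times 2$ pairing), whereas the paper works directly with $\mathcal{D}_{V}^{\tilde{d}}$ after noting $\mathcal{N}_{V}^{\tilde{d}}\Delta=0$; the two computations coincide since $\mathcal{N}_{V}^{\tilde{d}}$ takes values in $V\wedge V^{\perp}$.
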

Before proceeding to the proof of the proposition, observe that, if
$V$ is an enveloping sphere congruence of
$(\Delta^{1,0},\Delta^{0,1})$, then
$\mathrm{rank}\,\Delta^{1,0}=\frac{1}{2}\,\mathrm{rank}\,V=\mathrm{rank}\,\Delta^{0,1}$,
which, together with the isotropy of $\Delta^{1,0}$ and
$\Delta^{0,1}$, and having in consideration the non-degeneracy of
$V$, gives
$$(\Delta^{1,0})^{\perp}\cap V=\Delta^{1,0},\,\,\,(\Delta^{0,1})^{\perp}\cap V=\Delta^{0,1},$$
$\Delta^{1,0}$ and $\Delta^{0,1}$ are maximal isotropic in $V$.

Now we proceed to the proof of Proposition
\ref{curlyDpreservam1001}.
\begin{proof}
Condition \eqref {eq:defcomplsurf}, together with
\eqref{eq:NLambda0}, establishes
$(\mathcal{D}_{V}^{\tilde{d}})^{1,0}\,\Gamma(\Delta)\subset\Omega^{1,0}(\Delta^{1,0})$
and
$(\mathcal{D}_{V}^{\tilde{d}})^{0,1}\,\Gamma(\Delta)\subset\Omega^{0,1}(\Delta^{0,1})$.
Write $\Delta^{1,0}=\langle\sigma,\tau\rangle$ with
$\sigma\in\Gamma(\Delta)$ never-zero. To conclude that
$(\mathcal{D}_{V}^{\tilde{d}})^{1,0}\Gamma(\Delta^{1,0})\subset\Omega^{1,0}(\Delta^{1,0})$,
we are left to verify that $(\mathcal{D}_{V}^{\tilde{d}})^{1,0}\tau$
takes values in $\Delta^{1,0}$, or, equivalently, in
$(\Delta^{1,0})^{\perp}$. The fact that
$\mathcal{D}_{V}^{\tilde{d}}$ is metric and $\Delta^{1,0}$ is
isotropic establishes
$$((\mathcal{D}_{V}^{\tilde{d}})^{1,0}\tau,\tau)=\frac{1}{2}\,d^{1,0}(\tau,\tau)=0$$
and
$$0=d^{1,0}(\tau,\sigma)=((\mathcal{D}_{V}^{\tilde{d}})^{1,0}\tau,\sigma)+(\tau,
(\mathcal{D}_{V}^{\tilde{d}})^{1,0}\sigma),$$ and, therefore,
$((\mathcal{D}_{V}^{\tilde{d}})^{1,0}\tau,\sigma)=0$. A similar
argument applies to $\Delta^{0,1}$, completing the proof.
\end{proof}

Consider, for a moment, a real surface $\Lambda$. Let $\sigma$ be a
never-zero section of $\Lambda$ and $z$ be a holomorphic chart of
$(M, \mathcal{C}_{\Lambda})$. The central sphere congruence $S$ of
$\Lambda$ is characterized by having $\mathrm{rank}$ $4$ and
satisfying the conditions $\Lambda^{1,0}+\Lambda^{0,1}\subset S$ and
$\sigma_{z\bar{z}}\in\Gamma(S)$. In view of \eqref{eq:calNLambda0},
the condition $\sigma_{z\bar{z}}\in\Gamma(S)$ can be reformulated as
$\mathcal{N}^{1,0}\Lambda^{0,1}=0$, or, equivalently,
$\mathcal{N}^{0,1}\Lambda^{0,1}=0$. We conclude that the central
sphere congruence of $\Lambda$ is characterized, equivalently, by
enveloping $\Lambda$, together with satisfying
\eqref{eq:curlyN10Lambda01bbbbb09rgg9987654}. This motivates the
next definition.

\begin{defn}\label{defnccsc}
An enveloping sphere congruence $V$ of $(\Delta^{1,0},\Delta^{0,1})$
is said to be a \emph{$\tilde{d}$-central sphere congruence} of
$(\Delta^{1,0},\Delta^{0,1})$ if
$$(\mathcal{N}_{V}^{\tilde{d}})^{1,0}\Delta^{0,1}=0=(\mathcal{N}_{V}^{\tilde{d}})^{0,1}\Delta^{1,0}.$$
\end{defn}

As usual, in the particular case of $\tilde{d}=d$, we shall,
alternatively, refer to $V$ as a central sphere congruence.

Observe that, if, given $\sigma\in\Gamma(\Delta)$,
$(\tilde{d}^{1,0}\sigma,\tilde{d}^{0,1}\sigma)$ is locally
never-zero, or, equivalently,
\begin{equation}\label{eq:condforVdet}
(\tilde{d}^{1,0}\sigma,\tilde{d}^{0,1}\sigma)\neq 0,
\end{equation}
at some point, then $(\Delta^{1,0},\Delta^{0,1})$ admits a unique
 $\tilde{d}$-central sphere
congruence, that is,
$V=\Delta^{1,0}+\Delta^{0,1}+\tilde{d}^{1,0}\tilde{d}^{0,1}\sigma(TM\times
TM)$. In fact, the centrality of $V$, equivalent to
$$\tilde{d}^{1,0}\Gamma(\Delta^{0,1}),\,
\tilde{d}^{0,1}\Gamma(\Delta^{1,0})\subset\Omega^{1}(V),$$ensures,
in particular, that
$\tilde{d}^{1,0}\tilde{d}^{0,1}\sigma\in\Omega^{2}(V)$. In its turn,
equation \eqref{eq:condforVdet} ensures that $\tilde{d}^{1,0}\sigma$
does not take values in $\langle\sigma\rangle$, nor does
$\tilde{d}^{0,1}\sigma$ in
$\langle\sigma,\tilde{d}^{1,0}\sigma\rangle$, and, therefore,
$\mathrm{rank}\,(\Delta^{1,0}+\Delta^{0,1})\geq 3$, leaving us to
verify that $\tilde{d}^{1,0}\tilde{d}^{0,1}\sigma$ does not take
values in $\Delta^{1,0}+\Delta^{0,1}$. Suppose it did. In that case,
$(\tilde{d}^{1,0}\tilde{d}^{0,1}\sigma,\sigma)=0$, which contradicts
$$(\tilde{d}^{1,0}\tilde{d}^{0,1}\sigma,\sigma)=d^{1,0}(\tilde{d}^{0,1}\sigma,\sigma)-(\tilde{d}^{0,1}\sigma,\tilde{d}^{1,0}\sigma)=-(\tilde{d}^{0,1}\sigma,\tilde{d}^{1,0}\sigma)\neq
0.$$

Observe, on the other hand, that condition \eqref{eq:condforVdet} is
equivalent to
$$(\tilde{d}_{\delta_{x}}\sigma,\tilde{d}_{\delta_{x}}\sigma)+(\tilde{d}_{\delta_{y}}\sigma,\tilde{d}_{\delta_{y}}\sigma)\neq 0,$$
fixing $z=x+iy$ a holomorphic chart of $(M,\mathcal{C})$, and is,
therefore, trivially satisfied in the particular case $\tilde{d}$ is
real and $\Delta$ is a real $\tilde{d}$-surface, as, in that case,
$g_{z}\in\mathcal{C}=[g_{\sigma}^{\tilde{d}}]$. It follows, and it
is worth emphasizing, that:
\begin{rem}
In the particular case $\tilde{d}$ is real and
$(\Delta^{1,0},\Delta^{0,1})$ defines a real $\tilde{d}$-surface
$\Delta^{1,0}\cap\Delta^{0,1}=:\Delta$, not only $\mathcal{C}$,
$\Delta^{1,0}$ and $\Delta^{0,1}$ are determined by $\Delta$, as
observed previously, but so is $V$, \emph{the} $\tilde{d}$-central
sphere congruence of $(\Delta^{1,0},\Delta^{0,1})$: it is the
complexification of the $\tilde{d}$-central sphere congruence of
$\Delta$, $ V=S^{\tilde{d}}_{\Delta}.$  In fact, the identities
$$\Delta^{1,0}=\langle\sigma ,\tilde{d}_{\delta
_{z}}\sigma\rangle,\,\,\,\,\,\,\Delta^{0,1}=\langle\sigma
,\tilde{d}_{\delta _{\bar{z}}}\sigma\rangle
$$
and
$$
V=\langle\sigma ,\tilde{d}_{\delta _{z}}\sigma ,\tilde{d}_{\delta
_{\bar{z}}}\sigma ,\tilde{d}_{\delta _{\bar{z}}}\tilde{d}_{\delta
_{z}}\sigma\rangle,
$$
for $\sigma\in\Gamma(\Delta)$ never-zero and $z$ a holomorphic chart
of $(M,\mathcal{C})$, still hold, at least, locally, in the case of
a general $\tilde{d}$-surface $(\Delta^{1,0},\Delta^{0,1})$,
provided that, clearly independently of the choice of $\sigma$, we
have, at some point, $\sigma\wedge\tilde{d}^{1,0}\sigma\neq 0$ and
$(\tilde{d}^{1,0}\sigma,\tilde{d}^{0,1}\sigma)\neq 0$.
\end{rem}

\section{Complexified constrained Willmore surfaces}\label{CCWSgd}

\markboth{\tiny{A. C. QUINTINO}}{\tiny{CONSTRAINED WILLMORE
SURFACES}}

In generalization of the characterization of Willmore surfaces in
space-forms in terms of the harmonicity of the central sphere
congruence, a surface $\Lambda$ in the projectivized light-cone is a
$q$-constrained Willmore surface, for some real form
$q\in\Omega^{1}(\Lambda\wedge\Lambda^{(1)})$, if and only if $S$ is
$q$-constrained harmonic with respect to the conformal structure
$\mathcal{C}_{\Lambda}$. Generalizing the class of constrained
Willmore surfaces in space-forms, we define \textit{complexified
constrained Willmore surface} by the property of admitting a
constrained harmonic central sphere congruence with a multiplier
satisfying certain specificities, as presented in this
section.\newline

\subsection{Complexified constrained Willmore surfaces and constrained
harmonicity}\label{CCWS}

Note that, given a real surface $\Lambda$, and in view of the fact
that $\mathrm{rank}\,\Lambda =1$, we have, according to
\eqref{eq:wegdebrack},
\begin{equation}\label{eq:qq=0}
[\Lambda\wedge\Lambda^{(1)},\Lambda\wedge\Lambda^{(1)}]\subset
\Lambda \wedge\Lambda =\{0\}.
\end{equation}
In particular, if $q$ is a multiplier to $\Lambda$, then $[q\wedge
q]=0$. Furthermore, according to Lemma \ref{withvswithoutdecomps},
$d^{\mathcal{D}}q=0$ if and only if
$d^{\mathcal{D}}q^{1,0}=0=d^{\mathcal{D}}q^{0,1}$, considering
$(1,0)$- and $(0,1)$-decomposition with respect to
$\mathcal{C}_{\Lambda}$. In generalization of Theorem
\ref{willmorevsharmonicitytheorem}, it follows that:
\begin{thm}\label{ffffffffadeaeddaeae}
A real surface $\Lambda$ is $q$-constrained Willmore, for some real
form $q\in\Omega^{1}(\Lambda\wedge\Lambda^{(1)})$, if and only if
its central sphere congruence is $q$-constrained harmonic with
respect to the conformal structure $\mathcal{C}_{\Lambda}$.
\end{thm}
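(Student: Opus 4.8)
The plan is to unwind both definitions and observe that, for $V=S$ and $\tilde{d}=d$, the constrained harmonicity conditions of Definition~\ref{cccchadddafhh} collapse precisely onto the two conditions of Theorem~\ref{CWeq}. First I would record the compatibility of the data: since $\Lambda\subset\Lambda^{(1)}\subset S$, any real $q\in\Omega^{1}(\Lambda\wedge\Lambda^{(1)})$ in fact takes values in $\wedge^{2}S$, hence in $\wedge^{2}S\oplus\wedge^{2}S^{\perp}$, so it is an admissible candidate multiplier for the $(q,d)$-constrained harmonicity of $S$. Moreover, with $\tilde{d}=d$ and $V=S$ the decomposition $d=\mathcal{D}^{d}_{S}+\mathcal{N}^{d}_{S}$ of Section~\ref{sec:cchb} is exactly the decomposition $d=\mathcal{D}+\mathcal{N}$ of the central sphere congruence, so that $\mathcal{D}^{d}_{S}=\mathcal{D}$ and $\mathcal{N}^{d}_{S}=\mathcal{N}$. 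Under this identification, condition $(ii)$ of Definition~\ref{cccchadddafhh} reads $d^{\mathcal{D}}*\mathcal{N}=2\,[q\wedge *\mathcal{N}]$, which is verbatim equation~\eqref{eq:mainCWeq}.

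The next step is to reduce condition $(i)$ of Definition~\ref{cccchadddafhh} to condition~\eqref{eq:curlyDextderivofq}. Here I would invoke the two observations recorded just above the statement: since $\mathrm{rank}\,\Lambda=1$ we have $\Lambda\wedge\Lambda=\{0\}$, and \eqref{eq:wegdebrack} gives $[\Lambda\wedge\Lambda^{(1)},\Lambda\wedge\Lambda^{(1)}]\subset\Lambda\wedge\Lambda=\{0\}$, whence $[q\wedge q]=0$. Therefore condition $(i)$ becomes $d^{\mathcal{D}}q^{1,0}=0=d^{\mathcal{D}}q^{0,1}$, and by Lemma~\ref{withvswithoutdecomps}$(ii)$ this pair of equations holds if and only if $d^{\mathcal{D}}q=0$, i.e. condition~\eqref{eq:curlyDextderivofq}. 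Thus $S$ being $(q,d)$-constrained harmonic is equivalent to the conjunction of \eqref{eq:curlyDextderivofq} and \eqref{eq:mainCWeq}.

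Finally I would close the argument by appealing to Theorem~\ref{CWeq}, which states precisely that $\Lambda$ is a $q$-constrained Willmore surface if and only if the real form $q\in\Omega^{1}(\Lambda\wedge\Lambda^{(1)})$ satisfies both \eqref{eq:curlyDextderivofq} and \eqref{eq:mainCWeq}; combining this with the equivalence established in the previous paragraph yields the theorem. There is no serious analytic obstacle here, as the whole content is the bookkeeping that makes $[q\wedge q]$ vanish for a rank-one null line bundle, together with the split-versus-unsplit closedness dictionary of Lemma~\ref{withvswithoutdecomps}. The one point I would take care over is ensuring that $q$ genuinely lands in $\wedge^{2}S$ and that the $(1,0)$- and $(0,1)$-decomposition used in Definition~\ref{cccchadddafhh} is taken with respect to $\mathcal{C}_{\Lambda}$, which is exactly the conformal structure under which the multiplier conditions of Theorem~\ref{CWeq} are formulated.
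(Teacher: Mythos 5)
Your proposal is correct and follows essentially the same route as the paper: the paper's own argument consists precisely of noting that $[\Lambda\wedge\Lambda^{(1)},\Lambda\wedge\Lambda^{(1)}]\subset\Lambda\wedge\Lambda=\{0\}$ (so $[q\wedge q]=0$), invoking Lemma~\ref{withvswithoutdecomps} to identify the split closedness conditions with $d^{\mathcal{D}}q=0$, and then matching Definition~\ref{cccchadddafhh} against Theorem~\ref{CWeq}. Your write-up merely spells out the bookkeeping (the identifications $\mathcal{D}^{d}_{S}=\mathcal{D}$, $\mathcal{N}^{d}_{S}=\mathcal{N}$, and $\Lambda\wedge\Lambda^{(1)}\subset\wedge^{2}S$) that the paper leaves implicit.
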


By Theorem \ref{thm5.2}, together with Remark \ref{CoS1}, it
follows, in generalization of Theorem \ref{willmoreiffdlambdaflat},
that:

\begin{thm}\label{originallybyfran+calderbankCWviaflatness}
A real surface $\Lambda$ is $q$-constrained Willmore, for some real
form $q\in\Omega^{1}(\Lambda\wedge\Lambda^{(1)})$, if and only if,
considering $(1,0)$- and $(0,1)$-decomposition with respect to
$\mathcal{C}_{\Lambda}$, $d^{\lambda}_{q}$ is a flat connection, for
each $\lambda\in\C\backslash\{0\}$ or, equivalently, for each
$\lambda\in S^{1}$.
\end{thm}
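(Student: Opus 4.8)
The plan is to recognize that this theorem is nothing more than the specialization of the general theory of Section \ref{sec:cchb} to the trivial flat connection and the central sphere congruence, chained with the characterization of constrained Willmore surfaces via constrained harmonicity. First I would record the identification underlying the notation: taking $\tilde{d}=d$ the trivial flat connection and $V=S$ the complexification of the central sphere congruence of $\Lambda$, the decomposition $d=\mathcal{D}_{S}+\mathcal{N}_{S}$ is exactly $d=\mathcal{D}+\mathcal{N}$, and the loop $\tilde{d}^{\lambda,q}_{V}$ of Section \ref{sec:cchb} is, by definition, the family $d^{\lambda}_{q}$ appearing in the statement. I would also check that the hypothesis on $q$ is compatible with Definition \ref{cccchadddafhh}: since $\Lambda\subset S$ and $\Lambda^{(1)}\subset S$, any real form $q\in\Omega^{1}(\Lambda\wedge\Lambda^{(1)})$ takes values in $\wedge^{2}S\subset\wedge^{2}S\oplus\wedge^{2}S^{\perp}$, so that the notion of $(q,d)$-constrained harmonicity of $S$ is well-defined and refers to the very same multiplier $q$.

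Then the proof reduces to composing three equivalences, all of them already available. By Theorem \ref{ffffffffadeaeddaeae}, the real surface $\Lambda$ is $q$-constrained Willmore (for the given real $q$) if and only if $S$ is $q$-constrained harmonic with respect to $\mathcal{C}_{\Lambda}$. By Theorem \ref{thm5.2}, applied with $\tilde{d}=d$ and $V=S$, with the $(1,0)$- and $(0,1)$-decomposition taken with respect to $\mathcal{C}_{\Lambda}$, the bundle $S$ is $(q,d)$-constrained harmonic if and only if $d^{\lambda,q}_{S}=d^{\lambda}_{q}$ is flat for every $\lambda\in\C\backslash\{0\}$. Finally, Remark \ref{CoS1} furnishes the equivalence between flatness for all $\lambda\in\C\backslash\{0\}$ and flatness for all $\lambda\in S^{1}$. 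Stringing these together yields the claimed characterization, together with the stated equivalence of the two parameter ranges.

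The main point requiring care is purely one of bookkeeping rather than of substance, since the genuine content was already discharged in the proofs of Theorem \ref{thm5.2} and Theorem \ref{ffffffffadeaeddaeae}. I would make sure that the conformal structure and the induced $(1,0)$/$(0,1)$-splitting are taken consistently, namely with respect to $\mathcal{C}_{\Lambda}$, in both invoked theorems, so that the $\mathcal{N}^{1,0}$, $\mathcal{N}^{0,1}$, $q^{1,0}$, $q^{0,1}$ entering $d^{\lambda}_{q}$ match those entering the constrained harmonicity equations of Definition \ref{cccchadddafhh}. I would also confirm, appealing to the reality of $\Lambda$, $q$ and $d$ (cf. Remark \ref{s1familyofconnections}), that $d^{\lambda}_{q}$ is a genuine real metric connection on $\underline{\R}^{n+1,1}$ for $\lambda\in S^{1}$, so that the passage from the statement over $\C\backslash\{0\}$ to the statement over $S^{1}$ is meaningful.
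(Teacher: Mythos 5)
Your proposal is correct and follows exactly the paper's own route: the paper obtains this theorem by combining Theorem \ref{ffffffffadeaeddaeae} (constrained Willmore $\Leftrightarrow$ constrained harmonicity of $S$ with respect to $\mathcal{C}_{\Lambda}$) with Theorem \ref{thm5.2} and Remark \ref{CoS1}, which is precisely your chain of equivalences. Your additional bookkeeping checks (that $q$ lands in $\wedge^{2}S$, that the decompositions are taken with respect to $\mathcal{C}_{\Lambda}$, and the reality of $d^{\lambda}_{q}$ for $\lambda\in S^{1}$) are consistent with what the paper establishes around Definition \ref{cccchadddafhh} and Remark \ref{s1familyofconnections}.
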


As referred previously, Theorem
\ref{originallybyfran+calderbankCWviaflatness} was originally
established by F. Burstall and D. Calderbank
\cite{burstall+calderbank}.

\begin{rem}
Theorem \ref{CWeq} and Lemma \ref{withvswithoutdecomps} combine to
establish, in particular, that if $q$ is a multiplier to a real
surface $\Lambda$, then, considering $(1,0)$- and
$(0,1)$-decomposition with respect to $\mathcal{C}_{\Lambda}$,
$q^{1,0}\in\Omega^{1,0}(\Lambda\wedge\Lambda^{0,1})$, or,
equivalently, $q^{0,1}\in\Omega^{0,1}(\Lambda\wedge\Lambda^{1,0})$.
\end{rem}

In view of Theorem \ref{ffffffffadeaeddaeae}, we generalize the
class of constrained Willmore surfaces in space-forms with the
following definition:

\begin{defn}
A  $\tilde{d}$-surface $(\Delta^{1,0},\Delta^{0,1})$ is said to be a
constrained Willmore $\tilde{d}$-surface if there exist
\begin{equation}\label{eq:q10q01forcomplexcW}
q^{1,0}\in\Omega^{1,0}(\wedge^{2}\Delta^{0,1}),\,\,\,\,\,\,q^{0,1}\in\Omega^{0,1}(\wedge^{2}\Delta^{1,0})
\end{equation}
for which, setting $q:=q^{1,0}+q^{0,1}$,
$(\Delta^{1,0},\Delta^{0,1})$ admits a $(q,\tilde{d})$-constrained
harmonic $\tilde{d}$-central sphere congruence.
\end{defn}

In the conditions of the definition above, we may refer to
$(\Delta^{1,0},\Delta^{0,1})$ as a
$(q,\tilde{d})$-\textit{constrained} \textit{Willmore}
\textit{surface}. In the particular case $\tilde{d}$ is real,
$(\Delta^{1,0},\Delta^{0,1})$ is a real $\tilde{d}$-surface and $q$
is real, we may refer to $(\Delta^{1,0},\Delta^{0,1})$ as a real
constrained Willmore $\tilde{d}$-surface or a real
$(q,\tilde{d})$-constrained Willmore surface. For simplicity, we
may, alternatively, refer to a $(q,\tilde{d})$-constrained harmonic
$\tilde{d}$-central sphere congruence as a
$(q,\tilde{d})$-\textit{central sphere congruence}. The form $q$ is
said to be a \textit{multiplier} to the constrained Willmore
$\tilde{d}$-surface $(\Delta^{1,0},\Delta^{0,1})$. In the particular
case $\tilde{d}=d$, we shall omit the reference to $\tilde{d}$ and
refer to $(\Delta^{1,0},\Delta^{0,1})$ as a constrained Willmore
surface or a $q$-constrained Willmore surface, or, in the case
$\tilde{d}$, $(\Delta^{1,0},\Delta^{0,1})$ and $q$ are real, as a
real $q$-constrained Willmore surface. In the light of this
terminology, the surfaces studied in Chapter \ref{sec:CWseq} shall
be renamed \textit{real constrained Willmore surfaces}. In the
particular case $q=0$ we may refer to $(\Delta^{1,0},\Delta^{0,1})$
as a Willmore $\tilde{d}$-surface, in line with Definition
\ref{eq:tildewilmm}; or, alternatively, in the case $\tilde{d}$ is
real and $(\Delta^{1,0},\Delta^{0,1})$ is a real
$\tilde{d}$-surface, as a \textit{real Willmore
$\tilde{d}$-surface}. When referring to a surface in the
projectivized light-cone, or, equivalently, in some space-form, it
shall be understood a real surface, with no need to express it.

\subsection{Complexified constrained Willmore surfaces under change
of flat metric connection}

Let $(\Delta^{1,0},\Delta^{0,1})$ be a pair of isotropic
$\mathrm{rank}\,2$ subbundles of $\underline{\C}^{n+2}$,
intersecting in a rank $1$ bundle, and
$$\tilde{\phi}:(\underline{\C}^{n+2},\tilde{d})\rightarrow
(\underline{\C}^{n+2},d)$$ be an isomorphism. Obviously,
$(\tilde{\phi}\,\Delta^{1,0},\tilde{\phi}\,\Delta^{0,1})$ is another
pair of isotropic $\mathrm{rank}\,2$ subbundles of
$\underline{\C}^{n+2}$. It is clear that
$(\Delta^{1,0},\Delta^{0,1})$ is a $\tilde{d}$-surface if and only
if $(\tilde{\phi}\,\Delta^{1,0},\tilde{\phi}\,\Delta^{0,1})$ is a
$d$-surface, and that, according to \eqref {eq:isomV}, given a
$\tilde{d}$-central sphere congruence $V$ of
$(\Delta^{1,0},\Delta^{0,1})$, $\tilde{\phi}\,V$ is a central sphere
congruence of
$(\tilde{\phi}\,\Delta^{1,0},\tilde{\phi}\,\Delta^{0,1})$.
Furthermore:

\begin{prop}\label{CWunderphi}
Suppose $(\Delta^{1,0},\Delta^{0,1})$ is a $\tilde{d}$-surface. In
that case, $(\Delta^{1,0},\Delta^{0,1})$ is a
$(q,\tilde{d})$-constrained Willmore surface admitting $V$ as a
$(q,\tilde{d})$-central sphere congruence if and only if
$(\tilde{\phi}\,\Delta^{1,0},\tilde{\phi}\,\Delta^{0,1})$ is a
$\mathrm{Ad}_{\tilde{\phi}}\,q$-constrained Willmore surface
admitting $\tilde{\phi}\,V$ as a
$\mathrm{Ad}_{\tilde{\phi}}\,q$-central sphere congruence.
\end{prop}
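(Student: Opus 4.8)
Set $q^{*}:=\mathrm{Ad}_{\tilde{\phi}}\,q$ and write $\Delta^{*,1,0}:=\tilde{\phi}\,\Delta^{1,0}$, $\Delta^{*,0,1}:=\tilde{\phi}\,\Delta^{0,1}$. The plan is to split the defining data of a $(q,\tilde{d})$-constrained Willmore surface admitting $V$ as a central sphere congruence into its three constituent conditions — (i) being a $\tilde{d}$-surface, (ii) the type conditions \eqref{eq:q10q01forcomplexcW} on $q$, and (iii) the $(q,\tilde{d})$-constrained harmonicity of the $\tilde{d}$-central sphere congruence $V$ — and to transport each across $\tilde{\phi}$ separately.

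Conditions of the first kind have already been transported in the paragraph preceding the statement: under the standing hypothesis, $(\Delta^{1,0},\Delta^{0,1})$ is a $\tilde{d}$-surface if and only if $(\Delta^{*,1,0},\Delta^{*,0,1})$ is a $d$-surface, and, by \eqref{eq:isomV}, $V$ is a $\tilde{d}$-central sphere congruence of $(\Delta^{1,0},\Delta^{0,1})$ if and only if $\tilde{\phi}\,V$ is a central sphere congruence of $(\Delta^{*,1,0},\Delta^{*,0,1})$. So I would begin by invoking these and assume from now on that $V$ is a $\tilde{d}$-central sphere congruence of $(\Delta^{1,0},\Delta^{0,1})$, equivalently that $\tilde{\phi}\,V$ is a central sphere congruence of $(\Delta^{*,1,0},\Delta^{*,0,1})$.

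Next I would handle the type conditions on the multiplier. Since $\tilde{\phi}$ is a pointwise (tensorial) bundle isometry, hence a section of $O(\underline{\C}^{n+2})$, conjugation by it acts only on the values of a form and leaves its bidegree untouched; thus $\mathrm{Ad}_{\tilde{\phi}}$ commutes with the extraction of $(1,0)$- and $(0,1)$-parts, so that $(q^{*})^{1,0}=\mathrm{Ad}_{\tilde{\phi}}(q^{1,0})$ and $(q^{*})^{0,1}=\mathrm{Ad}_{\tilde{\phi}}(q^{0,1})$. By \eqref{eq:adjwedge}, $\mathrm{Ad}_{\tilde{\phi}}$ restricts to bundle isomorphisms $\wedge^{2}\Delta^{0,1}\to\wedge^{2}\Delta^{*,0,1}$ and $\wedge^{2}\Delta^{1,0}\to\wedge^{2}\Delta^{*,1,0}$, with inverse $\mathrm{Ad}_{\tilde{\phi}^{-1}}$. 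Hence $q^{1,0}\in\Omega^{1,0}(\wedge^{2}\Delta^{0,1})$ and $q^{0,1}\in\Omega^{0,1}(\wedge^{2}\Delta^{1,0})$ hold if and only if $(q^{*})^{1,0}\in\Omega^{1,0}(\wedge^{2}\Delta^{*,0,1})$ and $(q^{*})^{0,1}\in\Omega^{0,1}(\wedge^{2}\Delta^{*,1,0})$, which are precisely the type conditions required of $q^{*}$ as a candidate multiplier for $(\Delta^{*,1,0},\Delta^{*,0,1})$.

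Finally, the constrained harmonicity transports directly: applying Theorem \ref{prop5.5} with $\hat{d}=d$ and $\phi=\tilde{\phi}$, the bundle $V$ is $(q,\tilde{d})$-constrained harmonic if and only if $\tilde{\phi}\,V$ is $(q^{*},d)$-constrained harmonic. Assembling the three equivalences, the conjunction defining $(\Delta^{1,0},\Delta^{0,1})$ as a $(q,\tilde{d})$-constrained Willmore surface with central sphere congruence $V$ is equivalent, term by term, to the conjunction defining $(\Delta^{*,1,0},\Delta^{*,0,1})$ as a $q^{*}$-constrained Willmore surface with central sphere congruence $\tilde{\phi}\,V$, which is the assertion. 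The only point requiring genuine care is the commutation of $\mathrm{Ad}_{\tilde{\phi}}$ with the bidegree decomposition and its compatibility with $\wedge^{2}$; everything else reduces to the already-established transport lemmas and to Theorem \ref{prop5.5}, so I anticipate no real obstacle beyond this bookkeeping.
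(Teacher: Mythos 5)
Your proposal is correct and takes essentially the same route as the paper: the paper's proof likewise notes, via \eqref{eq:adjwedge}, that $\mathrm{Ad}_{\tilde{\phi}}\,q^{i,j}\in\Omega^{i,j}(\wedge^{2}\tilde{\phi}\,\Delta^{j,i})$ for $i\neq j\in\{0,1\}$, and then invokes Theorem \ref{prop5.5} to transport the constrained harmonicity, with the surface and central-sphere-congruence conditions already handled in the preceding paragraph. Your extra bookkeeping (that $\mathrm{Ad}_{\tilde{\phi}}$ commutes with the bidegree decomposition and is inverted by $\mathrm{Ad}_{\tilde{\phi}^{-1}}$) is exactly what the paper leaves implicit.
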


\begin{proof}
According to equation \eqref {eq:adjwedge},
$\mathrm{Ad}_{\tilde{\phi}}\,q^{i,j}\in\Omega^{i,j}(\wedge^{2}\tilde{\phi}\,\Delta^{j,i})$,
for $i\neq j\in\{0,1\}$. The result comes as an immediate
consequence of Theorem \ref{prop5.5}.
\end{proof}

If $\tilde{d}$ is real, one can take $\tilde{\phi}$ to be real, in
which case $(\Delta^{1,0},\Delta^{0,1})$ is a real
$\tilde{d}$-surface if and only if
$(\tilde{\phi}\,\Delta^{1,0},\tilde{\phi}\,\Delta^{0,1})$ is a real
$d$-surface:
$$\overline{\tilde{\phi}\,\Delta^{1,0}\cap\tilde{\phi}\,\Delta^{0,1}}=
\tilde{\phi}\,\overline{\Delta^{1,0}\cap\Delta^{0,1}}$$ and, given
$\sigma\in\Gamma(\Delta^{1,0}\cap\Delta^{0,1})$ never-zero,
$$\langle\tilde{\phi}\sigma\rangle+d^{1,0}(\tilde{\phi}\sigma)(TM)+d^{0,1}(\tilde{\phi}\sigma)(TM)=
\tilde{\phi}(\langle\sigma\rangle+\tilde{d}^{1,0}\sigma(TM)+\tilde{d}^{0,1}\sigma(TM)).$$
Furthermore, $(\Delta^{1,0},\Delta^{0,1})$ is a real
$(q,\tilde{d})$-constrained Willmore surface if and only if
$(\tilde{\phi}\,\Delta^{1,0},\tilde{\phi}\,\Delta^{0,1})$ is a real
$\mathrm{Ad}_{\tilde{\phi}}\,q$-constrained Willmore surface.

\section{Spectral deformation of complexified constrained Willmore
surfaces}\label{subsec:cwillmfam}

\markboth{\tiny{A. C. QUINTINO}}{\tiny{CONSTRAINED WILLMORE
SURFACES}}

Complexified constrained Willmore surfaces are characterized by the
flatness of the metric connection $d^{\lambda,q}_{V}$, for all
$\lambda\in\C\backslash\{0\}$, for some central sphere congruence
$V$ and some $q\in\Omega^{1}(\wedge^{2}V\oplus\wedge^{2}V^{\perp})$
satisfying certain specificities. Given a complexified constrained
Willmore surface, such family of flat metric connections provides a
spectral deformation of the surface into new complexified
constrained Willmore surfaces, which, in the case of a complexified
Willmore surface and of the zero multiplier, remains within the
class of complexified Willmore surfaces, and, for $\lambda\in
S^{1}$, preserves reality conditions. The deformation defined by the
loop of flat metric connections $d^{\lambda}_{q}$ coincides, up to
reparametrization, with the spectral deformation of a
$q$-constrained Willmore surface in spherical space presented in
\cite{SD}.\newline

Suppose $(\Delta^{1,0},\Delta^{0,1})$ is a constrained Willmore
surface admitting $q$ as a multiplier and $V$ as a $(q,d)$-central
sphere congruence, in which case, according to Theorem \ref{thm5.2},
the $\C\backslash \{0\}$-family
$$d^{\lambda,q}_{V}=\mathcal{D}_{V}+\lambda
^{-1}\mathcal{N}^{1,0}_{V}+\lambda\mathcal{N}^{0,1}_{V}+(\lambda
^{-2}-1)q^{1,0}+(\lambda ^{2}-1)q^{0,1},$$ on
$\lambda\in\C\backslash \{0\}$, consists of a family of flat metric
connections on $\underline{\C}^{n+2}$. Consider an isomorphism
$$\phi^{\lambda}_{q}:(\underline{\C}^{n+2},d^{\lambda,q}_{V})\rightarrow
(\underline{\C}^{n+2},d).$$ Since $q^{i,j}$ takes values in
$\wedge^{2}\Delta^{j,i}$ and $\Delta^{j,i}$ is isotropic, for $i\neq
j\in\{0,1\}$, we have $q^{1,0}\Delta=0=q^{0,1}\Delta$, which,
together with \eqref {eq:NLambda0}, shows that, given
$\sigma\in\Gamma(\Delta)$ never-zero,
$d^{\lambda,q}_{V}\sigma=\mathcal{D}_{V}\sigma$. Equation \eqref
{eq:NLambda0} establishes, on the other hand,
$\mathcal{D}_{V}\sigma=d\sigma$. Hence
$$d^{i,j}(\phi^{\lambda}_{q}\sigma)=\phi^{\lambda}_{q}((d^{\lambda,q}_{V})^{i,j}\sigma)=\phi^{\lambda}_{q}(d^{i,j}\sigma),$$
for $i\neq j\in\{0,1\}$, leading us to conclude that
$(\phi^{\lambda}_{q}\,\Delta^{1,0},\phi^{\lambda}_{q}\,\Delta^{0,1})$
is still a $d$-surface. Furthermore:

\begin{thm}\label{thm8.3.1}
If $(\Delta^{1,0},\Delta^{0,1})$ is a $q$-constrained Willmore
surface admitting $V$ as a $(q,d)$-central sphere congruence, then
$(\phi^{\lambda}_{q}\,\Delta^{1,0},\phi^{\lambda}_{q}\,\Delta^{0,1})$
is a $\mathrm{Ad}_{\phi^{\lambda}_{q}}(q_{\lambda})$-constrained
Willmore surface admitting $\phi^{\lambda}_{q}\,V$ as a
$(\mathrm{Ad}_{\phi^{\lambda}_{q}}(q_{\lambda}),d)$-central sphere
congruence, for each $\lambda\in\C\backslash\{0\}$.
\end{thm}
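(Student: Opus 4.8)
The plan is to reduce Theorem \ref{thm8.3.1} to the two structural results already established: Theorem \ref{th5.0.4} (spectral deformation preserves constrained harmonicity, shifting the multiplier $q\mapsto q_\lambda$) and Theorem \ref{prop5.5} (constrained harmonicity transports across an isomorphism of connections via $\mathrm{Ad}$). The hypothesis gives that $V$ is a $(q,d)$-central sphere congruence of $(\Delta^{1,0},\Delta^{0,1})$, so by Theorem \ref{th5.0.4}, $V$ is $(q_\lambda, d^{\lambda,q}_V)$-constrained harmonic, where $q_\lambda=\lambda^{-2}q^{1,0}+\lambda^2 q^{0,1}$. Applying Theorem \ref{prop5.5} with the isomorphism $\phi^\lambda_q:(\underline{\C}^{n+2},d^{\lambda,q}_V)\rightarrow(\underline{\C}^{n+2},d)$ then yields that $\phi^\lambda_q V$ is $(\mathrm{Ad}_{\phi^\lambda_q}q_\lambda, d)$-constrained harmonic. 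This handles the constrained harmonicity of the deformed sphere congruence immediately.

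The remaining work is to verify the three structural conditions that upgrade ``$\phi^\lambda_q V$ is a constrained harmonic bundle'' to ``$(\phi^\lambda_q\Delta^{1,0},\phi^\lambda_q\Delta^{0,1})$ is a constrained Willmore $d$-surface admitting $\phi^\lambda_q V$ as a central sphere congruence''. First, I would confirm that $\phi^\lambda_q V$ is a central sphere congruence of the deformed pair, which follows from equation \eqref{eq:isomV}: an isomorphism preserving connections sends $\mathcal{D}_V$ and $\mathcal{N}_V$ to their conjugates by $\phi^\lambda_q$, so the centrality conditions $(\mathcal{N}_V)^{1,0}\Delta^{0,1}=0=(\mathcal{N}_V)^{0,1}\Delta^{1,0}$ of Definition \ref{defnccsc} transport verbatim. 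The discussion just before the theorem statement already establishes that $(\phi^\lambda_q\Delta^{1,0},\phi^\lambda_q\Delta^{0,1})$ is again a $d$-surface, using $d^{\lambda,q}_V\sigma=\mathcal{D}_V\sigma=d\sigma$ for $\sigma\in\Gamma(\Delta)$.

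Second, and this is the only genuinely computational point, I must check the defining grading condition \eqref{eq:q10q01forcomplexcW} for the new multiplier: that $(\mathrm{Ad}_{\phi^\lambda_q}q_\lambda)^{1,0}$ takes values in $\wedge^2(\phi^\lambda_q\Delta^{0,1})$ and $(\mathrm{Ad}_{\phi^\lambda_q}q_\lambda)^{0,1}$ in $\wedge^2(\phi^\lambda_q\Delta^{1,0})$. Since $\phi^\lambda_q$ is complex-linear and preserves the $(1,0)/(0,1)$ splitting, the $(1,0)$-part of $\mathrm{Ad}_{\phi^\lambda_q}q_\lambda$ is $\mathrm{Ad}_{\phi^\lambda_q}$ applied to $(q_\lambda)^{1,0}=\lambda^{-2}q^{1,0}$, which by hypothesis lies in $\wedge^2\Delta^{0,1}$; equation \eqref{eq:adjwedge} then gives $\mathrm{Ad}_{\phi^\lambda_q}(\wedge^2\Delta^{0,1})=\wedge^2(\phi^\lambda_q\Delta^{0,1})$, exactly as needed, and symmetrically for the $(0,1)$-part. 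This is precisely the argument used in the proof of Proposition \ref{CWunderphi}.

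The main obstacle, such as it is, is purely bookkeeping: tracking the scalar factors $\lambda^{-2}$, $\lambda^2$ through $q_\lambda$ and ensuring they do not disturb the subbundle membership (they do not, being scalars), and confirming that the $(1,0)$/$(0,1)$ decomposition is respected by both $\mathrm{Ad}_{\phi^\lambda_q}$ (which commutes with the fixed complex structure $J$ on $M$, as $\phi^\lambda_q$ is a bundle map over $M$ not affecting the base) and by the passage $q\mapsto q_\lambda$. Once these are in place, the result follows by assembling the two cited theorems with the transport identities \eqref{eq:isomV} and \eqref{eq:adjwedge}. I would therefore write the proof as a short deduction: invoke Theorem \ref{th5.0.4}, then Theorem \ref{prop5.5}, then verify the grading via \eqref{eq:adjwedge}, citing Proposition \ref{CWunderphi} for the pattern.
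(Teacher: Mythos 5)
Your overall route is the paper's own: the paper likewise combines Theorem \ref{th5.0.4} with Theorem \ref{prop5.5}, the only organizational difference being that it first proves the intermediate statement that $(\Delta^{1,0},\Delta^{0,1})$ is itself a $(q_{\lambda},d^{\lambda,q}_{V})$-constrained Willmore surface admitting $V$ as a $(q_{\lambda},d^{\lambda,q}_{V})$-central sphere congruence, and then invokes Proposition \ref{CWunderphi} wholesale (whose proof is exactly your combination of Theorem \ref{prop5.5} with \eqref{eq:adjwedge}). Your $d$-surface check via $d^{\lambda,q}_{V}\sigma=d\sigma$ and your grading check (scalars $\lambda^{\mp 2}$ do not disturb subbundle membership, then \eqref{eq:adjwedge}) coincide with the paper's.

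The one step you state too loosely is the centrality of $\phi^{\lambda}_{q}V$. Equation \eqref{eq:isomV}, applied to $\phi^{\lambda}_{q}:(\underline{\C}^{n+2},d^{\lambda,q}_{V})\rightarrow(\underline{\C}^{n+2},d)$, gives $\mathcal{N}^{d}_{\phi^{\lambda}_{q}V}=\phi^{\lambda}_{q}\,\mathcal{N}^{d^{\lambda,q}_{V}}_{V}\,(\phi^{\lambda}_{q})^{-1}$: it transports the $d^{\lambda,q}_{V}$-data of $V$, not its $d$-data. Hence what "transports verbatim" is the condition $(\mathcal{N}^{d^{\lambda,q}_{V}}_{V})^{1,0}\Delta^{0,1}=0=(\mathcal{N}^{d^{\lambda,q}_{V}}_{V})^{0,1}\Delta^{1,0}$, whereas your hypothesis is the same statement for $\mathcal{N}_{V}=\mathcal{N}^{d}_{V}$; these are different tensors, so the transport alone does not suffice. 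The missing (one-line) link is the identity $\mathcal{N}_{V}^{d^{\lambda,q}_{V}}=\lambda^{-1}\mathcal{N}_{V}^{1,0}+\lambda\mathcal{N}_{V}^{0,1}$, valid because $q^{1,0},q^{0,1}$ preserve $V$ and $V^{\perp}$ while $\mathcal{N}_{V}$ intertwines them and $V,V^{\perp}$ are $\mathcal{D}_{V}$-parallel; this is precisely the displayed computation in the paper's proof (and appears in general form before Theorem \ref{thm5.2}). With it, the two centrality conditions differ only by the scalars $\lambda^{\mp 1}$, and your argument closes as written.
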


The proof will consist of showing that, if
$(\Delta^{1,0},\Delta^{0,1})$ is a $q$-constrained Willmore surface
admitting $V$ as a $(q,d)$-central sphere congruence, then
$(\Delta^{1,0},\Delta^{0,1})$ is also a
$(q_{\lambda},d^{\lambda,q}_{V})$-constrained Willmore surface
admitting $V$ as a $(q_{\lambda},d^{\lambda,q}_{V})$-central sphere
congruence, for each $\lambda\in\C\backslash\{0\}$.
\begin{proof}
Fix $\lambda\in\C\backslash\{0\}$. The crucial fact that
\begin{equation}\label{eq:crucial}
d^{\lambda,q}_{V}\sigma=d\sigma,
\end{equation}
for $\sigma\in\Gamma(\Delta)$ never-zero, shows that, if
$(\Delta^{1,0},\Delta^{0,1})$ is a surface, then
$(\Delta^{1,0},\Delta^{0,1})$ is also a $d^{\lambda,q}_{V}$-surface.
On the other hand, as $q^{1,0},q^{0,1}\in\Omega^{1}(\wedge^{2}V)$
and $V$ and $V^{\perp}$ are $\mathcal{D}$-parallel, we have
$$\mathcal{N}_{V}^{d^{\lambda,q}_{V}}=\lambda^{-1}\mathcal{N}_{V}^{1,0}+\lambda\mathcal{N}_{V}^{0,1},$$
making clear that, if $V$ is a central sphere congruence of
$(\Delta^{1,0},\Delta^{0,1})$, then $V$ is, as well, a
$d^{\lambda,q}_{V}$-central sphere congruence of
$(\Delta^{1,0},\Delta^{0,1})$. Furthermore: according to Theorem
\ref{th5.0.4}, the $(q,d)$-constrained harmonicity of $V$ ensures
its $(q_{\lambda},d^{\lambda,q}_{V})$-constrained harmonicity. Since
$q_{\lambda}^{1,0}$ and $q_{\lambda}^{0,1}$ are scales of,
respectively, $q^{1,0}$ and $q^{0,1}$, we conclude that
$q_{\lambda}^{1,0}\in\Omega^{1,0}(\wedge^{2}\Delta^{0,1})$ and
$q_{\lambda}^{0,1}\in\Omega^{0,1}(\wedge^{2}\Delta^{1,0})$. The
result follows now from Proposition \ref{CWunderphi}.
\end{proof}

The $q$-constrained harmonicity of $V$, characterized by the
flatness of $d^{\lambda,q}_{V}$, for all $\lambda\in
\C\backslash\{0\}$, establishes, equivalently, the flatness of
$$d^{\mu,\mathrm{Ad}_{\phi ^{\lambda}_{q}}q_{\lambda}}_{\phi
^{\lambda}_{q}V}=\phi ^{\lambda}_{q}\circ
(d^{\lambda,q}_{V})^{\mu,q_{\lambda}}_{V} \circ (\phi
^{\lambda}_{q})^{-1}=\phi ^{\lambda}_{q}\circ d^{\lambda\mu,q}_{V}
\circ (\phi ^{\lambda}_{q})^{-1},$$ for all $\lambda,\mu\in
\C\backslash\{0\}$, or, equivalently, the $\mathrm{Ad}_{\phi
^{\lambda}_{q}}q_{\lambda}$-harmonicity of $\phi ^{\lambda}_{q}V$,
for all $\lambda$. On the other hand, for each
$\lambda\in\C\backslash\{0\}$, the deformation $\phi
^{\lambda}_{q}V$ of $V$ is a central sphere congruence to the
deformation
$(\phi^{\lambda}_{q}\,\Delta^{1,0},\phi^{\lambda}_{q}\,\Delta^{0,1})$
of $(\Delta^{1,0},\Delta^{0,1})$. For each $\lambda\in\C\backslash
\{0\}$, the flat metric connection $d^{\lambda,q}_{V}$ provides, in
this way, a deformation of the constrained Willmore surface
$(\Delta^{1,0},\Delta^{0,1})$ into another constrained Willmore
surface. Note that in the case $(\Delta^{1,0},\Delta^{0,1})$ is a
Willmore surface and $q=0$,
$(\phi^{\lambda}_{q}\,\Delta^{1,0},\phi^{\lambda}_{q}\,\Delta^{0,1})$
is still a Willmore surface. Such a $\C\backslash \{0\}$-spectral
deformation of constrained Willmore surfaces provides, in
particular, a $S^{1}$-spectral deformation of real constrained
Willmore surfaces, as we observe next.

\subsection{Real spectral deformation}\label{realspecofCW}

Suppose that $(\Delta^{1,0},\Delta^{0,1})$ defines a real
$q$-constrained Willmore surface
$\Lambda:=\Delta^{1,0}\cap\Delta^{0,1}$. In that case, cf. Remark
\ref{s1familyofconnections}, for each $\lambda\in S^{1}$,
$d^{\lambda,q}_{V}$ defines a connection on
$\underline{\R}^{n+1,1}$, so that we can take $\phi^{\lambda}_{q}$
to be real. Fix $\lambda\in S^{1}$. Consider $\phi^{\lambda}_{q}$ to
be real, i.e., $\phi^{\lambda}_{q}$ the complex linear extension to
$\underline{\C}^{n+2}$ of an isomorphism
$$\phi^{\lambda}_{q}:(\underline{\R}^{n+1,1},d^{\lambda,q}_{V})\rightarrow
(\underline{\R}^{n+1,1},d).$$ Then, obviously,
$\overline{\phi^{\lambda}_{q}\Lambda}=\phi^{\lambda}_{q}\Lambda$. On
the other hand, given $\sigma\in\Gamma(\Lambda)$ never-zero, the
crucial equation \eqref{eq:crucial} gives
$$\langle\phi^{\lambda}_{q}\sigma\rangle+d^{1,0}(\phi^{\lambda}_{q}\sigma)(TM)+d^{0,1}(\phi^{\lambda}_{q}\sigma)(TM)=
\langle\sigma\rangle+d^{1,0}\sigma(TM)+d^{1,0}\sigma(TM).$$ We
conclude that
$(\phi^{\lambda}_{q}\,\Delta^{1,0},\phi^{\lambda}_{q}\,\Delta^{0,1})$
still defines a real surface,
$$\phi^{\lambda}_{q}\Lambda=:\Lambda_{q}^{\lambda},$$
which we denote by the \textit{spectral deformation of parameter
$\lambda$ of $\Lambda$, corresponding to the multiplier $q$}.

For $\lambda\in S^{1}$, the deformation of
$(\Delta^{1,0},\Delta^{0,1})$ provided by $d^{\lambda,q}_{V}$
preserves reality conditions. It preserves, as well, the conformal
structure induced in $M$: yet again according to equation \eqref
{eq:crucial}, given $\sigma\in\Gamma(\Lambda)$ never-zero,
$g_{\sigma}^{d^{\lambda,q}_{V}}=g_{\sigma}$ and, therefore,
$$\mathcal{C}_{\Lambda^{\lambda}_{q}}=\mathcal{C}_{\Lambda}.$$
According to Theorem \ref{thm8.3.1}, it preserves the central sphere
congruence, as well,
\begin{equation}\label{eq:cscpreserved byCWdeform}
S_{\phi_{q}^{\lambda}\Lambda}=\phi^{\lambda}_{q}S_{\Lambda}.
\end{equation}

Following Theorem \ref{thm8.3.1}, we have:
\begin{thm}\label{spdeformCW}
Suppose that $\Lambda$ is a real $q$-constrained Willmore surface,
for some $q\in\Omega^{1}(\Lambda\wedge\Lambda^{(1)})$. Then, for
each $\lambda\in S^{1}$, the transformation
$\phi^{\lambda}_{q}\Lambda$ of $\Lambda$ defined by the flat metric
connection $d^{\lambda}_{q}$ is a real
$\mathrm{Ad}_{\phi^{\lambda}_{q}}(q_{\lambda})$-constrained Willmore
surface.
\end{thm}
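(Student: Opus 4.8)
The plan is to read off this statement as the specialization to $\lambda\in S^{1}$, and to a real surface, of the $\C\backslash\{0\}$-deformation already produced in Theorem \ref{thm8.3.1}, supplemented by the reality bookkeeping carried out in the discussion preceding the statement. First I would record that, $\Lambda$ being a real $q$-constrained Willmore surface, Theorem \ref{ffffffffadeaeddaeae} presents its central sphere congruence $S$ as $q$-constrained harmonic with respect to $\mathcal{C}_{\Lambda}$; equivalently, in the complexified language, the pair $(\Lambda^{1,0},\Lambda^{0,1})$ is a $(q,d)$-constrained Willmore surface admitting (the complexification of) $S$ as a $(q,d)$-central sphere congruence. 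Theorem \ref{thm8.3.1} then applies verbatim to give, for every $\lambda\in\C\backslash\{0\}$, that $(\phi^{\lambda}_{q}\,\Lambda^{1,0},\phi^{\lambda}_{q}\,\Lambda^{0,1})$ is a $\mathrm{Ad}_{\phi^{\lambda}_{q}}(q_{\lambda})$-constrained Willmore surface admitting $\phi^{\lambda}_{q}\,S$ as a central sphere congruence. All that remains is to show that, for $\lambda\in S^{1}$, this complexified transform is in fact a \emph{real} surface, and that its multiplier is real.

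For the reality of the connection, I would invoke Remark \ref{s1familyofconnections}: since $S$, $q$ and $d$ are all real, $d^{\lambda}_{q}=d^{\lambda,q}_{S}$ is a real flat metric connection for each $\lambda\in S^{1}$, so one may take $\phi^{\lambda}_{q}$ to be the complex-linear extension of a real isomorphism $(\underline{\R}^{n+1,1},d^{\lambda}_{q})\rightarrow(\underline{\R}^{n+1,1},d)$. Reality of $\phi^{\lambda}_{q}$ and of $\Lambda$ immediately give $\overline{\phi^{\lambda}_{q}\Lambda}=\phi^{\lambda}_{q}\Lambda$, so that $\phi^{\lambda}_{q}\Lambda$ is the complexification of a real null line subbundle. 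The crucial identity \eqref{eq:crucial}, $d^{\lambda,q}_{V}\sigma=d\sigma$ for $\sigma\in\Gamma(\Lambda)$ never-zero, then shows (as spelt out just before the statement) that
$$\langle\phi^{\lambda}_{q}\sigma\rangle+d^{1,0}(\phi^{\lambda}_{q}\sigma)(TM)+d^{0,1}(\phi^{\lambda}_{q}\sigma)(TM)=\langle\sigma\rangle+d^{1,0}\sigma(TM)+d^{0,1}\sigma(TM),$$
which has rank $3$; hence $\phi^{\lambda}_{q}\Lambda$ defines an immersion $\Lambda^{\lambda}_{q}$ of $M$ into $\mathbb{P}(\mathcal{L})$, preserving $\mathcal{C}_{\Lambda}$ and, by \eqref{eq:cscpreserved byCWdeform}, carrying $S$ to the central sphere congruence $\phi^{\lambda}_{q}S$ of $\Lambda^{\lambda}_{q}$.

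It then remains to check that the multiplier $\mathrm{Ad}_{\phi^{\lambda}_{q}}(q_{\lambda})$ is real. Here I would use that $q$ real means $q^{0,1}=\overline{q^{1,0}}$ and that $\overline{\lambda}=\lambda^{-1}$ for $\lambda\in S^{1}$, whence $\overline{q_{\lambda}}=\lambda^{2}\,\overline{q^{1,0}}+\lambda^{-2}\,\overline{q^{0,1}}=\lambda^{-2}q^{1,0}+\lambda^{2}q^{0,1}=q_{\lambda}$; since $\phi^{\lambda}_{q}$ is real as well, $\mathrm{Ad}_{\phi^{\lambda}_{q}}(q_{\lambda})$ is real. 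Invoking the real form of Proposition \ref{CWunderphi} then upgrades the conclusion of Theorem \ref{thm8.3.1} to the assertion that $\phi^{\lambda}_{q}\Lambda$ is a \emph{real} $\mathrm{Ad}_{\phi^{\lambda}_{q}}(q_{\lambda})$-constrained Willmore surface, which is the desired statement.

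The bulk of the genuine work — the flatness computation and the constrained-harmonicity transfer — is already discharged by Theorems \ref{thm5.2}, \ref{th5.0.4}, \ref{prop5.5} and \ref{thm8.3.1}, so I expect no serious obstacle here; the one point requiring care is precisely the passage from the \emph{complexified} transform back to a genuine real surface, i.e. verifying that reality of the data is preserved for unit spectral parameter and that the rank-$3$ immersion condition survives, which is exactly what the crucial equation \eqref{eq:crucial} secures.
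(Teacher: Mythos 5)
Your proposal is correct and follows essentially the same route as the paper: reduce to Theorem \ref{thm8.3.1} for the complexified statement, then use Remark \ref{s1familyofconnections} to take $\phi^{\lambda}_{q}$ real and the crucial equation \eqref{eq:crucial} to see that reality and the immersion (rank $3$) condition survive for $\lambda\in S^{1}$. Your explicit check that $\overline{q_{\lambda}}=q_{\lambda}$ for unit $\lambda$ is a small piece of bookkeeping the paper leaves implicit, but it is exactly the right verification and changes nothing in the structure of the argument.
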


The loop of flat metric connections $d^{\lambda}_{q}$ defines in
this way a $S^{1}$-deformation of the real $q$-constrained Willmore
surface $\Lambda$ into a family of real constrained Willmore
surfaces. In the particular case $\Lambda$ is Willmore and $q=0$,
the deformation remains within the class of Willmore surfaces; in
fact, it coincides  with the deformation presented in Section \ref
{subsec:willmfam}.

An alternative perspective on this spectral deformation of the real
$q$-constrained Willmore surface $\Lambda$ is that of the action
$$\Lambda\subset(\underline{\R}^{n+1,1},d)\mapsto\Lambda\subset(\underline{\R}^{n+1,1}, d^{\lambda}_{q}),$$
of the loop of flat metric connections $d^{\lambda}_{q}$ on
$\{\Lambda\}$, consisting of the change of the trivial flat
connection on $\underline{\R}^{n+1,1}$ into the flat metric
connection $d^{\lambda}_{q}$, for each $\lambda$. Equation
\eqref{eq:crucial} establishes
$\Lambda^{(1)}_{d^{\lambda}_{q}}=\Lambda^{(1)}$ and, therefore,
$\Lambda$ as $d^{\lambda}_{q}$-surface. Equation
\eqref{eq:conndeconn} establishes, furthermore, $\Lambda$ as a
$(q_{\lambda},d^{\lambda}_{q})$-constrained Willmore surface.

We complete this section by verifying that the deformation defined
by the loop of flat metric connections $d^{\lambda}_{q}$ coincides,
up to reparametrization, with the spectral deformation of a
$q$-constrained Willmore surface in spherical space defined in
\cite{SD}.\footnote{The omission,  in \cite{SD}, of reference to the
transformation rule of the normal connection shall be understood as
preservation.}Fix a holomorphic chart $z$ of
$(M,\mathcal{C}_{\Lambda^{\lambda}_{q}})=(M,\mathcal{C}_{\Lambda})$.
We have
$$g_{\phi_{q}^{\lambda}\sigma^{z}}=g_{\sigma^{z}}=g_{z},$$ showing
that $\phi_{q}^{\lambda}\sigma^{z}$ is the normalized section of
$\phi_{q}^{\lambda}\Lambda$ with respect to $z$. Since
\begin{eqnarray*}
(\phi_{q}^{\lambda}\sigma^{z})_{zz}&=&(\phi_{q}^{\lambda}\sigma_{z}^{z})_{z}\\&=&
\phi_{q}^{\lambda}((d^{\lambda}_{q})_{\delta_{z}}\sigma_{z}^{z})\\&=&
\phi_{q}^{\lambda}(\mathcal{D}_{\delta_{z}}\sigma_{z}^{z}+\lambda^{-1}\mathcal{N}_{\delta_{z}}\sigma_{z}^{z}+(\lambda^{-2}-1)q_{\delta_{z}}\sigma_{z}^{z})
\\&=&\phi_{q}^{\lambda}(\pi_{S}\sigma_{zz}^{z}+\lambda^{-1}\pi_{S^{\perp}}\sigma_{zz}^{z}-\frac{1}{2}(\lambda^{-2}-1)\,q^{z}\sigma^{z})
\end{eqnarray*}
and, ultimately,
$$(\phi_{q}^{\lambda}\sigma^{z})_{zz}=-\frac{1}{2}\,(c^{z}+(\lambda^{-2}-1)\,q^{z})\,\phi_{q}^{\lambda}\sigma^{z}+\lambda^{-1}\phi_{q}^{\lambda}k^{z},$$
we conclude that $(k^{\lambda}_{q})^{z}$ and
$(c^{\lambda}_{q})^{z}$, the Hopf differential and the Schwarzian
derivative, respectively, of $\phi_{q}^{\lambda}\Lambda$ with
respect to $z$, relate to those of $\Lambda$ by
$$(k^{\lambda}_{q})^{z}=\lambda^{-1}\phi_{q}^{\lambda}k^{z},\,\,\,\,\,\,(c^{\lambda}_{q})^{z}=c^{z}+(\lambda^{-2}-1)\,q^{z}.$$
By Lemma \ref{thmonHopfeSchwarz}, having in consideration
\eqref{eq:cscpreserved byCWdeform}, the conclusion follows.

\section{Dressing action}\label{sec:dress}

\markboth{\tiny{A. C. QUINTINO}}{\tiny{CONSTRAINED WILLMORE
SURFACES}}

We use a version of the dressing action theory of C.-L. Terng and K.
Uhlenbeck \cite{uhlenbeck} to build transformations of constrained
Willmore surfaces. We start by defining a local action of a group of
rational maps on the set of flat metric connections of the type
$\hat{d}^{\lambda,q}_{S}$, with $\hat{d}$ flat metric connection on
$\underline{\C}^{n+2}$ and
$q\in\Omega^{1}(\wedge^{2}S\oplus\wedge^{2}S^{\perp})$. Namely,
given $r=r(\lambda)\in\Gamma(O(\underline{\C}^{n+2}))$ holomorphic
at $\lambda=0$ and $\lambda=\infty$ and twisted in the sense that
$\rho r(\lambda)\rho=r(-\lambda)$, for $\rho$ reflection across $S$,
we define a $1$-form $\hat{q}$ with values in $\wedge^{2}S$ (note
that the fact that $r(\lambda)$ is twisted establishes that both
$r(0)$ and $r(\infty)$ preserve $S$) by
$\hat{q}^{1,0}:=\mathrm{Ad}_{r(0)}q^{1,0},\,\,\,\,\hat{q}\,^{0,1}:=\mathrm{Ad}_{r(\infty)}q^{0,1}$,
and a new family of metric connections from $d^{\lambda,q}_{S}$ by
$\hat{d}^{\lambda,\hat{q}}_{S}:=r(\lambda)\circ
d^{\lambda,q}_{S}\circ r(\lambda)^{-1}$. Obviously, for each
$\lambda$, the flatness of $\hat{d}^{\lambda,\hat{q}}_{S}$ is
equivalent to that of $d^{\lambda,q}_{S}$. Crucially, if
$\hat{d}^{\lambda,\hat{q}}_{S}$ admits a holomorphic extension to
$\lambda\in\C\backslash\{0\}$ through metric connections on
$\underline{\C}^{n+2}$, then the notation
$\hat{d}^{\lambda,\hat{q}}_{S}$ proves to be not merely formal, for
$\hat{d}:=\hat{d}^{1,\hat{q}}_{S}$. In that case, it follows that,
if $\Lambda$ is $q$-constrained Willmore, then $S$ is
$(\hat{q},\hat{d})$-constrained harmonic and, therefore, in the case
$1\in\mathrm{dom} (r)$, $S^{*}:=r(1)^{-1}S$ is $q^{*}$-constrained
harmonic, for $q^{*}:=\mathrm{Ad}_{r(1)^{-1}}\hat{q}$. The
transformation of $S$ into $S^{*}$, preserving constrained
harmonicity, leads, furthermore, to a transformation of $\Lambda$
into a new constrained Willmore surface, provided that
$\mathrm{det}\,r(0)\vert_{S}=\mathrm{det}\,r(\infty)\vert_{S}$. Set
$(\Lambda^{*})^{1,0}:=r(1)^{-1}r(\infty)\Lambda^{1,0}$,
$(\Lambda^{*})^{0,1}:=r(1)^{-1}r(0)\Lambda^{0,1}$ and
$\Lambda^{*}:=(\Lambda^{*})^{1,0}\cap (\Lambda^{*})^{0,1}$. The
condition above on the determinants of $r(0)\vert_{S}$ and
$r(\infty)\vert_{S}$ establishes $\Lambda^{*}$ as a line bundle (the
argument is based on the two families of lines on the Klein
quadric). The isotropy of $\Lambda^{1,0}$ and $\Lambda^{0,1}$
ensures that of $\Lambda^{*}$. It is not clear, though, that
$\Lambda^{*}$ is a real bundle. If $\Lambda^{*}$ is a real surface,
one proves that $S^{*}$ is the central sphere congruence of
$\Lambda^{*}$ and that the bundles $(\Lambda^{*})^{1,0}$ and
$(\Lambda^{*})^{0,1}$ defined above are not merely formal. The fact
that $q^{1,0}\in\Omega^{1,0}(\wedge^{2}\Lambda^{0,1})$ establishes
$(q^{*})^{1,0}\in\Omega^{1,0}(\wedge^{2}(\Lambda^{*})^{0,1})\subset\Omega^{1,0}(\Lambda^{*}\wedge(\Lambda^{*})^{(1)})$.
We conclude that, if, furthermore, $q^{*}$ is real, then
$\Lambda^{*}$ is a $q^{*}$-constrained Willmore surface. In fact, we
use a version of the dressing action theory of C.-L. Terng and K.
Uhlenbeck \cite{uhlenbeck} to build, more generally, transformations
of constrained harmonic bundles and complexified constrained
Willmore surfaces.\newline

Let $\rho\in\Gamma(O(\underline{\C}^{n+2}))$ be reflection across
$V$,
$$\rho=\pi_{V}-\pi_{V^{\perp}}.$$Obviously, given
$w\in\Gamma(\underline{\C}^{n+2})$, $w$ is a section of $V$
(respectively, a section of $V^{\perp}$) if and only if $\rho w=w$
(respectively, $\rho w=-w$). Note that $\rho^{-1}=\rho$. Let $q$ be
a $1$-form with values in $\wedge^{2}V\oplus \wedge^{2}V^{\perp}$.
The $\mathcal{D}_{V}$-parallelness of $V$ and $V^{\perp}$, together
with the fact that $\mathcal{N}_{V}$ intertwines $V$ and
$V^{\perp}$, whereas $q$ preserves them, makes clear that
\begin{equation}\label{eq:rholam}
d^{-\lambda,q}_{V}=\rho\circ
d_{V}^{\lambda,q}\circ\rho^{-1},\end{equation}for
$\lambda\in\C\backslash \{0\}$. Suppose we have
$r(\lambda)\in\Gamma(O(\underline{\C}^{n+2}))$ such that
$\lambda\mapsto r(\lambda)$ is rational in $\lambda$, $r$ is
holomorphic and invertible at $\lambda=0$ and $\lambda=\infty$ and
twisted in the sense that
\begin{equation}\label{eq:RcommuteRho}
\rho\,r(\lambda)\,\rho^{-1}=r(-\lambda),
\end{equation}
for $\lambda\in\mathrm{dom}(r)$. In particular, it follows that both
$r(0)$ and $r(\infty)$ commute with $\rho$, and, therefore, that
\begin{equation}\label{eq:r0rinfpreserveVperp}
r(0)_{\vert_{V}},r(\infty)_{\vert_{V}}\in\Gamma(O(V)),\,\,\,\,r(0)_{\vert_{V^{\perp}}},r(\infty)_{\vert_{V^{\perp}}}\in\Gamma(O(V^{\perp})).
\end{equation}
Define $\hat{q}\in\Omega^{1}(\wedge^{2}V\oplus \wedge^{2}V^{\perp})$
by setting
$$\hat{q}^{1,0}:=\mathrm{Ad}_{r(0)}q^{1,0},\,\,\,\,\,\,\hat{q}\,^{0,1}:=\mathrm{Ad}_{r(\infty)}q^{0,1}.$$
Define a new family of metric connections on $\underline{\C}^{n+2}$
by setting
$$\hat{d}^{\lambda,\hat{q}}_{V}:=r(\lambda)\circ
d^{\lambda,q}_{V}\circ r(\lambda)^{-1}.$$ Suppose that there exists
a holomorphic extension of
$\lambda\mapsto\hat{d}^{\lambda,\hat{q}}_{V}$ to
$\lambda\in\C\backslash\{0\}$ through metric connections on
$\underline{\C}^{n+2}$. In that case, as we, crucially, verify next,
the notation $\hat{d}^{\lambda,\hat{q}}_{V}$ is not merely formal:
\begin{prop}\label{cruxh}
$$\hat{d}^{\lambda,\hat{q}}_{V}=\mathcal{D}_{V}^{\hat{d}}+\lambda^{-1}(\mathcal{N}_{V}^{\hat{d}})^{1,0}+\lambda(\mathcal{N}_{V}^{\hat{d}})^{0,1}+(\lambda^{-2}-1)\hat{q}^{1,0}+(\lambda^{2}-1)\hat{q}^{0,1},$$
for the flat metric connection
$$\hat{d}:=\hat{d}^{1,\hat{q}}_{V}=\mathrm{lim}_{\lambda\rightarrow 1}r(\lambda)\circ
d^{\lambda,q}_{V}\circ r(\lambda)^{-1}$$and $\lambda\in\C\backslash
\{0\}$.
\end{prop}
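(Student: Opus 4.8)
The plan is to recover the whole family $\hat{d}^{\lambda,\hat{q}}_V$ from its singularity structure in $\lambda$, in the style of the Terng--Uhlenbeck dressing argument. Since the difference of two connections in the family is a tensorial $1$-form, I would write $\hat{d}^{\lambda,\hat{q}}_V=\hat{\nabla}+\lambda^{-2}\beta_{-2}+\lambda^{-1}\beta_{-1}+\lambda\beta_1+\lambda^2\beta_2$ and first justify that this Laurent expansion in degrees $-2,\dots,2$ is exactly what occurs. By hypothesis $\lambda\mapsto\hat{d}^{\lambda,\hat{q}}_V$ is holomorphic on all of $\C\backslash\{0\}$, so the only possible poles are at $\lambda=0$ and $\lambda=\infty$. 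Near those two points $r$ is holomorphic and invertible, so the gauge term $-(dr(\lambda))\,r(\lambda)^{-1}$ and $r(\lambda)\circ\mathcal{D}_V\circ r(\lambda)^{-1}$ are holomorphic there, while conjugation by $r$ preserves pole orders; since $d^{\lambda,q}_V-\mathcal{D}_V=\lambda^{-1}\mathcal{N}_V^{1,0}+\lambda\mathcal{N}_V^{0,1}+(\lambda^{-2}-1)q^{1,0}+(\lambda^2-1)q^{0,1}$ has a pole of order $2$ at $0$ (from $q^{1,0}$) and of order $2$ at $\infty$ (from $q^{0,1}$), the conjugate $\mathrm{Ad}_{r(\lambda)}(d^{\lambda,q}_V-\mathcal{D}_V)$ has poles of order at most $2$ at each. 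A Liouville-type argument (holomorphic on $\C\backslash\{0\}$, finite-order poles at $0$ and $\infty$) then forces the stated Laurent shape, with $\hat{\nabla}$ a metric connection and each $\beta_j\in\Omega^1(o(\underline{\C}^{n+2}))$.

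Next I would use the twisting to split the coefficients along the $V$-grading. Combining the twisting of $d^{\lambda,q}_V$ (equation \eqref{eq:rholam}) with that of $r$ (equation \eqref{eq:RcommuteRho}) gives $\rho\,\hat{d}^{\lambda,\hat{q}}_V\,\rho^{-1}=\hat{d}^{-\lambda,\hat{q}}_V$. Matching powers after applying $\mathrm{Ad}_{\rho}$ (which fixes each degree) against $\lambda\mapsto-\lambda$ (which multiplies the degree-$j$ term by $(-1)^j$) shows that $\hat{\nabla}$ preserves $V$ and $V^\perp$, that $\beta_{\pm2}\in\Omega^1(\wedge^2V\oplus\wedge^2V^\perp)$, and that $\beta_{\pm1}\in\Omega^1(V\wedge V^\perp)$. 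The leading principal parts come solely from $\mathrm{Ad}_{r(\lambda)}(d^{\lambda,q}_V-\mathcal{D}_V)$, and I would read off $\beta_{-2}=\mathrm{Ad}_{r(0)}q^{1,0}=\hat{q}^{1,0}$ and $\beta_2=\mathrm{Ad}_{r(\infty)}q^{0,1}=\hat{q}^{0,1}$, of the correct Hodge type since $\mathrm{Ad}_{r(0)},\mathrm{Ad}_{r(\infty)}$ preserve form type.

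Setting $\hat{d}:=\hat{d}^{1,\hat{q}}_V=\hat{\nabla}+\beta_{-2}+\beta_{-1}+\beta_1+\beta_2$ and decomposing $\hat{d}=\mathcal{D}_V^{\hat{d}}+\mathcal{N}_V^{\hat{d}}$, the grading identifies the block-diagonal part as $\mathcal{D}_V^{\hat{d}}=\hat{\nabla}+\beta_{-2}+\beta_2$ and the off-diagonal part as $\mathcal{N}_V^{\hat{d}}=\beta_{-1}+\beta_1$. Substituting $\hat{\nabla}=\mathcal{D}_V^{\hat{d}}-\hat{q}^{1,0}-\hat{q}^{0,1}$ back into the expansion collapses it to $\mathcal{D}_V^{\hat{d}}+\lambda^{-1}\beta_{-1}+\lambda\beta_1+(\lambda^{-2}-1)\hat{q}^{1,0}+(\lambda^2-1)\hat{q}^{0,1}$, which is the asserted form once $\beta_{-1}=(\mathcal{N}_V^{\hat{d}})^{1,0}$ and $\beta_1=(\mathcal{N}_V^{\hat{d}})^{0,1}$. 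These two type assertions I would obtain from the principal-part computation at $0$: expanding $r(\lambda)=r(0)(I+\lambda r_1+\cdots)$, the $\lambda^{-1}$-coefficient equals $\mathrm{Ad}_{r(0)}\mathcal{N}_V^{1,0}+\mathrm{Ad}_{r(0)}[\,r_1,q^{1,0}]$; as $r_1$ is a $0$-form and $\mathcal{N}_V^{1,0},q^{1,0}$ are $(1,0)$-forms, $\beta_{-1}$ is a $(1,0)$-form, and symmetrically $\beta_1$ is a $(0,1)$-form.

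The main obstacle I anticipate is the first step: pinning down that the extended family is genuinely a Laurent polynomial supported in degrees $-2$ through $2$. This requires controlling the pole orders at $0$ and $\infty$ using only the holomorphy and invertibility of $r$ there (not global rationality), and then invoking holomorphy on $\C\backslash\{0\}$ to exclude essential singularities. Once the expansion has this shape, the twisting and the two principal-part computations are essentially bookkeeping.
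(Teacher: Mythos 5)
Your proof is correct, and while it rests on the same underlying idea as the paper's (read off the Laurent coefficients of the dressed family and identify them), its organization is genuinely different. The paper splits $\hat{d}^{\lambda,\hat{q}}_{V}$ into its $(0,1)$- and $(1,0)$-parts at the outset, observes that each is holomorphic at one end of $\mathbb{P}^{1}$, and then identifies \emph{every} coefficient by explicit limits as $\lambda\to 0$ and $\lambda\to\infty$; in particular it obtains the closed formula $\mathcal{D}_{V}^{\hat{d}}=r(0)\cdot(\mathcal{D}_{V}^{0,1}-q^{0,1}+q^{1,0})+r(\infty)\cdot(\mathcal{D}_{V}^{1,0}-q^{1,0}+q^{0,1})$ of \eqref{eq:mathcalDdosr}, using the twisting only at $\lambda=1$ to place the leftover terms in $\Omega^{1}(V\wedge V^{\perp})$. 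You instead keep the whole connection, bound the pole orders at $0$ and $\infty$ and invoke Liouville to get the Laurent-polynomial shape, and then let the twisting relation $\rho\circ\hat{d}^{\lambda,\hat{q}}_{V}\circ\rho^{-1}=\hat{d}^{-\lambda,\hat{q}}_{V}$, matched degree by degree against the sign $(-1)^{j}$, grade all coefficients by the $\rho$-eigenbundle decomposition at once — a cleaner and more uniform way to see which pieces are block-diagonal and which lie in $V\wedge V^{\perp}$, at the price of identifying $\mathcal{D}_{V}^{\hat{d}}$ only abstractly as $\hat{\nabla}+\hat{q}^{1,0}+\hat{q}^{0,1}$. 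What the paper's heavier computation buys is precisely those explicit formulas: \eqref{eq:mathcalDdosr} (equivalently \eqref{eq:curlyDscomadjuntascomesemchapeustnne029876}) is reused later in the proofs of Theorem \ref{eq:thm8.4.2paraja} and Proposition \ref{quaddiffscoincide}, and your final principal-part expansion (with $r(\lambda)=r(0)(I+\lambda r_{1}+\cdots)$, giving the $(1,0)$-type of the degree-$(-1)$ coefficient) is essentially the paper's post-proof identity \eqref{eq:calNhatd0,1}. Two small points to make airtight in a write-up: the twisting identity holds a priori only on $\mathrm{dom}(r)\backslash\{0\}$ and must be extended to all of $\C\backslash\{0\}$ by analytic continuation, and the skew-symmetry of the Laurent coefficients (hence metric-ness of $\hat{\nabla}$) should be noted as following from the hypothesis that the extension is through \emph{metric} connections; both are routine.
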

\begin{proof}
First note that, as $r$ is holomorphic and invertible at $\lambda=0$
and
$$(d^{\lambda,q}_{V})^{0,1}=\mathcal{D}_{V}^{0,1}+\lambda
\mathcal{N}_{V}^{0,1}+(\lambda^{2}-1)q^{0,1}$$ is holomorphic on
$\C$, the connection
$$(\hat{d}^{\lambda,\hat{q}}_{V})^{0,1}=r(\lambda)\circ(d^{\lambda,q}_{V})^{0,1}\circ r(\lambda)^{-1}$$
which admits a holomorphic extension to
$\lambda\in\C\backslash\{0\}$, admits, furthermore, a holomorphic
extension to $\lambda\in\C$. Thus, locally,
$$(\hat{d}^{\lambda,\hat{q}}_{V})^{0,1}=A_{0}^{0,1}+\sum_{i\geq
1}\lambda^{i}A_{i}^{0,1},$$ with $A_{0}$ connection and
$A_{i}\in\Omega^{1}(o(\underline{\C}^{n+2}))$, for all $i$.
Considering then limits of
$$\lambda^{-2}A_{0}^{0,1}+\sum_{i\geq
1}\lambda^{i-2}A_{i}^{0,1}=r(\lambda)\circ
(\lambda^{-2}\mathcal{D}_{V}^{0,1}+\lambda^{-1}\mathcal{N}_{V}^{0,1}+(1-\lambda^{-2})q^{0,1})\circ
r(\lambda)^{-1},$$when $\lambda$ goes to infinity, we get
$$A_{2}^{0,1}+\mathrm{lim}_{\lambda\rightarrow \infty}\sum_{i\geq
3}\lambda^{i-2}A_{i}^{0,1}=\mathrm{Ad}_{r(\infty)}\,q^{0,1},$$which
shows that $A_{i}^{0,1}=0$, for all $i\geq 3$, and that
$A_{2}^{0,1}=\hat{q}^{0,1}$. Considering now limits of
$$A_{0}^{0,1}+\lambda A_{1}^{0,1}+\lambda^{2}\hat{q}^{0,1}=r(\lambda)\circ
(\mathcal{D}_{V}^{0,1}+\lambda\mathcal{N}_{V}^{0,1}+(\lambda^{2}-1)q^{0,1})\circ
r(\lambda)^{-1},$$when $\lambda$ goes to $0$, we conclude that
$A_{0}^{0,1}=r(0)\cdot(\mathcal{D}_{V}^{0,1}-q^{0,1})$ and,
therefore, that
$(\hat{d}^{\lambda,\hat{q}}_{V})^{0,1}=r(0)\cdot(\mathcal{D}_{V}^{0,1}-q^{0,1})+\lambda
A_{1}^{0,1}+\lambda^{2}\hat{q}^{0,1}$. As for
$$(\hat{d}^{\lambda,\hat{q}}_{V})^{1,0}=r(\lambda)\circ (\mathcal{D}_{V}^{1,0}+\lambda^{-1}\mathcal{N}^{1,0}+(\lambda^{-2}-1)q^{1,0})\circ r(\lambda)^{-1},$$
which has a pole at $\lambda=0$, we have, for $\lambda$ away from
$0$,
\begin{equation}\label{eq:dhat01holomwithpole0}
\sum_{i\geq 1}\lambda^{-i}A_{-i}^{1,0}+A^{1,0}_{0}+\sum_{i\geq
1}\lambda^{i}A_{i}^{1,0}=r(\lambda)\circ
(\mathcal{D}_{V}^{1,0}+\lambda^{-1}\mathcal{N}^{1,0}+(\lambda^{-2}-1)q^{1,0})\circ
r(\lambda)^{-1},
\end{equation}
with $A_{-i}^{1,0}\in\Omega^{1}(o(\underline{\C}^{n+2}))$, for all
$i\geq 1$. Considering limits of \eqref{eq:dhat01holomwithpole0}
when $\lambda$ goes to infinity, shows that $A_{i}^{1,0}=0$, for all
$i\geq 1$, and that
$A^{1,0}_{0}=r(\infty)\cdot(\mathcal{D}_{V}^{1,0}-q^{1,0})$.
Multiplying then both members of equation
\eqref{eq:dhat01holomwithpole0} by $\lambda^{2}$ and considering
limits when $\lambda$ goes to $0$, we conclude that
$A^{1,0}_{-2}=\hat{q}^{1,0}$ and that $A_{-i}^{1,0}=0$, for all
$i\geq 3$, and, ultimately, that
$(\hat{d}^{\lambda,\hat{q}}_{V})^{1,0}=r(\infty)\cdot(\mathcal{D}^{1,0}_{V}-q^{1,0})+\lambda^{-1}A_{-1}^{1,0}+\lambda^{-2}\hat{q}^{1,0}$.
Thus
\begin{eqnarray*}
\hat{d}_{V}^{\lambda,\hat{q}}&=&r(0)\cdot(\mathcal{D}_{V}^{0,1}-q^{0,1}+q^{1,0})+r(\infty)\cdot(\mathcal{D}^{1,0}_{V}-q^{1,0}+q^{0,1})\\
&& \mbox{}+ \lambda^{-1}A_{-1}^{1,0}+\lambda
A_{1}^{0,1}+(\lambda^{-2}-1)\hat{q}^{1,0}+(\lambda^{2}-1)\hat{q}^{0,1},
\end{eqnarray*}
for $\lambda\in\C\backslash \{0\}$, and, in particular,
$$\hat{d}=r(0)\cdot(\mathcal{D}_{V}^{0,1}-q^{0,1}+q^{1,0})+r(\infty)\cdot(\mathcal{D}^{1,0}_{V}-q^{1,0}+q^{0,1})+
A_{-1}^{1,0}+A_{1}^{0,1}.$$ The fact that $r(0)$ and $r(\infty)$
(and so $r(0)^{-1}$ and $r(\infty)^{-1}$), as well as $q$, preserve
$V$ and $V^{\perp}$, together with the
$\mathcal{D}_{V}$-parallelness of $V$ and of $V^{\perp}$, shows that
$\hat{d}-(A_{-1}^{1,0}+A_{1}^{0,1})$ preserves $\Gamma(V)$ and
$\Gamma(V^{\perp})$. On the other hand, equations \eqref{eq:rholam}
and \eqref{eq:RcommuteRho} combine to give
$$\hat{d}^{-\lambda,\hat{q}}_{V}=\rho\circ\hat{d}^{\lambda,\hat{q}}_{V}\circ\rho^{-1},$$
for all $\lambda\in\C\backslash\{0\}$ away from the poles of $r$ and
then, by continuity, on all of $\C\backslash\{0\}$. The particular
case of $\lambda=1$ gives $$\rho
(A_{-1}^{1,0}+A_{1}^{0,1})_{\vert_{V}}=-(A_{-1}^{1,0}+A_{1}^{0,1})_{\vert_{V}},\,\,\,\rho
(A_{-1}^{1,0}+A_{1}^{0,1})_{\vert_{V^{\perp}}}=-(A_{-1}^{1,0}+A_{1}^{0,1})_{\vert_{V^{\perp}}},$$showing
that that $A_{-1}^{1,0}+A_{1}^{0,1}\in\Omega^{1}(V\wedge
V^{\perp})$. We conclude that
\begin{equation}\label{eq:mathcalDdosr}
r(0)\cdot(\mathcal{D}_{V}^{0,1}-q^{0,1}+q^{1,0})+r(\infty)\cdot(\mathcal{D}^{1,0}_{V}-q^{1,0}+q^{0,1})=\mathcal{D}_{V}^{\hat{d}}
\end{equation}
and
$$A_{-1}^{1,0}=(\mathcal{N}_{V}^{\hat{d}})^{1,0},\,\,\,\,\,\,A_{1}^{0,1}=(\mathcal{N}_{V}^{\hat{d}})^{0,1},$$
completing the proof.
\end{proof}

Before proceeding any further, we remark on the $(1,0)$- and
$(0,1)$-components of $\mathcal{D}^{\hat{d}}_{V}$ and
$\mathcal{N}^{\hat{d}}_{V}$. According to \eqref{eq:mathcalDdosr},
\begin{equation}\label{eq:curlyDscomadjuntascomesemchapeustnne029876}
(\mathcal{D}_{V}^{\hat{d}})^{1,0}=r(\infty)\cdot(\mathcal{D}_{V}^{1,0}-q^{1,0})+\hat{q}^{1,0},\,\,\,\,(\mathcal{D}^{\hat{d}}_{V})^{0,1}=r(0)\cdot(\mathcal{D}_{V}^{0,1}-q^{0,1})+\hat{q}^{0,1}.
\end{equation}
As for $\mathcal{N}^{\hat{d}}_{V}$, according to Proposition
\ref{cruxh},
\begin{eqnarray*}
(\mathcal{N}^{\hat{d}}_{V})^{1,0}&=&\mathrm{lim}_{\lambda\rightarrow
0}\,\lambda((\hat{d}_{V}^{\lambda,\hat{q}})^{1,0}-(\mathcal{D}^{\hat{d}}_{V})^{1,0}-(\lambda^{-2}-1)\hat{q}^{1,0})\\&=&\mathrm{lim}_{\lambda\rightarrow
0}\,\lambda((\hat{d}_{V}^{\lambda,\hat{q}})^{1,0}-\lambda^{-2}\hat{q}^{1,0})\\&=&\mathrm{lim}_{\lambda\rightarrow
0}\,(r(\lambda)\circ(\lambda\, (d^{\lambda,q}_{V})^{1,0})\circ
r(\lambda)^{-1}-\lambda^{-1}\mathrm{Ad}_{r(0)}q^{1,0})\\&=&\mathrm{Ad}_{r(0)}\mathcal{N}^{1,0}_{V}+\mathrm{lim}_{\lambda\rightarrow
0}\,\frac{1}{\lambda}(\mathrm{Ad}_{r(\lambda)}-\mathrm{Ad}_{r(0)})q^{1,0}.
\end{eqnarray*}
so that
\begin{equation}\label{eq:calNhatd0,1}
(\mathcal{N}^{\hat{d}}_{V})^{1,0}=\mathrm{Ad}_{r(0)}\mathcal{N}^{1,0}_{V}+\frac{d}{d\lambda}_{\vert_{\lambda=0}}\mathrm{Ad}_{r(\lambda)}q^{1,0};
\end{equation}
and, similarly,
\begin{eqnarray*}
(\mathcal{N}^{\hat{d}}_{V})^{0,1}&=&\mathrm{lim}_{\lambda\rightarrow
\infty}\,\lambda^{-1}((\hat{d}_{V}^{\lambda,\hat{q}})^{0,1}-(\mathcal{D}^{\hat{d}}_{V})^{0,1}-(\lambda^{2}-1)\hat{q}^{0,1})\\&=&
\mathrm{Ad}_{r(\infty)}\mathcal{N}^{0,1}_{V}+\mathrm{lim}_{\lambda\rightarrow
\infty}\,(r(\lambda)\circ \lambda q^{0,1}\circ
r(\lambda)^{-1}-\lambda\mathrm{Ad}_{r(\infty)}q^{0,1})
\end{eqnarray*}
and, therefore,
\begin{equation}\label{eq:calNhatd1,0}
(\mathcal{N}^{\hat{d}}_{V})^{0,1}=\mathrm{Ad}_{r(\infty)}\mathcal{N}^{0,1}_{V}+\frac{d}{d\lambda}_{\vert_{\lambda=0}}\mathrm{Ad}_{r(\lambda^{-1})}q^{0,1}.
\end{equation}

Now suppose $V$ is a $q$-constrained harmonic bundle, in which case,
according to Theorem \ref{thm5.2}, the family $d^{\lambda,q}_{V}$,
on $\lambda\in\C\backslash \{0\}$, consists of a family of flat
metric connections on $\underline{\C}^{n+2}$. For non-zero
$\lambda\in \mathrm{dom}(r)$ and by definition of
$\hat{d}^{\lambda,\hat{q}}_{V}$, the isometry $r(\lambda)$ is made
into an isomorphism
$$r(\lambda):(\underline{\C}^{n+2},d^{\lambda,q}_{V})\rightarrow(\underline{\C}^{n+2},\hat{d}^{\lambda,\hat{q}}_{V}),$$
ensuring, in particular, that $\hat{d}^{\lambda,\hat{q}}_{V}$ is a
flat connection, as so is $d^{\lambda,q}_{V}$: the curvature tensors
of $\hat{d}^{\lambda,\hat{q}}_{V}$ and $d^{\lambda,q}_{V}$ are
related by
$$R^{\hat{d}^{\lambda,\hat{q}}_{V}}=r(\lambda)\,
R^{d^{\lambda,q}_{V}}\, r(\lambda)^{-1}.$$ Furthermore, the
vanishing of the curvature tensor of $\hat{d}^{\lambda,\hat{q}}_{V}$
for $\lambda\in\mathrm{dom}(r)\backslash \{0\}$ extends by
continuity to $\lambda\in\C\backslash \{0\}$. We are in this way
provided with a new $\C\backslash \{0\}$-family of flat metric
connections on $\underline{\C}^{n+2}$, that of
$\hat{d}^{\lambda,\hat{q}}_{V}$.

Suppose $1\in\mathrm{dom}(r)$. In the light of Proposition
\ref{cruxh}, and according to Theorem \ref {thm5.2}, the flatness of
the metric connection $\hat{d}^{\lambda,\hat{q}}_{V}$, for
$\lambda\in\C\backslash \{0\}$, is equivalent to the
$(\hat{q},\hat{d})$-constrained harmonicity of $V$, which, in its
turn and according to Proposition \ref {prop5.5}, is equivalent to
the $(\mathrm{Ad}_{r(1)^{-1}}\,\hat{q})$-constrained harmonicity of
$r(1)^{-1}V$.
\begin{thm}\label{CHtransf}
$r(1)^{-1}V$ is a $(\mathrm{Ad}_{r(1)^{-1}}\,\hat{q})$-constrained
harmonic bundle.
\end{thm}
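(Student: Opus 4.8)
The plan is to assemble the chain of equivalences laid out in the paragraph preceding the statement into a single deduction: I would start from the $q$-constrained harmonicity of $V$ with respect to the trivial flat connection, pass through the intermediate flat metric connection $\hat{d}$ via conjugation by the loop $r(\lambda)$, and finally transport the conclusion back down to the trivial connection by conjugating with $r(1)^{-1}$. Each link in this chain is supplied by a result already proved above, so the proof is essentially a matter of invoking them in the correct order and checking the domains on which the various families of connections are flat.

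First I would invoke Theorem \ref{thm5.2}: the hypothesis that $V$ is $q$-constrained harmonic means exactly that $d^{\lambda,q}_{V}$ is flat for every $\lambda\in\C\backslash\{0\}$. For $\lambda\in\mathrm{dom}(r)\backslash\{0\}$ the isometry $r(\lambda)$ conjugates $d^{\lambda,q}_{V}$ into $\hat{d}^{\lambda,\hat{q}}_{V}$, so the two curvature tensors are related by $R^{\hat{d}^{\lambda,\hat{q}}_{V}}=r(\lambda)\,R^{d^{\lambda,q}_{V}}\,r(\lambda)^{-1}$; hence $\hat{d}^{\lambda,\hat{q}}_{V}$ is flat wherever $r$ is defined and $\lambda\neq 0$. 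Next, using the standing hypothesis that $\lambda\mapsto\hat{d}^{\lambda,\hat{q}}_{V}$ admits a holomorphic extension to $\C\backslash\{0\}$ through metric connections, I would propagate this flatness to all of $\C\backslash\{0\}$: since $\mathrm{dom}(r)$ is the complement of the finitely many poles of the rational map $r$, the set $\mathrm{dom}(r)\backslash\{0\}$ is open and dense in $\C\backslash\{0\}$, and the curvature of the extended family depends holomorphically on $\lambda$ and vanishes on this dense set, so it vanishes identically. By Proposition \ref{cruxh}, the extended family is precisely the loop $\hat{d}^{\lambda,\hat{q}}_{V}$ built from the flat metric connection $\hat{d}=\hat{d}^{1,\hat{q}}_{V}$ and the form $\hat{q}$, read in the $\hat{d}$-sense; therefore, applying Theorem \ref{thm5.2} now to the connection $\hat{d}$, the bundle $V$ is $(\hat{q},\hat{d})$-constrained harmonic.

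Finally I would transport this to $r(1)^{-1}V$. Because $d^{1,q}_{V}=d$, Proposition \ref{cruxh} gives $\hat{d}=r(1)\circ d\circ r(1)^{-1}$, so $r(1)\colon(\underline{\C}^{n+2},d)\to(\underline{\C}^{n+2},\hat{d})$ is an isomorphism of bundles provided with a metric and a connection. Applying Proposition \ref{prop5.5} to this isomorphism, with the bundle there taken to be $r(1)^{-1}V$, converts the $(\hat{q},\hat{d})$-constrained harmonicity of $V=r(1)\bigl(r(1)^{-1}V\bigr)$ into the $(\mathrm{Ad}_{r(1)^{-1}}\hat{q},d)$-constrained harmonicity of $r(1)^{-1}V$, which is exactly the asserted conclusion; note that $\mathrm{Ad}_{r(1)^{-1}}\hat{q}$ takes values in $\wedge^{2}(r(1)^{-1}V)\oplus\wedge^{2}(r(1)^{-1}V)^{\perp}$ by \eqref{eq:adjwedge}, so it is a genuine multiplier. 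I expect the only genuinely delicate point to be the analytic-continuation step: flatness is available a priori only off the poles of $r$, and one must argue carefully—via the holomorphic-extension hypothesis and the identity theorem applied to the holomorphic family of curvatures—that it propagates to the full punctured plane, since it is this global flatness, and not merely flatness on $\mathrm{dom}(r)$, that Theorem \ref{thm5.2} requires in order to read off constrained harmonicity with respect to $\hat{d}$.
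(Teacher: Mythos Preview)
Your proposal is correct and follows essentially the same approach as the paper: the paper's argument, given in the paragraph immediately preceding the theorem, consists of exactly the chain you describe---flatness of $d^{\lambda,q}_{V}$ via Theorem~\ref{thm5.2}, transfer to $\hat{d}^{\lambda,\hat{q}}_{V}$ by conjugation and continuity, identification of this loop via Proposition~\ref{cruxh}, and then descent to $r(1)^{-1}V$ via Theorem~\ref{prop5.5}. Your write-up is in fact more detailed than the paper's, which states the chain of equivalences in a single sentence without spelling out the analytic-continuation step or the verification that $\mathrm{Ad}_{r(1)^{-1}}\hat{q}$ lands in the correct bundle.
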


Note that this transformation preserves the harmonicity condition.

This transformation of a constrained harmonic bundle into a new one
leads, furthermore, to a transformation of constrained Willmore
surfaces into new ones. Suppose, furthermore, that $V$ is a
$(q,d)$-central sphere congruence of some $q$-constrained Willmore
surface $(\Delta^{1,0},\Delta^{0,1})$. Set
$$\hat{\Delta}^{1,0}:=r(\infty)\Delta^{1,0},\,\,\,\,\,\,\hat{\Delta}^{0,1}:=r(0)\Delta^{0,1},$$
and suppose that
\begin{equation}\label{eq:condondet}
\mathrm{det}\,r(0)_{\vert_{V}}=\mathrm{det}\,r(\infty)_{\vert_{V}}.
\end{equation}
Then:

\begin{thm}\label{eq:thm8.4.2paraja}
$(r(1)^{-1}\hat{\Delta}^{1,0},r(1)^{-1}\hat{\Delta}^{0,1})$ is a
$(\mathrm{Ad}_{r(1)^{-1}}\hat{q})$-constrained Willmore surface
admitting $r(1)^{-1}V$ as a
$(\mathrm{Ad}_{r(1)^{-1}}\hat{q},d)$-central sphere congruence.
\end{thm}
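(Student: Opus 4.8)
The plan is to prove the statement in two movements: first to establish that the pair $(\hat{\Delta}^{1,0},\hat{\Delta}^{0,1})$ is itself a $(\hat{q},\hat{d})$-constrained Willmore surface admitting $V$ as a $(\hat{q},\hat{d})$-central sphere congruence, and then to transport this conclusion across the isomorphism $r(1)^{-1}$. For the transport, note that Proposition \ref{cruxh} exhibits $\hat{d}=\hat{d}^{1,\hat{q}}_{V}=r(1)\circ d\circ r(1)^{-1}$ (using $1\in\mathrm{dom}(r)$ and $d^{1,q}_{V}=d$) as a flat metric connection, so $r(1)^{-1}\colon(\underline{\C}^{n+2},\hat{d})\to(\underline{\C}^{n+2},d)$ is an isomorphism of bundles with metric and connection. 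Granting the intermediate claim, a single application of Proposition \ref{CWunderphi} (with $\tilde{d}=\hat{d}$, $\tilde{\phi}=r(1)^{-1}$ and multiplier $\hat{q}$) yields exactly that $(r(1)^{-1}\hat{\Delta}^{1,0},r(1)^{-1}\hat{\Delta}^{0,1})$ is a $\mathrm{Ad}_{r(1)^{-1}}\hat{q}$-constrained Willmore surface admitting $r(1)^{-1}V$ as an $\mathrm{Ad}_{r(1)^{-1}}\hat{q}$-central sphere congruence.

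So the content is the intermediate claim, which I would verify condition by condition. That $V$ is $(\hat{q},\hat{d})$-constrained harmonic is immediate: $r(\lambda)$ conjugates the flat $d^{\lambda,q}_{V}$ into $\hat{d}^{\lambda,\hat{q}}_{V}$, which is therefore flat on $\mathrm{dom}(r)\setminus\{0\}$ and, by continuity, on all of $\C\setminus\{0\}$, whence Theorem \ref{thm5.2} applies via the normal form of Proposition \ref{cruxh}. That $\hat{\Delta}^{1,0}$ and $\hat{\Delta}^{0,1}$ are isotropic of rank $2$ is clear since $r(\infty),r(0)$ are isometries preserving $V$; that $\hat{q}^{1,0}=\mathrm{Ad}_{r(0)}q^{1,0}\in\Omega^{1,0}(\wedge^{2}\hat{\Delta}^{0,1})$ and $\hat{q}^{0,1}=\mathrm{Ad}_{r(\infty)}q^{0,1}\in\Omega^{0,1}(\wedge^{2}\hat{\Delta}^{1,0})$ follows from \eqref{eq:adjwedge}; and the enveloping property $\hat{\Delta}^{1,0}+\hat{\Delta}^{0,1}\subset V$ holds because $r(0),r(\infty)$ preserve $V$ and $\Delta^{1,0}+\Delta^{0,1}\subset V$.

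The two genuinely computational conditions are the central sphere congruence property and condition \eqref{eq:defcomplsurf}. For the former I would use the formulas \eqref{eq:calNhatd0,1} and \eqref{eq:calNhatd1,0}: applying $(\mathcal{N}^{\hat{d}}_{V})^{1,0}$ to a section $r(0)\tau$ of $\hat{\Delta}^{0,1}$ with $\tau\in\Gamma(\Delta^{0,1})$, the $\mathrm{Ad}_{r(0)}\mathcal{N}^{1,0}_{V}$ term vanishes since $\mathcal{N}^{1,0}_{V}\Delta^{0,1}=0$, while the derivative term reduces, using $q^{1,0}\tau=0$, to $-r(0)\,q^{1,0}\bigl(r(0)^{-1}r'(0)\tau\bigr)$. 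Differentiating the twisting relation \eqref{eq:RcommuteRho} at $\lambda=0$ shows $r'(0)$ anticommutes with $\rho$, hence sends $V$ into $V^{\perp}$; as $r(0)^{-1}$ preserves $V^{\perp}$ and $q^{1,0}$ annihilates $(\Delta^{0,1})^{\perp}\supset V^{\perp}$, this term vanishes too, and the same argument applied to the twisted map $\lambda\mapsto r(\lambda^{-1})$ gives $(\mathcal{N}^{\hat{d}}_{V})^{0,1}\hat{\Delta}^{1,0}=0$, so $V$ is a $\hat{d}$-central sphere congruence in the sense of Definition \ref{defnccsc}. For condition \eqref{eq:defcomplsurf}, take $\hat{\sigma}\in\Gamma(\hat{\Delta})\subset\Gamma(\hat{\Delta}^{0,1})$; the central sphere property kills $(\mathcal{N}^{\hat{d}}_{V})^{1,0}\hat{\sigma}$ and $\hat{q}^{1,0}\hat{\sigma}$ vanishes by isotropy, so \eqref{eq:curlyDscomadjuntascomesemchapeustnne029876} leaves $\hat{d}^{1,0}\hat{\sigma}=r(\infty)(\mathcal{D}^{1,0}_{V}-q^{1,0})w$ for $w:=r(\infty)^{-1}\hat{\sigma}\in\Gamma(\Delta^{1,0})$. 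Here $\mathcal{D}^{1,0}_{V}w\in\Gamma(\Delta^{1,0})$ by Proposition \ref{curlyDpreservam1001} applied to the original surface, and, crucially, $q^{1,0}w\in\Gamma(\Delta)$: writing $q^{1,0}$ locally as a multiple of $\sigma\wedge\sigma_{\bar{z}}$, the coefficient $(w,\sigma)$ of $\sigma_{\bar{z}}$ vanishes by the isotropy of $\Delta^{1,0}$, leaving only a multiple of $\sigma$. Hence $\hat{d}^{1,0}\hat{\sigma}\in\Omega^{1,0}(\hat{\Delta}^{1,0})$, and the conjugate computation gives the $(0,1)$-statement.

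Finally, the rank-one intersection $\hat{\Delta}=\hat{\Delta}^{1,0}\cap\hat{\Delta}^{0,1}$ — the one place where hypothesis \eqref{eq:condondet} is indispensable — is what I expect to be the main obstacle, and I would argue it through the geometry of the Klein quadric. The isotropic $2$-planes of the rank $4$ space $V$ split into two families, the two rulings of the quadric surface cut out in $\mathbb{P}(V)$ by the metric, and two such planes meet in rank $1$ precisely when they lie in opposite rulings; since $\Delta^{1,0}\cap\Delta^{0,1}=\Delta$ has rank $1$, the planes $\Delta^{1,0}$ and $\Delta^{0,1}$ lie in opposite rulings. Elements of $O(V)$ of determinant $+1$ preserve each ruling while those of determinant $-1$ interchange them, so the equality $\mathrm{det}\,r(0)\vert_{V}=\mathrm{det}\,r(\infty)\vert_{V}$ of \eqref{eq:condondet} forces $r(\infty)$ and $r(0)$ to act identically on the set of rulings; hence $\hat{\Delta}^{1,0}=r(\infty)\Delta^{1,0}$ and $\hat{\Delta}^{0,1}=r(0)\Delta^{0,1}$ again lie in opposite rulings and meet in rank $1$. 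With all the conditions in hand, the intermediate claim is established, and the theorem follows by the transport across $r(1)^{-1}$ described above.
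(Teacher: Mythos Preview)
Your proof is correct and follows essentially the same route as the paper: reduce to showing $(\hat{\Delta}^{1,0},\hat{\Delta}^{0,1})$ is $(\hat{q},\hat{d})$-constrained Willmore with $V$ as $(\hat{q},\hat{d})$-central sphere congruence, then transport via $r(1)^{-1}$ through Proposition~\ref{CWunderphi}. The ingredients you use --- the Klein quadric argument for the rank-one intersection, the formulas \eqref{eq:curlyDscomadjuntascomesemchapeustnne029876}, \eqref{eq:calNhatd0,1}, \eqref{eq:calNhatd1,0} for the pieces of $\hat{d}$, Proposition~\ref{curlyDpreservam1001} for the $\mathcal{D}_V^{1,0}$-invariance of $\Delta^{1,0}$, and differentiation of the twisting relation to force $r'(0)$ to interchange $V$ and $V^{\perp}$ --- are exactly those of the paper. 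The only cosmetic difference is that the paper phrases the anticommutation of $r'(0)$ with $\rho$ as the statement $r(0)^{-1}r'(0)\in\Gamma(V\wedge V^{\perp})$ (equation~\eqref{eq:rrsat0}), and that it establishes the slightly stronger invariance $(\mathcal{D}_{V}^{\hat{d}})^{1,0}\Gamma(\hat{\Delta}^{1,0})\subset\Omega^{1,0}(\hat{\Delta}^{1,0})$ rather than just the surface condition on $\hat{\Delta}$; your direct verification of~\eqref{eq:defcomplsurf} is equally valid.
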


\begin{proof}
The proof will consist of showing that
$(\hat{\Delta}^{1,0},\hat{\Delta}^{0,1})$ is a
$(\hat{q},\hat{d})$-constrained Willmore surface admitting $V$ as a
$(\hat{q},\hat{d})$-central sphere congruence. The result will then
follow from Proposition \ref{CWunderphi}.

First of all, note that, as $q$ is a multiplier to
$(\Delta^{1,0},\Delta^{0,1})$, we have, according to equation \eqref
{eq:adjwedge},
$$\hat{q}^{1,0}\in\Omega^{1,0}(\wedge^{2}\hat{\Delta}^{0,1}),\,\,\,\,\,\,\
\hat{q}^{0,1}\in\Omega^{0,1}(\wedge^{2}\hat{\Delta}^{1,0}).$$ Having
observed above that $V$ is $(\hat{q},\hat{d})$-constrained harmonic,
it remains to show that $(\hat{\Delta}^{1,0},\hat{\Delta}^{0,1})$ is
a $\hat{d}$-surface admitting $V$ as a $\hat{d}$-central sphere
congruence, as follows.

The fact $\Delta^{1,0}$ and $\Delta^{0,1}$ are rank $2$ isotropic
subbundles of $V$ ensures that so are $\hat{\Delta^{1,0}}$ and
$\hat{\Delta^{0,1}}$, as $r(0)$ and $r(\infty)$ are orthogonal
transformations and preserve $\Gamma(V)$.

The fact that $\Delta^{1,0}$ and $\Delta^{0,1}$ intersect in a rank
$1$ bundle ensures that so do $\hat{\Delta}^{1,0}$ and
$\hat{\Delta}^{0,1}$. The point is a general fact\footnote{That of
the two families of lines on the Klein quadric.} about the
Grassmannian $\mathcal{G}_{W}$ of isotropic $2$-planes in a complex
$4$-dimensional space $W$: it has two components, each an orbit of
the special orthogonal group $SO(W)$, intertwined by the action of
elements of $O(W)\backslash SO(W)$, and for which any element
intersects any element of the other component in a line while
distinct elements of the same component have trivial intersection.
Given that $\mathrm{rank}\,(\Delta^{1,0}\cap\Delta^{0,1})=1$,
$\Delta^{1,0}_{p}$ and $\Delta^{0,1}_{p}$ lie in different
components of $\mathcal{G}_{V_{p}}$ and the hypothesis
\eqref{eq:condondet} ensures that the same is true of
$\hat{\Delta}^{1,0}_{p}$ and $\hat{\Delta}^{0,1}_{p}$, for all $p$
in $M$.

Set
$$\hat{\Delta}:=\hat{\Delta}^{1,0}\cap\hat{\Delta}^{0,1}.$$We are left
to verify that
\begin{equation}\label{eq:chapeusds}
\hat{d}^{1,0}\Gamma(\hat{\Delta})\subset\Omega^{1}(\hat{\Delta}^{1,0}),\,\,\,\,\,\,\hat{d}^{0,1}\Gamma(\hat{\Delta})\subset\Omega^{1}(\hat{\Delta}^{0,1})
\end{equation}
and that
\begin{equation}\label{eq:chapeusNs}
(\mathcal{N}_{V}^{\hat{d}})^{1,0}\hat{\Delta}^{0,1}=0=(\mathcal{N}_{V}^{\hat{d}})^{0,1}\hat{\Delta}^{1,0}.
\end{equation}
Equation \eqref{eq:chapeusNs} forces
$\mathcal{N}_{V}^{\hat{d}}\hat{\Delta}=0$, in which situation,
condition \eqref{eq:chapeusds} reads, equivalently,
$$(\mathcal{D}_{V}^{\hat{d}})^{1,0}\Gamma(\hat{\Delta})\subset\Omega^{1}(\hat{\Delta}^{1,0}),\,\,\,\,\,\,(\mathcal{D}_{V}^{\hat{d}})^{0,1}\Gamma(\hat{\Delta})\subset\Omega^{1}(\hat{\Delta}^{0,1}),$$
which, in its turn, follows from
\begin{equation}\label{eq:vcsl'1'08763trfgvhbjvuf364rgbcn}
(\mathcal{D}_{V}^{\hat{d}})^{1,0}\Gamma(\hat{\Delta}^{1,0})\subset\Omega^{1}(\hat{\Delta}^{1,0}),\,\,\,\,\,\,(\mathcal{D}_{V}^{\hat{d}})^{0,1}\Gamma(\hat{\Delta}^{0,1})\subset\Omega^{1}(\hat{\Delta}^{0,1}).
\end{equation}
It is \eqref{eq:chapeusNs}
 and \eqref{eq:vcsl'1'08763trfgvhbjvuf364rgbcn} that we shall establish.

By the isotropy of $\Delta^{i,j}$,  we have
$q^{i,j}\Delta^{i,j}\subset\Delta\subset\Delta^{i,j}$, for $i\neq
j\in\{0,1\}$, which, together with Proposition
\ref{curlyDpreservam1001}, makes clear that
$$r(\infty)\cdot(\mathcal{D}_{V}^{1,0}-q^{1,0})\,\Gamma(\hat{\Delta}^{1,0})\subset\Omega^{1}(\hat{\Delta}^{1,0}),\,\,\,\,r(0)\cdot(\mathcal{D}_{V}^{0,1}-q^{0,1})\,\Gamma(\hat{\Delta}^{0,1})\subset\Omega^{1}(\hat{\Delta}^{0,1}).$$
On the other hand, as $\hat{\Delta}$ has rank $1$,
$\hat{\Delta}\wedge\hat{\Delta}^{i,j}=\wedge^{2}\hat{\Delta}^{i,j}$,
so that
$\hat{q}^{i,j}\in\Omega^{i,j}(\hat{\Delta}\wedge\hat{\Delta}^{j,i})$,
and, therefore, by the isotropy of $\hat{\Delta}^{i,j}$,
$\hat{q}^{i,j}\hat{\Delta}^{i,j}\subset\hat{\Delta}\subset\hat{\Delta}^{i,j}$,
for $i\neq j\in\{0,1\}$, completing the verification of
\eqref{eq:vcsl'1'08763trfgvhbjvuf364rgbcn}, in view of
\eqref{eq:curlyDscomadjuntascomesemchapeustnne029876}.

Finally, we establish \eqref{eq:chapeusNs}. According to
\eqref{eq:calNhatd0,1},
$$(\mathcal{N}^{\hat{d}}_{V})^{1,0}= \mathrm{Ad}_{r(0)}(\mathcal{N}^{1,0}+[r(0)^{-1}\frac{d}{d\lambda}_{\vert_{\lambda=0}}r(\lambda),q^{1,0}]).$$
The centrality of $V$ with respect to $(\Delta^{1,0},\Delta^{0,1})$
establishes, in particular,  $\mathcal{N}_{V}^{1,0}\Delta^{0,1}=0$,
whilst the isotropy of $\Delta^{0,1}$ ensures, in particular, that
$q^{1,0}\Delta^{0,1}=0$. Hence
$$\mathrm{Ad}_{r(0)}(\mathcal{N}^{1,0}+r(0)^{-1}\frac{d}{d\lambda}_{\vert_{\lambda=0}}r(\lambda)\,q^{1,0})\hat{\Delta}^{0,1}=0.$$
On the other hand, differentiation of  $r(\lambda)^{-1}=\rho\,
r(-\lambda)^{-1} \rho$, derived from equation
\eqref{eq:RcommuteRho}, gives
$$-r(\lambda)^{-1}\frac{d}{dk}_{\vert_{k=\lambda}}r(k)\,r(\lambda)^{-1}=\rho\, r(-\lambda)^{-1}\frac{d}{dk}_{\vert_{k=-\lambda}}r(k)\,r(-\lambda)^{-1}\rho,$$
or, equivalently,
$$\rho\,r(\lambda)^{-1}\frac{d}{dk}_{\vert_{k=\lambda}}r(k)\rho=-r(-\lambda)^{-1}\frac{d}{dk}_{\vert_{k=-\lambda}}r(k)\,r(-\lambda)^{-1}\rho\,r(\lambda)\rho,$$
and, therefore, yet again by equation \eqref{eq:RcommuteRho},
$$\rho\,r(\lambda)^{-1}\frac{d}{dk}_{\vert_{k=\lambda}}r(k)\rho=-r(-\lambda)^{-1}\frac{d}{dk}_{\vert_{k=-\lambda}}r(k).$$
Evaluation at $\lambda=0$ shows then that
$$\rho\,
r(0)^{-1}\frac{d}{d\lambda}_{\vert_{\lambda=0}}r(\lambda)\rho=-r(0)^{-1}\frac{d}{d\lambda}_{\vert_{\lambda=0}}r(\lambda).$$
Equivalently,
\begin{equation}\label{eq:rrsat0}
r(0)^{-1}\frac{d}{d\lambda}_{\vert_{\lambda=0}}r(\lambda)\in\Gamma(V\wedge
V^{\perp}).
\end{equation}
As $q^{1,0}V^{\perp}=0$, we conclude that
$$Ad_{r(0)}(q^{1,0}r(0)^{-1}\frac{d}{d\lambda}_{\vert_{\lambda=0}}r(\lambda))\hat{\Delta}^{0,1}=0$$
and, ultimately, that
$(\mathcal{N}_{V}^{\hat{d}})^{1,0}\hat{\Delta}^{0,1}=0$. A similar
argument near $\lambda=\infty$ establishes
$(\mathcal{N}_{V}^{\hat{d}})^{0,1}\hat{\Delta}^{1,0}=0$, completing
the proof.
\end{proof}

\section{B\"{a}cklund transformation of constrained harmonic bundles and complexified constrained
Willmore surfaces}\label{sec:tranfscentr}

\markboth{\tiny{A. C. QUINTINO}}{\tiny{CONSTRAINED WILLMORE
SURFACES}}

The classical B\"{a}cklund transformation was introduced by A.
B\"{a}cklund in the nineteenth century, providing a mean to generate
constant negative Gaussian curvature surfaces from a given one. Many
variants of this transformation have followed - for details, see
\cite{uhlenbeck}. In this section, we construct rational maps
$r(\lambda)$ satisfying the hypothesis of the dressing action
presented above, defining then a transformation of constrained
harmonic bundles and complexified constrained Willmore surfaces, the
\textit{B\"{a}cklund} \textit{transformation}. As the philosophy
underlying the work of C.-L. Terng and K. Uhlenbeck \cite{uhlenbeck}
suggests, we consider linear fractional transformations. We define
two different types of such transformations, \textit{type p} and
\textit{type q}, each one of them satisfying the hypothesis of the
dressing action with the exception of condition
\eqref{eq:detr0infty}. Iterating the procedure, in a $2$-step
process composing the two different types of transformations, will
produce a desired $r(\lambda)$. A \textit{Bianchi permutability} of
type $p$ and type $q$ transformations of constrained harmonic
bundles is established. For special choices of parameters, the
reality of $\Lambda$ as a bundle proves to establish that of
$\Lambda^{*}$, whilst the reality of $q$ establishes that of
$q^{*}$. For such a choice of parameters, $\Lambda^{*}$ is said to
be a \textit{B\"{a}cklund transform} of $\Lambda$, provided that it
immerses.\newline

Let $\rho$ denote reflection across $V$. Choose
$\alpha\in\C\backslash \{-1,0,1\}$ and a null line subbundle $L$ of
$\underline{\C}^{n+2}$ such that, locally,
\begin{equation}\label{eq:rhoLnotorthL}
\rho L\cap L^{\perp}=\{0\}.
\end{equation}
For that, note that condition \eqref{eq:rhoLnotorthL}, equivalently
characterized by $(\rho l,l)\neq 0$, fixing $l\in\Gamma(L)$
never-zero, is an open condition on points of $M$, so that it is
satisfied locally as long as, at some point $p\in M$, $L_{p}$ is not
orthogonal to $\rho_{p}L_{p}$, its reflection across $V_{p}$.
Condition \eqref{eq:rhoLnotorthL} ensures, on the one hand, that
$L\cap \rho L=\{0\}$, and, on the other hand, that $L\oplus \rho L$
is non-degenerate. Consider then projections
$\pi_{L}:\underline{\C}^{n+2}\rightarrow L$, $\pi_{\rho
L}:\underline{\C}^{n+2}\rightarrow \rho L$ and $\pi_{(L\oplus \rho
L)^{\perp}}:\underline{\C}^{n+2}\rightarrow (L\oplus \rho
L)^{\perp}$ with respect to the decomposition
$$\underline{\C}^{n+2}=L\oplus \rho L\oplus(L\oplus \rho L)^{\perp}.$$
For $\lambda\in\C\backslash\{\pm\alpha\}$, set
$$p_{\alpha,L}(\lambda):=\frac{\alpha-\lambda}{\alpha+\lambda}\,\pi_{L}+\pi_{(L\oplus \rho L)^{\perp}}+\,\frac{\alpha+\lambda}{\alpha-\lambda}\,\pi_{\rho L}$$and
$$q_{\alpha,L}(\lambda):=\frac{\lambda-\alpha}{\lambda+\alpha}\,\pi_{L}+\pi_{(L\oplus \rho L)^{\perp}}+\,\frac{\lambda+\alpha}{\lambda-\alpha}\,\pi_{\rho L},$$defining in this way
two maps of $\C\backslash\{\pm\alpha\}$ into
$\Gamma(O(\underline{\C}^{n+2}))$ that, clearly, extend
holomorphically to the Riemann sphere except $\pm\alpha$,
$$p_{\alpha,L}, q_{\alpha,L}:\mathbb{P}^{1}\backslash\{\pm
\alpha\}\rightarrow\Gamma(O(\underline{\C}^{n+2})),$$ by setting
$$p_{\alpha,L}(\infty):=-\pi_{L}+\pi_{(L\oplus \rho L)^{\perp}}-\pi_{\rho L}$$and
\begin{equation}\label{eq:qinfty=1}
q_{\alpha,L}(\infty):=I.
\end{equation}
We may, alternatively, denote $p_{\alpha,L}$ and $q_{\alpha,L}$ by,
specifically and respectively, $p_{V,\alpha,L}$ and
$q_{V,\alpha,L}$.

The transformations \textit{of type} $p$ and \textit{of type} $q$
are closely related: for $\lambda\in\C\backslash\{\pm\alpha,0\}$,
\begin{equation}\label{eq:conjap}
p_{\alpha,L}(\lambda)=q_{\alpha^{-1},L}(\lambda^{-1}),
\end{equation}
and
\begin{equation}\label{eq:p0qinftyetc}
p_{\alpha,L}(0)=q_{\alpha,L}(\infty),\,\,\,\,\,\,\,\,p_{\alpha,L}(\infty)=q_{\alpha,L}(0).
\end{equation}
Observe that
$$\mathrm{det}\,p_{\alpha,L}(\infty)_{\vert_{V}}=\mathrm{det}\,q_{\alpha,L}(0)_{\vert_{V}}=-1.$$
In fact,
$$p_{\alpha,L}(\infty)_{\vert_{V}}=q_{\alpha,L}(0)_{\vert_{V}}
=I\left\{
\begin{array}{ll} -1 & \mbox{$\mathrm{on}\,V\cap(L\oplus\rho L)$}\\ 1 &
\mbox{$\mathrm{on}\,V\cap(L\oplus\rho
L)^{\perp}$}\end{array}\right.$$and, as $\rho L\neq L$, $L$ is not a
subbundle of $V$ and, therefore, $\mathrm{rank}\,V\cap(L\oplus\rho
L)=1$ (noting that, for any subbundle $W$ of $\R^{n+1,1}$ and so, in
particular, for $L$, the intersection of $W+\rho W$ with $V$ is not
trivial). It follows that
$\mathrm{det}\,p_{\alpha,L}(0)_{\vert_{V}}\neq
\mathrm{det}\,p_{\alpha,L}(\infty)_{\vert_{V}}$ and
$\mathrm{det}\,q_{\alpha,L}(0)_{\vert_{V}}\neq
\mathrm{det}\,q_{\alpha,L}(\infty)_{\vert_{V}}$; neither
$r=p_{\alpha,L}$ nor $r=q_{\alpha,L}$ satisfies the hypothesis
\eqref{eq:condondet} of the dressing action. However, a two-step
process, composing a transformation of \textit{type $p$} with a
transformation of \textit{type $q$}, produces transformations
$r(\lambda)$ satisfying that hypothesis: by \eqref{eq:qinfty=1} and
\eqref{eq:p0qinftyetc},
$$\mathrm{det}\,p_{\alpha,L}(0)q_{\alpha',L'}(0)_{\vert_{V}}=
\mathrm{det}\,p_{\alpha,L}(\infty)q_{\alpha',L'}(\infty)_{\vert_{V}},$$
as well as
$$\mathrm{det}\,q_{\alpha,L}(0)p_{\alpha',L'}(0)_{\vert_{V}}=
\mathrm{det}\,q_{\alpha,L}(\infty)p_{\alpha',L'}(\infty)_{\vert_{V}},$$
for all $\alpha',L'$. As we shall verify next, for special choices
of parameters $\alpha ,L,\alpha'$ and $L'$, both
$p_{\alpha,L}q_{\alpha',L'}(\lambda)$ and
$q_{\alpha,L}p_{\alpha',L'}(\lambda)$ define $r(\lambda)$
satisfying, furthermore, all the hypotheses of the dressing action.

For that, first note that, for
$\lambda\in\mathbb{P}^{1}\backslash\{\pm\alpha\}$,
\begin{equation}\label{eq:pinvs}
p_{\alpha,L}(\lambda)^{-1}=p_{-\alpha,L}(\lambda)=p_{\alpha,L}(-\lambda),\,\,\,\,\,\,q_{\alpha,L}(\lambda)^{-1}=q_{-\alpha,L}(\lambda)=q_{\alpha,L}(-\lambda).
\end{equation}
On the other hand, the isometry $\rho=\rho ^{-1}$ intertwines $L$
and $\rho L$ and, therefore, preserves $(L\oplus \rho L)^{\perp}$,
which makes clear that $\rho\circ p_{\alpha,L}$ and
$p_{\alpha,L}^{-1}\circ\rho$ coincide in $L$, $\rho L$ and $(L\oplus
\rho L)^{\perp}$. Hence
\begin{equation}\label{eq:rhopqinv}
\rho
\,p_{\alpha,L}(\lambda)\rho=p_{\alpha,L}(-\lambda),\,\,\,\,\,\,\,\,\rho\,q_{\alpha,L}(\lambda)\rho=q_{\alpha,L}(-\lambda),
\end{equation}
for $\lambda\in\mathbb{P}^{1}\backslash\{\pm\alpha\}$; establishing
both $p_{\alpha,L}$ and $q_{\alpha,L}$ - as well as, therefore,
$p_{\alpha,L}q_{\alpha',L'}$ and $q_{\alpha,L}p_{\alpha',L'}$, for
all $\alpha',L'$ - as twisted in the sense of section
\ref{sec:dress}.

Now let $q$ be a $1$-form with values in $\wedge^{2}V\oplus
\wedge^{2}V^{\perp}$. For each $\lambda\in\C\backslash
\{-\alpha,0,\alpha\}$, define a new metric connection on
$\underline{\C}^{n+2}$ by setting
$$d^{\lambda,q}_{p_{_{\alpha,L}}}:=p_{\alpha,L}(\lambda)\circ
d^{\lambda,q}_{V}\circ p_{\alpha,L}(\lambda)^{-1}.$$
\begin{Lemma}\label{lemmaext}
Suppose $L$ is $d^{\alpha,q}_{V}$-parallel. In that case, there
exists a holomorphic extension of $\lambda\mapsto
d^{\lambda,q}_{p_{_{\alpha,L}}}$ to $\lambda\in\C\backslash\{0\}$
through metric connections on $\underline{\C}^{n+2}$.
\end{Lemma}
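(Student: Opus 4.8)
The plan is to treat $\nabla_\lambda:=d^{\lambda,q}_V$ and $p(\lambda):=p_{\alpha,L}(\lambda)$ as the basic objects and to show that the only obstructions to holomorphicity of $d^{\lambda,q}_{p_{\alpha,L}}=p(\lambda)\circ\nabla_\lambda\circ p(\lambda)^{-1}$ sit at $\lambda=\pm\alpha$ and are removable. First I would record the coarse structure: $\nabla_\lambda$ is holomorphic on $\C\backslash\{0\}$ (its only pole being at $0$), while $p(\lambda)$ and $p(\lambda)^{-1}=p_{\alpha,L}(-\lambda)$ (cf. \eqref{eq:pinvs}) are orthogonal and rational with simple poles only at $\pm\alpha$. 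Hence $\lambda\mapsto d^{\lambda,q}_{p_{\alpha,L}}$ is manifestly a holomorphic family of connections on $\C\backslash\{0,\alpha,-\alpha\}$, each of them a \emph{metric} connection, since conjugation of a metric connection by a section of $O(\underline{\C}^{n+2})$ is again metric. It therefore remains to prove that the apparent singularities at $\lambda=\pm\alpha$ are removable through metric connections. Using \eqref{eq:rholam} and \eqref{eq:rhopqinv} together with $\rho^2=I$, I would derive $\rho\circ d^{\lambda,q}_{p_{\alpha,L}}\circ\rho=d^{-\lambda,q}_{p_{\alpha,L}}$; since $\rho$ is independent of $\lambda$, removability at $\alpha$ forces removability at $-\alpha$, so it suffices to analyse $\lambda=\alpha$.

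The next step is a Laurent analysis at $\alpha$. Writing $d^{\lambda,q}_{p_{\alpha,L}}=\nabla_\lambda-(\nabla_\lambda p)p^{-1}$, where $\nabla_\lambda p$ is the covariant derivative of the endomorphism $p$ and $\nabla_\lambda$ is holomorphic at $\alpha$, the entire principal part at $\alpha$ is carried by $-(\nabla_\lambda p)p^{-1}$. Using $p=\pi_{(L\oplus\rho L)^\perp}+a\,\pi_L+a^{-1}\pi_{\rho L}$ with $a(\lambda)=\frac{\alpha-\lambda}{\alpha+\lambda}$ (so $a$ has a simple zero at $\alpha$) and $p^{-1}=\pi_{(L\oplus\rho L)^\perp}+a^{-1}\pi_L+a\,\pi_{\rho L}$, I would expand $(\nabla_\lambda p)p^{-1}$ and isolate the terms carrying $a^{-1}$ or $a^{-2}$, the only ones singular at $\alpha$. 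The key algebraic input is that $L$ is $\nabla_\alpha$-parallel, i.e. $\nabla_\alpha\Gamma(L)\subset\Omega^1(L)$, together with $\pi_{\rho L}|_L=\pi_{(L\oplus\rho L)^\perp}|_L=0$. These give, for $\ell\in\Gamma(L)$, both $(\nabla_\alpha\pi_{\rho L})\ell=-\pi_{\rho L}(\nabla_\alpha\ell)=0$ and $(\nabla_\alpha\pi_{(L\oplus\rho L)^\perp})\ell=-\pi_{(L\oplus\rho L)^\perp}(\nabla_\alpha\ell)=0$, so the double-pole coefficient (proportional to $(\nabla_\alpha\pi_{\rho L})\pi_L$) vanishes, and the $a^{-1}(\nabla_\lambda\pi_{(L\oplus\rho L)^\perp})\pi_L$ contribution to the residue vanishes as well.

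This leaves a possible simple pole at $\alpha$ with residue $c_{-1}$, and the heart of the argument is to show $c_{-1}=0$ \emph{without} computing it. From the expansion, every surviving singular term has left factor $\nabla_\lambda\pi_{\rho L}$ restricted to $\ker\pi_{\rho L}$ and right factor $\pi_L$ or $\pi_{(L\oplus\rho L)^\perp}$; hence $\mathrm{Im}\,c_{-1}\subseteq\rho L$, and since $\pi_L$ and $\pi_{(L\oplus\rho L)^\perp}$ annihilate $\rho L$ we also get $\rho L\subseteq\ker c_{-1}$. Moreover $c_{-1}$ is skew-symmetric, being the residue of an $o(\underline{\C}^{n+2})$-valued meromorphic family of connection forms. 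I would then invoke the elementary fact that a skew-symmetric endomorphism $T$ with $\mathrm{Im}\,T\subseteq\langle n\rangle$ and $n\in\ker T$, for a \emph{null} vector $n$, must vanish: writing $Tx=\phi(x)\,n$, skew-symmetry gives $\phi(x)(n,y)=-\phi(y)(n,x)$ for all $x,y$, which forces $\phi=\kappa(n,\cdot)$ for a scalar $\kappa$; re-inserting this yields $2\kappa(n,x)(n,y)=0$ for all $x,y$, and choosing $x=y$ with $(n,x)\neq0$ gives $\kappa=0$, whence $T=0$. Applied with $n$ spanning the null line $\rho L$, this gives $c_{-1}=0$, so the singularity at $\alpha$ is removable; the twist symmetry then removes the one at $-\alpha$, and, being a limit of metric connections, the extension is itself metric.

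The main obstacle is exactly the residue at $\alpha$: the double-pole cancellation is a direct consequence of the parallelism of $L$, but the vanishing of the simple pole is not purely algebraic in the naive sense, and this is what forces one to combine the image/kernel bookkeeping with the null structure of $\rho L$ and the skew-symmetry coming from metricity. I would take care to verify that the listed terms really are the only singular ones (in particular that $\nabla_\lambda$ contributes nothing singular at $\alpha\neq0$) and that the residue computed through $-(\nabla_\lambda p)p^{-1}$ genuinely coincides with that of the full connection form relative to the trivial metric connection, so that skew-symmetry is legitimately available.
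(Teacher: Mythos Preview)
Your argument is correct and takes a genuinely different route from the paper's. The paper first splits $d^{\alpha,q}_V=D^{\alpha}_q+\beta^{\alpha}_q$, where $D^{\alpha}_q$ preserves each summand of $L\oplus\rho L\oplus(L\oplus\rho L)^\perp$ and $\beta^{\alpha}_q\in\Omega^1(L\wedge(L\oplus\rho L)^\perp)$; conjugation by $p(\lambda)$ fixes $D^{\alpha}_q$ and scales $\beta^{\alpha}_q$ by $\frac{\alpha-\lambda}{\alpha+\lambda}$, so this part is manifestly holomorphic at $\alpha$. The remainder $(\lambda-\alpha)A(\lambda)$ is then treated block by block, using the skew-symmetry of $A(\lambda)$ to kill $\pi_{\rho L}A\pi_L$ and $\pi_LA\pi_{\rho L}$ outright; the prefactor $(\lambda-\alpha)$ absorbs the remaining simple poles, and an explicit formula for the extension at $\alpha$ drops out. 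The point $-\alpha$ is handled by a parallel argument using the $d^{-\alpha,q}_V$-parallelness of $\rho L$. Your approach bypasses this block decomposition entirely: the gauge identity $p\nabla_\lambda p^{-1}=\nabla_\lambda-(\nabla_\lambda p)p^{-1}$, together with the observation that $(\nabla_\lambda p)p^{-1}$ is itself $o(\underline{\C}^{n+2})$-valued (since $p$ is orthogonal and $\nabla_\lambda$ metric), lets you collapse the residue via the clean null-line lemma rather than by explicit cancellation. Your use of the twist relation $\rho\,d^{\lambda,q}_{p_{\alpha,L}}\,\rho=d^{-\lambda,q}_{p_{\alpha,L}}$ to transport removability from $\alpha$ to $-\alpha$ is also a shortcut the paper does not take. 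The trade-off is that the paper's computation yields a concrete formula for the extended connection, while yours establishes existence more abstractly; both rely on skew-symmetry, but you invoke it once at the end, whereas the paper uses it to prune the block matrix from the start.
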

Before proceeding to the proof, observe that, as \eqref{eq:rholam}
makes clear, if $L$ is $d^{\alpha,q}_{V}$-parallel, then $\rho L$ is
$d^{-\alpha,q}_{V}$-parallel.
\begin{proof}
Since $d^{\alpha,q}_{V}$ is a metric connection,
$$d^{\alpha,q}_{V}\,\Gamma (\rho L)\subset \Omega^{1}((\rho
L)^{\perp}),$$ as well as, in view of the parallelness of $L$ with
respect to $d^{\alpha,q}_{V}$,
$$d^{\alpha,q}_{V}\,\Gamma ((L\oplus\rho L)^{\perp})\subset
\Omega^{1} (L^{\perp}).$$ For simplicity use $\pi_{\perp}$ to denote
the orthogonal projection of $\underline{\C}^{n+2}$ onto $(L\oplus
\rho L)^{\perp}$. As
$$L^{\perp}=L\oplus(L\oplus\rho L)^{\perp},\,\,\,\,\,(\rho
L)^{\perp}=\rho L\oplus(L\oplus\rho L)^{\perp},$$ we conclude that
$\pi_{L}\circ d^{\alpha,q}_{V}\circ\pi_{\rho L}=0=\pi_{\rho L}\circ
d^{\alpha,q}_{V}\circ \pi_{\perp}$, showing that $d^{\alpha,q}_{V}$
splits as $$d^{\alpha,q}_{V}=D^{\alpha}_{q}+\beta^{\alpha}_{q}$$ for
the connection
$$D^{\alpha}_{q}:=d^{\alpha,q}_{V}\circ\pi_{L}+\pi_{\rho L}\circ
d^{\alpha,q}_{V}\circ\pi_{\rho L}+\pi_{\perp}\circ
d^{\alpha,q}_{V}\circ\pi_{\perp},$$ on $\underline{\C}^{n+2}$, and
the $1$-form  $$\beta^{\alpha}_{q}:=\pi_{\perp}\circ
d^{\alpha,q}_{V}\circ\pi_{\rho L}+\pi_{L}\circ
d^{\alpha,q}_{V}\circ\pi_{\perp}\in\Omega^{1}(L\wedge(L\oplus\rho
L)^{\perp}).$$ Clearly, for each $\lambda$,
$$p_{\alpha,L}(\lambda)\circ D^{\alpha}_{q}\circ
p_{\alpha,L}(\lambda)^{-1}=D^{\alpha}_{q},\,\,\,\,\,\,\,
p_{\alpha,L}(\lambda)\,\beta^{\alpha}_{q}\,p_{\alpha,L}(\lambda)^{-1}=\frac{\alpha-\lambda}{\alpha+\lambda}\,\beta^{\alpha}_{q}.$$
Now decompose $d^{\lambda,q}_{V}$ as
$$d^{\lambda,q}_{V}=d^{\alpha,q}_{V}+(\lambda-\alpha)A(\lambda),$$for $\lambda\in\C\backslash \{0,\alpha\}$,
with $\lambda\mapsto
A(\lambda)\in\Omega^{1}(o(\underline{\C}^{n+2}))$ holomorphic.
Namely,
$$A(\lambda)=\frac{\alpha-\lambda}{\alpha\lambda^{2}-\alpha^{2}\lambda}\,\mathcal{N}^{1,0}+\mathcal{N}^{0,1}+\frac{\alpha^{2}-\lambda^{2}}{\lambda^{3}\alpha^{2}-\lambda^{2}\alpha^{3}}\,q^{1,0}+(\lambda+\alpha)\,q^{0,1},$$
for all $\lambda$. It follows that
$$d^{\lambda,q}_{p_{_{\alpha,L}}}=D^{\alpha}_{q}+\frac{\alpha-\lambda}{\alpha+\lambda}\,\beta^{\alpha}_{q}+(\lambda-\alpha)\,p_{\alpha,L}(\lambda)\,A(\lambda)\,p_{\alpha,L}(\lambda)^{-1},$$
for $\lambda\in\C\backslash\{-\alpha,0,\alpha\}$. For simplicity,
set
$\Upsilon(\lambda):=(\lambda-\alpha)\,p_{\alpha,L}(\lambda)\,A(\lambda)\,p_{\alpha,L}(\lambda)^{-1}$.
The skew-symmetry of $A(\lambda)$ makes clear that
$A(\lambda)L\subset L^{\perp}$, as well as $A(\lambda)\rho
L\subset(\rho L)^{\perp}$ and, consequently, that $\pi_{\rho
L}\,A(\lambda)\,\pi_{L}=0=\pi_{L}\,A(\lambda)\,\pi_{\rho L}$. On the
other hand, it is clear that
$$\pi_{\rho L
}\,p_{\alpha,L}(\lambda)\,A(\lambda)\,p_{\alpha,L}(\lambda)^{-1}\pi_{L}=\frac{\alpha+\lambda}{\alpha-\lambda}\,\pi_{\rho
L}A(\lambda)\,\frac{\alpha+\lambda}{\alpha-\lambda}\,\pi_{L}=\frac{(\alpha+\lambda)^{2}}{(\alpha-\lambda)^{2}}\,\pi_{\rho
L}A(\lambda)\pi_{L}$$ and, similarly,
$$\pi_{L}\,p_{\alpha,L}(\lambda)\,A(\lambda)\,p_{\alpha,L}(\lambda)^{-1}\pi_{\rho L}=\frac{(\alpha-\lambda)^{2}}{(\alpha+\lambda)^{2}}\,\pi_{L}\,A(\lambda)\pi_{\rho L}.$$ Hence
$\pi_{\rho
L}\,\Upsilon(\lambda)\,\pi_{L}=0=\pi_{L}\Upsilon(\lambda)\,\pi_{\rho
L}$. It follows that
\begin{eqnarray*}
\Upsilon(\lambda)&=&
(\lambda-\alpha)\,(\pi_{L}\,A(\lambda)\,\pi_{L}+\pi_{\rho
L}\,A(\lambda)\,\pi_{\rho
L}+\pi_{\perp}\,A(\lambda)\,\pi_{\perp})\\&&\mbox{}-(\alpha+\lambda)\,(\pi_{\perp}\,A(\lambda)\,\pi_{L}+\pi_{\rho
L}\,A(\lambda)\,\pi_{\perp})\\ & &
\mbox{}-\frac{(\alpha-\lambda)^{2}}{\alpha+\lambda}\,(\pi_{L}\,A(\lambda)\,\pi_{\perp}+\pi_{\perp}\,A(\lambda)\pi_{\rho
L}).
\end{eqnarray*} Hence, by setting
$$d^{\alpha,q}_{p_{_{\alpha,L}}}:=D^{\alpha}_{q}-2\alpha\,(\pi_{\perp}\,A(\alpha)\,\pi_{L}+\pi_{\rho
L}\,A(\alpha)\,\pi_{\perp}),$$ we extend holomorphically
$\lambda\mapsto d^{\lambda,q}_{p_{_{\alpha,L}}}$ to
$\lambda\in\C\backslash\{-\alpha,0\}$ through what, by continuity,
we verify to be a metric connection on $\underline{\C}^{n+2}$.

The existence of a holomorphic extension to $\C\backslash\{0\}$,
through a metric connection on $\underline{\C}^{n+2}$, can be proved
analogously, having in consideration the
$d^{-\alpha,q}_{V}$-parallelness of $\rho L$.
\end{proof}

The same argument establishes the existence, in the case $L$ is
$d^{\alpha,q}_{V}$-parallel, of a holomorphic extension of
$$\C\backslash\{-\alpha,0,\alpha\}\ni\lambda\mapsto d_{q_{_{\alpha,L}}}^{\lambda,q}:=q_{\alpha,L}(\lambda)\circ
d^{\lambda,q}_{V}\circ q_{\alpha,L}(\lambda)^{-1}$$to
$\C\backslash\{0\}$ through metric connections on
$\underline{\C}^{n+2}$. This argument uses nothing about the precise
form of the connection $d^{\lambda,q}_{V}$, only the holomorphicity
of $\lambda\mapsto d^{\lambda,q}_{V}$ in $\C\backslash\{0\}$ through
metric connections on $\underline{\C}^{n+2}$, the
$d^{\alpha,q}_{V}$-parallelness of $L$ and the consequent
$d^{-\alpha,q}_{V}$-parallelness of $\rho L$. We can iterate the
procedure, in a two-step process, starting with the connections
$d^{\lambda,q}_{p_{_{\alpha,L}}}$, defining a family of connections
of the form
$$q_{\alpha',L'}(\lambda)p_{\alpha,L}(\lambda)\circ
d^{\lambda,q}_{V}\circ
p_{\alpha,L}(\lambda)^{-1}q_{\alpha',L'}(\lambda)^{-1};$$ or,
equally, starting with the connections
$d^{\lambda,q}_{q_{_{\alpha,L}}}$, defining, in that case, a family
of connections of the form
$$p_{\alpha',L'}(\lambda) q_{\alpha,L}(\lambda)\circ d^{\lambda,q}_{V}\circ
q_{\alpha,L}(\lambda)^{-1}p_{\alpha',L'}(\lambda)^{-1};$$ for
suitable parameters $\alpha,\alpha',L,L'$, as follows.

Choose $L^{\alpha}$ a $d^{\alpha,q}_{V}$-parallel null line
subbundle of $\underline{\C}^{n+2}$ with
\begin{equation}\label{eq:Lbetabom}
\rho L^{\alpha}\cap (L^{\alpha})^{\perp}=\{0\},
\end{equation}
locally. Such $L^{\alpha}$ can be obtained by choosing a null line
$\langle l^{\alpha}_{p}\rangle\subset\C^{n+2}$, for some
$l^{\alpha}_{p}\in\C^{n+2}$ not orthogonal to $\rho_{_{V_{p}}}
l^{\alpha}_{p}$, for some $p\in M$, and extending it to a
$d^{\alpha,q}_{V}$-parallel null line subbundle $L^{\alpha}$ of
$\underline{\C}^{n+2}$ by $d^{\alpha,q}_{V}$-parallel transport of
$l^{\alpha}_{p}$. The non-orthogonality of $L^{\alpha}$ and $\rho
L^{\alpha}$ at $p$ is, equivalently, satisfied in some non-empty
open set, which we restrict to.

Condition \eqref{eq:Lbetabom} allows us to refer to
$q_{\alpha,L^{\alpha}}$. Now choose $\beta\neq \pm\alpha$ in
$\C\backslash\{-1,0,1\}$ and $L^{\beta}$ a
$d^{\beta,q}_{V}$-parallel null line subbundle of
$\underline{\C}^{n+2}$ and note that the null line bundle
$$\tilde{L}^{\beta}_{\alpha}:=q_{\alpha,L^{\alpha}}(\beta)L^{\beta}$$
is $d^{\beta,q}_{q_{_{\alpha,L^{\alpha}}}}$-parallel and,
consequently, $\rho \tilde{L}^{\beta}_{\alpha}$ is parallel with
respect to the connection
\begin{eqnarray*}
d^{-\beta,q}_{q_{_{\alpha,L^{\alpha}}}}&=&q_{\alpha,L^{\alpha}}(-\beta)\circ
d^{-\beta,q}_{V}\circ\rho\,
q_{\alpha,L^{\alpha}}(\beta)^{-1}\rho\\&=&q_{\alpha,L^{\alpha}}(-\beta)\rho\circ
d^{\beta,q}_{V}\circ q_{\alpha,L^{\alpha}}(\beta)^{-1}\rho.
\end{eqnarray*}
Choose $L^{\beta}$ satisfying, furthermore,
\begin{equation}\label{eq:tilderhoLnotorth}
\rho \tilde{L}^{\beta}_{\alpha}\cap
(\tilde{L}^{\beta}_{\alpha})^{\perp}=\{0\},
\end{equation}
locally. To see that such a choice is possible, first choose a point
$p\in M$ at which $\rho L^{\alpha}$ is not orthogonal to
$L^{\alpha}$. As $L^{\alpha}$ is an eigenspace of
$q_{\alpha,L^{\alpha}}(\beta)$,
$q_{\alpha,L^{\alpha}}(\beta)L^{\alpha}=L^{\alpha}$ and, therefore,
at $p$,
$$\rho q_{\alpha,L^{\alpha}}(\beta)L^{\alpha}\cap
(q_{\alpha,L^{\alpha}}(\beta)L^{\alpha})^{\perp}=\{0\}.$$ Choose
$l^{\alpha}_{p}\in L^{\alpha}_{p}$ non-zero. A
$d^{\beta,q}_{V}$-parallel null line bundle
$L^{\beta}\subset\underline{\C}^{n+2}$, satisfying equation
\eqref{eq:tilderhoLnotorth}, locally, can be obtained by
$d^{\beta,q}_{V}$-parallel transport of $l^{\alpha}_{p}$.

Condition \eqref{eq:tilderhoLnotorth} allows us to refer to
$p_{\beta,\tilde{L}^{\beta}_{\alpha}}$. Set then
$$r^{(\beta,\alpha)}_{L^{\alpha},L^{\beta}}:=p_{\beta,\tilde{L}^{\beta}_{\alpha}}\,q_{\alpha,L^{\alpha}},$$
defining, for each
$\lambda\in\mathbb{P}^{1}\backslash\{\pm\alpha,\pm\beta\}$, an
orthogonal transformation
$r^{(\beta,\alpha)}_{L^{\alpha},L^{\beta}}(\lambda)$ of
$\underline{\C}^{n+2}$. The $d^{\alpha,q}_{V}$-parallelness of
$L^{\alpha}$ ensures that $\lambda\mapsto
d^{\lambda,q}_{q_{_{\alpha,L^{\alpha}}}}$ admits a holomorphic
extension to $\lambda\in\C\backslash\{0\}$ through metric
connections on $\underline{\C}^{n+2}$ and, consequently, the
$d^{\beta,q}_{q_{_{\alpha,L^{\alpha}}}}$-parallelness of
$\tilde{L}^{\beta}_{\alpha}$ (together with the consequent
$d^{-\beta,q}_{q_{_{\alpha,L^{\alpha}}}}$-parallelness of $\rho
\tilde{L}^{\beta}_{\alpha}$) ensures that so does
$$\lambda\mapsto r^{(\beta,\alpha)}_{L^{\alpha},L^{\beta}}(\lambda)\circ
d^{\lambda,q}_{V}\circ
r^{(\beta,\alpha)}_{L^{\alpha},L^{\beta}}(\lambda)^{-1}=p_{\beta,\tilde{L}^{\beta}_{\alpha}}(\lambda)\,q_{\alpha,L^{\alpha}}(\lambda)\circ
d^{\lambda,q}_{V}\circ q_{\alpha,L^{\alpha}}(\lambda)^{-1}\,
p_{\beta,\tilde{L}^{\beta}_{\alpha}}(\lambda)^{-1}.$$ We conclude
that $r^{(\beta,\alpha)}_{L^{\alpha},L^{\beta}}$ satisfies the
hypothesis of the dressing action, defining a transformation of
constrained harmonic bundles and constrained Willmore surfaces. Our
next step is to investigate how these transformations relate to the
ones defined by $r^{(\alpha,\beta)}_{L^{\alpha},L^{\beta}}$,  in the
case they are both defined.

Suppose, furthermore, that, locally, $L^{\beta}$ is never-orthogonal
to $\rho L^{\beta}$,
\begin{equation}\label{eq:rhoLbetacap}
\rho L^{\beta}\cap (L^{\beta})^{\perp}=\{0\}.
\end{equation}
This is certainly the case for $L^{\beta}$ obtained by
$d^{\beta,q}_{V}$-parallel transport of $l^{\alpha}_{p}$, for
$l^{\alpha}_{p}\in L^{\alpha}_{p}$ non-zero and $p$ a point in $M$
at which $\rho L^{\alpha}$ is not orthogonal to $L^{\alpha}$. Note
that
$$\tilde{L}^{\alpha}_{\beta}=q_{\beta,L^{\beta}}(\alpha)L^{\alpha}$$ is
a $d^{\alpha,q}_{q_{_{\beta,L^{\beta}}}}$-parallel bundle. Observe
that, locally,
\begin{equation}\label{eq:2sxcmbvmkl?'098765escvbnm,87651qser678b35689oknbfrt}
\rho \tilde{L}^{\alpha}_{\beta}\cap
(\tilde{L}^{\alpha}_{\beta})^{\perp}=\{0\}.
\end{equation}
Indeed, given $l^{\alpha}\in\Gamma(L^{\alpha})$ never-zero, and
according to \eqref{eq:pinvs} and \eqref{eq:rhopqinv}, we have
\begin{eqnarray*}
(\rho \, q_{\beta,L^{\beta}}(\alpha)l^{\alpha},
q_{\beta,L^{\beta}}(\alpha)l^{\alpha})&=&
(q_{\beta,L^{\alpha}}(\alpha)^{-1}\rho\,l^{\alpha},
q_{\beta,L^{\beta}}(\alpha)l^{\alpha})\\&=&(\rho\,l^{\alpha},
q_{\beta,L^{\alpha}}(\alpha)^{2}l^{\alpha})\\&=&\frac{(\alpha-\beta)^{2}}{(\alpha+\beta)^{2}}\,(\rho\,l^{\alpha},
l^{\alpha});
\end{eqnarray*}
so that, given $p$ a point in $M$ at which $\rho L^{\alpha}$ is not
orthogonal to $L^{\alpha}$,
$$(\rho_{V_{p}}\, q_{\beta,L^{\beta}_{p}}(\alpha)l^{\alpha}_{p},
q_{\beta,L^{\beta}_{p}}(\alpha)l^{\alpha}_{p})\neq 0,$$ condition
\eqref{eq:2sxcmbvmkl?'098765escvbnm,87651qser678b35689oknbfrt} is
satisfied at $p$, or, equivalently, in some open neighbourhood of
$p$.

Set then
$$r^{(\alpha,\beta)}_{L^{\alpha},L^{\beta}}:=p_{\alpha,\tilde{L}^{\alpha}_{\beta}}\,q_{\beta,L^{\beta}},$$
defining, for each
$\lambda\in\mathbb{P}^{1}\backslash\{\pm\alpha,\pm\beta\}$, an
orthogonal transformation
$r^{(\alpha,\beta)}_{L^{\alpha},L^{\beta}}(\lambda)$ of
$\underline{\C}^{n+2}$. The $d^{\beta,q}_{V}$-parallelness of
$L^{\beta}$ ensures that $\lambda\mapsto
d^{\lambda,q}_{q_{_{\beta,L^{\beta}}}}$ admits a holomorphic
extension to $\lambda\in\C\backslash\{0\}$ through metric
connections on $\underline{\C}^{n+2}$ and, consequently, the
$d^{\alpha,q}_{q_{_{\beta,L^{\beta}}}}$-parallelness of
$\tilde{L}^{\alpha}_{\beta}$ (together with the consequent
$d^{-\alpha,q}_{q_{_{\beta,L^{\beta}}}}$-parallelness of $\rho
\tilde{L}^{\alpha}_{\beta}$) ensures that so does
$$\lambda\mapsto r^{(\alpha,\beta)}_{L^{\alpha},L^{\beta}}(\lambda)\circ
d^{\lambda,q}_{V}\circ
r^{(\alpha,\beta)}_{L^{\alpha},L^{\beta}}(\lambda)^{-1}=p_{\alpha,\tilde{L}^{\alpha}_{\beta}}(\lambda)\,q_{\beta,L^{\beta}}(\lambda)\circ
d^{\lambda,q}_{V}\circ q_{\beta,L^{\beta}}(\lambda)^{-1}
p_{\alpha,\tilde{L}^{\alpha}_{\beta}}(\lambda)^{-1}.$$ We conclude
that $r^{(\alpha,\beta)}_{L^{\alpha},L^{\beta}}$ satisfies the
hypothesis of the dressing action, defining a transformation of
constrained harmonic bundles and constrained Willmore surfaces. We
shall verify that these transformations coincide with the ones
defined by $r^{(\beta,\alpha)}_{L^{\alpha},L^{\beta}}$. On the way,
we verify that, starting with the connections
$d^{\lambda,q}_{p_{_{\alpha,L^{\alpha}}}}$, rather than with the
connections $d^{\lambda,q}_{q_{_{\beta,L^{\beta}}}}$, leads to the
same transformations of constrained harmonic bundles and of
constrained Willmore surfaces.

Set
$$\hat{L}^{\beta}_{\alpha}:=p_{\alpha,L^{\alpha}}(\beta)L^{\beta}$$
and observe that, locally,
$$\rho\hat{L}^{\beta}_{\alpha}\cap
(\hat{L}^{\beta}_{\alpha})^{\perp}=\{0\}.$$ Indeed, given
$l^{\beta}\in\Gamma(L^{\beta})$,
$$(\rho p_{\alpha,L^{\alpha}}(\beta)l^{\beta},
p_{\alpha,L^{\alpha}}(\beta)l^{\beta})=\frac{(\alpha-\beta)^{2}}{(\alpha+\beta)^{2}}\,(\rho\,l^{\beta},
l^{\beta}),$$and the conclusion follows, in view of
\eqref{eq:rhoLbetacap}.

Set then
$$\hat{r}^{(\alpha,\beta)}_{L^{\alpha},L^{\beta}}:=q_{\beta,\hat{L}^{\beta}_{\alpha}}\,p_{\alpha,L^{\alpha}},$$
defining, for each
$\lambda\in\mathbb{P}^{1}\backslash\{\pm\alpha,\pm\beta\}$, an
orthogonal transformation
$\hat{r}^{(\alpha,\beta)}_{L^{\alpha},L^{\beta}}(\lambda)$ of
$\underline{\C}^{n+2}$. The $d^{\alpha,q}_{V}$-parallelness of
$L^{\alpha}$ ensures that $\lambda\mapsto
d^{\lambda,q}_{p_{\alpha,L^{\alpha}}}$ admits a holomorphic
extension to $\lambda\in\C\backslash\{0\}$ through metric
connections on $\underline{\C}^{n+2}$ and, consequently, the
$d^{\beta,q}_{p_{_{\alpha,L^{\alpha}}}}$-parallelness of
$\hat{L}^{\beta}_{\alpha}$ (and the consequent
$d^{-\beta,q}_{p_{_{\alpha,L^{\alpha}}}}$-parallelness of $\rho
\hat{L}^{\beta}_{\alpha}$) ensures that so does $$\lambda\mapsto
\hat{r}^{(\alpha,\beta)}_{L^{\alpha},L^{\beta}}(\lambda)\circ
d^{\lambda,q}_{V}\circ
\hat{r}^{(\alpha,\beta)}_{L^{\alpha},L^{\beta}}(\lambda)^{-1}=q_{\beta,\hat{L}^{\beta}_{\alpha}}(\lambda)p_{_{\alpha,L^{\alpha}}}(\lambda)\circ
d^{\lambda,q}_{V}\circ p_{_{\alpha,L^{\alpha}}}(\lambda)^{-1}
q_{\beta,\hat{L}^{\beta}_{\alpha}}(\lambda)^{-1}.$$ We conclude that
$\hat{r}^{(\alpha,\beta)}_{L^{\alpha},L^{\beta}}$ satisfies the
hypothesis of the dressing action, defining a transformation of
constrained harmonic bundles and constrained Willmore surfaces.

For simplicity, set
$r^{*}:=r^{(\alpha,\beta)}_{L^{\alpha},L^{\beta}}$ and
$\hat{r}^{*}:=\hat{r}^{(\alpha,\beta)}_{L^{\alpha},L^{\beta}}$.

\begin{prop}\label{rstarvshatrstar} $r^{*}$ and $\hat{r}\,^{*}$ are related by
\begin{equation}\label{eq:rel}
r^{*}=K\,\hat{r}\,^{*},
\end{equation}
for $K:=q_{\beta,L^{\beta}}(0)\,
q_{\beta,\hat{L}^{\beta}_{\alpha}}(0)$.
\end{prop}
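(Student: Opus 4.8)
The plan is to prove the stronger statement that $\Phi(\lambda):=r^{*}(\lambda)\,\hat{r}^{*}(\lambda)^{-1}$ is constant, equal to $K$, by a Liouville argument, and then to read off its value. Both $r^{*}=p_{\alpha,\tilde{L}^{\alpha}_{\beta}}\,q_{\beta,L^{\beta}}$ and $\hat{r}^{*}=q_{\beta,\hat{L}^{\beta}_{\alpha}}\,p_{\alpha,L^{\alpha}}$ are rational in $\lambda$, holomorphic and invertible at $\lambda=0$ and $\lambda=\infty$, and twisted in the sense of \eqref{eq:RcommuteRho} (as products of twisted factors, by \eqref{eq:rhopqinv}); their only possible poles lie in $\{\pm\alpha,\pm\beta\}$. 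Hence $\Phi$ is a rational, $\mathrm{O}(\underline{\C}^{n+2})$-valued map, twisted in the sense that $\rho\,\Phi(\lambda)\,\rho=\Phi(-\lambda)$, holomorphic and invertible at $0$ and $\infty$, with poles confined to $\{\pm\alpha,\pm\beta\}$. Once $\Phi$ is shown to have no poles at all, each entry of $\Phi$ becomes a holomorphic function on the compact Riemann sphere $\mathbb{P}^{1}$, hence constant; so $\Phi$ is a constant element $K$ of $\mathrm{O}(\underline{\C}^{n+2})$ and $r^{*}=K\,\hat{r}^{*}$.

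Since $\Phi$ is twisted and $\rho$ is a fixed isometry, regularity of $\Phi$ at $\lambda_{0}$ is equivalent to regularity at $-\lambda_{0}$, so it suffices to remove the poles at $\lambda=\alpha$ and $\lambda=\beta$. Near $\lambda=\alpha$ the factors $q_{\beta,\cdot}(\pm\lambda)$ are holomorphic and invertible, so both $r^{*}$ and $\hat{r}^{*-1}=p_{\alpha,L^{\alpha}}(-\lambda)\,q_{\beta,\hat{L}^{\beta}_{\alpha}}(-\lambda)$ (using \eqref{eq:pinvs}) carry a simple pole coming from a type-$p$ factor, with rank-one principal parts $\tfrac{-2\alpha}{\lambda-\alpha}\,\pi_{\rho\tilde{L}^{\alpha}_{\beta}}\,q_{\beta,L^{\beta}}(\alpha)$ and $\tfrac{-2\alpha}{\lambda-\alpha}\,\pi_{L^{\alpha}}\,q_{\beta,\hat{L}^{\beta}_{\alpha}}(-\alpha)$ respectively. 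The coefficient of the would-be double pole of $\Phi$ is a multiple of $\pi_{\rho\tilde{L}^{\alpha}_{\beta}}\,q_{\beta,L^{\beta}}(\alpha)\,\pi_{L^{\alpha}}$, which vanishes precisely because $\tilde{L}^{\alpha}_{\beta}=q_{\beta,L^{\beta}}(\alpha)L^{\alpha}$, so that $q_{\beta,L^{\beta}}(\alpha)L^{\alpha}\subset\ker\pi_{\rho\tilde{L}^{\alpha}_{\beta}}$. Near $\lambda=\beta$ the same computation, now with the poles carried by type-$q$ factors, yields a double-pole coefficient proportional to $\pi_{\rho L^{\beta}}\,p_{\alpha,L^{\alpha}}(-\beta)\,\pi_{\hat{L}^{\beta}_{\alpha}}$; this vanishes because $\hat{L}^{\beta}_{\alpha}=p_{\alpha,L^{\alpha}}(\beta)L^{\beta}$ and $p_{\alpha,L^{\alpha}}(-\beta)=p_{\alpha,L^{\alpha}}(\beta)^{-1}$ by \eqref{eq:pinvs}, whence $p_{\alpha,L^{\alpha}}(-\beta)\hat{L}^{\beta}_{\alpha}=L^{\beta}\subset\ker\pi_{\rho L^{\beta}}$. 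Thus the defining relations of $\tilde{L}^{\alpha}_{\beta}$ and $\hat{L}^{\beta}_{\alpha}$ are exactly what make the second-order parts cancel.

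The main obstacle is to upgrade these double-pole cancellations to the vanishing of the remaining \emph{simple} poles of $\Phi$ at $\alpha$ and $\beta$. Here I would exploit that $\Phi$ takes values in the complex orthogonal group, so $\Phi(\lambda)^{-1}=\Phi(\lambda)^{t}$ with respect to the bilinear form on $\underline{\C}^{n+2}$: the absence of double poles in both $\Phi$ and $\Phi^{-1}=\Phi^{t}$, together with the explicit rank-one Laurent tails above and the relations $\tilde{L}^{\alpha}_{\beta}=q_{\beta,L^{\beta}}(\alpha)L^{\alpha}$, $\hat{L}^{\beta}_{\alpha}=p_{\alpha,L^{\alpha}}(\beta)L^{\beta}$, forces the simple-pole residues to vanish as well (alternatively, one expands the full Laurent tail at each pole and checks the $(\lambda-\alpha)^{-1}$- and $(\lambda-\beta)^{-1}$-coefficients directly, which is routine but laborious). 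With $\Phi$ now pole-free, it is constant.

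Finally, I evaluate the constant at $\lambda=0$. By \eqref{eq:p0qinftyetc} and \eqref{eq:qinfty=1} we have $p_{\alpha,\tilde{L}^{\alpha}_{\beta}}(0)=q_{\alpha,\tilde{L}^{\alpha}_{\beta}}(\infty)=I$ and $p_{\alpha,L^{\alpha}}(0)=q_{\alpha,L^{\alpha}}(\infty)=I$, so that $r^{*}(0)=q_{\beta,L^{\beta}}(0)$ and $\hat{r}^{*}(0)=q_{\beta,\hat{L}^{\beta}_{\alpha}}(0)$. Moreover each $q_{\gamma,M}(0)=-\pi_{M}+\pi_{(M\oplus\rho M)^{\perp}}-\pi_{\rho M}$ is an involution, whence $q_{\beta,\hat{L}^{\beta}_{\alpha}}(0)^{-1}=q_{\beta,\hat{L}^{\beta}_{\alpha}}(0)$. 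Therefore $K=\Phi(0)=r^{*}(0)\,\hat{r}^{*}(0)^{-1}=q_{\beta,L^{\beta}}(0)\,q_{\beta,\hat{L}^{\beta}_{\alpha}}(0)$, which is exactly the asserted value of $K$, completing the proof of \eqref{eq:rel}.
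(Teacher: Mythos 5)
Your overall skeleton is the same as the paper's: show that $\Phi(\lambda):=r^{*}(\lambda)\,\hat{r}^{*}(\lambda)^{-1}$ extends holomorphically to all of $\mathbb{P}^{1}$, conclude it is constant, and evaluate at $\lambda=0$ (where $p_{\gamma,L'}(0)=I$ and $q_{\beta,\cdot}(0)$ is an involution); your computation of the double-pole coefficients and of $\Phi(0)=q_{\beta,L^{\beta}}(0)\,q_{\beta,\hat{L}^{\beta}_{\alpha}}(0)=K$ is correct. The gap is exactly where you flag it: the vanishing of the remaining simple poles. Writing $r^{*}(\lambda)=\frac{A}{\lambda-\alpha}+B+O(\lambda-\alpha)$ and $\hat{r}^{*}(\lambda)^{-1}=\frac{C}{\lambda-\alpha}+D+O(\lambda-\alpha)$, you have shown $AC=0$ but still need $AD+BC=0$, and neither of your two suggestions delivers this. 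The first — that $\Phi$ being $O(\underline{\C}^{n+2})$-valued with no double poles forces absence of simple poles — is false: the maps $p_{\alpha,L}$ and $q_{\alpha,L}$ themselves are rational, orthogonal-valued, and have simple poles but no double poles; indeed, for a residue $R$ at a simple pole, orthogonality of $\Phi$ only yields $R^{t}R=0$ and $R^{t}S+S^{t}R=0$, which over $\C$ is satisfied by any rank-one $R$ with isotropic image and does not force $R=0$. The second suggestion ("expand the full Laurent tail, routine but laborious") is not carried out, and it is precisely the non-routine part of the argument.

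The paper closes exactly this hole by invoking Lemma \ref{TrioDeHolomorfia} (quoted from Burstall \cite{IS}): after a M\"{o}bius change of variable moving the pole to $0$, the products $p_{\alpha,L^{\alpha}}\,q_{\beta,L^{\beta}}^{-1}\,p_{\alpha,\tilde{L}^{\alpha}_{\beta}}^{-1}$ and $q_{\beta,L^{\beta}}\,p_{\alpha,L^{\alpha}}^{-1}\,q_{\beta,\hat{L}^{\beta}_{\alpha}}^{-1}$ fall under that lemma's hypotheses, the relations $L^{\alpha}=q_{\beta,L^{\beta}}(\alpha)^{-1}\tilde{L}^{\alpha}_{\beta}$ and $L^{\beta}=p_{\alpha,L^{\alpha}}(\beta)^{-1}\hat{L}^{\beta}_{\alpha}$ being precisely the condition $L_{1}=\xi(0)\hat{L}_{1}$, and the twisting \eqref{eq:rhopqinv} then transfers regularity from $\alpha$ to $-\alpha$ and from $\beta$ to $-\beta$. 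What your sketch misses is the mechanism that makes the lemma true: not the orthogonality of the product $\Phi$, but the orthogonality of the middle, holomorphic factor $\xi(\lambda)$ used to \emph{first} order. In the lemma's notation, the residue contains the extra term $\pi_{L_{-1}}\xi'(0)\pi_{\hat{L}_{1}}$; writing $\xi'(0)=\xi(0)\eta$ with $\eta=\xi(0)^{-1}\xi'(0)$ skew-symmetric, the isotropy of $\hat{L}_{1}$ gives $\eta\hat{L}_{1}\subset\hat{L}_{1}^{\perp}=\hat{L}_{1}\oplus\hat{L}_{0}$, and since $\xi(0)(\hat{L}_{1}\oplus\hat{L}_{0})\subset L_{1}\oplus L_{0}=\ker\pi_{L_{-1}}$, this term vanishes. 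If you supply this first-order argument — in effect reproving Lemma \ref{TrioDeHolomorfia} — your proof goes through; as written, the decisive step is asserted rather than proved.
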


The proof of the proposition will be based on the following lemma,
cf. F. Burstall \cite{IS}.

\begin{Lemma}\label{TrioDeHolomorfia}
Let
$\gamma(\lambda)=\lambda\,\pi_{L_{1}}+\pi_{L_{0}}+\lambda^{-1}\,\pi_{L_{-1}}$
and
$\hat{\gamma}(\lambda)=\lambda\,\pi_{\hat{L}_{1}}+\pi_{\hat{L}_{0}}+\lambda^{-1}\,\pi_{\hat{L}_{-1}}$
 be homomorphisms of $\C^{n+2}$ corresponding to decompositions $$\C^{n+2}=L_{1}\oplus L_{0}\oplus L_{-1}=\hat{L}_{1}\oplus \hat{L}_{0}\oplus
 \hat{L}_{-1}$$ with $L_{\pm 1}$ and $\hat{L}_{\pm 1}$ null lines and
 $L_{0}=(L_{1}\oplus L_{-1})^{\perp}$, $\hat{L}_{0}=(\hat{L}_{1}\oplus
 \hat{L}_{-1})^{\perp}$. Suppose $\mathrm{Ad}\,\gamma$ and $\mathrm{Ad}\,\hat{\gamma}$ have simple poles. Suppose as well that $\xi$ is a map into
 $O(\C^{n+2})$ holomorphic near $0$ such that $$L_{1}=\xi
 (0)\hat{L}_{1}.$$ Then $\gamma\xi\hat{\gamma}^{-1}$ is holomorphic and invertible at $0$.
\end{Lemma}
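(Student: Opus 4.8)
My plan is to work entirely with Laurent expansions about $\lambda=0$ and show that the two factors singular there conspire to leave $r:=\gamma\xi\hat\gamma^{-1}$ with neither a double nor a simple pole. Near $0$ we have $\gamma(\lambda)=\lambda^{-1}\pi_{L_{-1}}+\pi_{L_0}+\lambda\pi_{L_1}$ and $\hat\gamma(\lambda)^{-1}=\lambda^{-1}\pi_{\hat L_1}+\pi_{\hat L_0}+\lambda\pi_{\hat L_{-1}}$, so the residues of the two singular factors are $\pi_{L_{-1}}$ and $\pi_{\hat L_1}$. Writing $\xi(\lambda)=\xi_0+\lambda\xi_1+O(\lambda^2)$ with $\xi_0=\xi(0)$, I would first read off the coefficient of $\lambda^{-2}$ in $r$, namely $\pi_{L_{-1}}\xi_0\pi_{\hat L_1}$, and note it vanishes because $\xi_0\hat L_1=L_1$ lies in $\ker\pi_{L_{-1}}=L_1\oplus L_0=L_1^{\perp}$. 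This is the only place the matching condition $L_1=\xi(0)\hat L_1$ enters at leading order, and it kills the double pole.

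Next I would compute the residue (the coefficient of $\lambda^{-1}$), which works out to $\pi_{L_{-1}}\xi_0\pi_{\hat L_0}+\pi_{L_0}\xi_0\pi_{\hat L_1}+\pi_{L_{-1}}\xi_1\pi_{\hat L_1}$. Two terms drop out at once: $\pi_{L_0}\xi_0\pi_{\hat L_1}=0$ since $\xi_0\hat L_1=L_1\subset\ker\pi_{L_0}$, and $\pi_{L_{-1}}\xi_0\pi_{\hat L_0}=0$ because orthogonality of $\xi_0$ upgrades $\xi_0\hat L_1=L_1$ to $\xi_0(\hat L_1\oplus\hat L_0)=\xi_0\hat L_1^{\perp}=L_1^{\perp}=L_1\oplus L_0$, whence $\xi_0\hat L_0\subset\ker\pi_{L_{-1}}$. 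The main obstacle is the surviving term $\pi_{L_{-1}}\xi_1\pi_{\hat L_1}$, and this is exactly where I must use that $\xi$ takes values in $O(\underline{\C}^{n+2})$ to \emph{all} orders, not merely at $\lambda=0$: the logarithmic derivative $B:=\xi_0^{-1}\xi_1$ lies in $o(\underline{\C}^{n+2})$, i.e. is skew. Because $\hat L_1$ is a null \emph{line}, skewness forces $B\hat L_1\subset\hat L_1^{\perp}=\hat L_1\oplus\hat L_0$; applying $\xi_0$ and using orthogonality once more gives $\xi_1\hat L_1=\xi_0 B\hat L_1\subset\xi_0\hat L_1^{\perp}=L_1^{\perp}=\ker\pi_{L_{-1}}$, so this term vanishes too. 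Hence $r$ is holomorphic at $0$.

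For invertibility I would run the identical argument on the algebraic inverse $r^{-1}=\hat\gamma\,\xi^{-1}\,\gamma^{-1}$, in which $\xi^{-1}$ is again orthogonal-valued and holomorphic near $0$. Its two singular factors at $0$ have residues $\pi_{\hat L_{-1}}$ (from $\hat\gamma$) and $\pi_{L_1}$ (from $\gamma^{-1}$), and the middle factor satisfies the mirror matching condition $\xi^{-1}(0)L_1=\hat L_1$, which is equivalent to the hypothesis. The same two steps then show $r^{-1}$ is holomorphic at $0$; since $r\,r^{-1}=I$ for $\lambda\neq0$ near $0$, continuity yields $r(0)\,r^{-1}(0)=I$, so $r(0)$ is invertible. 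Throughout, the assumption that $\mathrm{Ad}\,\gamma$ and $\mathrm{Ad}\,\hat\gamma$ have simple poles is precisely what is reflected in the displayed three-term form of $\gamma$ and $\hat\gamma$ together with the null-and-perpendicular structure of the two decompositions, and it is this structure — $L_{\pm1},\hat L_{\pm1}$ null lines with $L_0=(L_1\oplus L_{-1})^{\perp}$, $\hat L_0=(\hat L_1\oplus\hat L_{-1})^{\perp}$ — that makes the reductions in the first two paragraphs go through. The argument follows the template of the corresponding factorization lemma in \cite{IS}, specialised to these simple factors.
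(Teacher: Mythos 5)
Your proof is correct. Note that there is no in-paper proof to compare it against: Lemma \ref{TrioDeHolomorfia} is stated in the paper with the attribution ``cf.\ F.\ Burstall \cite{IS}'' and no argument is given, so your Laurent-expansion computation fills a genuine gap, and it is essentially the standard argument of the cited source. The three decisive steps all check out: the coefficient of $\lambda^{-2}$, namely $\pi_{L_{-1}}\xi(0)\pi_{\hat{L}_{1}}$, vanishes because $\xi(0)\hat{L}_{1}=L_{1}\subset L_{1}^{\perp}=L_{1}\oplus L_{0}=\ker\pi_{L_{-1}}$ (here the nullity of $L_{1}$ and the perpendicular structure of the decomposition are what identify $\ker\pi_{L_{-1}}$ with $L_{1}^{\perp}$); of the three residue terms, two vanish because the orthogonal map $\xi(0)$ carries $\hat{L}_{1}^{\perp}=\hat{L}_{1}\oplus\hat{L}_{0}$ onto $L_{1}^{\perp}$; and your treatment of the third term $\pi_{L_{-1}}\xi_{1}\pi_{\hat{L}_{1}}$ correctly isolates the only place where orthogonality of $\xi$ to first order (rather than merely at $\lambda=0$) is needed — skewness of $B=\xi(0)^{-1}\xi_{1}$ together with $\hat{L}_{1}$ being a line gives $\xi_{1}\hat{L}_{1}=\xi(0)B\hat{L}_{1}\subset\xi(0)\hat{L}_{1}^{\perp}=L_{1}^{\perp}$. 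The invertibility argument, running the identical computation on $\hat{\gamma}\,\xi^{-1}\gamma^{-1}$ with the equivalent matching condition $\hat{L}_{1}=\xi(0)^{-1}L_{1}$ and concluding $r(0)\,r^{-1}(0)=I$ by continuity, is also sound; as a small shortcut you could instead observe that $\det\bigl(\gamma\xi\hat{\gamma}^{-1}\bigr)=\det\xi=\pm 1$ on a punctured neighbourhood of $0$, hence also at $0$ after holomorphic extension.
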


Now we proceed to the proof of Proposition \ref{rstarvshatrstar}.

\begin{proof}
For simplicity, throughout this proof, we adopt $p_{\mu,L}^{-1}$ and
$q_{\mu,L}^{-1}$ to denote $\lambda\mapsto p_{\mu,L}(\lambda)^{-1}$
and, respectively, $\lambda\mapsto q_{\mu,L}(\lambda)^{-1}$, in the
case $p_{\mu,L}$ and, respectively, $q_{\mu,L}$ are defined.

As
$L^{\alpha}=q_{\beta,L^{\beta}}(\alpha)^{-1}\tilde{L}^{\alpha}_{\beta}$,
after an appropriate change of variable, we conclude, by Lemma \ref
{TrioDeHolomorfia}, that $p_{\alpha,L^{\alpha}}\,
q_{\beta,L^{\beta}}^{-1}\,
p_{\alpha,\tilde{L}^{\alpha}_{\beta}}^{-1}$ admits a holomorphic and
invertible extension to $\mathbb{P}^{1}\backslash\{\pm
\beta,-\alpha\}$. On the other hand, in view of \eqref{eq:rhopqinv},
the holomorphicity and invertibility of $p_{\alpha,L^{\alpha}}\,
q_{\beta,L^{\beta}}^{-1}\,
p_{\alpha,\tilde{L}^{\alpha}_{\beta}}^{-1}$ at the points $\alpha$
and $-\alpha$ are equivalent. Thus $p_{\alpha,L^{\alpha}}\,
q_{\beta,L^{\beta}}^{-1}\,
p_{\alpha,\tilde{L}^{\alpha}_{\beta}}^{-1}$ admits an holomorphic
and invertible extension to $\mathbb{P}^{1}\backslash\{\pm \beta\}$,
and so does, therefore, $(p_{\alpha,L^{\alpha}}\,
q_{\beta,L^{\beta}}^{-1}\,
p_{\alpha,\tilde{L}^{\alpha}_{\beta}}^{-1})^{-1}\,
q_{\beta,L^{\beta}}^{-1}$. A similar argument shows that
$p_{\alpha,\tilde{L}^{\alpha}_{\beta}}\, (q_{\beta,L^{\beta}}\,
p_{\alpha,L^{\alpha}}^{-1}\,
q_{\beta,\hat{L}^{\beta}_{\alpha}}^{-1})$ admits an holomorphic
extension to $\mathbb{P}^{1}\backslash\{\pm\alpha\}$. But
$$p_{\alpha,\tilde{L}^{\alpha}_{\beta}}\,q_{\beta,L^{\beta}}\,
p_{\alpha,L^{\alpha}}^{-1}\,
q_{\beta,\hat{L}^{\beta}_{\alpha}}^{-1}=(p_{\alpha,L^{\alpha}}\,
q_{\beta,L^{\beta}}^{-1}\,
p_{\alpha,\tilde{L}^{\alpha}_{\beta}}^{-1})^{-1}\,
q_{\beta,L^{\beta}}^{-1}.$$We conclude that
$p_{\alpha,\tilde{L}^{\alpha}_{\beta}}\,q_{\beta,L^{\beta}}\,
p_{\alpha,L^{\alpha}}^{-1}\,
q_{\beta,\hat{L}^{\beta}_{\alpha}}^{-1}$ extends holomorphically to
$\mathbb{P}^{1}$ and is, therefore, constant. Evaluating at
$\lambda=0$ gives
$p_{\alpha,\tilde{L}^{\alpha}_{\beta}}\,q_{\beta,L^{\beta}}\,
p_{\alpha,L^{\alpha}}^{-1}\,
q_{\beta,\hat{L}^{\beta}_{\alpha}}^{-1}=q_{\beta,L^{\beta}}(0)\,q_{\beta,\hat{L}^{\beta}_{\alpha}}(0)$,
completing the proof.
\end{proof}

According to \eqref{eq:rhopqinv}, $K$ commutes with $\rho$,
\begin{equation}\label{eq:rhoKrhoisK}
\rho\,K\,\rho=K.
\end{equation}
This ensures, in particular, that $K$ preserves $V$: given
$v\in\Gamma(V)$, $\rho Kv=\rho K\rho v=Kv$. Equivalently,
\begin{equation}\label{eq:KV=V}
K\,V=V.
\end{equation}
Together with equation \eqref {eq:rel}, equation \eqref{eq:KV=V}
shows that, for each $\lambda$, $$\hat{r}\,^{*}(\lambda)^{-1}\,V=
r^{*}(\lambda)^{-1}\,K\,V=r^{*}(\lambda)^{-1}\,V.$$ In particular,
\begin{equation}\label{eq:eqbase}
\hat{r}\,^{*}(1)^{-1}\,V=r^{*}(1)^{-1}\,V.
\end{equation}
As for a constrained Willmore surface $(\Delta^{1,0},\Delta^{0,1})$
admitting $V$ as a $(q,d)$-central sphere congruence, and, yet
again, by equation \eqref{eq:rel}, we have
$$r^{*}(1)^{-1}r^{*}(\infty)\Delta^{1,0}=
\hat{r}\,^{*}(1)^{-1}\,K^{-1}\,(K\,\hat{r}\,^{*})(\infty)\,\Delta^{1,0}
=\hat{r}\,^{*}(1)^{-1}\,\hat{r}\,^{*}(\infty)\,\Delta^{1,0},$$ as
well as
$$r^{*}(1)^{-1}r^{*}(0)\Delta^{0,1}=
\hat{r}\,^{*}(1)^{-1}\,\hat{r}\,^{*}(0)\,\Delta^{0,1}.$$ We conclude
that, despite not coinciding, $r^{*}$ and $\hat{r}\,^{*}$ produce
the same transformations of bundles and complexified surfaces.

Now set
$$\tilde{K}=p_{\alpha,L^{\alpha}}(\infty)\,p_{\beta,L^{\beta}}(\infty).$$
Note that
$$p_{\alpha',L'}(\lambda)\,p_{\alpha',L'}(\infty)=q_{\alpha',L'}(\lambda)=p_{\alpha',L'}(\infty)\,p_{\alpha',L'}(\lambda),$$
or, equivalently,
$$q_{\alpha',L'}(\lambda)\,p_{\alpha',L'}(\infty)=p_{\alpha',L'}(\lambda)=p_{\alpha',L'}(\infty)\,q_{\alpha',L'}(\lambda),$$
for all $\alpha',L'$ and $\lambda\in\mathbb{P}^{1}\backslash\{\pm
\alpha'\}$. Together with equation \eqref{eq:rel}, and having in
consideration that $p_{\alpha',L'}(\infty)^{2}=I$, this establishes
\begin{eqnarray*}
\tilde{K}\,r^{*}&=&p_{\alpha,L^{\alpha}}(\infty)\,p_{\beta,L^{\beta}}(\infty)^{2}\,
p_{\beta,\hat{L}^{\beta}_{\alpha}}(\infty)\,q_{\beta,\hat{L}^{\beta}_{\alpha}}p_{\alpha,L^{\alpha}}\\&=&p_{\alpha,L^{\alpha}}(\infty)\,
p_{\beta,\hat{L}^{\beta}_{\alpha}}\,p_{\alpha,L^{\alpha}}\\&=&
p_{\beta,\hat{L}^{\beta}_{\alpha}}\,p_{\alpha,L^{\alpha}}(\infty)p_{\alpha,L^{\alpha}}
\end{eqnarray*}
and, ultimately,
\begin{equation}\label{eq:rellinha}
r^{(\beta,\alpha)}_{L^{\alpha},L^{\beta}}=\tilde{K}\,r^{(\alpha,\beta)}_{L^{\alpha},L^{\beta}}.
\end{equation}
Ultimately, we conclude that, despite not coinciding,
$r^{(\alpha,\beta)}_{L^{\alpha},L^{\beta}}$ and
$r^{(\beta,\alpha)}_{L^{\alpha},L^{\beta}}$ produce the same
transformations of bundles and complexified surfaces.

Set
$$q^{*}:=\mathrm{Ad}_{r^{*}(1)^{-1}}(\mathrm{Ad}_{r^{*}(0)}q^{1,0}+
\mathrm{Ad}_{r^{*}(\infty)}q^{0,1}).$$

\begin{defn}\label{BTdefn}
Suppose $V$ is a $q$-constrained harmonic bundle. In that case, the
$q^{*}$-constrained harmonic bundle
$$V^{*}:=r^{*}(1)^{-1}\,V$$ is said to be the \emph{B\"{a}cklund
transform of $V$ of parameters $\alpha,\beta,L^{\alpha},L^{\beta}$}.
In the case $V$ is a $(q,d)$-central sphere congruence for some
constrained Willmore surface $(\Delta^{1,0},\Delta^{0,1})$, the
$q^{*}$-constrained Willmore surface
$$(\Delta^{1,0}, \,\Delta^{0,1})^{*}:=((\Delta^{*})^{1,0}, \,(\Delta^{*})^{0,1}):=(r^{*}(1)^{-1}\,r^{*}(\infty)\Delta^{1,0},r^{*}(1)^{-1}\,r^{*}(0)\Delta^{0,1})$$
is said to be the \emph{B\"{a}cklund transform of
$(\Delta^{1,0},\Delta^{0,1})$ of parameters
$\alpha,\beta,L^{\alpha},L^{\beta}$}.
\end{defn}

\begin{rem}
Set $\tilde{r}^{*}:=r^{(\beta,\alpha)}_{L^{\alpha},L^{\beta}}$.
According to equations \eqref{eq:rel} and \eqref{eq:rellinha},
$$\hat{q}^{*}:=\mathrm{Ad}_{\hat{r}^{*}(1)^{-1}}(\mathrm{Ad}_{\hat{r}^{*}(0)}q^{1,0}+
\mathrm{Ad}_{\hat{r}^{*}(\infty)}q^{0,1})=q^{*},$$ as well as
$$\tilde{q}^{*}:=\mathrm{Ad}_{\tilde{r}^{*}(1)^{-1}}(\mathrm{Ad}_{\tilde{r}^{*}(0)}q^{1,0}+
\mathrm{Ad}_{\tilde{r}^{*}(\infty)}q^{0,1})=q^{*}.$$
\end{rem}

Equation \eqref{eq:rel} establishes a \textit{Bianchi
permutability}\footnote{The terminology is motivated by the
permutability of this kind established by Bianchi with respect to
the original B\"{a}cklund transformations.} of type $p$ and type $q$
transformations of constrained harmonic bundles (into constrained
harmonic bundles), by means of the commutativity of the diagram in
Figure \ref{fig:im1}, below.
\begin{center}
\begin{figure}[H]
\includegraphics{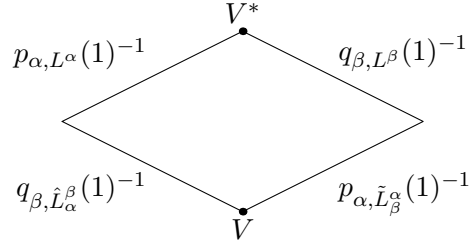}
\caption{A Bianchi permutability of type $p$ and type $q$
transformations.}\label{fig:im1}
\end{figure}
\end{center}
Equation \eqref{eq:rel} will play a crucial role when investigating
the preservation of reality conditions by B\"{a}cklund
transformations, in the next section.

\subsection{Real B\"{a}cklund transformation}

As we verify in this section, for special choices of parameters, the
B\"{a}cklund transformation preserves reality conditions.\newline

Suppose $V$ is a real $q$-constrained harmonic subbundle of
$\underline{\C}^{n+2}$ and let us focus on the particular case of a
B\"{a}cklund transformation of parameters
$\alpha,\beta,L^{\alpha},L^{\beta}$ for
$$\alpha\in\C\backslash (S^{1}\cup\{0\}),\,\,\,\,\,\,\,\,\,\,
\beta=\overline{\alpha}\,^{-1},\,\,\,\,\,\,\,\,\,\,L^{\beta}=\overline{L^{\alpha}}$$
and $L^{\alpha}$ a $d^{\alpha,q}_{V}$-parallel null line subbundle
of $\underline{\C}^{n+2}$ such that, locally,
$$\rho L^{\alpha}\cap (L^{\alpha})^{\perp}=\{0\}$$ and
\begin{equation}\label{eq:rhonotorthogonalnht236whum}
\rho\,q_{\overline{\alpha}\,^{-1},\overline{L^{\alpha}}}\,(\alpha)L^{\alpha}\cap
(q_{\overline{\alpha}\,^{-1},\overline{L^{\alpha}}}\,(\alpha)L^{\alpha})^{\perp}=\{0\}.
\end{equation}
We start by verifying that this is, indeed, a possible choice of
B\"{a}cklund transformation parameters in the case $V$ is real.

First note that the reality of $V$ establishes that of $\rho$,
$\overline{\rho}=\rho$, in which case the local non-orthogonality of
$L^{\alpha}$ and $\rho L^{\alpha}$ is equivalent to that of
$\overline{L^{\alpha}}$ and $\rho\overline{L^{\alpha}}$,
$$\rho\overline{L^{\alpha}}\cap (\overline{L^{\alpha}})^{\perp}=\{0\}.$$
On the other hand, as $V$ and $V^{\perp}$ are real, so are then
$\pi_{V}$ and $\pi_{V^{\perp}}$, as well as, therefore,
$\mathcal{D}_{V}$  and $\mathcal{N}_{V}$, so that, by the reality of
$q$,
$$d^{\overline{\alpha}\,^{-1},q}_{V}=\overline{d^{\alpha,q}_{V}}.$$
Hence the $d^{\alpha,q}_{V}$-parallelness of $L^{\alpha}$ is
equivalent to the $d^{\overline{\alpha}\,^{-1},q}_{V}$-parallelness
of $\overline{L^{\alpha}}$. The reality of $\rho$ establishes, on
the other hand,
\begin{equation}\label{eq:conj}
\overline{p_{V,\alpha',L'}(\lambda)}=p_{\overline{V},\overline{\alpha'},\overline{L'}}\,(\overline{\lambda}),\,\,\,\,\,\,\,\,\,\,\overline{q_{V,\alpha',L'}(\lambda)}=q_{\overline{V},\overline{\alpha'},\overline{L'}}\,(\overline{\lambda}),
\end{equation}
together with
$$\overline{p_{V,\alpha',L'}(\infty)}=p_{\overline{V},\overline{\alpha'},\overline{L'}}(\infty),$$
for all $\alpha',L'$ and $\lambda\in\C\backslash\{\pm\alpha'\}$.
According to  equation \eqref{eq:conjap}, it follows that, given
$l^{\alpha}\in\Gamma(L^{\alpha})$,
\begin{eqnarray*}
\overline{(\rho
q_{\overline{\alpha}\,^{-1},\overline{L^{\alpha}}}\,(\alpha)l^{\alpha},q_{\overline{\alpha}\,^{-1},\overline{L^{\alpha}}}\,(\alpha)l^{\alpha})}&=&
(\rho
q_{\alpha^{-1},L^{\alpha}}\,(\overline{\alpha})\overline{l^{\alpha}},q_{\alpha^{-1},L^{\alpha}}\,(\overline{\alpha})\overline{l^{\alpha}})\\&=&
(\rho
p_{\alpha,L^{\alpha}}(\overline{\alpha}\,^{-1})\overline{l^{\alpha}},p_{\alpha,L^{\alpha}}(\overline{\alpha}\,^{-1})\overline{l^{\alpha}}),
\end{eqnarray*}
showing that condition \eqref{eq:rhonotorthogonalnht236whum} is
equivalent to
$$\rho p_{\alpha,L^{\alpha}}(\overline{\alpha}\,^{-1})\overline{L^{\alpha}}\cap(p_{\alpha,L^{\alpha}}(\overline{\alpha}\,^{-1})\overline{L^{\alpha}})^{\perp}=\{0\}.$$

Next we establish the existence of a choice of $L^{\alpha}$ in the
conditions above.
\begin{Lemma}
Let $v$ and $w$ be sections of $V$ and $V^{\perp}$, respectively,
with $(v,v)$ never-zero, $(v,\overline{v})=0$ and $(w,w)=-(v,v)$.
Define a null section of $\underline{\C}^{n+2}$ by
$l^{\alpha}:=v+w$. Let $L^{\alpha}\subset\underline{\C}^{n+2}$ be a
$d^{\alpha,q}_{V}$-parallel null line bundle defined naturally by
$d^{\alpha,q}_{V}$-parallel transport of $l^{\alpha}_{p}$, for some
point $p\in M$. Then there is some (non-empty) open set in which
$L^{\alpha}$ is never orthogonal to $\rho L^{\alpha}$ and satisfies
equation \eqref{eq:rhonotorthogonalnht236whum}.
\end{Lemma}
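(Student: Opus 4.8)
The plan is to check both conditions at the single point $p$ where, by construction, $L^{\alpha}_{p}=\langle l^{\alpha}_{p}\rangle$, and then invoke openness: both are non-vanishing conditions of the form $(\rho\,\cdot\,,\cdot)\neq 0$, and, exactly as observed for \eqref{eq:rhoLnotorthL}, such a condition holding at a point holds on a whole neighbourhood. First I would record that $l^{\alpha}=v+w$ is null: since $v\in\Gamma(V)$ and $w\in\Gamma(V^{\perp})$ are orthogonal, $(l^{\alpha},l^{\alpha})=(v,v)+(w,w)=(v,v)-(v,v)=0$. As $\rho$ is reflection across $V$, at $p$ we have $\rho\,l^{\alpha}=v-w$, so $(\rho\,l^{\alpha},l^{\alpha})=(v,v)-(w,w)=2(v,v)$, which is never-zero by hypothesis. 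This gives $\rho L^{\alpha}\cap(L^{\alpha})^{\perp}=\{0\}$ at $p$, hence locally. Conjugating and using that $\rho$ is real (because $V$ is), the same identity yields $\rho\overline{L^{\alpha}}\cap(\overline{L^{\alpha}})^{\perp}=\{0\}$ near $p$, so that $q_{\overline{\alpha}^{-1},\overline{L^{\alpha}}}$ is defined there and \eqref{eq:rhonotorthogonalnht236whum} is meaningful.

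For the second condition, set $\eta:=q_{\overline{\alpha}^{-1},\overline{L^{\alpha}}}(\alpha)\,l^{\alpha}$; as in the discussion of \eqref{eq:rhoLnotorthL}, condition \eqref{eq:rhonotorthogonalnht236whum} is equivalent to $(\rho\eta,\eta)\neq 0$. Since $q_{\overline{\alpha}^{-1},\overline{L^{\alpha}}}$ is twisted (\eqref{eq:rhopqinv}) and satisfies $q(\lambda)^{-1}=q(-\lambda)$ (\eqref{eq:pinvs}), and is orthogonal, I would rewrite $(\rho\eta,\eta)=(\rho\,l^{\alpha},\,q_{\overline{\alpha}^{-1},\overline{L^{\alpha}}}(\alpha)^{2}\,l^{\alpha})$ at $p$. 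The bulk of the work is then the metric evaluation of the right-hand side: I would decompose $l^{\alpha}=v+w$ along $\underline{\C}^{n+2}=\overline{L^{\alpha}}\oplus\rho\overline{L^{\alpha}}\oplus(\overline{L^{\alpha}}\oplus\rho\overline{L^{\alpha}})^{\perp}$ at $p$, using that there $\overline{L^{\alpha}}\oplus\rho\overline{L^{\alpha}}=\langle\overline{v},\overline{w}\rangle$, that $v$ lies in its orthogonal complement (since $(v,\overline{v})=0$ by hypothesis and $(v,\overline{w})=0$ as $V$ is real), and that only the $\overline{w}$-direction of $w$ protrudes, the protrusion being measured by $\mu:=(w,\overline{w})$. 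Applying $q_{\overline{\alpha}^{-1},\overline{L^{\alpha}}}(\alpha)^{2}$, which multiplies the $\overline{L^{\alpha}}$- and $\rho\overline{L^{\alpha}}$-components by $t^{2}$ and $t^{-2}$ for $t:=\dfrac{\alpha-\overline{\alpha}^{-1}}{\alpha+\overline{\alpha}^{-1}}$, and pairing against $\rho\,l^{\alpha}=v-w$, gives
$$(\rho\eta,\eta)=2(v,v)+\frac{\mu^{2}}{2\,\overline{(v,v)}}\,(t-t^{-1})^{2}.$$

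The step I expect to be the conceptual crux, rather than this bookkeeping, is the extraction of positivity. Here $t=\dfrac{|\alpha|^{2}-1}{|\alpha|^{2}+1}$ is \emph{real}, and, since $|\alpha|\neq 1$, it lies in $(-1,1)\setminus\{0\}$, so $(t-t^{-1})^{2}>0$; moreover $\mu=(w,\overline{w})$ is \emph{real}, as $\overline{(w,\overline{w})}=(\overline{w},w)=(w,\overline{w})$. Multiplying the displayed identity by the nonzero scalar $\overline{(v,v)}$ therefore yields
$$\overline{(v,v)}\,(\rho\eta,\eta)=2\,|(v,v)|^{2}+\tfrac{1}{2}\,\mu^{2}\,(t-t^{-1})^{2}>0,$$
the sum of a strictly positive and a nonnegative real number. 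Hence $(\rho\eta,\eta)\neq 0$ at $p$, so \eqref{eq:rhonotorthogonalnht236whum} holds at $p$ and, by openness, on a neighbourhood of $p$; together with the first condition this completes the proof. The only genuine subtlety to flag is that the clean decomposition above is valid only at $p$, where the parallel-transported bundle $L^{\alpha}$ coincides with $\langle v+w\rangle$; away from $p$ one relies solely on the fact that the two inner-product functions are smooth and nonzero at $p$, hence nonzero nearby.
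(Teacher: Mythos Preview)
Your proof is correct and follows essentially the same approach as the paper's: both check the two non-orthogonality conditions at the point $p$ via explicit computation of $(\rho\,q(\alpha)l^{\alpha},q(\alpha)l^{\alpha})$ using the decomposition $\underline{\C}^{n+2}=\overline{L^{\alpha}}\oplus\rho\overline{L^{\alpha}}\oplus(\overline{L^{\alpha}}\oplus\rho\overline{L^{\alpha}})^{\perp}$, arrive at the same final identity (your displayed formula, after multiplication by $\overline{(v,v)}$, is exactly the paper's $|(\rho l^{\alpha},l^{\alpha})|^{2}+(A^{2}+A^{-2}-2)(l^{\alpha},\overline{l^{\alpha}})^{2}$ with $A=t$ and $(l^{\alpha},\overline{l^{\alpha}})=\mu$), and conclude by the same positivity argument and openness. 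The only cosmetic difference is that you decompose $v$ and $w$ separately, exploiting $(v,\overline{v})=0$ early, whereas the paper decomposes $l^{\alpha}$ and $\rho\,l^{\alpha}$ as wholes and uses the relation $(l^{\alpha},\rho\overline{l^{\alpha}})^{2}=(l^{\alpha},\overline{l^{\alpha}})^{2}$ at the end.
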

\begin{proof}
At the point $p$, $L^{\alpha}$ is spanned by $l^{\alpha}_{p}$. The
fact that, at the point $p$,
$$(\rho l^{\alpha},l^{\alpha})=(v,v)-(w,w)=2(v,v)\neq 0$$
establishes the non-orthogonality of $L^{\alpha}$ and $\rho
L^{\alpha}$ at this point and, therefore, locally. Consider
projections
$\pi_{\overline{L^{\alpha}}}:\underline{\C}^{n+2}\rightarrow\overline{L^{\alpha}}$,
$\pi_{\rho\overline{L^{\alpha}}}:\underline{\C}^{n+2}\rightarrow
\rho\overline{L^{\alpha}}$ and
$\pi_{\perp}:\underline{\C}^{n+2}\rightarrow(\overline{L^{\alpha}}\oplus\rho
\overline{L^{\alpha}})^{\perp}$ with respect to the decomposition
$$\underline{\C}^{n+2}= \overline{L^{\alpha}}\oplus
(\overline{L^{\alpha}}\oplus\rho
\overline{L^{\alpha}})^{\perp}\oplus\rho\overline{L^{\alpha}},$$
established by the subsequent local non-orthogonality of
$\overline{L^{\alpha}}$ and $\rho \overline{L^{\alpha}}$. For
simplicity, denote
$q_{\overline{\alpha}\,^{-1},\overline{L^{\alpha}}}$ by $q$. Set
$$A:=\frac{\alpha-\overline{\alpha}\,^{-1}}{\alpha+\overline{\alpha}\,^{-1}}=\frac{\mid\alpha\mid ^{2}-1}{\mid\alpha\mid
^{2}+1}\in\R.$$ Then
$$\rho q(\alpha)l^{\alpha}=q(\alpha)^{-1}\rho l^{\alpha}=
A^{-1}\pi_{\overline{L^{\alpha}}}\,\rho l^{\alpha}+\pi_{\perp}\,\rho
l^{\alpha}+A\,\pi_{\rho\overline{L^{\alpha}}}\rho l^{\alpha},$$ and,
therefore,
$$(\rho q(\alpha)l^{\alpha},q(\alpha)l^{\alpha})=
A^{2}(\pi_{\overline{L^{\alpha}}}\,l^{\alpha},\pi_{\rho\overline{L^{\alpha}}}\,\rho
l^{\alpha})+ (\pi_{\perp}l^{\alpha},\pi_{\perp}\rho l^{\alpha})+
A^{-2}(\pi_{\rho\overline{L^{\alpha}}}\,l^{\alpha},\pi_{\overline{L^{\alpha}}}\,\rho
l^{\alpha}).$$ At the point $p$,
$\overline{L^{\alpha}}=\langle\overline{l^{\alpha}}\rangle$ and the
orthogonality relations show then that, at $p$,
$$l^{\alpha}=\frac{(l^{\alpha},\rho\overline{l^{\alpha}})}{\overline{(\rho l^{\alpha},l^{\alpha})}}\,\overline{l^{\alpha}}+
\frac{(l^{\alpha},\overline{l^{\alpha}})}{\overline{(\rho
l^{\alpha},l^{\alpha})}}\,\rho\overline{l^{\alpha}}+
\pi_{\perp}l^{\alpha}.$$ Hence, at $p$,
$$(\pi_{\overline{L^{\alpha}}}\,l^{\alpha},\pi_{\rho\overline{L^{\alpha}}}\,\rho l^{\alpha})=\frac{(l^{\alpha},\rho\overline{l^{\alpha}})^{2}}{\overline{(\rho l^{\alpha},l^{\alpha})}}$$
and
$$(\pi_{\rho\overline{L^{\alpha}}}\,l^{\alpha},\pi_{\overline{L^{\alpha}}}\,\rho l^{\alpha})=\frac{(l^{\alpha},\overline{l^{\alpha}})^{2}}{\overline{(\rho l^{\alpha},l^{\alpha})}},$$
and, therefore,
\begin{eqnarray*}
(\pi_{\perp}l^{\alpha},\pi_{ \perp}\rho l^{\alpha})&=& (\rho
l^{\alpha},l^{\alpha})-\frac{(l^{\alpha},\rho\overline{l^{\alpha}})^{2}+(l^{\alpha},\overline{l^{\alpha}})^{2}}{\overline{(\rho
l^{\alpha},l^{\alpha})}}.
\end{eqnarray*}
It follows that $(\rho q(\alpha)l^{\alpha},q(\alpha)l^{\alpha})$
vanishes at $p$ if and only if, at this point,
$$\mid(\rho l^{\alpha},l^{\alpha})\mid^{2}+(A^{2}-1)(l^{\alpha},\rho\overline{l^{\alpha}})^{2}+(A^{-2}-1)(l^{\alpha},\overline{l^{\alpha}})^{2}=0,$$
or, equivalently,
$$\mid(\rho l^{\alpha},l^{\alpha})\mid^{2}+(A^{2}+A^{-2}-2)(l^{\alpha},\overline{l^{\alpha}})^{2}=0,$$
as, since $(v,\overline{v})=0$, we have, at $p$,
$$(l^{\alpha},\rho\overline{l^{\alpha}})^{2}=((v,\overline{v})-(w,\overline{w}))^{2}=(w,\overline{w})^{2}=((v,\overline{v})+(w,\overline{w}))^{2}=
(l^{\alpha},\overline{l^{\alpha}})^{2}.$$ As
$(l^{\alpha},\overline{l^{\alpha}})$ is real, and, therefore,
$(l^{\alpha},\overline{l^{\alpha}})^{2}\geq 0$, and
$A^{2}+A^{-2}-2=(A-A^{-1})^{2}>0$, we conclude that, at the point
$p$,
$$(\rho q(\alpha)l^{\alpha},q(\alpha)l^{\alpha})\neq
0.$$ The proof is complete by the fact that the non-orthogonality of
$q(\alpha)L^{\alpha}$ and $\rho q(\alpha)L^{\alpha}$ is an open
condition on the points in $M$.
\end{proof}

We refer to a B\"{a}cklund transformation corresponding to this
particular choice of parameters, in the case $V$ is real, as a
\textit{B\"{a}cklund} \textit{transformation} \textit{of}
\textit{parameters} \textit{$\alpha$}, \textit{$L^{\alpha}$}. For
this choice of parameters, where, in particular, $\beta$ and
$L^{\beta}$ are defined by $\alpha$ and $L^{\alpha}$, we denote
$\tilde{L}^{\alpha}_{\beta}$, $\hat{L}^{\beta}_{\alpha}$ and
$r^{(\alpha,\beta)}_{L^{\alpha},L^{\beta}}$ simply by
$\tilde{L}^{\alpha}$, $\hat{L}^{\overline{\alpha}\,^{-1}}$ and
$r^{\alpha}_{L^{\alpha}}$, respectively. In what follows in this
section, consider this particular choice of parameters. Let $r^{*}$
denote $r^{\alpha}_{L^{\alpha}}$. According to \eqref{eq:conjap} and
\eqref{eq:conj},
\begin{eqnarray*}
\overline{r^{*}(1)^{-1}}&=&\overline{q_{\overline{\alpha}\,^{-1},\overline{L^{\alpha}}}(1)}\,^{-1}\,\overline{p_{\alpha,\tilde{L}^{\alpha}}(1)}\,^{-1}\\&=&
p_{\alpha,L^{\alpha}}(1)\,^{-1}\,p_{\overline{\alpha},\overline{\tilde{L}^{\alpha}}}\,(1)\,^{-1}.
\end{eqnarray*}
On the other hand,
\begin{equation}\label{eq:tildevsbarL}
\overline{\tilde{L}^{\alpha}}=q_{\alpha^{-1},L^{\alpha}}(\overline{\alpha})\overline{L^{\alpha}}=p_{\alpha,L^{\alpha}}(\overline{\alpha}\,^{-1})\overline{L^{\alpha}}=\hat{L}^{\overline{\alpha}\,^{-1}}.
\end{equation}
and, therefore, by equation \eqref{eq:rel},
\begin{eqnarray*}
r^{*}(1)^{-1}&=&(Kq_{\overline{\alpha}\,^{-1},\overline{\tilde{L}^{\alpha}}}\,(1)\,p_{\alpha,L^{\alpha}}(1))^{-1}\\&=&p_{\alpha,L^{\alpha}}(1)^{-1}\,p_{\overline{\alpha},\overline{\tilde{L}^{\alpha}}}\,(1)^{-1}\,K^{-1}.
\end{eqnarray*}
Hence
\begin{equation}\label{eq:conjugado de rstar(1)inv}
\overline{r^{*}(1)^{-1}}=r^{*}(1)^{-1}K.
\end{equation}
Note that, given $\alpha',L'$, $p_{\alpha',L'}(\infty)$ does not
depend on $\alpha'$. Thus
\begin{eqnarray*}
\overline{r^{*}(0)}&=&\overline{p_{\alpha,\tilde{L}^{\alpha}}(0)q_{\overline{\alpha}\,^{-1},\overline{L^{\alpha}}}\,(0)}\\&=&
p_{\alpha^{-1},L^{\alpha}}(\infty)\\&=&p_{\alpha,L^{\alpha}}(\infty).
\end{eqnarray*}
On the other hand, by equation \eqref{eq:rel},
\begin{eqnarray*}
r^{*}(\infty)&=&Kq_{\overline{\alpha}\,^{-1},\hat{L}^{\overline{\alpha}\,^{-1}}}(\infty)p_{\alpha,L^{\alpha}}(\infty)\\&=&Kp_{\alpha,L^{\alpha}}(\infty).
\end{eqnarray*}
We conclude that
\begin{equation}\label{eq:conjugado de rstar(0)}
\overline{r^{*}(0)}=K^{-1}r^{*}(\infty).
\end{equation}

\begin{rem}
It is opportune to remark that, by \eqref{eq:tildevsbarL},
\begin{eqnarray*}
\overline{r^{*}(\lambda)}&=&\overline{K\,q_{\overline{\alpha}\,^{-1},\overline{\tilde{L}^{\alpha}}}\,(\lambda)\,p_{\alpha,L^{\alpha}}(\lambda)}=\\&=&
\overline{K}\,p_{\alpha,\tilde{L}^{\alpha}}(\overline{\lambda}\,^{-1})\,q_{\overline{\alpha}\,^{-1},\overline{L^{\alpha}}}\,(\overline{\lambda}\,^{-1})\\&=&
\overline{K}\,r^{*}(\overline{\lambda}\,^{-1})
\end{eqnarray*}
for all $\lambda$, and, in particular,
\begin{equation}\label{eq:koverlineinversedadada?}
\overline{r^{*}(1)^{-1}}=r^{*}(1)^{-1}\overline{K}\,^{-1}.
\end{equation}
Together, equations \eqref{eq:conjugado de rstar(1)inv} and
\eqref{eq:koverlineinversedadada?} establish, in particular,
$\overline{K}=K^{-1}$. On the other hand, in view of the fact that
given $\alpha',L'$, $p_{\alpha',L'}(\infty)$ does not depend on
$\alpha'$, \eqref{eq:pinvs} establishes $K^{-1}=K$. Hence
\begin{equation}\label{eq:overlineKisK}
\overline{K}=K^{-1}=K.
\end{equation}
\end{rem}

Let $V^{*}$ be the B\"{a}cklund transform of $V$ of parameters
$\alpha,L^{\alpha}$. Equation \eqref{eq:conjugado de rstar(1)inv},
together with equation \eqref{eq:KV=V}, shows that the reality of
$V$ establishes that of $V^{*}$,
$$\overline{V^{*}}=V^{*}.$$
Following Theorem \ref{CHtransf}, we get:
\begin{thm}
If $V$ is a real $q$-constrained harmonic bundle, then the
B\"{a}cklund transform $V^{*}$ of $V$, of parameters
$\alpha,L^{\alpha}$, is a real $q^{*}$-constrained harmonic bundle.
\end{thm}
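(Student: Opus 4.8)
The plan is to separate the statement into its two assertions: that $V^{*}$ is $q^{*}$-constrained harmonic, and that $V^{*}$ is real. The first is not special to this choice of parameters and will follow at once from the general dressing-action machinery already set up; the genuinely new content, and the delicate point, is the reality.

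For the constrained-harmonicity assertion, I would first record that $r^{*}=r^{\alpha}_{L^{\alpha}}=p_{\alpha,\tilde{L}^{\alpha}}\,q_{\overline{\alpha}\,^{-1},\overline{L^{\alpha}}}$ satisfies all the hypotheses of the dressing action: it is rational and holomorphic-invertible at $\lambda=0,\infty$; it is twisted in the sense of \eqref{eq:RcommuteRho} by \eqref{eq:rhopqinv}; the $d^{\alpha,q}_{V}$-parallelness of $L^{\alpha}$ together with the $d^{\overline{\alpha}\,^{-1},q}_{V}$-parallelness of $\overline{L^{\alpha}}$ guarantee, via Lemma \ref{lemmaext}, the holomorphic extension of $\lambda\mapsto r^{*}(\lambda)\circ d^{\lambda,q}_{V}\circ r^{*}(\lambda)^{-1}$ to $\C\backslash\{0\}$; and the determinant condition \eqref{eq:condondet} holds because $r^{*}$ composes one factor of type $p$ with one of type $q$ (cf. \eqref{eq:qinfty=1} and \eqref{eq:p0qinftyetc}). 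With these in place, Theorem \ref{CHtransf} applies verbatim and yields that $V^{*}=r^{*}(1)^{-1}V$ is $q^{*}$-constrained harmonic, with $q^{*}=\mathrm{Ad}_{r^{*}(1)^{-1}}\hat{q}$ as in Definition \ref{BTdefn}.

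The reality of $V^{*}$ is where the special structure of the parameters $\beta=\overline{\alpha}\,^{-1}$, $L^{\beta}=\overline{L^{\alpha}}$ enters, and I would extract it from the conjugation relation \eqref{eq:conjugado de rstar(1)inv}. Since $V$ is real, $\overline{V}=V$, so
$$\overline{V^{*}}=\overline{r^{*}(1)^{-1}}\,\overline{V}=\overline{r^{*}(1)^{-1}}\,V.$$
Inserting \eqref{eq:conjugado de rstar(1)inv}, which reads $\overline{r^{*}(1)^{-1}}=r^{*}(1)^{-1}K$, gives $\overline{V^{*}}=r^{*}(1)^{-1}KV$, and then \eqref{eq:KV=V}, $KV=V$, collapses this to $r^{*}(1)^{-1}V=V^{*}$. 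Hence $\overline{V^{*}}=V^{*}$, that is, $V^{*}$ is real, and the two assertions together complete the proof.

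The main obstacle is therefore entirely packaged into establishing \eqref{eq:conjugado de rstar(1)inv}. That relation is the crux: it rests on the conjugation formulas \eqref{eq:conj} for the linear-fractional factors $p$ and $q$ (valid precisely because $V$, and hence $\rho$, is real, so that $\overline{\rho}=\rho$), on the $p$–$q$ interchange identities \eqref{eq:conjap} and \eqref{eq:rhopqinv}, and crucially on the Bianchi-permutability identity \eqref{eq:rel}, $r^{*}=K\,\hat{r}^{*}$, which, combined with $\overline{\tilde{L}^{\alpha}}=\hat{L}^{\overline{\alpha}\,^{-1}}$ from \eqref{eq:tildevsbarL}, is what lets one recognize the conjugate of $r^{*}$ as $r^{*}$ itself up to the factor $K$. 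One should also keep in mind that $K$ preserves $V$ and satisfies $\overline{K}=K^{-1}=K$ \eqref{eq:overlineKisK}, which is consistent with the collapse above; this is the only genuinely non-formal input, and the rest of the argument is the bookkeeping assembly already carried out for the factors.
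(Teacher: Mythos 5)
Your proposal is correct and follows essentially the same route as the paper: constrained harmonicity of $V^{*}$ comes from Theorem \ref{CHtransf} via the dressing action, and reality comes from combining $\overline{r^{*}(1)^{-1}}=r^{*}(1)^{-1}K$ (equation \eqref{eq:conjugado de rstar(1)inv}, itself resting on \eqref{eq:conj}, \eqref{eq:tildevsbarL} and the permutability identity \eqref{eq:rel}) with $KV=V$ and $\overline{V}=V$. The only inessential difference is that you also verify the determinant condition \eqref{eq:condondet}, which is needed only for the induced transformation of surfaces (Theorem \ref{eq:thm8.4.2paraja}), not for the bundle statement proved here.
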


B\"{a}cklund transformations of constrained harmonic bundles
preserve reality conditions, for this specific choice of parameters.
As we verify next, this leads us, via the central sphere congruence,
to a transformation of real constrained Willmore surfaces,
preserving the Willmore surface condition.

Suppose $\Lambda$ is a real $q$-constrained Willmore surface having
$V$ as the complexification of its central sphere congruence. In
particular, $q$ is real. According to equations \eqref{eq:conjugado
de rstar(1)inv} and \eqref{eq:conjugado de rstar(0)}, the reality of
$q$ establishes that of $q^{*}$:
\begin{eqnarray*}
\overline{(q^{*})^{1,0}}&=&\overline{r^{*}(1)^{-1}}\,\,\overline{r^{*}(0)}\,\,\overline{q^{1,0}}\,\,\overline{r^{*}(1)^{-1}\,r^{*}(0)}\,^{-1}\\&=&
r^{*}(1)^{-1}r^{*}(\infty)\,q^{0,1}\,r^{*}(\infty)^{-1}\,r^{*}(1)\\&=&(q^{*})^{0,1}
\end{eqnarray*}
and, therefore, $$\overline{q^{*}}=q^{*}.$$ Let
$((\Lambda^{*})^{1,0},(\Lambda^{*})^{0,1})$ be the B\"{a}cklund
transform of $\Lambda$ of parameters $\alpha,L^{\alpha}$. Yet again
according to equations \eqref{eq:conjugado de rstar(1)inv} and
\eqref{eq:conjugado de rstar(0)},
\begin{eqnarray*}
\overline{(\Lambda^{*})^{0,1}}&=&\overline{r^{*}(1)^{-1}}\,\overline{r^{*}(0)}\,\overline{\Lambda^{0,1}}\\&=&
r^{*}(1)^{-1}\,r^{*}(\infty)\Lambda^{1,0}\\&=&(\Lambda^{*})^{1,0}
\end{eqnarray*}
establishing the reality of the bundle
$$\Lambda^{*}:=(\Lambda^{*})^{1,0}\cap (\Lambda^{*})^{0,1}.$$
If $\Lambda^{*}$ defines an immersion of $M$ into
$\mathbb{P}(\mathcal{L})$, then, according  to Theorem
\ref{eq:thm8.4.2paraja}, $V^{*}$ is the complexification of the
central sphere congruence of $\Lambda^{*}$, which establishes
$\Lambda^{*}$ as a real $q^{*}$-constrained Willmore surface.

\begin{thm}\label{backwillm}
If $\Lambda$ is a real $q$-constrained Willmore surface, then the
B\"{a}cklund transform $\Lambda^{*}$ of $\Lambda$, of parameters
$\alpha,L^{\alpha}$, is a real $q^{*}$-constrained Willmore surface,
provided that it immerses.
\end{thm}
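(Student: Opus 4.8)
The plan is to assemble the ingredients established in the preceding sections, since the statement is essentially a collection of those results specialized to a real surface; the only genuinely new content is the verification that reality is preserved for the parameter choice $\beta=\overline{\alpha}\,^{-1}$, $L^{\beta}=\overline{L^{\alpha}}$. First I would record the starting data: because $\Lambda$ is a real $q$-constrained Willmore surface, its central sphere congruence $S$ has complexification $V=S^{\C}$, which by Theorem \ref{ffffffffadeaeddaeae} is a real $q$-constrained harmonic bundle with $q$ real. The dressing construction of Section \ref{sec:dress}, applied through $r^{*}=r^{\alpha}_{L^{\alpha}}$, then yields $V^{*}=r^{*}(1)^{-1}V$, which is $q^{*}$-constrained harmonic by Theorem \ref{CHtransf} and real by the theorem immediately preceding this one. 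The complexified transform $((\Lambda^{*})^{1,0},(\Lambda^{*})^{0,1})$ is formed as in Definition \ref{BTdefn}, its intersection $\Lambda^{*}$ being a rank $1$ isotropic subbundle by the two-families-of-lines argument on the Klein quadric together with the determinant condition \eqref{eq:condondet}.

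The core of the work is the two reality computations, which I would carry out next using the conjugation identities $\overline{r^{*}(1)^{-1}}=r^{*}(1)^{-1}K$, $\overline{r^{*}(0)}=K^{-1}r^{*}(\infty)$, the invariance $KV=V$, and $\overline{K}=K^{-1}=K$. For the multiplier, these give $\overline{(q^{*})^{1,0}}=r^{*}(1)^{-1}r^{*}(\infty)\,q^{0,1}\,r^{*}(\infty)^{-1}r^{*}(1)=(q^{*})^{0,1}$, so that $\overline{q^{*}}=q^{*}$. For the surface, the same identities yield $\overline{(\Lambda^{*})^{0,1}}=r^{*}(1)^{-1}r^{*}(\infty)\Lambda^{1,0}=(\Lambda^{*})^{1,0}$, whence $\Lambda^{*}=(\Lambda^{*})^{1,0}\cap(\Lambda^{*})^{0,1}$ is a real line subbundle of $\underline{\C}^{n+2}$, hence the complexification of a null real line subbundle of $\underline{\R}^{n+1,1}$.

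Finally I would invoke the immersion hypothesis. If $\Lambda^{*}$ defines an immersion of $M$ into $\mathbb{P}(\mathcal{L})$, then Theorem \ref{eq:thm8.4.2paraja} exhibits $V^{*}$ as a central sphere congruence of the complexified surface $((\Lambda^{*})^{1,0},(\Lambda^{*})^{0,1})$; since $\Lambda^{*}$ is now a genuine real surface, $V^{*}$ must in fact be the complexification of the central sphere congruence of $\Lambda^{*}$ (in the sense of Definition \ref{defnccsc}). Combining the reality of $\Lambda^{*}$ and of $q^{*}$ with the $q^{*}$-constrained harmonicity of $V^{*}$, Theorem \ref{ffffffffadeaeddaeae} then identifies $\Lambda^{*}$ as a real $q^{*}$-constrained Willmore surface, which completes the proof.

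I expect the main obstacle to be not any individual identity—each reality computation is routine once the conjugation relations for $r^{*}$ are in place—but rather the conceptual passage from the formal complexified transform to an actual geometric surface. One must be certain that reality of the pair $((\Lambda^{*})^{1,0},(\Lambda^{*})^{0,1})$, together with the standing immersion hypothesis, genuinely promotes $V^{*}$ from an abstract enveloping sphere congruence to the true central sphere congruence of the real surface $\Lambda^{*}$; this is precisely where Theorem \ref{eq:thm8.4.2paraja} and the characterization in Definition \ref{defnccsc} do the essential work, and it is the step where the immersion assumption is genuinely used.
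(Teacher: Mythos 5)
Your proposal is correct and follows essentially the same route as the paper's own proof: the same conjugation identities $\overline{r^{*}(1)^{-1}}=r^{*}(1)^{-1}K$, $\overline{r^{*}(0)}=K^{-1}r^{*}(\infty)$ together with $KV=V$ and $\overline{K}=K^{-1}=K$ yield the reality of $V^{*}$, $q^{*}$ and $\Lambda^{*}$ by exactly the computations you display, and the final step likewise invokes Theorem \ref{eq:thm8.4.2paraja} under the immersion hypothesis to identify $V^{*}$ as the complexification of the central sphere congruence of $\Lambda^{*}$ and conclude via the constrained-harmonicity characterization.
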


Note that, if $\Lambda^{*}$ defines an immersion of $M$ into
$\mathbb{P}(\mathcal{L})$, then
$$\mathcal{C}_{\Lambda^{*}}=\mathcal{C}=\mathcal{C}_{\Lambda}.$$

The geometry of B\"{a}cklund transformation is not clear.
\begin{rem}
Note that
$$p_{\alpha',\rho
L'}(\lambda)=p_{\alpha',L'}(\lambda)^{-1},\,\,\,\,\,\,q_{\alpha',\rho
L'}(\lambda)=q_{\alpha',L'}(\lambda)^{-1},$$ for all $\alpha',L'$
and $\lambda\in\mathbb{P}^{1}\backslash\{\pm\alpha'\} $. By
\eqref{eq:pinvs}, and having in consideration the reality of $\rho$,
we get $q_{\overline{-\alpha}\,^{-1},\overline{\rho
L^{\alpha}}}\,(-\alpha)=q_{\overline{\alpha}\,^{-1},\rho\overline{L^{\alpha}}}\,(\alpha)=
q_{\overline{\alpha}\,^{-1},\overline{L^{\alpha}}}\,(\alpha)^{-1}$
and, therefore, by \eqref{eq:rhopqinv}
\begin{equation}\label{eq:qrgosl2w4567bdaw}
q_{\overline{-\alpha}\,^{-1},\overline{\rho L^{\alpha}}}\,(-\alpha)=
\rho\,q_{\overline{\alpha}\,^{-1},\overline{L^{\alpha}}}\,(\alpha)\,\rho,
\end{equation}
making clear that the orthogonality of $\rho
\,q_{\overline{\alpha}\,^{-1},\overline{L^{\alpha}}}\,(\alpha)L^{\alpha}$
 and
 $q_{\overline{\alpha}\,^{-1},\overline{L^{\alpha}}}\,(\alpha)L^{\alpha}$
 is equivalent to that of
 $\rho\,q_{\overline{-\alpha}\,^{-1},\overline{\rho L^{\alpha}}}\,(-\alpha)\rho L^{\alpha}$
 and
 $q_{\overline{-\alpha}\,^{-1},\overline{\rho L^{\alpha}}}\,(-\alpha)\rho L^{\alpha}$.
On the other hand, as we know, $L^{\alpha}$ is a
$d^{\alpha,q}_{V}$-parallel null line bundle if and only if $\rho
L^{\alpha}$ is a $d^{-\alpha,q}_{V}$-parallel null line bundle. We
conclude that $\alpha,L^{\alpha}$ are B\"{a}cklund transformation
parameters if and only if so are $-\alpha,\rho L^{\alpha}$.
Furthermore: the B\"{a}cklund transforms of parameters
$\alpha,L^{\alpha}$ and $-\alpha,\rho L^{\alpha}$ coincide. In fact,
according to equation \eqref{eq:qrgosl2w4567bdaw},
$$\tilde{L}^{-\alpha}=q_{\overline{-\alpha}\,^{-1},\overline{\rho L^{\alpha}}}\,(-\alpha)\rho
L^{\alpha}=
\rho\,q_{\overline{\alpha}\,^{-1},\overline{L^{\alpha}}}\,(\alpha)L^{\alpha}=\rho\tilde{L}^{\alpha}$$
and, therefore,
$$p_{-\alpha,\tilde{L}^{-\alpha}}(\lambda)\,q_{\overline{-\alpha}\,^{-1},\overline{\rho
L^{\alpha}}}\,(\lambda)=
p_{\alpha,\tilde{L}^{\alpha}_{\beta}}(\lambda)\,q_{\overline{\alpha}\,^{-1},\overline{L^{\alpha}}}\,(\lambda),$$
for all $\lambda\in\mathbb{P}^{1}\backslash\{\pm\alpha\}$.
\end{rem}

We complete this section with an interesting and useful result
relating the quadratic differentials $q_{Q}$ and $q^{*}_{Q}$:

\begin{prop}\label{quaddiffscoincide}
$$q^{*}_{Q}=q_{Q}.$$
\end{prop}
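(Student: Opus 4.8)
The plan is to isolate the coefficient $q^z$ of the quadratic differential in a form adapted to the \emph{trivial} flat connection $d$, and then to read off precisely what the dressing gauge does to it. First I would record a clean description of $q^z$. By \eqref{eq:newdefofQqrealcase}, relative to $\mathcal{C}_\Lambda$ one has $q^z\tau=-2\,q_{\delta_z}(\mathcal{D}_{\delta_z}\tau)$ for every $\tau\in\Gamma(\Lambda^{0,1})$; since the central sphere congruence is central, $\mathcal{N}^{1,0}\Lambda^{0,1}=0$ (cf. \eqref{eq:curlyN10Lambda01bbbbb09rgg9987654}), so $\mathcal{D}_{\delta_z}\tau=d_{\delta_z}\tau$ on $\Lambda^{0,1}$ and hence
$$q^z\tau=-2\,q^{1,0}_{\delta_z}(d_{\delta_z}\tau),\qquad \tau\in\Gamma(\Lambda^{0,1}),$$
an identity referring only to $d$. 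The same formula, with $(\Lambda^\ast)^{0,1}$ replacing $\Lambda^{0,1}$, computes $(q^\ast)^z$, since $V^\ast$ is the central sphere congruence of $\Lambda^\ast$ and $(q^\ast)^{1,0}\in\Omega^{1,0}(\wedge^2(\Lambda^\ast)^{0,1})$.

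Next I would feed in the explicit transform. By Definition \ref{BTdefn}, $(\Lambda^\ast)^{0,1}=T\,\Lambda^{0,1}$ and $(q^\ast)^{1,0}=\mathrm{Ad}_T\,q^{1,0}$, where $T:=r^\ast(1)^{-1}r^\ast(0)\in\Gamma(O(\underline{\C}^{n+2}))$. Taking $\tau^\ast:=T\tau$ with $\tau\in\Gamma(\Lambda^{0,1})$ and expanding,
$$(q^\ast)^{1,0}_{\delta_z}(d_{\delta_z}\tau^\ast)=T\,q^{1,0}_{\delta_z}\bigl((T^{-1}d_{\delta_z}T)\tau+d_{\delta_z}\tau\bigr)=-\tfrac{q^z}{2}\,\tau^\ast+T\,q^{1,0}_{\delta_z}\bigl((T^{-1}d_{\delta_z}T)\tau\bigr),$$
the first term being produced by the $d$-adapted identity above. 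Thus $q^\ast_Q=q_Q$ becomes equivalent to the single vanishing statement
$$q^{1,0}_{\delta_z}\,(T^{-1}d_{\delta_z}T)\big|_{\Lambda^{0,1}}=0.$$
All the preceding steps are formal; everything has been reduced to controlling the logarithmic derivative of the endpoint gauge against $q^{1,0}$.

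It remains to establish this last identity, which is the heart of the matter. Viewed through \eqref{eq:isomwedge}, $q^{1,0}_{\delta_z}$ equals $c\,\sigma\wedge\sigma_{\bar z}$, so it annihilates $(\Lambda^{0,1})^\perp=\Lambda^{0,1}\oplus S^\perp$ and has image in $\Lambda^{0,1}$; since $T^{-1}dT$ is skew, the identity reduces to showing that the $(1,0)$-part of the logarithmic derivative carries $\Lambda^{0,1}$ into $(\Lambda^{0,1})^\perp$, equivalently $\bigl((T^{-1}d_{\delta_z}T)\Lambda^{0,1},\Lambda^{0,1}\bigr)=0$. Writing $T^{-1}dT=r^\ast(0)^{-1}d\,r^\ast(0)-\mathrm{Ad}_{T^{-1}}\bigl(r^\ast(1)^{-1}d\,r^\ast(1)\bigr)$ and recalling that $r^\ast(0)=p_{\beta,L^\beta}(\infty)$ and $r^\ast(1)$ are built from reflections in the null lines $L^\alpha,L^\beta,\tilde L^\alpha_\beta$, the spatial derivatives of these reflections are governed by the $d^{\pm\alpha,q}_V$- and $d^{\pm\beta,q}_V$-parallelism of those lines; by the same mechanism behind \eqref{eq:rrsat0}, the relevant components lie in $V\wedge V^\perp$ together with terms along the null lines.

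The main obstacle is precisely this final bookkeeping: after composing with $q^{1,0}_{\delta_z}$ (which sees only the $\Lambda^{1,0}$-to-$\Lambda$ part and kills $S^\perp$ and $\Lambda^{0,1}$), one must show that every surviving contribution of $T^{-1}d_{\delta_z}T$ pairs to zero against $\Lambda^{0,1}$, using the nullity of the B\"acklund lines and the isotropy of $\Lambda^{0,1}$. Marshalling the parallelism of $L^\alpha,L^\beta,\tilde L^\alpha_\beta$ and the isotropy relations into the orthogonality $\bigl((T^{-1}d_{\delta_z}T)\Lambda^{0,1},\Lambda^{0,1}\bigr)=0$ is where the real work lies; I expect this to be the only genuinely computational part of the argument.
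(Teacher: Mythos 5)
Your formal reduction is correct, and it runs parallel to the first half of the paper's own proof: expressing $q^{z}$ and $(q^{*})^{z}$ via \eqref{eq:newdefofQqrealcase} (you simplify slightly by noting that $\mathcal{D}_{\delta_{z}}=d_{\delta_{z}}$ on $\Gamma(\Lambda^{0,1})$, and the analogous fact for $V^{*}$), writing $\tau^{*}=T\tau$ with $T=r^{*}(1)^{-1}r^{*}(0)$, using $(q^{*})^{1,0}=\mathrm{Ad}_{T}\,q^{1,0}$, and isolating the obstruction
$$q^{1,0}_{\delta_{z}}\bigl((T^{-1}d_{\delta_{z}}T)\tau\bigr)=0,\qquad \tau\in\Gamma(\Lambda^{0,1}),$$
equivalently $\bigl((T^{-1}d_{\delta_{z}}T)\Lambda^{0,1},\Lambda^{0,1}\bigr)=0$. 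Up to this point everything you write is sound.

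The gap is that this identity, which you yourself describe as "where the real work lies", is never proved, and the mechanism you sketch for it does not apply as stated. You invoke "the same mechanism behind \eqref{eq:rrsat0}", but \eqref{eq:rrsat0} concerns the derivative of $r^{*}(\lambda)$ in the \emph{spectral parameter} $\lambda$ at $\lambda=0$ (a consequence of the twisting $\rho\,r(\lambda)\,\rho=r(-\lambda)$), whereas your object $T^{-1}d_{\delta_{z}}T$ involves the \emph{spatial} derivative on $M$ of the endpoint gauges $r^{*}(0)$, $r^{*}(1)$; twistedness gives no direct control over those. What does control them are the gauge relations $r^{*}(\lambda)\circ d^{\lambda,q}_{V}\circ r^{*}(\lambda)^{-1}=\hat{d}^{\lambda,\hat{q}}_{V}$ together with Proposition \ref{cruxh} and its consequences \eqref{eq:curlyDscomadjuntascomesemchapeustnne029876} and \eqref{eq:calNhatd0,1}. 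That is exactly how the paper closes the argument: it considers the Leibniz action of $\mathcal{D}_{V}^{1,0}-q^{1,0}$ on the whole family $\mathcal{R}_{\lambda}:=r^{*}(\infty)^{-1}r^{*}(\lambda)$, rewrites it via \eqref{eq:curlyDscomadjuntascomesemchapeustnne029876} as $\mathrm{Ad}_{r^{*}(\lambda)^{-1}}\bigl(-\lambda^{-1}(\mathcal{N}_{V}^{\hat{d}})^{1,0}-\lambda^{-2}\hat{q}^{1,0}+\mathrm{Ad}_{r^{*}(\lambda)}(\lambda^{-1}\mathcal{N}^{1,0}_{V}+\lambda^{-2}q^{1,0})\bigr)$, and passes to the limit $\lambda\to 0$ using the holomorphicity of $\mathrm{Ad}_{r^{*}(\lambda)}$ and \eqref{eq:calNhatd0,1}; only then does the $\lambda$-derivative $\psi=r^{*}(0)^{-1}\frac{d}{d\lambda}\big|_{\lambda=0}r^{*}(\lambda)$, and hence \eqref{eq:rrsat0}, legitimately enter, and the conclusion follows from $\mathcal{N}^{1,0}_{V}\Lambda^{0,1}=0$, $\mathcal{N}^{1,0}_{V}V^{\perp}\subset\Lambda^{0,1}$, $q^{1,0}\Lambda^{0,1}=0=qV^{\perp}$ and $\mathrm{Im}\,q^{1,0}\subset\Lambda^{0,1}$. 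To complete your route you would have to recreate this machinery (for instance, compute $d_{\delta_{z}}r^{*}(1)$ from $r^{*}(1)\circ d=\hat{d}\circ r^{*}(1)$ and $d_{\delta_{z}}r^{*}(0)$ from \eqref{eq:curlyDscomadjuntascomesemchapeustnne029876}); differentiating the individual factors $p$, $q$ and appealing directly to the parallelism of $L^{\alpha}$, $L^{\beta}$, $\tilde{L}^{\alpha}_{\beta}$, as you propose, is not carried out and would in any case have to be routed through these same gauge relations.
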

\begin{proof}
Fix $z$ a holomorphic chart of $(M,\mathcal{C}_{\Lambda})$. The
proof will consist of showing that $(q^{*})^{z}=q^{z}$, or,
equivalently, that
$q^{z}\tau^{*}=-2q^{*}_{\delta_{z}}((\mathcal{D}_{V^{*}})_{\delta_{z}}\tau^{*})$,
for all $\tau^{*}\in\Gamma((\Lambda^{*})^{0,1})$. Fix
$\tau\in\Gamma(\Lambda^{0,1})$ and set
$\tau^{*}:=r^{*}(1)^{-1}r^{*}(0)\tau$. Set also
$$\mathcal{R}:=r^{*}(\infty)^{-1}r^{*}(0).$$
Let $\hat{d}$ be as defined in Section \ref{sec:dress} for
$r=r^{*}$. According to \eqref{eq:isomV}, followed by equation
\eqref{eq:mathcalDdosr},
\begin{eqnarray*}
\mathcal{D}_{V^{*}}^{1,0}\circ
r^{*}(1)^{-1}r^{*}(0)&=&r^{*}(1)^{-1}\circ(\mathcal{D}_{V}^{\hat{d}})^{1,0}\circ
r^{*}(0)\\&=&
r^{*}(1)^{-1}r^{*}(0)q^{1,0}+r^{*}(1)^{-1}r^{*}(\infty)\circ
(\mathcal{D}_{V}^{1,0}-q^{1,0})\circ \mathcal{R}
\end{eqnarray*}
and, therefore,
$$q^{*}_{\delta_{z}}((\mathcal{D}_{V^{*}})_{\delta_{z}}\tau^{*})=
r^{*}(1)^{-1}r^{*}(0)\,q_{\delta_{z}}\mathcal{R}^{-1}\circ((\mathcal{D}_{V})_{\delta_{z}}-q_{\delta_{z}})\circ
\mathcal{R}\,\tau,$$as $q^{1,0}\Lambda^{0,1}=0$. On the other hand,
$$\mathcal{R}^{-1}\circ((\mathcal{D}_{V})_{\delta_{z}}-q_{\delta_{z}})\circ
\mathcal{R}=\mathcal{R}^{-1}\circ((\mathcal{D}_{V})_{\delta_{z}}-q_{\delta_{z}})\mathcal{R}+
(\mathcal{D}_{V})_{\delta_{z}}-q_{\delta_{z}},$$so we conclude that
$$q^{*}_{\delta_{z}}((\mathcal{D}_{V^{*}})_{\delta_{z}}\tau^{*})=r^{*}(1)^{-1}r^{*}(0)\,q_{\delta_{z}}\mathcal{R}^{-1}\circ((\mathcal{D}_{V})_{\delta_{z}}-q_{\delta_{z}})\mathcal{R}\tau-\frac{1}{2}\,
q^{z}\tau^{*}.$$ We complete this verification by showing that
$\mathcal{R}^{-1}\circ((\mathcal{D}_{V})_{\delta_{z}}-q_{\delta_{z}})\mathcal{R}\,\Gamma(\Lambda^{0,1})\subset\Gamma(\Lambda^{0,1})$,
as follows. Set, more generally,
$$\mathcal{R}_{\lambda}:=r^{*}(\infty)^{-1}r^{*}(\lambda).$$
In view of equation
\eqref{eq:curlyDscomadjuntascomesemchapeustnne029876},
$$\mathcal{R}_{\lambda}^{-1}\circ
(\mathcal{D}^{1,0}_{V}-q^{1,0})\mathcal{R}_{\lambda}=r^{*}(\lambda)^{-1}\circ
((\mathcal{D}_{V}^{\hat{d}})^{1,0}-\hat{q}^{1,0})\circ\,r^{*}(\lambda)-\mathcal{D}_{V}^{1,0}+q^{1,0}.$$
On the other hand,
$$(\mathcal{D}_{V}^{\hat{d}})^{1,0}-\hat{q}^{1,0}=r^{*}(\lambda)\circ
(\mathcal{D}^{1,0}_{V}-q^{1,0}+\lambda^{-1}\mathcal{N}^{1,0}_{V}+\lambda^{-2}q^{1,0})\circ
r^{*}(\lambda)^{-1}-
\lambda^{-1}(\mathcal{N}_{V}^{\hat{d}})^{1,0}-\lambda^{-2}\hat{q}^{1,0}.$$Thus
$$\mathcal{R}_{\lambda}^{-1}\circ
(\mathcal{D}^{1,0}_{V}-q^{1,0})\mathcal{R}_{\lambda}=\mathrm{Ad}_{r^{*}(\lambda)^{-1}}(-\lambda^{-1}(\mathcal{N}_{V}^{\hat{d}})^{1,0}-\lambda^{-2}\hat{q}^{1,0}+\mathrm{Ad}_{r^{*}(\lambda)}(\lambda^{-1}\mathcal{N}^{1,0}+\lambda^{-2}q^{1,0})).$$
In view of the holomorphicity of $\mathrm{Ad}_{r^{*}(\lambda)}$ at
$\lambda=0$,
$$\mathrm{Ad}_{r^{*}(\lambda)}=\mathrm{Ad}_{r^{*}(0)}+\sum_{k\geq
1}\frac{\lambda^{k}}{k!}\frac{d^{k}}{d\lambda^{k}}_{\vert_{\lambda=0}}\mathrm{Ad}_{r^{*}(\lambda)}$$
and, therefore, by \eqref{eq:calNhatd1,0},
$$\mathcal{R}_{\lambda}^{-1}\circ
(\mathcal{D}^{1,0}_{V}-q^{1,0})\mathcal{R}_{\lambda}=\mathrm{Ad}_{r^{*}(\lambda)^{-1}}(\frac{d}{d\lambda}_{\vert_{\lambda=0}}\mathrm{Ad}_{r^{*}(\lambda)}\mathcal{N}^{1,0}_{V}+\frac{1}{2}\frac{d^{2}}{d\lambda^{2}}_{\vert_{\lambda=0}}\mathrm{Ad}_{r^{*}(\lambda)}q^{1,0}+o(\lambda)),$$
for $\lambda$ near $0$. Considering limits when $\lambda$ goes to
$0$, we conclude that
$$\mathcal{R}^{-1}\circ
(\mathcal{D}^{1,0}_{V}-q^{1,0})\mathcal{R}=\mathrm{Ad}_{r^{*}(0)^{-1}}(\frac{d}{d\lambda}_{\vert_{\lambda=0}}\mathrm{Ad}_{r^{*}(\lambda)}\mathcal{N}^{1,0}_{V}+\frac{1}{2}\frac{d^{2}}{d\lambda^{2}}_{\vert_{\lambda=0}}\mathrm{Ad}_{r^{*}(\lambda)}q^{1,0}).$$
For simplicity, set
$\psi:=r^{*}(0)^{-1}\frac{d}{d\lambda}_{\vert_{\lambda=0}}r^{*}(\lambda)\in\Gamma(V\wedge
V^{\perp})$ (recalling \eqref{eq:rrsat0}). Note that
$$\frac{d}{d\lambda}_{\vert_{\lambda=0}}\mathrm{Ad}_{r^{*}(\lambda)}\mathcal{N}^{1,0}_{V}=\mathrm{Ad}_{r^{*}(0)}[\psi,\mathcal{N}^{1,0}_{V}].$$
The centrality of the central sphere congruence of $\Lambda$,
$\mathcal{N}_{V}^{1,0}\Lambda^{0,1}=0$, together with the
skew-symmetry of $\mathcal{N}_{V}$, establishes that
$\mathcal{N}^{1,0}_{V}$ takes values in the orthogonal to
$\Lambda^{0,1}$. In particular,
\begin{equation}\label{eq:N10VperpinLambda01}
\mathcal{N}^{1,0}_{V}V^{\perp}\subset
V\cap(\Lambda^{0,1})^{\perp}=\Lambda^{0,1}.
\end{equation}
It follows that
$$\mathrm{Ad}_{r^{*}(0)^{-1}}(\frac{d}{d\lambda}_{\vert_{\lambda=0}}\mathrm{Ad}_{r^{*}(\lambda)}\mathcal{N}^{1,0}_{V})\Lambda^{0,1}=-\mathcal{N}^{1,0}_{V}\psi\Lambda^{0,1}\subset \Lambda^{0,1}.$$
On the other hand,
$$\mathrm{Ad}_{r^{*}(0)^{-1}}(\frac{d^{2}}{d\lambda^{2}}_{\vert_{\lambda=0}}\mathrm{Ad}_{r^{*}(\lambda)}q^{1,0})=r^{*}(0)^{-1}\frac{d^{2}}{d\lambda^{2}}_{\vert_{\lambda=0}}r^{*}(\lambda)q^{1,0}
-2\psi q^{1,0}\psi+2q^{1,0}\psi\psi$$ and, therefore,
$$\mathrm{Ad}_{r^{*}(0)^{-1}}(\frac{d^{2}}{d\lambda^{2}}_{\vert_{\lambda=0}}\mathrm{Ad}_{r^{*}(\lambda)}q^{1,0})\Lambda^{0,1}=q^{1,0}\psi\psi\Lambda^{0,1},$$
as $q^{1,0}\Lambda^{0,1}=0=qV^{\perp}$. The fact that $q^{1,0}$
takes values in $\Lambda^{0,1}$ completes the proof.
\end{proof}

\section{B\"{a}cklund transformation vs. spectral deformation}\label{BTvsSp}

\markboth{\tiny{A. C. QUINTINO}}{\tiny{CONSTRAINED WILLMORE
SURFACES}}

B\"{a}cklund transformation and spectral deformation, of constrained
harmonic bundles or complexified constrained Willmore surfaces, are
closely related: as we verify in this section, the B\"{a}cklund
transform of parameters
$\frac{\alpha}{\lambda},\frac{\beta}{\lambda},\phi^{\lambda}
L^{\alpha},\phi^{\lambda} L^{\beta}$ of the spectral deformation
 of parameter $\lambda$, corresponding to
a multiplier $q$, defined by $\phi^{\lambda}$, coincides with the
spectral deformation of parameter $\lambda$, corresponding to the
multiplier $q^{*}$, of the B\"{a}cklund transform of parameters
$\alpha,\beta,L^{\alpha},L^{\beta}$.\newline

Suppose $V$ is a $q$-constrained harmonic bundle, for some
$q\in\Omega^{1}(\wedge^{2}V\oplus \wedge^{2}V^{\perp})$. Let $V^{*}$
be the B\"{a}cklund transform of $V$ of parameters
$\alpha,\beta,L^{\alpha},L^{\beta}$. Let $r^{*}$ denote
$r^{(\alpha,\beta)}_{L^{\alpha},L^{\beta}}$ and, for each
$\lambda\in\C\backslash\{0,\pm\alpha,\pm\beta\}$, let
$\hat{d}^{\lambda,\hat{q}}_{V}$ be as defined in Section
\ref{sec:dress} for $r=r^{*}$. By definition of
$\hat{d}^{\lambda,\hat{q}}_{V}$,
$r^{*}(\lambda):(\underline{\C}^{n+2},d^{\lambda,q}_{V})\rightarrow
(\underline{\C}^{n+2},\hat{d}^{\lambda,\hat{q}}_{V})$ is an isometry
of bundles preserving connections, for each $\lambda$. In
particular, so is $r^{*}(1):(\underline{\C}^{n+2},d)\rightarrow
(\underline{\C}^{n+2},\hat{d})$. Fix $\lambda$ in
$\C\backslash\{0,\pm\alpha,\pm\beta\}$. In view of
\eqref{eq:isomVd}, we conclude that
$$d^{\lambda,q^{*}}_{V^{*}}=r^{*}(1)^{-1}\,r^{*}(\lambda)\circ d^{\lambda,q}_{V}\circ
r^{*}(\lambda)^{-1}r^{*}(1).$$ It follows that, given
$$\phi^{\lambda}:(\underline{\C}^{n+2},d^{\lambda,q}_{V})\rightarrow
(\underline{\C}^{n+2},d),$$an isometry preserving connections, so is
$$\psi^{\lambda}:=\phi^{\lambda} \,r^{*}(\lambda)^{-1}\,r^{*}(1):(\underline{\C}^{n+2},d^{\lambda,q^{*}}_{V^{*}})\rightarrow
(\underline{\C}^{n+2},d),$$providing, therefore, the spectral
deformation $\psi^{\lambda}V^{*}$, of parameter $\lambda$,
corresponding to the multiplier $q^{*}$, of $V^{*}$. On the other
hand, such isometry $\phi^{\lambda}$ provides the spectral
deformation $\phi^{\lambda} V$, of parameter $\lambda$,
corresponding to the multiplier $q$, of $V$. Next we focus on this
$\mathrm{Ad}_{\phi^{\lambda}}q_{\lambda}$-constrained harmonic
bundle. For simplicity, set
$\tilde{q}_{\lambda}:=\mathrm{Ad}_{\phi^{\lambda}}q_{\lambda}$. Let
$\rho_{V}$ and $\rho_{\phi^{\lambda}V}$ denote reflection across $V$
and, respectively, $\phi^{\lambda}V$. Recalling
\eqref{eq:conndeconn}, we get
$$d^{\frac{\alpha}{\lambda},\tilde{q}_{\lambda}}_{\phi^{\lambda}
V}=\phi^{\lambda}\circ
(d^{\lambda,q}_{V})_{V}^{\frac{\alpha}{\lambda},q_{\lambda}}\circ(\phi^{\lambda})^{-1}=\phi^{\lambda}\circ
d^{\alpha,q}_{V}\circ(\phi^{\lambda})^{-1},$$ which makes clear
that, as $L^{\alpha}$ and $L^{\beta}$ are, respectively,
$d^{\alpha,q}_{V}$- and $d^{\beta,q}_{V}$-parallel, $\phi^{\lambda}
L^{\alpha}$ and $\phi^{\lambda} L^{\beta}$ are, respectively,
$d^{\frac{\alpha}{\lambda},\tilde{q}_{\lambda}}_{\phi^{\lambda} V}$-
and $d^{\frac{\beta}{\lambda},\tilde{q}_{\lambda}}_{\phi^{\lambda}
V}$-parallel. On the other hand, the fact that
\begin{equation}\label{eq:rhophiVvsrhoV}
\rho_{\phi^{\lambda}V}=\phi^{\lambda}\rho_{V}(\phi^{\lambda})^{-1}
\end{equation}
makes clear that, in view of the local non-orthogonality of
$L^{\beta}$ and $\rho_{V}L^{\beta}$, $\phi^{\lambda} L^{\beta}$ and
$\rho_{\phi^{\lambda}V}\phi^{\lambda} L^{\beta}$ are, locally,
non-orthogonal, as well. Equation \eqref{eq:rhophiVvsrhoV}
establishes, on the other hand,
$$p_{\phi^{\lambda} V,\alpha',\phi^{\lambda} L'}(\lambda)=\phi^{\lambda}\,
p_{V,\alpha',L'}(\lambda)\,(\phi^{\lambda})^{-1},\,\,\,\,\,\,
q_{\phi^{\lambda} V,\alpha',\phi^{\lambda}
L'}(\lambda)=\phi^{\lambda}\,
q_{V,\alpha',L'}(\lambda)\,(\phi^{\lambda})^{-1},$$ for all
$\alpha',L'$ and $\lambda\neq\pm\alpha' $. Note that
$$p_{\alpha',L'}(\lambda)=p_{\frac{\alpha'}{\lambda},L'}(1),\,\,\,\,\,q_{\alpha',L'}(\lambda)=q_{\frac{\alpha'}{\lambda},L'}(1).$$
for all $\alpha',L'$ and $\lambda\neq\pm\alpha'$. It follows that
$$\tilde{L}^{\alpha/\lambda}_{\beta/\lambda}=q_{\phi^{\lambda}V,\frac{\beta}{\lambda},\phi^{\lambda}L^{\beta}}(\frac{\alpha}{\lambda})\phi^{\lambda}L^{\alpha}=
\phi^{\lambda}\,q_{V,\frac{\beta}{\lambda},L^{\beta}}(\frac{\alpha}{\lambda})L^{\alpha}=
\phi^{\lambda}\,q_{V,\beta,L^{\beta}}(\alpha)L^{\alpha}=\phi^{\lambda}\,\tilde{L}^{\alpha}_{\beta}$$
establishing the local non-orthogonality of
$\rho_{\phi^{\lambda}V}\tilde{L}^{\alpha/\lambda}_{\beta/\lambda}=\phi^{\lambda}\,\rho_{V}\,\tilde{L}^{\alpha}_{\beta}$
and $\tilde{L}^{\alpha/\lambda}_{\beta/\lambda}$, in view of that of
$\rho_{V}\,\tilde{L}^{\alpha}_{\beta}$ and
$\tilde{L}^{\alpha}_{\beta}$. We conclude that, for each $\lambda$,
$\frac{\alpha}{\lambda},\frac{\beta}{\lambda},\phi^{\lambda}
L^{\alpha},\phi^{\lambda} L^{\beta}$ constitute B\"{a}cklund
transformation parameters to $\phi^{\lambda} V$. Set
$$r^{*}_{\lambda}:=p_{\phi^{\lambda}
V,\frac{\alpha}{\lambda},\tilde{L}^{\alpha/\lambda}_{\beta/\lambda}}\,q_{\phi^{\lambda}
V,\frac{\beta}{\lambda},\phi^{\lambda}
L^{\beta}}=\phi^{\lambda}\,p_{
V,\frac{\alpha}{\lambda},\tilde{L}^{\alpha}_{\beta}}\,q_{
V,\frac{\beta}{\lambda},L^{\beta}}\,(\phi^{\lambda})^{-1}.$$Then
$$r^{*}_{\lambda}(1)^{-1}\,\phi^{\lambda}=\phi^{\lambda}\,q_{V,\beta,L^{\beta}}(\lambda)^{-1}\,p_{V,\alpha,\tilde{L}^{\alpha}_{\beta}}(\lambda)^{-1}=
\phi^{\lambda}\,r^{*}(\lambda)^{-1}$$ and, therefore,
$r^{*}_{\lambda}(1)^{-1}\,\phi^{\lambda}=\psi^{\lambda}\,r^{*}(1)^{-1}$.
In particular,
$$r^{*}_{\lambda}(1)^{-1}\,\phi^{\lambda}V=\psi^{\lambda}\,r^{*}(1)^{-1}V,$$
the B\"{a}cklund transform of parameters
$\frac{\alpha}{\lambda},\frac{\beta}{\lambda},\phi^{\lambda}
L^{\alpha},\phi^{\lambda} L^{\beta}$ of the spectral deformation
$\phi^{\lambda}V$ of $V$, of parameter $\lambda$ [corresponding to
the multiplier $q$], coincides with the spectral deformation of
parameter $\lambda$ [corresponding to the multiplier $q^{*}$] of the
B\"{a}cklund transform of $V$ of parameters
$\alpha,\beta,L^{\alpha},L^{\beta}$.

This permutability between B\"{a}cklund transformation and spectral
deformation of constrained harmonic bundles extends to constrained
Willmore surfaces. Indeed, if $V$ is a $(q,d)$-central sphere
congruence to some $q$-constrained Willmore surface
$(\Delta^{1,0},\Delta^{0,1})$, the equality
$$r^{*}_{\lambda}(1)^{-1}=\phi^{\lambda}\,r^{*}(\lambda)^{-1}\,(\phi^{\lambda})^{-1}$$
establishes, furthermore,
\begin{eqnarray*}
r_{\lambda}^{*}(1)^{-1}r_{\lambda}^{*}(\infty)\,\phi^{\lambda}\Delta^{1,0}&=&\phi^{\lambda}r^{*}(\lambda)^{-1}p_{V,\frac{\alpha}{\lambda},\tilde{L}^{\alpha}_{\beta}}(\infty)\Delta^{1,0}\\&=&
\phi^{\lambda}r^{*}(\lambda)^{-1}p_{V,\alpha,\tilde{L}^{\alpha}_{\beta}}(\infty)\Delta^{1,0}
\end{eqnarray*}
and, therefore,
$$r_{\lambda}^{*}(1)^{-1}r_{\lambda}^{*}(\infty)\,\phi^{\lambda}\Delta^{1,0}=\psi^{\lambda}\,r^{*}(1)^{-1}r^{*}(\infty)\Delta^{1,0};$$
and, similarly,
$$r_{\lambda}^{*}(1)^{-1}r_{\lambda}^{*}(0)\,\phi^{\lambda}\Delta^{0,1}=\psi^{\lambda}\,r^{*}(1)^{-1}r^{*}(0)\Delta^{0,1}.$$

\begin{thm}
Let $V$ be a $q$-constrained harmonic bundle,
$\alpha$,$\beta$,$L^{\alpha}$,$L^{\beta}$ be B\"{a}cklund
transformation parameters to $V$, $\lambda\in\C\backslash\{0,
\pm\alpha,\pm\beta\}$ and
$\phi^{\lambda}:(\underline{\C}^{n+2},d^{\lambda,q}_{V})\rightarrow
(\underline{\C}^{n+2},d)$ be an isometry preserving connections. The
B\"{a}cklund transform of parameters
$\frac{\alpha}{\lambda},\frac{\beta}{\lambda}$, $\phi^{\lambda}
L^{\alpha}$, $\phi^{\lambda} L^{\beta}$ of the spectral deformation
$\phi^{\lambda}V$ of $V$, of parameter $\lambda$ [corresponding to
the multiplier $q$], coincides with the spectral deformation of
parameter $\lambda$ [corresponding to the multiplier $q^{*}$] of the
B\"{a}cklund transform of parameters
$\alpha,\beta,L^{\alpha},L^{\beta}$ of $V$. Furthermore, if $V$ is a
$q$-central sphere congruence to a constrained Willmore surface
$(\Delta^{1,0},\Delta^{0,1})$ and
$\phi^{\lambda}_{*}:(\underline{\C}^{n+2},d^{\lambda,q^{*}}_{V^{*}})\rightarrow
(\underline{\C}^{n+2},d)$ is an isometry preserving connections,
then the diagram in Figure \ref{fig:im2} commutes.
\begin{center}
\begin{figure}[H]
\includegraphics{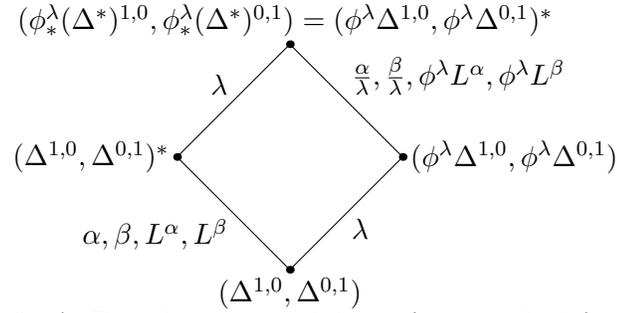}
\caption{A Bianchi permutability of spectral deformation and
B\"{a}cklund transformation of constrained Willmore
surfaces.}\label{fig:im2}
\end{figure}
\end{center}
\end{thm}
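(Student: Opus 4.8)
The plan is to reduce the whole permutability to two ingredients already available: the intertwining rule \eqref{eq:isomVd}, which says how the family $\tilde{d}^{\lambda,q}_{V}$ transforms under a connection-preserving isomorphism, and the group law \eqref{eq:conndeconn} for that family. Write $r^{*}:=r^{(\alpha,\beta)}_{L^{\alpha},L^{\beta}}$ and let $\hat{d}^{\lambda,\hat{q}}_{V}$ be the family built in Section \ref{sec:dress} for $r=r^{*}$, so that $r^{*}(\lambda)\colon(\underline{\C}^{n+2},d^{\lambda,q}_{V})\to(\underline{\C}^{n+2},\hat{d}^{\lambda,\hat{q}}_{V})$ preserves connections and $V^{*}=r^{*}(1)^{-1}V$. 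Applying \eqref{eq:isomVd} with $\psi=r^{*}(1)^{-1}r^{*}(\lambda)$ gives $d^{\lambda,q^{*}}_{V^{*}}=r^{*}(1)^{-1}r^{*}(\lambda)\circ d^{\lambda,q}_{V}\circ r^{*}(\lambda)^{-1}r^{*}(1)$. First I would exploit this to note that, given a connection-preserving isometry $\phi^{\lambda}\colon(\underline{\C}^{n+2},d^{\lambda,q}_{V})\to(\underline{\C}^{n+2},d)$ realising the spectral deformation $\phi^{\lambda}V$, the composite $\psi^{\lambda}:=\phi^{\lambda}\,r^{*}(\lambda)^{-1}\,r^{*}(1)$ is again a connection-preserving isometry, now out of $(\underline{\C}^{n+2},d^{\lambda,q^{*}}_{V^{*}})$, and so realises the spectral deformation $\psi^{\lambda}V^{*}$ of the B\"acklund transform.

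Next I would check that $\alpha/\lambda,\beta/\lambda,\phi^{\lambda}L^{\alpha},\phi^{\lambda}L^{\beta}$ are genuine B\"acklund parameters for $\phi^{\lambda}V$, with multiplier $\tilde{q}_{\lambda}:=\mathrm{Ad}_{\phi^{\lambda}}q_{\lambda}$. The group law \eqref{eq:conndeconn} yields $d^{\alpha/\lambda,\tilde{q}_{\lambda}}_{\phi^{\lambda}V}=\phi^{\lambda}\circ d^{\alpha,q}_{V}\circ(\phi^{\lambda})^{-1}$, so the $d^{\alpha,q}_{V}$-parallelness of $L^{\alpha}$ transports to the required parallelness of $\phi^{\lambda}L^{\alpha}$, and likewise for $\beta,L^{\beta}$. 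The open non-orthogonality conditions transfer because $\phi^{\lambda}$ is an isometry, whence $\rho_{\phi^{\lambda}V}=\phi^{\lambda}\rho_{V}(\phi^{\lambda})^{-1}$ (equation \eqref{eq:rhophiVvsrhoV}) and the dressing factors conjugate, $p_{\phi^{\lambda}V,\alpha',\phi^{\lambda}L'}=\mathrm{Ad}_{\phi^{\lambda}}p_{V,\alpha',L'}$ and similarly for $q$. Combining this conjugation rule with the scaling identity (that $p_{\alpha',L'}(\lambda')$ and $q_{\alpha',L'}(\lambda')$ depend on the parameter and loop variable only through their ratio $\lambda'/\alpha'$) produces the crucial compatibility $\tilde{L}^{\alpha/\lambda}_{\beta/\lambda}=\phi^{\lambda}\,\tilde{L}^{\alpha}_{\beta}$: the auxiliary parallel subbundle constructed on $\phi^{\lambda}V$ is exactly the $\phi^{\lambda}$-image of the one on $V$.

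With these in hand I would set $r^{*}_{\lambda}:=p_{\phi^{\lambda}V,\alpha/\lambda,\tilde{L}^{\alpha/\lambda}_{\beta/\lambda}}\,q_{\phi^{\lambda}V,\beta/\lambda,\phi^{\lambda}L^{\beta}}$ and rewrite it, via the conjugation and scaling identities, as $r^{*}_{\lambda}=\mathrm{Ad}_{\phi^{\lambda}}\bigl(p_{V,\alpha/\lambda,\tilde{L}^{\alpha}_{\beta}}\,q_{V,\beta/\lambda,L^{\beta}}\bigr)$. Evaluating at loop argument $1$ and applying the scaling identity once more collapses the two inner factors back to $r^{*}(\lambda)$, giving $r^{*}_{\lambda}(1)^{-1}\phi^{\lambda}=\phi^{\lambda}r^{*}(\lambda)^{-1}=\psi^{\lambda}r^{*}(1)^{-1}$. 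Applying both sides to $V$ gives $r^{*}_{\lambda}(1)^{-1}\phi^{\lambda}V=\psi^{\lambda}V^{*}$, the asserted coincidence at the bundle level. For the surface statement I would feed the same identity $r^{*}_{\lambda}(1)^{-1}=\phi^{\lambda}r^{*}(\lambda)^{-1}(\phi^{\lambda})^{-1}$ into the defining formulas $(\Delta^{*})^{1,0}=r^{*}(1)^{-1}r^{*}(\infty)\Delta^{1,0}$ and $(\Delta^{*})^{0,1}=r^{*}(1)^{-1}r^{*}(0)\Delta^{0,1}$, using that $p_{\cdot,\alpha',L'}(\infty)$ is independent of $\alpha'$ so the rescaled and unrescaled $p$-factors agree at $\infty$, to obtain $r^{*}_{\lambda}(1)^{-1}r^{*}_{\lambda}(\infty)\phi^{\lambda}\Delta^{1,0}=\psi^{\lambda}(\Delta^{*})^{1,0}$ and the corresponding $(0,1)$-identity; this is the commutativity of the diagram in Figure \ref{fig:im2}.

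The hard part will be the bookkeeping behind $\tilde{L}^{\alpha/\lambda}_{\beta/\lambda}=\phi^{\lambda}\tilde{L}^{\alpha}_{\beta}$, since the entire permutability hinges on it: it expresses that rescaling the B\"acklund spectral parameters by $\lambda$ is precisely absorbed by conjugation with the spectral-deformation isometry $\phi^{\lambda}$. Everything else is formal manipulation of the conjugation rule, the ratio-scaling of $p$ and $q$, and the group law \eqref{eq:conndeconn}; once the parallelness and non-orthogonality hypotheses are verified, the two-step dressing construction of Section \ref{sec:dress} applies verbatim on $\phi^{\lambda}V$, so no new analytic input is needed.
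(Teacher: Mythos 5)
Your proposal is correct and follows essentially the same route as the paper's own proof: the definition of $\psi^{\lambda}:=\phi^{\lambda}r^{*}(\lambda)^{-1}r^{*}(1)$ via \eqref{eq:isomVd}, the transport of parallelness through \eqref{eq:conndeconn}, the conjugation of the dressing factors by $\phi^{\lambda}$ together with the ratio-scaling identity $p_{\alpha',L'}(\lambda)=p_{\alpha'/\lambda,L'}(1)$ to obtain $\tilde{L}^{\alpha/\lambda}_{\beta/\lambda}=\phi^{\lambda}\tilde{L}^{\alpha}_{\beta}$, the key identity $r^{*}_{\lambda}(1)^{-1}\phi^{\lambda}=\psi^{\lambda}r^{*}(1)^{-1}$, and the use of the $\alpha'$-independence of $p_{\cdot,\alpha',L'}(\infty)$ for the surface-level statement all match the paper's argument step for step. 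No gaps.
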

For $\lambda\in\{\pm\alpha,\pm\beta\}$, it is not clear how the
spectral deformation of parameter $\lambda$ relates to the
B\"{a}cklund transformation of parameters
$\alpha,\beta,L^{\alpha},L^{\beta}$.

\chapter{Constrained Willmore surfaces with a conserved
quantity}\label{chaptercq}

\markboth{\tiny{A. C. QUINTINO}}{\tiny{CONSTRAINED WILLMORE
SURFACES}}

A powerful result by E. Noether \cite{noether} establishes that any
symmetry of the action of a physical system has a corresponding
conservation law. Time translation symmetry gives conservation of
energy, space translation symmetry gives conservation of momentum,
symmetry under rotation gives conservation of angular momentum -
these are examples of physically conserved quantities one gets from
symmetries of the laws of nature. Noether's theorem has become a
fundamental tool of modern theoretical physics and the calculus of
variations. In this chapter, we introduce the concept of
\textit{conserved quantity} of a constrained Willmore surface in a
space-form, an idea by Fran Burstall and David Calderbank.
Constrained Willmore surfaces in space-forms admitting a conserved
quantity form a subclass of constrained Willmore surfaces preserved
by both spectral deformation and B\"{a}cklund transformation, for
special choices of parameters. In codimension $1$, this class
consists of the class of constant mean curvature surfaces in
space-forms. In codimension $2$, surfaces with holomorphic mean
curvature vector in some space-form are examples of constrained
Willmore surfaces admitting a conserved quantity.\newline

Let $\Lambda\subset\underline{\R}^{n+1,1}$ be a $q$-constrained
Willmore surface in the projectivized light-cone, for some real
$1$-form $q$ with values in $\Lambda\wedge\Lambda^{(1)}$. Let $S$ be
the central sphere congruence of $\Lambda$. Provide $M$ with the
conformal structure $\mathcal{C}_{\Lambda}$, induced by $\Lambda$.

\section{Conserved quantities of constrained Willmore
surfaces}\label{cqofCWs}

\markboth{\tiny{A. C. QUINTINO}}{\tiny{CONSTRAINED WILLMORE
SURFACES}}

Let
$$p(\lambda):=\lambda^{-1}v+v_{0}+\lambda\overline{v}$$ be a Laurent
 polynomial with $v_{0}\in\Gamma(S)$ real, $v\in\Gamma(S^{\perp})$
and $v_{\infty}:=p(1)=v_{0}+v+\overline{v}\neq 0$.

\begin{defn}
We say that $p(\lambda)$ is a $q$-\emph{conserved} \emph{quantity}
of $\Lambda$ if
\begin{equation}\label{eq:dlambdaqplambda0(CQ)}
d^{\lambda,q}_{S}p(\lambda)=0,
\end{equation}
for all $\lambda\in\C\backslash\{0\}$.
\end{defn}

\begin{thm}\label{charactdeCQ}
$p(\lambda)$ is a $q$-conserved quantity of $\Lambda$ if and only if
$$dv_{\infty}=0,\,\,\,\,\mathcal{D}^{0,1}v=0,\,\,\,\,\mathcal{N}^{1,0}v+q^{1,0}v_{0}=0.$$
\end{thm}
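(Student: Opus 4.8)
The plan is to substitute $p(\lambda)=\lambda^{-1}v+v_{0}+\lambda\overline{v}$ into $d^{\lambda,q}_{S}=\mathcal{D}+\lambda^{-1}\mathcal{N}^{1,0}+\lambda\mathcal{N}^{0,1}+(\lambda^{-2}-1)q^{1,0}+(\lambda^{2}-1)q^{0,1}$ and expand the outcome as a Laurent polynomial in $\lambda$ with coefficients independent of $\lambda$; then $d^{\lambda,q}_{S}p(\lambda)=0$ for all $\lambda\in\C\backslash\{0\}$ if and only if every coefficient vanishes. The computation rests on four structural facts available from the excerpt: $\mathcal{D}$ preserves $S$ and $S^{\perp}$ and is real; $\mathcal{N}$ is real and intertwines $S$ and $S^{\perp}$; $q$ takes values in $\Lambda\wedge\Lambda^{(1)}\subset\wedge^{2}S$, so $q$ preserves $S$ and annihilates $S^{\perp}$ (whence $q^{1,0}v=q^{0,1}v=q^{1,0}\overline{v}=q^{0,1}\overline{v}=0$, while $q^{i,j}v_{0}\in\Gamma(\Lambda)$); and $v_{0}$ is real with $v,\overline{v}\in\Gamma(S^{\perp})$. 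Using $q^{1,0}v=q^{0,1}\overline{v}=0$ etc. to discard terms, collecting powers of $\lambda$ yields the system
\begin{align*}
\lambda^{-2}:&\quad \mathcal{N}^{1,0}v+q^{1,0}v_{0}=0,\\
\lambda^{-1}:&\quad \mathcal{D}v+\mathcal{N}^{1,0}v_{0}=0,\\
\lambda^{0}:&\quad \mathcal{D}v_{0}+\mathcal{N}^{0,1}v+\mathcal{N}^{1,0}\overline{v}-q^{1,0}v_{0}-q^{0,1}v_{0}=0,\\
\lambda^{1}:&\quad \mathcal{D}\overline{v}+\mathcal{N}^{0,1}v_{0}=0,\\
\lambda^{2}:&\quad \mathcal{N}^{0,1}\overline{v}+q^{0,1}v_{0}=0.
\end{align*}

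Next I would exploit reality to cut the five equations down to three. Since $\mathcal{D},\mathcal{N},q$ are real, $v_{0}$ is real, and $\overline{\mathcal{N}^{1,0}}=\mathcal{N}^{0,1}$, $\overline{q^{1,0}}=q^{0,1}$ (from $\overline{\xi^{1,0}}=\overline{\xi}\,^{0,1}$), the $\lambda^{-2}$ and $\lambda^{2}$ equations are complex conjugates of one another, as are the $\lambda^{-1}$ and $\lambda^{1}$ equations, while the $\lambda^{0}$ equation is self-conjugate. Hence the whole system is equivalent to: the $\lambda^{-2}$ equation, the $\lambda^{-1}$ equation, and the $\lambda^{0}$ equation. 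The $\lambda^{-2}$ equation is already exactly the stated condition $\mathcal{N}^{1,0}v+q^{1,0}v_{0}=0$. Splitting the $\lambda^{-1}$ equation into bidegrees—$\mathcal{D}v\in\Omega^{1}(S^{\perp})$ decomposes as $\mathcal{D}^{1,0}v+\mathcal{D}^{0,1}v$, while $\mathcal{N}^{1,0}v_{0}$ is of type $(1,0)$—shows it is equivalent to the pair $\mathcal{D}^{0,1}v=0$ (the second stated condition) together with $\mathcal{D}^{1,0}v+\mathcal{N}^{1,0}v_{0}=0$.

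It then remains to tie everything to $dv_{\infty}=0$, and this is where I expect the only real care is needed. For the forward direction I would note $d^{1,q}_{S}=d$ and $p(1)=v_{\infty}$, so $dv_{\infty}=d^{1,q}_{S}p(1)$ equals the sum of the five Laurent coefficients, hence vanishes as soon as the system holds; this, the $\lambda^{-2}$ equation, and the $(0,1)$-part of the $\lambda^{-1}$ equation give the three stated conditions. For the converse I would decompose $dv_{\infty}=0$ into its $S$- and $S^{\perp}$-components. The $(1,0)$-part of the $S^{\perp}$-component reads $\mathcal{D}^{1,0}v+\mathcal{D}^{1,0}\overline{v}+\mathcal{N}^{1,0}v_{0}=0$; since $\mathcal{D}^{1,0}\overline{v}=0$ is the conjugate of $\mathcal{D}^{0,1}v=0$, this recovers the missing $(1,0)$-part $\mathcal{D}^{1,0}v+\mathcal{N}^{1,0}v_{0}=0$ of the $\lambda^{-1}$ equation. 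The $S$-component reads $\mathcal{D}v_{0}+\mathcal{N}v+\mathcal{N}\overline{v}=0$; substituting $\mathcal{N}^{1,0}v=-q^{1,0}v_{0}$ and its conjugate $\mathcal{N}^{0,1}\overline{v}=-q^{0,1}v_{0}$ turns this into precisely the $\lambda^{0}$ equation. Thus the three stated conditions regenerate the full system, completing the equivalence. The genuine content—and the main obstacle—is not the expansion itself but recognizing that $dv_{\infty}=0$ packages together exactly the coefficient equations not captured by the two extreme-order conditions, via the orthogonal splitting $S\oplus S^{\perp}$ refined by the $(1,0)/(0,1)$ decomposition.
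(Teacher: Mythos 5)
Your proposal is correct and follows essentially the same route as the paper: expand $d^{\lambda,q}_{S}p(\lambda)$ as a Laurent polynomial using $qS^{\perp}=0$, pair off coefficients by reality, and then identify $dv_{\infty}=0$ with the remaining equations via the splitting $\underline{\R}^{n+1,1}=S\oplus S^{\perp}$ refined by bidegree. The only (minor, and pleasant) difference is your forward direction, where evaluating at $\lambda=1$ to get $dv_{\infty}=d^{1,q}_{S}p(1)$ as the sum of the coefficient equations replaces the paper's direct computation of $dv_{\infty}$ from its constituent equations; the converse argument coincides with the paper's.
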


\begin{proof}
Given $\lambda\in\C\backslash\{0\}$,
\begin{eqnarray*}
d^{\lambda,q}_{S}p(\lambda)&=&\lambda^{-1}\mathcal{D}v+\mathcal{D}v_{0}+\lambda\mathcal{D}\overline{v}+\lambda^{-2}\mathcal{N}^{1,0}v+\lambda^{-1}\mathcal{N}^{1,0}v_{0}+\mathcal{N}^{1,0}\overline{v}\\&
& \mbox{}+\mathcal{N}^{0,1}v
+\lambda\mathcal{N}^{0,1}v_{0}+\lambda^{2}\mathcal{N}^{0,1}\overline{v}+(\lambda^{-2}-1)q^{1,0}v_{0}+(\lambda^{2}-1)q^{0,1}v_{0}.
\end{eqnarray*}
Organizing the terms in $d^{\lambda,q}_{S}p(\lambda)$ by powers of
$\lambda$ shows that equation \eqref{eq:dlambdaqplambda0(CQ)} holds
for all $\lambda\in\C\backslash\{0\}$ if and only if so do equations
$\mathcal{D}v+\mathcal{N}^{1,0}v_{0}=0=\mathcal{D}\overline{v}+\mathcal{N}^{0,1}v_{0}$,
together with
$\mathcal{N}^{1,0}v+q^{1,0}v_{0}=0=\mathcal{N}^{0,1}\overline{v}+q^{0,1}v_{0}$
and with
\begin{equation}\label{eq:CQ3}
\mathcal{D}v_{0}+\mathcal{N}^{1,0}\overline{v}+\mathcal{N}^{0,1}v-q^{1,0}v_{0}-q^{0,1}v_{0}=0.
\end{equation}
In view of the reality of $\mathcal{D}$, $\mathcal{N}$ and $q$, we
conclude that equation \eqref{eq:dlambdaqplambda0(CQ)} holds if and
only if so do equations \eqref{eq:CQ3},
\begin{equation}\label{eq:CQ1}
\mathcal{D}v+\mathcal{N}^{1,0}v_{0}=0
\end{equation}
and
\begin{equation}\label{eq:CQ2}
\mathcal{N}^{1,0}v+q^{1,0}v_{0}=0.
\end{equation}
Equation \eqref{eq:CQ1} is equivalent to the system of equations
\begin{equation}\label{eq:CQ4}
\mathcal{D}^{1,0}v+\mathcal{N}^{1,0}v_{0}=0
\end{equation}
and
\begin{equation}\label{eq:CQ5}
\mathcal{D}^{0,1}v=0.
\end{equation}
In the light of \eqref{eq:CQ2}, equation \eqref{eq:CQ3} reads
$\mathcal{D}v_{0}+\mathcal{N}^{1,0}\overline{v}+\mathcal{N}^{0,1}v+\mathcal{N}^{1,0}v+\mathcal{N}^{0,1}\overline{v}=0$,
i.e.,
\begin{equation}\label{eq:CQ1.5}
\mathcal{D}v_{0}+\mathcal{N}(v+\overline{v})=0.
\end{equation}
On the other hand, in view of equations \eqref{eq:CQ4},
\eqref{eq:CQ5} and \eqref{eq:CQ1.5},
\begin{eqnarray*}
dv_{\infty}&=&\mathcal{D}v_{0}+\mathcal{D}(v+\overline{v})+\mathcal{N}v_{0}+\mathcal{N}(v+\overline{v})\\&=&\mathcal{D}^{1,0}v+\mathcal{D}^{0,1}v+\mathcal{D}^{1,0}\overline{v}+\mathcal{D}^{0,1}\overline{v}+\mathcal{N}^{1,0}v_{0}+\mathcal{N}^{0,1}v_{0}
\\&=&\overline{\mathcal{D}^{0,1}v}+\overline{\mathcal{D}^{1,0}v+\mathcal{N}^{1,0}v_{0}}
\end{eqnarray*}
and, ultimately,
\begin{equation}\label{eq:CQ7}
dv_{\infty}=0.
\end{equation}
We complete the proof by observing that, together with equation
\eqref{eq:CQ5}, equation \eqref{eq:CQ7} establishes \eqref{eq:CQ4}
and \eqref{eq:CQ1.5}. For that, first note that
$\pi_{S}\,(dv_{\infty})=\mathcal{D}v_{0}+\mathcal{N}(v+\overline{v})$
and
$\pi_{S^{\perp}}(dv_{\infty})=\mathcal{D}(v+\overline{v})+\mathcal{N}v_{0}$.
Thus $dv_{\infty}$ vanishes if and only if
$$\mathcal{D}v_{0}+\mathcal{N}(v+\overline{v})=0=\mathcal{D}(v+\overline{v})+\mathcal{N}v_{0}.$$
In particular, $dv_{\infty}=0$ forces
$0=\mathcal{N}^{1,0}v_{0}+\mathcal{D}^{1,0}v+\mathcal{D}^{1,0}\overline{v}$
which, together with equation \eqref{eq:CQ5}, establishes equation
\eqref{eq:CQ4}, completing the proof.
\end{proof}

For later reference,
\begin{rem}\label{qCWnotinterveneonproofrenm}
The characterization of the $d^{\lambda,q}_{S}$-parallelism of
$p(\lambda)$ by the equations in Theorem \ref{charactdeCQ} does not
involve the fact that $\Lambda$ is $q$-constrained Willmore, only
the reality of $q\in\Omega^{1}(\Lambda\wedge\Lambda^{(1)})$.
\end{rem}

\begin{rem}\label{CQeqdeterminesq}
In the characterization above, of a $q$-conserved quantity of
$\Lambda$, equation $\mathcal{N}^{1,0}v+q^{1,0}v_{0}=0$ determines
$q$. In fact, $q$ is real, so it is determined by $q^{1,0}$, which,
given that $\Lambda$ is a $q$-constrained Willmore surface, is
ensured to be a $1$-form with values in
$\Lambda\wedge\Lambda^{0,1}$. Hence $q^{1,0}$ is determined by
$q^{1,0} u\in\Gamma(\Lambda^{0,1})$, fixing $u\in S\backslash
\Lambda^{\perp}$ (cf. Remark \ref{e10determinedby}). On the other
hand, $\mathcal{N}^{1,0}v\in\Gamma(\Lambda^{0,1})$ (cf.
\eqref{eq:N10VperpinLambda01}). Finally, if $dv_{\infty}=0$, then
$v_{\infty}$ defines a space-form $S_{v_{\infty}}$, and, for
$\sigma_{\infty}$ the surface in $S_{v_{\infty}}$ defined by
$\Lambda$,
$(\sigma_{\infty},v_{0})=(\sigma_{\infty},\pi_{S}(v_{\infty}))=(\sigma_{\infty},v_{\infty})=-1$
is never-zero.
\end{rem}

\begin{rem}
If $p(\lambda)$ is a $q$-conserved quantity of $\Lambda$, then
$$d(v,v)=0.$$ Indeed, by equation \eqref{eq:CQ2}, and in view of the
skew-symmetry of $q$,
$$(\mathcal{N}^{1,0}v,v_{0})=-(q^{1,0}v_{0},v_{0})=(v_{0},q^{1,0}v_{0})=-(v_{0},\mathcal{N}^{1,0}v)$$
and, therefore, $(\mathcal{N}^{1,0}v,v_{0})=0$. Equivalently, $
(v,\mathcal{N}^{1,0}v_{0})=0$, in view of the skew-symmetry of
$\mathcal{N}$, which, together with equation \eqref{eq:CQ4},
establishes $(v,\mathcal{D}^{1,0}v)=0$ and, consequently, by
equation \eqref{eq:CQ5}, $(v,\mathcal{D}v)=0$.
\end{rem}

\section{Examples}

\markboth{\tiny{A. C. QUINTINO}}{\tiny{CONSTRAINED WILLMORE
SURFACES}}

Two special cases follow from the characterization of a conserved
quantity provided by Theorem \ref{charactdeCQ}.
\subsection{The special case of codimension $1$: CMC surfaces in
$3$-space}\label{sec:casecodim1}

\markboth{\tiny{A. C. QUINTINO}}{\tiny{CONSTRAINED WILLMORE
SURFACES}}

The existence of a conserved quantity $p(\lambda)$ of $\Lambda$
establishes, in particular, the constancy of $v_{\infty}:=p(1)$. In
the particular case of $n=3$, we verify that $\Lambda$ has constant
mean curvature in the space-form $S_{v_{\infty}}$, that is, the
surface defined by $\Lambda$ in the space-form $S_{v_{\infty}}$ has
constant mean curvature. In fact, constant mean curvature surfaces
in $3$-dimensional space-forms are, precisely, the constrained
Willmore surfaces in $3$-space-form admitting a conserved quantity.
This case will be addressed in detail in Section \ref{sec:CMC},
dedicated to constant mean curvature surfaces in $3$-dimensional
space-forms.
\subsection{A special case in codimension
$2$: holomorphic mean curvature vector surfaces in
$4$-space}\label{HMCsurfs}

\markboth{\tiny{A. C. QUINTINO}}{\tiny{CONSTRAINED WILLMORE
SURFACES}}

In codimension $2$, the complexification of $S^{\perp}$ admits a
unique decomposition into the direct sum of two null complex lines,
complex conjugate of each other: given $v\in\Gamma(S^{\perp})$ null,
$S^{\perp}=\langle v\rangle\oplus \langle\overline{v}\rangle$. Such
a $v$ defines an almost-complex structure $J_{v}$ on $S^{\perp}$,
with eigenvalues $i$ and $-i$ and eigenspaces $\langle v\rangle$ and
$\langle\overline{v}\rangle$, associated to $i$ and $-i$,
respectively. In \cite{burstall+calderbank} (see, in particular,
Corollary 14.3), F. Burstall and D. Calderbank proved that a
codimension $2$ surface in a space-form, with holomorphic mean
curvature vector with respect to the complex structure induced by
$\nabla^{S^{\perp}}$, is constrained Willmore. In this section, we
prove it in our setting, proving, furthermore, that, in
$4$-dimensional space-form, the constrained Willmore surfaces
admitting a conserved quantity
$p(\lambda)=\lambda^{-1}v+v_{0}+\lambda\overline{v}$ with $v$ null
are the surfaces with holomorphic mean curvature vector in the
space-form $S_{p(1)}$, with respect to the complex structure on
$(S^{\perp},J_{v},\nabla^{S^{\perp}})$ determined by
Koszul-Malgrange Theorem.\newline

Suppose $n=4$. In that case, $S^{\perp}$ is a (non-degenerate real)
rank $2$ bundle, admitting, therefore, a unique decomposition
\begin{equation}\label{eq:SperpSpm}
S^{\perp}=S^{\perp}_{+}\oplus S^{\perp}_{-}
\end{equation}
of its complexification into the direct sum of two null complex
lines, complex conjugate of each other. In particular, $S^{\perp}$
admits an almost-complex structure (in fact, two, differing by
sign),
$$J_{S^{\perp}}=\pm\, I\left\{
\begin{array}{ll} i & \mbox{$\mathrm{on}\,S^{\perp}_{+}$}\\ -i &
\mbox{$\mathrm{on}\,S^{\perp}_{-}$}\end{array}\right..$$ Provide
then $S^{\perp}$ with the unique complex structure compatible with
the connection $\nabla^{S^{\perp}}=\mathcal{D}\vert _{\Gamma
(S^{\perp})}$, cf. Koszul-Malgrange Theorem, characterized by the
fact that a section $\nu$ of $S^{\perp}$ is holomorphic if and only
if $\mathcal{D}^{0,1}\nu=0$ (see, for example,
\cite{burstall+rawnsley}, Theorem $2.1$). Fix a choice of
$J_{S^{\perp}}$ and provide $S^{\perp}$ with the structure of
complex vector bundle defined by $i\nu:=J_{S^{\perp}}\nu$, for all
real $\nu\in\Gamma(S^{\perp})$. Then, given
$\nu\in\Gamma(S^{\perp})$ real and $z=x+iy$ a holomorphic chart of
$M$,
\begin{equation}\label{eq:cJcmestrevcompl}
\mathcal{D}_{\delta_{\bar{z}}}\nu=\frac{1}{2}\,(\mathcal{D}_{\delta_{x}}\nu+J_{S^{\perp}}\mathcal{D}_{\delta_{y}}\nu).
\end{equation}
Writing $\nu=v+\overline{v}$, with $v$ in the eigenspace of
$J_{S^{\perp}}$ associated to the eigenvalue $i$, and expanding
\eqref{eq:cJcmestrevcompl} out, we conclude that $\nu$ is
holomorphic if and only if
$\mathcal{D}^{0,1}v+\mathcal{D}^{1,0}\overline{v}=0$, or,
equivalently, $\mathcal{D}^{0,1}v=0$.

Now fix a non-zero $v_{\infty}\in\R^{5,1}$. Let $v_{\infty}^{\perp}$
be the orthogonal projection of $v_{\infty}$ onto $S^{\perp}$.  In
view of the reality of $v_{\infty}^{\perp}$, write
$v_{\infty}^{\perp}=v+\overline{v}$, with $v$ in the eigenspace of
$J_{S^{\perp}}$ associated to the eigenvalue $i$. Consider the
surface $\sigma _{\infty}:M\rightarrow S_{v_{\infty}}$, in the
space-form $S_{v_{\infty}}$, defined by $\Lambda$. Under the
isomorphism $\mathcal{Q}:N_{\infty}\rightarrow S^{\perp}$,
preserving connections, defined in Section \ref{normalbundle}, the
complex structure on $S^{\perp}$ induces naturally a complex
structure on $N_{\infty}$, preserving holomorphicity. We say that
$\Lambda$ has \textit{holomorphic mean curvature vector} in the
space-form $S_{v_{\infty}}$ if $\mathcal{H}_{\infty}$ is
holomorphic. By equation \eqref{eq:piperpvinfisQHinf}, it follows
that $\Lambda$ is a holomorphic mean curvature vector surface in
$S_{v_{\infty}}$ if and only if
\begin{equation}\label{eq:curlyDvinftyperp}
\mathcal{D}^{0,1}v_{\infty}^{\perp}=0,
\end{equation}
or, equivalently, $\mathcal{D}^{0,1}v=0$.

Now suppose $\Lambda$ has holomorphic mean curvature vector in
$S_{v_{\infty}}$. Define a real form
$q\in\Omega^{1}(\Lambda\wedge\Lambda^{(1)})$, with
$q^{1,0}\in\Omega^{1,0}(\Lambda\wedge\Lambda^{0,1})$, by setting
$q^{1,0}v_{\infty}^{T}:=-\mathcal{N}^{1,0}v$, for $v_{\infty}^{T}$
the orthogonal projection of $v_{\infty}$ onto $S$  (cf. Remark
\ref{CQeqdeterminesq}). Set
$p(\lambda):=\lambda^{-1}v+v_{\infty}^{T}+\lambda\overline{v}$.
According to Theorem \ref{charactdeCQ}, having in consideration
Remark \ref{qCWnotinterveneonproofrenm},
$d^{\lambda,q}_{S}p(\lambda)=0$ and, consequently,
\begin{equation}\label{eq:Rlambdaplambda0}
R^{\lambda}p(\lambda)=0,
\end{equation}
for $R^{\lambda}$ the curvature tensor of $d^{\lambda,q}_{S}$, for
all $\lambda\in\C\backslash\{0\}$. In the proof of Theorem
\ref{thm5.2}, we observed, in particular, that
$$R^{\lambda}=\frac{\lambda
^{-1}-\lambda}{2}\,i\,(d^{\mathcal{D}}*\mathcal{N}-2[q\wedge
*\mathcal{N}]) +(\lambda ^{-2}-1)\,d^{\mathcal{D}}q^{1,0}+(\lambda
^{2}-1)\,d^{\mathcal{D}}q^{0,1},$$ having in consideration that $
[q\wedge q]$ vanishes, cf. \eqref{eq:qq=0}. Note that, as
$qS^{\perp}=0$ and $S^{\perp}$ is $\mathcal{D}$-parallel,
$d^{\mathcal{D}}q^{1,0}S^{\perp}=0=d^{\mathcal{D}}q^{0,1}S^{\perp}$.
According to
\eqref{eq:cdfghyjuklguyeifrojhgfdxmnbvcxcghyrd5tr3276y348iu549549886}
and \eqref{eq:11111111qwertyuiopmjsj55s}, equation
\eqref{eq:Rlambdaplambda0} establishes
\begin{equation}\label{eq:nnahsdvvvvvlannnnnnnnnna}
\frac{\lambda^{-1}-\lambda}{2}\,(d^{\mathcal{D}}*\mathcal{N}-2[q\wedge
*\mathcal{N}])v_{\infty}^{T}=0
\end{equation}
and
$$0=\frac{i}{2}(d^{\mathcal{D}}*\mathcal{N}-2[q\wedge
*\mathcal{N}])((\lambda^{-2}-1)v+(1-\lambda^{2})\overline{v})
+(\lambda^{-2}-1)d^{\mathcal{D}}q^{1,0}v_{\infty}^{T}+(\lambda^{2}-1)d^{\mathcal{D}}q^{0,1}v_{\infty}^{T}.$$
Organizing the terms in equation \eqref{eq:nnahsdvvvvvlannnnnnnnnna}
by powers of $\lambda$, we conclude from the fact that equation
\eqref{eq:Rlambdaplambda0} holds for all
$\lambda\in\C\backslash\{0\}$ that, in particular,
\begin{equation}\label{eq:4444s911jhuh3u3uwesb}
d^{\mathcal{D}}*\mathcal{N}-2[q\wedge
*\mathcal{N}]v_{\infty}^{T}=0,
\end{equation}
or, equivalently, cf. Remark \ref{remmuyutilcurlyDNqCWeq},
$d^{\mathcal{D}}*\mathcal{N}=2[q\wedge *\mathcal{N}]$, in view of
the fact that
$(\sigma_{\infty},v_{\infty}^{T})=(\sigma_{\infty},v_{\infty})(=-1)$
is never-zero. Now we see that equation \eqref{eq:Rlambdaplambda0}
establishes, furthermore,
$$(\lambda^{-2}-1)\,d^{\mathcal{D}}q^{1,0}v_{\infty}^{T}+(\lambda^{2}-1)\,d^{\mathcal{D}}q^{0,1}v_{\infty}^{T}=0$$
and the fact that it does not depend on
$\lambda\in\C\backslash\{0\}$ establishes then
$d^{\mathcal{D}}q^{1,0}v_{\infty}^{T}=0$. Lastly, observe, in view
of \eqref{eq:curlyD1oLambdaijblablalbla}, that, as $q^{1,0}$ takes
values in $\Lambda\wedge\Lambda^{0,1}$, then so does
$d^{\mathcal{D}}q^{1,0}$,
$$d^{\mathcal{D}}q^{1,0}\in\Omega^{2}(\Lambda\wedge\Lambda^{0,1}),$$
to conclude, cf. Remark \ref{e10determinedby}, that
$d^{\mathcal{D}}q^{1,0}=0$ and, ultimately, by Lemma
\ref{withvswithoutdecomps}, that $d^{\mathcal{D}}q=0$. We conclude
that $\Lambda$ is a $q$-constrained Willmore surface admitting
$p(\lambda)$ as a $q$-conserved quantity.

Conversely, suppose $\Lambda$ is a constrained Willmore surface
admitting a conserved quantity
$p(\lambda)=\lambda^{-1}w+v_{0}+\lambda\overline{w}$ with
$v_{0}\in\Gamma(S)$ real and $w\in\Gamma(S^{\perp})$ null (and, in
particular, never-zero). In that case, $S^{\perp}=\langle
w\rangle\oplus \langle\overline{w}\rangle$ is the decomposition of
$S^{\perp}$ in \eqref{eq:SperpSpm}. Let $J_{w}$ be the
almost-complex structure on $S^{\perp}$ admitting $\langle w\rangle$
and $\langle \overline{w}\rangle$ as eigenspaces associated to the
eigenvalues $i$ and $-i$, respectively. According to the
characterization of conserved quantities presented in Theorem
\ref{charactdeCQ}, we conclude that $\Lambda$ has holomorphic mean
curvature vector in $S_{p(1)}$,  with respect to the complex
structure on $(S^{\perp},J_{w},\nabla^{S^{\perp}})$ determined by
Koszul-Malgrange Theorem.

\section{Spectral deformation of constrained Willmore surfaces with a conserved
quantity}

\markboth{\tiny{A. C. QUINTINO}}{\tiny{CONSTRAINED WILLMORE
SURFACES}}

The spectral deformation of constrained Willmore surfaces preserves
the existence of a conserved quantity, trivially:

\begin{thm}\label{specCWwcq}
Let $\mu$ be in $S^{1}$ and
$\phi^{\mu}_{q}:(\underline{\R}^{n+1,1},d^{\mu,q}_{S})\rightarrow
(\underline{\R}^{n+1,1},d)$ be an isomorphism. Suppose that either
$v_{0}$ is non-zero or $\overline{\mu}v+\mu\overline{v}$ is
non-zero. In that case, if $p(\lambda)$ is a $q$-conserved quantity
of $\Lambda$, then $\phi^{\mu}_{q}p(\mu\lambda)$ is a
$\mathrm{Ad}_{\phi^{\mu}_{q}}(q_{\mu})$-conserved quantity of the
spectral deformation $\phi^{\mu}_{q}\Lambda$ of parameter $\mu$ of
$\Lambda$.
\end{thm}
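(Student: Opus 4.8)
The plan is to reduce the claimed parallelism to that of $p$ itself by conjugating through $\phi^\mu_q$ and invoking the composition rule \eqref{eq:conndeconn} for the loops $d^{\lambda,q}_S$, and then to check separately that $\phi^\mu_q\,p(\mu\lambda)$ has the shape of a conserved quantity, the non-triviality being exactly what the hypothesis provides. Throughout, since $\mu\in S^1$ and $S,q$ are real, $d^{\mu,q}_S$ is a real connection (Remark \ref{s1familyofconnections}), so I take $\phi^\mu_q$ real; I write $q^\ast:=\mathrm{Ad}_{\phi^\mu_q}(q_\mu)$ and $S^\ast:=\phi^\mu_q S$, the latter being the central sphere congruence of $\phi^\mu_q\Lambda$ by \eqref{eq:cscpreserved byCWdeform} (and $\phi^\mu_q\Lambda$ being $q^\ast$-constrained Willmore by Theorem \ref{spdeformCW}, so the target notion makes sense).

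First I would record the two structural identities. Applying \eqref{eq:isomVd} with $\psi=\phi^\mu_q\colon(\underline{\R}^{n+1,1},d^{\mu,q}_S)\to(\underline{\R}^{n+1,1},d)$ and $V=S$, and using $\mathrm{Ad}_{(\phi^\mu_q)^{-1}}q^\ast=q_\mu$, gives
$$d^{\lambda,q^\ast}_{S^\ast}=\phi^\mu_q\circ(d^{\mu,q}_S)^{\lambda,q_\mu}_S\circ(\phi^\mu_q)^{-1}.$$
The composition rule \eqref{eq:conndeconn} (base $d$, bundle $S$, parameters $\mu$ then $\lambda$) turns the inner connection into $d^{\mu\lambda,q}_S$, so that
$$d^{\lambda,q^\ast}_{S^\ast}=\phi^\mu_q\circ d^{\mu\lambda,q}_S\circ(\phi^\mu_q)^{-1}.$$
With this in hand the parallelism is immediate: for every $\lambda\in\C\backslash\{0\}$,
$$d^{\lambda,q^\ast}_{S^\ast}\bigl(\phi^\mu_q\,p(\mu\lambda)\bigr)=\phi^\mu_q\bigl(d^{\mu\lambda,q}_S\,p(\mu\lambda)\bigr)=0,$$
since $p$ is a $q$-conserved quantity of $\Lambda$, whence $d^{\lambda',q}_S p(\lambda')=0$ for every $\lambda'$, here $\lambda'=\mu\lambda$.

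It remains to recognize $\phi^\mu_q\,p(\mu\lambda)$ as a conserved quantity in the prescribed form. Expanding $p(\mu\lambda)=\lambda^{-1}(\mu^{-1}v)+v_0+\lambda(\mu\overline{v})$ and setting $v_0^\ast:=\phi^\mu_q v_0$ and $v^\ast:=\mu^{-1}\phi^\mu_q v$, the reality of $\phi^\mu_q$ together with $\overline{\mu^{-1}}=\mu$ shows that the coefficient of $\lambda$ equals $\overline{v^\ast}$; moreover $v_0^\ast\in\Gamma(S^\ast)$ is real and, as $\phi^\mu_q$ is an isometry carrying $S^\perp$ to $(S^\ast)^\perp$, $v^\ast\in\Gamma((S^\ast)^\perp)$. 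The single point requiring the hypothesis is non-triviality, $p^\ast(1):=\phi^\mu_q\,p(\mu)\neq0$. Taking $\lambda=1$ in the parallelism shows $p^\ast(1)$ is $d^{1,q^\ast}_{S^\ast}=d$-parallel, hence constant; since $\phi^\mu_q$ is a bundle isomorphism, $p^\ast(1)$ vanishes nowhere if and only if $p(\mu)=\mu^{-1}v+v_0+\mu\overline{v}$ is non-zero at some point, and by constancy $p^\ast(1)$ vanishes either everywhere or nowhere. The orthogonal splitting $\underline{\C}^{n+2}=S^{\C}\oplus(S^\perp)^{\C}$ places $v_0$ in the first summand and $\overline{\mu}v+\mu\overline{v}=\mu^{-1}v+\mu\overline{v}$ in the second, so $p(\mu)$ vanishes at a point exactly when both summands do there; the hypothesis that $v_0$ or $\overline{\mu}v+\mu\overline{v}$ is non-zero therefore furnishes a point with $p(\mu)\neq0$, completing the verification.

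The computation is otherwise routine. I expect the only genuine care to lie in matching the $\mathrm{Ad}$-twisted multipliers so that \eqref{eq:isomVd} and \eqref{eq:conndeconn} compose cleanly into $d^{\mu\lambda,q}_S$, and in tracking the factors of $\mu$ through complex conjugation so as to recognize the stated hypothesis as precisely the condition $p(\mu)\neq0$.
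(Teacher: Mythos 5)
Your proposal is correct and follows essentially the same route as the paper's own proof: both rest on the identity $d^{\lambda,q^{*}}_{S^{*}}=\phi^{\mu}_{q}\circ d^{\mu\lambda,q}_{S}\circ(\phi^{\mu}_{q})^{-1}$ obtained from \eqref{eq:isomVd} and \eqref{eq:conndeconn}, plus the reality/isometry bookkeeping to see that $\phi^{\mu}_{q}p(\mu\lambda)$ has the prescribed form. Your handling of non-triviality (constancy of $p^{*}(1)$ and the orthogonal splitting of $p(\mu)$ into its $S$- and $S^{\perp}$-components) just spells out what the paper invokes implicitly when it declares $v_{\infty}^{\mu}\neq 0$ by hypothesis.
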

\begin{proof}
By hypothesis,
$$v_{\infty}^{\mu}:=\phi^{\mu}_{q}(\overline{\mu}v+v_{0}+\mu\overline{v})$$
is non-zero. On the other hand, as $\phi^{\mu}_{q}$ is real and
$\mu$ is unit, we have $\overline{\mu^{-1}\phi^{\mu}_{q}v}=\mu\,
\phi^{\mu}_{q}\overline{v}$. Having in consideration that
$\phi^{\mu}_{q}$ is an isometry, and, in particular,
$(\phi^{\mu}_{q}S)^{\perp}=\phi^{\mu}_{q}S^{\perp}$, we conclude
that
$$\phi^{\mu}_{q}p(\mu\lambda)=\lambda^{-1}(\mu^{-1}\phi^{\mu}_{q}v)+\phi^{\mu}_{q}v_{0}+\lambda(\mu\,\phi^{\mu}_{q}\overline{v})$$
is of the right form. The fact that
$\phi^{\mu}_{q}:(\underline{\R}^{n+1,1},d^{\mu,q}_{S})\rightarrow
(\underline{\R}^{n+1,1},d)$ preserves connections, and,
consequently,
$$d^{\lambda,\mathrm{Ad}_{\phi^{\mu}_{q}}(q_{\mu})}_{\phi^{\mu}_{q}S}=\phi^{\mu}_{q}\circ
(d^{\mu,q}_{S})^{\lambda,q_{\mu}}_{S}\circ
(\phi^{\mu}_{q})^{-1}=\phi^{\mu}_{q}\circ d^{\mu\lambda,q}_{S}\circ
(\phi^{\mu}_{q})^{-1},$$ completes the proof.
\end{proof}

\section{B\"{a}cklund transformation of constrained Willmore surfaces with a conserved
quantity}\label{BTCWcQ}

\markboth{\tiny{A. C. QUINTINO}}{\tiny{CONSTRAINED WILLMORE
SURFACES}}

B\"{a}cklund transformations of constrained Willmore surfaces
preserve the existence of a conserved quantity, in the following
terms:
\begin{thm}\label{BTsofCMCs}
Suppose $p(\lambda)$ is a $q$-conserved quantity of $\Lambda$. Let
$\alpha,L^{\alpha}$ be B\"{a}cklund transformation parameters to
$\Lambda$ corresponding to the multiplier $q$ and let $r^{*}$ denote
$r^{\alpha}_{L^{\alpha}}$. If
\begin{equation}\label{eq:palhopahaortogLalpha}
p(\alpha)\perp L^{\alpha},
\end{equation}
then
$$p^{*}(\lambda):=r^{*}(1)^{-1}\,r^{*}(\lambda)\,p(\lambda)$$
is a $q^{*}$-conserved quantity of the B\"{a}cklund transform
$\Lambda^{*}$ of $\Lambda$ of parameters $\alpha,L^{\alpha}$,
provided that $\Lambda^{*}$ immerses.
\end{thm}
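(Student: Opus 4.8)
The plan is to exploit the intertwining relation established in Section \ref{BTvsSp}, namely $d^{\lambda,q^{*}}_{S^{*}}=\psi(\lambda)\circ d^{\lambda,q}_{S}\circ\psi(\lambda)^{-1}$ with $\psi(\lambda):=r^{*}(1)^{-1}r^{*}(\lambda)$, valid for $\lambda\in\C\backslash\{0,\pm\alpha,\pm\beta\}$ (here $\beta=\overline{\alpha}^{-1}$, $L^{\beta}=\overline{L^{\alpha}}$). Since $p(\lambda)$ is $d^{\lambda,q}_{S}$-parallel and $p^{*}(\lambda)=\psi(\lambda)p(\lambda)$, the conjugation gives $d^{\lambda,q^{*}}_{S^{*}}p^{*}(\lambda)=\psi(\lambda)\,d^{\lambda,q}_{S}p(\lambda)=0$ for every such $\lambda$. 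Thus the parallelism is essentially automatic off the poles of $r^{*}$; the real content is to show that $p^{*}(\lambda)$ extends holomorphically across $\lambda=\pm\alpha,\pm\beta$ and that the extension is a Laurent polynomial of the form $\lambda^{-1}v^{*}+v_{0}^{*}+\lambda\overline{v^{*}}$ with $v_{0}^{*}\in\Gamma(S^{*})$ real and $v^{*}\in\Gamma((S^{*})^{\perp})$. Once this is done, continuity yields $d^{\lambda,q^{*}}_{S^{*}}p^{*}(\lambda)=0$ for all $\lambda\in\C\backslash\{0\}$, and, as soon as $\Lambda^{*}$ immerses (so that $S^{*}$ is genuinely its central sphere congruence and $q^{*}$ a multiplier, by Theorems \ref{eq:thm8.4.2paraja} and \ref{backwillm}), $p^{*}$ is a $q^{*}$-conserved quantity.

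For the pole removal I would first record two symmetries: since $v_{0}\in\Gamma(S)$ and $v\in\Gamma(S^{\perp})$, one has $p(-\lambda)=\rho\,p(\lambda)$ for $\rho$ reflection across $S$; and, since $v_{0}$ is real, $\overline{p(\lambda)}=p(\overline{\lambda}^{-1})$, whence $p(\beta)=\overline{p(\alpha)}$. As $L^{\beta}=\overline{L^{\alpha}}$ and the metric is real, the hypothesis \eqref{eq:palhopahaortogLalpha}, $p(\alpha)\perp L^{\alpha}$, is then equivalent to $p(\beta)\perp L^{\beta}$. Writing $r^{*}=p_{\alpha,\tilde{L}^{\alpha}}\,q_{\beta,L^{\beta}}$, the factor $q_{\beta,L^{\beta}}$ has simple poles only at $\pm\beta$ and $p_{\alpha,\tilde{L}^{\alpha}}$ only at $\pm\alpha$. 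The residue of $q_{\beta,L^{\beta}}(\lambda)p(\lambda)$ at $\lambda=\beta$ is proportional to $\pi_{\rho L^{\beta}}p(\beta)$; since $L^{\beta}$ is null, $(L^{\beta})^{\perp}=L^{\beta}\oplus(L^{\beta}\oplus\rho L^{\beta})^{\perp}$, so $\pi_{\rho L^{\beta}}p(\beta)=0$ is exactly $p(\beta)\perp L^{\beta}$, and the pole disappears; the pole at $-\beta$ is handled by $p(-\beta)=\rho p(\beta)$ together with the twisting $\rho\,q_{\beta,L^{\beta}}(\lambda)\rho=q_{\beta,L^{\beta}}(-\lambda)$ from \eqref{eq:rhopqinv}. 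The same reasoning applies one step later to $p_{\alpha,\tilde{L}^{\alpha}}$: setting $\tilde{p}(\lambda):=q_{\beta,L^{\beta}}(\lambda)p(\lambda)$, the poles at $\pm\alpha$ vanish because $\tilde{p}(\alpha)=q_{\beta,L^{\beta}}(\alpha)p(\alpha)\perp q_{\beta,L^{\beta}}(\alpha)L^{\alpha}=\tilde{L}^{\alpha}$, the orthogonality being preserved by the isometry $q_{\beta,L^{\beta}}(\alpha)$. Hence $r^{*}(\lambda)p(\lambda)$, and therefore $p^{*}(\lambda)$, is holomorphic on $\C\backslash\{0\}$.

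For the form, the only remaining singularities of $p^{*}$ on $\P^{1}$ are simple poles at $0$ and $\infty$ (the latter because $p$ grows like $\lambda\overline{v}$ while $r^{*}$ is regular there), so a Liouville argument gives $p^{*}(\lambda)=\lambda^{-1}v^{*}+v_{0}^{*}+\lambda c$ with $v^{*}=r^{*}(1)^{-1}r^{*}(0)v$, $c=r^{*}(1)^{-1}r^{*}(\infty)\overline{v}$ and $v_{0}^{*}=r^{*}(1)^{-1}\bigl((r^{*})'(0)v+r^{*}(0)v_{0}\bigr)$. Here $r^{*}(0)$ preserves $S$ and $S^{\perp}$ (cf. \eqref{eq:r0rinfpreserveVperp}), so $v^{*}\in\Gamma((S^{*})^{\perp})$; the reality relations \eqref{eq:conjugado de rstar(1)inv}, \eqref{eq:conjugado de rstar(0)} and \eqref{eq:overlineKisK} give $\overline{p^{*}(\lambda)}=p^{*}(\overline{\lambda}^{-1})$, forcing $c=\overline{v^{*}}$ and $v_{0}^{*}$ real; and $p^{*}(1)=r^{*}(1)^{-1}r^{*}(1)p(1)=p(1)\neq 0$, so the non-degeneracy is immediate. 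The one genuinely delicate point is $v_{0}^{*}\in\Gamma(S^{*})$: this follows because, by \eqref{eq:rrsat0}, $\psi:=r^{*}(0)^{-1}(r^{*})'(0)\in\Gamma(S\wedge S^{\perp})$ maps $S^{\perp}$ into $S$, so $(r^{*})'(0)v=r^{*}(0)(\psi v)\in r^{*}(0)S=S$, whence $v_{0}^{*}\in r^{*}(1)^{-1}S=S^{*}$.

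The main obstacle is the bookkeeping of the second paragraph: keeping straight which orthogonality relation annihilates which residue, and, in particular, propagating the single hypothesis $p(\alpha)\perp L^{\alpha}$ through conjugation by $q_{\beta,L^{\beta}}(\alpha)$ and through the $\rho$-twisting so as to cover all four poles $\pm\alpha,\pm\beta$ at once. Everything else is routine once the symmetries $p(-\lambda)=\rho p(\lambda)$ and $\overline{p(\lambda)}=p(\overline{\lambda}^{-1})$ and the residue computation $(L^{\beta})^{\perp}=L^{\beta}\oplus(L^{\beta}\oplus\rho L^{\beta})^{\perp}$ are in hand.
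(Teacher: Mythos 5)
Your proof is correct, and its overall architecture is the paper's: parallelism of $p^{*}$ comes for free from the conjugation $d^{\lambda,q^{*}}_{S^{*}}=r^{*}(1)^{-1}r^{*}(\lambda)\circ d^{\lambda,q}_{S}\circ r^{*}(\lambda)^{-1}r^{*}(1)$; the real work is removing the poles of $r^{*}(\lambda)p(\lambda)$ at $\pm\alpha,\pm\overline{\alpha}\,^{-1}$ using \eqref{eq:palhopahaortogLalpha} together with the symmetries $\rho\,p(\lambda)=p(-\lambda)$ and $\overline{p(\lambda)}=p(\overline{\lambda}\,^{-1})$ (equations \eqref{eq:rhopinf} and \eqref{eq:pinfconjugation}); and one finishes with a Liouville argument at $0,\infty$ plus the bundle and reality conditions on the three coefficients. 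Where you genuinely diverge is in two sub-steps. For the pole removal, the paper plays two factorizations of $r^{*}$ against each other: from $r^{*}=K\,q_{\overline{\alpha}^{-1},\hat{L}^{\overline{\alpha}^{-1}}}\,p_{\alpha,L^{\alpha}}$ (Proposition \ref{rstarvshatrstar}) the hypothesis kills the poles of the innermost factor at $\pm\alpha$, so $p^{*}$ has poles at most at $\pm\overline{\alpha}\,^{-1}$; from $r^{*}=p_{\alpha,\tilde{L}^{\alpha}}\,q_{\overline{\alpha}^{-1},\overline{L^{\alpha}}}$ the conjugated hypothesis kills the inner poles at $\pm\overline{\alpha}\,^{-1}$, so $p^{*}$ has poles at most at $\pm\alpha$; intersecting the two conclusions gives holomorphicity, and no orthogonality is ever pushed through an intermediate factor. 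You instead use the single factorization $p_{\alpha,\tilde{L}^{\alpha}}\,q_{\overline{\alpha}^{-1},\overline{L^{\alpha}}}$ and kill the outer poles by transporting the hypothesis through the isometry, $\tilde{p}(\alpha):=q_{\overline{\alpha}^{-1},\overline{L^{\alpha}}}(\alpha)p(\alpha)\perp q_{\overline{\alpha}^{-1},\overline{L^{\alpha}}}(\alpha)L^{\alpha}=\tilde{L}^{\alpha}$ — equally valid, arguably more economical, and it spares you Proposition \ref{rstarvshatrstar} at this stage (though $K$ re-enters in your reality step). For the coefficient conditions, the paper derives $\rho^{*}p^{*}(\lambda)=p^{*}(-\lambda)$ from the twisting \eqref{eq:RcommuteRho} and \eqref{eq:rhopinf} and reads off $v^{*},\overline{v^{*}}\in\Gamma((S^{*})^{\perp})$ and $v_{0}^{*}\in\Gamma(S^{*})$ all at once; you compute the coefficients explicitly via \eqref{eq:r0rinfpreserveVperp} and settle the delicate middle one with \eqref{eq:rrsat0} — correct, but the twisting symmetry is the lighter tool and avoids the explicit formula for $v_{0}^{*}$ altogether. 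Finally, your reality citations at the special points do suffice: $\overline{v^{*}}=c$ follows from them directly, and the reality of $v_{0}^{*}$ then follows from $p^{*}(1)=p(1)$ being real, so there is no gap there.
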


\begin{proof}
Suppose $\Lambda^{*}$ immerses and let $S^{*}$ be its central sphere
congruence. First of all, note that
$$d^{\lambda,q^{*}}_{S^{*}}p^{*}(\lambda)=r^{*}(1)^{-1}\circ\hat{d}^{\lambda,\hat{q}}_{S}\circ
r^{*}(\lambda)\,p(\lambda)=r^{*}(1)^{-1}r^{*}(\lambda)\circ
d^{\lambda,q}_{S}\,p(\lambda)=0,$$ for
$\lambda\in\C\backslash\{0\}$. Let $\rho$ and $\rho^{*}$ denote,
respectively, $\rho_{S}$ and $\rho_{S^{*}}$. Consider projections
$\pi_{L^{\alpha}}:\underline{\C}^{n+2}\rightarrow L^{\alpha}$,
$\pi_{\rho L^{\alpha}}:\underline{\C}^{n+2}\rightarrow\rho
L^{\alpha}$ and $\pi_{\perp}:\underline{\C}^{n+2}\rightarrow
(L^{\alpha}\oplus\rho L^{\alpha})^{\perp}$ with respect to the
decomposition $\underline{\C}^{n+2}=L^{\alpha}\oplus \rho
L^{\alpha}\oplus (L^{\alpha}\oplus\rho L^{\alpha})^{\perp}$. As
$L^{\alpha}$ and $\rho L^{\alpha}$ are never orthogonal, condition
\eqref{eq:palhopahaortogLalpha} establishes, in particular,
$\pi_{\rho L^{\alpha}}p(\alpha)=0$. On the other hand, in view of
the specific form of $p(\lambda)$,
\begin{equation}\label{eq:rhopinf}
\rho p(\lambda)=p(-\lambda),
\end{equation}
for all $\lambda$. Hence $\pi_{L^{\alpha}}p(-\alpha)=\rho\pi_{\rho
L^{\alpha}}p(\alpha)$ and, therefore,
$\pi_{L^{\alpha}}p(-\alpha)=0$.  It follows that
$$p_{\alpha,L^{\alpha}}(\lambda)\,p(\lambda)=\frac{\alpha-\lambda}{\alpha+\lambda}\,\pi_{L^{\alpha}}p(\lambda)+\pi_{
\perp}p(\lambda)+\frac{\alpha+\lambda}{\alpha-\lambda}\,\pi_{\rho
L^{\alpha}}p(\lambda)$$ has no poles and, consequently, that
$$p^{*}(\lambda)=r^{*}(1)^{-1}K
q_{\overline{\alpha}\,^{-1},\hat{L}^{\overline{\alpha}\,^{-1}}}(\lambda)\,p_{\alpha,L_{\alpha}}(\lambda)\,p(\lambda)$$
has, at most, poles at $\lambda=\pm\overline{\alpha}\,^{-1}$.
Consider now projections
$\pi_{\overline{L^{\alpha}}}:\underline{\C}^{n+2}\rightarrow
\overline{L^{\alpha}}$ and $\pi_{\rho
\overline{L^{\alpha}}}:\underline{\C}^{n+2}\rightarrow \rho
\overline{L^{\alpha}}$ with respect to the decomposition
$\underline{\C}^{n+2}=\overline{L^{\alpha}}\oplus \rho
\overline{L^{\alpha}}\oplus (\overline{L^{\alpha}}\oplus\rho
\overline{L^{\alpha}})^{\perp}$. Given the specific form of
$p(\lambda)$, we have, on the other hand,
\begin{equation}\label{eq:pinfconjugation}
p(\overline{\lambda}\,^{-1})=\overline{p(\lambda)},
\end{equation}
for all $\lambda$, and, therefore,
$p(\overline{\alpha}\,^{-1})\in\Gamma(\overline{L^{\alpha}}\,^{\perp})$.
Thus
$\pi_{\overline{L^{\alpha}}}\,p(-\overline{\alpha}\,^{-1})=0=\pi_{\rho
\overline{L^{\alpha}}}\,p(\overline{\alpha}\,^{-1})$. It follows
that
$p^{*}(\lambda)=r^{*}(1)^{-1}\,p_{\alpha,\tilde{L}^{\alpha}}(\lambda)\,q_{\overline{\alpha}\,^{-1},\overline{L^{\alpha}}}\,(\lambda)\,p(\lambda)$
has, at most, poles at $\lambda=\pm\alpha$. We conclude that
$p^{*}(\lambda)$ has no poles. The fact that
$\mathrm{lim}_{\lambda\rightarrow
\infty}\,\lambda^{-1}p^{*}(\lambda)=r^{*}(1)^{-1}\,r^{*}(\infty)\,\overline{v}$
and $\mathrm{lim}_{\lambda\rightarrow 0}\,\lambda
p^{*}(\lambda)=r^{*}(1)^{-1}\,r^{*}(0)\,v$ are both finite
establishes then $p^{*}(\lambda)$ as a degree $1$ Laurent
polynomial. According to \eqref{eq:RcommuteRho},
$$\rho^{*}p^{*}(\lambda)=r^{*}(1)^{-1}\rho\, r^{*}(1)p^{*}(\lambda)=r^{*}(1)^{-1}\rho
\,r^{*}(\lambda)\,p(\lambda)=r^{*}(1)^{-1}r^{*}(-\lambda)\rho
p(\lambda)$$ and, therefore, following \eqref{eq:rhopinf},
$$\rho^{*}p^{*}(\lambda)=p^{*}(-\lambda),$$
showing that the coefficients on $\lambda$ and $\lambda^{-1}$ in
$p^{*}(\lambda)$ are sections of $(S^{*})^{\perp}$, whilst the
coefficient on $\lambda^{0}$ is a section of $S^{*}$. To complete
the proof, we are left to verify that
$p^{*}(\overline{\lambda}\,^{-1})=\overline{p^{*}(\lambda)}$,
equivalent to the complex conjugation conditions on the coefficients
in $p^{*}(\lambda)$. It comes as an immediate consequence of
equations \eqref{eq:koverlineinversedadada?} and
\eqref{eq:pinfconjugation}.
\end{proof}

\chapter{Constrained Willmore surfaces and isothermic
condition}\label{isoCW}

\markboth{\tiny{A. C. QUINTINO}}{\tiny{CONSTRAINED WILLMORE
SURFACES}}

A classical result of Thomsen \cite{thomsen} characterizes
isothermic Willmore surfaces in $3$-space as minimal surfaces in
some $3$-dimensional space-form. Constant mean curvature (CMC)
surfaces in $3$-dimensional space-forms are, in particular,
isothermic constrained Willmore surfaces, as proven by J. Richter
\cite{richter}. However, isothermic constrained Willmore surfaces in
$3$-space are not necessarily CMC surfaces in some space-form, as
proven by an example due to Fran Burstall and presented in
\cite{christoph2}, of a constrained Willmore cylinder that does not
have constant mean curvature in any space-form. We dedicate a
section to the very important class of CMC surfaces in $3$-space,
with constrained Willmore B\"{a}cklund transformations; both
constrained Willmore and isothermic spectral deformations; as well
as a spectral deformation of their own and, in the Euclidean case,
 isothermic Darboux transformations and Bianchi-B\"{a}cklund transformations.
S. Kobayashi and J.-I. Inoguchi \cite{kobayashi} proved that
isothermic Darboux transformation of a CMC surface in $\R^{3}$ is
equivalent to Bianchi-B\"{a}cklund transformation. We believe
isothermic Darboux transformation of a CMC surface in Euclidean
$3$-space can be obtained as a particular case of constrained
Willmore B\"{a}cklund transformation. This shall be the subject of
further work. We present the classical CMC spectral deformation by
means of the action of a loop of flat metric connections. We observe
that these three spectral deformations of CMC surfaces in $3$-space
are all closely related and, therefore, all closely related to
B\"{a}cklund transformation. We observe, in particular, that the
classical CMC spectral deformation can be obtained as composition of
isothermic and constrained Willmore spectral deformation and that,
in the particular case of minimal surfaces, the classical CMC
spectral deformation coincides with the constrained Willmore
spectral deformation corresponding to the zero multiplier. The
chapter starts with a section on the M\"{o}bius invariant class of
isothermic surfaces in space forms. We characterize isothermic
constrained Willmore surfaces by the non-uniqueness of multiplier.
The constrained Willmore spectral deformation is known to preserve
the isothermic condition, cf. \cite{SD}. As for B\"{a}cklund
transformation of constrained Willmore surfaces, we believe it does
not necessarily preserve the isothermic condition. We believe one
can obtain non-isothermic, non-Willmore constrained Willmore
surfaces as B\"{a}cklund transforms of non-minimal CMC surfaces in
space-forms. This shall be the subject of further work.
\newline

Throughout this chapter, let $\Lambda\subset\underline{\R}^{n+1,1}$
be a surface in the projectivized light-cone and $S$ be the central
sphere congruence of $\Lambda$. Consider $M$ provided with the
conformal structure $\mathcal{C}_{\Lambda}$.

\section{Isothermic surfaces}\label{Hopfeisothermal}

\markboth{\tiny{A. C. QUINTINO}}{\tiny{CONSTRAINED WILLMORE
SURFACES}}

It seems that the notion of isothermal lines, tracing back to the
early nineteenth century, was motivated by their physical
interpretation as lines of equal temperature, having led to the
notion of isothermic surfaces, that is, surfaces with isothermal
lines of curvature. This section is dedicated to the study of
isothermic surfaces merely from the point of view of constrained
Willmore surfaces. Classically, a surface in $\R^{3}$ is isothermic
if it admits conformal coordinate line coordinates at every point.
F. Burstall and U. Pinkall \cite{FPink} extended the isothermic
condition to surfaces in space forms, with a manifestly conformally
formulation, characterizing isothermic surfaces in the conformal
$n$-sphere by the existence of a non-zero real closed $1$-form
$\eta$ with values in a certain subbundle of the skew-symmetric
endomorphisms of $\underline{\R}^{n+1,1}$. We characterize
isothermic constrained Willmore surfaces by the non-uniqueness of
multiplier and establish the set of multipliers to an isothermic
$q$-constrained Willmore surface $(\Lambda,\eta)$ as the
$1$-dimensional affine space $q+\langle *\eta\rangle_{\R}$. The
constrained Willmore spectral deformation is known to preserve the
isothermic condition, cf. \cite{SD}. We derive it in our setting. As
for B\"{a}cklund transformation of constrained Willmore surfaces, it
is not clear that the isothermic condition is preserved. Isothermic
surfaces in $\R^{3}$ were studied intensively at the turn of the
20th century and a rich transformation theory of these surfaces was
developed in the works of Darboux \cite{darboux}, Calapso
\cite{calapso}, \cite{calapso2} and Bianchi \cite{bianchi},
\cite{bianchi2}. The loop group formalism provides a context in
which the results of Bianchi, Calapso and Darboux can be
generalized. Following the work of F. Burstall, D. Calderbank and U.
Pinkall \cite{burstall+calderbank}, \cite{FPink}, we characterize
isothermic surfaces by the flatness of a certain $\R$-family of
metric connections on $\underline{\R}^{n+1,1}$ and define, in terms
of this family of connections, both the isothermic spectral
deformation, discovered in the classical setting by Calapso and,
independently, by Bianchi; and the isothermic Darboux
transformation.\newline

\subsection{Isothermic surfaces: definition}
In this section, we present a manifestly conformally invariant
formulation of the isothermic condition, by F. Burstall and U.
Pinkall.\newline

Isothermic surfaces are classically defined to be immersions
$f:(M,g_{f})\rightarrow \R^{3}$ admitting, at every point, conformal
curvature line coordinates, i.e., conformal coordinates along the
principal directions. Equation \eqref{eq:secffvsshapeop}, relating
the shape operator to the second fundamental form of an isometric
immersion, makes clear that conformal coordinates $x$ and $y$ are
curvature line coordinates if and only if
\begin{equation}\label{eq:Pidiagonal}
\Pi(\delta_{x},\delta_{y})=0.
\end{equation}
In fact, the conformality of $x$ and $y$ ensures the existence of
some $u\in C^{\infty}(M,\R)$ for which
$$g_{f}=e^{u}(dx^{2}+dy^{2}),$$  establishing, in particular, that
$g_{f}(\delta_{x},\delta_{y})=0$, so that, if
$A^{\xi}\delta_{x}\in\langle\delta_{x}\rangle$, for either unit
$\xi\in\Gamma(N_{f})$, then equation \eqref{eq:Pidiagonal} is
established. It is obvious that, conversely, equation
\eqref{eq:Pidiagonal} forces, in particular,
$$A^{\xi}\delta_{x}\in\langle\delta_{x}\rangle,\,\,\,A^{\xi}\delta_{y}\in\langle\delta_{y}\rangle,$$
for either unit normal vector field $\xi$ to $f$. Classical
isothermic surfaces extend naturally to immersions
$f:(M,g_{f})\rightarrow \bar{M}$, into a Riemannian manifold
$\bar{M}$, admitting, at every point, conformal coordinates which
diagonalize the second fundamental form,\footnote{Or, equivalently,
which diagonalize simultaneously all shape operators (the
verification presented above for the particular case
$\bar{M}=\R^{3}$ clearly holds for general $\bar{M}$).} which we
still refer to as conformal curvature line coordinates. Equation
\eqref{eq:howPichanges} makes clear that the isothermic condition is
a conformal invariant (even though the second fundamental form is
not), having in consideration that under a conformal change of
metric in $\bar{M}$, the metric induced in $M$ changes conformally.
In fact, it makes clear, furthermore, that conformal curvature line
coordinates are preserved under conformal changes of the metric.
Hence, as very well-known:
\begin{thm}\label{isounderconfdiff}
Conformal curvature line coordinates are preserved by conformal
diffeomorphisms.
\end{thm}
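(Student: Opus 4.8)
The plan is to factor a conformal diffeomorphism as a conformal change of the ambient metric followed by an isometry, and to check that each of these two operations preserves both defining features of a conformal curvature line coordinate, namely conformality and the diagonalization \eqref{eq:Pidiagonal} of the second fundamental form. Let $\phi:(\bar{M},g)\to(\bar{N},h)$ be a conformal diffeomorphism, so that $\phi^{*}h=e^{2u}g$ for some $u\in C^{\infty}(\bar{M},\R)$; set $g':=e^{2u}g$. Let $f:M\to\bar{M}$ be an immersion admitting $x,y$ as conformal curvature line coordinates with respect to $g$, and write $\tilde{f}:=\phi\circ f:M\to(\bar{N},h)$. The goal is to show that $x,y$ are conformal curvature line coordinates for $\tilde{f}$.

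First I would dispose of the conformal change of ambient metric $g\mapsto g'=e^{2u}g$, which is exactly the preservation already recorded in the text. The metric $g'_{f}$ induced in $M$ by $f$ from $g'$ equals $e^{2u\circ f}g_{f}$, conformally equivalent to $g_{f}$, so $x,y$ remain conformal coordinates and, in particular, the off-diagonal term $g_{f}(\delta_{x},\delta_{y})$ still vanishes. Since the correction term in \eqref{eq:howPichanges} is pointwise a multiple of $g_{f}(X,Y)$, evaluating \eqref{eq:howPichanges} on the pair $(\delta_{x},\delta_{y})$ yields $\Pi'(\delta_{x},\delta_{y})=\Pi(\delta_{x},\delta_{y})$, so that \eqref{eq:Pidiagonal} is preserved. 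Thus $x,y$ are conformal curvature line coordinates for $f:M\to(\bar{M},g')$.

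Next I would handle the isometry factor. By construction $\phi:(\bar{M},g'=\phi^{*}h)\to(\bar{N},h)$ is an isometry, and $\tilde{f}$ induces on $M$ the metric $\tilde{f}^{*}h=f^{*}\phi^{*}h=f^{*}g'=g'_{f}$, again conformally equivalent to $g_{f}$, so $x,y$ stay conformal for $\tilde{f}$. Moreover, being an isometry, $\phi$ preserves Levi-Civita connections and normal bundles, and hence transports the second fundamental form of $f:M\to(\bar{M},g')$ to that of $\tilde{f}:M\to(\bar{N},h)$ via $d\phi$; the vanishing $\Pi'(\delta_{x},\delta_{y})=0$ therefore transports to the vanishing of the second fundamental form of $\tilde{f}$ on $(\delta_{x},\delta_{y})$. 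Combining the two steps, $x,y$ are conformal for $\tilde{f}$ and diagonalize its second fundamental form, i.e.\ they are conformal curvature line coordinates for $\tilde{f}$.

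The argument is essentially bookkeeping, and I expect no genuine obstacle; the one point that carries all the content is the observation that the discrepancy between the two second fundamental forms in \eqref{eq:howPichanges} is proportional to $g_{f}(X,Y)$ and is therefore invisible on the conformal off-diagonal pair $(\delta_{x},\delta_{y})$. The only care needed is to keep straight which ambient metric each second fundamental form and each induced metric is computed with respect to, and to verify the conformality condition and the diagonalization condition \emph{separately} at each of the two stages.
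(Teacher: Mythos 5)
Your proposal is correct and follows essentially the same route as the paper: the paper's (very terse) argument is precisely that equation \eqref{eq:howPichanges} exhibits the discrepancy between the two second fundamental forms as a multiple of $g_{f}(X,Y)$, which vanishes on the conformal pair $(\delta_{x},\delta_{y})$, so that conformal curvature line coordinates survive a conformal change of the ambient metric, the isometry factor being implicit. You have merely made explicit the factorization of a conformal diffeomorphism into a conformal change of metric followed by an isometry, which is the standard reading of the paper's one-line justification.
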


Theorem \ref{isounderconfdiff} establishes, in particular, the
M\"{o}bius invariance of the class of isothermic surfaces. We define
$\Lambda:(M,\mathcal{C}_{\Lambda})\rightarrow
(\mathbb{P}(\mathcal{L}),\mathcal{C}_{\mathbb{P}(\mathcal{L})})$
 to be an \textit{isothermic surface} if, fixing
$h\in\mathcal{C}_{\mathbb{P}(\mathcal{L})}$ (independently of the
choice of $h$), $\Lambda:(M,\Lambda^{*}h)\rightarrow
(\mathbb{P}(\mathcal{L}),h)$ is an isothermic surface, with
$\Lambda^{*}h$ denoting the metric induced in $M$ by $\Lambda$ from
$h$. We formulate it next following a manifestly conformally
invariant characterization of isothermic surfaces established by F.
Burstall and U. Pinkall.

\begin{defn}\label{isocharac}
$\Lambda$ is said to be an \emph{isothermic surface} if there exists
a non-zero closed real $1$-form $\eta$ with values in
$\Lambda\wedge\Lambda^{\perp}$.
\end{defn}

This formulation of isothermic surfaces is discussed in
\cite{FPink}, \cite{burstall+calderbank}, \cite{susana} and
\cite{udo}. For the relationship between this formulation and the
classical one, see \cite{FPink}, \cite{susana} and \cite{udo} (see,
in particular, \S$5.3.19$).\footnote{Without wishing to go into
detail, it is worth remarking on this relationship. (For more
details, see \cite{IS} and \cite{susana}.) Given
$f,f^{c}:M\rightarrow \R^{n}$ immersions of $M$ in Euclidean
$n$-space, $f$ and $f^{c}$ are said to be \textit{Christoffel
transforms} of each other, or \textit{dual isothermic surfaces}, if
$f$ and $f^{c}$ have parallel tangent planes, i.e.,
$df(TM)=df^{c}(TM)$; $f$ and $f^{c}$ induce the same conformal
structure on $M$; and $f$ and $f^{c}$ induce opposite orientations
on $M$, i.e., $df^{c}\circ df^{-1}:df(TM)\rightarrow df(TM)$ has
negative determinant. A result by Christoffel \cite{christoffel},
for $n=3$, and B. Palmer \cite{palmer}, for arbitrary n,
characterizes isothermic surfaces immersed in $\R^{n}$ by the
existence of a dual isothermic surface. Since $n\geq 3$, we can
choose $v_{\infty}\in\mathcal{L}$ such that $\Lambda_{p}\neq \langle
v_{\infty}\rangle $, for all $p\in M$, and to define then a surface
$\sigma_{0,\infty}$ in Euclidean $n$-space by stereographic
projection of pole $x_{0}\in S^{n}$ of $\Lambda _{p}\in
\mathbb{P}(\mathcal{L})\backslash\{\langle v_{\infty}\rangle\}=
S^{n}\backslash\{x_{0}\}$, for each $p\in M$. According to Theorem
\ref{isounderconfdiff}, $\Lambda$ is isothermic if and only if so is
$\sigma_{0,\infty}$. Let $\sigma_{\infty}$ be the surface  defined
by $\Lambda$ in $S_{v_{\infty}}$. One verifies that, if
$\sigma_{0,\infty}$ is isothermic and $\sigma_{0,\infty}^{c}$ is a
dual isothermic surface to $\sigma_{0,\infty}$, then
$$\eta:=\sigma_{\infty}\wedge
(d\sigma_{0,\infty}^{c}+(\sigma_{0,\infty},d\sigma_{0,\infty}^{c})v_{\infty})$$
is a form in the conditions of Definition \ref{isocharac}; and,
conversely, that the existence of a form $\eta$ such that
$(\Lambda,\eta)$ is isothermic establishes the existence of a dual
isothermic surface to $\sigma_{0,\infty}$.}

Constant mean curvature surfaces in $3$-space are well-known
examples of isothermic surfaces (see Section \ref{sec:CMC}), as well
as surfaces of revolution, cones and cylinders (see, for example,
\cite{susana}).

Under the conditions of Definition \ref{isocharac}, we may,
alternatively, refer to the isothermic surface $\Lambda$ as the pair
$(\Lambda,\eta)$. As $\Lambda$ is not contained in any $2$-sphere
(cf. \eqref{eq:notinsmallerspheres}), such $\eta$ is unique up to
non-zero constant real scale, cf. \cite{susana} (see, in particular,
Proposition $1.25$). It is very useful to know that, cf.
\cite{susana} (see, in particular, Proposition $1.11$):
\begin{Lemma}\label{etainLwedgeL1}
If $(\Lambda,\eta)$ is an isothermic surface, then
$\eta\in\Omega^{1}(\Lambda\wedge\Lambda^{(1)})$.
\end{Lemma}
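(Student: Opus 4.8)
The plan is to start from Definition \ref{isocharac}: we are given a nonzero, closed, real $1$-form $\eta$ taking values in $\Lambda\wedge\Lambda^{\perp}$, and we must sharpen ``$\Lambda^{\perp}$'' to the rank-$3$ bundle $\Lambda^{(1)}$. The key structural observation I would record first is the orthogonal splitting $\Lambda^{\perp}=\Lambda^{(1)}\oplus S^{\perp}$: indeed $S^{\perp}\subset\Lambda^{\perp}$ because $\Lambda\subset S$, one has $\Lambda^{\perp}\cap S=\Lambda^{(1)}$, and $\Lambda^{(1)}\subset S$ is orthogonal to $S^{\perp}$, with $S^{\perp}$ non-degenerate. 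Under this splitting, proving $\eta\in\Omega^{1}(\Lambda\wedge\Lambda^{(1)})$ is equivalent to proving that the $\Lambda\wedge S^{\perp}$-component of $\eta$ vanishes, and that is precisely my target.

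To set up coordinates I would fix a never-zero $\sigma\in\Gamma(\Lambda)$ and a holomorphic chart $z$. Since $\Lambda$ is a line bundle, $\eta_{\delta_z}=\sigma\wedge a$ for some $a\in\Gamma((\Lambda^{\perp})^{\C})$ (well-defined modulo $\Lambda$), and the reality of $\eta$ gives $\eta_{\delta_{\bar z}}=\sigma\wedge\bar a$. As $M$ is a surface, $d\eta=0$ reduces to the single identity $d\eta(\delta_z,\delta_{\bar z})=0$; expanding it with $[\delta_z,\delta_{\bar z}]=0$ and the trivial flat connection produces the $\wedge^{2}$-valued equation
$$\sigma_z\wedge\bar a+\sigma\wedge\bar a_z-\sigma_{\bar z}\wedge a-\sigma\wedge a_{\bar z}=0.$$

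The crux is to read this identity as a skew endomorphism, via \eqref{eq:isomwedge}, evaluated on an arbitrary $s\in\Gamma((S^{\perp})^{\C})$. Because $\sigma,\sigma_z,\sigma_{\bar z}$ all lie in $S$ and are therefore orthogonal to $s$, each term $(u\wedge v)(s)$ collapses to $-(s,v)\,u$, and collecting coefficients yields
$$-(s,\bar a)\,\sigma_z+(s,a)\,\sigma_{\bar z}+\big((s,a_{\bar z})-(s,\bar a_z)\big)\,\sigma=0.$$
Now $\sigma,\sigma_z,\sigma_{\bar z}$ form a frame of $\Lambda^{(1)}$ and so are pointwise linearly independent; reading off the coefficient of $\sigma_{\bar z}$ forces $(s,a)=0$ for every $s\in\Gamma((S^{\perp})^{\C})$. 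Since $a\in\Lambda^{\perp}=\Lambda^{(1)}\oplus S^{\perp}$ with $\Lambda^{(1)}\perp S^{\perp}$ and $S^{\perp}$ non-degenerate, this says exactly that $\pi_{S^{\perp}}(a)=0$, i.e.\ $a\in\Gamma((\Lambda^{(1)})^{\C})$. Hence $\eta_{\delta_z}=\sigma\wedge a\in\Gamma(\Lambda\wedge\Lambda^{(1)})$, and by reality so is $\eta_{\delta_{\bar z}}$, giving $\eta\in\Omega^{1}(\Lambda\wedge\Lambda^{(1)})$.

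I expect the main difficulty to be organizational rather than conceptual: the argument works smoothly precisely because the decisive coefficient $(s,a)$, attached to $\sigma_{\bar z}$, is derivative-free, so neither the ambiguity of $a$ modulo $\Lambda$ nor the exact form of $a_z,a_{\bar z}$ ever intervenes. The points I would be most careful to verify are that closedness is genuinely being used (the bare membership in $\Lambda\wedge\Lambda^{\perp}$ alone does not suffice), and that the combination of evaluating against $S^{\perp}$ with the independence of $\{\sigma,\sigma_z,\sigma_{\bar z}\}$ is what extracts the vanishing of the $S^{\perp}$-component.
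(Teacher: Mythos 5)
Your proof is correct, and there is in fact nothing in the paper to compare it against: the paper states this lemma without proof, quoting it from \cite{susana} (Proposition 1.11), so your argument supplies a self-contained proof where the text gives only a citation. Each ingredient you use is available inside the paper: $S\cap\Lambda^{\perp}=\Lambda^{(1)}$ is recorded just after \eqref{eq:calNLambda0}, which together with $\Lambda\subset S$ and the non-degeneracy of $S$ gives your splitting $\Lambda^{\perp}=\Lambda^{(1)}\oplus S^{\perp}$; writing $\eta_{\delta_{z}}=\sigma\wedge a$ uses only that $\Lambda$ is a line bundle; on a surface, closedness is exactly the single identity $d\eta(\delta_{z},\delta_{\bar{z}})=0$; and evaluating that identity, via \eqref{eq:isomwedge}, on $s\in\Gamma((S^{\perp})^{\C})$ kills every pairing of $s$ with $\sigma,\sigma_{z},\sigma_{\bar{z}}\in\Gamma(S^{\C})$, leaving precisely your three coefficients against the pointwise-independent frame $\sigma,\sigma_{z},\sigma_{\bar{z}}$ of $(\Lambda^{(1)})^{\C}$. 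The coefficient of $\sigma_{\bar{z}}$ yields $(s,a)=0$ for all $s$, the non-degeneracy of $S^{\perp}$ then forces $\pi_{S^{\perp}}a=0$, and reality transfers the conclusion to the $(0,1)$-part. Your closing cautions are also accurate: the ambiguity of $a$ modulo $\Lambda$ is invisible to $(s,a)$ because $s\perp\Lambda$, and closedness is genuinely indispensable, since a form such as $(\sigma\wedge s)\,dx$ with $s\in\Gamma(S^{\perp})$ never-zero takes values in $\Lambda\wedge\Lambda^{\perp}$ but not in $\Lambda\wedge\Lambda^{(1)}$ --- and, by your very computation, cannot be closed.
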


It follows, in particular, that:
\begin{Lemma}\label{etadetcvansisheszesesseparately}
If $(\Lambda,\eta)$ is an isothermic surface, then
\begin{equation}\label{eq:etaisoeqs}
d^{\mathcal{D}}\eta=0=[\mathcal{N}\wedge\eta].
\end{equation}
\end{Lemma}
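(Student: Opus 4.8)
The plan is to mimic exactly the derivation of the Gauss--Ricci and Codazzi equations from the flatness of $d$: split the closedness condition $d\eta=0$ along the decomposition \eqref{eq:PoplusM} of $o(\underline{\R}^{n+1,1})$ and read off the two desired equations as its components in complementary summands. First I would record where $\eta$ takes values. By Lemma \ref{etainLwedgeL1}, $\eta\in\Omega^{1}(\Lambda\wedge\Lambda^{(1)})$, and since $\Lambda\subset\Lambda^{(1)}\subset S$ we have $\Lambda\wedge\Lambda^{(1)}\subset S\wedge S=\wedge^{2}S$. Thus $\eta\in\Omega^{1}(\wedge^{2}S)$; equivalently, as a skew-symmetric endomorphism, $\eta$ preserves $S$ and annihilates $S^{\perp}$.

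Reading the isothermic condition as the vanishing of the exterior derivative of $\eta$ with respect to the trivial flat connection, $d\eta=0$, I would apply \eqref{eq:ext} with $\nabla^{1}=d$, $\nabla^{2}=\mathcal{D}$ and $A=\mathcal{N}$ (recall $d=\mathcal{D}+\mathcal{N}$, cf. \eqref{eq:dcurlyDcurlyN}), obtaining
$$0=d\eta=d^{\mathcal{D}}\eta+[\mathcal{N}\wedge\eta].$$
It then remains to show that the two summands live in complementary subbundles of \eqref{eq:PoplusM}. Since $\mathcal{D}$ is a metric connection with respect to which $S$, and hence $\wedge^{2}S$, is parallel, and $\eta\in\Omega^{1}(\wedge^{2}S)$, the induced exterior derivative satisfies $d^{\mathcal{D}}\eta\in\Omega^{2}(\wedge^{2}S)$.

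For the bracket term I would use \eqref{eq:wegdebrack} together with $\mathcal{N}\in\Omega^{1}(S\wedge S^{\perp})$ and $\eta\in\Omega^{1}(\wedge^{2}S)$: writing $\eta_{Y}=a\wedge b$ with $a,b\in\Gamma(S)$, one has $[\mathcal{N}_{X},a\wedge b]=(\mathcal{N}_{X}a)\wedge b+a\wedge(\mathcal{N}_{X}b)$, and since $\mathcal{N}_{X}$ sends $S$ into $S^{\perp}$, each summand lies in $S\wedge S^{\perp}$; hence $[\mathcal{N}\wedge\eta]\in\Omega^{2}(S\wedge S^{\perp})$. Because $\wedge^{2}S$ and $S\wedge S^{\perp}$ are distinct direct summands in \eqref{eq:PoplusM}, the vanishing of the sum forces each component to vanish separately, yielding $d^{\mathcal{D}}\eta=0$ and $[\mathcal{N}\wedge\eta]=0$.

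The only real subtlety — the main obstacle — is the correct reading of ``closed'': one must interpret closedness of the $o(\underline{\R}^{n+1,1})$-valued $1$-form $\eta$ as $d^{d}\eta=0$ for the trivial flat connection $d$, so that \eqref{eq:ext} applies and $[\mathcal{N}\wedge\eta]$ is precisely the correction term arising from passing between $d$ and $\mathcal{D}$. Everything else is the bookkeeping of the splitting \eqref{eq:PoplusM}, identical in spirit to the Gauss--Ricci and Codazzi argument given earlier, and requires no computation beyond the membership statements above.
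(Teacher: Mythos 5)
Your proof is correct and is essentially the paper's own argument: both expand the closedness $d\eta=0$ into $d^{\mathcal{D}}\eta+[\mathcal{N}\wedge\eta]=0$ via the splitting $d=\mathcal{D}+\mathcal{N}$, place $d^{\mathcal{D}}\eta$ in $\Omega^{2}(\wedge^{2}S\oplus\wedge^{2}S^{\perp})$ (using Lemma \ref{etainLwedgeL1} and the $\mathcal{D}$-parallelness of $S$) and $[\mathcal{N}\wedge\eta]$ in $\Omega^{2}(S\wedge S^{\perp})$, and conclude from the decomposition \eqref{eq:PoplusM} that each term vanishes separately.
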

\begin{proof}
Suppose $(\Lambda,\eta)$ is isothermic. Then, in particular, the
form $\eta$ is closed,
$d^{\mathcal{D}}\eta+[\mathcal{N}\wedge\eta]=0$. On the other hand,
according to Lemma \ref{etainLwedgeL1},
$\eta\in\Omega^{1}(\wedge^{2}S)$. Hence, by the
$\mathcal{D}$-parallelness of $S$ and $S^{\perp}$,
$d^{\mathcal{D}}\eta\in\Omega^{2}(\wedge^{2}S\oplus\wedge^{2}S^{\perp})$;
whereas, as $\mathcal{N}$ takes values in $S\wedge S^{\perp}$,
$[\mathcal{N}\wedge\eta]\in\Omega^{2}(S\wedge S^{\perp})$. We
conclude that $d^{\mathcal{D}}\eta$ and $[\mathcal{N}\wedge\eta]$
vanish separately.
\end{proof}

\begin{rem}\label{etawedgeta0}
Following equation \eqref{eq:wegdebrack}, we have
\begin{equation}\label{eq:lambdalambdaperpwedgemesmo0}
[(\Lambda\wedge\Lambda^{\perp})\wedge(\Lambda\wedge\Lambda^{\perp})]\subset\Lambda\wedge\Lambda=\{0\}
\end{equation}
and, therefore, given
$\eta\in\Omega^{1}(\Lambda\wedge\Lambda^{\perp})$,
\begin{equation}\label{eq:etawedgeeta0}
[\eta\wedge\eta]=0.
\end{equation}
\end{rem}

Theorem \ref{isounderconfdiff} establishes, in particular:

\begin{thm}\label{isoLambequivisosigmainf}
$\Lambda$ is isothermic if and only if, fixing
$v_{\infty}\in\R^{n+1,1}$ non-zero, so is the surface
$\sigma_{\infty}:M\rightarrow S_{v_{\infty}}$, in the space-form
$S_{v_{\infty}}$, defined by $\Lambda$. Furthermore: $\Lambda$
shares conformal curvature line coordinates with
$\sigma_{v_{\infty}}$, for all $v_{\infty}$.
\end{thm}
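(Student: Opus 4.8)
The plan is to deduce this statement directly from Theorem \ref{isounderconfdiff}, exploiting the fact that $\Lambda$ and the surface $\sigma_{\infty}$ it defines in the space-form $S_{v_{\infty}}$ differ only by post-composition with a conformal diffeomorphism. Fixing a non-zero $v_{\infty}\in\R^{n+1,1}$, I would first recall from Section \ref{subsec:hyper} that the conformal structure $\mathcal{C}_{\mathbb{P}(\mathcal{L})}$ on the projectivized light-cone was constructed precisely so that the restricted projection $\pi_{\mathcal{L}}\vert_{S_{v_{\infty}}}\colon S_{v_{\infty}}\to\mathbb{P}(\mathcal{L})\backslash\mathbb{P}(\mathcal{L}\cap\langle v_{\infty}\rangle^{\perp})$ is a conformal diffeomorphism, independently of $v_{\infty}$. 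Since $\sigma_{\infty}=(\pi_{\mathcal{L}}\vert_{S_{v_{\infty}}})^{-1}\circ\Lambda$ locally, we have $\Lambda=\pi_{\mathcal{L}}\vert_{S_{v_{\infty}}}\circ\sigma_{\infty}$, so $\Lambda$ and $\sigma_{\infty}$ are indeed related by a conformal diffeomorphism of their target spaces.

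The core step is then to invoke Theorem \ref{isounderconfdiff}: conformal curvature line coordinates are preserved by conformal diffeomorphisms. Unwinding the definition of isothermicity for $\Lambda$ --- namely, that $\Lambda\colon(M,\Lambda^{*}h)\to(\mathbb{P}(\mathcal{L}),h)$ admits conformal curvature line coordinates at each point, for $h\in\mathcal{C}_{\mathbb{P}(\mathcal{L})}$ --- I would apply the theorem to the conformal diffeomorphism $\pi_{\mathcal{L}}\vert_{S_{v_{\infty}}}$ to conclude that a system of conformal coordinates diagonalizes the second fundamental form of $\sigma_{\infty}$ in $S_{v_{\infty}}$ exactly when it does so for $\Lambda$ in $(\mathbb{P}(\mathcal{L}),h)$. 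This yields simultaneously the equivalence of the isothermic condition for $\Lambda$ and for $\sigma_{\infty}$ and the stronger \emph{furthermore} assertion that the two surfaces share conformal curvature line coordinates; since $v_{\infty}$ was arbitrary, the conclusion holds for every $v_{\infty}$.

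I expect the only genuinely delicate point to be bookkeeping rather than substance: checking that the induced metrics $\Lambda^{*}h$ and $g_{\infty}$ are conformally related, so that the hypothesis of Theorem \ref{isounderconfdiff} is literally met and the notion of \emph{conformal coordinate} transfers unambiguously across $\pi_{\mathcal{L}}\vert_{S_{v_{\infty}}}$, and remembering that $\sigma_{\infty}$ is only a local immersion, defined where $(\sigma,v_{\infty})\neq 0$. The latter is harmless, since our theory is local throughout; the former is exactly the content of the conformality of $\pi_{\mathcal{L}}\vert_{S_{v_{\infty}}}$, already established in Section \ref{subsec:hyper}. No new computation is needed, as the statement is an immediate corollary of the conformal invariance of conformal curvature line coordinates packaged in Theorem \ref{isounderconfdiff}; one could alternatively note that the manifestly conformally invariant characterization of Definition \ref{isocharac} makes the plain equivalence almost tautological, but that route would not by itself deliver the sharing of coordinates.
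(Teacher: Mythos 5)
Your proposal is correct and takes essentially the same route as the paper, which derives this theorem as an immediate particular case of Theorem \ref{isounderconfdiff} applied to the conformal diffeomorphism $\pi_{\mathcal{L}}\vert_{S_{v_{\infty}}}$ relating $\Lambda$ and $\sigma_{\infty}$. Your explicit bookkeeping (conformality of the induced metrics, locality of $\sigma_{\infty}$) merely spells out what the paper leaves implicit.
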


\subsection{Isothermic condition and Hopf
differential}\label{isoandhopf}

The Hopf differential is closely related to
$\Pi^{(2,0)}:=\Pi\vert_{T^{1,0}M\times T^{1,0}M}$, giving rise to
yet another characterization of isothermic surfaces in space-forms,
which we present in this section.\newline

Fix a non-zero $v_{\infty}$ in $\mathbb{R}^{n+1,1}$ and consider the
surface $\sigma _{\infty}:M\rightarrow S_{v_{\infty}}$, in the
space-form $S_{v_{\infty}}$, defined by $\Lambda$. Recall that the
pull-back bundle by $\sigma_{\infty}$ of the tangent bundle
$TS_{v_{\infty}}$ consists of the orthogonal complement in
$\underline{\R}^{n+1,1}$ of the non-degenerate bundle
$\langle\sigma_{\infty},v_{\infty}\rangle$,
$\sigma_{\infty}^{*}TS_{v_{\infty}}=\langle\sigma_{\infty},v_{\infty}\rangle^{\perp}$.
Let $\pi _{N_{\infty}}$ denote the orthogonal projection of
$\underline {\mathbb{R}}^{n+1,1}=d\sigma _{\infty}(TM)\oplus N
_{\infty}\oplus \langle v_{\infty},\sigma _{\infty}\rangle $ onto
$N_{\infty}$. Fix a holomorphic chart $z=x+iy$  of
$(M,\mathcal{C}_{\Lambda})$. Observe that
$$(\sigma_{\infty})_{zz}\in\Gamma(\sigma_{\infty}^{*}TS_{v_{\infty}}).$$
Indeed, differentiation of $(\sigma_{\infty},v_{\infty})=-1$ shows
that $((\sigma_{\infty})_{z},v_{\infty})=0$ and, consequently,
$((\sigma_{\infty})_{zz},v_{\infty})=0$; whereas differentiation of
$(\sigma_{\infty},(\sigma_{\infty})_{z})=0$ shows that
$(\sigma_{\infty})_{z}$ is orthogonal to $\sigma_{\infty}$. It
follows that
$(\sigma_{\infty})_{zz}-\pi_{N_{\infty}}(\sigma_{\infty})_{zz}\in\Gamma(d\sigma_{\infty}(TM))\subset\Gamma(S)$
and, therefore,
$(\sigma_{\infty})_{zz}-\pi_{N_{\infty}}(\sigma_{\infty})_{zz}-(\pi_{N_{\infty}}(\sigma_{\infty})_{zz},\mathcal{H}_{\infty})\sigma_{\infty}\in\Gamma(S)$.
We conclude that
\begin{equation}\label{piSperppiNinf}
\pi_{S^{\perp}}(\sigma_{\infty})_{zz}=\mathcal{Q}(\pi_{N_{\infty}}(\sigma_{\infty})_{zz}),
\end{equation}
for the isomorphism $\mathcal{Q}:N_{\infty}\rightarrow S^{\perp}$
defined in Section \ref{normalbundle}. Now write
$\sigma^{z}=\lambda\sigma_{\infty}$ with
$\lambda\in\Gamma(\underline{\R})$ never-zero. Then
$\sigma_{zz}^{z}=\lambda(\sigma_{\infty})_{zz}+2\lambda
_{z}(\sigma_{\infty})_{z}+\lambda_{zz}\sigma_{\infty}$ and,
therefore,
\begin{equation}\label{eq:kzpiSperpsigmainfzz}
k^{z}=\lambda\,\pi_{S^{\perp}}(\sigma_{\infty})_{zz}.
\end{equation}
It follows that, under the isomorphism $\mathcal{Q}$, the Hopf
differential $k^{z}$ is a real scale of
\begin{equation}\label{eq:ggbaddstbl?anhtq1534ascdu7qdfvgbnm}
\pi_{N_{\infty}}(\sigma_{\infty})_{zz}=\Pi_{\infty}(\delta_{z},\delta_{z})=\frac{1}{4}\,(\Pi_{\infty}(\delta_{x},\delta_{x})-2i\Pi_{\infty}(\delta_{x},\delta_{y})-\Pi_{\infty}(\delta_{y},\delta_{y})).
\end{equation}
We are led to the following characterization of isothermic surfaces
in terms of the Hopf differential, presented in \cite{SD}:
\begin{Lemma}\label{isoviakreal}
The surface $\Lambda$ is isothermic if and only if around each point
there exists a holomorphic chart of $(M,\mathcal{C}_{\Lambda})$ with
respect to which the Hopf differential of $\Lambda$ is a real
section of $S^{\perp}$. Furthermore: the conformal coordinates $x,y$
are curvature line coordinates to $\Lambda$ if and only if $k^{z}$
is real.
\end{Lemma}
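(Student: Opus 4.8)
The plan is to reduce the statement to the classical curvature-line characterization of isothermicity for the surface $\sigma_\infty$ defined by $\Lambda$ in a space-form $S_{v_\infty}$, using the dictionary between the Hopf differential and the second fundamental form recorded in \eqref{eq:kzpiSperpsigmainfzz} together with the expansion of $\Pi_\infty(\delta_z,\delta_z)$ established just above. I would prove the ``furthermore'' clause first, since the main equivalence then follows formally from it and from Theorem \ref{isoLambequivisosigmainf}.

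First I would fix a holomorphic chart $z=x+iy$ of $(M,\mathcal{C}_\Lambda)$ and a non-zero $v_\infty\in\R^{n+1,1}$. Writing $\sigma^z=\lambda\sigma_\infty$ with $\lambda\in\Gamma(\underline{\R})$ never-zero, equation \eqref{eq:kzpiSperpsigmainfzz} together with the identification $\pi_{S^\perp}(\sigma_\infty)_{zz}=\mathcal{Q}(\Pi_\infty(\delta_z,\delta_z))$ gives $k^z=\lambda\,\mathcal{Q}(\Pi_\infty(\delta_z,\delta_z))$. Since $\mathcal{Q}:N_\infty\to S^\perp$ of Proposition \ref{identofnormals} is a real isomorphism — it commutes with complex conjugation, being the complex-linear extension of a real bundle map — and $\lambda$ is real and never-zero, I conclude that $k^z$ is a real section of $S^\perp$ if and only if $\Pi_\infty(\delta_z,\delta_z)$ is a real section of $N_\infty$.

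Next I would split $\Pi_\infty(\delta_z,\delta_z)=\tfrac14\bigl(\Pi_\infty(\delta_x,\delta_x)-\Pi_\infty(\delta_y,\delta_y)\bigr)-\tfrac{i}{2}\,\Pi_\infty(\delta_x,\delta_y)$ into real and imaginary parts, exploiting that $\Pi_\infty$ is a real, normal-valued symmetric form, so that $\Pi_\infty(\delta_x,\delta_x)$, $\Pi_\infty(\delta_x,\delta_y)$ and $\Pi_\infty(\delta_y,\delta_y)$ are all real sections of $N_\infty$. Hence $\Pi_\infty(\delta_z,\delta_z)$ is real precisely when $\Pi_\infty(\delta_x,\delta_y)=0$, which by \eqref{eq:Pidiagonal} is exactly the condition that the (conformal) coordinates $x,y$ be curvature line coordinates for $\sigma_\infty$ — equivalently, by Theorem \ref{isoLambequivisosigmainf}, for $\Lambda$. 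This establishes the ``furthermore'' clause.

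For the main equivalence I would then assemble these pieces. By Theorem \ref{isoLambequivisosigmainf}, $\Lambda$ is isothermic if and only if $\sigma_\infty$ is, and $\sigma_\infty$ is isothermic precisely when around each point it admits conformal curvature line coordinates. Since $g_\infty$ lies in $\mathcal{C}_\Lambda$, the conformal coordinates of $\sigma_\infty$ are, up to orientation, exactly the holomorphic charts of $(M,\mathcal{C}_\Lambda)$, and by the ``furthermore'' such a chart consists of curvature line coordinates if and only if its Hopf differential $k^z$ is real; the quantifier matching then yields the claim. The only delicate point I expect is this bookkeeping of reality and quantifiers: confirming that $\mathcal{Q}$ and $\lambda$ are genuinely real so that reality of $k^z$ transfers to $\Pi_\infty(\delta_z,\delta_z)$, and matching ``there exist conformal curvature line coordinates at every point'' with ``there is a holomorphic chart making $k^z$ real''. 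The computational heart — the real/imaginary splitting of the $(2,0)$ Hopf form — is routine given the identities already in place.
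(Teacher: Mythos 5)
Your proposal is correct and follows essentially the same route as the paper: both arguments rest on the identity $k^{z}=\lambda\,\pi_{S^{\perp}}(\sigma_{\infty})_{zz}=\lambda\,\mathcal{Q}\bigl(\Pi_{\infty}(\delta_{z},\delta_{z})\bigr)$, the reality of $\mathcal{Q}$ and $\lambda$, and the real/imaginary splitting of $\Pi_{\infty}(\delta_{z},\delta_{z})$ to equate reality of $k^{z}$ with $\Pi_{\infty}(\delta_{x},\delta_{y})=0$, after which Theorem \ref{isoLambequivisosigmainf} transfers the statement between $\sigma_{\infty}$ and $\Lambda$. Your write-up merely makes explicit the quantifier bookkeeping that the paper's terse proof leaves implicit.
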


\begin{proof}
The conformal coordinates  $x,y$ are curvature line coordinates of
$\sigma_{\infty}$ if and only if
$\Pi_{\infty}(\delta_{x},\delta_{y})=0$, or, equivalently,
$\pi_{N_{\infty}}(\sigma_{\infty})_{zz}$ is real. Since
$\mathcal{Q}$ is an isomorphism of real bundles, the reality of
$\pi_{N_{\infty}}(\sigma_{\infty})_{zz}$ is equivalent to that of
$k^{z}$.
\end{proof}

\subsection{Transformations of isothermic surfaces}\label{isotransform}

Isothermic surfaces in $\R^{3}$ were studied intensively at the turn
of the 20th century and a rich transformation theory of these
surfaces was developed in the works of Darboux \cite{darboux},
Calapso \cite{calapso}, \cite{calapso2} and Bianchi \cite{bianchi},
\cite{bianchi2}. The loop group formalism provides a context in
which the results of Bianchi, Calapso and Darboux can be
generalized. Following the work of F. Burstall, D. Calderbank and U.
Pinkall \cite{burstall+calderbank}, \cite{FPink}, we characterize
isothermic surfaces by the flatness of a certain $\R$-family of
metric connections on $\underline{\R}^{n+1,1}$ and define, in terms
of this family of connections, both the isothermic spectral
deformation, discovered in the classical setting by Calapso and,
independently, by Bianchi; and the isothermic Darboux
transformation.\newline

Let $\eta$ be a non-zero real $1$-form with values in
$\Lambda\wedge\Lambda^{\perp}$. For each $t\in\R$, set
$$d^{t}_{\eta}:=d+t\eta,$$defining a connection of
$\underline{\C}^{n+2}$. The reality of $\eta$ establishes that of
$d^{t}_{\eta}$, whereas its skew-symmetry establishes $d^{t}_{\eta}$
as a metric connection. As established in \cite{burstall+calderbank}
and \cite{FPink}:

\begin{thm}
$(\Lambda,\eta)$ is isothermic if and only if $d^{t}_{\eta}$ is a
flat connection, for each $t\in\R$.
\end{thm}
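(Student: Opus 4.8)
The plan is to compute the curvature of the deformed connection $d^{t}_{\eta}=d+t\eta$ relative to the trivial flat connection $d$ and to read off flatness as a single closedness condition on $\eta$. Applying the curvature comparison formula \eqref{eq:curvtens} with $\nabla^{2}=d$, $\nabla^{1}=d^{t}_{\eta}$ and $A=t\eta$, and using that the trivial connection is flat, so that $R^{d}=0$, I obtain
\[
R^{d^{t}_{\eta}}=t\,d^{d}\eta+\tfrac{t^{2}}{2}\,[\eta\wedge\eta],
\]
where $d^{d}\eta$ denotes the exterior covariant derivative of the $\wedge^{2}\underline{\R}^{n+1,1}$-valued form $\eta$ with respect to $d$. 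Since $d$ is flat this is genuine exterior differentiation, and closedness of $\eta$ means precisely $d^{d}\eta=0$; via \eqref{eq:ext} this is the same as $d^{\mathcal{D}}\eta+[\mathcal{N}\wedge\eta]=0$, exactly the closedness used in the proof of Lemma \ref{etadetcvansisheszesesseparately}.

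The key simplification is that the quadratic term vanishes identically. Indeed, $\eta$ takes values in $\Lambda\wedge\Lambda^{\perp}$, and by \eqref{eq:wegdebrack} the Lie bracket of two sections of $\Lambda\wedge\Lambda^{\perp}$ lands in $\Lambda\wedge\Lambda=\{0\}$, as recorded in \eqref{eq:lambdalambdaperpwedgemesmo0}; hence $[\eta\wedge\eta]=0$ by \eqref{eq:etawedgeeta0}. Therefore
\[
R^{d^{t}_{\eta}}=t\,d^{d}\eta .
\]
From this identity the equivalence is immediate. If $(\Lambda,\eta)$ is isothermic, then $\eta$ is closed, $d^{d}\eta=0$, so $R^{d^{t}_{\eta}}=0$ for every $t\in\R$ and each $d^{t}_{\eta}$ is flat. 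Conversely, if $d^{t}_{\eta}$ is flat for every $t\in\R$ --- indeed for any single $t\neq 0$ --- then $t\,d^{d}\eta=0$ forces $d^{d}\eta=0$, i.e.\ $\eta$ is closed; since $\eta$ is by hypothesis a non-zero real $1$-form with values in $\Lambda\wedge\Lambda^{\perp}$, Definition \ref{isocharac} gives that $(\Lambda,\eta)$ is isothermic. The reality and skew-symmetry of $\eta$ (already noted before the statement) guarantee that each $d^{t}_{\eta}$ is a genuine real metric connection on $\underline{\R}^{n+1,1}$.

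There is essentially no analytic obstacle here: the whole content is the algebraic vanishing of $[\eta\wedge\eta]$, which collapses the a priori quadratic dependence of the curvature on $t$ to a purely linear one, so that flatness and closedness become literally the same condition. The only point requiring care is bookkeeping of what \emph{closed} means for a $\wedge^{2}\underline{\R}^{n+1,1}$-valued form --- exterior differentiation with respect to the trivial flat connection $d$ --- so that the vanishing of $R^{d^{t}_{\eta}}$ is correctly matched with the closedness appearing in Definition \ref{isocharac}.
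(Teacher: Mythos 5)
Your proof is correct and follows essentially the same route as the paper: compute $R^{d^{t}_{\eta}}=t\,d\eta+\tfrac{t^{2}}{2}[\eta\wedge\eta]$ via \eqref{eq:curvtens}, kill the quadratic term by \eqref{eq:etawedgeeta0}, and read off that flatness for all $t$ (indeed for any single $t\neq 0$) is equivalent to closedness of $\eta$. The extra bookkeeping you include on what closedness means for the $o(\underline{\R}^{n+1,1})$-valued form $\eta$ is consistent with the paper and does not change the argument.
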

The proof is immediate, but worth presenting:
\begin{proof}
The curvature tensor $R^{t}$ of $d^{t}_{\eta}$ is given by
$R^{t}=R^{d}+td\eta+\frac{t^{2}}{2}\,[\eta\wedge \eta]=0$. Equation
\eqref{eq:etawedgeeta0} makes then clear that $\eta$ is closed if
and only if $R^{t}=0$, for all $t$.
\end{proof}

One shall be aware of the ambiguity that the notation $d^{t}_{\eta}$
carries, for $t=\pm 1$, with respect to the constrained Willmore
spectral deformation of parameter $t$, corresponding to the
multiplier $\eta$, in the case $(\Lambda,\eta)$ is an isothermic
surface admitting $\eta$ as a multiplier. $\newline$

\textbf{The isothermic spectral deformation.} As we verify next, if
$(\Lambda,\eta)$ is isothermic, then so is the transformation of
$\Lambda$ defined by the flat metric connection $d^{t}_{\eta}$, for
each $t\in\R$. Associated to an isothermic surface, we have a
one-parameter family of isothermic surfaces, discovered in the
classical setting by Calapso \cite{calapso}, \cite{calapso2} and,
independently, by Bianchi \cite{bianchi}, \cite{bianchi2}.\newline

Suppose $(\Lambda,\eta)$ is isothermic, so that, in particular,
$d^{t}_{\eta}$ is a flat metric connection on
$\underline{\R}^{n+1,1}$, for each $t\in\R$. Given $t\in\R$ and
$\sigma\in\Gamma(\Lambda)$,
\begin{equation}\label{eq:dtetasigmaigualdsigma}
d^{t}_{\eta}\sigma=d\sigma,
\end{equation}
showing that $\Lambda$ is still a $d^{t}_{\eta}$-surface, or,
equivalently, the transformation $\Lambda^{t}_{\eta}$ of $\Lambda$
defined by the connection $d^{t}_{\eta}$ is still a surface.
Furthermore:
\begin{thm}
Let $(\Lambda,\eta)$ be an isothermic surface. Then, for each
$t\in\R$, the transformation $\Lambda^{t}_{\eta}$ of $\Lambda$
defined by the flat metric connection $d^{t}_{\eta}$ is still
isothermic.
\end{thm}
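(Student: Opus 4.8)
The plan is to transport the witnessing form $\eta$ through the connection-preserving isomorphism that defines the transformation. Write $\phi := \phi_{d^t_\eta}\colon (\underline{\R}^{n+1,1},d^t_\eta)\to(\underline{\R}^{n+1,1},d)$ for the isomorphism of bundles provided with a metric and a connection realizing $\Lambda^t_\eta=\phi\Lambda$; since the reality of $\eta$ makes $d^t_\eta$ real, I would take $\phi$ real. Equation \eqref{eq:dtetasigmaigualdsigma} already guarantees that $\Lambda$ is a $d^t_\eta$-surface, so $\Lambda^t_\eta$ is a genuine surface, and it remains only to produce a non-zero closed real $1$-form with values in $\Lambda^t_\eta\wedge(\Lambda^t_\eta)^\perp$. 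My candidate is
$$\tilde\eta:=\mathrm{Ad}_\phi\,\eta.$$

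First I would dispatch the algebraic requirements. By \eqref{eq:adjwedge}, $\mathrm{Ad}_\phi$ sends $\Lambda\wedge\Lambda^\perp$ to $\phi\Lambda\wedge\phi\Lambda^\perp$, and the fact that $\phi$ is an isometry gives $\phi\Lambda^\perp=(\phi\Lambda)^\perp=(\Lambda^t_\eta)^\perp$; hence $\tilde\eta\in\Omega^1(\Lambda^t_\eta\wedge(\Lambda^t_\eta)^\perp)$. Reality of $\phi$ and $\eta$ yields reality of $\tilde\eta$, while invertibility of $\phi$ together with $\eta\neq 0$ give $\tilde\eta\neq 0$.

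The crux is the closedness of $\tilde\eta$ with respect to $d$, and the key observation is that $\eta$ is closed not only for $d$ but also for $d^t_\eta$. Indeed, applying \eqref{eq:ext} to $d^t_\eta=d+t\eta$,
$$d^{d^t_\eta}\eta=d^d\eta+t\,[\eta\wedge\eta]=0,$$
the first term vanishing because $(\Lambda,\eta)$ is isothermic and the second by the identity $[\eta\wedge\eta]=0$ of \eqref{eq:etawedgeeta0}. Since $\phi$ intertwines $d^t_\eta$ and $d$, it induces a parallel bundle isomorphism on endomorphisms, so $\mathrm{Ad}_\phi$ commutes with the exterior covariant derivatives, exactly as in the conjugation identities \eqref{eq:isomV}--\eqref{eq:isomVd} used for $\mathcal{D}$ and $\mathcal{N}$; therefore
$$d^d\tilde\eta=d^d(\mathrm{Ad}_\phi\,\eta)=\mathrm{Ad}_\phi\big(d^{d^t_\eta}\eta\big)=0.$$
Thus $(\Lambda^t_\eta,\tilde\eta)$ satisfies Definition \ref{isocharac}.

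I expect the only step that calls for care to be the naturality identity $d^d(\mathrm{Ad}_\phi\,\eta)=\mathrm{Ad}_\phi(d^{d^t_\eta}\eta)$, which is precisely where the hypothesis that $\phi$ preserves connections enters; once this is in place the conclusion follows from the structure of $d^t_\eta$ and the purely algebraic fact $[\eta\wedge\eta]=0$. Everything else is immediate, and the argument is manifestly parallel to the treatment of the constrained Willmore spectral deformation under change of flat metric connection.
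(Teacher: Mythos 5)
Your proposal is correct and follows essentially the same route as the paper: both transport $\eta$ to $\mathrm{Ad}_{\phi^{t}_{\eta}}\eta$, use \eqref{eq:adjwedge} and the isometry property for the algebraic conditions, and reduce closedness to the identity $d^{d^{t}_{\eta}}\eta=d\eta+t[\eta\wedge\eta]=0$ combined with the fact that $\phi^{t}_{\eta}$ intertwines $d^{t}_{\eta}$ and $d$. The only cosmetic difference is that the paper writes the conjugation identity and the vanishing of $d^{d^{t}_{\eta}}\eta$ in a single chain of equalities, whereas you separate the two steps.
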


Fix $t\in\R$ and
$\phi^{t}_{\eta}:(\underline{\R}^{n+1,1},d^{t}_{\eta})\rightarrow(\underline{\R}^{n+1,1},d)$
an isomorphism. The proof of the theorem will consist of showing
that $(\Lambda^{t}_{\eta},\mathrm{Ad}_{\phi^{t}_{\eta}}\eta)$ is
isothermic.

\begin{proof}
If $\eta=\sigma\wedge\mu$, with $\sigma\in\Gamma(\Lambda)$ and
$\mu\in\Omega^{1}(\Lambda^{\perp})$, then, recalling
\eqref{eq:adjwedge},
$\mathrm{Ad}_{\phi^{t}_{\eta}}\eta=\phi^{t}_{\eta}\sigma\wedge\phi^{t}_{\eta}\mu$
is a non-zero real $1$-form with values in
$\Omega^{1}((\phi^{t}_{\eta}\Lambda)\wedge
(\phi^{t}_{\eta}\Lambda)^{\perp})$ and
$$d(\mathrm{Ad}_{\phi^{t}_{\eta}}\eta)=\phi^{t}_{\eta}\circ d^{d^{t}_{\eta}}\eta\circ (\phi^{t}_{\eta})^{-1}=\phi^{t}_{\eta}\circ (d\eta+t[\eta\wedge\eta])\circ (\phi^{t}_{\eta})^{-1}=0.$$
\end{proof}
We may refer to $\Lambda^{t}_{\eta}$ as the \textit{isothermic
$(t,\eta)$-transformation} of $\Lambda$. Note that, if
$(\Lambda,\eta')$ is isothermic, for some non-zero real $1$-form
$\eta'$, with values in $\Lambda\wedge\Lambda^{\perp}$, then
$\eta'=t_{\eta}\eta$, for some $t_{\eta}\in\R$ and the isothermic
$(t,\eta')$-transformation of $\Lambda$ coincides with the
$(t\,t_{\eta},\eta)$-transformation.

Observe that, given $t'\in\R$ and
$\phi^{t'}_{\mathrm{Ad}_{\phi^{t}_{\eta}}\eta}:(\underline{\R}^{n+1,1},d^{t'}_{\mathrm{Ad}_{\phi^{t}_{\eta}}\eta})\rightarrow(\underline{\R}^{n+1,1},d)$
an isomorphism,
$\phi^{t'}_{\mathrm{Ad}_{\phi^{t}_{\eta}}\eta}\phi^{t}_{\eta}:(\underline{\R}^{n+1,1},d^{t+t'}_{\eta})\rightarrow(\underline{\R}^{n+1,1},d)$
is an isomorphism, to conclude that
$$\Lambda^{t+t'}_{\eta}=(\Lambda^{t}_{\eta})^{t'}_{\mathrm{Ad}_{\phi^{t}_{\eta}}\eta},$$
we have a one-parameter family of isothermic surfaces. In the
particular case of Euclidean $3$-space, this is the T-transform,
found by Calapso \cite{calapso}, \cite{calapso2} and, independently,
by Bianchi \cite{bianchi}, \cite{bianchi2}.

We complete this section by verifying that, up to reparametrization,
this isothermic spectral deformation coincides with the one
presented in \cite{SD}.\footnote{The omission, in \cite{SD}, of
reference to the transformation rules of the Hopf differential and
of the normal connection shall be understood as preservation.} First
of all, note that, in view of \eqref{eq:dtetasigmaigualdsigma},
given $\sigma\in\Gamma(\Lambda)$ never-zero,
\begin{equation}\label{eq:gdeisotharsnf}
g_{\phi^{t}_{\eta}\sigma}=g_{\sigma}^{d^{t}_{\eta}}=g_{\sigma},
\end{equation}
showing that the deformation defined by $d^{t}_{\eta}$ preserves the
conformal structure,
$$\mathcal{C}_{\Lambda^{t}_{\eta}}=\mathcal{C}_{\Lambda}.$$ It
preserves the central sphere congruence, as well. Indeed, fixing a
holomorphic chart $z$ of
$(M,\mathcal{C}_{\Lambda^{t}_{\eta}})=(M,\mathcal{C}_{\Lambda})$, we
have
$(d^{t}_{\eta})_{\delta_{\bar{z}}}(d^{t}_{\eta})_{\delta_{z}}\sigma=(\sigma)_{z\bar{z}}+
t\eta_{\delta_{\bar{z}}}\sigma_{z}=\sigma_{z\bar{z}}\,\mathrm{mod}\Lambda$,
in view of equation \eqref{eq:dtetasigmaigualdsigma}, showing that
$S^{d^{t}_{\eta}}=S$ and, ultimately, according to \eqref{eq:csc},
that
\begin{equation}\label{eq:cscpreservedbyisospecdeform}
S_{\phi^{t}_{\eta}\Lambda}=\phi^{t}_{\eta}S.
\end{equation}
According to \eqref{eq:gdeisotharsnf}, on the other hand,
$g_{\phi_{\eta}^{t}\sigma^{z}}=g_{\sigma^{z}}=g_{z}$, showing that
$\phi_{\eta}^{t}\sigma^{z}$ is the normalized section of
$\phi_{\eta}^{t}\Lambda$ with respect to $z$. Note that, as $\eta$
takes values in $\Lambda\wedge\Lambda^{\perp}$,
$\eta\Lambda^{\perp}$ takes values in $\Lambda$, and define
$\eta^{z}\in\C^{\infty}(M,\R)$ by
$\eta_{\delta_{z}}\sigma_{z}^{z}=\eta^{z}\sigma^{z}$. Then
$$(\phi_{\eta}^{t}\sigma^{z})_{zz}=(\phi_{\eta}^{t}\sigma_{z}^{z})_{z}=
\phi_{\eta}^{t}(\sigma_{zz}^{z}+t\eta_{\delta_{z}}\sigma_{z}^{z})=-\frac{1}{2}\,(c^{z}-2t\eta^{z})\,\phi_{\eta}^{t}\sigma^{z}+\phi_{\eta}^{t}k^{z}.$$
We conclude that $k^{z}_{t}$ and $c^{z}_{t}$, the Hopf differential
and the Schwarzian derivative, respectively, of
$\phi_{\eta}^{t}\Lambda$ with respect to $z$, relate to those of
$\Lambda$ by
$$k^{z}_{t}=\phi_{\eta}^{t}k^{z},\,\,\,\,\,\,c^{z}_{t}=c^{z}-2t\eta^{z}.$$
By Lemma \ref{thmonHopfeSchwarz}, having in consideration
\eqref{eq:cscpreservedbyisospecdeform}, the conclusion follows.
$\newline$

\textbf{Isothermic Darboux transformation.} Darboux \cite{darboux}
discovered a transformation of isothermic surfaces in $\R^{3}$: the
surface and its Darboux transform are characterized by being
conformal and curvature line preserving and enveloping some
$2$-sphere congruence. In this section, we present a manifestly
conformally invariant formulation of Darboux transforms of
isothermic surfaces, due to F. Burstall and U. Pinkall, in terms of
the family of flat metric connections, presented above,
characterizing the isothermic condition. For further reference, we
make a very brief description of the Darboux transformation of
isothermic surfaces in Euclidean $n$-space via solutions of a
Ricatti equation, presented in \cite{IS} as a direct extension of
the case $n=3,4$, discovered by Hertrich-Jeromin$-$Pedit
\cite{udo+pedit}.\newline

In \cite{udo+pedit}, U. Hertrich-Jeromin and F. Pedit develop
isothermic surface theory in Euclidean $n$-space, for $n=3,4$. In
particular, they define Darboux transformation, based on the
solution of a Ricatti equation, that, when restricted to codimension
$1$, becomes classical. In \cite{IS}, F. Burstall presents a direct
extension to general $n$ of this approach to isothermic surfaces and
Darboux transformation. For further reference, we describe it here
very briefly (for more details, see \cite{IS}). The starting point
is the fact that an immersion $f:M\rightarrow \R^{n}$ is isothermic
if and only if there exists another immersion
$f^{c}:M\rightarrow\R^{n}$ such that\footnote{Equation
\eqref{eq:dfdfc0} characterizes $f^{c}$ as a Christoffel transform
of $f$.}
\begin{equation}\label{eq:dfdfc0}
df\wedge df^{c}=0,
\end{equation}
where we multiply the coefficients of these $\R^{n}$-valued
$1$-forms using the product of the Clifford algebra
$\mathcal{C}l_{n}$ of $\R^{n}$. Equation \eqref{eq:dfdfc0} is the
integrability condition for a Ricatti equation involving an
auxiliary parameter $r\in\R\backslash\{0\}$:
$$dg=rgdf^{c}g-df,$$where again all multiplications take place in
$\mathcal{C}l_{n}$. We construct a new isothermic surface $\hat{f}$
by setting
$$\hat{f}=f+g$$ and verify that, just as in the classical case, $f$
and $\hat{f}$ are characterized by the conditions that they have the
same conformal structure and curvature lines and are enveloping
surfaces of a $2$-sphere congruence. We refer to $\hat{f}$ as the
\textit{Darboux transform of $f$ of parameters $r,g$}.

F. Burstall and U. Pinkall generalized the Darboux transformation to
isothermic surfaces in general space-form as follows, in a
manifestly conformally invariant formulation:

\begin{defn} Let $(\Lambda,\eta)$ be an isothermic surface. A surface
$\hat{\Lambda}$ is an \emph{isothermic Darboux transform} of
$\Lambda$ if $\hat{\Lambda}\cap\Lambda=\{0\}$ and there is a
non-zero real constant $m$ for which $\hat{\Lambda}$ is
$(d+m\eta)$-parallel.
\end{defn}

This formulation is discussed in \cite{FPink}, \cite{udo} (see, in
particular, \S$5.4.8$), \cite{susana} and
\cite{burstall+calderbank}. For the relationship between this
approach to isothermic Darboux transformation and the classical one,
see \cite{susana}.

\subsection{Isothermic condition and uniqueness of
multiplier}\label{nonisoCW}

In this section, we characterize isothermic constrained Willmore
surfaces in space-forms by the non-uniqueness of multiplier and
establish the set of multipliers to an isothermic $q$-constrained
Willmore surface $(\Lambda,\eta)$ as the $1$-dimensional affine
space $q+\langle *\eta\rangle_{\R}$.\newline

\begin{thm}\label{uniquenessofq}
Suppose $\Lambda$ is a constrained Willmore surface. The uniqueness
of multiplier to $\Lambda$ is equivalent to $\Lambda$ being not
isothermic.
\end{thm}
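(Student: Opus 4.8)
The plan is to prove the two implications that together yield the stated equivalence: that an isothermic constrained Willmore surface admits more than one multiplier, and conversely that non-uniqueness of the multiplier forces the surface to be isothermic. The bridge between the two phenomena is the Hodge star: I will show that, for $\Lambda$ constrained Willmore, the closed real forms $\eta\in\Omega^1(\Lambda\wedge\Lambda^\perp)$ witnessing isothermicity are precisely the Hodge duals $*\hat q$ of the ``directions of freedom'' $\hat q$ in the space of multipliers. Note first that $\Lambda^{(1)}\subset\Lambda^\perp$, so $\Lambda\wedge\Lambda^{(1)}\subset\Lambda\wedge\Lambda^\perp$; by \eqref{eq:*vsJ} the operator $*$ acts on $1$-forms by precomposition with $J$ and hence preserves the coefficient bundle $\Lambda\wedge\Lambda^{(1)}$, and it preserves reality since $\overline{*\xi}=*\overline{\xi}$.

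For the first implication, suppose $(\Lambda,\eta)$ is isothermic. By Lemma \ref{etainLwedgeL1} we have $\eta\in\Omega^1(\Lambda\wedge\Lambda^{(1)})$, and by Lemma \ref{etadetcvansisheszesesseparately} it satisfies \eqref{eq:etaisoeqs}, i.e. $d^{\mathcal{D}}\eta=0$ and $[\mathcal{N}\wedge\eta]=0$. I will check that, for any multiplier $q$ and any $t\in\R$, the form $q+t*\eta$ is again a multiplier in the sense of Theorem \ref{CWeq}. It lies in $\Omega^1(\Lambda\wedge\Lambda^{(1)})$ and is real by the remarks above. For \eqref{eq:curlyDextderivofq}, Lemma \ref{withvswithoutdecomps}$(iii)$ turns $d^{\mathcal{D}}\eta=0$ into $d^{\mathcal{D}}*\eta=0$, whence $d^{\mathcal{D}}(q+t*\eta)=0$. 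For \eqref{eq:mainCWeq}, it suffices that $[*\eta\wedge*\mathcal{N}]=0$; this follows from \eqref{eq:bracstarstarbrac}, which gives $[*\eta\wedge*\mathcal{N}]=[\eta\wedge\mathcal{N}]=[\mathcal{N}\wedge\eta]=0$. Since $*\eta\neq 0$, the forms $q+t*\eta$ are pairwise distinct, so the multiplier is not unique.

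For the converse, suppose $\Lambda$ admits two distinct multipliers $q_1\neq q_2$, and set $\hat q:=q_1-q_2\neq 0$, a real form in $\Omega^1(\Lambda\wedge\Lambda^{(1)})$. Subtracting the respective instances of \eqref{eq:curlyDextderivofq} and of \eqref{eq:mainCWeq} gives $d^{\mathcal{D}}\hat q=0$ and $[\hat q\wedge*\mathcal{N}]=0$. I then propose $\eta:=*\hat q$ as an isothermic form: it is a non-zero real form with values in $\Lambda\wedge\Lambda^{(1)}\subset\Lambda\wedge\Lambda^\perp$, so by Definition \ref{isocharac} it remains only to show $\eta$ is closed, $d\eta=0$. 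Writing $d=\mathcal{D}+\mathcal{N}$ as in \eqref{eq:dcurlyDcurlyN}, equation \eqref{eq:ext} gives $d\eta=d^{\mathcal{D}}\eta+[\mathcal{N}\wedge\eta]$. The first term vanishes because $d^{\mathcal{D}}\hat q=0$ yields $d^{\mathcal{D}}*\hat q=0$ by Lemma \ref{withvswithoutdecomps}$(iii)$; the second vanishes because \eqref{eq:starseliebracks} gives $[\mathcal{N}\wedge*\hat q]=-[*\mathcal{N}\wedge\hat q]=-[\hat q\wedge*\mathcal{N}]=0$. Hence $(\Lambda,*\hat q)$ is isothermic.

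The two implications combine to give that non-uniqueness of the multiplier is equivalent to isothermicity, which is the contrapositive form of the statement. The computations are routine once the two Hodge-star bracket identities \eqref{eq:starseliebracks} and \eqref{eq:bracstarstarbrac} are in hand; the one point I expect to require care, and which I regard as the crux, is to keep the two distinct notions of closedness apart — the isothermic condition demands the full $d\eta=0$ (hence both $d^{\mathcal{D}}\eta=0$ and $[\mathcal{N}\wedge\eta]=0$), whereas the multiplier conditions package the analogous vanishing in terms of $d^{\mathcal{D}}$ and of $[\,\cdot\,\wedge*\mathcal{N}]$, and the Hodge star is exactly what interchanges these two packagings. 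The same argument also yields the sharper statement that the multipliers of an isothermic $(\Lambda,\eta)$ form the affine line $q+\langle *\eta\rangle_{\R}$, since the witnessing form $\hat q$ produced above has $*\hat q$ a real isothermic form, necessarily a constant real multiple of $\eta$ by uniqueness of the isothermic form up to scale.
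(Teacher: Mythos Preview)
Your proof is correct and follows essentially the same approach as the paper: in both directions the bridge is $\eta\leftrightarrow *\hat q$, with Lemma~\ref{withvswithoutdecomps}$(iii)$ converting $d^{\mathcal{D}}\eta=0$ into $d^{\mathcal{D}}*\eta=0$ and the Hodge-star bracket identities converting $[\mathcal{N}\wedge\eta]=0$ into $[\hat q\wedge*\mathcal{N}]=0$ (the paper uses \eqref{eq:bracstarstarbrac} where you use \eqref{eq:starseliebracks}, but these are equivalent). Your closing remark on the affine line $q+\langle*\eta\rangle_{\R}$ matches the paper's subsequent observation as well.
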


\begin{proof}
Suppose $\Lambda$ is a constrained Willmore surface and $q_{1}\neq
q_{2}$ are multipliers to $\Lambda$. Set $\eta:=*(q_{1}-q_{2})$,
defining, in this way, a non-zero real $1$-form with values in
$\Lambda\wedge\Lambda^{(1)}\subset\Lambda\wedge\Lambda^{\perp}$. The
fact that $d^{\mathcal{D}}q_{1}=0=d^{\mathcal{D}}q_{2}$ establishes
$d^{\mathcal{D}}*\eta=0$, or, equivalently, cf. Lemma
\ref{withvswithoutdecomps}, $d^{\mathcal{D}}\eta=0$; whereas
$[q_{1}\wedge
*\mathcal{N}]=\frac{1}{2}\,d^{\mathcal{D}}*\mathcal{N}=[q_{2}\wedge*\mathcal{N}]$
gives $[\mathcal{N}\wedge\eta]=[*\eta\wedge
*\mathcal{N}]=[(q_{2}-q_{1})\wedge *\mathcal{N}]=0$, recalling
equation \eqref{eq:bracstarstarbrac}. We conclude that $\eta$ is
closed and, therefore, that $(\Lambda,\eta)$ is isothermic.

Conversely, suppose $(\Lambda,\eta)$ is an isothermic
$q$-constrained Willmore surface, for some
$\eta\in\Omega^{1}(\Lambda\wedge\Lambda^{\perp})$ and
$q\in\Omega^{1}(\Lambda\wedge\Lambda^{(1)})$. Recalling Lemma
\ref{etainLwedgeL1}, set $q':=q+*\eta$, defining a real $1$-form
$q'\neq q$ with values in $\Lambda\wedge\Lambda^{(1)}$. Cf. Lemma
\ref{etadetcvansisheszesesseparately},
$d^{\mathcal{D}}\eta=0=[\mathcal{N}\wedge\eta]$. Equivalently
(recall Lemma \ref{withvswithoutdecomps}),
$d^{\mathcal{D}}*\eta=0=[*\eta\wedge *\mathcal{N}]$. Hence
$d^{\mathcal{D}}q'=d^{\mathcal{D}}q=0$ and $2[q'\wedge
*\mathcal{N}]=2[q\wedge*\mathcal{N}]=d^{\mathcal{D}}*\mathcal{N}$,
showing that $q'$ is a multiplier to $\Lambda$, as well as $q$, and
completing the proof.
\end{proof}

Constant mean curvature surfaces in $3$-space are examples of
isothermic constrained Willmore surfaces, as proven by J. Richter
\cite{richter}. A classical result by Thomsen \cite{thomsen}
characterizes isothermic Willmore surfaces in $3$-space as minimal
surfaces in some $3$-dimensional space-form, showing that the zero
multiplier is not necessarily the only multiplier to a constrained
Willmore surface with no constraint on the conformal structure.

Suppose $(\Lambda,\eta)$ is isothermic. Analogously to what was
observed in the proof of Theorem \ref{uniquenessofq} for the
particular case $t=1$, we verify, that, if $q$ is a multiplier to
$\Lambda$, then so is
$$q^{t}:=q+t*\eta,$$for each $t\in\R$. In the proof of
Theorem \ref{uniquenessofq}, we have, on the other hand, verified
that, if $q_{1}$ and $q_{2}$ are distinct multipliers to $\Lambda$,
then $(\Lambda,*(q_{1}-q_{2}))$ is isothermic and, therefore,
$q_{2}=q_{1}+t*\eta$, for some (non-zero) constant $t\in\R$. We
conclude that, if $\Lambda$ is constrained Willmore and $q$ is a
multiplier to $\Lambda$, then the set of multipliers to $\Lambda$ is
the affine space $q+\langle *\eta\rangle_{\R}$. In particular, the
set of multipliers to an isothermic Willmore surface
$(\Lambda,\eta)$ consists of the $1$-dimensional vector space
$\langle *\eta\rangle_{\R}$.

\subsection{Isothermic condition under constrained Willmore
transformation}\label{isoCWtransforms}

The constrained Willmore spectral deformation is known to preserve
the isothermic condition, cf. \cite{SD}. Next we derive it in our
setting.

\begin{thm}
The constrained Willmore spectral deformation preserves the
isothermic condition.
\end{thm}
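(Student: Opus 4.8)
The plan is to exhibit, on the spectrally deformed surface $\Lambda^\lambda_q=\phi^\lambda_q\Lambda$, an explicit $1$-form witnessing the isothermic condition of Definition \ref{isocharac}, obtained by transporting a suitably rescaled version of $\eta$ through the deformation isomorphism $\phi^\lambda_q\colon(\underline{\R}^{n+1,1},d^\lambda_q)\to(\underline{\R}^{n+1,1},d)$. Fixing $\lambda\in S^1$ and an isothermic $q$-constrained Willmore surface $(\Lambda,\eta)$, I would set
$$\eta_\lambda:=\lambda^{-1}\eta^{1,0}+\lambda\,\eta^{0,1},$$
the rescaling that matches the $\lambda^{\mp1}$ scaling of $\mathcal{N}$ inside $d^\lambda_q$. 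By Lemma \ref{etainLwedgeL1}, $\eta$ takes values in $\Lambda\wedge\Lambda^{(1)}$, hence so does $\eta_\lambda$; it is non-zero since $\eta\neq0$; and, as $\eta$ is real (so $\eta^{0,1}=\overline{\eta^{1,0}}$) and $\overline\lambda=\lambda^{-1}$, one checks immediately that $\overline{\eta_\lambda}=\eta_\lambda$.

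The crux is the identity $d^{d^\lambda_q}\eta_\lambda=0$. Writing $d^\lambda_q=\mathcal{D}+A_\lambda$ with $A_\lambda=\lambda^{-1}\mathcal{N}^{1,0}+\lambda\mathcal{N}^{0,1}+(\lambda^{-2}-1)q^{1,0}+(\lambda^2-1)q^{0,1}$, equation \eqref{eq:ext} gives $d^{d^\lambda_q}\eta_\lambda=d^{\mathcal{D}}\eta_\lambda+[A_\lambda\wedge\eta_\lambda]$. The first term vanishes: since $(\Lambda,\eta)$ is isothermic, Lemma \ref{etadetcvansisheszesesseparately} yields $d^{\mathcal{D}}\eta=0$, whence $d^{\mathcal{D}}\eta^{1,0}=0=d^{\mathcal{D}}\eta^{0,1}$ by Lemma \ref{withvswithoutdecomps}. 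In the second term, every bracket pairing $q$ against $\eta$ vanishes, because both take values in $\Lambda\wedge\Lambda^{\perp}$ and \eqref{eq:lambdalambdaperpwedgemesmo0} annihilates such brackets; the pairings of $\mathcal{N}$ with $\eta$ of equal type vanish by \eqref{eq:ijwedgeij0}; and the surviving mixed terms collapse to
$$[A_\lambda\wedge\eta_\lambda]=\lambda^{-1}\lambda\,[\mathcal{N}^{1,0}\wedge\eta^{0,1}]+\lambda\,\lambda^{-1}[\mathcal{N}^{0,1}\wedge\eta^{1,0}]=[\mathcal{N}^{1,0}\wedge\eta^{0,1}]+[\mathcal{N}^{0,1}\wedge\eta^{1,0}]=[\mathcal{N}\wedge\eta]=0,$$
the last equality being again the isothermic relation of Lemma \ref{etadetcvansisheszesesseparately}. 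I expect this step to be the main obstacle, and it is here that the rescaling exponents are forced: for a general $a\eta^{1,0}+b\eta^{0,1}$ the mixed terms combine to $(\lambda^{-1}b-\lambda a)[\mathcal{N}^{1,0}\wedge\eta^{0,1}]$, so it is precisely the balance $\lambda^{-1}\cdot\lambda=\lambda\cdot\lambda^{-1}$, together with $[\mathcal{N}^{1,0}\wedge\eta^{0,1}]+[\mathcal{N}^{0,1}\wedge\eta^{1,0}]=0$ supplied by the isothermic hypothesis, that produces the cancellation. Note that the constrained Willmore equation for $q$ is not itself needed here, only that $q$ is valued in $\Lambda\wedge\Lambda^{\perp}$ and that $d^\lambda_q$ is flat so that $\phi^\lambda_q$ exists.

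With $d^{d^\lambda_q}\eta_\lambda=0$ established, I would transport: since for $\lambda\in S^1$ the connection $d^\lambda_q$ is real (Remark \ref{s1familyofconnections}), one takes $\phi^\lambda_q$ real and sets $\hat\eta:=\mathrm{Ad}_{\phi^\lambda_q}\eta_\lambda$. By \eqref{eq:adjwedge} and the fact that $\phi^\lambda_q$ is an isometry, $\hat\eta$ is a non-zero real $1$-form with values in $(\phi^\lambda_q\Lambda)\wedge(\phi^\lambda_q\Lambda)^{\perp}$, and, exactly as in the isothermic spectral deformation argument above, $d\hat\eta=\phi^\lambda_q\circ d^{d^\lambda_q}\eta_\lambda\circ(\phi^\lambda_q)^{-1}=0$. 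Hence $(\Lambda^\lambda_q,\hat\eta)$ meets Definition \ref{isocharac}, so the constrained Willmore spectral deformation $\Lambda^\lambda_q$ of the isothermic surface $\Lambda$ is again isothermic, as claimed.
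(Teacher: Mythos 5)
Your proof is correct and follows essentially the same route as the paper's own argument: the same rescaled form $\eta_{\lambda}=\lambda^{-1}\eta^{1,0}+\lambda\,\eta^{0,1}$, the same reduction of $d^{d^{\lambda}_{q}}\eta_{\lambda}$ to $d^{\mathcal{D}}\eta_{\lambda}+[\mathcal{N}\wedge\eta]$ via \eqref{eq:lambdalambdaperpwedgemesmo0} and \eqref{eq:ijwedgeij0}, the same appeal to Lemmas \ref{etainLwedgeL1}, \ref{withvswithoutdecomps} and \ref{etadetcvansisheszesesseparately}, and the same transport under a real isomorphism $\phi^{\lambda}_{q}$. Your added remarks, that the exponents $\lambda^{\mp 1}$ are forced by the cancellation of the mixed terms and that the constrained Willmore equation enters only through the flatness of $d^{\lambda}_{q}$ (i.e., through the existence of $\phi^{\lambda}_{q}$), are accurate glosses but do not alter the argument.
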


The proof of the theorem will consist of showing that, if
$(\Lambda,\eta)$ is an isothermic $q$-constrained Willmore surface,
for some $\eta\in\Omega^{1}(\Lambda\wedge\Lambda^{\perp})$ and
$q\in\Omega^{1}(\Lambda\wedge \Lambda^{(1)})$, then, fixing
$\lambda\in S^{1}$ and an isomorphism
$\phi^{\lambda}_{q}:(\underline{\R}^{n+1,1},d^{\lambda}_{q})\rightarrow
(\underline{\R}^{n+1,1},d)$,
$(\phi^{\lambda}_{q}\Lambda,\mathrm{Ad}_{\phi^{\lambda}_{q}}\eta_{\lambda})$
is isothermic, for
$$\eta_{\lambda}:=\lambda^{-1}\eta^{1,0}+\lambda\,\eta^{0,1}.$$

\begin{proof}
Suppose $(\Lambda,\eta)$ is an isothermic $q$-constrained Willmore
surface, for some $\eta\in\Omega^{1}(\Lambda\wedge\Lambda^{\perp})$
and $q\in\Omega^{1}(\Lambda\wedge \Lambda^{(1)})$. Fix $\lambda\in
S^{1}$ and
$\phi^{\lambda}_{q}:(\underline{\R}^{n+1,1},d^{\lambda}_{q})\rightarrow
(\underline{\R}^{n+1,1},d)$ an isomorphism. Write
$\eta=\sigma\wedge\mu$, with $\sigma\in\Gamma(\Lambda)$ and
$\mu\in\Omega^{1}(\Lambda^{\perp})$. Then
$\mathrm{Ad}_{\phi^{\lambda}_{q}}\eta_{\lambda}=\phi^{\lambda}_{q}\sigma\wedge\phi^{\lambda}_{q}(\lambda^{-1}\mu^{1,0}+\lambda\mu^{0,1})$
is a non-zero real $1$-form with values in
$(\phi^{\lambda}_{q}\Lambda)\wedge
(\phi^{\lambda}_{q}\Lambda)^{\perp}$ and
$d(\mathrm{Ad}_{\phi^{\lambda}_{q}}\eta_{\lambda})=\phi^{\lambda}_{q}\circ
d^{d^{\lambda}_{q}}\eta_{\lambda}\circ (\phi^{\lambda}_{q})^{-1}$
vanishes if and only if
$$d^{d^{\lambda}_{q}}\eta_{\lambda}=d^{\mathcal{D}}\eta_{\lambda}+[(\lambda^{-1}\mathcal{N}^{1,0}+\lambda\mathcal{N}^{0,1}+(\lambda^{-2}-1)q^{1,0}+(\lambda^{2}-1)q^{0,1})\wedge\eta_{\lambda}]$$
does. Note that $q\in\Omega^{1}(\Lambda\wedge\Lambda^{\perp})$.
Thus, by \eqref{eq:lambdalambdaperpwedgemesmo0},
$[q^{1,0}\wedge\eta^{0,1}]=0=[q^{0,1}\wedge \eta^{1,0}]$ and,
therefore,
$d^{d^{\lambda}_{q}}\eta_{\lambda}=d^{\mathcal{D}}\eta_{\lambda}+[\mathcal{N}\wedge\eta]$.
Lemma \ref{etadetcvansisheszesesseparately}, together with Lemma
\ref{withvswithoutdecomps} (having in consideration Lemma
\ref{etainLwedgeL1}), establishes
$d^{\mathcal{D}}\eta^{1,0}=0=d^{\mathcal{D}}\eta^{0,1},\,\,\,[\mathcal{N}\wedge\eta]=0$,
completing the proof.
\end{proof}

As for B\"{a}cklund transformation of constrained Willmore surfaces,
we believe it does not necessarily preserve the isothermic
condition. This shall be the subject of further work.

\section{Constant mean curvature surfaces in $3$-space}\label{sec:CMC}

\markboth{\tiny{A. C. QUINTINO}}{\tiny{CONSTRAINED WILLMORE
SURFACES}}

Minimal surfaces arose originally as surfaces that minimized the
surface area, subject to some constraint, such as total volume
enclosed. Physical processes which can be modeled by minimal
surfaces include the formation of soap bubbles. A soap bubble can
can be thought of as an excellent approximation of some ideal
elastic matter, which encloses a volume and exists in an equilibrium
where slightly greater pressure inside the bubble is balanced by the
area-minimizing forces of the bubble itself. Minimal surfaces are
defined as surfaces with zero mean curvature and can be extended to
surfaces with constant, not necessarily zero, mean curvature.
Constant mean curvature surfaces in $3$-dimensional space-forms form
a very important class of isothermic constrained Willmore surfaces,
as proven by J. Richter \cite{richter}, with constrained Willmore
B\"{a}cklund transformations; both constrained Willmore and
isothermic spectral deformations; as well as a spectral deformation
of their own and, in the Euclidean case,
 isothermic Darboux transformations and Bianchi-B\"{a}cklund
transformations. The isothermic spectral deformation is known to
preserve the constancy of the mean curvature of a surface in some
space-form, cf. \cite{SD}. Characterized as the class of constrained
Willmore surfaces in $3$-dimensional space-forms admitting a
conserved quantity, the class of CMC surfaces in $3$-space is known
to be preserved by both constrained Willmore spectral deformation
and B\"{a}cklund transformation, for special choices of parameters.
We verify that both the space-form and the mean curvature are
preserved by constrained Willmore B\"{a}cklund transformation and
investigate how these change under constrained Willmore and
isothermic spectral deformation. We present the classical CMC
spectral deformation by means of the action of a loop of flat metric
connections on the class of CMC surfaces in $3$-space (preserving
the space-form and the mean curvature) and observe that the
classical CMC spectral deformation can be obtained as composition of
isothermic and constrained Willmore spectral deformation. These
spectral deformations of CMC surfaces in $3$-space are, in this way,
all closely related and, therefore, closely related to constrained
Willmore B\"{a}cklund transformation. S. Kobayashi and J.-I.
Inoguchi \cite{kobayashi} proved that isothermic Darboux
transformation of CMC surfaces in Euclidean $3$-space is equivalent
to Bianchi-B\"{a}cklund transformation. We believe isothermic
Darboux transformation of a CMC surface in Euclidean $3$-space can
be obtained as a particular case of constrained Willmore
B\"{a}cklund transformation. This shall be the subject of further
work. In contrast to isothermic or constrained Willmore surfaces in
space-forms, surfaces of constant mean curvature are not conformally
invariant objects.\newline

Throughout this section, consider $n=3$. For simplicity, we use $T$
and $\perp$ to indicate the orthogonal projections of
$\underline{\R}^{4,1}$ onto $S$ and $S^{\perp}$, respectively.

\begin{rem}
In contrast to isothermic or constrained Willmore surfaces in
space-forms, surfaces of constant mean curvature are not conformally
invariant objects (recall equation
\eqref{eq:conformalvarianceoftheMCV}).
\end{rem}

Fix $v_{\infty}\in\R^{4,1}$ non-zero. Consider the surface
$\sigma_{\infty}:M\rightarrow S_{v_{\infty}}$, in the space-form
$S_{v_{\infty}}$, defined by $\Lambda$. Given
$\xi\in\Gamma(N_{\infty})$ unit, the mean curvature
$H^{\xi}_{\infty}$ of $\sigma_{\infty}$ with respect to $\xi$ is
given by
$H^{\xi}_{\infty}=(\xi,\mathcal{H}_{\infty})=-(\xi+(\xi,\mathcal{H}_{\infty})\sigma_{\infty},v_{\infty})=-(\mathcal{Q}\xi,v_{\infty}^{\perp})$,
for $\mathcal{Q}:N_{\infty}\rightarrow S^{\perp}$ the isometry
defined in Section \ref{normalbundle}. Because we are in codimension
$1$, $S^{\perp}=\langle \mathcal{Q}\xi\rangle$ and we conclude that
$H^{\xi}_{\infty}=\pm(v_{\infty}^{\perp},v_{\infty}^{\perp})^{\frac{1}{2}}$,
depending on the sign of $\xi$. We define the \textit{mean curvature
of $\Lambda$ in the space-form $S_{v_{\infty}}$} to be
$$H_{\infty}:=(v_{\infty}^{\perp},v_{\infty}^{\perp})^{\frac{1}{2}}$$
and define $\Lambda$ to be a \textit{constant mean curvature
surface} (respectively, a \textit{minimal surface}) \textit{in the
space-form} $S_{v_{\infty}}$ if $\sigma_{\infty}$ is so:
\begin{defn}
$\Lambda$ is said to be  a \emph{constant} \emph{mean}
\emph{curvature} (CMC) \emph{surface} in the space-form
$S_{v_{\infty}}$ if $(v_{\infty}^{\perp},v_{\infty}^{\perp})$ is
constant. In the case $v_{\infty}\in\Gamma(S)$, $\Lambda$ is said to
be, specifically, a \emph{minimal surface} in $S_{v_{\infty}}$.
\end{defn}

Let $N$ be the real unit section of $S^{\perp}$ for which
\begin{equation}\label{eq:vinfperpHinfN}
v_{\infty}^{\perp}=H_{\infty}N,
\end{equation}
which, in the particular case $\Lambda$ is minimal in
$S_{v_{\infty}}$, is defined only up to sign.\footnote{There are
exactly two possible choices of real unit sections of $S^{\perp}$,
symmetrical of each other - they are  $\mathcal{Q}\xi$ with $\xi$ a
unit normal vector field to $\sigma_{\infty}$. Unless
$H_{\infty}=0$, condition \eqref{eq:vinfperpHinfN} determines $N$ as
$\mathcal{Q}\xi$ for $\xi$ the unit normal vector field to
$\sigma_{\infty}$ with $H_{\infty}^{\xi}=-H_{\infty}$ (or,
equivalently, $H_{\infty}^{\xi}<0$).}

\begin{rem}\label{vingTnot0}
If $v_{\infty}^{T}=0$, then $v_{\infty}^{\perp}$ is a constant
section of $S^{\perp}$, and so is then
$H_{\infty}^{-1}v_{\infty}^{\perp}=N$. The constancy of the normal
to $S$, in its turn, establishes the constancy of $S$, establishing,
in particular, that $\Lambda$ lies in a $2$-sphere:
$\P(\mathcal{L}\cap S)$; which contradicts
\eqref{eq:notinsmallerspheres}. Thus
\begin{equation}\label{eq:vinfTnonzero} v_{\infty}^{T}\neq0.
\end{equation}
\end{rem}

It is useful to note that, in view of the constancy of $(N,N)$, and
because we are in codimension $1$ (and, therefore,
$S^{\perp}=\langle N\rangle$), we have
\begin{equation}\label{eq:dNinS}
dN\in\Omega^{1}(S).
\end{equation}

\subsection{CMC surfaces in $3$-space as isothermic constrained
Willmore surfaces with a conserved quantity}\label{CMCisoCW}

Constant mean curvature surfaces in $3$-dimen\-sio\-nal space-forms
are examples of isothermic constrained Willmore surfaces, as proven
by J. Richter \cite{richter}. In this section, we establish it in
our setting. We present a $1$-form, derived\footnote{From the notion
of \textit{conserved quantity of an isothermic surface}, presented
in \cite{susana}, similarly to how a $q$-conserved quantity of a
constrained Willmore surface determines $q$ (cf. Remark
\ref{CQeqdeterminesq}).} by F. Burstall and D. Calderbank from a
surface with constant mean curvature in $3$-space, which establishes
the surface as an isothermic surface and for which scaling by the
mean curvature provides a multiplier to the surface. We prove also
that constant mean curvature surfaces in $3$-dimensional space-forms
are the constrained Willmore surfaces in $3$-space admitting a
conserved quantity.\newline

Suppose $\Lambda$ has constant mean curvature $H_{\infty}$ in
$S_{v_{\infty}}$. Set
$$\eta_{\infty}:=\frac{1}{2}\,\sigma_{\infty}\wedge dN.$$
We may, alternatively, use $\eta_{\infty}^{N}$ to denote
$\eta_{\infty}$, in order to avoid the ambiguity with respect to the
sign of $N$ in the particular case $H_{\infty}=0$.

\begin{thm}\label{thm905}
$(\Lambda,\eta_{\infty})$ is isothermic.
\end{thm}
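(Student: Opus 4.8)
The plan is to verify the three defining requirements of Definition \ref{isocharac} for $\eta_{\infty}=\frac{1}{2}\,\sigma_{\infty}\wedge dN$: that it is a real, closed, non-zero $1$-form with values in $\Lambda\wedge\Lambda^{\perp}$. Reality is immediate, since $\sigma_{\infty}$ and $N$ are real sections of $\underline{\R}^{4,1}$. For the target bundle, I note that $\sigma_{\infty}\in\Gamma(\Lambda)$, while $dN\in\Omega^{1}(S)$ by \eqref{eq:dNinS}, and, differentiating $(N,\sigma_{\infty})=0$, that $(dN,\sigma_{\infty})=-(N,d\sigma_{\infty})=0$ because $d\sigma_{\infty}\in\Omega^{1}(\Lambda^{(1)})\subset\Omega^{1}(S)$ is orthogonal to $N$; hence $dN\in\Omega^{1}(\Lambda^{\perp})$ and $\eta_{\infty}\in\Omega^{1}(\Lambda\wedge\Lambda^{\perp})$.

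The substantial point is closedness. Fixing a holomorphic chart $z$ of $(M,\mathcal{C}_{\Lambda})$, I would first reduce $d\eta_{\infty}$ using the flatness of $d$: since $N_{z\bar{z}}=N_{\bar{z}z}$ (as $d$ is flat and $[\delta_{z},\delta_{\bar{z}}]=0$, cf. \eqref{eq:LiBrdelta0}), the term $\sigma_{\infty}\wedge(N_{\bar{z}z}-N_{z\bar{z}})$ drops out of $d\eta_{\infty}(\delta_{z},\delta_{\bar{z}})$, leaving $d\eta_{\infty}(\delta_{z},\delta_{\bar{z}})=\frac{1}{2}(\sigma_{\infty,z}\wedge N_{\bar{z}}-\sigma_{\infty,\bar{z}}\wedge N_{z})$. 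Because $\mathcal{D}N=\nabla^{S^{\perp}}N=0$, I have $N_{z}=\mathcal{N}^{1,0}_{\delta_{z}}N\in\Gamma(\Lambda^{0,1})$ by \eqref{eq:curlyN10SperpinLambda01}, so I may write $N_{z}=a\,\sigma_{\infty}+b\,\sigma_{\infty,\bar{z}}$ and, by reality, $N_{\bar{z}}=\overline{N_{z}}=\bar{a}\,\sigma_{\infty}+\bar{b}\,\sigma_{\infty,z}$. Substituting and using $\sigma_{\infty,z}\wedge\sigma_{\infty,z}=0=\sigma_{\infty,\bar{z}}\wedge\sigma_{\infty,\bar{z}}$ gives $d\eta_{\infty}(\delta_{z},\delta_{\bar{z}})=\frac{1}{2}(\bar{a}\,\sigma_{\infty,z}-a\,\sigma_{\infty,\bar{z}})\wedge\sigma_{\infty}$. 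Thus closedness reduces to showing that the $\sigma_{\infty}$-coefficient $a$ of $N_{z}$ vanishes.

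This is exactly where the constant mean curvature hypothesis, equivalently the constancy of $v_{\infty}$, must enter, and I expect it to be the main obstacle. Projecting $dv_{\infty}=0$ onto $S^{\perp}$ and using $\nabla^{S^{\perp}}v_{\infty}^{\perp}=H_{\infty}\nabla^{S^{\perp}}N=0$ yields $\mathcal{N}\,v_{\infty}^{T}=0$, for $v_{\infty}^{T}$ the projection of $v_{\infty}$ onto $S$. Since $(v_{\infty}^{T},\sigma_{\infty})=(v_{\infty},\sigma_{\infty})=-1$ and $(v_{\infty}^{T},\sigma_{\infty,\bar{z}})=(v_{\infty},\sigma_{\infty,\bar{z}})=0$ (the latter by differentiating $(v_{\infty},\sigma_{\infty})=-1$), pairing $N_{z}=a\,\sigma_{\infty}+b\,\sigma_{\infty,\bar{z}}$ against $v_{\infty}^{T}$ gives $a=-(N_{z},v_{\infty}^{T})$; and differentiating $(N,v_{\infty}^{T})=0$ along $\delta_{z}$ gives $(N_{z},v_{\infty}^{T})=-(N,\mathcal{N}_{\delta_{z}}v_{\infty}^{T})=0$. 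Hence $a=0$ and $d\eta_{\infty}=0$. The minimal case $H_{\infty}=0$, where $v_{\infty}\in\Gamma(S)$ and $N$ is determined only up to sign, runs identically, with $\mathcal{N}v_{\infty}^{T}=\mathcal{N}v_{\infty}=0$ directly.

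Finally, for non-vanishing, with $a=0$ the condition $\eta_{\infty}=0$ forces $b=0$, i.e. $N_{z}\in\langle\sigma_{\infty}\rangle$. Differentiating $(N,\sigma_{\infty,z})=0$ shows $b\,(\sigma_{\infty,z},\sigma_{\infty,\bar{z}})=(N_{z},\sigma_{\infty,z})=-(N,\sigma_{\infty,zz})$, and since $(\sigma_{\infty,z},\sigma_{\infty,\bar{z}})\neq 0$ and $S^{\perp}=\langle N\rangle$, the vanishing of $b$ is equivalent to the vanishing of $\pi_{S^{\perp}}\sigma_{\infty,zz}$, hence of the Hopf differential $k^{z}$ (a non-zero scale of it, cf. \eqref{eq:kzpiSperpsigmainfzz}). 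But a surface with identically vanishing Hopf differential is totally umbilic, hence contained in a $2$-sphere, contradicting \eqref{eq:notinsmallerspheres}. Therefore $\eta_{\infty}\neq 0$, and $(\Lambda,\eta_{\infty})$ is isothermic.
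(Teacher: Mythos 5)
Your proof is correct, and its core coincides with the paper's: both arguments reduce closedness to showing that, in the frame $(\sigma_{\infty},(\sigma_{\infty})_{\bar{z}})$ of $\Lambda^{0,1}$, the $\sigma_{\infty}$-coefficient $a$ of $N_{z}$ vanishes, and both kill $a$ using the constancy of $v_{\infty}$ and of $H_{\infty}$. The local justifications differ only cosmetically: where you get $N_{z}\in\Gamma(\Lambda^{0,1})$ from $\mathcal{D}N=0$ and \eqref{eq:curlyN10SperpinLambda01}, the paper uses the orthogonality of $N_{z}$ to $\sigma_{\infty}$ and $(\sigma_{\infty})_{\bar{z}}$ together with the maximal isotropy of $\Lambda^{0,1}$ in $S$; and where you derive $a=0$ from $\mathcal{N}v_{\infty}^{T}=0$, the paper pairs directly with $v_{\infty}$, noting $(N_{z},v_{\infty})=(N,v_{\infty})_{z}=(H_{\infty})_{z}=0$. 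The one genuine divergence is the non-vanishing step. The paper stays internal: with $a=0$, the vanishing of $\eta_{\infty}$ forces $N_{z}=0$, hence, by reality, $dN=0$, so $N$ is constant, and Remark \ref{vingTnot0} then forces $\Lambda$ to lie in a $2$-sphere, contradicting \eqref{eq:notinsmallerspheres}. You instead identify $b=0$ with the vanishing of the Hopf differential (correctly, via \eqref{eq:kzpiSperpsigmainfzz}) and invoke the classical fact that a totally umbilic surface lies in a $2$-sphere. That fact is true, but it is not proved in the paper, and you could not repair this by citing Proposition \ref{umbilicvshopfprop}, since that proposition presupposes the isothermic condition you are in the middle of establishing. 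Since you already have $a=0$, the internal finish costs one line ($b=0$ gives $dN=0$, so $N$ is constant) and removes the external dependence; I would conclude that way.
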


\begin{proof}
The reality of $\eta_{\infty}$ is immediate, in view of the reality
of both $\sigma_{\infty}$ and $N$. Fix a holomorphic chart $z$ of
$M$. The fact that $N$ is orthogonal to $S$ shows, in particular,
that $(N_{z},\sigma_{\infty})=0=(N_{z},(\sigma_{\infty})_{\bar{z}})$
and, therefore, in view of the maximal isotropy of $\Lambda^{0,1}$
in $S$, that $N_{z}\in\Gamma(\Lambda^{0,1})$. On the other hand,
differentiation of $(\sigma_{\infty},v_{\infty})=-1$ shows that
$((\sigma_{\infty})_{\bar{z}},v_{\infty})=0$ and, consequently, that
the component of $N_{z}$ with respect to $\sigma_{\infty}$ in the
frame $(\sigma_{\infty},(\sigma_{\infty})_{\bar{z}})$ of
$\Lambda^{0,1}$ is
$-(N_{z},v_{\infty})=-(N,v_{\infty}^{\perp})_{z}=(H_{\infty})_{z}=0$.
Thus, having in consideration the reality of $N$,
\begin{equation}\label{eq:Nzjkl}
N_{z}\in\Gamma\langle(\sigma_{\infty})_{\bar{z}}\rangle,\,\,\,\,N_{\bar{z}}\in\Gamma\langle(\sigma_{\infty})_{z}\rangle.
\end{equation}
In particular, $dN\in\Omega^{1}(\Lambda^{\perp})$ and, therefore,
$\eta_{\infty}\in\Omega^{1}(\Lambda\wedge\Lambda^{\perp})$.\footnote{Furthermore,
$\eta_{\infty}\in\Omega^{1}(\Lambda\wedge(\Lambda^{\perp}\cap
S))\subset \Omega^{1}(\Lambda\wedge\Lambda^{(1)})$.} It follows from
\eqref{eq:Nzjkl}, on the other hand, having in consideration
\eqref{eq:LiBrdelta0}, that
$$d(\sigma_{\infty}\wedge
dN)(\delta_{z},\delta_{\bar{z}})=(\sigma_{\infty})_{z}\wedge
N_{\bar{z}}-(\sigma_{\infty})_{\bar{z}}\wedge
N_{z}+\sigma_{\infty}\wedge(N_{z\bar{z}}-N_{\bar{z}z})=0,$$ or,
equivalently, $d\eta_{\infty}=0$. Lastly, note that, if
$\sigma_{\infty}\wedge dN=0$, or, equivalently,
$dN\in\Omega^{1}(\langle\sigma_{\infty}\rangle)$, then, in
particular, $(N_{\bar{z}},(\sigma_{\infty})_{\bar{z}})=0$. Together
with \eqref{eq:Nzjkl}, and having in consideration that
$((\sigma_{\infty})_{z},(\sigma_{\infty})_{\bar{z}})$ is never-zero,
this forces $N_{\bar{z}}$ to vanish and, therefore, in view of the
reality of $N$, $N$ to be constant. But, as observed in Remark
\ref{vingTnot0}, the constancy of $N$ forces $\Lambda$ to lie in a
$2$-sphere, which is not the case. Hence $\sigma_{\infty}\wedge dN$
is non-zero, completing the proof.
\end{proof}

\begin{rem}
In the proof of Theorem \ref{thm905}, we have observed, in
particular, that $N_{z}\in\Gamma(\Lambda^{0,1})$ and, therefore,
\begin{equation}\label{eq:eta10LambdaLambda01}
\eta_{\infty}^{1,0}\in\Omega^{1,0}(\Lambda\wedge\Lambda^{0,1}),\,\,\,\,\eta_{\infty}^{0,1}\in\Omega^{0,1}(\Lambda\wedge\Lambda^{1,0}),
\end{equation}
in view of the reality of $\eta_{\infty}$.
\end{rem}

Set $$q_{\infty}=H_{\infty}\eta_{\infty}.$$

\begin{thm}\label{qinfCQ}
$\Lambda$ is a $q_{\infty}$-constrained Willmore surface.
\end{thm}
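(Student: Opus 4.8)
The plan is to verify that $q_\infty=H_\infty\eta_\infty$ is a multiplier to $\Lambda$, i.e. that $\Lambda$ is $q_\infty$-constrained Willmore. By Theorem \ref{thm905} and the footnote accompanying it, $q_\infty$ is a real form taking values in $\Lambda\wedge(\Lambda^\perp\cap S)\subset\Lambda\wedge\Lambda^{(1)}$, with $q_\infty^{1,0}\in\Omega^{1,0}(\Lambda\wedge\Lambda^{0,1})$ by \eqref{eq:eta10LambdaLambda01}; so the type hypotheses on a candidate multiplier are already in place, and what remains is to establish \eqref{eq:curlyDextderivofq} and \eqref{eq:mainCWeq}. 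I would obtain these by the bootstrap of Section \ref{HMCsurfs}: first exhibit a $d^{\lambda,q_\infty}_S$-parallel Laurent polynomial, then read the constrained Willmore equations off the vanishing of the curvature of $d^{\lambda,q_\infty}_S$ along it. The codimension-$1$ geometry dictates the candidate
\[
p(\lambda):=\tfrac{1}{2}\,\lambda^{-1}H_\infty N+v_\infty^T+\tfrac{1}{2}\,\lambda H_\infty N,
\]
with $v:=\tfrac12 H_\infty N\in\Gamma(S^\perp)$ real, $v_0:=v_\infty^T\in\Gamma(S)$ real, and $p(1)=H_\infty N+v_\infty^T=v_\infty^\perp+v_\infty^T=v_\infty$ non-zero and constant (cf. \eqref{eq:vinfperpHinfN}).

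Next I would check the three equations of Theorem \ref{charactdeCQ} for $p(\lambda)$ and $q=q_\infty$, which by Remark \ref{qCWnotinterveneonproofrenm} suffice — using only the reality of $q_\infty\in\Omega^1(\Lambda\wedge\Lambda^{(1)})$, and not yet any constrained Willmore hypothesis — to conclude $d^{\lambda,q_\infty}_S p(\lambda)=0$ for all $\lambda$. The constancy of $v_\infty$ gives $dv_\infty=0$; since $dN\in\Omega^1(S)$ by \eqref{eq:dNinS}, the unit normal $N$ is $\mathcal{D}$-parallel, whence $\mathcal{D}^{0,1}v=0$, and the same relation gives $\mathcal{N}N=\pi_S(dN)=dN$. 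The one computation with content is $\mathcal{N}^{1,0}v+q_\infty^{1,0}v_0=0$: here $\mathcal{N}^{1,0}v=\tfrac12 H_\infty(dN)^{1,0}$, while the wedge action, together with $(v_\infty^T,\sigma_\infty)=(v_\infty,\sigma_\infty)=-1$ and $(v_\infty^T,dN)=-H_\infty(N,dN)=-\tfrac14 H_\infty\,d(N,N)=0$, yields $q_\infty(v_\infty^T)=\tfrac12 H_\infty(\sigma_\infty\wedge dN)(v_\infty^T)=-\tfrac12 H_\infty\,dN$, so that $q_\infty^{1,0}v_0=-\tfrac12 H_\infty(dN)^{1,0}$ cancels $\mathcal{N}^{1,0}v$ exactly.

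With parallelism in hand, $R^\lambda p(\lambda)=0$, where $R^\lambda$ is the curvature of $d^{\lambda,q_\infty}_S$, which in the proof of Theorem \ref{thm5.2} was found to be
\[
R^\lambda=\tfrac{\lambda^{-1}-\lambda}{2}\,i\,(d^{\mathcal{D}}*\mathcal{N}-2[q_\infty\wedge *\mathcal{N}])+(\lambda^{-2}-1)\,d^{\mathcal{D}}q_\infty^{1,0}+(\lambda^{2}-1)\,d^{\mathcal{D}}q_\infty^{0,1},
\]
the bracket $[q_\infty\wedge q_\infty]$ vanishing because $\mathrm{rank}\,\Lambda=1$ (cf. \eqref{eq:wegdebrack}). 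Since $q_\infty$ and the forms $d^{\mathcal{D}}q_\infty^{i,j}$ annihilate $S^\perp$ (as $S^\perp$ is $\mathcal{D}$-parallel and $q_\infty\in\wedge^2 S$), while $d^{\mathcal{D}}*\mathcal{N}-2[q_\infty\wedge *\mathcal{N}]$ takes values in $\Lambda\wedge S^\perp$ (Remark \ref{remmuyutilcurlyDNqCWeq}), I would split $R^\lambda p(\lambda)=0$ into its $S$- and $S^\perp$-components and collect powers of $\lambda$. The $S^\perp$-component gives $(d^{\mathcal{D}}*\mathcal{N}-2[q_\infty\wedge *\mathcal{N}])\,v_\infty^T=0$; as $v_\infty^T\in S\setminus\Lambda^\perp$, Remark \ref{remmuyutilcurlyDNqCWeq} promotes this to \eqref{eq:mainCWeq}. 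The first term then being absent, the $S$-component reduces to $(\lambda^{-2}-1)\,d^{\mathcal{D}}q_\infty^{1,0}v_\infty^T+(\lambda^{2}-1)\,d^{\mathcal{D}}q_\infty^{0,1}v_\infty^T=0$ for all $\lambda$, forcing $d^{\mathcal{D}}q_\infty^{1,0}v_\infty^T=0$; since $d^{\mathcal{D}}q_\infty^{1,0}\in\Omega^2(\Lambda\wedge\Lambda^{0,1})$ by \eqref{eq:curlyD1oLambdaijblablalbla}, Remark \ref{e10determinedby} gives $d^{\mathcal{D}}q_\infty^{1,0}=0$ and Lemma \ref{withvswithoutdecomps} then yields \eqref{eq:curlyDextderivofq}, completing the proof.

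The main obstacle — really the only non-formal step — is the cancellation $\mathcal{N}^{1,0}v+q_\infty^{1,0}v_0=0$, which is where the normalization $\eta_\infty=\tfrac12\sigma_\infty\wedge dN$ and the scaling by $H_\infty$ earn their keep: the Burstall--Calderbank form $\eta_\infty$ is engineered so that $H_\infty\eta_\infty$ reproduces exactly the normal contribution of $\mathcal{N}^{1,0}$. What makes this transparent in codimension $1$ is the $\mathcal{D}$-parallelism of $N$, i.e. $dN\in\Omega^1(S)$, which trivializes $\nabla^{S^\perp}$ on the real line $S^\perp=\langle N\rangle$; the point to stay alert to is that here $v=\overline v$ is real, so $p(\lambda)$ is a degenerate (non-null) conserved quantity and the two-null-line apparatus of higher codimension is unavailable. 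Alternatively, one could verify \eqref{eq:mainCWeq} directly through the Hopf-differential form \eqref{CWviacharts} of Lemma \ref{ckCWeq}, but the conserved-quantity bootstrap is shorter and matches Section \ref{HMCsurfs}.
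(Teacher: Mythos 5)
Your proposal is correct, but it runs along a genuinely different track from the paper's own proof. The paper treats the two constrained Willmore equations asymmetrically: it gets \eqref{eq:curlyDextderivofq} for free, as $d^{\mathcal{D}}q_{\infty}=H_{\infty}\,d^{\mathcal{D}}\eta_{\infty}=0$ by the isothermic property (Lemma \ref{etadetcvansisheszesesseparately}), and then proves \eqref{eq:mainCWeq} by a direct tensor computation: evaluating $(d^{\mathcal{D}}*\mathcal{N})v_{\infty}^{T}$ and $2[q_{\infty}\wedge*\mathcal{N}]v_{\infty}^{T}$ via the Codazzi equation, the consequences $\mathcal{N}v_{\infty}^{T}=0$ and $\mathcal{N}v_{\infty}^{\perp}=-\mathcal{D}v_{\infty}^{T}$ of $dv_{\infty}=0$, and the flatness identity $[\mathcal{N}^{1,0},\mathcal{N}^{0,1}v_{\infty}^{\perp}]=-[\mathcal{N}^{0,1},\mathcal{N}^{1,0}v_{\infty}^{\perp}]$, closing with Remark \ref{remmuyutilcurlyDNqCWeq} exactly as you do. You instead invert the logical order of the chapter: you first establish the conserved quantity $p_{\infty}(\lambda)$ of Proposition \ref{CMChasCQ} — legitimately, since Remark \ref{qCWnotinterveneonproofrenm} guarantees Theorem \ref{charactdeCQ} needs only the reality of $q_{\infty}\in\Omega^{1}(\Lambda\wedge\Lambda^{(1)})$, not the constrained Willmore hypothesis — and then read both equations off $R^{\lambda}p_{\infty}(\lambda)=0$, which is precisely the bootstrap the paper itself deploys in Section \ref{HMCsurfs} for holomorphic mean curvature vector surfaces in codimension $2$. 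Your key cancellation $\mathcal{N}^{1,0}v+q_{\infty}^{1,0}v_{0}=0$ is the same computation as the paper's \eqref{eq:qNemCMC}, and the component-splitting of $R^{\lambda}p_{\infty}(\lambda)$ mirrors the passage from \eqref{eq:Rlambdaplambda0} to \eqref{eq:4444s911jhuh3u3uwesb}; all the tools you invoke precede Theorem \ref{qinfCQ} in the text, so there is no circularity. What each route buys: the paper's argument is elementary and self-contained (Codazzi plus orthogonal decompositions, no spectral machinery), whereas yours unifies the codimension-$1$ and codimension-$2$ cases under one mechanism and delivers Theorem \ref{qinfCQ} and Proposition \ref{CMChasCQ} in a single stroke, where the paper proves the latter separately afterwards, reusing identities from the former's proof. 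One cosmetic slip: in justifying $(v_{\infty}^{T},dN)=0$ you should note that $(v_{\infty},dN)=d(v_{\infty},N)=dH_{\infty}=0$ (the CMC hypothesis) before discarding it, and $(N,dN)=\tfrac{1}{2}\,d(N,N)$, not $\tfrac{1}{4}\,d(N,N)$; neither affects the conclusion.
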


The proof of the theorem will follow a few considerations. First
note that the constancy of $(v_{\infty}^{\perp},v_{\infty}^{\perp})$
ensures, in particular, that $v_{\infty}^{\perp}$ is either zero or
never-zero, and, on the other hand, that
\begin{equation}\label{eq:auxmaisuma4356}
((dv_{\infty}^{\perp})^{\perp},v_{\infty}^{\perp})=0.
\end{equation}
In the case $v_{\infty}^{\perp}$ is never-zero, and, therefore,
$S^{\perp}=\langle v_{\infty}^{\perp}\rangle$, equation
\eqref{eq:auxmaisuma4356} establishes
\begin{equation}\label{eq:perpdvper0}
(dv_{\infty}^{\perp})^{\perp}=0,
\end{equation}
an equality that, obviously, still holds in the case
$v_{\infty}\in\Gamma(S)$. Equivalently,
\begin{equation}\label{eq:utily}
\mathcal{N}v_{\infty}^{\perp}=dv_{\infty}^{\perp}.
\end{equation}
Since $dv_{\infty}^{\perp}=H_{\infty}dN$, we conclude that
\begin{equation}\label{eq:qinfifLambdaCMC}
q_{\infty}=\frac{1}{2}\,\sigma_{\infty}\wedge\mathcal{N}v_{\infty}^{\perp}.
\end{equation}

Now we proceed to the proof of Theorem \ref{qinfCQ}.

\begin{proof}
According to Theorem \ref{thm905}, $(\Lambda,\eta_{\infty})$ is
isothermic, which, according to Lemma \ref{etainLwedgeL1},
establishes that the $1$-form $q_{\infty}$ takes values in
$\Lambda\wedge\Lambda^{(1)}$. The reality of $q_{\infty}$ is
obviously equivalent to that of $\eta_{\infty}$. On the other hand,
by \eqref{eq:etaisoeqs},
$$d^{\mathcal{D}}q_{\infty}=H_{\infty}\,d^{\mathcal{D}}\eta_{\infty}=0.$$
To complete the proof, we are left to verify that
$d^{\mathcal{D}}*\mathcal{N}=2[q_{\infty}\wedge*\mathcal{N}]$.\footnote{The
scaling  of $\eta_{\infty}$ by $H_{\infty}$ in order to obtain a
multiplier is determined by this equation.}

Notation: given
$\Psi\in\Omega^{1}(\mathrm{End}(\underline{\R}^{4,1}))$ and
$\psi\in\Omega^{1}(\underline{\R}^{4,1})$, $[\Psi,\psi]$ denotes the
$2$-form with values in $\underline{\R}^{4,1}$ defined by
$[\Psi,\psi](X,Y):=\Psi _{X}\psi_{Y}-\Psi _{Y}\psi_{X}$, for
$X,Y\in\Gamma(TM)$.

In view of \eqref{eq:codcurlyN},
$$(d^{\mathcal{D}}*\mathcal{N})v_{\infty}^{T}=-2i\,(d^{\mathcal{D}}\mathcal{N}^{1,0})v_{\infty}^{T}=-2i\,
d^{\mathcal{D}}(\mathcal{N}^{1,0}v_{\infty}^{T})-2i[\mathcal{N}^{1,0},\mathcal{D}^{0,1}v_{\infty}^{T}].$$
On the other hand, by the constancy of the section $v_{\infty}$ of
$\underline{\R}^{4,1}$,
\begin{equation}\label{eq:dvinfis0}
\mathcal{D}v_{\infty}^{T}+\mathcal{D}v_{\infty}^{\perp}+\mathcal{N}v_{\infty}^{T}+\mathcal{N}v_{\infty}^{\perp}=0,
\end{equation}
and, in particular, considering the orthogonal projection of
$\underline{\R}^{4,1}$ onto $S^{\perp}$,
\begin{equation}\label{eq:hagrt}
\mathcal{N}v_{\infty}^{T}=-\mathcal{D}v_{\infty}^{\perp}.
\end{equation}
 But, according to \eqref{eq:perpdvper0},
\begin{equation}\label{eq:conseqof65}
\mathcal{D}v_{\infty}^{\perp}=0.
\end{equation}
Hence
$$(d^{\mathcal{D}}*\mathcal{N})\,v_{\infty}^{T}=-2i\,[\mathcal{N}^{1,0},\mathcal{D}^{0,1}v_{\infty}^{T}].$$
On the other hand,
$$2[q_{\infty}\wedge
*\mathcal{N}]\,v_{\infty}^{T}=-2i[q_{\infty}\wedge\mathcal{N}^{1,0}]\,v_{\infty}^{T}+2i[q_{\infty}\wedge\mathcal{N}^{0,1}]\,v_{\infty}^{T}.$$
The fact that $q_{\infty}$ takes values in
$\Lambda\wedge\Lambda^{(1)}$ establishes, in particular,
$q_{\infty}\,S^{\perp}=0$ and, therefore,
$$[q_{\infty}\wedge\mathcal{N}^{1,0}]\,v_{\infty}^{T}=[\mathcal{N}^{1,0},
q_{\infty}^{0,1}v_{\infty}^{T}].$$In view of
\eqref{eq:qinfifLambdaCMC},
$$2\,q_{\infty}^{0,1}v_{\infty}^{T}=(\sigma_{\infty}\wedge\mathcal{N}^{0,1}v_{\infty}^{\perp})v_{\infty}^{T}=-\mathcal{N}^{0,1}v_{\infty}^{\perp}-(\mathcal{N}^{0,1}v_{\infty}^{\perp},v_{\infty}^{T})\sigma_{\infty},$$
and, therefore,
$$2\,[q_{\infty}\wedge\mathcal{N}^{1,0}]\,v_{\infty}^{T}=-[\mathcal{N}^{1,0},\mathcal{N}^{0,1}v_{\infty}^{\perp}].$$
Similarly,
$$2\,[q_{\infty}\wedge\mathcal{N}^{0,1}]\,v_{\infty}^{T}=-[\mathcal{N}^{0,1},\mathcal{N}^{1,0}v_{\infty}^{\perp}].$$
Now observe, in view of equation \eqref{eq:perpdvper0}, that, given
$X,Y\in\Gamma(TM)$,
$$\mathcal{N}_{X}\mathcal{N}_{Y}v_{\infty}^{\perp}-\mathcal{N}_{Y}\mathcal{N}_{X}v_{\infty}^{\perp}=
\pi_{S^{\perp}}(d_{X}d_{Y}v_{\infty}^{\perp}-d_{Y}d_{X}v_{\infty}^{\perp})=\pi_{S^{\perp}}(d_{[X,Y]}v_{\infty}^{\perp})=0.$$
In particular,
$$[\mathcal{N}^{1,0},\mathcal{N}^{0,1}v_{\infty}^{\perp}]=-[\mathcal{N}^{0,1},\mathcal{N}^{1,0}v_{\infty}^{\perp}].$$
Hence $$2[q_{\infty}\wedge *\mathcal{N}]\,v_{\infty}^{T}=
2i\,[\mathcal{N}^{1,0},\mathcal{N}^{0,1}v_{\infty}^{\perp}].$$ Going
back to equation \eqref{eq:dvinfis0}, and considering, this time,
the orthogonal projection of $\underline{\R}^{4,1}$ onto $S$,
establishes
\begin{equation}\label{eq:NperpDT}
\mathcal{N}v_{\infty}^{\perp}=-\mathcal{D}v_{\infty}^{T},
\end{equation}
and, ultimately,
$$(d^{\mathcal{D}}*\mathcal{N})\,v_{\infty}^{T}=2\,[q_{\infty}\wedge
*\mathcal{N}]\,v_{\infty}^{T},$$
which, in view of the fact that
$(\sigma_{\infty},v_{\infty}^{T})=(\sigma_{\infty},v_{\infty})(=-1)$
is never-zero, completes the proof, cf. Remark
\ref{remmuyutilcurlyDNqCWeq}.
\end{proof}

Note that, if $\Lambda$ is minimal in $S_{v_{\infty}}$ (i.e.,
$H_{\infty}=0$), then $q_{\infty}=0$, which, according to Theorem
\ref{qinfCQ}, establishes $\Lambda$ as a Willmore surface.

\begin{corol}
A CMC surface in $3$-space is, in particular, a constrained Willmore
surface. A minimal surface in $3$-space is, in particular, a
Willmore surface.
\end{corol}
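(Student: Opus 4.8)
The plan is to read both assertions off directly from Theorem \ref{qinfCQ}, which has already carried all the substantive weight; the Corollary is essentially a bookkeeping consequence. The guiding observation is that being a constrained Willmore surface means admitting \emph{some} multiplier, and Theorem \ref{qinfCQ} produces an explicit one for any CMC surface.

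First, for the CMC statement, I would fix a nonzero $v_{\infty}\in\R^{4,1}$ realizing the $3$-dimensional space-form $S_{v_{\infty}}$ in which the given surface $\Lambda$ has constant mean curvature, so that $(v_{\infty}^{\perp},v_{\infty}^{\perp})$ is constant and $H_{\infty}=(v_{\infty}^{\perp},v_{\infty}^{\perp})^{1/2}$ is the mean curvature of $\Lambda$ in $S_{v_{\infty}}$. Theorem \ref{qinfCQ} then asserts precisely that $\Lambda$ is a $q_{\infty}$-constrained Willmore surface, with multiplier $q_{\infty}=H_{\infty}\eta_{\infty}$, where $\eta_{\infty}=\tfrac{1}{2}\,\sigma_{\infty}\wedge dN$ and $(\Lambda,\eta_{\infty})$ is isothermic by Theorem \ref{thm905}. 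Since the existence of this real multiplier $q_{\infty}\in\Omega^{1}(\Lambda\wedge\Lambda^{(1)})$ with $d^{\mathcal{D}}q_{\infty}=0$ and $d^{\mathcal{D}}*\mathcal{N}=2\,[q_{\infty}\wedge*\mathcal{N}]$ is exactly the defining property, $\Lambda$ is constrained Willmore.

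Next, for the minimal statement, I would specialize to the case $H_{\infty}=0$, in which $\Lambda$ is minimal in $S_{v_{\infty}}$. Here $q_{\infty}=H_{\infty}\eta_{\infty}=0$, so Theorem \ref{qinfCQ} exhibits $\Lambda$ as a $0$-constrained Willmore surface. Willmore surfaces being, by definition, the $0$-constrained Willmore surfaces (cf.\ the discussion immediately following Theorem \ref{CWeq}), this shows $\Lambda$ is Willmore. This reproduces the remark recorded just before the statement.

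The hard part has already been done upstream: verifying that $\eta_{\infty}$ is closed and valued in $\Lambda\wedge\Lambda^{(1)}$ (Theorem \ref{thm905}), and that its scaling $q_{\infty}$ solves the Euler-Lagrange constrained Willmore equation (Theorem \ref{qinfCQ}). Consequently I anticipate no genuine obstacle at this stage; the only care needed is the notational point that setting $H_{\infty}=0$ collapses the multiplier $q_{\infty}$ to zero, so that the minimal case is exactly the distinguished $0$-constrained, hence genuinely Willmore, subcase of the CMC statement.
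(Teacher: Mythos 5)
Your proposal is correct and follows exactly the paper's own route: the first assertion is an immediate citation of Theorem \ref{qinfCQ} (which exhibits $q_{\infty}=H_{\infty}\eta_{\infty}$ as a multiplier), and the second is the remark recorded just before the corollary, namely that $H_{\infty}=0$ forces $q_{\infty}=0$, so the surface is $0$-constrained Willmore, i.e.\ Willmore. No gaps; nothing further is needed.
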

Minimal surfaces in $3$-space are, in particular, isothermic
Willmore surfaces. Furthermore, a classical result by Thomsen
\cite{thomsen} characterizes isothermic Willmore surfaces in
$3$-space as minimal surfaces in some $3$-dimensional space-form.
Hence a CMC surface in a $3$-dimensional space-form is a Willmore
surface if and only if it is minimal.

Next we establish a conserved quantity of a CMC surface in $3$-space
(see also Proposition \ref{pinftyallt}).
\begin{prop}\label{CMChasCQ}
$\Lambda$ admits
$$p_{\infty}(\lambda):=\lambda^{-1}\frac{1}{2}\,v_{\infty}^{\perp}+v_{\infty}^{T}+\lambda\frac{1}{2}\,
v_{\infty}^{\perp}$$ as a $q_{\infty}$-conserved quantity.
\end{prop}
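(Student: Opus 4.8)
The plan is to read this off from the characterization of conserved quantities in Theorem \ref{charactdeCQ}, applied with the decomposition $v=\tfrac12 v_{\infty}^{\perp}$ and $v_{0}=v_{\infty}^{T}$. First I would check that $p_{\infty}(\lambda)$ is of the admissible shape: $v_{0}=v_{\infty}^{T}$ is a real section of $S$ and $v=\tfrac12 v_{\infty}^{\perp}$ is a section of $S^{\perp}$ with $\overline{v}=v$ (so the $\lambda$-coefficient $\tfrac12 v_{\infty}^{\perp}$ is indeed $\overline{v}$), while $p_{\infty}(1)=v_{\infty}^{T}+v_{\infty}^{\perp}=v_{\infty}$ is non-zero. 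Since $\Lambda$ is a $q_{\infty}$-constrained Willmore surface (Theorem \ref{qinfCQ}), $q_{\infty}$ is a real $1$-form with values in $\Lambda\wedge\Lambda^{(1)}$, so by Remark \ref{qCWnotinterveneonproofrenm} it suffices to verify the three equations $dv_{\infty}=0$, $\mathcal{D}^{0,1}v=0$ and $\mathcal{N}^{1,0}v+q_{\infty}^{1,0}v_{0}=0$.

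The first two conditions are immediate from facts already assembled in the proof of Theorem \ref{qinfCQ}. The equation $d(p_{\infty}(1))=dv_{\infty}=0$ is just the constancy of the vector $v_{\infty}\in\R^{4,1}$. The equation $\mathcal{D}^{0,1}v=\tfrac12\mathcal{D}^{0,1}v_{\infty}^{\perp}=0$ follows at once from \eqref{eq:conseqof65}, $\mathcal{D}v_{\infty}^{\perp}=0$, which is exactly the place where the constant-mean-curvature hypothesis enters (via $\eqref{eq:perpdvper0}$).

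The third condition is where the actual computation lies. I would use the expression $q_{\infty}=\tfrac12\,\sigma_{\infty}\wedge\mathcal{N}v_{\infty}^{\perp}$ from \eqref{eq:qinfifLambdaCMC}, take $(1,0)$-parts (noting $\sigma_{\infty}$ is a $0$-form) to get $q_{\infty}^{1,0}=\tfrac12\,\sigma_{\infty}\wedge\mathcal{N}^{1,0}v_{\infty}^{\perp}$, and then expand $q_{\infty}^{1,0}v_{\infty}^{T}$ using the action formula \eqref{eq:isomwedge}, obtaining $\tfrac12\bigl((v_{\infty}^{T},\sigma_{\infty})\,\mathcal{N}^{1,0}v_{\infty}^{\perp}-(v_{\infty}^{T},\mathcal{N}^{1,0}v_{\infty}^{\perp})\,\sigma_{\infty}\bigr)$. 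Here $(v_{\infty}^{T},\sigma_{\infty})=(v_{\infty},\sigma_{\infty})=-1$ since $\sigma_{\infty}\in\Gamma(S)$ is orthogonal to $v_{\infty}^{\perp}$, so the first term contributes exactly $-\tfrac12\mathcal{N}^{1,0}v_{\infty}^{\perp}=-\mathcal{N}^{1,0}v$, which is what is needed to cancel $\mathcal{N}^{1,0}v$.

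The main obstacle is therefore to show that the leftover coefficient $(v_{\infty}^{T},\mathcal{N}^{1,0}v_{\infty}^{\perp})$ vanishes, so that the $\sigma_{\infty}$-term disappears. This is where I would invoke the crucial vanishing $\mathcal{N}v_{\infty}^{T}=0$, which comes from combining \eqref{eq:hagrt}, $\mathcal{N}v_{\infty}^{T}=-\mathcal{D}v_{\infty}^{\perp}$, with \eqref{eq:conseqof65}, $\mathcal{D}v_{\infty}^{\perp}=0$; hence $\mathcal{N}^{1,0}v_{\infty}^{T}=0$ as well. Using the skew-symmetry of $\mathcal{N}^{1,0}$ at each point, $(v_{\infty}^{T},\mathcal{N}^{1,0}v_{\infty}^{\perp})=(v_{\infty},\mathcal{N}^{1,0}v_{\infty}^{\perp})=-(\mathcal{N}^{1,0}v_{\infty},v_{\infty}^{\perp})=-(\mathcal{N}^{1,0}v_{\infty}^{\perp},v_{\infty}^{\perp})=0$, the last equality because $\mathcal{N}^{1,0}v_{\infty}^{\perp}\in\Gamma(S)$ (indeed in $\Lambda^{0,1}$ by \eqref{eq:curlyN10SperpinLambda01}) is orthogonal to $v_{\infty}^{\perp}\in\Gamma(S^{\perp})$. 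Consequently $q_{\infty}^{1,0}v_{\infty}^{T}=-\mathcal{N}^{1,0}v$ and the third equation holds, so Theorem \ref{charactdeCQ} yields that $p_{\infty}(\lambda)$ is a $q_{\infty}$-conserved quantity of $\Lambda$.
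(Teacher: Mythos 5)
Your proof is correct and follows essentially the same route as the paper's: both verify the three conditions of Theorem \ref{charactdeCQ}, using \eqref{eq:conseqof65} for $\mathcal{D}^{0,1}v=0$, and computing $q_{\infty}^{1,0}v_{\infty}^{T}=-\frac{1}{2}\,\mathcal{N}^{1,0}v_{\infty}^{\perp}$ from \eqref{eq:qinfifLambdaCMC} via the vanishing $\mathcal{N}v_{\infty}^{T}=0$ (from \eqref{eq:hagrt} and \eqref{eq:conseqof65}) and the skew-symmetry of $\mathcal{N}$. Your write-up merely spells out the wedge-action expansion and the orthogonality bookkeeping that the paper leaves implicit.
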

\begin{proof}
According to equations \eqref{eq:hagrt} and \eqref{eq:conseqof65},
we have $\mathcal{N}v_{\infty}^{T}=0$, so that
$(\mathcal{N}v_{\infty}^{\perp},v_{\infty}^{T})=-(v_{\infty}^{\perp},\mathcal{N}v_{\infty}^{T})=0$
and, consequently, by \eqref{eq:qinfifLambdaCMC},
\begin{equation}\label{eq:qNemCMC}
q_{\infty}^{1,0}v_{\infty}^{T}=\frac{1}{2}\,(-\mathcal{N}^{1,0}v_{\infty}^{\perp}-(\mathcal{N}^{1,0}v_{\infty}^{\perp},v_{\infty}^{T})\sigma_{\infty})=-\frac{1}{2}\,\mathcal{N}^{1,0}v_{\infty}^{\perp}.
\end{equation}
Equation \eqref{eq:conseqof65} completes the proof, according to
Theorem \ref{charactdeCQ}.
\end{proof}

\begin{prop}\label{CW+areCMC}
A constrained Willmore surface in $3$-space admitting a conserved
quantity $p(\lambda)$ is, in particular, a CMC surface in the
space-form $S_{p(1)}$.
\end{prop}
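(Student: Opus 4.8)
The plan is to read off, from the hypothesis that $p(\lambda)=\lambda^{-1}v+v_{0}+\lambda\overline{v}$ is a $q$-conserved quantity, the structural equations provided by Theorem \ref{charactdeCQ},
$$dv_{\infty}=0,\qquad \mathcal{D}^{0,1}v=0,\qquad \mathcal{N}^{1,0}v+q^{1,0}v_{0}=0,$$
and then to deduce the constancy of the squared length of the normal part of $v_{\infty}$. Since $dv_{\infty}=0$, the vector $v_{\infty}=p(1)$ is constant and non-zero, so (cf. Remark \ref{CQeqdeterminesq}) it determines a genuine space-form $S_{v_{\infty}}$, and it suffices to show that $\Lambda$ has constant mean curvature there; by definition this amounts to showing that $(v_{\infty}^{\perp},v_{\infty}^{\perp})$ is constant, where $v_{\infty}^{\perp}$ is the orthogonal projection of $v_{\infty}$ onto $S^{\perp}$. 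As $v_{0}\in\Gamma(S)$ while $v,\overline{v}\in\Gamma(S^{\perp})$, this projection is simply $v_{\infty}^{\perp}=v+\overline{v}$.

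First I would record the constancy of $(v,v)$, which is already noted in the remarks following Theorem \ref{charactdeCQ}. Concretely, from $\mathcal{N}^{1,0}v=-q^{1,0}v_{0}$ and the skew-symmetry of $q$ one gets $(\mathcal{N}^{1,0}v,v_{0})=-(q^{1,0}v_{0},v_{0})=0$, and the skew-symmetry of $\mathcal{N}$ then yields $(v,\mathcal{N}^{1,0}v_{0})=0$. Since the conserved-quantity equations also force $\mathcal{N}^{1,0}v_{0}=-\mathcal{D}^{1,0}v$ (as established in the proof of Theorem \ref{charactdeCQ}) together with $\mathcal{D}^{0,1}v=0$, this reads $(v,\mathcal{D}v)=0$; and as $\mathcal{N}v$ is a section of $S$, hence orthogonal to $v\in\Gamma(S^{\perp})$, we conclude $d(v,v)=2(\mathcal{D}v,v)=0$, so $(v,v)$ is constant.

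The decisive step is to exploit that we are in codimension $1$: here $S^{\perp}$ is a real rank $1$ (positive-definite) bundle, so its complexification is spanned over $\C$ by a local real unit section $N$, and I may write $v=gN$ with $g=(v,N)\in C^{\infty}(M,\C)$ and $\overline{v}=\overline{g}\,N$. Then $(v,v)=g^{2}$, and the constancy of $(v,v)$ forces the smooth function $g$ to be locally constant: if $(v,v)=0$ then $g\equiv 0$ and $v\equiv 0$, while otherwise $g$ equals one of the two fixed square roots of the constant $(v,v)$ and cannot jump by continuity. Consequently $v_{\infty}^{\perp}=(g+\overline{g})N=2\,\mathrm{Re}(g)\,N$ and
$$(v_{\infty}^{\perp},v_{\infty}^{\perp})=4\,(\mathrm{Re}\,g)^{2}\,(N,N)=4\,(\mathrm{Re}\,g)^{2}$$
is constant, which is exactly the condition that $\Lambda$ be a CMC surface in $S_{v_{\infty}}$. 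The degenerate case $v\equiv 0$ gives $v_{\infty}^{\perp}=0$, i.e. $v_{\infty}\in\Gamma(S)$ and $H_{\infty}=0$, so $\Lambda$ is then minimal in $S_{v_{\infty}}$, a CMC surface of zero mean curvature.

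I expect the main obstacle to be conceptual rather than computational: one must correctly identify $v_{\infty}^{\perp}=v+\overline{v}$ and, above all, use the rank $1$ structure of $S^{\perp}$ to pass from the conformally invariant datum ``$(v,v)$ constant'' to the metric statement ``$(v_{\infty}^{\perp},v_{\infty}^{\perp})$ constant''; in higher codimension these two are genuinely inequivalent, which is precisely why the result is special to $3$-space. A secondary point requiring care is the bookkeeping of the $(1,0)$/$(0,1)$ decompositions and the reality conditions, so that the skew-symmetry cancellations giving $d(v,v)=0$ are applied to the correct components.
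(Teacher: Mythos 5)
Your proof is correct. It is worth setting it against the paper's own proof, which is a two-line computation: since $\hat{v}_{\infty}:=p(1)$ is constant, the paper writes $d(\hat{v}_{\infty}^{\perp},\hat{v}_{\infty}^{\perp})=2(d\hat{v}_{\infty}^{\perp},\hat{v}_{\infty}^{\perp})$ and annihilates the pairing by invoking \eqref{eq:utily}, $d\hat{v}_{\infty}^{\perp}=\mathcal{N}\hat{v}_{\infty}^{\perp}$, after which $(\mathcal{N}\hat{v}_{\infty}^{\perp},\hat{v}_{\infty}^{\perp})=0$ because $\mathcal{N}$ exchanges $S$ and $S^{\perp}$. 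The delicate point is that \eqref{eq:utily} is derived in Section \ref{CMCisoCW} from the constancy of $(v_{\infty}^{\perp},v_{\infty}^{\perp})$, i.e.\ under the CMC hypothesis, which is precisely the conclusion of Proposition \ref{CW+areCMC}; read literally, that citation begs the question, and the honest justification is exactly the codimension-one argument you supply. Writing $v=gN$ with $N$ a local real unit section of the rank-one bundle $S^{\perp}$ (so that $\mathcal{D}N=0$, the content of \eqref{eq:dNinS}), the constancy of $(v,v)=g^{2}$ --- the unlabelled remark following Theorem \ref{charactdeCQ}, which you correctly re-derive --- forces $g$, hence $\mathrm{Re}\,g$, to be locally constant, so that $v_{\infty}^{\perp}=2(\mathrm{Re}\,g)\,N$ has constant square length; incidentally this also yields $\mathcal{D}v_{\infty}^{\perp}=0$, recovering \eqref{eq:utily} in the conserved-quantity setting and thereby repairing the paper's citation. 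So both arguments rest on the same codimension-one mechanism, but yours makes explicit the step that the paper delegates to an equation proved under the hypothesis being established, and is the more self-contained of the two. One small streamlining: in your final step only the constancy of $g^{2}$ and continuity are needed; the holomorphicity of $g$ coming from $\mathcal{D}^{0,1}v=0$ enters only through the derivation of $d(v,v)=0$ itself.
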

\begin{proof}
Let $\hat{\Lambda}\subset\underline{\R}^{4,1}$ be a constrained
Willmore surface in the projectivized light-cone. Let $\perp$
indicate, temporarily, the orthogonal projection of
$\underline{\R}^{4,1}$ onto the normal bundle to the central sphere
congruence of $\hat{\Lambda}$. The existence of a conserved quantity
$p(\lambda)$ of $\hat{\Lambda}$ establishes, in particular, cf.
Theorem \ref{charactdeCQ}, the constancy of
$\hat{v}_{\infty}:=p(1)$. Furthermore, by \eqref{eq:utily},
$d(\hat{v}_{\infty}^{\perp},\hat{v}_{\infty}^{\perp})=2(d\hat{v}_{\infty}^{\perp},\hat{v}_{\infty}^{\perp})=(\mathcal{N}_{\hat{\Lambda}}\hat{v}_{\infty}^{\perp},\hat{v}_{\infty}^{\perp})=0$,
establishing $\hat{\Lambda}$ as a CMC surface in the space-form
$S_{\hat{v}_{\infty}}$.
\end{proof}

Theorem \ref{qinfCQ} combines with Propositions \ref{CMChasCQ} and
\ref{CW+areCMC} to establish, in particular, the following:
\begin{thm}\label{CMCsareCWswithCQ}
CMC surfaces in $3$-dimensional space-forms are the constrained
Willmore surfaces in $3$-space admitting a $q$-conserved quantity,
for some multiplier $q$.
\end{thm}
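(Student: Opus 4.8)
The plan is to prove the asserted equality of classes as a double inclusion, drawing directly on the three results assembled immediately above the statement. Throughout, I fix $n=3$ and work with a surface $\Lambda$ in the projectivized light-cone, with central sphere congruence $S$ and normal datum as in the surrounding section.

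First I would establish that every CMC surface in a $3$-dimensional space-form is a constrained Willmore surface admitting a conserved quantity. Suppose $\Lambda$ has constant mean curvature $H_{\infty}$ in the space-form $S_{v_{\infty}}$, for some non-zero $v_{\infty}\in\R^{4,1}$. Theorem \ref{qinfCQ} already furnishes the multiplier: setting $\eta_{\infty}:=\frac{1}{2}\,\sigma_{\infty}\wedge dN$, which is closed and $\Lambda\wedge\Lambda^{(1)}$-valued by Theorem \ref{thm905}, and $q_{\infty}:=H_{\infty}\eta_{\infty}$, the surface $\Lambda$ is $q_{\infty}$-constrained Willmore. It then remains only to exhibit a $q_{\infty}$-conserved quantity, and this is exactly the content of Proposition \ref{CMChasCQ}: the Laurent polynomial
$$p_{\infty}(\lambda)=\lambda^{-1}\tfrac{1}{2}\,v_{\infty}^{\perp}+v_{\infty}^{T}+\lambda\,\tfrac{1}{2}\,v_{\infty}^{\perp}$$
is $d^{\lambda,q_{\infty}}_{S}$-parallel for all $\lambda\in\C\backslash\{0\}$. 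I would check that this polynomial has the form demanded by the definition of conserved quantity: its constant term $v_{\infty}^{T}\in\Gamma(S)$ is real, its $\lambda^{\mp1}$ coefficients lie in $\Gamma(S^{\perp})$ and are complex conjugate of one another (since $v_{\infty}^{\perp}$ is real), and $p_{\infty}(1)=v_{\infty}^{\perp}+v_{\infty}^{T}=v_{\infty}\neq0$. Hence $\Lambda$ belongs to the claimed class, with multiplier $q=q_{\infty}$.

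Next I would establish the reverse inclusion, which is a direct invocation of Proposition \ref{CW+areCMC}: if $\Lambda$ is a constrained Willmore surface in $3$-space admitting a $q$-conserved quantity $p(\lambda)$ for some multiplier $q$, then (writing $v_{\infty}:=p(1)$) the constancy of $v_{\infty}$ guaranteed by the characterization in Theorem \ref{charactdeCQ}, together with the relation $\mathcal{N}v_{\infty}^{\perp}=dv_{\infty}^{\perp}$ used in the proof of that proposition, forces $d(v_{\infty}^{\perp},v_{\infty}^{\perp})=0$, so that $\Lambda$ has constant mean curvature in the space-form $S_{p(1)}$. Combining the two inclusions yields the theorem.

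Since both inclusions are essentially consequences of the three cited results, there is no serious technical obstacle remaining at this stage; the genuine work lives in Theorems \ref{qinfCQ} and \ref{thm905} and in Proposition \ref{CW+areCMC}. The only point demanding care is the bookkeeping in the forward direction — confirming that the space-form datum $v_{\infty}$, which determines $H_{\infty}$, the unit normal $N$, and thence $\eta_{\infty}$ and $q_{\infty}$, produces a conserved quantity of precisely the admissible shape, and in particular that $p_{\infty}(1)$ is non-zero so that the definition genuinely applies.
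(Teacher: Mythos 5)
Your proposal is correct and follows exactly the paper's own route: the paper states Theorem \ref{CMCsareCWswithCQ} as an immediate combination of Theorem \ref{qinfCQ}, Proposition \ref{CMChasCQ} (forward inclusion) and Proposition \ref{CW+areCMC} (reverse inclusion), which is precisely your double-inclusion argument. Your additional bookkeeping — that $p_{\infty}(\lambda)$ has the admissible shape and $p_{\infty}(1)=v_{\infty}\neq 0$ — is a sensible check but involves nothing beyond what those results already provide.
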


Next we establish a conserved quantity with respect to a general
multiplier to a CMC surface in a $3$-dimensional space-form. For
each $t\in\R$, set
$$q_{\infty}^{t}:=q_{\infty}+t*\eta_{\infty}.$$
Combined, Theorems \ref{thm905} and \ref{qinfCQ} establish, in
particular, the set of multipliers to $\Lambda$ as the family
$q_{\infty}^{t}$, with $t\in\R$. In generalization of Proposition
\ref{CMChasCQ}, we have:

\begin{prop}\label{pinftyallt}
$\Lambda$ admits
$$p_{\infty}^{t}(\lambda):=\lambda^{-1}\frac{1}{2}\,(H_{\infty}-it)N+ v_{\infty}^{T}+\lambda\frac{1}{2}\,(H_{\infty}+it)N$$
as a $q_{\infty}^{t}$-conserved quantity.
\end{prop}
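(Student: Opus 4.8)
The plan is to recognise $p_\infty^t(\lambda)$ as a Laurent polynomial of the form $\lambda^{-1}v+v_0+\lambda\overline{v}$ required by Theorem \ref{charactdeCQ}, with
\[
v=\tfrac{1}{2}(H_\infty-it)N,\qquad v_0=v_\infty^T,\qquad \overline{v}=\tfrac{1}{2}(H_\infty+it)N,
\]
and then to verify the three parallelism conditions of that theorem. First I would check the formal data: since $H_\infty,t\in\R$ and $N$ is real, the $\lambda$-coefficient is indeed the complex conjugate of the $\lambda^{-1}$-coefficient, $v_0=v_\infty^T\in\Gamma(S)$ is real, and $v\in\Gamma(S^\perp)$ because $N\in\Gamma(S^\perp)$. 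Moreover $p_\infty^t(1)=H_\infty N+v_\infty^T=v_\infty^\perp+v_\infty^T=v_\infty\neq 0$, so the non-degeneracy requirement holds and $p_\infty^t(1)$ recovers the space-form vector. By Theorems \ref{thm905} and \ref{qinfCQ} the form $q_\infty^t$ is real with values in $\Lambda\wedge\Lambda^{(1)}$, so Theorem \ref{charactdeCQ} applies here via Remark \ref{qCWnotinterveneonproofrenm}.

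For the first two conditions, $dp_\infty^t(1)=dv_\infty=0$ is immediate from the constancy of $v_\infty$. For $\mathcal{D}^{0,1}v=0$ the key observation is that $\mathcal{D}N=0$: indeed $N$ is a section of $S^\perp$, so $\mathcal{D}N=\nabla^{S^\perp}N=\pi_{S^\perp}(dN)$, which vanishes because $dN\in\Omega^1(S)$ by \eqref{eq:dNinS}. Hence $\mathcal{D}^{0,1}v=\tfrac{1}{2}(H_\infty-it)\,\mathcal{D}^{0,1}N=0$.

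The third condition, $\mathcal{N}^{1,0}v+(q_\infty^t)^{1,0}v_0=0$, is the heart of the matter. First I would compute $(q_\infty^t)^{1,0}$: since $q_\infty=H_\infty\eta_\infty$ and, by \eqref{eq:*vsJ}, $(*\eta_\infty)^{1,0}=-i\,\eta_\infty^{1,0}$, one gets $(q_\infty^t)^{1,0}=(H_\infty-it)\eta_\infty^{1,0}$. Substituting and factoring out the scalar $(H_\infty-it)$ reduces the condition to the single identity
\[
\tfrac{1}{2}\,\mathcal{N}^{1,0}N+\eta_\infty^{1,0}v_\infty^T=0.
\]
For $H_\infty\neq0$ this follows directly from \eqref{eq:qNemCMC} after dividing by $H_\infty$. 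The hard part, and the one place the argument must be handled with care, is the minimal case $H_\infty=0$, where that division is illegitimate and $q_\infty=0$ carries no information. I would therefore establish the identity uniformly by a direct computation: using the wedge formula for $\eta_\infty=\tfrac{1}{2}\sigma_\infty\wedge dN$ together with $(v_\infty^T,\sigma_\infty)=(v_\infty,\sigma_\infty)=-1$ and $(v_\infty^T,dN)=(v_\infty,dN)=d(v_\infty,N)=dH_\infty=0$, one finds $\eta_\infty v_\infty^T=-\tfrac{1}{2}dN$. On the other hand $\mathcal{N}N=dN-\mathcal{D}N=dN$ by the above, so $\mathcal{N}^{1,0}N=(dN)^{1,0}$ and $\eta_\infty^{1,0}v_\infty^T=-\tfrac{1}{2}(dN)^{1,0}=-\tfrac{1}{2}\mathcal{N}^{1,0}N$, which is exactly the desired identity and holds with no constraint on $H_\infty$.

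Assembling the three verified conditions, Theorem \ref{charactdeCQ} yields that $p_\infty^t(\lambda)$ is a $q_\infty^t$-conserved quantity, completing the proof and generalising Proposition \ref{CMChasCQ} (the case $t=0$). I expect the only genuine obstacle to be the uniform treatment of the orthogonality relations $(v_\infty^T,dN)=0$ and $\mathcal{N}N=dN$, which must be phrased so as to cover the minimal case along with the non-minimal one rather than relying on the scaling argument behind \eqref{eq:qNemCMC}.
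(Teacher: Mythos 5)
Your proof is correct and takes essentially the same route as the paper's: both verify the three conditions of Theorem \ref{charactdeCQ} using the constancy of $v_{\infty}$, the facts $\mathcal{D}N=0$ and $\mathcal{N}N=dN$ coming from \eqref{eq:dNinS}, and the identity $\eta_{\infty}^{1,0}v_{\infty}^{T}=-\frac{1}{2}\,d^{1,0}N$ derived from the constancy of $H_{\infty}$. Your only (cosmetic) difference is to factor $(q_{\infty}^{t})^{1,0}=(H_{\infty}-it)\,\eta_{\infty}^{1,0}$ and prove the single identity $\frac{1}{2}\mathcal{N}^{1,0}N+\eta_{\infty}^{1,0}v_{\infty}^{T}=0$ uniformly, whereas the paper treats the $q_{\infty}^{1,0}$ term via \eqref{eq:qNemCMC} and the $*\eta_{\infty}$ term by exactly the direct computation you use.
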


\begin{proof}
First of all, note that
$d(v_{\infty}^{T}+\frac{1}{2}((H_{\infty}-it)+(H_{\infty}+it))N)=dv_{\infty}=0$.
The fact that $H_{\infty}=(N,v_{\infty}^{\perp})$ is constant
establishes
\begin{equation}\label{eq:dNvinfperp0}
0=d(N,v_{\infty}^{\perp})=d(N,v_{\infty})=(dN,v_{\infty}).
\end{equation}
By \eqref{eq:dNinS}, it follows that $(dN,v_{\infty}^{T})=0$ and,
consequently,
$\eta_{\infty}^{1,0}v_{\infty}^{T}=-\frac{1}{2}\,d^{1,0}N$. By
\eqref{eq:qNemCMC}, and, yet again,  \eqref{eq:dNinS}, we conclude
then that
$$(q_{\infty}^{t})^{1,0}v_{\infty}^{T}=q_{\infty}^{1,0}v_{\infty}^{T}-it\eta_{\infty}^{1,0}v_{\infty}^{T}= -\frac{1}{2}\,\mathcal{N}^{1,0}v_{\infty}^{\perp}+\frac{it}{2}\,d^{1,0}N=-\frac{1}{2}\,\mathcal{N}^{1,0}
(H_{\infty}-it)N.$$ On the other hand, \eqref{eq:dNinS} establishes
$\mathcal{D}^{0,1}N=0$, and then
$\mathcal{D}^{0,1}(H_{\infty}-it)N=0$, in view of the constancy of
$H_{\infty}$. The conclusion follows, according to Theorem
\ref{charactdeCQ}.
\end{proof}

We may, alternatively, use $q_{\infty}^{N,t}$ and $p_{\infty}^{N,t}$
to denote $q_{\infty}^{t}$ and $p_{\infty}^{t}$, respectively, in
order to avoid the ambiguity with respect to the sign of $N$ in the
particular case $H_{\infty}=0$.

We complete this section by remarking on the close relationship
between the multiplier $q_{\infty}$ and the Hopf differential. Fix a
holomorphic chart $z$ of $M$. For simplicity, write $q_{\infty}^{z}$
for $(q_{\infty})^{z}$, alternatively. Note that
$$(q_{\infty})_{\delta_{z}}(\sigma_{\infty})_{z}=\frac{1}{2}\,(\sigma_{\infty}\wedge
dv_{\infty}^{\perp})_{\delta_{z}}(\sigma_{\infty})_{z}=-\frac{1}{2}\,(d_{\delta_{z}}v_{\infty}^{\perp},(\sigma_{\infty})_{z})\sigma_{\infty}$$
and, therefore, by equation \eqref{eq:conseqof65},
$$-2(q_{\infty})_{\delta_{z}}(\sigma_{\infty})_{z}=-(v_{\infty}^{\perp},\mathcal{N}_{\delta_{z}}(\sigma_{\infty})_{z})\sigma_{\infty}=-(v_{\infty}^{\perp},((\sigma_{\infty})_{zz})^{\perp})\sigma_{\infty}.$$
We conclude that
$$q_{\infty}^{z}=-(v_{\infty}^{\perp},((\sigma_{\infty})_{zz})^{\perp}).$$
Observe that, if $v_{\infty}^{\perp}\neq 0$, in which case
$S^{\perp}=\langle v_{\infty}^{\perp}\rangle$, then, in view of the
non-degeneracy of $S^{\perp}$ and according to equation
\eqref{eq:kzpiSperpsigmainfzz}, $q_{\infty}^{z}$ is real if and only
if so is $k^{z}$. Note that
$$q_{\infty}^{z}=-\lambda^{-1}(v_{\infty}^{\perp},k^{z}),$$
for $\lambda\in\Gamma(\underline{\R})$ as in equation
\eqref{eq:kzpiSperpsigmainfzz}. For later reference, note that, in
the case $v_{\infty}^{\perp}$ is non-zero,
\begin{equation}\label{eq:qinftzvsqinfz}
(q_{\infty}^{t})^{z}=(1-itH_{\infty}^{-1})(q_{\infty})^{z}.
\end{equation}

\subsection{CMC surfaces in $3$-space: an equation on the Hopf differential
and the Schwarzian derivative} Constant mean curvature surfaces in
$3$-dimensional space-forms are, in particular, isothermic
constrained Willmore surfaces. In view of the characterization of
isothermic surfaces in space-forms in terms of the reality of the
Hopf differential, the close relationship between the Hopf
differential and the set of multipliers to a CMC surface in
$3$-space leads to a characterization of these surfaces in terms of
the Schwarzian derivative and the Hopf differential, following the
characterization of constrained Willmore surfaces in space-forms
presented in Section \ref{cW+kc}.\newline

Let $z$ be a holomorphic chart of $M$, which, in the case $\Lambda$
has constant mean curvature in some space-form, we can choose so
that $k^{z}$ is real, cf. Lemma \ref{isoviakreal}. If $\Lambda$ is
minimal in $S_{v_{\infty}}$, then $\Lambda$ is Willmore, so that,
according to Lemma \ref{ckCWeq},
$\nabla^{S^{\perp}}_{\delta_{\bar{z}}}\nabla^{S^{\perp}}_{\delta_{\bar{z}}}k^{z}+\frac{\overline{c^{z}}}{2}\,k^{z}=0$
and, therefore,
\begin{equation}\label{eq:CMCeqoncandkbajhhsdajhgklasjdhgf}
\nabla^{S^{\perp}}_{\delta_{\bar{z}}}\nabla^{S^{\perp}}_{\delta_{\bar{z}}}k^{z}+\frac{\overline{c^{z}}}{2}\,k^{z}=H_{\infty}k^{z}.
\end{equation}
On the other hand, if $\Lambda$ has non-zero constant mean curvature
in $S_{v_{\infty}}$ and $k^{z}$ is real, then, according to what was
observed in Section \ref{CMCisoCW}, together with Lemma
\ref{ckCWeq}, $q_{\infty}^{z}$ is a non-zero real-valued holomorphic
function and, therefore, constant. We conclude that, if $\Lambda$
has constant mean curvature $H_{\infty}\neq 0$ in $S_{v_{\infty}}$,
then we can choose $z$ such that $k^{z}$ is real and
$q^{z}_{\infty}=H_{\infty}$ and, therefore, according to Lemma
\ref{ckCWeq} equation \eqref{eq:CMCeqoncandkbajhhsdajhgklasjdhgf}
still holds. Furthermore, cf. \cite{SD}:
\begin{Lemma}\label{CMCviacharts}
$\Lambda$ has constant mean curvature $H\in\R$ in some space-form if
and only if around each point there exists a holomorphic chart $z$
of $(M,\mathcal{C}_{\Lambda})$ such that $k^{z}$ is real and
$$\nabla^{S^{\perp}}_{\delta_{\bar{z}}}\nabla^{S^{\perp}}_{\delta_{\bar{z}}}k^{z}+\frac{\overline{c^{z}}}{2}k^{z}=Hk^{z}.$$
In that case, $\Lambda$ is a $q$-constrained Willmore surface for
$q$ the quadratic differential defined locally by $q^{z}dz^{2}$ for
$q^{z}:=H$, under the correspondence given by
\eqref{eq:correspondenec between qzand qsharp}.
\end{Lemma}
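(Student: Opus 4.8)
The plan is to prove both implications, separating the minimal ($H=0$) and non-minimal ($H\neq0$) cases, and to build everything on the machinery already in place: the chart form of the constrained Willmore equation (Lemma \ref{ckCWeq}), the characterization of the isothermic condition by reality of the Hopf differential (Lemma \ref{isoviakreal}), and the identification of CMC surfaces with constrained Willmore surfaces carrying a conserved quantity (Theorem \ref{CMCsareCWswithCQ}).

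For the forward implication, suppose $\Lambda$ has constant mean curvature $H$ in $S_{v_{\infty}}$. By Theorem \ref{thm905} the pair $(\Lambda,\eta_{\infty})$ is isothermic, so Lemma \ref{isoviakreal} lets me choose, around each point, a holomorphic chart $z$ with $k^{z}$ real. By Theorem \ref{qinfCQ}, $\Lambda$ is $q_{\infty}$-constrained Willmore, whence $q_{\infty}^{z}$ is holomorphic by Lemma \ref{ckCWeq}. In the non-minimal case $v_{\infty}^{\perp}\neq0$, the observation of Section \ref{CMCisoCW}, that $q_{\infty}^{z}$ is real exactly when $k^{z}$ is, forces $q_{\infty}^{z}$ to be a real holomorphic function, hence a constant; rescaling the chart by a real or purely imaginary constant (which preserves reality of $k^{z}$ while multiplying $q_{\infty}^{z}$ by a real factor, as a quadratic-differential coefficient transforming by \eqref{eq:qwvsqz}) normalizes this constant to $H_{\infty}=H$. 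Lemma \ref{ckCWeq} then gives $\nabla^{S^{\perp}}_{\delta_{\bar z}}\nabla^{S^{\perp}}_{\delta_{\bar z}}k^{z}+\frac{\overline{c^{z}}}{2}k^{z}=\mathrm{Re}(\overline{H}\,k^{z})=Hk^{z}$, since $H$ and $k^{z}$ are real. The minimal case $H=0$ is immediate: there $q_{\infty}=0$, so $\Lambda$ is Willmore, and Lemma \ref{ckCWeq} with $q^{z}=0$ yields the equation with vanishing right-hand side.

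For the converse, suppose around each point there is a chart $z$ with $k^{z}$ real and $\nabla^{S^{\perp}}_{\delta_{\bar z}}\nabla^{S^{\perp}}_{\delta_{\bar z}}k^{z}+\frac{\overline{c^{z}}}{2}k^{z}=Hk^{z}$ for a real constant $H$. Reality of $k^{z}$ gives the isothermic condition via Lemma \ref{isoviakreal}, and since $H,k^{z}$ are real I may rewrite the right-hand side as $\mathrm{Re}(\overline{H}\,k^{z})$; Lemma \ref{ckCWeq} then identifies $\Lambda$ as a $q$-constrained Willmore surface, with $q$ the quadratic differential $H\,dz^{2}$ under \eqref{eq:correspondenec between qzand qsharp}. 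It remains to exhibit the space-form in which $\Lambda$ is CMC. When $H=0$, $\Lambda$ is an isothermic Willmore surface, and Thomsen's theorem \cite{thomsen} realizes it as a minimal surface in some space-form. When $H\neq0$, I would invoke Theorem \ref{CMCsareCWswithCQ} and produce a conserved quantity: in codimension one $S^{\perp}$ is a positive-definite line bundle with a $\mathcal{D}$-parallel real unit section $N$ (its parallelism following from $(N,N)\equiv1$ and $\mathrm{rank}\,S^{\perp}=1$), and I would set $v:=\frac{1}{2}HN$ and seek $v_{0}\in\Gamma(S)$ real so that $p(\lambda):=\lambda^{-1}v+v_{0}+\lambda\overline{v}$ satisfies the three equations of Theorem \ref{charactdeCQ}, namely $\mathcal{D}^{0,1}v=0$ (automatic, since $N$ is parallel and $H$ constant), $\mathcal{N}^{1,0}v+q^{1,0}v_{0}=0$, and $d(v_{0}+HN)=0$.

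The main obstacle is precisely this last step of the converse: constructing the real section $v_{0}\in\Gamma(S)$, equivalently the constant vector $v_{\infty}=v_{0}+HN$, and verifying the coupled conditions $\mathcal{N}v_{0}=0$ and $\mathcal{D}v_{0}=-H\mathcal{N}N$ together with $\mathcal{N}^{1,0}v+q^{1,0}v_{0}=0$. This is an integrability problem, which I expect to settle by using that the loop $d^{\lambda}_{q}$ is flat (Theorem \ref{originallybyfran+calderbankCWviaflatness}) to produce $d^{\lambda}_{q}$-parallel sections, and then arguing that the real constant multiplier $q^{z}\equiv H$ forces one such parallel section to be a degree-one Laurent polynomial of the required shape; this is the reverse of the computation behind Proposition \ref{pinftyallt}. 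By contrast, the forward direction is essentially bookkeeping once the chart rescaling normalizing $q_{\infty}^{z}$ to $H$ is justified.
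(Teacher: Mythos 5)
Your forward implication is correct and follows the paper's own route: Theorem \ref{thm905} and Lemma \ref{isoviakreal} give a chart with $k^{z}$ real, Theorem \ref{qinfCQ} and Lemma \ref{ckCWeq} make $q_{\infty}^{z}$ holomorphic, the observation of Section \ref{CMCisoCW} (reality of $q_{\infty}^{z}$ is equivalent to that of $k^{z}$) makes it a real, hence constant, and in the non-minimal case non-zero, function; and your rescaling of the chart by a real or purely imaginary constant, which multiplies $q^{z}$ by an arbitrary non-zero real factor via \eqref{eq:qwvsqz} while preserving reality of $k^{z}$, justifies the normalization $q_{\infty}^{z}=H_{\infty}$ that the paper states without comment. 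The minimal case and the ``in that case'' clause are likewise fine.

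The genuine gap is the converse for $H\neq 0$, which you flag but do not close. Reality of $k^{z}$ and the displayed equation do give, via Lemmas \ref{isoviakreal} and \ref{ckCWeq}, that $\Lambda$ is an isothermic $q$-constrained Willmore surface with $q^{z}\equiv H$; but to conclude CMC via Theorem \ref{CMCsareCWswithCQ} you must actually exhibit a conserved quantity, i.e.\ solve for a real $v_{0}\in\Gamma(S)$ satisfying $\mathcal{N}v_{0}=0$, $\mathcal{D}v_{0}=-H\,\mathcal{N}N$ and $\mathcal{N}^{1,0}(\tfrac{H}{2}N)+q^{1,0}v_{0}=0$, and this is precisely the step your proposal defers. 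The strategy you sketch cannot close it as stated: flatness of $d^{\lambda}_{q}$ yields, for each \emph{fixed} $\lambda$, a local frame of $d^{\lambda}_{q}$-parallel sections, but a conserved quantity is a single family $p(\lambda)=\lambda^{-1}v+v_{0}+\lambda\overline{v}$, Laurent-polynomial of degree one with coefficients of prescribed type, parallel for \emph{all} $\lambda$ simultaneously; nothing formal organizes the $\lambda$-by-$\lambda$ parallel sections into such a polynomial family, and by Theorem \ref{CMCsareCWswithCQ} the existence of that family is equivalent to the very CMC conclusion you are after, so ``expecting'' it to appear is circular. Note that the paper does not prove this direction in-house either: it derives the forward implication in the text preceding the lemma and attributes the statement to \cite{SD}. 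So either follow the paper and cite \cite{SD} for the converse, or supply the actual integration argument for the overdetermined system above (whose compatibility must be extracted from $d^{\mathcal{D}}q=0$ and $d^{\mathcal{D}}*\mathcal{N}=2\,[q\wedge *\mathcal{N}]$); your proposal does neither.
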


\subsection{Spectral deformations of CMC surfaces in $3$-space}\label{CWspdeform}
As a class of constrained Willmore surfaces admitting a conserved
quantity $p(\lambda)$ with $p(1)$ with non-zero orthogonal
projection onto the central sphere congruence, the class of CMC
surfaces in $3$-space is known to be preserved by constrained
Willmore spectral deformation. In this section, we investigate how
the space-form and the mean curvature change under this deformation.
The isothermic spectral deformation is known to preserve the
constancy of the mean curvature of a surface in some space-form, cf.
\cite{SD}. We establish it in our setting, along with verifying how
the space-form and the mean curvature change under this deformation.
We present the classical CMC spectral deformation by means of the
action of a loop of flat metric connections on the class of CMC
surfaces in $3$-space (preserving the space-form and the mean
curvature).  We verify that all these deformations of CMC surfaces
are closely related. We observe, in particular, that the classical
CMC spectral deformation can be obtained as composition of
isothermic and constrained Willmore spectral deformation and that,
in the particular case of minimal surfaces, the classical CMC
spectral deformation coincides, up to reparametrization, with the
constrained Willmore spectral deformation corresponding to the zero
multiplier.\newline

Suppose $\Lambda$ has constant mean curvature $H_{\infty}$ in
$S_{v_{\infty}}$. In that case, $(\Lambda,\eta_{\infty})$ is an
isothermic $q_{\infty}^{t}$-constrained Willmore surface admitting
$p_{\infty}^{t}(\lambda)$ as a $q_{\infty}^{t}$-conserved quantity,
for each $t\in\R$.\newline

\textbf{The constrained Willmore spectral deformation.} For each
$t\in\R$ and $\lambda\in S^{1}$, the constrained Willmore spectral
deformation of $\Lambda$ of parameter $\lambda$ corresponding to the
multiplier $q_{\infty}^{t}$ is known to have constant mean curvature
in some space-form, cf. Theorem \ref{specCWwcq}, in view of
\eqref{eq:vinfTnonzero}. Next we investigate how the space-form and
the mean curvature change with this deformation.\newline

Fix $t\in\R$, $\lambda\in S^{1}$ and
$\phi^{\lambda}_{q_{\infty}^{t}}:(\underline{\R}^{4,1},d^{\lambda}_{q_{\infty}^{t}})\rightarrow
(\underline{\R}^{4,1},d)$ an isomorphism. Set
$$v_{q_{\infty}^{t}}^{\lambda}:=
\phi^{\lambda}_{q_{\infty}^{t}}(v_{\infty}^{T}+
((\mathrm{Re}\lambda)H_{\infty}+\frac{it}{2}(\lambda-\lambda^{-1}))N).\footnote{Recall
that $v_{\infty}^{\perp}=H_{\infty}N$, so that, in the particular
case $t=0$,
$v_{q_{\infty}^{t}}^{\lambda}=\phi^{\lambda}_{q_{\infty}^{t}}(v_{\infty}^{T}+(\mathrm{Re}\,\lambda)v_{\infty}^{\perp})$.}$$
\begin{Lemma}
$v_{q_{\infty}^{t}}^{\lambda}$ is a non-zero constant section of
$\underline{\R}^{4,1}$.
\end{Lemma}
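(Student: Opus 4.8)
The plan is to recognize $v_{q_\infty^t}^\lambda$ as precisely the image under $\phi^\lambda_{q_\infty^t}$ of the conserved quantity $p_\infty^t(\lambda)$ furnished by Proposition \ref{pinftyallt}, and then to read off constancy directly from the defining parallelism of a conserved quantity together with the connection-preserving property of $\phi^\lambda_{q_\infty^t}$. Thus the whole statement should reduce to two facts already available: that $p_\infty^t(\lambda)$ is $d^{\lambda,q_\infty^t}_S$-parallel for all $\lambda$, and that $\phi^\lambda_{q_\infty^t}$ intertwines $d^\lambda_{q_\infty^t}$ with the trivial flat connection $d$.

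The key (and essentially only nontrivial) step I would carry out first is to rewrite $p_\infty^t(\lambda)$ on $S^1$. Collecting the two $N$-terms in
$$p_\infty^t(\lambda) = \lambda^{-1}\tfrac12(H_\infty - it)N + v_\infty^T + \lambda\tfrac12(H_\infty + it)N$$
produces an $N$-coefficient $\tfrac{H_\infty}{2}(\lambda+\lambda^{-1}) + \tfrac{it}{2}(\lambda - \lambda^{-1})$. Since $\lambda\in S^1$ forces $\lambda^{-1}=\overline\lambda$, we have $\lambda+\lambda^{-1}=2\,\mathrm{Re}\,\lambda$, so this coefficient equals $(\mathrm{Re}\,\lambda)H_\infty + \tfrac{it}{2}(\lambda-\lambda^{-1})$, and hence
$$p_\infty^t(\lambda) = v_\infty^T + \Bigl((\mathrm{Re}\,\lambda)H_\infty + \tfrac{it}{2}(\lambda-\lambda^{-1})\Bigr)N,$$
which is exactly the section to which $\phi^\lambda_{q_\infty^t}$ is applied in the definition of $v_{q_\infty^t}^\lambda$; that is, $v_{q_\infty^t}^\lambda = \phi^\lambda_{q_\infty^t}\,p_\infty^t(\lambda)$. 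By Proposition \ref{pinftyallt}, $d^{\lambda,q_\infty^t}_S\,p_\infty^t(\lambda)=0$, so, using $\phi^\lambda_{q_\infty^t}\circ d^\lambda_{q_\infty^t}=d\circ\phi^\lambda_{q_\infty^t}$, I would conclude $d\,v_{q_\infty^t}^\lambda = \phi^\lambda_{q_\infty^t}\bigl(d^\lambda_{q_\infty^t}p_\infty^t(\lambda)\bigr)=0$; as $d$ is the trivial flat connection on $\underline{\R}^{4,1}=M\times\R^{4,1}$, being $d$-parallel means being constant.

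For the non-vanishing I would observe that $v_\infty^T$ and $N$ lie in the complementary subbundles $S$ and $S^\perp$, so the $S$-component of $p_\infty^t(\lambda)$ is $v_\infty^T$, which is never zero by \eqref{eq:vinfTnonzero}; hence $p_\infty^t(\lambda)$ is never zero, and since $\phi^\lambda_{q_\infty^t}$ is an isomorphism, neither is $v_{q_\infty^t}^\lambda$. The step I expect to be the crux is the passage to $\lambda\in S^1$ that collapses the Laurent polynomial $p_\infty^t(\lambda)$ into the stated expression via $\lambda^{-1}=\overline\lambda$ and $v_\infty^\perp=H_\infty N$ (cf. \eqref{eq:vinfperpHinfN}); once that identification is secured, both constancy and non-triviality follow formally, with no further computation.
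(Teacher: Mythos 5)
Your proposal is correct and takes essentially the same route as the paper: the paper likewise observes that $v_{q_{\infty}^{t}}^{\lambda}=\phi^{\lambda}_{q_{\infty}^{t}}(p_{\infty}^{t}(\lambda))$, deduces constancy from $d(\phi^{\lambda}_{q_{\infty}^{t}}(p_{\infty}^{t}(\lambda)))=\phi^{\lambda}_{q_{\infty}^{t}}(d^{\lambda}_{q_{\infty}^{t}}p_{\infty}^{t}(\lambda))=0$, and deduces non-vanishing from \eqref{eq:vinfTnonzero}. The only point the paper makes that you omit is the one-line reality check — for unit $\lambda$ the coefficient $\frac{it}{2}(\lambda-\lambda^{-1})=-t\,\mathrm{Im}\,\lambda$ is real, so that (with $\phi^{\lambda}_{q_{\infty}^{t}}$ chosen real) $v_{q_{\infty}^{t}}^{\lambda}$ is genuinely a section of $\underline{\R}^{4,1}$ and not merely of its complexification — which you should add for completeness.
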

\begin{proof}
According to \eqref{eq:vinfTnonzero}, $v_{\infty}^{T}$ is non-zero
and so is then $v_{q_{\infty}^{t}}^{\lambda}$. The fact that
$\lambda$ is unit establishes the reality of
$\frac{it}{2}(\lambda-\lambda^{-1})$ and, consequently, that of
$v_{q_{\infty}^{t}}^{\lambda}\in\Gamma((\underline{\R}^{4,1})^{\C})$.
In its turn, the fact that $p_{\infty}^{t}$ is a
$q_{\infty}^{t}$-conserved quantity of $\Lambda$ ensures the
constancy of the section
$\phi^{\lambda}_{q_{\infty}^{t}}(p_{\infty}^{t}(\lambda))=v_{q_{\infty}^{t}}^{\lambda}$
of $\underline{\R}^{4,1}$:
$d(\phi^{\lambda}_{q_{\infty}^{t}}(p_{\infty}^{t}(\lambda)))=\phi^{\lambda}_{q_{\infty}^{t}}(d^{\lambda}_{q_{\infty}^{t}}p_{\infty}^{t}(\lambda))=0$.
\end{proof}

In view of the fact that $p_{\infty}^{t}(\lambda)$ is a
$q_{\infty}^{t}$-conserved quantity of $\Lambda$, together with
\eqref{eq:vinfTnonzero}, Theorem \ref{specCWwcq} establishes the
constrained Willmore spectral deformation of $\Lambda$, of parameter
$\lambda$, corresponding to the multiplier $q_{\infty}^{t}$ as a
constrained Willmore surface (in $3$-dimensional space-form)
admitting a conserved quantity, or, equivalently, as a CMC surface
in some $3$-space. Specifically, according to Proposition
\ref{CW+areCMC}:
\begin{thm}
The spectral deformation of $\Lambda$ of parameter $\lambda$
corresponding to the multiplier $q_{\infty}^{t}$ is a CMC surface in
$S_{v_{q_{\infty}^{t}}^{\lambda}}$.
\end{thm}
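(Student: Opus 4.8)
The plan is to exhibit a conserved quantity of the spectral deformation, read off its value at the unit parameter, and then invoke Proposition \ref{CW+areCMC}. Write $\Lambda^{\lambda}:=\phi^{\lambda}_{q_{\infty}^{t}}\Lambda$ for the constrained Willmore spectral deformation of $\Lambda$ of parameter $\lambda\in S^{1}$ corresponding to the multiplier $q_{\infty}^{t}$; by Theorem \ref{spdeformCW} this is a real $\mathrm{Ad}_{\phi^{\lambda}_{q_{\infty}^{t}}}((q_{\infty}^{t})_{\lambda})$-constrained Willmore surface. By Proposition \ref{pinftyallt}, $p_{\infty}^{t}(\mu)$ is a $q_{\infty}^{t}$-conserved quantity of $\Lambda$, and its degree-zero coefficient is $v_{\infty}^{T}$, which is non-zero by \eqref{eq:vinfTnonzero}. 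Thus the hypothesis of Theorem \ref{specCWwcq} (with spectral parameter $\lambda$) is met, and $\mu\mapsto \phi^{\lambda}_{q_{\infty}^{t}}\,p_{\infty}^{t}(\lambda\mu)$ is a conserved quantity of $\Lambda^{\lambda}$ with respect to the multiplier $\mathrm{Ad}_{\phi^{\lambda}_{q_{\infty}^{t}}}((q_{\infty}^{t})_{\lambda})$.

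The one computation demanding care is the evaluation of this conserved quantity at $\mu=1$. First I would expand
$$p_{\infty}^{t}(\lambda) = v_{\infty}^{T} + \left[\tfrac{H_{\infty}}{2}(\lambda+\lambda^{-1})+\tfrac{it}{2}(\lambda-\lambda^{-1})\right]N,$$
and then use that $\lambda\in S^{1}$, so that $\lambda^{-1}=\overline{\lambda}$ and hence $\tfrac{H_{\infty}}{2}(\lambda+\lambda^{-1})=(\mathrm{Re}\,\lambda)H_{\infty}$. Applying $\phi^{\lambda}_{q_{\infty}^{t}}$ then yields exactly $\phi^{\lambda}_{q_{\infty}^{t}}\,p_{\infty}^{t}(\lambda)=v_{q_{\infty}^{t}}^{\lambda}$, the section appearing in the statement. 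The Lemma preceding this theorem guarantees $v_{q_{\infty}^{t}}^{\lambda}$ is a non-zero constant section of $\underline{\R}^{4,1}$, so it is a legitimate value $p^{*}(1)$ of a conserved quantity and defines a space-form $S_{v_{q_{\infty}^{t}}^{\lambda}}$ (this non-vanishing is in fact precisely what the hypothesis $v_{\infty}^{T}\neq 0$ in Theorem \ref{specCWwcq} secures).

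Finally, since $\Lambda^{\lambda}$ is a constrained Willmore surface in $3$-space admitting a conserved quantity whose value at the unit parameter is $v_{q_{\infty}^{t}}^{\lambda}$, Proposition \ref{CW+areCMC} concludes that $\Lambda^{\lambda}$ is a CMC surface in $S_{v_{q_{\infty}^{t}}^{\lambda}}$, as claimed. I do not expect a serious obstacle: the statement is essentially the assembly of the preservation of conserved quantities under spectral deformation (Theorem \ref{specCWwcq}) with the characterization of CMC surfaces in $3$-space as constrained Willmore surfaces carrying a conserved quantity (Proposition \ref{CW+areCMC}). The only points needing attention are the bookkeeping that identifies the transformed conserved quantity's constant term with $v_{q_{\infty}^{t}}^{\lambda}$ (which hinges on $\lambda$ having unit modulus) and the consistency check that the multiplier $\mathrm{Ad}_{\phi^{\lambda}_{q_{\infty}^{t}}}((q_{\infty}^{t})_{\lambda})$ produced by Theorem \ref{spdeformCW} coincides with the one with respect to which Theorem \ref{specCWwcq} delivers the conserved quantity, which it does by construction.
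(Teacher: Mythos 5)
Your proposal is correct and follows essentially the same route as the paper: it invokes Proposition \ref{pinftyallt} for the conserved quantity $p_{\infty}^{t}$, uses \eqref{eq:vinfTnonzero} to meet the hypothesis of Theorem \ref{specCWwcq}, identifies $\phi^{\lambda}_{q_{\infty}^{t}}p_{\infty}^{t}(\lambda)=v_{q_{\infty}^{t}}^{\lambda}$ via the unit-modulus computation, and concludes with Proposition \ref{CW+areCMC}. The bookkeeping you flag as the delicate point is exactly what the paper packages into the definition of $v_{q_{\infty}^{t}}^{\lambda}$ and the lemma preceding the theorem.
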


Recall that the constrained Willmore spectral deformation preserves
the central sphere congruence, as observed in \eqref{eq:cscpreserved
byCWdeform}, to conclude that:
\begin{prop}
Let $\Lambda^{\lambda}_{q_{\infty}^{t}}$ be the spectral deformation
of $\Lambda$, of parameter $\lambda$, corresponding to the
multiplier $q_{\infty}^{t}$. The mean curvature
$H_{q_{\infty}^{t}}^{\lambda}$ of
$\Lambda^{\lambda}_{q_{\infty}^{t}}$ in
$S_{v_{q_{\infty}^{t}}^{\lambda}}$ relates to the mean curvature of
$\Lambda$ in $S_{v_{\infty}}$ by
$$H_{q_{\infty}^{t}}^{\lambda}=\mid\mathrm{Re}\,(\lambda H_{\infty}+\frac{it}{2}(\lambda-\lambda^{-1}))\mid.$$
\end{prop}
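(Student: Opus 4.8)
The plan is to exploit the explicit conserved quantity $p_{\infty}^{t}(\lambda)$ furnished by Proposition \ref{pinftyallt} together with the fact, recorded in \eqref{eq:cscpreserved byCWdeform}, that the constrained Willmore spectral deformation preserves the central sphere congruence. By construction $v_{q_{\infty}^{t}}^{\lambda}=\phi^{\lambda}_{q_{\infty}^{t}}(p_{\infty}^{t}(\lambda))$, so I would first read off the $S\oplus S^{\perp}$-decomposition of $p_{\infty}^{t}(\lambda)$. Since $v_{\infty}^{T}\in\Gamma(S)$ and $N\in\Gamma(S^{\perp})$, and since $\lambda\in S^{1}$, the coefficient
$$w:=\tfrac{1}{2}\lambda^{-1}(H_{\infty}-it)+\tfrac{1}{2}\lambda(H_{\infty}+it)=(\mathrm{Re}\,\lambda)H_{\infty}+\tfrac{it}{2}(\lambda-\lambda^{-1})$$
of $N$ is real, the second summand being equal to $-t\,\mathrm{Im}\,\lambda$; thus $p_{\infty}^{t}(\lambda)=v_{\infty}^{T}+wN$ is the orthogonal decomposition with respect to $S\oplus S^{\perp}$.

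Next I would transport this decomposition by the isometry $\phi^{\lambda}_{q_{\infty}^{t}}$. Because $\phi^{\lambda}_{q_{\infty}^{t}}$ preserves the metric it carries $S^{\perp}$ onto $(\phi^{\lambda}_{q_{\infty}^{t}}S)^{\perp}$, and by \eqref{eq:cscpreserved byCWdeform} the central sphere congruence of the deformed surface $\Lambda^{\lambda}_{q_{\infty}^{t}}=\phi^{\lambda}_{q_{\infty}^{t}}\Lambda$ is precisely $\phi^{\lambda}_{q_{\infty}^{t}}S$. Hence $\phi^{\lambda}_{q_{\infty}^{t}}(v_{\infty}^{T})$ lies in the central sphere congruence of $\Lambda^{\lambda}_{q_{\infty}^{t}}$ while $\phi^{\lambda}_{q_{\infty}^{t}}(wN)$ lies in its orthogonal complement. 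Writing $\perp$ now for the projection onto $(S_{\Lambda^{\lambda}_{q_{\infty}^{t}}})^{\perp}$, this yields $(v_{q_{\infty}^{t}}^{\lambda})^{\perp}=w\,\phi^{\lambda}_{q_{\infty}^{t}}(N)$.

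Then I would evaluate the mean curvature directly from its definition, $H_{q_{\infty}^{t}}^{\lambda}=((v_{q_{\infty}^{t}}^{\lambda})^{\perp},(v_{q_{\infty}^{t}}^{\lambda})^{\perp})^{1/2}$. Since $\phi^{\lambda}_{q_{\infty}^{t}}$ is isometric and $N$ is a real unit section of the positive-definite line bundle $S^{\perp}$, i.e. $(N,N)=1$, I obtain
$$((v_{q_{\infty}^{t}}^{\lambda})^{\perp},(v_{q_{\infty}^{t}}^{\lambda})^{\perp})=w^{2}(\phi^{\lambda}_{q_{\infty}^{t}}N,\phi^{\lambda}_{q_{\infty}^{t}}N)=w^{2}(N,N)=w^{2},$$
so that $H_{q_{\infty}^{t}}^{\lambda}=|w|$. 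Recognising $w=\mathrm{Re}(\lambda H_{\infty}+\tfrac{it}{2}(\lambda-\lambda^{-1}))$—each summand being unchanged by taking real parts, the first because $H_{\infty}\in\R$ and the second because it is already real—delivers the asserted formula.

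The only genuinely delicate point is the reality bookkeeping: one must confirm that for $\lambda\in S^{1}$ the coefficient $w$ is real, so that $|w|$ is meaningful and $wN$ is a bona fide real normal section of $\Lambda^{\lambda}_{q_{\infty}^{t}}$, and that it agrees with $\mathrm{Re}(\lambda H_{\infty}+\tfrac{it}{2}(\lambda-\lambda^{-1}))$; everything else follows from the isometric invariance of the metric under $\phi^{\lambda}_{q_{\infty}^{t}}$ and the preservation of the central sphere congruence. I would also note in passing that $(v_{q_{\infty}^{t}}^{\lambda})^{T}=\phi^{\lambda}_{q_{\infty}^{t}}(v_{\infty}^{T})\neq 0$, which guarantees that $S_{v_{q_{\infty}^{t}}^{\lambda}}$ is genuinely a space-form containing the deformed surface, is immediate from \eqref{eq:vinfTnonzero} and the invertibility of $\phi^{\lambda}_{q_{\infty}^{t}}$.
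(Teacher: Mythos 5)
Your proposal is correct and follows essentially the same route as the paper's own proof: both compute $H_{q_{\infty}^{t}}^{\lambda}$ as the norm of the projection of $v_{q_{\infty}^{t}}^{\lambda}$ onto the normal bundle to $S_{\phi^{\lambda}_{q_{\infty}^{t}}\Lambda}=\phi^{\lambda}_{q_{\infty}^{t}}S$, use the isometry $\phi^{\lambda}_{q_{\infty}^{t}}$ together with $(N,N)=1$ to reduce this to the modulus of the (real, since $\lambda\in S^{1}$) coefficient $(\mathrm{Re}\,\lambda)H_{\infty}+\frac{it}{2}(\lambda-\lambda^{-1})$, and then rewrite it as the stated real part. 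Your additional bookkeeping (the explicit $S\oplus S^{\perp}$ decomposition of $p_{\infty}^{t}(\lambda)$ and the remark on \eqref{eq:vinfTnonzero}) is consistent with, and slightly more detailed than, the paper's argument.
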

\begin{proof}
The mean curvature of $\phi^{\lambda}_{q_{\infty}^{t}}\Lambda$ in
$S_{v_{q_{\infty}^{t}}^{\lambda}}$ is given by
$H_{q_{\infty}^{t}}^{\lambda}=(\pi_{\lambda,t}^{\perp}\,v_{q_{\infty}^{t}}^{\lambda},\pi_{\lambda,t}^{\perp}\,v_{q_{\infty}^{t}}^{\lambda})^{\frac{1}{2}}$,
for $\pi_{\lambda,t}^{\perp}$ the orthogonal projection of
$\underline{\R}^{4,1}$ onto the normal bundle to
$S_{\phi^{\lambda}_{q_{\infty}^{t}}\Lambda}=\phi^{\lambda}_{q_{\infty}^{t}}S$,
and, therefore,
\begin{eqnarray*}
H_{q_{\infty}^{t}}^{\lambda}&=&(((\mathrm{Re}\lambda)H_{\infty}+\frac{it}{2}(\lambda-\lambda^{-1}))N,((\mathrm{Re}\lambda)H_{\infty}+\frac{it}{2}(\lambda-\lambda^{-1}))N)^{\frac{1}{2}}\\&=&
\mid(\mathrm{Re}\lambda)H_{\infty}+\frac{it}{2}(\lambda-\lambda^{-1})\mid\\&=&\mid\mathrm{Re}\,(\lambda
H_{\infty}+\frac{it}{2}(\lambda-\lambda^{-1}))\mid,
\end{eqnarray*}
having in consideration that, as $\Lambda$ is unit,
$\frac{it}{2}(\lambda-\lambda^{-1})$ is real.
\end{proof}

It is interesting to remark that, for $t=0$, the deformations
corresponding to the parameters $i$ and $-i$ are minimal surfaces
and, therefore, Willmore surfaces, even when $\Lambda$ is not a
Willmore surface.
\newline

\textbf{The isothermic spectral deformation.} Cf. \cite{SD}, the
isothermic spectral deformation preserves the class of CMC surfaces
in $3$-dimensional space-forms. Next we establish it in our setting
and investigate how the space-form and the mean curvature change
with this deformation.

First of all, note that, if $H_{\infty}\neq 0$, then $\eta_{\infty}=
H_{\infty}^{-1}q_{\infty}$, and, therefore, the fact that
$q_{\infty}$ is a multiplier to $\Lambda$ ensures that
$$d^{\mathcal{D}}*\mathcal{N}\neq 2[\eta_{\infty}\wedge *\mathcal{N}],$$
$\eta_{\infty}$ is not a multiplier to $\Lambda$. If, on the other
hand, $\Lambda$ is minimal in $S_{v_{\infty}}$, then the set of
multipliers to $\Lambda$ is the vector space $\langle
*\eta_{\infty}\rangle_{\R}\neq 0$, in which case we conclude, yet
again, that $\eta_{\infty}$ is not a multiplier to $\Lambda$. There
is, therefore, no risk of ambiguity on the notation
$d_{\eta_{\infty}}^{t}$.

Fix $t\in\R$ and
$\phi^{t}_{\eta_{\infty}}:(\underline{\R}^{4,1},d_{\eta_{\infty}}^{t})\rightarrow
(\underline{\R}^{4,1},d)$ an isomorphism. Set
$$v_{\eta_{\infty}}^{t}:=\phi^{t}_{\eta_{\infty}}(v_{\infty}+\frac{t}{2}\,N).$$
\begin{Lemma}
$v_{\eta_{\infty}}^{t}$ is a non-zero constant section of
$\underline{\R}^{4,1}$.
\end{Lemma}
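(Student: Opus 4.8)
The plan is to exploit the fact that $\phi^{t}_{\eta_{\infty}}$ preserves connections, exactly as in the proof just given for $v_{q_{\infty}^{t}}^{\lambda}$. Since $\phi^{t}_{\eta_{\infty}}\circ d^{t}_{\eta_{\infty}}=d\circ\phi^{t}_{\eta_{\infty}}$, the section $v_{\eta_{\infty}}^{t}=\phi^{t}_{\eta_{\infty}}(v_{\infty}+\frac{t}{2}\,N)$ is $d$-parallel, hence constant, if and only if $v_{\infty}+\frac{t}{2}\,N$ is $d^{t}_{\eta_{\infty}}$-parallel. Thus the computational core is to verify the identity $d^{t}_{\eta_{\infty}}(v_{\infty}+\frac{t}{2}\,N)=0$, after which constancy follows immediately, and reality is automatic since $\phi^{t}_{\eta_{\infty}}$, $v_{\infty}$, $N$ and $t$ are all real.

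To establish that identity I would first expand $d^{t}_{\eta_{\infty}}=d+t\,\eta_{\infty}$, obtaining
$$d^{t}_{\eta_{\infty}}(v_{\infty}+\tfrac{t}{2}\,N)=dv_{\infty}+\tfrac{t}{2}\,dN+t\,\eta_{\infty}(v_{\infty})+\tfrac{t^{2}}{2}\,\eta_{\infty}(N),$$
and drop $dv_{\infty}$ by constancy of $v_{\infty}$. The next step is to evaluate the two endomorphism terms using $\eta_{\infty}=\frac{1}{2}\,\sigma_{\infty}\wedge dN$ and the wedge action \eqref{eq:isomwedge}, which gives $(\sigma_{\infty}\wedge d_{X}N)(w)=(w,\sigma_{\infty})\,d_{X}N-(w,d_{X}N)\,\sigma_{\infty}$. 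For $w=v_{\infty}$ I would invoke $(\sigma_{\infty},v_{\infty})=-1$ together with $(dN,v_{\infty})=0$ (equation \eqref{eq:dNvinfperp0}) to get $\eta_{\infty}(v_{\infty})=-\frac{1}{2}\,dN$; for $w=N$ the orthogonality $(N,\sigma_{\infty})=0$, valid since $N\in\Gamma(S^{\perp})$ and $\sigma_{\infty}\in\Gamma(S)$, and $(N,dN)=\frac{1}{2}\,d(N,N)=0$ by constancy of $(N,N)$ give $\eta_{\infty}(N)=0$. Substituting, the surviving terms are $\frac{t}{2}\,dN+t\bigl(-\frac{1}{2}\,dN\bigr)=0$, as required.

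Finally, for non-triviality, since $\phi^{t}_{\eta_{\infty}}$ is an isomorphism it suffices to check $v_{\infty}+\frac{t}{2}\,N\neq 0$. Writing $v_{\infty}=v_{\infty}^{T}+v_{\infty}^{\perp}=v_{\infty}^{T}+H_{\infty}N$, the section equals $v_{\infty}^{T}+(H_{\infty}+\frac{t}{2})N$, whose $S$-component $v_{\infty}^{T}$ is non-zero by \eqref{eq:vinfTnonzero}; as $S$ and $S^{\perp}$ are complementary, it cannot vanish. I do not expect a genuine obstacle here: the argument is a short structural reduction followed by a cancellation, and the only point requiring care is the correct bookkeeping of the wedge-endomorphism action \eqref{eq:isomwedge} and of the orthogonality relations used to compute $\eta_{\infty}(v_{\infty})$ and $\eta_{\infty}(N)$.
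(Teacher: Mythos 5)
Your proof is correct and follows essentially the same route as the paper's: push the computation through the connection-preserving isomorphism, expand $d^{t}_{\eta_{\infty}}=d+t\,\eta_{\infty}$, kill the wedge terms using $(\sigma_{\infty},v_{\infty})=-1$, $(N,\sigma_{\infty})=0$, $(N,dN)=0$ and the constancy of $H_{\infty}$ (equation \eqref{eq:dNvinfperp0}), and get non-vanishing from $v_{\infty}^{T}\neq 0$. The only cosmetic difference is that you apply \eqref{eq:dNvinfperp0} inside the evaluation of $\eta_{\infty}(v_{\infty})$, whereas the paper carries the term $\frac{t}{2}\,(dN,v_{\infty})\,\sigma_{\infty}$ to the end and then cancels it by the same identity.
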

\begin{proof}
The orthogonal projection of
$(\phi^{t}_{\eta_{\infty}})^{-1}v_{\eta_{\infty}}^{t}$ onto $S$ is
$v_{\infty}^{T}$, which is non-zero, cf. \eqref{eq:vinfTnonzero}.
Thus $v_{\eta_{\infty}}^{t}$ is non-zero. On the other hand, the
constancy of $v_{\infty}$, together with the fact that
$\eta_{\infty}$ vanishes on $S^{\perp}$, gives
\begin{eqnarray*}
dv_{\eta_{\infty}}^{t}&=&\phi^{t}_{\eta_{\infty}}(d^{t}_{\eta_{\infty}}v_{\eta_{\infty}}^{t})\\&=&
\phi^{t}_{\eta_{\infty}}(dv_{\infty}+\frac{t}{2}\,dN+t\eta_{\infty}v_{\infty}+\frac{t^{2}}{2}\,\eta_{\infty}N)\\&=&
\phi^{t}_{\eta_{\infty}}(\frac{t}{2}\,(dN,v_{\infty})\sigma_{\infty}).
\end{eqnarray*}
The constancy of $H_{\infty}$, and, in particular,
\eqref{eq:dNvinfperp0}, establishes then the constancy of
$v_{\eta_{\infty}}^{t}$, completing the proof.
\end{proof}

The isothermic spectral deformation provides, in particular, a
deformation of CMC surfaces in $3$-space. In fact:
\begin{thm}\label{isospCMC}
The isothermic $(t,\eta_{\infty})$-transformation of $\Lambda$ is a
CMC surface in $S_{v_{\eta_{\infty}}^{t}}$.
\end{thm}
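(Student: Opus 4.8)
The plan is to reduce the claim to a short computation, since most of the work has already been done. The Lemma immediately preceding the theorem establishes that $v^t_{\eta_{\infty}} = \phi^t_{\eta_{\infty}}(v_{\infty}+\tfrac{t}{2}\,N)$ is a non-zero constant section of $\underline{\R}^{4,1}$, so it genuinely determines a space-form $S_{v^t_{\eta_{\infty}}}$. By the definition of CMC surface, what remains is to verify that $(\,(v^t_{\eta_{\infty}})^{\perp_t},(v^t_{\eta_{\infty}})^{\perp_t}\,)$ is constant, where $\perp_t$ denotes orthogonal projection onto the bundle normal to the central sphere congruence of the transformed surface $\Lambda^t_{\eta_{\infty}}=\phi^t_{\eta_{\infty}}\Lambda$. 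First I would record that this squared norm, being constant, is exactly the CMC condition, and that its square root will then be the mean curvature of $\Lambda^t_{\eta_{\infty}}$ in $S_{v^t_{\eta_{\infty}}}$.

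The two key tools are the preservation of the central sphere congruence by the isothermic spectral deformation and the isometry property of $\phi^t_{\eta_{\infty}}$. By \eqref{eq:cscpreservedbyisospecdeform}, the central sphere congruence of $\Lambda^t_{\eta_{\infty}}$ is $\phi^t_{\eta_{\infty}}S$; since $\phi^t_{\eta_{\infty}}$ preserves the metric, the normal bundle to $\phi^t_{\eta_{\infty}}S$ is $\phi^t_{\eta_{\infty}}S^{\perp}$, and the orthogonal projections intertwine, $\pi_{\phi^t_{\eta_{\infty}}S^{\perp}}\circ\phi^t_{\eta_{\infty}}=\phi^t_{\eta_{\infty}}\circ\pi_{S^{\perp}}$. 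This lets me transport the whole question back to the original surface $\Lambda$.

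The computation then runs as follows. Since $N\in\Gamma(S^{\perp})$ and, by \eqref{eq:vinfperpHinfN}, $\pi_{S^{\perp}}(v_{\infty})=v_{\infty}^{\perp}=H_{\infty}N$, I get $\pi_{S^{\perp}}(v_{\infty}+\tfrac{t}{2}\,N)=(H_{\infty}+\tfrac{t}{2})\,N$. Applying the intertwining relation, the projection of $v^t_{\eta_{\infty}}$ onto the normal bundle of $\phi^t_{\eta_{\infty}}S$ is $(H_{\infty}+\tfrac{t}{2})\,\phi^t_{\eta_{\infty}}N$, and by the isometry property together with $(N,N)=1$ its squared norm equals $(H_{\infty}+\tfrac{t}{2})^{2}$. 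As $H_{\infty}$ and $t$ are constants, this is constant, so $\Lambda^t_{\eta_{\infty}}$ has constant mean curvature in $S_{v^t_{\eta_{\infty}}}$, indeed equal to $\mid H_{\infty}+\tfrac{t}{2}\mid$. I do not expect a genuine obstacle here: the result is forced once the central sphere congruence is known to be preserved and $\phi^t_{\eta_{\infty}}$ is an isometry. The only point requiring care is the bookkeeping of the two orthogonal projections (onto $S^{\perp}$ before, onto $\phi^t_{\eta_{\infty}}S^{\perp}$ after), ensuring they are correctly intertwined by $\phi^t_{\eta_{\infty}}$; this is precisely where the isometry hypothesis enters and where a naive argument could slip.
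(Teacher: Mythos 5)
Your proof is correct and follows essentially the same route as the paper: both rest on the preservation of the central sphere congruence \eqref{eq:cscpreservedbyisospecdeform}, the isometry property of $\phi^{t}_{\eta_{\infty}}$, and the computation $\bigl(v_{\infty}^{\perp}+\tfrac{t}{2}\,N,\,v_{\infty}^{\perp}+\tfrac{t}{2}\,N\bigr)=\bigl(H_{\infty}+\tfrac{t}{2}\bigr)^{2}$. The paper merely writes the same argument more compactly, absorbing the intertwining of projections into the definition of $\pi_{t}^{\perp}$.
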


\begin{proof}
The mean curvature $H^{t}_{\eta_{\infty}}$ of
$\phi^{t}_{\eta_{\infty}}\Lambda$ in the space-form
$S_{v_{\eta_{\infty}}^{t}}$ is given by
$H^{t}_{\eta_{\infty}}=(\pi_{t}^{\perp}v_{\eta_{\infty}}^{t},\pi_{t}^{\perp}v_{\eta_{\infty}}^{t})^{\frac{1}{2}}$,
for $\pi_{t}^{\perp}$ the orthogonal projection of
$\underline{\R}^{4,1}$ onto the normal bundle to
$S_{\phi^{t}_{\eta_{\infty}}\Lambda}=\phi^{t}_{\eta_{\infty}}S$,
and, therefore,
$$(H^{t}_{\eta_{\infty}})^{2}=(v_{\infty}^{\perp}+\frac{t}{2}\,N,v_{\infty}^{\perp}+\frac{t}{2}\,N)=
H_{\infty}^{2}+t(v_{\infty}^{\perp},N)+\frac{t^{2}}{4}=(H_{\infty}+\frac{t}{2})^{2}.$$
\end{proof}
In the proof of Theorem \ref{isospCMC}, we have verified, in
particular, that:
\begin{prop}
Let $\Lambda^{t}_{\eta_{\infty}}$ be the isothermic
$(t,\eta_{\infty})$-transformation of $\Lambda$. The mean curvature
$H_{\eta_{\infty}}^{t}$ of $\Lambda^{t}_{\eta_{\infty}}$ in
$S_{v_{\eta_{\infty}}^{t}}$ relates to the mean curvature of
$\Lambda$ in $S_{v_{\infty}}$ by
$$(H^{t}_{\eta_{\infty}})^{2}=(H_{\infty}+\frac{t}{2})^{2}.$$
\end{prop}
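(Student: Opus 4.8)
The plan is to read the claimed identity directly off the computation already carried out in the proof of Theorem \ref{isospCMC}; indeed the statement is, by design, the middle equality of that argument, so the only work is to justify why $(H^{t}_{\eta_{\infty}})^{2}$ equals the squared normal length of $v^{t}_{\eta_{\infty}}$ and then to evaluate that length. First I would recall, from the definition of the mean curvature of a surface in a space-form given at the start of Section \ref{sec:CMC}, that the mean curvature of $\Lambda^{t}_{\eta_{\infty}}=\phi^{t}_{\eta_{\infty}}\Lambda$ in $S_{v^{t}_{\eta_{\infty}}}$ is $H^{t}_{\eta_{\infty}}=(\pi_{t}^{\perp}v^{t}_{\eta_{\infty}},\pi_{t}^{\perp}v^{t}_{\eta_{\infty}})^{\frac{1}{2}}$, where $\pi_{t}^{\perp}$ is the orthogonal projection onto the bundle normal to the central sphere congruence $S_{\phi^{t}_{\eta_{\infty}}\Lambda}$ of the deformed surface. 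For this expression even to make sense I rely on the Lemma immediately preceding Theorem \ref{isospCMC}, which guarantees that $v^{t}_{\eta_{\infty}}$ is a nonzero constant section of $\underline{\R}^{4,1}$ and hence genuinely defines the space-form $S_{v^{t}_{\eta_{\infty}}}$.

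The key structural input is that the isothermic spectral deformation preserves the central sphere congruence: by \eqref{eq:cscpreservedbyisospecdeform} one has $S_{\phi^{t}_{\eta_{\infty}}\Lambda}=\phi^{t}_{\eta_{\infty}}S$. Since $\phi^{t}_{\eta_{\infty}}$ is an isometry, its normal bundle is $\phi^{t}_{\eta_{\infty}}S^{\perp}$ and the projection transports as $\pi_{t}^{\perp}=\phi^{t}_{\eta_{\infty}}\circ\pi_{S^{\perp}}\circ(\phi^{t}_{\eta_{\infty}})^{-1}$. Applying this to $v^{t}_{\eta_{\infty}}=\phi^{t}_{\eta_{\infty}}(v_{\infty}+\frac{t}{2}N)$ and using $N\in\Gamma(S^{\perp})$, I get $\pi_{t}^{\perp}v^{t}_{\eta_{\infty}}=\phi^{t}_{\eta_{\infty}}(v_{\infty}^{\perp}+\frac{t}{2}N)$, so that by isometry $(H^{t}_{\eta_{\infty}})^{2}=(v_{\infty}^{\perp}+\frac{t}{2}N,\,v_{\infty}^{\perp}+\frac{t}{2}N)$.

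Finally I would expand this inner product, invoking $v_{\infty}^{\perp}=H_{\infty}N$ from \eqref{eq:vinfperpHinfN} together with $(N,N)=1$ (recall $N$ is the real unit section of $S^{\perp}$), to obtain $(H^{t}_{\eta_{\infty}})^{2}=H_{\infty}^{2}+t\,H_{\infty}+\frac{t^{2}}{4}=(H_{\infty}+\frac{t}{2})^{2}$. There is essentially no obstacle: the whole content is the transport of the normal projection through the isometry $\phi^{t}_{\eta_{\infty}}$, which is legitimate precisely because the deformation fixes the central sphere congruence, and the final expansion is a one-line computation. Since both the preservation of the central sphere congruence and the constancy of $v^{t}_{\eta_{\infty}}$ are already established above, the proposition follows at once, exactly as extracted in the proof of Theorem \ref{isospCMC}.
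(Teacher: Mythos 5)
Your proposal is correct and is essentially identical to the paper's own argument: the Proposition is extracted from the proof of Theorem \ref{isospCMC}, which computes $(H^{t}_{\eta_{\infty}})^{2}=(v_{\infty}^{\perp}+\frac{t}{2}N,v_{\infty}^{\perp}+\frac{t}{2}N)=(H_{\infty}+\frac{t}{2})^{2}$ using $S_{\phi^{t}_{\eta_{\infty}}\Lambda}=\phi^{t}_{\eta_{\infty}}S$ exactly as you do. Your only addition is to spell out explicitly the transport $\pi_{t}^{\perp}=\phi^{t}_{\eta_{\infty}}\circ\pi_{S^{\perp}}\circ(\phi^{t}_{\eta_{\infty}})^{-1}$, which the paper leaves implicit in its ``and, therefore'' step.
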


\begin{rem}
Let $k^{t}_{\infty}$ denote the curvature of $S_{v_{\infty}^{t}}$.
According to \eqref{eq:gdeisotharsnf}, the family
$\phi^{t}_{\eta_{\infty}}\sigma_{\infty}$, on $t\in\R$, constitutes
an isothermic deformation of $\sigma_{\infty}$ with
$$k^{t}_{\eta_{\infty}}+(H^{t}_{\eta_{\infty}})^{2}=-(v_{\eta_{\infty}}^{t},v_{\eta_{\infty}}^{t})+(v_{\infty}^{\perp}+\frac{t}{2}N,v_{\infty}^{\perp}+\frac{t}{2}N)=-(v_{\infty}^{T},v_{\infty}^{T}),$$
independent of $t$.
\end{rem}
$\newline$

\textbf{The classical CMC spectral deformation.} The isometric
deformation of surfaces in $\R^{3}$ preserving the mean curvature,
or, equivalently,\footnote{Cf. Gauss's \emph{theorema egregium},
isometric deformation of surfaces in Euclidean $3$-space preserves
the Gaussian curvature, which, in view of \eqref{eq:mcineucld3} and
\eqref{GaussiancurvinEucli3}, makes clear that the mean curvature is
preserved by such a deformation if and only if so are both principal
curvatures.} both principal curvatures, was first studied by O.
Bonnet. Bonnet \cite{bonnet} (see also, for example, \cite{bobenko}
and \cite{chern}) proved that a CMC surface in Euclidean $3$-space
admits a (non-trivial) one-parameter family of isometrical
deformations preserving both principal curvatures. In \cite{SD}, F.
Burstall et al. present an action of $\C\backslash\{0\}$ on the
class of constant mean curvature surfaces in $3$-space, in terms of
the Hopf differential and the Schwarzian derivative. The particular
action of $S^{1}$ preserves the metric, the space-form and the mean
curvature - this is the classical CMC spectral deformation. In this
section, we present the classical CMC spectral deformation by means
of the action of a loop of flat metric connections on the class of
CMC surfaces in $3$-space.\newline

For each $\lambda\in S^{1}$, set
$$d^{\lambda}_{\infty}:=\mathcal{D}+\lambda\mathcal{N}^{1,0}+\lambda^{-1}\mathcal{N}^{0,1}+2(\lambda-1)q_{\infty}^{1,0}+2(\lambda^{-1}-1)q_{\infty}^{0,1},$$
defining a real connection on $(\underline{\R}^{4,1})^{\C}$. As
$\mathcal{D}$ is metric, and both $\mathcal{N}$ and $q_{\infty}$ are
skew-symmetric, $d^{\lambda}_{\infty}$ is metric too, for each
$\lambda$.
\begin{thm}
$d^{\lambda}_{\infty}$ is a flat connection, for each $\lambda\in
S^{1}$.
\end{thm}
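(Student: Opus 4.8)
The plan is to recognise $d^\lambda_\infty$ as precisely the flat connection obtained by composing the constrained Willmore spectral deformation with the isothermic spectral deformation, so that its flatness requires no direct curvature computation. The two ingredients are already in place: by Theorem~\ref{qinfCQ} the surface $\Lambda$ is a $q_\infty$-constrained Willmore surface, and by Theorem~\ref{thm905} the pair $(\Lambda,\eta_\infty)$ is isothermic, where $q_\infty=H_\infty\eta_\infty$. I would begin by reparametrising: set $\mu:=\lambda^{-1}$, so that $\mu\in S^1$ as well, and rewrite the claimed connection as
$$d^\lambda_\infty=\mathcal{D}+\mu^{-1}\mathcal{N}^{1,0}+\mu\,\mathcal{N}^{0,1}+2(\mu^{-1}-1)q_\infty^{1,0}+2(\mu-1)q_\infty^{0,1}.$$

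Next I would build the composite connection. Since $\Lambda$ is $q_\infty$-constrained Willmore, Theorem~\ref{originallybyfran+calderbankCWviaflatness} (equivalently Theorem~\ref{thm5.2} together with Remark~\ref{CoS1}) gives that $d^\mu_{q_\infty}$ is flat, and I may fix a real isomorphism $\phi^\mu_{q_\infty}\colon(\underline{\R}^{4,1},d^\mu_{q_\infty})\to(\underline{\R}^{4,1},d)$. By the theorem of Section~\ref{isoCWtransforms} asserting that the constrained Willmore spectral deformation preserves the isothermic condition, the deformed surface $\phi^\mu_{q_\infty}\Lambda$ is isothermic with form $\mathrm{Ad}_{\phi^\mu_{q_\infty}}(\eta_\infty)_\mu$, where $(\eta_\infty)_\mu:=\mu^{-1}\eta_\infty^{1,0}+\mu\,\eta_\infty^{0,1}$; hence the isothermic spectral connection $d+t\,\mathrm{Ad}_{\phi^\mu_{q_\infty}}(\eta_\infty)_\mu$ is flat for every $t\in\R$. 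Conjugating back through $\phi^\mu_{q_\infty}$ — using $(\phi^\mu_{q_\infty})^{-1}\circ d\circ\phi^\mu_{q_\infty}=d^\mu_{q_\infty}$ and $(\phi^\mu_{q_\infty})^{-1}\,\mathrm{Ad}_{\phi^\mu_{q_\infty}}(\eta_\infty)_\mu\,\phi^\mu_{q_\infty}=(\eta_\infty)_\mu$ — shows that
$$d^\mu_{q_\infty}+t\,(\eta_\infty)_\mu$$
is flat for all $\mu\in S^1$ and all $t\in\R$, this being exactly the connection underlying the composition of the two deformations.

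It then remains to match parameters. Substituting $q_\infty=H_\infty\eta_\infty$ and collecting the $\eta_\infty^{1,0}$- and $\eta_\infty^{0,1}$-coefficients, I would impose $d^\mu_{q_\infty}+t\,(\eta_\infty)_\mu=d^\lambda_\infty$; the $(1,0)$-component forces $t\mu^{-1}=2(\mu^{-1}-1)H_\infty-(\mu^{-2}-1)H_\infty=-H_\infty(\mu^{-1}-1)^2$, i.e.
$$t=-H_\infty(\mu+\mu^{-1}-2),$$
and a direct check shows this same value reconciles the $(0,1)$-coefficients, since $(\mu^2-1)H_\infty+t\mu=2(\mu-1)H_\infty$. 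Crucially, for $\mu\in S^1$ one has $\mu+\mu^{-1}=2\,\mathrm{Re}\,\mu\in\R$ and $H_\infty\in\R$, so $t$ is real and the isothermic deformation above is legitimate; moreover $(\eta_\infty)_\mu$ and $d^\mu_{q_\infty}$ are themselves real for $\mu\in S^1$ (cf. Remark~\ref{s1familyofconnections}), consistent with $d^\lambda_\infty$ being real. With this choice $d^\lambda_\infty$ coincides with a flat connection, completing the proof.

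Finally, the main obstacle to anticipate is the parameter bookkeeping rather than any analytic difficulty: one must correctly identify the reparametrisation $\mu=\lambda^{-1}$, carry the transformation rule $\eta\mapsto\mathrm{Ad}_\phi(\eta)_\lambda$ of the isothermic form under constrained Willmore spectral deformation, and verify that the single scalar $t=-H_\infty(\mu+\mu^{-1}-2)$ simultaneously matches both the $(1,0)$- and $(0,1)$-coefficients while remaining real for $\lambda\in S^1$. Should the composition route be deemed too indirect, the fallback is the direct computation of $R^{d^\lambda_\infty}=R^{\mathcal{D}}+d^{\mathcal{D}}A_\lambda+\tfrac12[A_\lambda\wedge A_\lambda]$ for $A_\lambda:=d^\lambda_\infty-\mathcal{D}$, organised by powers of $\lambda$; this reduces, via the Gauss--Ricci and Codazzi equations and the CMC relations $\mathcal{N}v_\infty^T=0$ and $\mathcal{D}v_\infty^\perp=0$ exploited in Theorem~\ref{qinfCQ}, to the vanishing already encoded in $\Lambda$ being $q_\infty$-constrained Willmore and $(\Lambda,\eta_\infty)$ isothermic — but the composition argument is considerably cleaner and matches the narrative that the classical CMC spectral deformation is the composite of the isothermic and constrained Willmore spectral deformations.
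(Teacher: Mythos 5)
Your proof is correct, and it takes a genuinely different route from the paper's. The paper proves flatness by direct computation: it expands $R^{d^{\lambda}_{\infty}}$, organizes the terms by powers of $\lambda$, invokes the Codazzi and Gauss--Ricci equations, then uses the conclusions of Theorems \ref{qinfCQ} and \ref{thm905} (namely $d^{\mathcal{D}}q_{\infty}^{1,0}=0=d^{\mathcal{D}}q_{\infty}^{0,1}$, $[q_{\infty}^{1,0}\wedge q_{\infty}^{0,1}]=0$ and $[\mathcal{N}\wedge\eta_{\infty}]=0$) to reduce everything to the single residual identity $d^{\mathcal{D}}\mathcal{N}^{1,0}=2[\mathcal{N}^{1,0}\wedge q_{\infty}^{0,1}]$, which is then extracted from the constrained Willmore equation. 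You instead prove the gauge identity
$$d^{\lambda}_{\infty}=d^{\lambda^{-1}}_{q_{\infty}}+2H_{\infty}(1-\mathrm{Re}\,\lambda)\,(\eta_{\infty})_{\lambda^{-1}}$$
and transport it through the spectral isomorphism $\phi^{\mu}_{q_{\infty}}$, so that flatness follows from two facts already on record: the flatness of $d^{\mu}_{q_{\infty}}$ (Theorem \ref{qinfCQ} plus the flatness characterization of constrained Willmore surfaces) and the flatness of the isothermic family attached to the deformed surface, via the theorem of Section \ref{isoCWtransforms} that constrained Willmore spectral deformation preserves the isothermic condition. Your identity is exactly equation \eqref{eq:eqwiththe3connections}, which the paper only derives \emph{after} this theorem, in the comparison of the three spectral deformations, in order to exhibit the classical CMC deformation as a composition of the isothermic and constrained Willmore ones; you are in effect front-loading that structural fact and reading the flatness off from it. Since Theorems \ref{thm905} and \ref{qinfCQ}, the flatness characterization of multipliers, the isothermic flatness theorem and the preservation theorem of Section \ref{isoCWtransforms} are all established before this point in the paper, there is no circularity. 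What each approach buys: the paper's computation is self-contained and makes visible exactly which structure equations (and which consequence of the constrained Willmore equation) are consumed; yours avoids all curvature computation, reuses existing machinery, and delivers the composition statement of the later subsection as a by-product rather than a separate verification. Your parameter bookkeeping ($\mu=\lambda^{-1}$, $t=-H_{\infty}(\mu+\mu^{-1}-2)=2H_{\infty}(1-\mathrm{Re}\,\lambda)$, reality of $t$ and of $(\eta_{\infty})_{\mu}$ for unit $\mu$) checks out on both the $(1,0)$ and $(0,1)$ coefficients.
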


\begin{proof}
The curvature tensor of $d^{\lambda}_{\infty}$ is given by
\begin{eqnarray*}
R^{d^{\lambda}_{\infty}}&=&(\lambda-\lambda^{-1})d^{\mathcal{D}}\mathcal{N}^{1,0}+2(\lambda-1)d^{\mathcal{D}}q^{1,0}_{\infty}+2(\lambda^{-1}-1)d^{\mathcal{D}}q^{0,1}_{\infty}\\
&& \mbox{}+(2-2\lambda)\,[\mathcal{N}^{1,0}\wedge q_{\infty}^{0,1}]
+(2-2\lambda^{-1})\,[\mathcal{N}^{0,1}\wedge
q_{\infty}^{1,0}]-(\lambda^{-1}+\lambda-2)[q^{1,0}_{\infty}\wedge
q_{\infty}^{0,1}],
\end{eqnarray*}
according to Codazzi and Gauss-Ricci equations. The fact that
$\Lambda$ is a CMC surface in $S_{v_{\infty}}$, and, therefore, a
$q_{\infty}$-constrained Willmore surface, establishes, in
particular,
$d^{\mathcal{D}}q_{\infty}^{1,0}=0=d^{\mathcal{D}}q_{\infty}^{0,1}$
and, as observed in Section \ref{CCWS}, $[q_{\infty}^{1,0}\wedge
q_{\infty}^{0,1}]=0$. The fact that $\Lambda$ is a CMC surface in
$S_{v_{\infty}}$ establishes, on the other hand,
$(\Lambda,\eta_{\infty})$ as isothermic, and, therefore,
$[\mathcal{N}\wedge\eta_{\infty}]=0$, cf. Lemma
\ref{etadetcvansisheszesesseparately}. Hence
$R^{d^{\lambda}_{\infty}}=0$ if and only if either $\lambda=\pm 1$
or $d^{\mathcal{D}}\mathcal{N}^{1,0}=2[\mathcal{N}^{1,0}\wedge
q_{\infty}^{0,1}]$. The fact that $\Lambda$ is a
$q_{\infty}$-constrained Willmore surface gives, on the other hand,
$d^{\mathcal{D}}(-i\mathcal{N}^{1,0}+i\mathcal{N}^{0,1})=-2i[q_{\infty}\wedge\mathcal{N}^{1,0}]+2i[q_{\infty}\wedge\mathcal{N}^{0,1}]$,
which completes the proof.
\end{proof}

Fix $\lambda\in S^{1}$ and
$\phi^{\lambda}_{\infty}:(\underline{\R}^{4,1},d^{\lambda}_{\infty})\rightarrow
(\underline{\R}^{4,1},d)$ an isomorphism. Set
$$v_{\infty}^{\lambda}:=\phi^{\lambda}_{\infty}v_{\infty}.$$
\begin{Lemma}
$v_{\infty}^{\lambda}$ is a non-zero constant section of
$\underline{\R}^{4,1}$.
\end{Lemma}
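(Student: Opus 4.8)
The plan is to show both halves of the statement separately, the nonvanishing being immediate and the constancy reducing to a single connection computation. Since $\phi^{\lambda}_{\infty}$ is an isomorphism of bundles and $v_{\infty}$ is nonzero, $v_{\infty}^{\lambda}=\phi^{\lambda}_{\infty}v_{\infty}$ is automatically nonzero. For the constancy, I would use that $\phi^{\lambda}_{\infty}:(\underline{\R}^{4,1},d^{\lambda}_{\infty})\rightarrow(\underline{\R}^{4,1},d)$ preserves connections, so that
$$dv_{\infty}^{\lambda}=d(\phi^{\lambda}_{\infty}v_{\infty})=\phi^{\lambda}_{\infty}(d^{\lambda}_{\infty}v_{\infty}).$$
Thus the whole matter is reduced to verifying that $v_{\infty}$ is $d^{\lambda}_{\infty}$-parallel, i.e. $d^{\lambda}_{\infty}v_{\infty}=0$, exactly as in the two preceding lemmas for $v_{q_{\infty}^{t}}^{\lambda}$ and $v_{\eta_{\infty}}^{t}$.

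To carry out $d^{\lambda}_{\infty}v_{\infty}=0$, I would decompose $v_{\infty}=v_{\infty}^{T}+v_{\infty}^{\perp}$ and feed in the structural facts already available for a CMC surface in codimension $1$. The relevant ingredients are: $\mathcal{D}v_{\infty}^{\perp}=0$, cf. \eqref{eq:conseqof65}; the consequent vanishing $\mathcal{N}v_{\infty}^{T}=-\mathcal{D}v_{\infty}^{\perp}=0$, cf. \eqref{eq:hagrt}, which, by type considerations, forces $\mathcal{N}^{1,0}v_{\infty}^{T}=0=\mathcal{N}^{0,1}v_{\infty}^{T}$ separately; the fact that $q_{\infty}$ takes values in $\Lambda\wedge\Lambda^{(1)}$, so that $q_{\infty}\,v_{\infty}^{\perp}=0$; and the identity $q_{\infty}^{1,0}v_{\infty}^{T}=-\tfrac{1}{2}\,\mathcal{N}^{1,0}v_{\infty}^{\perp}$ of \eqref{eq:qNemCMC}, together with its complex conjugate $q_{\infty}^{0,1}v_{\infty}^{T}=-\tfrac{1}{2}\,\mathcal{N}^{0,1}v_{\infty}^{\perp}$, valid by the reality of $q_{\infty}$, $\mathcal{N}$, $v_{\infty}^{T}$ and $v_{\infty}^{\perp}$.

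Substituting these into the definition of $d^{\lambda}_{\infty}$ and collecting terms, the contributions of the $q_{\infty}$-terms combine with those of $\mathcal{N}$ so that the coefficients $2(\lambda-1)$ and $2(\lambda^{-1}-1)$ precisely pare the coefficients $\lambda$ and $\lambda^{-1}$ of $\mathcal{N}^{1,0}v_{\infty}^{\perp}$ and $\mathcal{N}^{0,1}v_{\infty}^{\perp}$ down to $1$, leaving
$$d^{\lambda}_{\infty}v_{\infty}=\mathcal{D}v_{\infty}^{T}+\mathcal{N}^{1,0}v_{\infty}^{\perp}+\mathcal{N}^{0,1}v_{\infty}^{\perp}=\mathcal{D}v_{\infty}^{T}+\mathcal{N}v_{\infty}^{\perp},$$
which vanishes by \eqref{eq:NperpDT}. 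Then $dv_{\infty}^{\lambda}=\phi^{\lambda}_{\infty}(d^{\lambda}_{\infty}v_{\infty})=0$, completing the proof. The only point requiring care—and the closest thing to an obstacle—is the bookkeeping of the $\lambda$-coefficients in this cancellation, which is why the connection $d^{\lambda}_{\infty}$ was defined with the specific weights $2(\lambda-1)$ and $2(\lambda^{-1}-1)$ on the $q_{\infty}$-terms rather than the weights appearing in $d^{\lambda}_{q}$; everything else is a direct substitution of identities established in Section \ref{CMCisoCW}.
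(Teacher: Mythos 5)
Your proposal is correct and follows essentially the same route as the paper's own proof: reduce constancy to $d^{\lambda}_{\infty}v_{\infty}=0$ via the connection-preserving isomorphism, then cancel the $q_{\infty}$-terms against the $\mathcal{N}$-terms using \eqref{eq:conseqof65}, \eqref{eq:hagrt}, \eqref{eq:qNemCMC} and finally \eqref{eq:NperpDT}. The only cosmetic difference is that you get non-vanishing directly from $v_{\infty}\neq 0$ and the invertibility of $\phi^{\lambda}_{\infty}$, whereas the paper cites \eqref{eq:vinfTnonzero}; both are valid.
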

\begin{proof}
According to equations \eqref{eq:hagrt} and \eqref{eq:conseqof65},
together with the fact that $q_{\infty}$ vanishes on $S^{\perp}$,
$$d^{\lambda}_{\infty}v_{\infty}=\mathcal{D}v_{\infty}^{T}+\lambda\mathcal{N}^{1,0}v_{\infty}^{\perp}+\lambda^{-1}\mathcal{N}^{0,1}v_{\infty}^{\perp}+2(\lambda-1)q^{1,0}_{\infty}v_{\infty}^{T}+2(\lambda^{-1}-1)q^{0,1}_{\infty}v_{\infty}^{T},$$
and, consequently, by \eqref{eq:qNemCMC}, followed by
\eqref{eq:NperpDT},
$d^{\lambda}_{\infty}v_{\infty}=\mathcal{D}v_{\infty}^{T}+\mathcal{N}v_{\infty}^{\perp}=0$.
We conclude that $v_{\infty}^{\lambda}$ is constant:
$dv_{\infty}^{\lambda}=d(\phi^{\lambda}_{\infty}v_{\infty})=\phi^{\lambda}_{\infty}(d^{\lambda}_{\infty}v_{\infty})=0$.
Inequation \eqref{eq:vinfTnonzero} establishes
$v_{\infty}^{\lambda}$ as non-zero.
\end{proof}

As $\mathcal{N}\Lambda=0=q_{\infty}\Lambda$, given
$\sigma\in\Gamma(\Lambda)$,
\begin{equation}\label{eq:dchapeusigmadsigma}
d^{\lambda}_{\infty}\sigma=d\sigma,
\end{equation}
showing that $\Lambda$ is still a $d^{\lambda}_{\infty}$-surface,
or, equivalently, that the transformation
$\Lambda^{\lambda}_{\infty}$ of $\Lambda$, defined by the flat
metric connection $d^{\lambda}_{\infty}$, is still a surface.
Furthermore:
\begin{thm}\label{CMCclassicdef}
The transformation $\Lambda^{\lambda}_{\infty}$ of $\Lambda$ defined
by the flat metric connection $d^{\lambda}_{\infty}$ is a CMC
surface in
$$S_{v_{\infty}^{\lambda}}=\phi^{\lambda}_{\infty}S_{v_{\infty}}.$$
\end{thm}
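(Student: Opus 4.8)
The plan is to derive the constant-mean-curvature property of $\Lambda^{\lambda}_{\infty}$ from two ingredients: that the deformation $\phi^{\lambda}_{\infty}$ preserves the central sphere congruence, and that $\phi^{\lambda}_{\infty}$ is an isometry carrying the constant vector $v_{\infty}$ to the constant vector $v^{\lambda}_{\infty}$. First I would assemble the preliminaries already in place. Equation \eqref{eq:dchapeusigmadsigma} gives $d^{\lambda}_{\infty}\sigma = d\sigma$ for every never-zero $\sigma\in\Gamma(\Lambda)$, so $\Lambda$ is a $d^{\lambda}_{\infty}$-surface and $\Lambda^{\lambda}_{\infty}=\phi^{\lambda}_{\infty}\Lambda$ is a genuine surface; moreover $g^{d^{\lambda}_{\infty}}_{\sigma}=g_{\sigma}$, so the conformal structure is preserved and a direct $g_{\sigma}$-orthonormal frame $(e_{i})_{i}$ of $TM$ is simultaneously $g^{d^{\lambda}_{\infty}}_{\sigma}$-orthonormal. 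The Lemma immediately preceding the theorem provides that $v^{\lambda}_{\infty}=\phi^{\lambda}_{\infty}v_{\infty}$ is a non-zero constant section, hence defines a space-form $S_{v^{\lambda}_{\infty}}$; and since $\phi^{\lambda}_{\infty}$ is an isometry, $(\phi^{\lambda}_{\infty}\sigma,v^{\lambda}_{\infty})=(\sigma,v_{\infty})$ is locally non-zero, so $\Lambda^{\lambda}_{\infty}$ does define a surface in $S_{v^{\lambda}_{\infty}}$.

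The main step is to show that the $d^{\lambda}_{\infty}$-central sphere congruence of $\Lambda$ coincides with $S$, $S^{d^{\lambda}_{\infty}}=S$; by \eqref{eq:csc} this is equivalent to $S_{\Lambda^{\lambda}_{\infty}}=\phi^{\lambda}_{\infty}S$. Writing $d^{\lambda}_{\infty}=d+(\lambda-1)\mathcal{N}^{1,0}+(\lambda^{-1}-1)\mathcal{N}^{0,1}+2(\lambda-1)q_{\infty}^{1,0}+2(\lambda^{-1}-1)q_{\infty}^{0,1}$ and using $\Lambda^{(1)}_{d^{\lambda}_{\infty}}=\Lambda^{(1)}$, I would compute, since $(d^{\lambda}_{\infty})_{e_{i}}\sigma=d_{e_{i}}\sigma$,
$$\sum_{i}(d^{\lambda}_{\infty})_{e_{i}}(d^{\lambda}_{\infty})_{e_{i}}\sigma=\sum_{i}d_{e_{i}}d_{e_{i}}\sigma+(\lambda-1)\sum_{i}\mathcal{N}^{1,0}_{e_{i}}d_{e_{i}}\sigma+(\lambda^{-1}-1)\sum_{i}\mathcal{N}^{0,1}_{e_{i}}d_{e_{i}}\sigma+2(\lambda-1)\sum_{i}(q_{\infty}^{1,0})_{e_{i}}d_{e_{i}}\sigma+2(\lambda^{-1}-1)\sum_{i}(q_{\infty}^{0,1})_{e_{i}}d_{e_{i}}\sigma.$$
The two $\mathcal{N}$-contributions vanish by the identity $\sum_{i}\mathcal{N}^{1,0}_{e_{i}}d_{e_{i}}\sigma=0=\sum_{i}\mathcal{N}^{0,1}_{e_{i}}d_{e_{i}}\sigma$ established in the course of Section \ref{subsec:willmfam}, while each $q_{\infty}$-contribution takes values in $\Lambda\subset\Lambda^{(1)}$, because $q_{\infty}\in\Omega^{1}(\Lambda\wedge\Lambda^{(1)})$ transforms sections of $\Lambda^{(1)}$ into sections of $\Lambda$ and $d_{e_{i}}\sigma\in\Gamma(\Lambda^{(1)})$. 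Hence $\sum_{i}(d^{\lambda}_{\infty})_{e_{i}}(d^{\lambda}_{\infty})_{e_{i}}\sigma\equiv\sum_{i}d_{e_{i}}d_{e_{i}}\sigma\ (\mathrm{mod}\,\Lambda^{(1)})$, so $S^{d^{\lambda}_{\infty}}=\Lambda^{(1)}\oplus\langle\sum_{i}d_{e_{i}}d_{e_{i}}\sigma\rangle=S$ and therefore $S_{\Lambda^{\lambda}_{\infty}}=\phi^{\lambda}_{\infty}S$.

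Finally I would conclude the CMC property. With $S_{\Lambda^{\lambda}_{\infty}}=\phi^{\lambda}_{\infty}S$, the bundle normal to the central sphere congruence of $\Lambda^{\lambda}_{\infty}$ is $\phi^{\lambda}_{\infty}S^{\perp}$; since $\phi^{\lambda}_{\infty}$ is an isometry respecting the splitting $\underline{\R}^{4,1}=S\oplus S^{\perp}$, the orthogonal projection of $v^{\lambda}_{\infty}=\phi^{\lambda}_{\infty}v_{\infty}$ onto $\phi^{\lambda}_{\infty}S^{\perp}$ is $\phi^{\lambda}_{\infty}(v_{\infty}^{\perp})$, whence $((v^{\lambda}_{\infty})^{\perp},(v^{\lambda}_{\infty})^{\perp})=(v_{\infty}^{\perp},v_{\infty}^{\perp})=H_{\infty}^{2}$ is constant. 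By the definition of CMC surface, $\Lambda^{\lambda}_{\infty}$ then has constant mean curvature (in fact equal to $H_{\infty}$) in $S_{v^{\lambda}_{\infty}}=\phi^{\lambda}_{\infty}S_{v_{\infty}}$, as asserted. The genuinely load-bearing step is the preservation of the central sphere congruence: this is where the precise combination of $\mathcal{N}$- and $q_{\infty}$-terms defining $d^{\lambda}_{\infty}$ is used, although, as in the Willmore and isothermic cases, the verification reduces to the vanishing identities recalled above and the elementary fact that $q_{\infty}$ maps $\Lambda^{(1)}$ into $\Lambda$.
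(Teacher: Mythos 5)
Your proof is correct and follows essentially the same route as the paper: first establish that the deformation preserves the central sphere congruence, $S_{\Lambda^{\lambda}_{\infty}}=\phi^{\lambda}_{\infty}S$, and then use that $\phi^{\lambda}_{\infty}$ is an isometry carrying $v_{\infty}$ to the constant vector $v_{\infty}^{\lambda}$, so that the projection of $v_{\infty}^{\lambda}$ onto $(\phi^{\lambda}_{\infty}S)^{\perp}$ is $\phi^{\lambda}_{\infty}(v_{\infty}^{\perp})$ and has constant squared norm $H_{\infty}^{2}$. The only difference is cosmetic: you verify $S^{d^{\lambda}_{\infty}}=S$ by summing over a direct orthonormal frame and quoting $\sum_{i}\mathcal{N}^{1,0}_{e_{i}}d_{e_{i}}\sigma=0=\sum_{i}\mathcal{N}^{0,1}_{e_{i}}d_{e_{i}}\sigma$ together with $q_{\infty}\,\Gamma(\Lambda^{(1)})\subset\Gamma(\Lambda)$, whereas the paper computes $(d^{\lambda}_{\infty})_{\delta_{\bar{z}}}(d^{\lambda}_{\infty})_{\delta_{z}}\sigma=\sigma_{z\bar{z}}$ in a holomorphic chart, using $\pi_{S^{\perp}}\sigma_{z\bar{z}}=0$ and $q_{\infty}^{0,1}\Lambda^{1,0}=0$.
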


Before proceeding to the proof of the theorem, observe that
$\Lambda$ and $\Lambda^{\lambda}_{\infty}$ share the central sphere
congruence. For that, first note that, according to equation
\eqref{eq:dchapeusigmadsigma}, given $\sigma\in\Gamma(\Lambda)$
never-zero,
\begin{equation}\label{eq:glambdainftyISg}
g_{\phi^{\lambda}_{\infty}\sigma}=g_{\sigma}^{d^{\lambda}_{\infty}}=g_{\sigma},
\end{equation}
establishing
$$\mathcal{C}_{\Lambda^{\lambda}_{\infty}}=\mathcal{C}_{\Lambda}.$$
Yet again, in view of equation \eqref{eq:dchapeusigmadsigma}, it
follows that, given a holomorphic chart $z$ of
$(M,\mathcal{C}_{\Lambda}^{d^{\lambda}_{\infty}})=(M,\mathcal{C}_{\Lambda})$,
$(d^{\lambda}_{\infty})_{\delta_{\bar{z}}}(d^{\lambda}_{\infty})_{\delta_{z}}\sigma=(d^{\lambda}_{\infty})_{\delta_{\bar{z}}}\sigma_{z}=\sigma_{z\bar{z}}+\lambda^{-1}\pi_{S^{\perp}}\sigma_{z\bar{z}}+2(\lambda^{-1}-1)q_{\infty}^{0,1}\sigma_{z}$
and, therefore,
$(d^{\lambda}_{\infty})_{\delta_{\bar{z}}}(d^{\lambda}_{\infty})_{\delta_{z}}\sigma=\sigma_{z\bar{z}}$,
as $q_{\infty}^{0,1}\in\Omega^{1}(\Lambda\wedge\Lambda^{1,0})$. We
conclude that $S^{d^{\lambda}_{\infty}}=S$ and, ultimately,
according to \eqref{eq:csc}, that
\begin{equation}\label{eq:CMCdefpreservescsc}
S_{\phi^{\lambda}_{\infty}\Lambda}=\phi^{\lambda}_{\infty}S.
\end{equation}

Now we proceed to the proof of Theorem \ref{CMCclassicdef}.
\begin{proof}
According to \eqref{eq:CMCdefpreservescsc}, the mean curvature
$H^{\lambda}_{\infty}$ of $\phi^{\lambda}_{\infty}\Lambda$ in
$S_{v_{\infty}^{\lambda}}$ is given by
$H^{\lambda}_{\infty}=(\pi_{\lambda}^{\perp}v_{\infty}^{\lambda},\pi_{\lambda}^{\perp}v_{\infty}^{\lambda})^{\frac{1}{2}}$,
for $\pi_{\lambda}^{\perp}$ the orthogonal projection of
$\underline{\R}^{4,1}$ onto $\phi^{\lambda}_{\infty}S^{\perp}$, and,
therefore,
$H^{\lambda}_{\infty}=(v_{\infty}^{\perp},v_{\infty}^{\perp})^{\frac{1}{2}}=H_{\infty}$.
\end{proof}
In the proof of Theorem \ref{CMCclassicdef}, we have, in particular,
verified that:
\begin{prop}\label{mcpreservedbyclassicalCMCdeform}
The mean curvature $H_{\infty}^{\lambda}$ of
$\Lambda^{\lambda}_{q_{\infty}}$ in $S_{v_{\infty}^{\lambda}}$
coincides with the mean curvature of $\Lambda$ in $S_{v_{\infty}}$,
$$H_{\infty}^{\lambda}=H_{\infty}.$$
\end{prop}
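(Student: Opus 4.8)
The plan is to obtain the identity $H_{\infty}^{\lambda}=H_{\infty}$ directly as a byproduct of the computation already carried out in the proof of Theorem \ref{CMCclassicdef}, relying on the preservation of the central sphere congruence under the classical CMC spectral deformation together with the fact that $\phi^{\lambda}_{\infty}$ is an isometry.

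First I would recall the definition of the mean curvature of a surface in a space-form: for $\Lambda^{\lambda}_{\infty}=\phi^{\lambda}_{\infty}\Lambda$ regarded as a surface in the space-form $S_{v_{\infty}^{\lambda}}$, with $v_{\infty}^{\lambda}=\phi^{\lambda}_{\infty}v_{\infty}$, the mean curvature is $H_{\infty}^{\lambda}=(\pi_{\lambda}^{\perp}v_{\infty}^{\lambda},\pi_{\lambda}^{\perp}v_{\infty}^{\lambda})^{\frac{1}{2}}$, where $\pi_{\lambda}^{\perp}$ denotes the orthogonal projection of $\underline{\R}^{4,1}$ onto the bundle normal to the central sphere congruence $S_{\Lambda^{\lambda}_{\infty}}$ of the deformed surface.

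The key step is equation \eqref{eq:CMCdefpreservescsc}, $S_{\phi^{\lambda}_{\infty}\Lambda}=\phi^{\lambda}_{\infty}S$, which identifies the normal bundle of the deformed central sphere congruence as $\phi^{\lambda}_{\infty}S^{\perp}$. Since $\phi^{\lambda}_{\infty}$ is an isometry carrying $S^{\perp}$ isometrically onto $\phi^{\lambda}_{\infty}S^{\perp}$, it intertwines the orthogonal projections, $\pi_{\lambda}^{\perp}\circ\phi^{\lambda}_{\infty}=\phi^{\lambda}_{\infty}\circ\pi_{S^{\perp}}$, so that $\pi_{\lambda}^{\perp}v_{\infty}^{\lambda}=\phi^{\lambda}_{\infty}(\pi_{S^{\perp}}v_{\infty})=\phi^{\lambda}_{\infty}v_{\infty}^{\perp}$. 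Applying once more the isometry property of $\phi^{\lambda}_{\infty}$ then yields $(\pi_{\lambda}^{\perp}v_{\infty}^{\lambda},\pi_{\lambda}^{\perp}v_{\infty}^{\lambda})=(v_{\infty}^{\perp},v_{\infty}^{\perp})$, and hence $H_{\infty}^{\lambda}=(v_{\infty}^{\perp},v_{\infty}^{\perp})^{\frac{1}{2}}=H_{\infty}$.

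There is essentially no obstacle at this final stage: the genuine work lies upstream, in establishing the preservation of the central sphere congruence \eqref{eq:CMCdefpreservescsc}, which in turn rests on the crucial identity $d^{\lambda}_{\infty}\sigma=d\sigma$ from \eqref{eq:dchapeusigmadsigma} together with the fact that $q_{\infty}^{0,1}\in\Omega^{1}(\Lambda\wedge\Lambda^{1,0})$, and on the constancy of $v_{\infty}^{\lambda}$ established in the preceding lemma. Once those facts are in hand, the proposition is immediate from the isometric invariance of the ambient metric under $\phi^{\lambda}_{\infty}$, and indeed it is precisely the content extracted in the course of proving Theorem \ref{CMCclassicdef}.
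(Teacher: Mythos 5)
Your proposal is correct and follows essentially the same route as the paper: the paper likewise computes $H_{\infty}^{\lambda}=(\pi_{\lambda}^{\perp}v_{\infty}^{\lambda},\pi_{\lambda}^{\perp}v_{\infty}^{\lambda})^{\frac{1}{2}}$ with $\pi_{\lambda}^{\perp}$ the projection onto $\phi^{\lambda}_{\infty}S^{\perp}$, invoking \eqref{eq:CMCdefpreservescsc} and the isometry property of $\phi^{\lambda}_{\infty}$ to reduce this to $(v_{\infty}^{\perp},v_{\infty}^{\perp})^{\frac{1}{2}}=H_{\infty}$, exactly as you do. Your observation that the proposition is extracted from the proof of Theorem \ref{CMCclassicdef} matches the paper's own presentation.
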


The loop of flat metric connections $d^{\lambda}_{\infty}$ defines a
conformal $S^{1}$-deformation of $\Lambda$ into CMC surfaces in a
fixed space-form, preserving the mean curvature.
\begin{rem}
The family $\phi^{\lambda}_{\infty}\sigma_{\infty}$, with
$\lambda\in S^{1}$, is a family of isometrical deformations of
$\sigma_{\infty}$ in a fixed space-form, preserving the mean
curvature.
\end{rem}

We complete this section by verifying that the deformation defined
by the loop of flat metric connections $d^{\lambda}_{\infty}$ is the
classical CMC spectral deformation, described in \cite{SD} in terms
of the Hopf differential and the Schwarzian derivative. Fix a
holomorphic chart $z$ of
$(M,\mathcal{C}_{\Lambda^{\lambda}_{\infty}})=(M,\mathcal{C}_{\Lambda})$.
According to \eqref{eq:glambdainftyISg},
$g_{\phi_{\infty}^{\lambda}\sigma^{z}}=g_{z}$, showing that
$\phi_{\infty}^{\lambda}\sigma^{z}$ is the normalized section of
$\phi_{\infty}^{\lambda}\Lambda$ with respect to $z$. For
simplicity, write  $q_{\infty}^{z}$ for $(q_{\infty})^{z}$. In view
of \eqref{eq:dchapeusigmadsigma},
\begin{eqnarray*}
(\phi_{\infty}^{\lambda}\sigma^{z})_{zz}&=&(\phi_{\infty}^{\lambda}\sigma_{z}^{z})_{z}\\&=&
\phi_{\infty}^{\lambda}((d^{\lambda}_{\infty})_{\delta_{z}}\sigma_{z}^{z})\\&=&
\phi_{\infty}^{\lambda}(\mathcal{D}_{\delta_{z}}\sigma_{z}^{z}+\lambda\mathcal{N}_{\delta_{z}}\sigma_{z}^{z}+2(\lambda-1)(q_{\infty})_{\delta_{z}}\sigma_{z}^{z})
\\&=&\phi_{\infty}^{\lambda}(\pi_{S}\sigma_{zz}^{z}+\lambda\pi_{S^{\perp}}\sigma_{zz}^{z}-(\lambda-1)\,q^{z}_{\infty}\sigma^{z})
\end{eqnarray*}
and, ultimately,
$$(\phi_{\infty}^{\lambda}\sigma^{z})_{zz}=-\frac{1}{2}\,(c^{z}+2(\lambda-1)\,q^{z}_{\infty})\,\phi_{\infty}^{\lambda}\sigma^{z}+\lambda\phi_{\infty}^{\lambda}k^{z}.$$
We conclude that $(k^{\lambda}_{\infty})^{z}$ and
$(c^{\lambda}_{\infty})^{z}$, the Hopf differential and the
Schwarzian derivative, respectively, of
$\phi_{\infty}^{\lambda}\Lambda$ with respect to $z$, relate to
those of $\Lambda$ by
$$(k^{\lambda}_{\infty})^{z}=\lambda\phi_{\infty}^{\lambda}k^{z},\,\,\,\,\,\,(c^{\lambda}_{\infty})^{z}=c^{z}+2(\lambda-1)\,q^{z}_{\infty}.$$
By Lemma \ref{thmonHopfeSchwarz}, the conclusion follows. $\newline$

\textbf{Isothermic vs. constrained Willmore vs. classical CMC
spectral deformations.} How are the constrained Willmore, isothermic
and classical CMC spectral deformations of a CMC surface in
$3$-space related? In this section, we compare the families of flat
metric connections that define each of these deformations. We
observe, in particular, that the classical CMC spectral deformation
can be obtained as composition of isothermic and constrained
Willmore spectral deformation and that, in the particular case of
minimal surfaces, the classical CMC spectral deformation coincides,
up to reparametrization, with the constrained Willmore spectral
deformation corresponding to the zero multiplier.\newline

We start by introducing some terminology. Note that, according to
\eqref{eq:komega}, given $z$ and $\omega$ holomorphic charts of
$(M,\mathcal{C}_{\Lambda})$, $k^{z}$ vanishes if and only if
$k^{\omega}$ does. We refer to the points where the Hopf
differential of $\Lambda$ vanishes as \emph{umbilic points} of
$\Lambda$, in coherence with the classical notion of umbilic point
of a surface in Euclidean $3$-space, cf. Proposition
\ref{umbilicvshopfprop} below, in Appendix \ref{appHopf+umbilics}
(having in consideration that $\Lambda$ is isothermic).

Fix $z$ a holomorphic chart of $M$ and let $\sigma^{z}$ be the
normalized section of $\Lambda$ with respect to $z$. Given
$t,t'\in\R$ and $\lambda\in S^{1}$,
$d^{\lambda}_{q_{\infty}^{t}}=d^{t'}_{\eta_{\infty}}$ forces, in
particular,
$d^{\lambda}_{q_{\infty}^{t}}\sigma^{z}_{z}=d^{t'}_{\eta_{\infty}}\sigma^{z}_{z}$,
or, equivalently,
$$\mathcal{D}_{\delta_{z}}\sigma_{z}^{z}+\lambda^{-1}\mathcal{N}_{\delta_{z}}\sigma_{z}^{z}+(\lambda^{-2}-1)(q_{\infty}^{t})_{\delta_{z}}\sigma_{z}^{z}=
\mathcal{D}_{\delta_{z}}\sigma_{z}^{z}+\mathcal{N}_{\delta_{z}}\sigma_{z}^{z}+t'(\eta_{\infty})_{\delta_{z}}\sigma_{z}^{z},$$
or, yet again, equivalently,
$$(\lambda^{-1}-1)k^{z}=0,\,\,\,\,(\lambda^{-2}-1)(q_{\infty}^{t})_{\delta_{z}}\sigma_{z}^{z}=t'(\eta_{\infty})_{\delta_{z}}\sigma_{z}^{z},$$
in view of the fact that
$\mathrm{Im}\,q_{\infty}^{t},\mathrm{Im}\,\eta_{\infty}\subset S$.
By \eqref{eq:eta10LambdaLambda01} and according to Remark
\ref{e10determinedby},
$(\eta_{\infty})_{\delta_{z}}\sigma_{z}^{z}\neq 0$. We conclude
that, away from umbilics,
$d^{\lambda}_{q_{\infty}^{t}}=d^{t'}_{\eta_{\infty}}$ holds if and
only if $\lambda=1$ and $t'=0$, in which case,
$$d^{1}_{q_{\infty}^{t}}=d=d^{0}_{\eta_{\infty}},$$for all $t\in\R$.
Similarly, we conclude that, away from umbilics, given
$\lambda,\lambda'\in S^{1}$ and $t,t'\in\R$,
$d^{\lambda}_{q_{\infty}^{t}}=d^{\lambda'}_{q_{\infty}^{t'}}$ if and
only if $\lambda=\lambda'$ and either $\lambda=\pm 1$ or $t=t'$.
Lastly, if $d^{\lambda}_{\infty}=d^{\lambda'}_{q_{\infty}^{t}}$, for
some $\lambda,\lambda'\in S^{1}$ and $t\in\R$, then
$$2(\lambda-1)(q_{\infty})_{\delta_{z}}\sigma_{z}^{z}=((\lambda')^{-2}-1)(H_{\infty}-it)(\eta_{\infty})_{\delta_{z}}\sigma_{z}^{z}$$
and either $\lambda'=\lambda^{-1}$ or $k^{z}=0$; and, therefore,
away from umbilics, $\lambda'=\lambda^{-1}$ and either $\lambda=1$
or
\begin{equation}\label{eq:33q2wesdfghjkl8waq777777sw4}
(2H_{\infty}-(\lambda+1)(H_{\infty}-it))(\eta_{\infty})_{\delta_{z}}\sigma_{z}^{z}=0.
\end{equation}
In its turn, unless $\lambda=-1$, equation
\eqref{eq:33q2wesdfghjkl8waq777777sw4} forces $t$ to be
$$t_{\lambda}:=iH_{\infty}\frac{1-\lambda}{1+\lambda}\in\R.$$
Conversely, one verifies that, for all $\lambda\neq\pm 1$,
$$d^{\lambda}_{\infty}=d^{\lambda^{-1}}_{q_{\infty}^{t_{\lambda}}},$$
which, in fact, extends to all $\lambda\neq -1$:
$d^{1}_{\infty}=d^{1}_{q_{\infty}^{0}}$. One verifies that, given
$t\in\R$, we have $d^{-1}_{\infty}=d^{-1}_{q_{\infty}^{t}}$ if and
only if $H_{\infty}=0$. Note, in particular, that, if
$H_{\infty}=0$, then $d_{\infty}^{\lambda}=d^{\lambda^{-1}}_{0}$,
for all $\lambda\in S^{1}$.

For each $t\in\R$, we have a multiplier $q_{\infty}^{t}$ and,
therefore, a loop of $q_{\infty}^{t}$-constrained Willmore spectral
deformation parameters $\lambda\in S^{1}$. The set of constrained
Willmore spectral deformation parameters is described, in this way,
as the cylinder of points $(t,\lambda)$ with $t\in\R$ and
$\lambda\in S^{1}$. A transformation corresponding to a parameter in
the line $(t,1)$, with $t\in\R$, is trivial, and so is the
transformation corresponding to the parameter $0$ in the real line
of isothermic spectral deformation parameters. The transformations
corresponding to parameters in the line $(t,-1)$ do not depend on
$t\in\R$. For minimal surfaces, the classical CMC spectral
deformation coincides, up to reparametrization, with the constrained
Willmore spectral deformation corresponding to the zero multiplier.
Furthermore, for a general surface, the classical CMC spectral
deformation of parameter other than $-1$ can be obtained as
constrained Willmore spectral deformation of parameters in the curve
$(t_{\lambda},\lambda^{-1})$, with $\lambda\in S^{1}$. As for the
classical CMC deformation of parameter $-1$ of non-minimal surfaces,
it is not clear that it can be obtained by constrained Willmore
spectral deformation alone. However, such transformations can be
obtained as composition of isothermic and constrained Willmore
spectral deformation, as we verify next. Given $\lambda\in S^{1}$,
\begin{eqnarray*}
d_{\infty}^{\lambda^{-1}}&=&
d^{\lambda}_{q_{\infty}}+(-\lambda^{-2}+2\lambda^{-1}-1)\,q^{1,0}_{\infty}+(-\lambda^{2}+2\lambda-1)\,q^{0,1}_{\infty}\\&=&
d^{\lambda}_{q_{\infty}}+H_{\infty}(2-\lambda-\lambda^{-1})(\lambda^{-1}\eta_{\infty}^{1,0}+\lambda\,\eta_{\infty}^{0,1})
\end{eqnarray*}
and, ultimately,
\begin{equation}\label{eq:eqwiththe3connections}
d_{\infty}^{\lambda^{-1}}=d^{\lambda}_{q_{\infty}}+2H_{\infty}(1-\mathrm{Re}\,\lambda)\,\eta_{\infty}^{\lambda},
\end{equation}
for $\eta_{\infty}^{\lambda}:=(\eta_{\infty})_{\lambda}$ as in
Section \ref{isoCWtransforms}. Fix $\lambda\in S^{1}$ and
$\phi^{\lambda}_{q_{\infty}}:(\underline{\R}^{4,1},d^{\lambda}_{q_{\infty}})\rightarrow
(\underline{\R}^{4,1},d)$ an isomorphism. Cf. Section
\ref{isoCWtransforms},
$(\phi^{\lambda}_{q_{\infty}}\Lambda,\hat{\eta}_{\infty}^{\lambda})$
is isothermic, for
$$\hat{\eta}_{\infty}^{\lambda}:=\mathrm{Ad}_{\phi^{\lambda}_{q_{\infty}}}\eta_{\infty}^{\lambda}.$$
Set
$$r_{\lambda}:=2H_{\infty}(1-\mathrm{Re}\,\lambda)$$
and fix an isomorphism
$\phi^{r}_{\lambda}:(\underline{\R}^{4,1},d+r_{\lambda}\hat{\eta}_{\infty}^{\lambda})\rightarrow
(\underline{\R}^{4,1},d)$. Following equation
\eqref{eq:eqwiththe3connections}, we get that
$$\phi^{r}_{\lambda}\phi^{\lambda}_{q_{\infty}}\circ d^{\lambda^{-1}}_{\infty}=
\phi^{r}_{\lambda}\circ(d\circ
\phi^{\lambda}_{q_{\infty}}+r_{\lambda}\hat{\eta}_{\infty}^{\lambda}\phi^{\lambda}_{q_{\infty}})=
d\circ\phi^{r}_{\lambda}\phi^{\lambda}_{q_{\infty}},$$ the isometry
$$\phi^{r}_{\lambda}\phi^{\lambda}_{q_{\infty}}:(\underline{\R}^{4,1},d^{\lambda^{-1}}_{\infty})\rightarrow
(\underline{\R}^{4,1},d)$$ preserves connections. We conclude that
the isothermic
$(r_{\lambda},\hat{\eta}_{\infty}^{\lambda})$-transformation of the
constrained Willmore spectral deformation of parameter $\lambda$ of
$\Lambda$ corresponding to the multiplier $q_{\infty}$ coincides
with the classical CMC spectral deformation of parameter
$\lambda^{-1}$ of $\Lambda$,
$$\Lambda^{{\lambda^{-1}}}_{\infty}=(\Lambda^{\lambda}_{q_{\infty}})^{r_{\lambda}}_{\hat{\eta}_{\infty}^{\lambda}}.$$

\subsection{Constrained Willmore B\"{a}cklund transformation of CMC surfaces in $3$-space}\label{BTCMC}
Characterized as the class of constrained Willmore surfaces in
$3$-dimen\-sio\-nal space-forms admitting a conserved quantity, the
class of CMC surfaces in $3$-space is known to be preserved by
constrained Willmore B\"{a}cklund transformation, for special
choices of parameters. In this section, we verify that both the
space-form and the mean curvature are preserved under this
transformation. We remark on the fact that constrained Willmore
B\"{a}cklund transformation of non-minimal CMC surfaces in $3$-space
preserves conformal curvature line coordinates.\newline

Suppose $\Lambda$ has constant mean curvature $H_{\infty}$ in
$S_{v_{\infty}}$. Then, for each $t\in\R$, $\Lambda$ is a
$q_{\infty}^{t}$-constrained Willmore surface admitting
$p_{\infty}^{t}(\lambda)$ as a $q_{\infty}^{t}$-conserved quantity.
Fix $t\in\R$. Suppose $\alpha_{t},L^{\alpha_{t}}$ are constrained
Willmore B\"{a}cklund transformation parameters to $\Lambda$,
corresponding to the multiplier $q_{\infty}^{t}$, satisfying the
condition
\begin{equation}\label{eq:pinalphaLalph}
p_{\infty}^{t}(\alpha_{t})\perp L^{\alpha_{t}},
\end{equation}
and let $\Lambda^{*_{t}}$ be the constrained Willmore B\"{a}cklund
transform of $\Lambda$ of parameters $\alpha_{t}, L^{\alpha_{t}}$.
Suppose $\Lambda^{*_{t}}$ immerses. Let $r^{*_{t}}$ denote
$r^{\alpha_{t}}_{L^{\alpha_{t}}}$. According to Theorem
\ref{BTsofCMCs}, $\Lambda^{*_{t}}$ is a
$(q_{\infty}^{t})^{*}$-constrained Willmore surface admitting
$$(p_{\infty}^{t})^{*}(\lambda)=r^{*_{t}}(1)^{-1}r^{*_{t}}(\lambda)p_{\infty}^{t}(\lambda)$$
as a $(q_{\infty}^{t})^{*}$-conserved quantity, and, therefore, a
CMC surface in some space-form. Furthermore, according to
Proposition \ref{CW+areCMC}:
\begin{thm}
Suppose $\Lambda$ is a CMC surface in $S_{v_{\infty}}$. Suppose
$\alpha_{t}, L^{\alpha_{t}}$ are constrained Willmore B\"{a}cklund
transformation parameters to $\Lambda$, corresponding to the
multiplier $q_{\infty}^{t}$, verifying condition
\eqref{eq:pinalphaLalph}. Then the constrained Willmore B\"{a}cklund
transform of $\Lambda$ of parameters $\alpha_{t}, L^{\alpha_{t}}$
still is a CMC surface in $S_{v_{\infty}}$, provided that it
immerses.
\end{thm}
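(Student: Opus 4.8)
The plan is to combine the conserved-quantity transformation law of Theorem \ref{BTsofCMCs} with the codimension-one characterization of CMC surfaces as constrained Willmore surfaces carrying a conserved quantity (Proposition \ref{CW+areCMC}), reducing the entire statement to a single evaluation at $\lambda=1$.

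First I would recall, from Proposition \ref{pinftyallt}, that $\Lambda$ is a $q_{\infty}^{t}$-constrained Willmore surface admitting the $q_{\infty}^{t}$-conserved quantity $p_{\infty}^{t}(\lambda)=\lambda^{-1}\frac{1}{2}(H_{\infty}-it)N+v_{\infty}^{T}+\lambda\frac{1}{2}(H_{\infty}+it)N$. By hypothesis, $\alpha_{t},L^{\alpha_{t}}$ are B\"{a}cklund transformation parameters corresponding to $q_{\infty}^{t}$ satisfying $p_{\infty}^{t}(\alpha_{t})\perp L^{\alpha_{t}}$ (condition \eqref{eq:pinalphaLalph}, which is exactly condition \eqref{eq:palhopahaortogLalpha} for $p=p_{\infty}^{t}$). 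Hence Theorem \ref{BTsofCMCs} applies directly: provided $\Lambda^{*_{t}}$ immerses, $(p_{\infty}^{t})^{*}(\lambda):=r^{*_{t}}(1)^{-1}\,r^{*_{t}}(\lambda)\,p_{\infty}^{t}(\lambda)$ is a $(q_{\infty}^{t})^{*}$-conserved quantity of $\Lambda^{*_{t}}$. Moreover, by Theorem \ref{backwillm}, $\Lambda^{*_{t}}$ is a real $(q_{\infty}^{t})^{*}$-constrained Willmore surface, so it is a genuine surface in the projectivized light-cone to which Proposition \ref{CW+areCMC} may be applied; that proposition yields that $\Lambda^{*_{t}}$ is a CMC surface in the space-form $S_{(p_{\infty}^{t})^{*}(1)}$.

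The crux is the identification of this space-form with $S_{v_{\infty}}$, which becomes immediate upon evaluation at $\lambda=1$: the factor $r^{*_{t}}(1)^{-1}r^{*_{t}}(1)$ collapses to the identity, so $(p_{\infty}^{t})^{*}(1)=p_{\infty}^{t}(1)$. A direct computation from the explicit form of $p_{\infty}^{t}$ gives $p_{\infty}^{t}(1)=v_{\infty}^{T}+\frac{1}{2}\bigl((H_{\infty}-it)+(H_{\infty}+it)\bigr)N=v_{\infty}^{T}+H_{\infty}N=v_{\infty}^{T}+v_{\infty}^{\perp}=v_{\infty}$, using \eqref{eq:vinfperpHinfN}. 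Therefore $S_{(p_{\infty}^{t})^{*}(1)}=S_{v_{\infty}}$, and $\Lambda^{*_{t}}$ is a CMC surface in the original space-form, as claimed.

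I expect no serious obstacle in establishing membership of $S_{v_{\infty}}$: the entire content sits in the two cited transformation results, and it is precisely the evaluation $\lambda=1$ that pins down the space-form. If one wishes, in addition, to record that the mean curvature itself is unchanged, the argument continues by comparing norms. Writing $(p_{\infty}^{t})^{*}(\lambda)=\lambda^{-1}v^{*}+v_{0}^{*}+\lambda\overline{v^{*}}$, the $\lambda^{-1}$-coefficient is $v^{*}=r^{*_{t}}(1)^{-1}r^{*_{t}}(0)\,\tfrac{1}{2}(H_{\infty}-it)N\in\Gamma((S^{*_{t}})^{\perp})$, where $S^{*_{t}}$ is the central sphere congruence of $\Lambda^{*_{t}}$. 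Since $r^{*_{t}}(1)^{-1}r^{*_{t}}(0)$ is orthogonal, $(v^{*},v^{*})=\tfrac{1}{4}(H_{\infty}-it)^{2}$; and since $(S^{*_{t}})^{\perp}$ is a real line, writing $v^{*}=c\,N^{*_{t}}$ for a real unit $N^{*_{t}}$ gives $c=\pm\tfrac{1}{2}(H_{\infty}-it)$, so the mean curvature $(v^{*}+\overline{v^{*}},v^{*}+\overline{v^{*}})^{1/2}=\lvert 2\,\mathrm{Re}\,c\rvert=H_{\infty}$. The one delicate point in this supplementary computation is that $r^{*_{t}}(1)^{-1}r^{*_{t}}(0)$ is in general complex, so one must track real parts carefully when passing from $(v^{*},v^{*})$ to $(v^{*}+\overline{v^{*}},v^{*}+\overline{v^{*}})$.
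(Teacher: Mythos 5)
Your proof is correct and follows essentially the paper's own route: Theorem \ref{BTsofCMCs} supplies the conserved quantity $(p_{\infty}^{t})^{*}(\lambda)=r^{*_{t}}(1)^{-1}r^{*_{t}}(\lambda)p_{\infty}^{t}(\lambda)$ of the transform, Proposition \ref{CW+areCMC} then gives the CMC property in the space-form determined by its value at $\lambda=1$, and the evaluation $(p_{\infty}^{t})^{*}(1)=p_{\infty}^{t}(1)=v_{\infty}$ pins down $S_{v_{\infty}}$. Your supplementary computation of the mean curvature is not needed for this statement; it reproves, by a slightly more direct norm argument (using only the orthogonality of $r^{*_{t}}(1)^{-1}r^{*_{t}}(0)$ rather than the explicit identification $N^{*_{t}}=-r^{*_{t}}(1)^{-1}N$ and its reality), what the paper establishes separately in Propositions \ref{pinf*} and \ref{Hsparaqinft}.
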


Next we investigate how the mean curvature changes under this
transformation. Let $S^{*_{t}}$ be the central sphere congruence of
$\Lambda^{*_{t}}$ and $T_{*_{t}}$ and $\perp_{*_{t}}$ indicate the
orthogonal projections of $\underline{\R}^{4,1}$ onto $S^{*_{t}}$
and $(S^{*_{t}})^{\perp}$, respectively.
\begin{prop}\label{pinf*}
Set $N^{*_{t}}:=-r^{*_{t}}(1)^{-1}N$. Then
\begin{equation}\label{eq:vinfstartHinfNstart}
v_{\infty}^{\perp_{*_{t}}}=H_{\infty}N^{*_{t}}
\end{equation}
and
$$(p_{\infty}^{t})^{*}(\lambda)=\lambda^{-1}\frac{1}{2}\,(H_{\infty}-it)N^{*_{t}}+v_{\infty}^{T_{*_{t}}}+\lambda\frac{1}{2}\,
(H_{\infty}+it)N^{*_{t}},$$ for all $\lambda$.
\end{prop}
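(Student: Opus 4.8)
The plan is to compute the transformed conserved quantity explicitly and read off its coefficients. By Theorem~\ref{BTsofCMCs}, the hypothesis \eqref{eq:pinalphaLalph} already guarantees that
$$(p_{\infty}^{t})^{*}(\lambda)=r^{*_{t}}(1)^{-1}\,r^{*_{t}}(\lambda)\,p_{\infty}^{t}(\lambda)$$
is a degree-one $(q_{\infty}^{t})^{*}$-conserved quantity of $\Lambda^{*_{t}}$, so it has the form $\lambda^{-1}w^{*}+v_{0}^{*}+\lambda\overline{w^{*}}$ with $v_{0}^{*}\in\Gamma(S^{*_{t}})$ real and $w^{*}\in\Gamma((S^{*_{t}})^{\perp})$, where $S^{*_{t}}=r^{*_{t}}(1)^{-1}S$. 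First I would recover the coefficients as residues: since $r^{*_{t}}$ is holomorphic and invertible at $0$ and $\infty$,
$$w^{*}=\lim_{\lambda\to0}\lambda\,(p_{\infty}^{t})^{*}(\lambda)=\tfrac12(H_{\infty}-it)\,r^{*_{t}}(1)^{-1}r^{*_{t}}(0)N,$$
and the $\lambda$-coefficient equals $\tfrac12(H_{\infty}+it)\,r^{*_{t}}(1)^{-1}r^{*_{t}}(\infty)N$. Evaluating at $\lambda=1$ gives $(p_{\infty}^{t})^{*}(1)=p_{\infty}^{t}(1)=v_{\infty}$, so projecting $v_{\infty}=v_{0}^{*}+w^{*}+\overline{w^{*}}$ onto $S^{*_{t}}$ and $(S^{*_{t}})^{\perp}$ immediately yields $v_{0}^{*}=v_{\infty}^{T_{*_{t}}}$ and $v_{\infty}^{\perp_{*_{t}}}=w^{*}+\overline{w^{*}}$.

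The computation then reduces to evaluating $r^{*_{t}}(0)$ and $r^{*_{t}}(\infty)$ on $N$. Writing $r^{*_{t}}=p_{\alpha,\tilde{L}^{\alpha}}\,q_{\beta,L^{\beta}}$ (with $\beta=\overline{\alpha}^{-1}$, $L^{\beta}=\overline{L^{\alpha}}$) and using that the type-$q$ factor is the identity at $\infty$ and the type-$p$ factor is the identity at $0$ (cf.\ \eqref{eq:p0qinftyetc}, \eqref{eq:qinfty=1}), I get $r^{*_{t}}(0)=q_{\beta,L^{\beta}}(0)$ and $r^{*_{t}}(\infty)=p_{\alpha,\tilde{L}^{\alpha}}(\infty)$, each being the reflection that acts as $-I$ on $L\oplus\rho L$ and $+I$ on its orthogonal complement, for $L=L^{\beta}$ and $L=\tilde{L}^{\alpha}$ respectively. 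Everything therefore hinges on proving $r^{*_{t}}(0)N=-N$ and $r^{*_{t}}(\infty)N=-N$: granting these, $w^{*}=\tfrac12(H_{\infty}-it)N^{*_{t}}$ and $\overline{w^{*}}=\tfrac12(H_{\infty}+it)N^{*_{t}}$ with $N^{*_{t}}=-r^{*_{t}}(1)^{-1}N$, whence $v_{\infty}^{\perp_{*_{t}}}=w^{*}+\overline{w^{*}}=H_{\infty}N^{*_{t}}$ and the Laurent polynomial takes the asserted shape.

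The hard part is exactly this sign determination, and here codimension one is decisive. For each of the null lines $L\in\{L^{\beta},\tilde{L}^{\alpha}\}$ entering $r^{*_{t}}$, the non-orthogonality condition $\rho L\cap L^{\perp}=\{0\}$ that makes the corresponding factor well defined forces $L\not\subset S$ (otherwise $\rho L=L\subset L^{\perp}$); hence for a local generator $l$ of $L$ the vector $\pi_{S^{\perp}}l=\tfrac12(l-\rho l)$ is a nonzero section lying in $L\oplus\rho L$, and since $S^{\perp}=\langle N\rangle$ has rank one it is a nonvanishing multiple of $N$. Thus $N\in\Gamma(L\oplus\rho L)$ and the reflection sends $N$ to $-N$, giving $r^{*_{t}}(0)N=-N=r^{*_{t}}(\infty)N$. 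Finally the reality built into $(p_{\infty}^{t})^{*}$ (Theorem~\ref{BTsofCMCs}) forces its $\lambda$-coefficient to be the conjugate of $w^{*}$; comparing with the two displayed expressions and using $H_{\infty}\pm it\neq0$ gives $\overline{N^{*_{t}}}=N^{*_{t}}$, consistent with $(N^{*_{t}},N^{*_{t}})=(N,N)=1$, so $N^{*_{t}}$ is real. The degenerate minimal case $H_{\infty}=t=0$ (where $N$ is defined only up to sign) should be noted separately, but there both normal coefficients vanish and the two identities hold trivially. I expect the codimension-one rank-one argument to be the only genuinely delicate point, the rest being bookkeeping of limits and orthogonal projections.
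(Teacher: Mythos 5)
Your proof is correct, and most of its skeleton coincides with the paper's: you invoke Theorem \ref{BTsofCMCs} for the structural form $\lambda^{-1}w^{*}+v_{0}^{*}+\lambda\overline{w^{*}}$, extract coefficients as limits at $\lambda=0$ and $\lambda=\infty$, evaluate at $\lambda=1$ to identify $v_{0}^{*}=v_{\infty}^{T_{*_{t}}}$ and $v_{\infty}^{\perp_{*_{t}}}=w^{*}+\overline{w^{*}}$, and settle the signs by a codimension-one, rank-one argument. Your sign determination is only a cosmetic variant of the paper's preliminary remark establishing \eqref{eq:r(0),r(infty)incodimension1}: the paper observes that $r^{*_{t}}(0)\vert_{S^{\perp}}$ and $r^{*_{t}}(\infty)\vert_{S^{\perp}}$ are orthogonal transformations of a rank-one bundle, hence $\pm I$, and rules out $+I$ because it would force $L^{\alpha_{t}}\subset S$, contradicting $\rho L^{\alpha_{t}}\cap(L^{\alpha_{t}})^{\perp}=\{0\}$; you run the same dichotomy in the opposite direction, showing $N\in\Gamma(\overline{L^{\alpha_{t}}}\oplus\rho\overline{L^{\alpha_{t}}})$ and $N\in\Gamma(\tilde{L}^{\alpha_{t}}\oplus\rho\tilde{L}^{\alpha_{t}})$ directly. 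Where you genuinely diverge is the reality of $N^{*_{t}}$: the paper introduces the Bianchi-permutability factor $K^{t}$ of Proposition \ref{rstarvshatrstar}, shows $K^{t}N=\pm N$ and excludes the minus sign by positive-definiteness of the metric on $(S^{*_{t}})^{\perp}$, thereby getting $\overline{N^{*_{t}}}=N^{*_{t}}$ unconditionally; you instead compute the $\lambda$-coefficient independently as $\tfrac12(H_{\infty}+it)\,r^{*_{t}}(1)^{-1}r^{*_{t}}(\infty)N$, equate it with the conjugate of the $\lambda^{-1}$-coefficient forced by the conserved-quantity structure, and cancel $H_{\infty}+it\neq 0$. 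Your route is shorter and bypasses $K^{t}$ and the signature argument entirely, and it suffices for the proposition as stated; what it buys less of is the degenerate case $H_{\infty}=t=0$, where your argument yields only the (then trivial) displayed identities but not the reality of $N^{*_{t}}$ itself, which the paper's argument delivers in all cases and which is what makes $N^{*_{t}}$ usable as the real unit normal in the constructions that follow the proposition (e.g. $\eta_{\infty}^{N^{*_{t}}}$ and $q_{\infty}^{*_{t}}$).
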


Before proceeding to the proof of the proposition:

\begin{rem}
Let $\rho$ denote reflection across $S$. According to
\eqref{eq:r0rinfpreserveVperp}, $r^{*_{t}}(0)\vert_{ S^{\perp}}$ and
$r^{*_{t}}(\infty)\vert_{S^{\perp}}$ are orthogonal transformations
of $S^{\perp}$ and, therefore, as $S^{\perp}$ is a
$\mathrm{rank}\,1$ bundle,
$r^{*_{t}}(0)\vert_{S^{\perp}},r^{*_{t}}(\infty)\vert_{S^{\perp}}\in\{\pm
\,I\}$. On the other hand,
$$r^{*_{t}}(0)\vert_{S^{\perp}}=q_{\overline{\alpha_{t}}\,^{-1},\overline{L^{\alpha_{t}}}}\,(0)\vert_{S^{\perp}}
=I\left\{
\begin{array}{ll} -1 & \mbox{$\mathrm{on}\,\,S^{\perp}\cap(\overline{L^{\alpha_{t}}}\oplus\rho \overline{L^{\alpha_{t}}})$}\\ 1 &
\mbox{$\mathrm{on}\,\,S^{\perp}\cap(\overline{L^{\alpha_{t}}}\oplus\rho
\overline{L^{\alpha_{t}}})^{\perp}$}\end{array}\right.,$$so that, if
$r^{*_{t}}(0)\vert_{S^{\perp}}$ were identity, then
$S^{\perp}\cap(\overline{L^{\alpha_{t}}}\oplus\rho
\overline{L^{\alpha_{t}}})^{\perp}$ would be $S^{\perp}$ and,
therefore, $L^{\alpha_{t}}\subset S$, in which case, $\rho
L^{\alpha_{t}}=L^{\alpha_{t}}$, which, as $L^{\alpha_{t}}$ is null,
contradicts the fact that $\rho L^{\alpha_{t}}\cap
(L^{\alpha_{t}})^{\perp}=\{0\}$. Hence
$r^{*_{t}}(0)\vert_{S^{\perp}}=-I$. Similarly, the fact that
$\tilde{L}^{\alpha_{t}}$ is null and $\rho
\tilde{L}^{\alpha_{t}}\cap (\tilde{L}^{\alpha_{t}})^{\perp}=\{0\}$
establishes
$r^{*_{t}}(\infty)\vert_{S^{\perp}}=p_{\alpha_{t},\tilde{L}^{\alpha_{t}}}(\infty)\vert_{S^{\perp}}=-I$.
Thus
\begin{equation}\label{eq:r(0),r(infty)incodimension1}
r^{*_{t}}(0)\vert_{S^{\perp}}=-I=r^{*_{t}}(\infty)\vert_{S^{\perp}}.
\end{equation}
\end{rem}

Now we proceed to the proof of Proposition \ref{pinf*}.
\begin{proof}
Write
$(p_{\infty}^{t})^{*}(\lambda)=\lambda^{-1}v_{t}+v_{0}^{t}+\lambda
\overline{v_{t}}$ with $v_{0}^{t}\in\Gamma(S^{*_{t}})$ and
$v_{t}\in\Gamma((S^{*_{t}})^{\perp})$. The fact that
$v_{\infty}=(p_{\infty}^{t})^{*}(1)=v_{t}+v_{0}^{t}+\overline{v_{t}}$
establishes $v_{0}^{t}=v_{\infty}^{T_{*_{t}}}$ and
\begin{equation}\label{eq:vinfperp*t}
v_{t}+\overline{v_{t}}=v_{\infty}^{\perp_{*_{t}}}.
\end{equation}
On the other hand,
\begin{eqnarray*}
v_{t}&=&\mathrm{lim}_{\lambda\rightarrow
0}\lambda(\lambda^{-1}v_{t}+v_{0}^{t}+\lambda
\overline{v_{t}})\\&=&\mathrm{lim}_{\lambda\rightarrow
0}\lambda(p_{\infty}^{t})^{*}(\lambda)\\&=&r^{*_{t}}(1)^{-1}r^{*_{t}}(0)\mathrm{lim}_{\lambda\rightarrow
0}\lambda p_{\infty}^{t}(\lambda)\\&=&
\frac{1}{2}\,r^{*_{t}}(1)^{-1}r^{*_{t}}(0)(H_{\infty}-it)N
\end{eqnarray*}
and, therefore, by \eqref{eq:r(0),r(infty)incodimension1},
$$v_{t}=\frac{1}{2}\,(H_{\infty}-it)N^{*_{t}}.$$
Next let $K^{t}\in\Gamma(O(\underline{\C}^{n+2}))$ be $K$, as
defined in Proposition \ref{rstarvshatrstar}, for
$\alpha=\alpha_{t}$ and $\beta=\overline{\alpha_{t}}\,^{-1}$.
According to \eqref{eq:rhoKrhoisK}, $K^{t}$ preserves $S^{\perp}$.
Hence $K^{t}N$ is a unit section of $S^{\perp}$, which equation
\eqref{eq:overlineKisK} ensures to be real. Thus $K^{t}N=\pm N$. If
$K^{t}N=-N$, then, according to equation \eqref{eq:conjugado de
rstar(1)inv},
$\overline{N^{*_{t}}}=-r^{*_{t}}(1)^{-1}K^{t}N=-N^{*_{t}}$ and,
therefore, $iN^{*_{t}}$ is a real section of $(S^{*_{t}})^{\perp}$
with $(iN^{*_{t}},iN^{*_{t}})=-1$, which contradicts the fact that
the metric in $(S^{*_{t}})^{\perp}$ has no signature. We conclude
that $K^{t}N=N$. It follows that
$\overline{N^{*_{t}}}=-r^{*_{t}}(1)^{-1}K^{t}N=N^{*_{t}}$,
$\overline{N^{*_{t}}}$ is real, and, therefore, by
\eqref{eq:vinfperp*t},
$v_{\infty}^{\perp_{*_{t}}}=H_{\infty}N^{*_{t}}$, completing the
proof.
\end{proof}

Equation \eqref{eq:vinfstartHinfNstart} establishes immediately
that:
\begin{prop}\label{Hsparaqinft}
The mean curvature $H_{\infty}^{*_{t}}$ of $\Lambda^{*_{t}}$ in
$S_{v_{\infty}}$ relates to the mean curvature of $\Lambda$ in
$S_{v_{\infty}}$ by
$$H_{\infty}^{*_{t}}=H_{\infty}.$$
\end{prop}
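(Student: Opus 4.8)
The plan is to read the result straight off Proposition \ref{pinf*}, so the proof will be very short. By definition, the mean curvature of $\Lambda^{*_t}$ in the space-form $S_{v_{\infty}}$ is $H_{\infty}^{*_t}=(v_{\infty}^{\perp_{*_t}},v_{\infty}^{\perp_{*_t}})^{1/2}$, where $v_{\infty}^{\perp_{*_t}}$ denotes the orthogonal projection of $v_{\infty}$ onto $(S^{*_t})^{\perp}$. Equation \eqref{eq:vinfstartHinfNstart} of Proposition \ref{pinf*} already identifies this projection as $v_{\infty}^{\perp_{*_t}}=H_{\infty}N^{*_t}$, with $N^{*_t}:=-r^{*_t}(1)^{-1}N$, so the task reduces to understanding the norm of $N^{*_t}$.

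First I would check that $N^{*_t}$ is a real unit section of $(S^{*_t})^{\perp}$. Its reality is already secured inside the proof of Proposition \ref{pinf*}, where one shows $K^tN=N$ and then, via \eqref{eq:conjugado de rstar(1)inv}, that $\overline{N^{*_t}}=N^{*_t}$. For the norm, the key observation is that $r^{*_t}(1)\in\Gamma(O(\underline{\C}^{n+2}))$, so $r^{*_t}(1)^{-1}$ is an isometry; hence $(N^{*_t},N^{*_t})=(N,N)=1$, using that $N$ is a real unit section of $S^{\perp}$ (cf. \eqref{eq:vinfperpHinfN}).

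With this in hand the computation is immediate:
$$(v_{\infty}^{\perp_{*_t}},v_{\infty}^{\perp_{*_t}})=H_{\infty}^2\,(N^{*_t},N^{*_t})=H_{\infty}^2,$$
whence $H_{\infty}^{*_t}=|H_{\infty}|=H_{\infty}$, the final equality holding because $H_{\infty}=(v_{\infty}^{\perp},v_{\infty}^{\perp})^{1/2}\ge 0$ is non-negative by definition. There is no genuine obstacle here: all the substantive work has been front-loaded into Proposition \ref{pinf*}, and what remains is only the invariance of the fibre metric under the orthogonal transformation $r^{*_t}(1)^{-1}$ together with the non-negativity of $H_{\infty}$.
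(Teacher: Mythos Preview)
Your proof is correct and follows exactly the paper's approach: the paper states that equation \eqref{eq:vinfstartHinfNstart} establishes the result immediately, and you have simply spelled out that immediate computation, namely that $N^{*_t}$ is unit because $r^{*_t}(1)^{-1}$ is orthogonal. There is nothing to add.
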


Let $\eta_{\infty}^{N^{*_{t}}}$ be the \textit{canonical} form
establishing the isothermic condition of $\Lambda^{*_{t}}$ as a CMC
surface in $S_{v_{\infty}}$, as established in Section
\ref{CMCisoCW},
$\eta_{\infty}^{N^{*_{t}}}:=\frac{1}{2}\,\sigma_{\infty}^{*_{t}}\wedge
dN^{*_{t}}$, for $\sigma_{\infty}^{*_{t}}$ the surface in
$S_{v_{\infty}}$ defined by $\Lambda^{*_{t}}$. Let
$q_{\infty}^{*_{t}}$ be the \textit{canonical} multiplier to
$\Lambda^{*_{t}}$ as a CMC surface in $S_{v_{\infty}}$, i.e.,
$q_{\infty}^{*_{t}}:=H_{\infty}\eta_{\infty}^{N^{*_{t}}}$. For each
$t'\in\R$, let $(q_{\infty}^{*_{t}})^{t'}$ denote the multiplier
$q_{\infty}^{*_{t}}+t'*\eta_{\infty}^{N^{*_{t}}}$ to
$\Lambda^{*_{t}}$ and $(p_{\infty}^{*_{t}})^{t'}$ be the
\textit{canonical} $(q_{\infty}^{*_{t}})^{t'}$-conserved quantity of
$\Lambda^{*_{t}}$, as established in Proposition \ref{pinftyallt},
$(p_{\infty}^{*_{t}})^{t'}:=\lambda^{-1}\frac{1}{2}\,(H_{\infty}-it')N^{*_{t}}+
v_{\infty}^{T_{*_{t}}}+\lambda\frac{1}{2}\,(H_{\infty}+it')N^{*_{t}}$.
Then
$$(p_{\infty}^{t})^{*}=(p_{\infty}^{*_{t}})^{t}$$ and, therefore,
according to Remark \ref{CQeqdeterminesq},
$$(q_{\infty}^{t})^{*}=(q_{\infty}^{*_{t}})^{t}.$$ By Proposition
\ref{quaddiffscoincide}, it follows that the quadratic differentials
$(q_{\infty}^{t})_{Q}$ and $((q_{\infty}^{*_{t}})^{t})_{Q}$
coincide, $(q_{\infty}^{t})_{Q}=((q_{\infty}^{*_{t}})^{t})_{Q}$.
Recall that $\Lambda$ and $\Lambda^{*_{t}}$ induce the same
conformal structure in $M$. We conclude that, given $z$ a
holomorphic chart of $(M,\mathcal{C}_{\Lambda}=
\mathcal{C}_{\Lambda^{*_{t}}})$,
$$(q_{\infty}^{t})^{z}=((q_{\infty}^{*_{t}})^{t})^{z}$$
and, therefore, by \eqref{eq:qinftzvsqinfz}, that, if $\Lambda$
(respectively, $\Lambda^{*_{t}}$) is not minimal in
$S_{v_{\infty}}$, then
$$(q_{\infty})^{z}=(q_{\infty}^{*_{t}})^{z}.$$By Lemma
\ref{isoviakreal}, we conclude, ultimately, that, unless $\Lambda$
(respectively, $\Lambda^{*_{t}}$) is minimal in $S_{v_{\infty}}$,
$\Lambda$ and $\Lambda^{*_{t}}$ share conformal curvature line
coordinates (recall the close relationship between $q_{\infty}$
(respectively, $q_{\infty}^{*_{t}}$) and the Hopf differential of
$\Lambda$ (respectively, $\Lambda^{*_{t}}$), cf. observed in Section
\ref{CMCisoCW}).

\subsection{Isothermic Darboux transformation vs. constrained Willmore B\"{a}cklund transformation of CMC surfaces in $3$-space}

U. Hertrich-Jeromin and F. Pedit \cite{udo+pedit} proved that, for
special choices of parameters, the isothermic Darboux transformation
of CMC surfaces in Euclidean $3$-space preserves the constancy of
the mean curvature and, furthermore, the mean curvature. This is
also the case for constrained Willmore B\"{a}cklund transformation
of these surfaces, cf. Section \ref{BTCMC}. Moreover, as proven by
S. Kobayashi and J.-I. Inoguchi \cite{kobayashi}, isothermic Darboux
transformation of CMC surfaces in $\R^{3}$ is equivalent to
Bianchi-B\"{a}cklund transformation, the latter described in
\cite{rui} in terms of a dressing action, following the work of
Terng and Uhlenbeck \cite{uhlenbeck}, just like the constrained
Willmore B\"{a}cklund transformation presented above. How are these
transformations related? And what to say for general $3$-space? This
shall be the subject of further work. In \cite{quaternionsbook}, a
description of Darboux transformation of constant mean curvature
surfaces in Euclidean $3$-space is presented in the quaternionic
setting. It is based on the solution of a Riccati equation and it
displays a striking similarity with the Darboux transformation of
constrained Willmore surfaces in $4$-space presented in Chapter
\ref{surfacesinS4} below. We prove that all non-trivial Darboux
transforms of constrained Willmore surfaces can be obtained by
constrained Wilmore B\"{a}cklund transformation. We believe
isothermic Darboux transformation of a CMC surface in Euclidean
$3$-space can be obtained as a particular case of constrained
Willmore B\"{a}cklund transformation. $\newline$

\chapter{The special case of surfaces in
$4$-space}\label{surfacesinS4}

\markboth{\tiny{A. C. QUINTINO}}{\tiny{CONSTRAINED WILLMORE
SURFACES}}

This chapter is dedicated to the special case of surfaces in
$4$-space. Our approach is quaternionic, based on the model of the
conformal $4$-sphere on the quaternionic projective space, and
follows the work of F. Burstall et al. \cite{quaternionsbook}. We
identify $\mathbb{H}^{2}$ with $\C^{4}$ and provide
$\wedge^{2}\C^{4}$ with the real structure $\wedge^{2}j$ and with a
certain metric inducing a metric with signature $(5,1)$ on the space
of real vectors of $\wedge^{2}\C^{4}$. Via the Pl\"{u}cker
embedding, we identify a $j$-stable $2$-plane $L$ in $\C^{4}$ with
the real null line $\wedge^{2}L$ in
$(\mathrm{Fix}(\wedge^{2}j))^{\C}$, presenting, in this way, the
quaternionic projective space as a model for the conformal
$4$-sphere, $\mathbb{H}P^{1}\cong S^{4}$. Surfaces in $S^{4}$ are
described in this model as the immersed bundles of $j$-stable
$2$-planes in $\C^{4}$. We extend the Darboux transformation of
Willmore surfaces in $S^{4}$ presented in \cite{quaternionsbook},
based on the solution of a Riccati equation, to a transformation of
constrained Willmore surfaces in $4$-space.  We apply, yet again,
the dressing action presented in Chapter \ref{transformsofCW} to
define another transformation of constrained Willmore surfaces in
$4$-space, the \textit{untwisted B\"{a}cklund transformation},
referring then to the original one as the \textit{twisted
B\"{a}cklund transformation}. We verify that, when both are defined,
twisted and untwisted B\"{a}cklund transformations coincide. We
prove that constrained Willmore Darboux transformation of parameters
$\rho,T$ with $\rho
> 1$ is equivalent to untwisted B\"{a}cklund transformation of
parameters $\alpha,L^{\alpha}$ with $\alpha^{2}$ real. Constrained
Willmore Darboux transformation of parameters $\rho,T$ with $\rho
\leq 1$ is trivial.\newline

\section{Surfaces in $S^{4}\cong\mathbb{H}P^{1}$}

\markboth{\tiny{A. C. QUINTINO}}{\tiny{CONSTRAINED WILLMORE
SURFACES}}

Consider the natural identification of $\mathbb{H}$ with $\R^{4}$
and then the natural identification of $\mathbb{H}^{2}$ with
$\langle 1,i\rangle^{4}=\C^{4}$. Provide $\wedge^{2}\C^{4}$ with the
real structure $\wedge^{2}j$. Define a metric on $\wedge^{2}\C^{4}$
by $(v_{1}\wedge v_{2},v_{3}\wedge v_{4}):=-\mathrm{det}(v_{1},
v_{2},v_{3},v_{4})$, for $v_{1}, v_{2},v_{3}, v_{4}\in\C^{4}$, with
$\mathrm{det}(v_{1}, v_{2},v_{3}, v_{4})$ denoting the determinant
of the matrix whose columns are the components of $v_{1},
v_{2},v_{3}$ and $v_{4}$, respectively, on the canonical basis of
$\C^{4}$. This metric induces a metric with signature $(5,1)$ on the
space of real vectors of $\wedge^{2}\C^{4}$,
$\mathrm{Fix}(\wedge^{2}j)=\R^{5,1}$. Via the Pl\"{u}cker embedding,
we identify a $j$-stable $2$-plane $L$ in $\C^{4}$ with the real
null line $\wedge^{2}L$ in $(\mathrm{Fix}(\wedge^{2}j))^{\C}$,
presenting, in this way, the quaternionic projective space
$\mathbb{H}P^{1}$ as a model for the conformal $4$-sphere, and
describing, in this model, surfaces in $S^{4}$ as the immersed
bundles $L\cong\wedge^{2}L:M\rightarrow S^{4}$ of $j$-stable
$2$-planes in $\C^{4}$.\newline

\subsection{Linear algebra}\label{LinAlg}
Consider the quaternions, the unitary $\R$-algebra $\mathbb{H}$
generated by $i,j,k$ with the relations
$$i^{2}=j^{2}=k^{2}=-1,$$
$$ij=-ji=k,\,\,jk=-kj=i,\,\,ki=-ik=j.$$
We identify the real vector space $\mathbb{H}$ in the obvious way
with $\R^{4}$: given $a,b,c,d\in\R$,
$$a+ib+jc+kd\cong (a,b,c,d);$$ and identify then $\mathbb{H}^{2}$ with $\C^{4}$ for
$\C:=\langle 1,i\rangle$, i.e., given $a,b,c,d, a',b',c',d'\in\R$,
$$((a,b,c,d),(a',b',c',d'))\cong (a+ia',b+ib',c+ic',d+id').$$
Under this identification, the left multiplication by the
quaternionic unit $j$ defines a complex anti-linear map
$j:\C^{4}\rightarrow\C^{4}$. The projective space $\mathbb{H}P^{1}$
of the quaternionic lines in $\mathbb{H}^{2}$ is, in this way,
described as the set of the $j$-stable $2$-planes in $\C^{4}$.
\begin{rem}
A $2$-plane $U$ in $\C^{4}$ is either $j$-stable or complementary to
$jU$: $U\cap jU$ is, obviously, $j$-stable, and, therefore,
$\mathrm{rank}_{\C}(U\cap jU)\in\{0,2\}$.
\end{rem}
Fix $\mathrm{det}\in\wedge ^{4}(\C^{4})^{*}\backslash\{0\}$ such
that
\begin{equation}\label{eq:detwedge4}
\overline{\mathrm{det}\circ\wedge^{4}j}=\mathrm{det}
\end{equation}
and that
\begin{equation}\label{eq:det>0}
\mathrm{det}(v_{1},v_{2},jv_{1},jv_{2})>0
\end{equation}
for $v_{1},v_{2},jv_{1},jv_{2}\in\C^{4}$ linearly independent. To
see that such a $\mathrm{det}$ exists, first observe that condition
\eqref{eq:detwedge4} is equivalent to the reality of
$\mathrm{det}(e_{1},e_{2},e_{3},e_{4})$, for
$e_{1},e_{2},e_{3}=je_{1},e_{4}=-je_{2}$ the canonical basis of
$\C^{4}$. This makes clear the existence of $\mathrm{det}$ in
$\wedge ^{4}(\C^{4})^{*}\backslash\{0\}$ satisfying condition
\eqref{eq:detwedge4}, determined up to a (non-zero) real scale. On
the other hand, for such a $\mathrm{det}$, given
$v_{1},v_{2},jv_{1},jv_{2}\in\C^{4}$ linearly independent,
$\mathrm{det}(v_{1},v_{2},jv_{1},jv_{2})$ is a non-zero real number,
whose sign does not, in fact, depend on the basis
$\{v_{1},v_{2},jv_{1},jv_{2}\}$ of $\C^{4}$. \footnote{The argument
above shows that a possible choice of such a $\mathrm{det}$ is the
one given by $\mathrm{det}(v_{1}\wedge v_{2},v_{3}\wedge
v_{4}):=-\mid v_{1}, v_{2},v_{3},v_{4}\mid$, for $v_{1},
v_{2},v_{3}, v_{4}\in\C^{4}$, with $\mid v_{1}, v_{2},v_{3},
v_{4}\mid$ denoting the determinant of the matrix whose columns are
the components of $v_{1}, v_{2},v_{3}$ and $v_{4}$, respectively, on
the canonical basis of $\C^{4}$.}

Consider $\wedge ^{2}\C^{4}$ equipped with the real structure
$\wedge^{2}j$ and the metric defined by
$$(v_{1}\wedge v_{2},v_{3}\wedge v_{4}):=\mathrm{det}(v_{1},
v_{2},v_{3}, v_{4}),$$ for $v_{1}, v_{2},v_{3},v_{4}\in\C^{4}$.
Condition \eqref{eq:detwedge4}, equivalent to
$$\overline{(v_{1}\wedge v_{2},v_{3}\wedge
v_{4})}=(\overline{v_{1}\wedge v_{2}},\overline{v_{3}\wedge
v_{4}}),$$ for $v_{1}, v_{2},v_{3},v_{4}\in\C^{4}$, ensures that
$\wedge^{2}\underline{\C}^{4}$ induces a (real) metric in
$\mathrm{Fix}\,(\wedge^{2} j)$, which condition \eqref{eq:det>0}
ensures to have signature $(5,1)$, as we verify next.
\begin{prop}\label{Fix51}
$\wedge^{2}\underline{\C}^{4}$ induces in $\mathrm{Fix}\,(\wedge^{2}
j)$ a metric with signature $(5,1)$.
\end{prop}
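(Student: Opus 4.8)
The plan is to produce an explicit real basis of $\mathrm{Fix}(\wedge^{2}j)$ and read the signature off the resulting Gram matrix. First I would record the structural facts that make this legitimate. Since $j^{2}=-\mathrm{id}$ on $\C^{4}$, on decomposable bivectors $(\wedge^{2}j)^{2}(v\wedge w)=(j^{2}v)\wedge(j^{2}w)=v\wedge w$, so $\wedge^{2}j$ is an anti-linear involution and $\mathrm{Fix}(\wedge^{2}j)$ is a real $6$-dimensional subspace with $(\mathrm{Fix}(\wedge^{2}j))^{\C}=\wedge^{2}\C^{4}$. Condition \eqref{eq:detwedge4} is exactly the statement $\overline{(\xi,\eta)}=(\wedge^{2}j\,\xi,\wedge^{2}j\,\eta)$, so the complex-bilinear form takes real values on $\mathrm{Fix}(\wedge^{2}j)$; moreover the pairing $\wedge^{2}\C^{4}\times\wedge^{2}\C^{4}\to\wedge^{4}\C^{4}\cong\C$ is perfect, hence nondegenerate, and its restriction to the real form $\mathrm{Fix}(\wedge^{2}j)$ complexifies back to it and is therefore nondegenerate. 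Thus the induced real metric has some signature $(p,q)$ with $p+q=6$, and it remains only to compute it.

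Next I would work in the canonical basis $e_{1},e_{2},e_{3}=je_{1},e_{4}=-je_{2}$, so that $je_{3}=-e_{1}$ and $je_{4}=e_{2}$. Applying $\wedge^{2}j$ to the six basis bivectors, I expect to find that it preserves the three complementary pairs of index sets: $\wedge^{2}j(e_{1}\wedge e_{2})=-e_{3}\wedge e_{4}$ and $\wedge^{2}j(e_{3}\wedge e_{4})=-e_{1}\wedge e_{2}$; $\wedge^{2}j(e_{1}\wedge e_{3})=e_{1}\wedge e_{3}$ and $\wedge^{2}j(e_{2}\wedge e_{4})=e_{2}\wedge e_{4}$; and $\wedge^{2}j(e_{1}\wedge e_{4})=-e_{2}\wedge e_{3}$, $\wedge^{2}j(e_{2}\wedge e_{3})=-e_{1}\wedge e_{4}$. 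Solving $\wedge^{2}j\,\xi=\xi$ on each pair then yields the real basis
$$f_{1}=e_{1}\wedge e_{2}-e_{3}\wedge e_{4},\quad f_{2}=i(e_{1}\wedge e_{2}+e_{3}\wedge e_{4}),$$
$$g_{1}=e_{1}\wedge e_{3},\quad g_{2}=e_{2}\wedge e_{4},\quad h_{1}=e_{1}\wedge e_{4}-e_{2}\wedge e_{3},\quad h_{2}=i(e_{1}\wedge e_{4}+e_{2}\wedge e_{3}).$$

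The decisive simplification is that $(e_{a}\wedge e_{b},e_{c}\wedge e_{d})=\det(e_{a},e_{b},e_{c},e_{d})$ vanishes unless $\{a,b,c,d\}=\{1,2,3,4\}$, so nonzero pairings occur only between bivectors on complementary index sets. This makes the Gram matrix block diagonal with respect to the three pairs $(f_{1},f_{2})$, $(g_{1},g_{2})$, $(h_{1},h_{2})$. Writing $s:=\det(e_{1},e_{2},e_{3},e_{4})$ and evaluating the determinants with their permutation signs, I expect the $(f_{1},f_{2})$ block to be $\mathrm{diag}(-2s,-2s)$, the $(h_{1},h_{2})$ block likewise $\mathrm{diag}(-2s,-2s)$, and the $(g_{1},g_{2})$ block to be the off-diagonal matrix $\bigl(\begin{smallmatrix}0&-s\\-s&0\end{smallmatrix}\bigr)$, which is hyperbolic of signature $(1,1)$ whatever the sign of $s$. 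Here is where condition \eqref{eq:det>0} enters: taking $v_{1}=e_{1}$, $v_{2}=e_{2}$ gives $\det(e_{1},e_{2},e_{3},-e_{4})=-s>0$, hence $s<0$ and $-2s>0$. The two diagonal blocks are therefore positive definite, and the total signature is $(2,0)+(1,1)+(2,0)=(5,1)$.

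The computation is essentially mechanical, so I do not anticipate a genuine obstacle; the two points demanding care are constructing the real basis correctly as the fixed locus of the anti-linear involution $\wedge^{2}j$, and bookkeeping the permutation signs so that \eqref{eq:det>0} forces the signature $(5,1)$ rather than its negative $(1,5)$ — indeed reversing the sign of $s$ would flip the two definite blocks and give $(1,5)$, which is precisely what condition \eqref{eq:det>0} is designed to exclude.
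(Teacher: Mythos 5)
Your computation is correct, and it is essentially the paper's own argument in concrete form: your basis realizes the decomposition $\wedge^{2}W\oplus\wedge^{2}jW\oplus W\wedge jW$ for the non-$j$-stable plane $W=\langle e_{1},e_{2}\rangle$, with your $f$-block the real points of $\wedge^{2}W\oplus\wedge^{2}jW$ (positive definite via \eqref{eq:det>0}), your $g$-block the hyperbolic plane spanned by the null vectors $e_{1}\wedge je_{1}$, $e_{2}\wedge je_{2}$, and your $h$-block the remaining positive definite plane in $W\wedge jW$. The paper runs the same three-block signature count $(2,0)+(1,1)+(2,0)$ for an arbitrary non-$j$-stable $W$, so the two proofs differ only in presentation.
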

We therefore write
$$\mathrm{Fix}\,(\wedge^{2} j)=\R^{5,1}$$ and so
$$\wedge^{2}\C^{4}=(\R^{5,1})^{\C}.$$
The proof of the proposition will follow a very important remark,
presented next.

By definition of the metric on $\wedge^{2}\C^{4}$, the
complementarity of $2$-planes $W$ and $\hat{W}$ in $\C^{4}$,
$$\C^{4}=W\oplus\hat{W},$$ is equivalent to $(w_{1}\wedge
w_{2},\hat{w}_{1}\wedge \hat{w}_{2})\neq 0$, for $w_{1},w_{2}$ frame
of $W$ and $\hat{w}_{1},\hat{w}_{2}$ frame of $\hat{W}$, or,
equivalently,
\begin{equation}\label{eq:capsSs}
\wedge^{2}W\cap (\wedge^{2}\hat{W})^{\perp}=\{0\}.
\end{equation}
Condition \eqref{eq:capsSs}, in its turn, establishes the
complementarity of $\wedge^{2}W$ and $\wedge^{2}\hat{W}$ in
$\wedge^{2}W+\wedge^{2}\hat{W}$, together with the non-degeneracy of
$\wedge^{2}W\oplus\wedge^{2}\hat{W}$, ensuring, in particular, that
\begin{equation}\label{eq:MwedgehatMistheorthogonaltowedges}
W\wedge\hat{W}=(\wedge^{2}W\oplus\wedge^{2}\hat{W})^{\perp}
\end{equation}
and establishing the decomposition
\begin{equation}\label{eq:wedge2C4MhatM}
\wedge^{2}\C^{4}=\wedge^{2}W\oplus\wedge^{2}\hat{W}\oplus
W\wedge\hat{W}.
\end{equation}
Conversely, a decomposition
\begin{equation}\label{eq:wedge2C4decomp}
\wedge^{2}\C^{4}= V\oplus V_{+}^\perp\oplus V_{-}^{\perp},
\end{equation}
with $V_{+}^\perp$ and $V_{-}^{\perp}$ null lines such that
$V_{+}^{\perp}\cap (V_{-}^{\perp})^{\perp}=\{0\},$ and
$V^{\perp}=V_{+}^\perp\oplus V_{-}^{\perp}$, determines
complementary $2$-planes $W$ and $\hat{W}$ in $\C^{4}$ such that
$\wedge^{2}W=V_{+}^{\perp}$, $\wedge^{2}\hat{W}=V_{-}^{\perp}$ and
$V=W\wedge\hat{W}$.

For the particular case of $W$ a non-$j$-stable $2$-plane and
$\hat{W}=jW$, and in view of the reality of $W\wedge jW$ and
$\wedge^{2}W\oplus\wedge^{2}jW$, \eqref{eq:wedge2C4MhatM} gives, in
particular,
$$\mathrm{Fix}(\wedge^{2}j)=(\wedge^{2}W\oplus\wedge^{2}jW)\cap\mathrm{Fix}(\wedge^{2}j)\oplus
(W\wedge jW)\cap\mathrm{Fix}(\wedge^{2}j).$$

We now prove Proposition \ref{Fix51}.
\begin{proof}
Let $W$ be a non-$j$-stable $2$-plane in $\C^{4}$. Given $w_{i}\in
W$, for $i=1,2,3,4$, the vector $w_{1}\wedge w_{2}+jw_{3}\wedge
jw_{4}$ is real if and only if $w_{1}\wedge w_{2}=w_{3}\wedge
w_{4}$. Hence, if $w_{1}\wedge w_{2}+jw_{3}\wedge jw_{4}$ is
non-zero, then $w_{3}\wedge w_{4}\neq 0$ and
$w_{i}=\lambda_{i}^{1}w_{1}+\lambda_{i}^{2}w_{2}$, for $i=3,4$. In
that case,
$$(w_{1}\wedge w_{2}+jw_{3}\wedge jw_{4},w_{1}\wedge
w_{2}+jw_{3}\wedge
jw_{4})=2\,\overline{\lambda_{3}^{1}\lambda_{4}^{2}-\lambda_{4}^{1}\lambda_{3}^{2}}\,\mathrm{det}(w_{1},w_{2},jw_{1},jw_{2}),$$
whilst $$0\neq jw_{3}\wedge
jw_{4}=\overline{\lambda_{3}^{1}\lambda_{4}^{2}-\lambda_{4}^{1}\lambda_{3}^{2}}\,jw_{1}\wedge
jw_{2}=\overline{\lambda_{3}^{1}\lambda_{4}^{2}-\lambda_{4}^{1}\lambda_{3}^{2}}\,jw_{3}\wedge
jw_{4}$$ and, therefore,
$\overline{\lambda_{3}^{1}\lambda_{4}^{2}-\lambda_{4}^{1}\lambda_{3}^{2}}=1$.
By equation \eqref{eq:det>0}, we conclude that, for non-zero real
$w_{1}\wedge w_{2}+jw_{3}\wedge jw_{4}\in\wedge^{2}W\oplus
\wedge^{2}jW$,
$$(w_{1}\wedge w_{2}+jw_{3}\wedge jw_{4},w_{1}\wedge w_{2}+jw_{3}\wedge jw_{4})=2\,\mathrm{det}(w_{1},w_{2},jw_{1},jw_{2})>0,$$
i.e., that the metric induced in
$(\wedge^{2}W\oplus\wedge^{2}jW)\cap\mathrm{Fix}(\wedge^{2}j)$ is
positive definite. Now let $w_{1},w_{2}$ be a basis of $W$, so that
$w_{1}\wedge jw_{1},w_{1}\wedge jw_{2}, w_{2}\wedge jw_{1}$ and
$w_{2}\wedge jw_{2}$ form a basis of $W\wedge jW$. The vectors
$w_{1}\wedge jw_{1}$ and $w_{2}\wedge jw_{2}$ are real, null and not
orthogonal, spanning, therefore, a subspace of $(W\wedge jW)\cap
\mathrm{Fix}\,(\wedge^{2} j)$ with signature $(1,1)$. On the other
hand, $\langle w_{1}\wedge jw_{2},w_{2}\wedge jw_{1}\rangle\cap
\mathrm{Fix}\,(\wedge^{2} j)=\{ (a+ib)w_{1}\wedge
jw_{2}+(a-ib)w_{2}\wedge jw_{1}:a,b\in\R\}$, clearly orthogonal to
$\langle w_{1}\wedge jw_{1},w_{2}\wedge jw_{2}\rangle$ and with
positive definite metric inherited from $\wedge^{2}\C^{4}$: given
$w:=(a+ib)w_{1}\wedge jw_{2}+(a-ib)w_{2}\wedge jw_{1}$ with
$a+ib\in\C\backslash\{0\}$,
$$(w,w)=2\,(a^{2}+b^{2})\,\mathrm{det}(w_{1},w_{2},jw_{1},jw_{2})>0.$$
By \eqref{eq:MwedgehatMistheorthogonaltowedges}, we conclude the
existence of an orthogonal basis of $\mathrm{Fix}\,(\wedge^{2} j)$
composed by five space-like vectors and one time-like vector.
\end{proof}
\begin{rem}\label{inprodvoverlv}
In the proof of Proposition \ref{Fix51}, we have observed, in
particular, that, given $W$ a non-$j$-stable $2$-plane in $\C^{4}$,
the metric induced in the space of the real vectors in
$\wedge^{2}W\oplus\wedge^{2}jW$ is positive definite. Hence, given
$v\in\wedge^{2}W$,
$$(v,\overline{v}\,)=\frac{1}{2}\,(v+\overline{v},v+\overline{v})\geq 0,$$
vanishing if and only if $v=0$. It follows, in particular, that,
given $v\in\wedge^{2}\C^{4}$ decomposable,
\begin{equation}\label{eq:vconjvispositive}
(v,\overline{v}\,)\geq 0.
\end{equation}
\end{rem}

We identify $2$-planes in $\C^{4}$ with null lines in
$\wedge^{2}\C^{4}$, spanned by a decomposable vector, via the famous
Pl\"{u}cker embedding, i.e., via the correspondence
$$L=\langle u,v\rangle\leftrightarrow\langle u\wedge v\rangle=\wedge^{2}L,$$given
complex linearly independent $u$ and $v$ in $\C^{4}$. In this way,
we identify, in particular, $j$-stable $2$-planes in $\C^{4}$ with
real null lines in $\wedge^{2}\C^{4}$ and then, naturally, with null
lines in $\mathrm{Fix}(\wedge^{2}j)$. This presents the quaternionic
projective space as a model for the conformal $4$-sphere.

\begin{prop}
$$\mathbb{H}P^{1}\cong S^{4}.$$
\end{prop}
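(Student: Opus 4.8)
The statement $\mathbb{H}P^{1}\cong S^{4}$ is, at this point in the text, a summary of the identifications already assembled, so the plan is to exhibit the identification explicitly and then match it to the conformal $4$-sphere via the model of Chapter 1. Recall that we have already established, via the P\"{u}cker embedding, a bijection between $j$-stable $2$-planes $L\subset\C^{4}$ (i.e.\ points of $\mathbb{H}P^{1}$) and real null lines $\wedge^{2}L\subset\mathrm{Fix}(\wedge^{2}j)=\R^{5,1}$; and that $\mathrm{Fix}(\wedge^{2}j)$ carries a metric of signature $(5,1)$ (Proposition \ref{Fix51}). First I would observe that the P\"{u}cker correspondence $L\mapsto\wedge^{2}L$ does land in the \emph{projectivized light-cone} $\mathbb{P}(\mathcal{L})$ of $\R^{5,1}$: for $L=\langle u,ju\rangle$ a $j$-stable plane, $u\wedge ju$ is manifestly real and decomposable, hence null since $(u\wedge ju,u\wedge ju)=\mathrm{det}(u,ju,u,ju)=0$; and non-zero because $u,ju$ are linearly independent. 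Thus every point of $\mathbb{H}P^{1}$ maps to a point of $\mathbb{P}(\mathcal{L})$, where $\mathcal{L}$ is the light-cone of $\R^{5,1}$.

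Next I would verify surjectivity and injectivity of this map onto $\mathbb{P}(\mathcal{L})$. Injectivity is immediate from the P\"{u}cker embedding together with the fact that a decomposable $2$-vector recovers its $2$-plane. For surjectivity, given a real null line $\langle\omega\rangle\subset\R^{5,1}$, I would first argue that $\omega$ is decomposable (a null $2$-vector on a $4$-dimensional space is decomposable, since $\omega\wedge\omega\in\wedge^{4}\C^{4}$ is a scalar multiple of the P\"{u}cker quadric defining decomposability, and nullity is exactly the vanishing of that scalar by the definition $(\omega,\omega)=-\mathrm{det}$ of the metric), so $\omega=u\wedge v$ for some plane $W=\langle u,v\rangle$. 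Then the reality $\wedge^{2}j\,\omega=\omega$ forces $W$ to be $j$-stable: by the Remark that any $2$-plane is either $j$-stable or complementary to $jW$, if $W$ were not $j$-stable then $\wedge^{2}W$ and $\wedge^{2}jW$ would be distinct lines and $\wedge^{2}j\,\omega$ would span $\wedge^{2}jW\neq\wedge^{2}W$, contradicting reality. Hence $W$ is $j$-stable and $\langle\omega\rangle=\wedge^{2}W$ is in the image. This gives a bijection $\mathbb{H}P^{1}\to\mathbb{P}(\mathcal{L})$.

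Finally, I would invoke the Darboux model of Section \ref{subsec:hyper}: $\mathbb{P}(\mathcal{L})$ of $\R^{5,1}$, when equipped with the conformal structure $\mathcal{C}_{\mathbb{P}(\mathcal{L})}$ induced by the metric $g_{\sigma}$ on a never-zero section $\sigma$ of the tautological bundle, is conformally identified with the conformal $4$-sphere, $S^{4}\cong\mathbb{P}(\mathcal{L})$ (here $n=4$, so $\R^{n+1,1}=\R^{5,1}$). Composing the two identifications gives $\mathbb{H}P^{1}\cong\mathbb{P}(\mathcal{L})\cong S^{4}$, which is the assertion. The only genuine content beyond bookkeeping is the decomposability-plus-reality argument establishing surjectivity, i.e.\ that every real null line of $\R^{5,1}$ is $\wedge^{2}$ of a $j$-stable plane; everything else follows from the P\"{u}cker embedding and Proposition \ref{Fix51}.

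The main obstacle I anticipate is pinning down cleanly that nullity in the metric $(\,\cdot\,,\cdot\,)=-\mathrm{det}$ is \emph{equivalent} to decomposability for a real $2$-vector in this four-dimensional setting, and that the conformal structure transported from $\mathbb{P}(\mathcal{L})$ genuinely matches the round conformal class on $S^{4}$ rather than merely the underlying smooth bijection; the latter is exactly what Section \ref{subsec:hyper} supplies for general $n$, so the work is to confirm the specialization $n=4$ carries no hidden sign or signature subtlety, which Proposition \ref{Fix51} already guarantees by fixing the signature as $(5,1)$.
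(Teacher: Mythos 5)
Your proposal is correct and follows essentially the same route as the paper: the paper treats this proposition as a summary of the Pl\"{u}cker identification of $j$-stable $2$-planes with real null lines in $\mathrm{Fix}(\wedge^{2}j)=\R^{5,1}$ (using Proposition \ref{Fix51} for the signature), composed with the Darboux model $S^{4}\cong\mathbb{P}(\mathcal{L})$ of Chapter 1. You simply make explicit the bijectivity details the paper leaves implicit — decomposability of null $2$-vectors via the Pl\"{u}cker quadric, and reality forcing $j$-stability — and these are argued correctly.
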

This chapter is dedicated to the special case of surfaces in
$S^{4}$, described in this model as the immersed bundles
$$L\cong\wedge^{2}L:M\rightarrow S^{4}$$ of $j$-stable
$2$-planes in $\C^{4}$.

We use $\mathrm{End}_{j}(\C^{4})$ to denote the set of endomorphisms
of $\C^{4}$ commuting with $j$, and $\mathrm{Gl}_{j}(\C^{4})$
(respectively, $sl_{j}(\C^{4})$) to denote the invertible
(respectively, trace-free) $j$-commuting endomorphisms of $\C^{4}$.
\begin{Lemma}\label{formadosendomorfismoscomsquare}
Given $\xi\in\mathrm{End}_{j}(\C^{4})$ with $\xi^{2}=aI$, for some
$a\in\R$, if $a\geq 0$, then
$$\xi=\sqrt{a}\,I,$$for one of the square roots of $a$; whereas,
if $a< 0$, then, given a choice of $\sqrt{a}$,
$$\xi=I\left\{
\begin{array}{ll} \sqrt{a} & \mbox{$\mathrm{on}\,W$}\\ \overline{\sqrt{a}}=-\sqrt{a} &
\mbox{$\mathrm{on}\,jW$}\end{array}\right.,$$for some non-j-stable
$2$-plane $W$ in $\C^{4}$.
\end{Lemma}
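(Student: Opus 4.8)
The plan is to diagonalise $\xi$ over $\C$ and then read off the constraint imposed by the anti-linear map $j$. Since $\xi^{2}=aI$, the minimal polynomial of $\xi$ divides $x^{2}-a$; fixing a square root $\sqrt{a}\in\C$ of $a$, this polynomial has distinct roots $\pm\sqrt{a}$ whenever $a\neq 0$, so $\xi$ is diagonalisable and $\C^{4}=E_{+}\oplus E_{-}$, where $E_{\pm}:=\ker(\xi\mp\sqrt{a}\,I)$. (I would treat the degenerate possibility $a=0$, where $\xi$ is a priori only nilpotent, separately.)

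The key step is the interaction of $j$ with the eigenspaces. For $v\in E_{+}$ the commutation $\xi j=j\xi$ together with the anti-linearity of $j$ gives $\xi(jv)=j(\xi v)=j(\sqrt{a}\,v)=\overline{\sqrt{a}}\,jv$, and similarly on $E_{-}$. Thus the sign of $a$ dictates the behaviour: if $a\geq 0$ then $\sqrt{a}$ is real, $\overline{\sqrt{a}}=\sqrt{a}$, and each $E_{\pm}$ is $j$-stable; whereas if $a<0$ then $\sqrt{a}$ is purely imaginary, $\overline{\sqrt{a}}=-\sqrt{a}$, so $j$ interchanges the two eigenspaces, $jE_{+}=E_{-}$ and $jE_{-}=E_{+}$.

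For $a<0$ this immediately yields the stated conclusion. Since $j$ is an invertible anti-linear map restricting to an isomorphism $E_{+}\to E_{-}$, the two eigenspaces have equal complex dimension, hence $\dim_{\C}E_{\pm}=2$; setting $W:=E_{+}$ we get $jW=E_{-}$, which forces $W$ to be non-$j$-stable (were $jW=W$ we would have $E_{+}=E_{-}$, impossible), and $\xi$ acts as $\sqrt{a}$ on $W$ and as $\overline{\sqrt{a}}=-\sqrt{a}$ on $jW$, exactly as claimed.

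The main obstacle is the case $a\geq 0$. Here the eigenspace argument alone delivers only that $\xi$ is \emph{diagonal} with respect to the decomposition into the two $j$-stable eigenspaces $E_{\pm}$ — that is, $\xi$ acts as $\pm\sqrt{a}$ on two complementary quaternionic subspaces — and upgrading this to the scalar statement $\xi=\sqrt{a}\,I$ amounts to ruling out a balanced $2+2$ splitting and showing that one eigenspace is trivial. This is precisely the delicate point, since a diagonal (non-scalar) quaternionic endomorphism does square to a positive real scalar; I would extract the vanishing of one eigenspace from the extra data carried by $\xi$ in the intended applications (where $\xi$ is a genuine complex structure, so that $a<0$, or where a reality/sign normalisation is in force), the same normalisation disposing of the nilpotent subcase $a=0$ and forcing $\xi=0$.
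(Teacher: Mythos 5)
Your handling of the case $a<0$ (and of diagonalisability for $a\neq 0$) is exactly the paper's proof: take an eigenvalue $\lambda$, get $\lambda^{2}=a$ from $\xi^{2}=aI$, use the anti-linearity of $j$ and $\xi j=j\xi$ to obtain $\xi(ju)=\overline{\lambda}\,ju$, hence $jE_{\lambda}=E_{\overline{\lambda}}$, and for $\sqrt{a}$ purely imaginary conclude $\C^{4}=E_{\sqrt{a}}\oplus jE_{\sqrt{a}}$ with $W:=E_{\sqrt{a}}$ necessarily non-$j$-stable. On that half of the lemma the two arguments coincide and there is nothing to add.

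For $a\geq 0$, the obstacle you isolate is not a defect of your argument but of the statement itself, and the example you allude to settles it: fix $\sqrt{a}>0$, take complementary $j$-stable $2$-planes $U$ and $U'$ (quaternionic lines), and let $\xi$ act as $\sqrt{a}$ on $U$ and as $-\sqrt{a}$ on $U'$; since these scalars are real and $U$, $U'$ are $j$-stable, $\xi$ is complex-linear, commutes with $j$, and satisfies $\xi^{2}=aI$, yet $\xi\neq\pm\sqrt{a}\,I$. (For $a=0$, a $j$-commuting nilpotent does the same: in the paper's basis $e_{3}=je_{1}$, $e_{4}=-je_{2}$, set $\xi e_{2}=e_{1}$, $\xi e_{4}=-e_{3}$, $\xi e_{1}=\xi e_{3}=0$.) The paper's own proof of this case rests on the bare assertion that for $a>0$ the number $-\sqrt{a}$ is not an eigenvalue of $\xi$; but its mechanism $jE_{\lambda}=E_{\overline{\lambda}}$ gives nothing when $\lambda$ is real beyond the $j$-stability of each eigenspace — which is precisely your observation. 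So you have located a genuine error in the lemma rather than a gap in your own proof, and your instinct to recover the scalar conclusion from the extra data of the applications is the correct repair: the lemma is invoked with $a=-1$ (where the eigenspace argument is complete) and, with $a\geq 0$, only to show that Darboux transformation with $\rho\leq 1$ is trivial, where $T-S=\sqrt{\rho^{-1}-1}\,I$ follows from the scalar initial condition imposed on the Riccati solution (since $S+\sqrt{\rho^{-1}-1}\,I$ is itself a solution, uniqueness of the solution with that initial value gives the claim), not from the pointwise algebra.
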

\begin{proof}
As a complex endomorphism, $\xi$ admits at least one complex
eigenvalue. Given $\lambda\in\C$ an eigenvalue of $\xi$ and $u$ an
eigenvector of $\xi$ associated to $\lambda$,
$$au=\xi^{2}u=\xi(\lambda u)=\lambda\xi u =\lambda^{2}u$$ and,
therefore, $\lambda=\pm\sqrt{a}$. On the other hand, $$\xi(ju)=j\xi
u=j\lambda u=\overline{\lambda}ju,$$ showing that
$\overline{\lambda}$ is an eigenvalue of $\xi$, as well, and that,
for $E_{\lambda}$ and $E_{\overline{\lambda}}$ the eigenspaces
associated to $\lambda$ and $\overline{\lambda}$, respectively, we
have $jE_{\lambda}\subset E_{\overline{\lambda}}$, or, equivalently,
in view of the symmetry of roles between $\lambda$ and
$\overline{\lambda}$, $jE_{\lambda}= E_{\overline{\lambda}}$. It
follows, in particular, that, if $a<0$, and, therefore, $\sqrt{a}$
is purely imaginary, $\overline{\sqrt{a}}=-\sqrt{a}$, the
eigenvalues of $\xi$ are exactly $\sqrt{a}$ and $-\sqrt{a}$ and, as
$\xi$ is diagonalizable, $\underline{\C}^{4}=E_{\sqrt{a}}\oplus
jE_{\sqrt{a}}$. If $a>0$, and so $\sqrt{a}$ is real, $-\sqrt{a}$ is
not an eigenvalue of $\xi$ and, therefore,
$E_{\sqrt{a}}=\underline{\C}^{4}$, completing the proof.
\end{proof}
According to the previous lemma, given
$S\in\mathrm{End}_{j}(\C^{4})$ with $S^{2}=-I$, we have a
decomposition
$$\C^{4}=S_{+}\oplus S_{-},$$
for $S_{+}$ and $S_{-}=jS_{+}$ the eigenspaces of $S$ associated to
$i$ and $-i$, respectively, a notation that will be kept throughout
the chapter.
\begin{rem}
Given $S\in\mathrm{End}_{j}(\C^{4})$, such that $S^{2}=-I$, and
$v_{\pm}\in\wedge^{2}S_{\pm},$ we have
$(v_{\pm},\overline{v_{\pm}})\geq 0$, vanishing if and only if
$v_{\pm}=0$, respectively.
\end{rem}

We define a $2$-\textit{sphere} \textit{in}
\textit{$\wedge^{2}\C^{4}$} to be the complexification of a
$2$-sphere in $\R^{5,1}$. Recall that the $2$-spheres in $\R^{5,1}$
are described as the non-degenerate $4$-planes in $\R^{5,1}$.
\begin{prop}\label{2spheres}
The $2$-spheres in $\wedge^{2}\C^{4}$ are described as the (real
non-degenerate) $4$-planes $S_{+}\wedge S_{-}$ for
$S\in\mathrm{End}_{j}(\C^{4})$ such that $S^{2}=-I$, determined up
to sign.
\end{prop}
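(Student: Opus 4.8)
The plan is to prove the two inclusions separately and then settle the two-to-one nature of the correspondence. Throughout I would use the identification $\wedge^2\C^4=(\mathrm{Fix}(\wedge^2 j))^\C$ from Proposition \ref{Fix51} and the decompositions recorded in equations \eqref{eq:capsSs}--\eqref{eq:wedge2C4decomp}, together with the standard fact that a null line in $\wedge^2\C^4$ is spanned by a decomposable vector (the Klein quadric) and hence corresponds via the Pl\"{u}cker embedding to a $2$-plane in $\C^4$.

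First I would treat the forward direction. Given $S\in\mathrm{End}_j(\C^4)$ with $S^2=-I$, Lemma \ref{formadosendomorfismoscomsquare} yields $\C^4=S_+\oplus S_-$ with $S_-=jS_+$, so $S_+$ and $S_-$ are complementary and $\dim_\C(S_+\wedge S_-)=4$. Since $j$ interchanges $S_+$ and $S_-$, the map $\wedge^2 j$ preserves $S_+\wedge S_-$, which is therefore real. Taking $W=S_+$ and $\hat W=S_-$ in \eqref{eq:MwedgehatMistheorthogonaltowedges} identifies $S_+\wedge S_-$ with $(\wedge^2 S_+\oplus\wedge^2 S_-)^\perp$; as $\wedge^2 S_+\oplus\wedge^2 S_-$ is non-degenerate (the remark following \eqref{eq:capsSs}), so is $S_+\wedge S_-$. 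To confirm that the real $4$-plane $(S_+\wedge S_-)\cap\mathrm{Fix}(\wedge^2 j)$ is a genuine $2$-sphere, i.e.\ has signature $(3,1)$, I would reuse the computation in the proof of Proposition \ref{Fix51} applied to $W=S_+$: there the span of the real null vectors $w_1\wedge jw_1$ and $w_2\wedge jw_2$ contributes a $(1,1)$ block while the real part of $\langle w_1\wedge jw_2, w_2\wedge jw_1\rangle$ contributes a positive-definite $2$-plane, giving $(3,1)$ in total.

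For the converse, let $V$ be a $2$-sphere, that is, the complexification of a non-degenerate (signature $(3,1)$) $4$-plane of $\mathrm{Fix}(\wedge^2 j)$. Then $V^\perp$ is a non-degenerate complex $2$-plane whose real part is positive definite, so the canonical decomposition of a non-degenerate complex $2$-plane into two null lines (recalled in Section 2.1) writes $V^\perp=V_+^\perp\oplus V_-^\perp$, and the positive-definiteness of its real part forces these null lines to be interchanged by $\wedge^2 j$, i.e.\ $V_-^\perp=(\wedge^2 j)V_+^\perp$. Each null line is decomposable, so by the discussion following \eqref{eq:wedge2C4decomp} it determines complementary $2$-planes $W,\hat W$ with $\wedge^2 W=V_+^\perp$, $\wedge^2\hat W=V_-^\perp$ and $V=W\wedge\hat W$. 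The crux is that $\wedge^2\hat W=V_-^\perp=(\wedge^2 j)\wedge^2 W=\wedge^2(jW)$, whence injectivity of the Pl\"{u}cker embedding gives $\hat W=jW$; in particular $W$ is non-$j$-stable and $\C^4=W\oplus jW$. Defining $S$ to be $i$ on $W$ and $-i$ on $jW$, one checks $S\in\mathrm{End}_j(\C^4)$ (using the anti-linearity of $j$) and $S^2=-I$, with $S_+=W$, so that $V=W\wedge jW=S_+\wedge S_-$.

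It remains to show the parametrization is two-to-one. From \eqref{eq:MwedgehatMistheorthogonaltowedges} the $4$-plane $V=S_+\wedge S_-$ recovers $V^\perp=\wedge^2 S_+\oplus\wedge^2 S_-$, hence the unordered pair of null lines $\{\wedge^2 S_+,\wedge^2 S_-\}$ and thus the unordered pair of $2$-planes $\{S_+,S_-\}$; since $S$ acts as $i$ on $S_+$ and $-i$ on $S_-$, interchanging the two labels replaces $S$ by $-S$, so $V$ determines $S$ up to sign. I expect the main obstacle to be the converse direction, specifically pinning down that the reality of $V$ forces the two null summands of $V^\perp$ to be swapped by $\wedge^2 j$ (rather than to be individually real) and that this translates, through Pl\"{u}cker injectivity and the identity $(\wedge^2 j)\wedge^2 W=\wedge^2(jW)$, into $\hat W=jW$. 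This is precisely where the positive-definiteness coming from the $(3,1)$ signature, and hence the quaternionic structure $j$, is essential: a $(4,0)$ $4$-plane would instead produce real null summands and a $j$-stable $W$, admitting no such $S$.
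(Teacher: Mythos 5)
Your proof is correct and follows essentially the same route as the paper's: the forward direction via complementarity, reality and non-degeneracy of $S_{+}\wedge S_{-}$ (through \eqref{eq:MwedgehatMistheorthogonaltowedges} and \eqref{eq:wedge2C4MhatM}), and the converse via the unique decomposition of $V^{\perp}$ into two null lines complex conjugate of each other, producing $V=W\wedge jW$ and $S:=\pm i$ on $W$, $jW$. The only differences are that you make explicit two points the paper leaves implicit — the signature $(3,1)$ verification borrowed from the proof of Proposition \ref{Fix51}, and the Pl\"{u}cker-injectivity step giving $\hat{W}=jW$ — which refines rather than changes the argument.
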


\begin{proof}
The complementarity in $\C^{4}$ of the $2$-planes $S_{+}$ and
$S_{-}$ ensures that $S_{+}\wedge S_{-}$ is a $4$-plane, as well as,
by recalling \eqref{eq:MwedgehatMistheorthogonaltowedges} and
\eqref{eq:wedge2C4MhatM}, its non-degeneracy in $\wedge^{2}\C^{4}$.
The reality of $S_{+}\wedge S_{-}$ is immediate from the fact that
$S_{+}$ and $S_{-}$ are intertwined by $j$. Conversely, a real
non-degenerate $4$-plane $V$ in $\wedge^{2}\C^{4}$ determines, up to
sign, a $j$-commuting endomorphism $S$ of $\C^{4}$, such that
$S^{2}=-I$, for which $V=S_{+}\wedge S_{-}.$ Indeed, for such a $V$,
$V^{\perp}$ is a real non-degenerate complex $2$-plane, admitting,
therefore, a unique decomposition $V^{\perp}=V_{+}^{\perp}\oplus
V_{-}^{\perp}$. as the direct sum of two null complex lines, complex
conjugate of each other. The corresponding decomposition
\eqref{eq:wedge2C4decomp} determines a $2$-plane $W$ in $\C^{4}$ for
which $\wedge^{2}W=V_{+}^{\perp}$ and $V=W\wedge jW$. By
$$S:=I\left\{
\begin{array}{ll} i & \mbox{$\mathrm{on}\,W$}\\-i & \mbox{$\mathrm{on}\,jW$}\end{array}\right.,$$
we define a $j$-commuting endomorphism $S$ of $\C^{4}$ such that
$S^{2}=-I$, determined by $V$ up to sign, for which $V=S_{+}\wedge
S_{-}.$
\end{proof}

Throughout this chapter, we will use the standard identification
$$sl(\C^{4})\cong o((\R^{5,1})^{\C}),$$
of the special linear algebra $sl(\C^{4})$ with the orthogonal
algebra $o(\wedge^{2}\C^{4})$, given by $f\mapsto \hat{f}$ with
\begin{equation}\label{eq:leinitzidentif}
\hat{f} (x\wedge y)= (fx)\wedge y+x\wedge (fy),
\end{equation}
for $f\in sl(\C^{4})$, $\hat{f}\in o(\wedge^{2}\C^{4})$ and $x,y\in
\C^{4}$. Observe that, under this identification, the reality of an
endomorphism of $(\R^{5,1})^{\C}$ corresponds to the
$j$-commutativity of an endomorphism of $\C^{4}$.

We complete this section by presenting a last identification that
shall be used throughout this chapter. Observe that by $g\mapsto
\hat{g}$ with $$\hat{g}(u\wedge v)=gu\wedge gv,$$ for
$g\in\mathrm{Sl}(\C^{4})$, $\hat{g}\in O((\R^{5,1})^{\C})$ and
$u,v\in\C^{4}$, we define a $2:1$ mapping, $$g,-g\mapsto \hat{g},$$
of the special linear group $\mathrm{Sl}(\C^{4})$ onto the
orthogonal group $O((\R^{5,1})^{\C})$. For an orthogonal
transformation
$$\xi=I\left\{
\begin{array}{ll} a & \mbox{$\mathrm{on}\,\wedge^{2}W$}\\ 1 &
\mbox{$\mathrm{on}\,W\wedge \hat{W}$}\\
a^{-1} & \mbox{$\mathrm{on}\,\wedge
^{2}\hat{W}$}\end{array}\right.\in O((\R^{5,1})^{\C}),$$ with $W$
and $\hat{W}$ complementary $2$-planes in $\C^{4}$ and
$a\in\C\backslash\{0\}$, and fixing a choice of $\sqrt a$, we will
still denote by $\xi$ the special linear transformation
$$\xi=I\left\{
\begin{array}{ll} \sqrt{a} & \mbox{$\mathrm{on}\,W$}\\
\sqrt a\,^{-1} & \mbox{$\mathrm{on}\,\hat{W}$}\end{array}\right.\in
\mathrm{Sl}(\C^{4}),$$defined up to sign depending on the choice of
$\sqrt a$. Obviously,
$$\xi(u\wedge v)=\xi u\wedge\xi v,$$for $u,v\in\C^{4}$,
independently of the choice of $\sqrt a$. Observe that, in the
particular case $W$ is a non-$j$-stable $2$-plane and $\hat{W}=jW$,
$\xi$ and $\overline{\xi}$ are related via $j$ by
\begin{equation}\label{eq:xijjxi}
\xi\circ j=j\circ\overline{\xi}.
\end{equation}

\subsection{The mean curvature sphere congruence}
We present the mean curvature sphere congruence  and the Hopf fields
of a surface in $S^{4}$, as defined in
\cite{quaternionsbook}.\newline

Let $L$ be an immersed bundle of $j$-stable $2$-planes in $\C^{4}$.
Consider $M$ provided with the conformal structure induced by the
surface $\wedge^{2}L:M\rightarrow S^{4}$. Given
$S\in\Gamma(\mathrm{End}_{j}(\underline{\C}^{4}))$ such that
\begin{equation}\label{eq:SL=L}
SL=L,
\end{equation}
we define a section of $\mathrm{End}(\underline{\C}^{4}/L)$, which
we still denote by $S$, by $S(x+L):=Sx+L$, for all
$x\in\Gamma(\underline{\C}^{4})$. Consider the derivative
$$\delta:=\pi_{\underline{\C}^{4}/L}\circ d_{\vert L},$$for $\pi_{\underline{\C}^{4}/L}:\underline{\C}^{4}\rightarrow
\underline{\C}^{4}/L$ the canonical projection.
\begin{rem}
Consider the projection
$\pi:\mathcal{L}\rightarrow\mathbb{P}(\mathcal{L})$ for the
light-cone $\mathcal{L}$ in $\R^{5,1}$. Given $l$ a never-zero
section of $L\simeq\wedge^{2}
L:M\rightarrow\mathbb{P}(\mathcal{L})$,  $d\pi_{l}$ gives an
isomorphism $L^{\perp}/L\rightarrow T_{L}\mathbb{P}(\mathcal{L})$
under which the derivative of $L$, $dL=d\pi_{l}\circ
dl=d\pi_{l}\circ \delta l$, is given by $\delta l$,
$$dL=\delta l.$$ Therefore $L$ immerses if and only if
$\mathrm{rank}\,\delta L(TM)=2$.
\end{rem}
As presented in \cite{quaternionsbook},
\begin{defn}\label{eq:mcs}
The \emph{mean curvature sphere (congruence)} of $L$ is defined to
be the unique $j$-commuting complex structure $S$ on
$\underline{\C}^{4}$ such that
$$SL=L,\,\,\,\,(dS)L\subset L,\,\,\,\, *\,\delta=S\circ\delta=\delta\circ S_{\vert L}$$ and
\begin{equation}\label{eq:SDSL=O}
(SdS-*dS)L=0.
\end{equation}
\end{defn}
Once and for all, fix $S$ as the mean curvature sphere of $L$.
Decompose the trivial flat connection on $\underline{\C}^{4}$ as
$d=\mathcal{D}_{S}+\mathcal{N}_{S}$ by the conditions
\begin{equation}\label{eq:NS-SN}
\mathcal{D}_{S}S=0,\,\,\,\mathcal{N}_{S}S=-S\mathcal{N}_{S}.
\end{equation}
Equivalently, set $\mathcal{N}_{S}:=\frac{1}{2}SdS$ and
$\mathcal{D}_{S}:=d-\mathcal{N}_{S}$. Note that conditions
\eqref{eq:NS-SN} are described, equivalently, by
\begin{equation}\label{eq:DS+}
\mathcal{D}_{S}\Gamma(S_{\pm})\subset\Omega^{1}(S_{\pm}),
\end{equation}
together with
\begin{equation}\label{eq:NS+}
\mathcal{N}_{S}S_{\pm}\subset S_{\mp}
\end{equation}
respectively. Consider the congruence $ V= S_{+}\wedge S_{-}$ of
$2$-spheres in $\wedge^{2}\underline{\C}^{4}$. Let
$\pi_{\wedge^{2}S_{+}\oplus\wedge^{2}S_{-}}$ denote the orthogonal
projection of
\begin{equation}\label{eq:w2C4w2s+w2s-s+s-}
\wedge^{2}\underline{\C}^{4}=\wedge^{2}S_{+}\oplus S_{+}\wedge
S_{-}\oplus\wedge^{2}S_{-}
\end{equation}
onto $\wedge^{2}S_{+}\oplus\wedge^{2}S_{-}$. The trivial flat
connection on $\wedge^{2}\underline{\C}^{4}$ relates to the trivial
flat connection on $\underline{\C}^{4}$ by
$$d(u\wedge v)=(du)\wedge v+u\wedge (dv),$$for
$u,v\in\Gamma(\underline{\C}^{4})$. In particular, given
$u\in\Gamma(S_{+}), v\in\Gamma(S_{-})$,
\begin{eqnarray*}
\mathcal{N}_{V}(u\wedge v)&=&(\pi_{V}\circ
d\circ\pi_{V^{\perp}}+\pi_{V^{\perp}}\circ d\circ\pi_{V})(u\wedge
v)\\&=&\pi_{\wedge^{2}S_{+}\oplus\wedge^{2}S_{-}}((du)\wedge
v+u\wedge(dv))\\&=&\pi_{\wedge^{2}S_{+}\oplus\wedge^{2}S_{-}}((\mathcal{D}_{S}u)\wedge
v+(\mathcal{N}_{S}u)\wedge v+ u\wedge (\mathcal{D}_{S}v)+u\wedge
(\mathcal{N}_{S}v))\
\end{eqnarray*}
and, therefore, according to \eqref{eq:DS+} and \eqref{eq:NS+},
$$\mathcal{N}_{V}(u\wedge v)=(\mathcal{N}_{S}u)\wedge v+u\wedge
(\mathcal{N}_{S}v).$$Similarly, we get to the same conclusion for
$u\wedge v\in\Gamma(\wedge^{2}S_{+})$ and $u\wedge
v\in\Gamma(\wedge^{2}S_{-})$. We conclude that
$\mathcal{N}_{V}\cong\mathcal{N}_{S}$, under the identification
defined by \eqref{eq:leinitzidentif}. In particular,
$$\mathcal{N}_{S}j=j\mathcal{N}_{S}.$$

We define a decomposition
$$\mathcal{N}_{S}=A+Q$$ with
$A,Q\in\Omega^{1}(\mathrm{End}_{j}(\underline{\C}^{4}))$ given by
$$*A=SA\,\,\,,\,\,\,*\,Q=-SQ,$$ said to be the
\textit{Hopf} \textit{fields} of $L$, following the terminology of
\cite{quaternionsbook}, as we verify later in this section. Clearly,
$$A=\frac{1}{2}(\mathcal{N}_{S}-S*\mathcal{N}_{S}),\,\,\,\,\,\,Q=\frac{1}{2}(\mathcal{N}_{S}+S*\mathcal{N}_{S}),$$
and, therefore, having in consideration \eqref{eq:NS-SN}, both $A$
and $Q$ anti-commute with $S$,
$$AS=-SA\,\,\,,\,\,\,QS=-SQ.$$
In particular, this makes clear that
\begin{equation}\label{eq:AQSpm}
AS_{\pm}\subset S_{\mp},\,\,\,\,QS_{\pm}\subset S_{\mp},
\end{equation}
respectively. On the other hand, $iA^{1,0}=-*A^{1,0}=-SA^{1,0}$,
showing that $\mathrm{Im}\,A^{1,0}\subset S_{-}$ and, therefore,
that
\begin{equation}\label{eq:A10S-}
A^{1,0}S_{-}=0.
\end{equation}
Analogously, we verify that
\begin{equation}\label{eq:todosondesao0}
A^{0,1}S_{+}=Q^{1,0}S_{+}=Q^{0,1}S_{-}=0.
\end{equation}
Note that, as
\begin{equation}\label{eq:dSNS}
dS=2\mathcal{N}_{S}S,
\end{equation}
we have $Q=\frac{1}{4}\,(SdS-*dS)$. Thus $$QL=0.$$ Lastly, observe
that
$$A^{1,0}s_{+}=\frac{1}{2}\,(\mathcal{N}_{S}^{1,0}s_{+}-iS\mathcal{N}_{S}^{1,0}s_{+})=\mathcal{N}_{S}^{1,0}s_{+}\in\Omega^{1,0}(L_{-}),$$
for $s_{+}\in\Gamma(S_{+})$,
\begin{equation}\label{eq:A10S+L-}
A^{1,0}S_{+}\subset L_{-};
\end{equation}
and, similarly,$$A^{0,1}S_{-}\subset L_{+}$$and
$$Q^{1,0}S_{-}\subset S_{+}\,\,\,,\,\,\,Q^{0,1}S_{+}\subset S_{-}.$$
We complete this section by verifying that our description follows
the description presented in \cite{quaternionsbook}. Given
$\psi\in\Gamma(\underline{\C}^{4})$,
$d(S\psi)=-2S\mathcal{N}_{S}\psi+S\mathcal{D}_{S}\psi+S\mathcal{N}_{S}\psi=-S\mathcal{N}_{S}\psi+S\mathcal{D}_{S}\psi$
and, therefore,
$S(d(S\psi))=\mathcal{N}_{S}\psi-\mathcal{D}_{S}\psi$, as well as
$*\,d(S\psi)=-S*\mathcal{N}_{S}\psi+S*\mathcal{D}_{S}\psi$. Thus
$Q\psi= \frac{1}{4}(d\psi+S*d\psi+S(d(S\psi))-*\,d(S\psi))$, showing
that $Q$ coincides with the one defined in \cite{quaternionsbook}.
The similar conclusion with respect to $A$ follows then from the
fact that
\begin{equation}\label{eq:dSvsAeQ}
dS=2(*\,Q-*A),
\end{equation}
made clear by \eqref{eq:dSNS}.

\subsection{Mean curvature sphere congruence and central sphere
congruence}\label{mcscVScsc}

In codimension $2$, the complexification of the normal $S^{\perp}$
to the central sphere congruence of a surface $\Lambda$ admits a
unique decomposition $S^{\perp}=S^{\perp}_{+}\oplus S^{\perp}_{-}$
into the direct sum of two null complex line bundles, complex
conjugate of each other. Via the Pl\"{u}cker embedding, we identify
$S_{+}^{\perp}$ with some bundle $S_{+}$ of $2$-planes in $\C^{4}$,
and write then $S=S_{+}\wedge jS_{+}$. The mean curvature sphere of
$L\cong\wedge^{2}L=\Lambda$ is defined, up to sign, as the
$j$-commuting complex structure on $\underline{\C}^{4}$ admitting
$S_{+}$ as the eigenspace associated to the eigenvalue $i$ (and,
therefore, $jS_{+}$ as the eigenspace associated to $-i$). This
establishes the close relationship between mean curvature sphere
congruence and the central sphere congruence.\newline

Let $L$ be an immersed bundle of $j$-stable $2$-planes in $\C^{4}$
and consider the surface $$\Lambda:=\wedge^{2}L:M\rightarrow
S^{4}.$$ Let $S$ be the mean curvature sphere of $L$.

\begin{prop}\label{VvsS}
The congruence $V:= S_{+}\wedge S_{-}$, of $2$-spheres in
$\wedge^{2}\C^{4}$, is the complexification of the central sphere
congruence of $\Lambda$.
\end{prop}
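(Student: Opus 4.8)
The plan is to recognise $V=S_{+}\wedge S_{-}$ as the complexified central sphere congruence of $\Lambda=\wedge^{2}L$ by verifying the three properties that characterise the latter: that it is a non-degenerate real rank $4$ subbundle, that it envelops $\Lambda$, and that it satisfies the centrality condition $\mathcal{N}_{V}^{1,0}\Lambda^{0,1}=0=\mathcal{N}_{V}^{0,1}\Lambda^{1,0}$, the central sphere congruence being the unique such subbundle (cf.\ the characterisation preceding Definition \ref{defnccsc} and \eqref{eq:curlyN10Lambda01bbbbb09rgg9987654}). The first property is immediate: by Proposition \ref{2spheres}, $V=S_{+}\wedge S_{-}$ is a real non-degenerate $2$-sphere in $\wedge^{2}\C^{4}$, and the computation in the proof of Proposition \ref{Fix51} (applied to $W=S_{+}$) shows that its real part has signature $(3,1)$, so $V$ is a bundle of $(3,1)$-planes. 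Throughout I would use the two facts already in place: the identification $\mathcal{N}_{V}\cong\mathcal{N}_{S}$ under \eqref{eq:leinitzidentif}, and the Hopf field decomposition $\mathcal{N}_{S}=A+Q$ with its type relations.

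First I would set up the frame. Since $S$ is a $j$-commuting complex structure preserving $L$, the rank $2$ bundle $L$ splits as $L=L_{+}\oplus L_{-}$ with $L_{\pm}:=L\cap S_{\pm}$ of rank $1$ and $L_{-}=jL_{+}$; fixing never-zero $\ell_{\pm}\in\Gamma(L_{\pm})$ gives a never-zero section $\sigma:=\ell_{+}\wedge\ell_{-}$ of $\wedge^{2}L=\Lambda$. The key algebraic input is that $\mathcal{N}_{S}$ carries each $L_{\pm}$ into the \emph{other} line of $L$: combining $QL=0$ with \eqref{eq:A10S-}, \eqref{eq:todosondesao0}, \eqref{eq:A10S+L-} and the conjugate inclusion $A^{0,1}S_{-}\subset L_{+}$, one obtains $\mathcal{N}_{S}\ell_{+}=A^{1,0}\ell_{+}\in\Omega^{1,0}(L_{-})$ and $\mathcal{N}_{S}\ell_{-}=A^{0,1}\ell_{-}\in\Omega^{0,1}(L_{+})$.

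For the enveloping property I would expand $d\sigma=(d\ell_{+})\wedge\ell_{-}+\ell_{+}\wedge(d\ell_{-})$ and split each derivative as $\mathcal{D}_{S}+\mathcal{N}_{S}$. The $\mathcal{D}_{S}$-parts lie in $S_{+}$ and $S_{-}$ respectively, so their wedges land in $S_{+}\wedge S_{-}=V$; the $\mathcal{N}_{S}$-parts wedge to zero, since $\mathcal{N}_{S}\ell_{+}\in\langle\ell_{-}\rangle$ gives $(\mathcal{N}_{S}\ell_{+})\wedge\ell_{-}=0$ and likewise $\ell_{+}\wedge(\mathcal{N}_{S}\ell_{-})=0$. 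Hence $d\sigma\in\Omega^{1}(V)$, so $\Lambda^{(1)}\subset V$ and in particular $\Lambda^{1,0},\Lambda^{0,1}\subset V$. The same bookkeeping identifies, modulo $\Lambda$, the representatives $\sigma_{\bar z}\equiv(\mathcal{D}_{S}^{0,1}\ell_{+})\wedge\ell_{-}$ and $\sigma_{z}\equiv\ell_{+}\wedge(\mathcal{D}_{S}^{1,0}\ell_{-})$, using that the condition $*\,\delta=S\circ\delta$ forces $(\mathcal{D}_{S}\ell_{+})^{1,0}\in L_{+}$ and $(\mathcal{D}_{S}\ell_{-})^{0,1}\in L_{-}$.

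Finally, for centrality: since $V$ envelops $\Lambda$ we have $\mathcal{N}_{V}\sigma=0$ by \eqref{eq:NLambda0}, so it remains to show $\mathcal{N}_{V}^{1,0}\sigma_{\bar z}=0$. Writing $w:=\mathcal{D}_{S}^{0,1}\ell_{+}\in\Gamma(S_{+})$ and using $\mathcal{N}_{V}\cong\mathcal{N}_{S}$ through \eqref{eq:leinitzidentif}, I would compute $\mathcal{N}_{V}^{1,0}(w\wedge\ell_{-})=(\mathcal{N}_{S}^{1,0}w)\wedge\ell_{-}+w\wedge(\mathcal{N}_{S}^{1,0}\ell_{-})$; here $\mathcal{N}_{S}^{1,0}w=A^{1,0}w\in L_{-}=\langle\ell_{-}\rangle$ (by \eqref{eq:todosondesao0} and \eqref{eq:A10S+L-}) kills the first term, while $\mathcal{N}_{S}^{1,0}\ell_{-}=0$ (by \eqref{eq:A10S-} and $QL=0$) kills the second, so $\mathcal{N}_{V}^{1,0}\Lambda^{0,1}=0$, and conjugation yields $\mathcal{N}_{V}^{0,1}\Lambda^{1,0}=0$. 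By uniqueness of the central sphere congruence this forces $V$ to be the complexification of $S_{\Lambda}$. The main obstacle is purely organisational: arranging the eigenspace splitting of $L$ under $S$ together with the precise target/type relations for $A$ and $Q$ so that every unwanted term is manifestly a wedge of two collinear vectors; once $QL=0$ and the Hopf relations are applied correctly, both enveloping and centrality fall out from the first derivatives of $\ell_{\pm}$ alone.
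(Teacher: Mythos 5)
Your proof is correct and takes essentially the same route as the paper's: split $L=L_{+}\oplus L_{-}$, use the defining properties of the mean curvature sphere (packaged by you as the Hopf-field relations $QL=0$, $A^{0,1}S_{+}=0=A^{1,0}S_{-}$, $A^{1,0}S_{+}\subset L_{-}$, $A^{0,1}S_{-}\subset L_{+}$, which the paper derives from those same properties) to show that $V$ envelops $\Lambda$ and satisfies the centrality condition $\mathcal{N}_{V}^{1,0}\Lambda^{0,1}=0=\mathcal{N}_{V}^{0,1}\Lambda^{1,0}$, and conclude by uniqueness of the central sphere congruence of a real surface. The only cosmetic difference is that the paper establishes the exact equalities $\Lambda^{1,0}=L_{+}\wedge S_{-}$ and $\Lambda^{0,1}=L_{-}\wedge S_{+}$, whereas you work with representatives modulo $\Lambda$, which suffices.
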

Before proceeding to the proof of the proposition, time for a few
considerations.

Equation \eqref{eq:SL=L} ensures that, given
$l_{i}=l_{i}^{+}+l_{i}^{-}\in\Gamma(L)$ with
$l_{i}^{\pm}\in\Gamma(S_{\pm})$, respectively, for $i=1,2$,
$l_{1}^{+}$ and $l_{2}^{+}$ are linearly dependent, and so are
$l_{1}^{-}$ and $l_{2}^{-}$. We conclude the existence of a frame of
$L$ composed by a section of $S_{+}$ and a section of $S_{-}$, which
provides a decomposition $$L=L_{+}\oplus L_{-}$$ of $L$ into a sum
of complex line bundles $L_{+}$ and $L_{-}$ with
$$L_{-}=jL_{+},$$namely, $$L_{+}:=L\cap S_{+}\,\,\,,\,\,\,L_{-}:=L\cap
S_{-}.$$ Obviously,
\begin{equation}\label{eq:wedge2LL+L-}
\wedge^{2}L= L_{+}\wedge L_{-}.
\end{equation}
Now we proceed to the proof of Proposition \ref{VvsS}.
\begin{proof}
By \eqref{eq:wedge2LL+L-}, $\wedge^{2}L\subset S_{+}\wedge S_{-}$.
Equations \eqref{eq:SL=L} and \eqref{eq:dSNS}, together with the
fact that $(dS)L\subset L$, show that $\mathcal{N}_{S}L\subset L$
and, therefore, by \eqref{eq:NS+}, that
\begin{equation}\label{eq:NLpmsubsetmp}
\mathcal{N}_{S}L_{\pm}\subset L_{\mp},
\end{equation}
respectively. Provide $M$ with a conformal structure $\mathcal{C}$.
In view of \eqref{eq:NLpmsubsetmp} and \eqref{eq:SL=L}, given
$l=l_{+}+l_{-}$ in $\Gamma(L)$, with $l_{+}\in\Gamma(L_{+})$ and
$l_{-}\in\Gamma(L_{-})$, $S\circ \delta
l=S(dl)+L=S(\mathcal{D}_{S}l_{+}+\mathcal{D}_{S}l_{-})+L$.
Therefore, by \eqref{eq:DS+}, $S\circ \delta
l=i(\mathcal{D}_{S}l_{+}-\mathcal{D}_{S}l_{-})+L$,  whilst $$*\delta
l=-i(d^{1,0}l-d^{0,1}l)+L=-i(\mathcal{D}_{S}^{1,0}l-\mathcal{D}_{S}^{0,1}l)+L.$$
Thus $*\delta l=S\circ \delta l$ if and only if
$\mathcal{D}_{S}^{1,0}l_{+}-\mathcal{D}_{S}^{0,1}l_{-}\in\Gamma(L)$.
By equation $*\,\delta=S\circ\delta$, we conclude that
$$\mathcal{D}_{S}^{1,0}\,\Gamma(L_{+})\subset\Omega^{1,0}(L_{+}),$$
or, equivalently,
$$\mathcal{D}_{S}^{0,1}\,\Gamma(L_{-})\subset\Omega^{0,1}(L_{-}).$$
It follows that, given $z$ holomorphic chart of
$(M,\mathcal{C}_{\Lambda})$ and never-zero $l_{+}\in\Gamma(L_{+})$
and $l_{-}\in\Gamma(L_{-})$,
$$
\Lambda^{1,0}=\langle l_{+}\wedge l_{-},d_{\delta_{z}}(l_{+}\wedge
l_{-})\rangle=\langle l_{+}\wedge
l_{-},l_{+}\wedge(\mathcal{D}_{S})_{\delta_{z}}l_{-}
\rangle,$$showing, in particular, the linear independency of $l_{-}$
and $(\mathcal{D}_{S})_{\delta_{z}}l_{-}$, and, consequently, that
$$\Lambda^{1,0}= L_{+}\wedge S_{-}$$and, by complex
conjugation, $$\Lambda^{0,1}=L_{-}\wedge S_{+}.$$In particular, it
is clear that $V$ envelops the surface $\Lambda$. Lastly, according
to equation \eqref{eq:SDSL=O}, we have, in particular, that, given
$l_{-}\in\Gamma(L_{-})$, $(Sd^{1,0}S+id^{1,0}S)l_{-}=0$, or,
equivalently, $(d^{1,0}S)l_{-}\in\Omega^{1,0}(S_{-})$, which, in its
turn, according to \eqref{eq:NS+}, is equivalent to
$\mathcal{N}_{S}^{1,0}l_{-}=0$. Hence
$$\mathcal{N}_{S}^{1,0}L_{-}=0.$$Similarly, we verify that
$$\mathcal{N}_{S}^{0,1}L_{+}=0.$$On the other hand, according to \eqref{eq:todosondesao0} and \eqref{eq:A10S+L-}, we have
$$\mathcal{N}_{S}^{1,0}S_{+}\subset L_{-}$$and, similarly, $$\mathcal{N}_{S}^{0,1}S_{-}\subset L_{+}.$$
It follows that $\mathcal{N}_{S}^{1,0}(L_{-}\wedge
S_{+})=0=\mathcal{N}_{S}^{0,1}(L_{+}\wedge S_{-})$, i.e.,
$\mathcal{N}_{V}^{1,0}\Lambda^{0,1}=0=\mathcal{N}_{V}^{0,1}\Lambda^{1,0}$,
$V$ is central with respect to $\Lambda$, completing the proof.
\end{proof}

\begin{rem}
The mean curvature sphere of $L$ is, conversely, determined by the
complexification of the central sphere congruence of $\Lambda$, as
follows. Since the complexification $V$ of the central sphere
congruence of $\Lambda$ is a $2$-sphere, $V=S_{+}\wedge S_{-}$, for
some $S\in\mathrm{End}_{j}(\C^{4})$ such that $S^{2}=-I$, determined
up to sign. One of them is the mean curvature sphere of $L$, cf.
Proposition \ref{VvsS}. For the other one, define $\mathcal{D}_{S}$
analogously to the case of the mean curvature sphere. Of course,
$$\mathcal{D}_{-S}=\mathcal{D}_{S}.$$The conformality of sections of $\Lambda$,
characterized by the isotropy of
$$d^{1,0}(l_{+}\wedge
l_{-})=(\mathcal{D}_{S}^{1,0}l_{+})\wedge l_{-}+l_{+}\wedge
(\mathcal{D}_{S}^{1,0}l_{-}),$$or, equivalently,
$$(\mathcal{D}_{S}^{1,0}l_{+})\wedge l_{-}\wedge l_{+}\wedge
(\mathcal{D}_{S}^{1,0}l_{-})=0,$$for all $l_{+}\in\Gamma(L_{+})$ and
$l_{-}\in\Gamma(L_{-})$, amounts, according to \eqref{eq:DS+}, to
either
\begin{equation}\label{eq:nbureckpvnv ucr938uyncm}
\mathcal{D}_{S}^{1,0}\,\Gamma(L_{+})\subset\Omega^{1,0}(L_{+})
\end{equation}
or
$$\mathcal{D}_{S}^{1,0}\,\Gamma(L_{-})\subset\Omega^{1,0}(L_{-}),$$
depending on the sign of $S$. Equation \eqref{eq:nbureckpvnv
ucr938uyncm} determines $S$ for which, in particular, given
$l=l_{+}+l_{-}\in\Gamma(L)$ with $l_{\pm}\in\Gamma(L_{\pm})$
respectively,
$\mathcal{D}_{S}^{1,0}l_{+}-\mathcal{D}_{S}^{0,1}l_{-}\in\Gamma(L),$
or, equivalently, $*\,\delta=S\circ\delta$. $S$ determined in this
way is the mean curvature sphere of $L$.
\end{rem}

\section{Constrained Willmore surfaces in $4$-space}

\markboth{\tiny{A. C. QUINTINO}}{\tiny{CONSTRAINED WILLMORE
SURFACES}}

In this section, we characterize constrained Willmore surfaces in
$4$-space in the quaternionic setting. We present, in particular, a
characterization of this class of surfaces in terms of the closeness
of a certain form, following the characterization of the harmonicity
of the mean curvature sphere presented in
\cite{quaternionsbook}.\newline

Let $L\cong\Lambda\subset\underline{\R}^{5,1}$ be a surface in
$S^{4}.$ Let $S$ be the mean curvature sphere of $L$ and $V$ be the
complexification of the central sphere congruence of $\Lambda$,
$$V=S_{+}\wedge S_{-},$$with orthogonal complement
$$V^{\perp}=\wedge^{2}S_{+}\oplus\wedge^{2} S_{-}.$$
Provide $M$ with the conformal structure induced by $\Lambda$.
Consider the map $j:\C^{4}/L\rightarrow \C^{4}/L$ induced naturally
by the quaternionic structure $j$ in $\C^{4}$, having in
consideration that $L$ is $j$-stable. Constrained Willmore surfaces
in $S^{4}$ are, alternatively, characterized as follows:
\begin{thm}\label{CWinS4vkutywsfgdredhgvcertyq2932h}
An immersed bundle $L$ of $j$-stable $2$-planes in $\C^{4}$ is a
constrained Willmore surface if and only if there exists
$q\in\Omega^{1}(\mathrm{End}_{j}(\underline{\C}^{4}/L,L))$ such that
\begin{equation}\label{eq:Sq=qS=*q}
Sq=*\,q=qS
\end{equation}
and
$$d^{\mathcal{D}_{S}}q=0,\,\,\,\,\,\,d^{\mathcal{D}_{S}}*\mathcal{N}_{S}=2[q\wedge
*\mathcal{N}_{S}].$$
\end{thm}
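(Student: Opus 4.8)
The plan is to deduce the statement from the light-cone characterisation of Theorem \ref{CWeq} by transporting it through the Pl\"ucker dictionary between the $\C^4$-picture and the $\wedge^2\C^4=(\R^{5,1})^\C$-picture. The two ingredients of that dictionary are already in place: by Proposition \ref{VvsS} the congruence $V=S_+\wedge S_-$ is the complexification of the central sphere congruence of $\Lambda=\wedge^2 L$, so that $\mathcal{D}=\mathcal{D}_V$ and $\mathcal{N}=\mathcal{N}_V$; and, under the Lie-algebra isomorphism $sl(\C^4)\cong o(\wedge^2\C^4)$ of \eqref{eq:leinitzidentif}, the computation carried out in the construction of the Hopf fields gives $\mathcal{N}_V\cong\mathcal{N}_S$, whence also $\mathcal{D}_V=d-\mathcal{N}_V\cong d-\mathcal{N}_S=\mathcal{D}_S$. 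Since \eqref{eq:leinitzidentif} intertwines Lie brackets and the trivial connections on $\C^4$ and $\wedge^2\C^4$, it carries $d^{\mathcal{D}_S}$ to $d^{\mathcal{D}}$ and the bracket $[\,\cdot\wedge\cdot\,]$ on $sl(\C^4)$-valued forms to the one on $o(\wedge^2\C^4)$-valued forms, while leaving the Hodge star untouched. Thus the displayed equations $d^{\mathcal{D}_S}q=0$ and $d^{\mathcal{D}_S}*\mathcal{N}_S=2[q\wedge*\mathcal{N}_S]$ will translate, term by term, into $d^{\mathcal{D}}q'=0$ and $d^{\mathcal{D}}*\mathcal{N}=2[q'\wedge*\mathcal{N}]$ for the corresponding $o(\wedge^2\C^4)$-valued form $q'$.

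The heart of the argument is therefore the pointwise correspondence between the two spaces of multipliers, which I would establish in two stages. First, unwinding \eqref{eq:leinitzidentif} together with the wedge-to-endomorphism formula \eqref{eq:isomwedge}, I would show that $\hat f$ takes values in $\Lambda\wedge\Lambda^\perp$ precisely when $f\in\mathrm{End}_j(\C^4/L,L)$, the reality of $\hat f$ matching the $j$-commutativity of $f$ (as recorded after \eqref{eq:leinitzidentif}) and the adaptation of $\mathrm{Im}\,\hat f$ and $\ker\hat f$ to the null line $\Lambda$ forcing $f$ to map $\C^4$ into $L$ and to annihilate $L$; a rank count ($\Lambda\wedge\Lambda^\perp$ and $\mathrm{End}_j(\C^4/L,L)$ are both real rank four) confirms this is a bijection. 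Second, the refinement from $\Lambda\wedge\Lambda^\perp$ down to the real rank-two bundle $\Lambda\wedge\Lambda^{(1)}$ is exactly the extra condition $fS=Sf$, using the identities $\Lambda^{1,0}=L_+\wedge S_-$ and $\Lambda^{0,1}=L_-\wedge S_+$ from the proof of Proposition \ref{VvsS}. This identifies the real forms $q'\in\Omega^1(\Lambda\wedge\Lambda^{(1)})$ of Theorem \ref{CWeq} with the $S$-commuting forms $q\in\Omega^1(\mathrm{End}_j(\C^4/L,L))$, the condition $qS=Sq$ being built into the correspondence.

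It remains to account for $Sq=*q$, which I would extract from closedness. By Lemma \ref{withvswithoutdecomps}(i), $d^{\mathcal{D}}q'=0$ forces $q'^{1,0}\in\Omega^{1,0}(\Lambda\wedge\Lambda^{0,1})$ and $q'^{0,1}\in\Omega^{0,1}(\Lambda\wedge\Lambda^{1,0})$. Under the dictionary, $q'^{1,0}\in\Lambda\wedge\Lambda^{0,1}=\Lambda^{0,1}\wedge\Lambda^{0,1}$ translates into $\mathrm{Im}\,q^{1,0}\subset L_-$, i.e. $Sq^{1,0}=-iq^{1,0}=*q^{1,0}$, and likewise $Sq^{0,1}=iq^{0,1}=*q^{0,1}$, so that $Sq=*q$; conversely $Sq=*q$ returns this decomposition, so the translation is faithful. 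Assembling the two directions: for the forward implication, Theorem \ref{ffffffffadeaeddaeae} (equivalently Theorem \ref{CWeq}) furnishes a real $q'\in\Omega^1(\Lambda\wedge\Lambda^{(1)})$ solving the two structure equations, which transports to an $S$-commuting $q\in\Omega^1(\mathrm{End}_j(\C^4/L,L))$ with $Sq=*q=qS$ solving $d^{\mathcal{D}_S}q=0$ and $d^{\mathcal{D}_S}*\mathcal{N}_S=2[q\wedge*\mathcal{N}_S]$; for the converse, such a $q$ transports back to a multiplier $q'$ for $\Lambda$, so $\Lambda=\wedge^2L$ is constrained Willmore by Theorem \ref{CWeq}.

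The main obstacle I anticipate is the bookkeeping in the first step: verifying cleanly that membership in $\Lambda\wedge\Lambda^{(1)}$ (rather than merely in $\Lambda\wedge\Lambda^\perp$) is equivalent to $S$-commutativity, and that the bracket $[q'\wedge*\mathcal{N}]$ computed in $o(\wedge^2\C^4)$ really does match $[q\wedge*\mathcal{N}_S]$ computed in $sl(\C^4)$, since the latter uses composition of $\C^4$-endomorphisms while the former lives in the exterior-power representation. Both reduce to the derivation property of \eqref{eq:leinitzidentif}, but the index manipulations with $S_\pm$ and $L_\pm$ must be handled carefully; once they are, the rest is a direct transcription of the light-cone result.
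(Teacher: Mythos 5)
Your proposal is correct and follows essentially the same route as the paper's own proof: both transport Theorem \ref{CWeq} through the identification $sl(\C^{4})\cong o(\wedge^{2}\C^{4})$, establish the correspondence between real forms valued in $\Lambda\wedge\Lambda^{(1)}$ and $S$-commuting, $j$-commuting forms valued in $\mathrm{End}_{j}(\underline{\C}^{4}/L,L)$ in the same two stages (adaptation to $L$ for $\Lambda\wedge\Lambda^{\perp}$, then $S$-commutativity for the refinement to $\Lambda\wedge\Lambda^{(1)}$), and extract $Sq=*\,q$ from $d^{\mathcal{D}}q=0$ via Lemma \ref{withvswithoutdecomps} exactly as the paper does. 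The only divergence is cosmetic: where you invoke a rank count, the paper carries out an explicit frame computation to show that $q\Lambda=0$ and $q\Lambda^{\perp}\subset\Lambda$ force $qL=0$ and $\mathrm{Im}\,q\subset L$.
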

In the proof of the theorem, we establish a correspondence between
$1$-forms with values in $\Lambda\wedge\Lambda^{(1)}$ and
$S$-commuting $1$-forms with values in
$\mathrm{End}_{j}(\underline{\C}^{4}/L,L)$, under the the
identification $sl(\C^{4})\cong o((\R^{5,1})^{\C})$ presented in
Section \ref{LinAlg}. Under this correspondence, the conditions on
$q$ in Theorem \ref{CWinS4vkutywsfgdredhgvcertyq2932h} characterize
$q$ as a multiplier to $\Lambda$.

Next we prove Theorem \ref{CWinS4vkutywsfgdredhgvcertyq2932h}.
\begin{proof}
First note that
$$\Lambda^{\perp}=\underline{\C}^{4}\wedge L.$$Let $q$ be a $1$-form with values in
$o((\underline{\R}^{5,1})^{\C})=sl(\underline{\C}^{4})$. Suppose
that $q\Lambda=0$ and $q\Lambda^{\perp}\subset\Lambda$.  Let
$l,l',u,v$ be a frame of $\underline{\C}^{4}$ with
$l,l'\in\Gamma(L)$. The fact that $q$ vanishes in $\Lambda$,
equivalent to
\begin{equation}\label{eq:qll'l'ql=0}
(ql)\wedge l'+l\wedge (ql')=0,
\end{equation}
shows that $ql$ has no component in $\langle u\rangle$ nor in
$\langle v\rangle$. On the other hand, since
$q\Lambda^{\perp}\subset \Lambda$, we have, in particular,
\begin{equation}\label{eq:quwedgel+lwedgequ}
(qu)\wedge l+u\wedge (ql)\in\Gamma(\langle l\wedge l'\rangle),
\end{equation}
forcing the component of $ql$ in $\langle l'\rangle $ to vanish.
Hence $ql\in\Gamma(\langle l\rangle)$, and, by symmetry of roles,
$ql'\in\Gamma(\langle l'\rangle)$. Equation \eqref{eq:qll'l'ql=0}
shows now that, if $ql=al$, with $a\in\Gamma(\underline{\C})$, then
$ql'=-al'$. By the skew-symmetry of $q$, it follows that
$$-a^{2}(l\wedge l',u\wedge v)=-(l\wedge l',qu\wedge qv).$$ Equation
\eqref{eq:quwedgel+lwedgequ} forces, on the other hand, the
component of $qu$ in $\langle v\rangle$ to vanish. Similarly, the
fact that
$$(qv)\wedge l+v\wedge (ql)\in\Gamma(\langle l\wedge l'\rangle)$$shows,
in particular, that the component of $qv$ in $\langle u\rangle$
vanishes. Thus $a=0$. We conclude that $qL=0$. Yet again by
$q\Lambda^{\perp}\subset \Lambda$, it follows that
$\mathrm{Im}\,q\subset L$. It is clear that, conversely, if $qL=0$
and $\mathrm{Im}\,q\subset L$, then $q\Lambda=0$ and
$q\Lambda^{\perp}\subset\Lambda$. We conclude that $q$
 takes values in $\Lambda\wedge\Lambda^{\perp}$, or, equivalently, $q\Lambda=0$ and $q\Lambda^{\perp}\subset\Lambda,$
if and only if $$qL=0,\,\,\,\mathrm{Im}\,q\subset L,$$i.e., $q$
defines a $1$-form with values in
$\mathrm{End}(\underline{\C}^{4}/L,L)$. Note that
$$\Lambda\wedge\Lambda^{(1)}=\Lambda\wedge\Lambda^{\perp}\cap(\wedge^{2}V\oplus \wedge^{2}V^{\perp}).$$
Now observe that if $q$ preserves $S_{+}\wedge S_{-}$ then, given
$s_{+}\in\Gamma(S_{+})$ and $s_{-}\in\Gamma(S_{+})$, the component
of $qs_{+}$ in $\langle s_{-}\rangle$ vanishes, as well as the
component of $qs_{-}$ in $\langle s_{+}\rangle$ does, which amounts
to $q$ preserving both $S_{+}$ and $S_{-}$. It is clear that,
conversely, if $q$ preserves both $S_{+}$ and $S_{-}$, then $q$
preserves $S_{+}\wedge S_{-}$, $\wedge^{2}S_{+}$ and
$\wedge^{2}S_{-}$. Hence $q$ takes values in $(\wedge^{2}V\oplus
\wedge^{2}V^{\perp})$, or, equivalently, $q$ preserves both $V$ and
$V^{\perp}$, if and only if $q$ preserves the eigenspaces of $S$. In
its turn, $q$ preserves $S_{\pm}$ if and only if $q$ commutes with
$S$ in $S_{\pm}$, respectively.

Now recall Lemma \ref{withvswithoutdecomps}, establishing that, if
$q\in\Omega^{1}(\Lambda\wedge\Lambda^{(1)})$ is real and
$d^{\mathcal{D}_{V}}q$ vanishes, then $q^{1,0}\in\Omega
^{1,0}(\Lambda \wedge\Lambda ^{0,1})$, or, equivalently,
$q^{0,1}\in\Omega ^{0,1}(\Lambda \wedge\Lambda ^{1,0})$. In that
case, $q^{1,0}(L_{-}\wedge S_{+})=0=q^{0,1}(L_{+}\wedge S_{-})$, or,
equivalently, $L_{-}\wedge q^{1,0}S_{+}=0=L_{+}\wedge q^{0,1}S_{-}$,
and, therefore,
$$q^{1,0}S_{+}=0=q^{0,1}S_{-},$$as $q^{1,0}S_{+}\subset L_{+}$ and $q^{0,1}S_{-}\subset L_{-}$.
Hence
$$\mathrm{Im}\,q^{1,0}\subset S_{-},\,\,\,\,\mathrm{Im}\,q^{0,1}\subset
S_{+},$$showing that
$$Sq^{1,0}=-iq^{1,0}=*\,q^{1,0},\,\,\,\,\,\,Sq^{0,1}=iq^{0,1}=*\,q^{0,1}$$and,
ultimately, that $Sq=*\,q$, completing the proof.
\end{proof}

Following the characterization of the harmonicity of $S$ presented
in \cite{quaternionsbook}, we have, more generally:

\begin{thm}\label{CWinS4charact}
Suppose $q\in\Omega^{1}(\mathrm{End}_{j}(\underline{\C}^{4}/L,L))$
satisfies condition \eqref{eq:Sq=qS=*q}. An immersed bundle $L$ of
$j$-stable $2$-planes in $\C^{4}$ is a $q$-constrained Willmore
surface if and only if either of the following equations is
verified:

$i)$\,\,$d*\mathcal{N}_{S}+2\,d*\,q=0;$

$ii)$\,\,$d*(A+q)=0;$

$iii)$\,\,$d*(Q+q)=0.$
\end{thm}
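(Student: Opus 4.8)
The plan is to reduce everything to the characterization of $q$-constrained Willmore surfaces already obtained in Theorem \ref{CWinS4vkutywsfgdredhgvcertyq2932h}: for $q$ satisfying \eqref{eq:Sq=qS=*q}, the bundle $L$ is $q$-constrained Willmore if and only if $d^{\mathcal{D}_{S}}q=0$ and $d^{\mathcal{D}_{S}}*\mathcal{N}_{S}=2[q\wedge*\mathcal{N}_{S}]$. I would first record the trivial but useful remark that, since $\mathcal{N}_{S}=A+Q$, the three $2$-forms in $i)$, $ii)$, $iii)$ are linked by $*\mathcal{N}_{S}+2*q=*(A+q)+*(Q+q)$, so that the form in $i)$ is the sum of those in $ii)$ and $iii)$. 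The argument then splits into two parts: showing that $i)$, $ii)$, $iii)$ are equivalent to one another, and showing that $i)$ is equivalent to the pair of conditions from Theorem \ref{CWinS4vkutywsfgdredhgvcertyq2932h}.

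For the equivalence of $i)$, $ii)$, $iii)$, the key observation is that $A$ and $Q$ were defined by $*A=SA$ and $*Q=-SQ$, whence $*(A-Q)=S(A+Q)=S\mathcal{N}_{S}$; and since, by definition, $\mathcal{N}_{S}=\tfrac12 S\,dS$ and $S^{2}=-I$, one gets $S\mathcal{N}_{S}=-\tfrac12\,dS$. Consequently $d*(A-Q)=-\tfrac12\,d(dS)=0$ by flatness of the trivial connection. Hence $d*(A+q)=d*(Q+q)$, and, by the previous remark, $d*\mathcal{N}_{S}+2d*q=2\,d*(A+q)$; so the three conditions are manifestly equivalent, and it suffices to treat $i)$.

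For the second part I would pass from the trivial connection to $\mathcal{D}_{S}$ via \eqref{eq:ext}, writing $d\xi=d^{\mathcal{D}_{S}}\xi+[\mathcal{N}_{S}\wedge\xi]$ for endomorphism-valued $1$-forms $\xi$. A type computation (using \eqref{eq:ijwedgeij0} and \eqref{eq:starseliebracks} together with the symmetry of the bracket on $1$-forms) gives $[\mathcal{N}_{S}\wedge*\mathcal{N}_{S}]=0$ and $[\mathcal{N}_{S}\wedge*q]=-[q\wedge*\mathcal{N}_{S}]$; moreover $*q=Sq$ together with $\mathcal{D}_{S}S=0$ (cf. \eqref{eq:NS-SN}) yields $d^{\mathcal{D}_{S}}*q=S\,d^{\mathcal{D}_{S}}q$. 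Combining these, the $2$-form of $i)$ becomes
\[
d*\mathcal{N}_{S}+2d*q=\bigl(d^{\mathcal{D}_{S}}*\mathcal{N}_{S}-2[q\wedge*\mathcal{N}_{S}]\bigr)+2S\,d^{\mathcal{D}_{S}}q.
\]
Since $q$ satisfies \eqref{eq:Sq=qS=*q}, it takes values in $\wedge^{2}V\oplus\wedge^{2}V^{\perp}$ (as in the proof of Theorem \ref{CWinS4vkutywsfgdredhgvcertyq2932h}, with $V=S_{+}\wedge S_{-}$), so by \eqref{eq:11111111qwertyuiopmjsj55s} and the $\mathcal{D}_{S}$-invariance of $V$ and $V^{\perp}$ the term $2S\,d^{\mathcal{D}_{S}}q$ lies in $\Omega^{2}(\wedge^{2}V\oplus\wedge^{2}V^{\perp})$, while by \eqref{eq:cdfghyjuklguyeifrojhgfdxmnbvcxcghyrd5tr3276y348iu549549886} the first bracketed term lies in $\Omega^{2}(V\wedge V^{\perp})$. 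As these subbundles are complementary (the complexification of \eqref{eq:PoplusM}), the vanishing of the whole expression is equivalent to the separate vanishing of each summand, that is, to $d^{\mathcal{D}_{S}}*\mathcal{N}_{S}=2[q\wedge*\mathcal{N}_{S}]$ and, $S$ being invertible, $d^{\mathcal{D}_{S}}q=0$ — precisely the conditions of Theorem \ref{CWinS4vkutywsfgdredhgvcertyq2932h}.

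The main obstacle I anticipate is the bookkeeping in this last splitting: one must check carefully that the hypotheses $Sq=*q=qS$ force $q$ into $\wedge^{2}V\oplus\wedge^{2}V^{\perp}$ with the correct $(1,0)$/$(0,1)$ type, so that $S\,d^{\mathcal{D}_{S}}q$ genuinely remains in the $\wedge^{2}V\oplus\wedge^{2}V^{\perp}$ summand and does not leak into $V\wedge V^{\perp}$; this is exactly what decouples the Codazzi-type equation $d^{\mathcal{D}_{S}}*\mathcal{N}_{S}=2[q\wedge*\mathcal{N}_{S}]$ from the closedness $d^{\mathcal{D}_{S}}q=0$. By contrast, the identity $*(A-Q)=-\tfrac12\,dS$ driving the equivalence of $i)$, $ii)$, $iii)$ is a short and robust computation once the defining relations of $A$, $Q$ and $\mathcal{N}_{S}$ are in hand.
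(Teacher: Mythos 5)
Your proposal is correct and follows essentially the same route as the paper's proof: the same reduction $d*\mathcal{N}_{S}+2\,d*q=\bigl(d^{\mathcal{D}_{S}}*\mathcal{N}_{S}-2[q\wedge*\mathcal{N}_{S}]\bigr)+2S\,d^{\mathcal{D}_{S}}q$ via $d^{\mathcal{D}_{S}}*q=S\,d^{\mathcal{D}_{S}}q$, $[\mathcal{N}_{S}\wedge*\mathcal{N}_{S}]=0$ and $[\mathcal{N}_{S}\wedge*q]=-[q\wedge*\mathcal{N}_{S}]$, the same splitting of this sum into two complementary pieces, and the same identity $d*A=d*Q$ (from $dS=2(*Q-*A)$) for the equivalence of $i)$, $ii)$, $iii)$. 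The only cosmetic difference is that the paper performs the splitting as the decomposition of $\mathrm{End}_{j}(\underline{\C}^{4})$-valued $2$-forms into $S$-commuting and $S$-anti-commuting parts, which is precisely your $\wedge^{2}V\oplus\wedge^{2}V^{\perp}$ versus $V\wedge V^{\perp}$ decomposition read through the identification $sl(\underline{\C}^{4})\cong o(\wedge^{2}\underline{\C}^{4})$, and working with commutators directly disposes of the bookkeeping you flag at the end.
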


Before proceeding to the proof of the theorem, observe that equation
\eqref{eq:dSvsAeQ} establishes, in particular,
\begin{equation}\label{dstarAisdstarQ}
d*A=d*\,Q.
\end{equation}

Now we prove Theorem \ref{CWinS4charact}.
\begin{proof}
In view of \eqref{eq:Sq=qS=*q},
\begin{equation}\label{eq:ddsdddddddddddddddddddddpzz2}
d^{\mathcal{D}_{S}}*\,q=d^{\mathcal{D}_{S}}Sq=(\mathcal{D}_{S}S)\wedge
q+Sd^{\mathcal{D}_{S}}q=Sd^{\mathcal{D}_{S}}q,
\end{equation}
and, therefore, we have
$d*\,q=[\mathcal{N}_{S}\wedge*\,q]=-[q\wedge*\mathcal{N}_{S}]$ if
and only if $d^{\mathcal{D}_{S}}q=0.$ In particular, if $L$ is a
constrained Willmore surface, then
$$d*\mathcal{N}_{S}+2\,d*\,q=d^{\mathcal{D}_{S}}*\mathcal{N}_{S}+[\mathcal{N}_{S}\wedge
*\mathcal{N}_{S}]-2[q\wedge*\mathcal{N}_{S}]=0.$$Yet again in view of
\eqref{eq:Sq=qS=*q},
$(d^{\mathcal{D}_{S}}*\,q)S=d^{\mathcal{D}_{S}}(*\,qS)+*\,q\wedge(\mathcal{D}_{S}S)=-d^{\mathcal{D}_{S}}q$,
whereas, following \eqref{eq:ddsdddddddddddddddddddddpzz2},
$S\,d^{\mathcal{D}_{S}}*\,q=-d^{\mathcal{D}_{S}}q$. Thus
$$S\,d^{\mathcal{D}_{S}}*\,q=(d^{\mathcal{D}_{S}}*\,q)S.$$
Similarly, as
\begin{equation}\label{eq:NSSN-Q+A}
*\mathcal{N}_{S}S=-Q+A=-S*\mathcal{N}_{S},
\end{equation}
we have
$$(d^{\mathcal{D}_{S}}*\mathcal{N}_{S})S=d^{\mathcal{D}_{S}}(*\mathcal{N}_{S}S)+*\mathcal{N}_{S}\wedge(\mathcal{D}_{S}S)=d^{\mathcal{D}_{S}}(-Q+A),$$
whereas
$$d^{\mathcal{D}_{S}}*\mathcal{N}_{S}=d^{\mathcal{D}_{S}}(S(-Q+A))=(\mathcal{D}_{S}S)\wedge
(-Q+A)+S\,d^{\mathcal{D}_{S}}(-Q+A)=S\,d^{\mathcal{D}_{S}}(-Q+A),$$and,
therefore,
$$S\,d^{\mathcal{D}_{S}}*\mathcal{N}_{S}=-(d^{\mathcal{D}_{S}}*\mathcal{N}_{S})S.$$
On the other hand, together, \eqref{eq:Sq=qS=*q} and
\eqref{eq:NSSN-Q+A} establish
$$[\mathcal{N}_{S}\wedge *\,q]S=-S[\mathcal{N}_{S}\wedge*\,q].$$It follows that, if equation $i)$ holds,
$0=d^{\mathcal{D}_{S}}*\mathcal{N}_{S}+2d^{\mathcal{D}_{S}}*\,q+2[\mathcal{N}_{S}\wedge
*\,q]$, then, equivalently,
$d^{\mathcal{D}_{S}}*\mathcal{N}_{S}+2[\mathcal{N}_{S}\wedge
*\,q]=0=2d^{\mathcal{D}_{S}}*\,q$, by separating $S$-commuting and
$S$-anti-commuting parts.

Equation \eqref{dstarAisdstarQ} establishes the equivalence of $i)$
and either equations $ii)$ or $iii)$, completing the proof.
\end{proof}

We complete this section with an approach to constrained Willmore
surfaces in $4$-space in terms of flatness of connections. Let
$q\in\Omega^{1}(\mathrm{End}_{j}(\underline{\C}^{4}/L,L))$. For each
$\lambda\in\C\backslash\{0\}$, define a connection on
$\underline{\C}^{4}$ by
$$d^{\lambda,q}_{S}:=\mathcal{D}_{S}+\lambda^{-1}\mathcal{N}_{S}^{1,0}+\lambda\mathcal{N}^{0,1}_{S}+(\lambda^{-2}-1)q^{1,0}+(\lambda^{2}-1)q^{0,1}.$$
For each $\lambda$, the connection $d^{\lambda,q}_{V}$, on
$\wedge^{2}\underline{\C}^{4}$, relates to $d^{\lambda,q}_{S}$ via
equation \eqref{eq:leinitzidentif}, and so do, therefore, the
respective curvature tensors,
$$R^{d^{\lambda,q}_{V}}(\sigma_{1}\wedge\sigma_{2})=(R^{d^{\lambda,q}_{S}}\sigma_{1})\wedge\sigma_{2}+\sigma_{1}\wedge
(R^{d^{\lambda,q}_{S}}\sigma_{2}),$$for
$\sigma_{1},\sigma_{2}\in\Gamma(\underline{\C}^{4})$. Hence $L$ is
$q$-constrained Willmore if and only if $d^{\lambda,q}_{S}$ is flat,
for all $\lambda\in\C\backslash\{0\}$. For
$\lambda\in\C\backslash\{0\}$, define an orthogonal transformation
of $\wedge^{2}\underline{\C}^{4}$ by
$$\tau (\lambda)
:=I\left\{
\begin{array}{ll} \lambda^{-1} & \mbox{$\mathrm{on}\,\wedge^{2}S_{+}$}\\ 1 &
\mbox{$\mathrm{on}\,S_{+}\wedge S_{-}$}\\ \lambda &
\mbox{$\mathrm{on}\,\wedge^{2}S_{-}$}\end{array}\right..$$ Given
$\lambda\in\C\backslash\{0\}$, we define a new connection on
$\wedge^{2}\underline{\C}^{4}$ by
$$d_{\lambda^{2},q}^{V}:=\tau(\lambda)\circ d^{\lambda,q}_{V}\circ
\tau(\lambda)^{-1};$$as well as, fixing a choice of
$\sqrt{\lambda}$, and, with respect to (and independently of) this
choice, a new connection on $\underline{\C}^{4}$ by
$$d_{\lambda^{2},q}^{S}:=\tau(\lambda)\circ d^{\lambda,q}_{S}\circ
\tau(\lambda)^{-1},$$for $\tau \in\Gamma(Sl(\underline{\C}^{4}))$ as
defined in Section \ref{LinAlg}. These definitions only depend,
indeed, on $\lambda^{2}$, as, in particular, the next lemma makes
clear (note that $d_{\lambda^{2},q}^{V}$ is related to
$d_{\lambda^{2},q}^{S}$ by equation \eqref{eq:leinitzidentif}). The
curvature tensors of $d^{V}_{\lambda^{2},q}$ and $d^{\lambda,q}_{V}$
are related by
$$R^{d^{V}_{\lambda^{2},q}}=\tau(\lambda)\,R^{d^{\lambda,q}_{V}}\,\tau(\lambda)^{-1},$$showing
that the flatness of $d^{V}_{\lambda^{2},q}$, or, equivalently, that
of $d_{\lambda^{2},q}^{S}$, for all $\lambda\in\C\backslash\{0\}$,
provides another characterization of the $q$-constrained Willmore
condition of $L$. It is obvious but it will be useful to remark
that, for each $\lambda$, the $d_{\lambda^{2},q}^{S}$-parallelness
of a bundle $W\subset\underline{\C}^{4}$ is equivalent to the
$d_{\lambda^{2},q}^{V}$-parallelness of $\wedge^{2}W\subset
\wedge^{2}\underline{\C}^{4}$.

\begin{Lemma}\label{d^S_lambda^2orm}
For each $\lambda\in\C\backslash\{0\}$,
$$d_{\lambda^{2},q}^{S}=
d+(\lambda^{-2}-1)(Q^{1,0}+q^{1,0})+(\lambda^{2}-1)(Q^{0,1}+q^{0,1}).$$
\end{Lemma}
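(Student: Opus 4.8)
My goal is to compute the conjugated connection $d^{S}_{\lambda^{2},q}:=\tau(\lambda)\circ d^{\lambda,q}_{S}\circ\tau(\lambda)^{-1}$ term by term and verify that the result has the claimed form, in particular that it depends only on $\lambda^{2}$ and not on the choice of $\sqrt{\lambda}$. First I would write out
$$d^{\lambda,q}_{S}=\mathcal{D}_{S}+\lambda^{-1}\mathcal{N}_{S}^{1,0}+\lambda\mathcal{N}_{S}^{0,1}+(\lambda^{-2}-1)q^{1,0}+(\lambda^{2}-1)q^{0,1}$$
and conjugate each summand by $\tau(\lambda)$, recalling that $\tau(\lambda)$ acts as $\sqrt{\lambda}\,^{-1}$ on $S_{+}$ and $\sqrt{\lambda}$ on $S_{-}$ under the $Sl(\C^{4})$-identification of Section \ref{LinAlg}. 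The key observation is that $\tau(\lambda)$ commutes with $\mathcal{D}_{S}$ (since $\mathcal{D}_{S}$ preserves $S_{\pm}$, cf. \eqref{eq:DS+}) and hence $\mathrm{Ad}_{\tau(\lambda)}\mathcal{D}_{S}$ differs from $\mathcal{D}_{S}$ only by the $1$-form $\tau(\lambda)(\mathcal{D}_{S}\tau(\lambda)^{-1})$, which vanishes because $\tau(\lambda)$ is $\mathcal{D}_{S}$-parallel (its eigenbundles $S_{\pm}$ are $\mathcal{D}_{S}$-parallel and the eigenvalues are constant). So $\mathrm{Ad}_{\tau(\lambda)}\mathcal{D}_{S}=\mathcal{D}_{S}$.

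Next I would track how $\tau(\lambda)$ scales the off-diagonal pieces. Since $\mathcal{N}_{S}$, $A$, $Q$ and $q$ all \emph{anti-commute} with $S$ (cf. \eqref{eq:NS-SN}, and $AS=-SA$, $QS=-SQ$), they interchange $S_{+}$ and $S_{-}$, so conjugation by $\tau(\lambda)$ multiplies them by $\lambda^{\pm 1}$ depending on which eigenbundle they map into. Using the type decompositions $\mathrm{Im}\,\mathcal{N}_{S}^{1,0}\subset S_{-}$, $\mathrm{Im}\,\mathcal{N}_{S}^{0,1}\subset S_{+}$ (equivalently \eqref{eq:todosondesao0}, \eqref{eq:A10S-} and the splitting $\mathcal{N}_{S}=A+Q$ with $*A=SA$, $*Q=-SQ$), together with $\mathrm{Im}\,q^{1,0}\subset S_{-}$, $\mathrm{Im}\,q^{0,1}\subset S_{+}$ established in the proof of Theorem \ref{CWinS4vkutywsfgdredhgvcertyq2932h}, I would compute that $\mathrm{Ad}_{\tau(\lambda)}$ multiplies $\mathcal{N}_{S}^{1,0}$ and $q^{1,0}$ by $\lambda^{-1}$ (a map into $S_{-}$ from $S_{+}$ picks up $\sqrt\lambda\cdot\sqrt\lambda=\lambda$, combined with the prefactor $\lambda^{-1}$ or $(\lambda^{-2}-1)$), and $\mathcal{N}_{S}^{0,1}$, $q^{0,1}$ by $\lambda$. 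The crucial point is that the half-integer powers of $\sqrt{\lambda}$ cancel in every term, which is why $d^{S}_{\lambda^{2},q}$ is well-defined independently of the sign of $\sqrt{\lambda}$.

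Carrying this out, the $\mathcal{N}_{S}^{1,0}$ term becomes $\lambda^{-2}\mathcal{N}_{S}^{1,0}$, the $\mathcal{N}_{S}^{0,1}$ term becomes $\lambda^{2}\mathcal{N}_{S}^{0,1}$, and the $q$-terms become $(\lambda^{-2}-1)\lambda^{-1}\cdot(\text{scale})$; regrouping and writing $\mathcal{N}_{S}=A+Q$ with the $(1,0)$ and $(0,1)$ pieces of $Q$ absorbed, I expect the diagonal contribution to reassemble into the trivial connection $d=\mathcal{D}_{S}+A+Q$ (using $\mathcal{N}_{S}=A+Q$ and the relation $d=\mathcal{D}_S+\mathcal{N}_S$), leaving precisely the excess terms $(\lambda^{-2}-1)(Q^{1,0}+q^{1,0})+(\lambda^{2}-1)(Q^{0,1}+q^{0,1})$. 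The main obstacle will be bookkeeping the scaling factors correctly through the $A$-versus-$Q$ split: since $A$ and $Q$ both anti-commute with $S$ but have \emph{opposite} signs of $*A=SA$ vs.\ $*Q=-SQ$, their $(1,0)$ and $(0,1)$ components sit in different eigenbundles, and I must verify that $A^{1,0}$ and $A^{0,1}$ recombine to cancel against the untransformed $A$ in $d$, while only the $Q$-pieces survive with nontrivial $\lambda$-dependence. I would check this cancellation explicitly using $\mathrm{Im}\,A^{1,0}\subset S_{-}$, $\mathrm{Im}\,Q^{1,0}\subset S_{+}$ and their conjugates, confirming that $A$ scales the same way $\mathcal{N}_S$'s total does (reproducing $d$) whereas $Q$ picks up the asymmetric factors. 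Once the algebra lines up, the identity follows.
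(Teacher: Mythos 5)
Your overall strategy --- conjugate $d^{\lambda,q}_{S}$ term by term, observe that $\mathrm{Ad}_{\tau(\lambda)}\mathcal{D}_{S}=\mathcal{D}_{S}$, and resolve the $A$-versus-$Q$ bookkeeping so that $\mathcal{D}_{S}+A+Q$ reassembles into $d$ while only the $Q$-pieces acquire $\lambda$-dependence --- is exactly the paper's argument, and your handling of $\mathcal{D}_{S}$, $A$ and $Q$ (via \eqref{eq:DS+}, \eqref{eq:AQSpm}, \eqref{eq:A10S-}, \eqref{eq:todosondesao0}) is correct in substance. But your treatment of the multiplier $q$ contains a genuine error. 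You assert that $q$ \emph{anti-commutes} with $S$, hence interchanges $S_{+}$ and $S_{-}$, so that $\mathrm{Ad}_{\tau(\lambda)}$ scales $q^{1,0}$ and $q^{0,1}$ by nontrivial powers of $\lambda$. This is false: by hypothesis \eqref{eq:Sq=qS=*q}, $q$ \emph{commutes} with $S$, $Sq=qS$; only $\mathcal{N}_{S}$, $A$ and $Q$ anti-commute with it. The facts you cite from the proof of Theorem \ref{CWinS4vkutywsfgdredhgvcertyq2932h}, namely $\mathrm{Im}\,q^{1,0}\subset S_{-}$ and $\mathrm{Im}\,q^{0,1}\subset S_{+}$, do not make $q^{1,0}$ a map $S_{+}\rightarrow S_{-}$: that same proof also gives $q^{1,0}S_{+}=0=q^{0,1}S_{-}$, so $q^{1,0}$ kills $S_{+}$ and maps $S_{-}$ into $S_{-}$. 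Thus $q^{1,0}$ and $q^{0,1}$ preserve both eigenbundles of $S$, and consequently $\tau(\lambda)\circ q\circ\tau(\lambda)^{-1}=q$ --- which is precisely the first, and crucial, observation in the paper's proof.

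This error is not cosmetic. If $q^{1,0}$ were scaled by $\lambda^{\pm 1}$ under conjugation, the $q$-contribution to $d^{S}_{\lambda^{2},q}$ would be $(\lambda^{-2}-1)\lambda^{\pm 1}q^{1,0}$, which carries odd powers of $\lambda$: the expression would change sign under $\lambda\mapsto-\lambda$, so $d^{S}_{\lambda^{2},q}$ would not even be well defined as a function of $\lambda^{2}$, and the coefficients $(\lambda^{-2}-1)$, $(\lambda^{2}-1)$ claimed in the lemma could not appear. (A similar sloppiness occurs when you say $\mathrm{Ad}_{\tau(\lambda)}$ multiplies $\mathcal{N}_{S}^{1,0}$ by a single power of $\lambda$: $\mathcal{N}_{S}^{1,0}=A^{1,0}+Q^{1,0}$ is not an eigenvector of the adjoint action, its two pieces scaling by $\lambda$ and $\lambda^{-1}$ respectively --- though your final paragraph does recognize and correct this.) The fix is to replace your anti-commutation claim for $q$ by the commutation relation \eqref{eq:Sq=qS=*q}, conclude $\mathrm{Ad}_{\tau(\lambda)}q^{1,0}=q^{1,0}$ and $\mathrm{Ad}_{\tau(\lambda)}q^{0,1}=q^{0,1}$, and let the prefactors pass through untouched; with that repair the rest of your computation yields the stated formula.
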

\begin{proof}
Fix $\lambda\in\C\backslash\{0\}$. The fact that both $q^{1,0}$ and
$q^{0,1}$ preserve $S_{+}$ and $S_{-}$ establishes, in particular,
$$\tau(\lambda)\circ q\circ
\tau(\lambda)^{-1}=q.$$Similarly, in view of the
$\mathcal{D}_{S}$-parallelness of $S_{+}$ and $S_{-}$ (cf.
\eqref{eq:DS+}), $$\tau(\lambda)\circ \mathcal{D}_{S}\circ
\tau(\lambda)^{-1}=\mathcal{D}_{S}.$$ On the other hand, according
to \eqref{eq:AQSpm}, \eqref{eq:A10S-} and \eqref{eq:todosondesao0},
$$\tau(\lambda)\circ A^{1,0}\circ \tau(\lambda)^{-1}=\lambda
A^{1,0},\,\,\,\,\tau(\lambda)\circ A^{0,1}\circ
\tau(\lambda)^{-1}=\lambda ^{-1}A^{0,1}$$and
$$\tau(\lambda)\circ Q^{1,0}\circ \tau(\lambda)^{-1}=\lambda ^{-1} Q^{1,0},\,\,\,\,\tau(\lambda)\circ Q^{0,1}\circ
\tau(\lambda)^{-1}=\lambda Q^{0,1}.$$Hence
$$
d_{\lambda^{2},q}^{S}=\mathcal{D}_{S}+(\lambda^{-2}-1)(Q^{1,0}+q^{1,0})+(\lambda^{2}-1)(Q^{0,1}+q^{0,1})+A+Q,$$
completing the proof.
\end{proof}

\section{Transformations of constrained Willmore surfaces in $4$-space}

\markboth{\tiny{A. C. QUINTINO}}{\tiny{CONSTRAINED WILLMORE
SURFACES}}

Under the standard identification $sl(\C^{4})\cong
o(\wedge^{2}\C^{4})$, $1$-forms with values in
$\Lambda\wedge\Lambda^{(1)}$ correspond to $S$-commuting $1$-forms
with values in $\mathrm{End}_{j}(\underline{\C}^{4}/L,L)$, for $S$
the mean curvature sphere congruence of $L\cong\Lambda$. For such a
form $q$, condition $d^{\mathcal{D}}q=0$ establishes $Sq=*q$. A
surface $L$ in $S^{4}$ is a $q$-constrained Willmore surface, for
some $1$-form $q$ with values in
$\mathrm{End}_{j}(\underline{\C}^{4}/L,L)$ such that $Sq=*\,q=qS$,
if and only if $d*(Q+q)=0$, for the Hopf field
$Q\in\Omega^{1}(\mathrm{End}_{j}(\underline{\C}^{4}))$. The
closeness of the $1$-form $*(Q+q)$ ensures the existence of
$G\in\Gamma(\mathrm{End}_{j}(\underline{\C}^{4}))$ with
$dG=2*(Q+q)$, as well as the integrability of the Riccati equation
$dT=\rho T(dG)T-dF+4\rho qT$, for each $\rho\in\R\backslash\{0\}$,
fixing such a $G$ and setting $F:=G-S$. For a local solution
$T\in\Gamma(Gl_{j}(\underline{\C}^{4}))$ of the $\rho$-Ricatti
equation, we define the \textit{constrained Willmore Darboux
transform} of $L$ of parameters $\rho,T$ by setting
$\hat{L}:=T^{-1}L$, and extend, in this way, the Darboux
transformation of Willmore surfaces in $S^{4}$ presented in
\cite{quaternionsbook} to a transformation of constrained Willmore
surfaces in $4$-space. We apply, yet again, the dressing action
presented in Chapter \ref{transformsofCW} to define another
transformation of constrained Willmore surfaces in $4$-space, the
\textit{untwisted B\"{a}cklund transformation}, referring then to
the original one as the \textit{twisted B\"{a}cklund
transformation}. We verify that, when both are defined, twisted and
untwisted B\"{a}cklund transformations coincide. We prove that
constrained Willmore Darboux transformation of parameters $\rho,T$
with $\rho
> 1$ is equivalent to untwisted B\"{a}cklund transformation of
parameters $\alpha,L^{\alpha}$ with $\alpha^{2}$ real. Constrained
Willmore Darboux transformation of parameters $\rho,T$ with $\rho
\leq 1$ is trivial.\newline

Let $L\cong\Lambda\subset\underline{\R}^{5,1}$ be a $q$-constrained
Willmore surface in $S^{4}$, for some $1$-form $q$ with values in
$\mathrm{End}_{j}(\underline{\C}^{4}/L,L)$. Let $S$ be the mean
curvature sphere of $L$ and $V$ be the complexification of the
central sphere congruence of $\Lambda$. Let $\rho_{V}$ be reflection
across $V$. Provide $M$ with the conformal structure induced by
$\Lambda$. For simplicity, in what follows, given
$\alpha\in\C\backslash S^{1}$ non-zero and $L^{\alpha}$ a
$d^{\alpha,q}_{V}$-parallel null line subbundle of
$\wedge^{2}\underline{\C}^{4}$ such that, locally,
\begin{equation}\label{eq:BTparameterscondition}
\rho_{V} L^{\alpha}\cap
(L^{\alpha})^{\perp}=\{0\}=\rho_{V}\tilde{L}^{\alpha}\cap
(\tilde{L}^{\alpha})^{\perp},
\end{equation}
we consider the alternative notation
$$p:=p_{\alpha,\tilde{L}^{\alpha}},\,\,\,q:=q_{\overline{\alpha}\,^{-1},\overline{L^{\alpha}}},$$
with no risk of ambiguity with the multiplier to $\Lambda$. Recall
that, for such $\alpha$ and $L^{\alpha}$, we define another
constrained Willmore surface in $S^{4}$, the B\"{a}cklund transform
$\Lambda^{*}$ of $\Lambda$ of parameters $\alpha,L^{\alpha}$, or,
equally, $-\alpha,\rho_{V} L^{\alpha}$, by setting
\begin{equation}\label{eq:BTofCWdef}
(\Lambda^{*})^{0,1}:=(pq)^{-1}(1)(pq)(0)\Lambda^{0,1},
\end{equation}
provided that $\Lambda^{*}$ immerses.

\subsection{Untwisted B\"{a}cklund transformation of constrained Willmore surfaces in $4$-space}\label{genBT}

In this section, we apply, yet again, the dressing action presented
in Section \ref{sec:dress} to define another transformation of
constrained Willmore surfaces in $4$-space, the \textit{untwisted
B\"{a}cklund transformation}, referring then to the B\"{a}cklund
transformation defined by \eqref{eq:BTofCWdef} as the
\textit{twisted  B\"{a}cklund transformation}. The terminology is
motivated by the fact that, whilst $pq(\lambda)$ relates to
$pq(-\lambda)$ via the twisting $\rho_{V}
pq(\lambda)\rho_{V}=pq(-\lambda)$, performed by $\rho_{V}$, the
untwisted transformation will be given by
$(\Lambda^{*})^{0,1}=P(1)^{-1}P(0)\Lambda^{0,1}$ for
$P(\lambda)=g^{-1}R(\lambda)\in\Gamma(O(\wedge^{2}\underline{\C}^{4}))$,
with $g\in\Gamma(SO(\underline{\R}^{5,1}))$, for some
$R(\lambda)\in\Gamma(O(\wedge^{2}\underline{\C}^{4}))$ for which no
such a relation with $R(-\lambda)$ is established.\newline

First of all, observe that, given $w:=v_{0}+v_{+}+v_{-}\neq 0$, with
$v_{0}\in\Gamma(S_{+}\wedge S_{-})$,
$v_{+}\in\Gamma(\wedge^{2}S_{+})$ and
$v_{-}\in\Gamma(\wedge^{2}S_{-})$, $w$ is null if and only if
\begin{equation}\label{eq:characterizdenull}
(v_{0},v_{0})=-2(v_{+},v_{-}).
\end{equation}
The nullity of $w$ is equivalently characterized by, at each point,
either $v_{-}=0=(v_{0},v_{0})$ or both $v_{-}\neq 0$ and
\begin{equation}\label{eq:v+vsv-}
v_{+}=-\frac{1}{2}\,(v_{0},v_{0})(v_{-},\overline{v_{-}})^{-1}\overline{v_{-}}.
\end{equation}

\begin{Lemma}\label{whentauLreal}
Let $\alpha\in \C\backslash\{0\}$. Let $v_{0}\in\Gamma(S_{+}\wedge
S_{-})$, $v_{+}\in\Gamma(\wedge^{2}S_{+})$ and
$v_{-}\in\Gamma(\wedge^{2}S_{-})$ be such that $v_{0}+v_{+}+v_{-}$
is null. In that case, $\tau(\alpha)\langle v_{0}+v_{+}+v_{-}\rangle
$ is real at a point $p$ in $M$ if and only if, at $p$, either
$$v_{+}=0=v_{-},\,\,\,\langle\overline{v_{0}}\rangle=\langle
v_{0}\rangle$$ or
$$\mid(v_{0},v_{0})\mid =1,\,\,\,(v_{-},\overline{v_{-}})=\frac{1}{2}\mid\alpha\mid^{-2},\,\,\,\overline{v_{0}}=-v_{0}.$$
\end{Lemma}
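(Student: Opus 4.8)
The plan is to compute $\tau(\alpha)\,w$ explicitly on the decomposition $\eqref{eq:w2C4w2s+w2s-s+s-}$ and to translate the reality of the line it spans into algebraic relations among $v_{0},v_{+},v_{-}$. First I would recall that $\tau(\alpha)$ acts as the scalar $\alpha^{-1}$ on $\wedge^{2}S_{+}$, as the identity on $S_{+}\wedge S_{-}$ and as $\alpha$ on $\wedge^{2}S_{-}$, so that, writing $w:=v_{0}+v_{+}+v_{-}$, one has $\tau(\alpha)w=v_{0}+\alpha^{-1}v_{+}+\alpha v_{-}=:u$. Since $S_{-}=jS_{+}$, the real structure $\overline{\cdot}=\wedge^{2}j$ preserves $V=S_{+}\wedge S_{-}$ and interchanges the conjugate null lines $\wedge^{2}S_{+}$ and $\wedge^{2}S_{-}$; hence $\overline{u}$ has $V$-component $\overline{v_{0}}$, $\wedge^{2}S_{+}$-component $\overline{\alpha}\,\overline{v_{-}}$ and $\wedge^{2}S_{-}$-component $\overline{\alpha}^{-1}\overline{v_{+}}$. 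The line $\langle u\rangle$ is real precisely when $\overline{u}=\mu u$ for some $\mu\in\C\backslash\{0\}$, and applying conjugation twice forces $|\mu|=1$. Comparing the three components gives $\overline{v_{0}}=\mu v_{0}$, $\overline{\alpha}\,\overline{v_{-}}=\mu\alpha^{-1}v_{+}$ and $\overline{\alpha}^{-1}\overline{v_{+}}=\mu\alpha v_{-}$.

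Next I would split according to whether the null parts vanish. The last two equations show that $v_{+}=0$ if and only if $v_{-}=0$; when both vanish, $u=v_{0}$ and reality of $\langle u\rangle$ is exactly $\langle\overline{v_{0}}\rangle=\langle v_{0}\rangle$, which is the first alternative (here $\eqref{eq:characterizdenull}$ forces $(v_{0},v_{0})=0$ automatically). In the remaining case $v_{-}\neq 0$, the earlier remark giving $(v_{-},\overline{v_{-}})\geq 0$ with equality iff $v_{-}=0$ yields $s:=(v_{-},\overline{v_{-}})>0$, so I may use the nullity description $\eqref{eq:v+vsv-}$ to write $v_{+}=-\tfrac{1}{2}c\,s^{-1}\overline{v_{-}}$ with $c:=(v_{0},v_{0})$, and its conjugate $\overline{v_{+}}=-\tfrac{1}{2}\overline{c}\,s^{-1}v_{-}$. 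Substituting these into the two off-diagonal equations and cancelling the nonzero factors $\overline{v_{-}},v_{-}$ produces two expressions for $\mu$, namely $\mu=-2s|\alpha|^{2}/c$ and $\mu=-\overline{c}/(2s|\alpha|^{2})$; equating them gives $4s^{2}|\alpha|^{4}=|c|^{2}$, that is $2s|\alpha|^{2}=|c|$, whence $\mu=-\overline{c}/|c|$ is automatically of unit modulus.

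It then remains to feed this back into the diagonal equation $\overline{v_{0}}=\mu v_{0}$, which becomes $\overline{v_{0}}=-(\overline{c}/|c|)v_{0}$ and in particular forces $c\neq 0$ (otherwise $\mu=0$), so that $v_{0}\neq 0$. These two scale-invariant relations, $2(v_{-},\overline{v_{-}})|\alpha|^{2}=|(v_{0},v_{0})|$ and $\overline{v_{0}}=-(\overline{c}/|c|)v_{0}$, are the substance of the second alternative; rescaling the chosen representative $w$ by a suitable complex factor (which preserves nullity and both relations) normalises $c$ to $c=1$, turning them into the stated equalities $|(v_{0},v_{0})|=1$, $(v_{-},\overline{v_{-}})=\tfrac{1}{2}|\alpha|^{-2}$ and $\overline{v_{0}}=-v_{0}$. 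The converse direction is a direct substitution: starting from either alternative, one checks that $\overline{u}$ is the stated scalar multiple of $u$.

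The main obstacle I anticipate is bookkeeping rather than conceptual: keeping straight how $\overline{\cdot}$ permutes the three summands of $\eqref{eq:w2C4w2s+w2s-s+s-}$ while $\tau(\alpha)$ rescales them, so that the three component equations are matched correctly, and verifying that $2s|\alpha|^{2}=|c|$ together with the phase condition on $v_{0}$ is exactly equivalent, after normalisation, to the three displayed identities. The one genuinely delicate point is reconciling the scale-invariant relations produced by the computation with the normalised (scale-fixed) form in the statement; I would handle this by remarking explicitly that reality of $\langle u\rangle$ is a property of the line, so the representative may be chosen with $(v_{0},v_{0})=1$ and $v_{0}$ purely imaginary, which is precisely what converts $\mu=-\overline{c}/|c|$ into $\mu=-1$.
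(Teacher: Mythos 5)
Your route is essentially the paper's own: the same componentwise analysis of $\overline{\tau(\alpha)w}=\mu\,\tau(\alpha)w$ relative to the splitting \eqref{eq:w2C4w2s+w2s-s+s-}, the same use of nullity through \eqref{eq:v+vsv-}, landing on exactly the paper's two displayed relations, $(v_{-},\overline{v_{-}})=\frac{1}{2}|\alpha|^{-2}|(v_{0},v_{0})|$ and $\overline{v_{0}}=-\frac{1}{2}|\alpha|^{-2}\overline{(v_{0},v_{0})}\,(v_{-},\overline{v_{-}})^{-1}v_{0}$. Where you diverge is the final step, and there your version is the more careful one. The paper concludes by asserting that these two relations force $\overline{v_{0}}=-(v_{0},v_{0})^{-1}v_{0}$ and hence $|(v_{0},v_{0})|=1$; writing $c:=(v_{0},v_{0})$, that assertion amounts to $-\overline{c}/|c|=-c^{-1}$, which is itself equivalent to $|c|=1$, so the deduction is circular---and, as you observe, no such conclusion can follow, since both relations are invariant under rescaling $w\mapsto tw$ while $|c|=1$ is not. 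Your repair (reality is a property of the line, so normalise the representative by $t^{2}c=1$) is the right one, and it matches how the reality alternative is actually instantiated later in the paper, where $v_{0}$ is taken purely imaginary \emph{unit} with nullity forcing $(v_{0},v_{0})=1$.

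There is, however, one step in your write-up that would fail if carried out literally, and it is exactly where the looseness of the printed statement bites: the converse is \emph{not} a direct substitution from the second alternative as printed. The conditions $\overline{v_{0}}=-v_{0}$ and $|(v_{0},v_{0})|=1$ only force $c$ real, hence $c=\pm 1$, and for $c=-1$ the line is \emph{not} real: in that case \eqref{eq:v+vsv-} gives $v_{+}=|\alpha|^{2}\overline{v_{-}}$, so $\tau(\alpha)w=\overline{\alpha}\,\overline{v_{-}}+v_{0}+\alpha v_{-}$, and comparing components of $\overline{\tau(\alpha)w}=\mu\,\tau(\alpha)w$ yields $\mu=1$ from the $\wedge^{2}S_{+}$-part but $\mu=-1$ from the $S_{+}\wedge S_{-}$-part (both $v_{-}$ and $v_{0}$ being non-zero here), a contradiction. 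So your substitution verifies reality only when $c=+1$, which is precisely the normalisation your forward direction produces. To close the gap, state explicitly that the second alternative is to be read with $(v_{0},v_{0})=1$ in place of $|(v_{0},v_{0})|=1$ (equivalently, for the representative normalised as in your last paragraph); with that one sentence your argument is complete, and it in fact corrects the lemma as printed, whose ``if'' direction fails for $c=-1$.
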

\begin{proof}
Our argument is pointwise, so we work in a single fibre.

Set $w:=v_{0}+v_{+}+v_{-}$. From the definition of $\tau(\alpha)$,
it follows that
$\langle\tau(\alpha)w,\overline{\tau(\alpha)w}\rangle$ has rank $1$
if and only if either
$$v_{+}=0=v_{-},\,\,\,\overline{v_{0}}=\lambda v_{0}$$ or $$ v_{-}\neq
0,\,\,\,
\overline{v_{+}}=\lambda\mid\alpha\mid^{2}v_{-},\,\,\,\overline{v_{0}}=\lambda
v_{0},\,\,\,\mid\lambda\mid^{2}=1,$$ for some
$\lambda\in\Gamma(\underline{\C})$. In particular, because $w$ is
null, if $v_{-}$ is non-zero, then, according to \eqref{eq:v+vsv-},
$\tau(\alpha)\langle w\rangle$ is real if and only if
\begin{equation}\label{eq:55433555455ghsg nxxhjzkl?}
(v_{-},\overline{v_{-}})=\frac{1}{2}\mid\alpha\mid^{-2}\mid(v_{0},v_{0})\mid
\end{equation}
and
\begin{equation}\label{eq:2qwasdxfgyu90'p,mn ji89765resd6786}
\overline{v_{0}}=-\frac{1}{2}\mid\alpha\mid^{-2}\overline{(v_{0},v_{0})}(v_{-},\overline{v_{-}})^{-1}v_{0}.
\end{equation}
Note that, if $v_{-}$ is non-zero, equation
\eqref{eq:55433555455ghsg nxxhjzkl?} forces $(v_{0},v_{0})$ to be
non-zero. On the other hand, in view of the nullity of $w$, if
$(v_{0},v_{0})$ is non-zero then so is $v_{-}$. The proof is
complete by observing that, in that case, together, equations
\eqref{eq:55433555455ghsg nxxhjzkl?} and
\eqref{eq:2qwasdxfgyu90'p,mn ji89765resd6786} force
$\overline{v_{0}}=-(v_{0},v_{0})^{-1}v_{0}$, so that $v_{0}=\mid
(v_{0},v_{0})\mid ^{-2}v_{0}$ and, ultimately,
$\mid(v_{0},v_{0})\mid=1$.
\end{proof}

Choose a non-zero $\alpha\in\C\backslash S^{1}$ and a
$d^{\alpha,q}_{V}$-parallel null line subbundle $L^{\alpha}$ of
$\wedge^{2}\underline{\C}^{4}$ such that, locally,
$\tau(\alpha)L^{\alpha}$ is never real. The existence of such a
choice of $L^{\alpha}$ is established in the following lemma.
\begin{Lemma}\label{tauLneverreal}
Let $l^{\alpha}$ be a non-zero section of $\wedge^{2}S_{+}$. Let
$L^{\alpha}\subset\wedge^{2}\underline{\C}^{4}$ be a
$d^{\alpha,q}_{V}$-parallel null line bundle defined naturally by
$d^{\alpha,q}_{V}$-parallel transport of $l^{\alpha}_{p}$, for some
point $p\in M$. Then $\tau(\alpha)L^{\alpha}$ is never real in some
open set containing $p$.
\end{Lemma}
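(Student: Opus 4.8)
The statement is a local, pointwise assertion about the line bundle $\tau(\alpha)L^{\alpha}$, so the plan is to reduce everything to the fibre over $p$ by invoking Lemma \ref{whentauLreal}, and then to promote the conclusion at $p$ to a neighbourhood by an openness argument. First I would note that, since $\Lambda$ is $q$-constrained Willmore, the connection $d^{\alpha,q}_{V}$ is flat by Theorem \ref{thm5.2}; hence $d^{\alpha,q}_{V}$-parallel transport of $l^{\alpha}_{p}$ produces a genuine smooth (indeed $d^{\alpha,q}_{V}$-parallel) null line subbundle $L^{\alpha}$ on a simply connected neighbourhood of $p$, on which the argument takes place.

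The crucial observation is simply what Lemma \ref{whentauLreal} gives \emph{at} $p$. By hypothesis $l^{\alpha}_{p}\in\wedge^{2}S_{+}$, so in the decomposition \eqref{eq:w2C4w2s+w2s-s+s-} of $l^{\alpha}_{p}$ as $v_{0}+v_{+}+v_{-}$ one has $v_{0}=0$, $v_{+}=l^{\alpha}_{p}\neq 0$ and $v_{-}=0$; moreover $l^{\alpha}_{p}$ is null, since any element of the line $\wedge^{2}S_{+}$ is isotropic, so Lemma \ref{whentauLreal} applies. I would then check that neither of the two alternatives of that lemma can hold: the first demands $v_{+}=0$, contradicting $v_{+}=l^{\alpha}_{p}\neq0$; the second demands $(v_{-},\overline{v_{-}})=\tfrac{1}{2}\mid\alpha\mid^{-2}\neq0$, contradicting $v_{-}=0$ (here $\alpha\neq0$ is used). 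Hence $\tau(\alpha)L^{\alpha}$ is \emph{not} real at $p$.

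Finally I would argue that non-reality is an open condition. Fixing a local never-zero section $l^{\alpha}$ of $L^{\alpha}$, the reality of the line $\tau(\alpha)L^{\alpha}$ at a point is equivalent to the linear dependence of the smooth section $\tau(\alpha)l^{\alpha}$ and its complex conjugate, i.e.\ to the vanishing of a suitable minor built continuously from these two sections. This vanishing locus is closed, so its complement is open; since we have just shown that $\tau(\alpha)l^{\alpha}_{p}$ and $\overline{\tau(\alpha)l^{\alpha}_{p}}$ are independent at $p$, they remain independent on an open neighbourhood of $p$, which is exactly the assertion.

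The argument is essentially a continuity argument grafted onto Lemma \ref{whentauLreal}, so I do not expect a serious obstacle; the only point requiring a little care is the smoothness and well-definedness of $L^{\alpha}$ as a bundle (rather than merely along a curve), which is why the flatness of $d^{\alpha,q}_{V}$ coming from the constrained Willmore hypothesis is invoked at the outset, together with the standing convention of restricting to a suitable connected, simply connected open set.
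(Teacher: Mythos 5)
Your proof is correct and shares the paper's two-step skeleton (non-reality at $p$, then openness), but both steps are carried out by different means, so the comparison is worth recording. At the point $p$, the paper simply observes that, since $\tau(\alpha)$ acts as a scalar on $\wedge^{2}S_{+}$, the line $\tau(\alpha)L^{\alpha}_{p}=L^{\alpha}_{p}$ lies in $\wedge^{2}S_{+}$ and so cannot be real, its conjugate lying in the complementary line $\wedge^{2}S_{-}$ (equivalently, $(l^{\alpha}_{p},\overline{l^{\alpha}_{p}}\,)>0$ by Remark \ref{inprodvoverlv}); you instead feed $v_{0}=0=v_{-}$, $v_{+}=l^{\alpha}_{p}$ into Lemma \ref{whentauLreal} and rule out both of its alternatives, which is valid, just a heavier tool for the same pointwise fact. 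For openness, the paper invokes the characterization, set up immediately before the lemma, of non-reality of a null line by the non-vanishing of the single smooth scalar $(\tau(\alpha)l,\overline{\tau(\alpha)l}\,)$ --- a criterion special to null lines in this quadric (it rests on a $2$-plane in $\C^{4}$ being either $j$-stable or complementary to its $j$-image) and one the paper reuses in the later parameter-existence lemmas; your criterion, persistence of linear independence of $\tau(\alpha)l^{\alpha}$ and $\overline{\tau(\alpha)l^{\alpha}}$, is more elementary and applies to any line, null or not. One cosmetic slip: linear dependence of two vectors is the simultaneous vanishing of \emph{all} $2\times 2$ minors, not of a single \emph{suitable} one; the repair is immediate, since some minor is non-zero at $p$ and remains non-zero nearby, which already forces independence there. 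Finally, your opening remark justifying the parallel-transport construction of $L^{\alpha}$ via the flatness of $d^{\alpha,q}_{V}$ (Theorem \ref{thm5.2} together with the constrained Willmore hypothesis) makes explicit something the paper leaves tacit.
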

Before proceeding to the proof of the lemma, observe that, at each
point, the non-reality of $\tau(\alpha)L^{\alpha}$ is equivalent to
the real subspace
$\tau(\alpha)L^{\alpha}+\overline{\tau(\alpha)L^{\alpha}}$ of
$(\R^{5,1})^{\C}$ being $2$-dimensional, or, equivalently, non-null,
\begin{equation}\label{eq:taulalphanotreal}
\overline{\tau(\alpha)L^{\alpha}}\cap
(\tau(\alpha)L^{\alpha})^{\perp}=\{0\},
\end{equation}
which characterizes the non-degeneracy of
$\tau(\alpha)L^{\alpha}\oplus\overline{\tau(\alpha)L^{\alpha}}.$
\begin{proof}
At the point $p$, $L^{\alpha}$ is spanned by $l^{\alpha}_{p}$, so
that, by construction, at this point, $\tau(\alpha)L^{\alpha}$ is
not real. On the other hand, at each point, the non-reality of
$\tau(\alpha)L^{\alpha}$ is characterized by
\eqref{eq:taulalphanotreal}, or, equivalently, by
$$(\tau(\alpha)l,\overline{\tau(\alpha)l}\,)\neq 0,$$fixing
$l\in\Gamma(L^{\alpha})$ non-zero; showing that the non-reality of
$(\tau(\alpha)L^{\alpha})_{x}$ is an open condition on $x\in M$.
\end{proof}
At each point, the non-reality of $\tau(\alpha)L^{\alpha}$,
characterized by equation \eqref{eq:taulalphanotreal}, establishes a
decomposition
\begin{equation}\label{eq:taudecomp}
\wedge^{2}\underline{\C}^{4}=\tau(\alpha)L^{\alpha}\oplus
(\tau(\alpha)L^{\alpha}\oplus\overline{\tau(\alpha)L^{\alpha}})^{\perp}\oplus
\overline{\tau(\alpha)L^{\alpha}}.
\end{equation}
For
$\lambda\in\C\backslash\{\pm\alpha,\pm\overline{\alpha}\,^{-1}\}$
and
\begin{equation}\label{eq:ralphalambdathe number}
r_{\alpha}(\lambda):=\frac{1-\overline{\alpha}\,^{-2}}{1-\alpha^{2}}\frac{\lambda-\alpha^{2}}{\lambda-\overline{\alpha}\,^{-2}}\neq
0,
\end{equation}
set then
$$r_{\alpha,L^{\alpha}} (\lambda)
:=I\left\{
\begin{array}{ll} r_{\alpha}(\lambda) & \mbox{$\mathrm{on}\,\tau (\alpha)L^{\alpha}$}\\ 1 &
\mbox{$\mathrm{on}\,(\tau (\alpha)L^{\alpha}+\overline{\tau (\alpha)L^{\alpha}})^{\perp}$}\\
r_{\alpha}(\lambda)^{-1} & \mbox{$\mathrm{on}\,\overline{\tau
(\alpha)L^{\alpha}}$}\end{array}\right.,$$defining this way a map
$r_{\alpha,L^{\alpha}}$ into
$\Gamma(O(\wedge^{2}\underline{\C}^{4}))$ with
$$\overline{r_{\alpha,L^{\alpha}}(\overline{\lambda}\,^{-1})}=r_{\alpha,L^{\alpha}}(\lambda),$$for
all $\lambda$, which, therefore, extends holomorphically to
$\mathbb{P}^{1}\backslash\{\pm\alpha,\pm\overline{\alpha}\,^{-1}\}$
by setting
$$r_{\alpha,L^{\alpha}}(\infty):=\overline{r_{\alpha,L^{\alpha}}(0)}.$$
We may, alternatively and for simplicity, write $r(\lambda)$ for
$r_{\alpha,L^{\alpha}} (\lambda)$ with
$\lambda\in\mathbb{P}^{1}\backslash\{\pm\alpha,\pm\overline{\alpha}\,^{-1}\}$.
It will be useful to observe that, in the case $\alpha^{2}$ is real,
we have
\begin{equation}\label{eq:rinf=r0-1}
r(\infty)=r(0)^{-1}.
\end{equation}
Once and for all, fix a choice of $\sqrt{r_{\alpha}(0)}$, set
$$\sqrt{\overline{r_{\alpha}(0)}}:=\overline{\sqrt{r_{\alpha}(0)}},$$
and consider the corresponding
$r(0),r(\infty)\in\Gamma(\mathrm{Sl(\underline{\C}^{4})})$, for
which equation \eqref{eq:xijjxi} then applies:
\begin{equation}\label{eq:r0jjrinfty}
r(0)\circ j=j\circ r(\infty).
\end{equation}
It follows, in particular, that, at each point, the $j$-stability of
$r(0)S_{+}$ is characterized by
$$r(0)S_{+}=r(\infty)S_{-},$$
or, equivalently, by the non-complementarity of $r(0)S_{+}$ and
$r(\infty)S_{-}$ in $\underline{\C}^{4}$,
\begin{equation}\label{eq:jstablityofr(0)=S+}
(\wedge^{2}r(0)S_{+})\cap
(\wedge^{2}r(\infty)S_{-})^{\perp}\neq\{0\}.
\end{equation}

Next we characterize the $j$-stability of $r(0)S_{+}$ in terms of
the projections of $L^{\alpha}$ with respect to the decomposition
\eqref{eq:w2C4w2s+w2s-s+s-}.

\begin{Lemma}\label{whenr0S+jstable}
Let $l^{\alpha}:=v_{0}+v_{+}+v_{-}$, with
$v_{0}\in\Gamma(S_{+}\wedge S_{-})$,
$v_{+}\in\Gamma(\wedge^{2}S_{+})$ and
$v_{-}\in\Gamma(\wedge^{2}S_{-})$, be a non-zero section of
$L^{\alpha}$. Let $\pi_{\oplus}$ and $\pi_{\perp}$ denote the
orthogonal projections of $\wedge^{2}\underline{\C}^{4}$ onto
$\tau(\alpha)L^{\alpha}\oplus\overline{\tau(\alpha)L^{\alpha}}$ and
$(\tau(\alpha)L^{\alpha}\oplus\overline{\tau(\alpha)L^{\alpha}})^{\perp}$,
respectively. At each point at which $l^{\alpha}$ does not vanish,
the bundle $r(0)S_{+}$ is $j$-stable if and only if
\begin{equation}\label{eq:v+v-not0}
v_{+}\neq 0\neq v_{-}
\end{equation}
and, given $u_{+}\in\Gamma(\wedge^{2}S_{+})$ not zero,

\begin{equation}\label{eq:hags3eddy678tjn uy8 wej7 e---,nvfc}
\mid(l^{\alpha},u_{+})\mid\mid(l^{\alpha},\overline{u_{+}}\,)\mid^{-1}=\mid
r_{\alpha}(0)\mid^{2}\mid\alpha\mid^{-2},
\end{equation}
together with

\begin{equation}\label{eq:12345rfdcx78ujhnmop98765432sdcvb,mjhnl?s.ag}
\pi_{\perp}u_{+}\neq 0,\,\,\,\,\,\,\langle\pi_{\perp}u_{+}\rangle
=\langle\overline{\pi_{\perp}u_{+}}\,\rangle
\end{equation}
and

\begin{equation}\label{eq:nolambdae-lambda}
(\pi_{\oplus}r(0)u_{+},\pi_{\oplus}\overline{r(0)u_{+}}\,)\neq
(\pi_{\perp}r(0)u_{+},\pi_{\perp}\overline{r(0)u_{+}}\,).
\end{equation}
\end{Lemma}

\begin{proof}
Our argument is pointwise, so we work in a single fibre.

Let $s_{+},s_{+}'$ be a frame of $S_{+}$. Then $r(0)S_{+}$ is
$j$-stable if and only if
\begin{equation}\label{eq:rinftr0js+s+'}
r(0)(s_{+}\wedge s_{+}')=\lambda r(\infty)(js_{+}\wedge js_{+}'),
\end{equation}
for some $\lambda\in\Gamma(\underline{\C})$. Note that such a
$\lambda$ has necessarily unit length. Indeed,
$$r(\infty)(js_{+}\wedge js_{+}')=\overline{r(0)(s_{+}\wedge
s_{+}')},$$ so that
$$r(\infty)(js_{+}\wedge
js_{+}')=\overline{\lambda}\,\overline{r(\infty)(js_{+}\wedge
js_{+}')}=\overline{\lambda}\,r(0)(s_{+}\wedge
s_{+}')=\mid\lambda\mid^{2}r(\infty)(js_{+}\wedge js_{+}').$$ Write
$s_{+}\wedge
s_{+}'=a\,\tau(\alpha)l^{\alpha}+b\,\overline{\tau(\alpha)l^{\alpha}}+\pi_{\perp}(s_{+}\wedge
s_{+}')$, with $a,b\in\Gamma(\underline{\C})$. Then
$$r(0)(s_{+}\wedge
s_{+}')=a\,r_{\alpha}(0)\tau(\alpha)l^{\alpha}+b\,r_{\alpha}(0)^{-1}\overline{\tau(\alpha)l^{\alpha}}+\pi_{\perp}(s_{+}\wedge
s_{+}'),$$whereas $$r(\infty)(js_{+}\wedge
js_{+}')=\overline{b}\,\overline{r_{\alpha}(0)}\,^{-1}\tau(\alpha)l^{\alpha}+\overline{a}\,\overline{r_{\alpha}(0)}\,\overline{\tau(\alpha)l^{\alpha}}+\overline{\pi_{\perp}(s_{+}\wedge
s_{+}')}.$$Hence equation \eqref{eq:rinftr0js+s+'} holds if and only
if
$$a\,r_{\alpha}(0)=\lambda\,\overline{b}\,\overline{r_{\alpha}(0)}\,^{-1},\,\,\,b\,r_{\alpha}(0)^{-1}=\lambda\,\overline{a}\,\overline{r_{\alpha}(0)},\,\,\,\pi_{\perp}(s_{+}\wedge
s_{+}')=\lambda\,\overline{\pi_{\perp}(s_{+}\wedge s_{+}')};$$or,
equivalently, at each point, either
\begin{equation}\label{eq:cccvfartaeeee}
a=0=b,\,\,\,\,\pi_{\perp}(s_{+}\wedge
s_{+}')=\lambda\,\overline{\pi_{\perp}(s_{+}\wedge s_{+}')},
\end{equation}
or $a,b\neq 0$ and
$$a\overline{b}\,^{-1}\mid
r_{\alpha}(0)\mid^{2}=\lambda=\overline{a}\,^{-1}b\mid
r_{\alpha}(0)\mid^{-2},\,\,\,\,\,\,\pi_{\perp}(s_{+}\wedge
s_{+}')=\lambda\overline{\pi_{\perp}(s_{+}\wedge s_{+}')}.$$ Note
that \eqref{eq:cccvfartaeeee} contradicts the complementarity of
$\wedge^{2}S_{+}$ and $\wedge^{2}S_{-}$. Set
$$X:=(\tau(\alpha)l^{\alpha},\overline{\tau(\alpha)l^{\alpha}})\neq 0.$$
In view of the nullity of $s_{+}\wedge s_{+}'$,
\begin{equation}\label{eq:nullityofs+wedges+'}
(\pi_{\perp}(s_{+}\wedge s_{+}'),\pi_{\perp}(s_{+}\wedge s_{+}'))=-2
abX,
\end{equation}
equation $a\overline{b}\,^{-1}\mid
r_{\alpha}(0)\mid^{2}=\overline{a}\,^{-1}b\mid
r_{\alpha}(0)\mid^{-2},$ equivalent to
$$\mid b\mid =\mid a\mid\mid r_{\alpha}(0)\mid^{2},$$ establishes $$2\mid
a\mid^{2}\mid r_{\alpha}(0)\mid^{2}\mid X\mid =\mid
(\pi_{\perp}(s_{+}\wedge s_{+}'),\pi_{\perp}(s_{+}\wedge
s_{+}'))\mid$$and, in particular, that, if, at some point, $a\neq
0$, then, at that point,
$$(\pi_{\perp}(s_{+}\wedge s_{+}'),\pi_{\perp}(s_{+}\wedge
s_{+}'))\neq0.$$ It follows, in particular, that, if
$\pi_{\perp}(s_{+}\wedge
s_{+}')=\lambda\overline{\pi_{\perp}(s_{+}\wedge s_{+}')}$, then
\begin{equation}\label{eq:piperprooverlpiperpronot0}
(\pi_{\perp}r(0)(s_{+}\wedge
s_{+}'),\overline{\pi_{\perp}r(0)(s_{+}\wedge s_{+}')}\,)\neq 0,
\end{equation}
as $\pi_{\perp}r(0)(s_{+}\wedge s_{+}')=\pi_{\perp}(s_{+}\wedge
s_{+}')$. On the other hand, as remarked previously, $r(0)S_{+}$ is
$j$-stable if and only if $(r(0)(s_{+}\wedge s_{+}'),\overline{
r(0)(s_{+}\wedge s_{+}')}\,)=0$, or, equivalently,
$$(\pi_{\oplus}r(0)(s_{+}\wedge s_{+}'),\pi_{\oplus}\overline{r(0)(s_{+}\wedge s_{+}')}\,)=
-(\pi_{\perp}r(0)(s_{+}\wedge
s_{+}'),\pi_{\perp}\overline{r(0)(s_{+}\wedge s_{+}')}\,),$$in which
case, in particular, having in consideration
\eqref{eq:piperprooverlpiperpronot0},
$$(\pi_{\oplus}r(0)(s_{+}\wedge
s_{+}'),\pi_{\oplus}\overline{r(0)(s_{+}\wedge s_{+}')}\,)\neq
(\pi_{\perp}r(0)(s_{+}\wedge
s_{+}'),\pi_{\perp}\overline{r(0)(s_{+}\wedge s_{+}')}\,).$$ We
conclude that the bundle $r(0)S_{+}$ is $j$-stable if and only if
$a,b,\pi_{\perp}(s_{+}\wedge s_{+}')\neq 0$ and
\begin{equation}\label{eq:modbmodar0}
\mid b\mid =\mid a\mid\mid r_{\alpha}(0)\mid^{2},
\end{equation}
together with
$$\pi_{\perp}(s_{+}\wedge s_{+}')=a\overline{b}\,^{-1}\mid
r_{\alpha}(0)\mid^{2}\overline{\pi_{\perp}(s_{+}\wedge s_{+}')}$$and
condition \eqref{eq:nolambdae-lambda}. Now write
$v_{+}=a_{+}\,s_{+}\wedge s_{+}'$ and $v_{-}=a_{-}\,js_{+}\wedge
js_{+}'$, with $a_{+},a_{-}\in\Gamma(\underline{\C})$. Set
$$N:=(s_{+}\wedge s_{+}',js_{+}\wedge js_{+}')>0. $$We have
$$a=X^{-1}(s_{+}\wedge
s_{+}',\overline{\tau(\alpha)l^{\alpha}})=\overline{\alpha}\,^{-1}\,\overline{a_{+}}\,X^{-1}N,$$
as well as $$b=X^{-1}(s_{+}\wedge
s_{+}',\tau(\alpha)l^{\alpha})=\alpha\, a_{-}\,X^{-1}N.$$In
particular, $a$ (respectively, $b$) vanishes at some point if and
only if $a_{+}$ (respectively, $a_{-}$), or, equivalently, $v_{+}$
(respectively, $v_{-}$) does. We verify, on the other hand, that
equation \eqref{eq:modbmodar0} holds if and only if
$$\mid a_{-}\mid=\mid a_{+}\mid \mid r_{\alpha}(0)\mid^{2}\mid
\alpha\mid^{-2},$$or, equivalently, if condition
\eqref{eq:hags3eddy678tjn uy8 wej7 e---,nvfc} is satisfied. It
follows, in particular, that, together, conditions
\eqref{eq:v+v-not0} and \eqref{eq:hags3eddy678tjn uy8 wej7
e---,nvfc} ensure that
$$\pi_{\oplus}r(0)(s_{+}\wedge s_{+}')=\lambda\,\pi_{\oplus}r(\infty)(js_{+}\wedge js_{+}'),$$
for $\lambda:=a\overline{b}\,^{-1}\mid r_{\alpha}(0)\mid^{2}\in
S^{1}$, which together with condition
\eqref{eq:12345rfdcx78ujhnmop98765432sdcvb,mjhnl?s.ag}, and given
that $$\pi_{\perp}r(0)(s_{+}\wedge s_{+}')=\pi_{\perp}(s_{+}\wedge
s_{+}')=\overline{\pi_{\perp}r(\infty)(js_{+}\wedge
js_{+}')},$$amounts to
$$r(0)(s_{+}\wedge s_{+}')=\lambda\,\pi_{\oplus}r(\infty)(js_{+}\wedge js_{+}')+\xi\,\pi_{\perp}r(\infty)(js_{+}\wedge
js_{+}'),$$for some $\xi\in\Gamma(\underline{\C})$. In fact, we
verify that $\xi$ is unit:
$$\mid(\pi_{\perp}(s_{+}\wedge s_{+}'),\pi_{\perp}(s_{+}\wedge
s_{+}'))\mid=\mid \xi\mid^{2}\mid(\pi_{\perp}(s_{+}\wedge
s_{+}'),\pi_{\perp}(s_{+}\wedge s_{+}'))\mid,$$and condition
\eqref{eq:v+v-not0} ensures, on the other hand, according to
\eqref{eq:nullityofs+wedges+'}, that
\begin{equation}\label{eq:eqnotzeropiperpofs+wedges*':}
(\pi_{\perp}(s_{+}\wedge s_{+}'),\pi_{\perp}(s_{+}\wedge
s_{+}'))\neq 0.
\end{equation}
For simplicity, write $\pi_{\oplus}^{r}$ for
$\pi_{\oplus}r(\infty)(js_{+}\wedge js_{+}')$.  By the nullity of
$s_{+}\wedge s_{+}'$, it follows that
$$0=(r(0)(s_{+}\wedge s_{+}'),r(0)(s_{+}\wedge
s_{+}'))=\lambda^{2}(\pi_{\oplus}^{r},\pi_{\oplus}^{r})+\xi^{2}(\pi_{\perp}(s_{+}\wedge
s_{+}'),\pi_{\perp}(s_{+}\wedge s_{+}')),$$ as well as
$$0=(r(\infty)(js_{+}\wedge js_{+}')),r(\infty)(js_{+}\wedge
js_{+}')))=(\pi_{\oplus}^{r},\pi_{\oplus}^{r})+(\pi_{\perp}(s_{+}\wedge
s_{+}'),\pi_{\perp}(s_{+}\wedge s_{+}')).$$ Hence
\begin{equation}\label{eq:perp-oplus}
(\pi_{\oplus}^{r},\pi_{\oplus}^{r})=-(\pi_{\perp}(s_{+}\wedge
s_{+}'),\pi_{\perp}(s_{+}\wedge s_{+}'))
\end{equation}
and, therefore, $\xi^{2}=\lambda^{2}$, which, as $\xi$ and $\lambda$
are both unit, forces $\lambda,\xi\in\{-1,1,-i,i\}$. Together with
\eqref{eq:eqnotzeropiperpofs+wedges*':} and \eqref{eq:perp-oplus},
the nullity of $r(0)(s_{+}\wedge s_{+}')$ excludes the cases $\xi
=\pm i\lambda$. Indeed, if $r(0)(s_{+}\wedge
s_{+}')=\lambda\,\pi_{\oplus}^{r}\pm
i\lambda\,\pi_{\perp}(s_{+}\wedge s_{+}')$, then
$$0=(\lambda\,\pi_{\oplus}^{r}\pm i\lambda\,\pi_{\perp}(s_{+}\wedge s_{+}'),\lambda\,\pi_{\oplus}^{r}\pm
i\lambda\,\pi_{\perp}(s_{+}\wedge s_{+}')),$$ respectively, leading
to a contradiction:
$$0=\lambda^{2}(\pi_{\oplus}^{r},\pi_{\oplus}^{r})-\lambda^{2}(\pi_{\perp}(s_{+}\wedge
s_{+}'),\pi_{\perp}(s_{+}\wedge
s_{+}'))=-2\lambda^{2}(\pi_{\perp}(s_{+}\wedge s_{+}'),
\pi_{\perp}(s_{+}\wedge s_{+}'))\neq 0.$$ Together with condition
\eqref{eq:nolambdae-lambda}, and given that $r(0)(s_{+}\wedge
s_{+}')$ and $r(\infty)(js_{+}\wedge s_{+}')$ are complex conjugate
of each other, the nullity of $r(0)(s_{+}\wedge s_{+}')$ excludes,
on the other hand, the case $\xi=-\lambda$: if $r(0)(s_{+}\wedge
s_{+}')=\lambda\,\pi_{\oplus}^{r}-\lambda\,\pi_{\perp}(s_{+}\wedge
s_{+}')$, then
$$0=(\lambda\,\pi_{\oplus}^{r}-\lambda\,\pi_{\perp}(s_{+}\wedge
s_{+}'), \overline{\pi_{\oplus}^{r}+\pi_{\perp}(s_{+}\wedge
s_{+}')}\,)=\lambda(\pi_{\oplus}^{r},\overline{\pi_{\oplus}^{r}})-\lambda(\pi_{\perp}(s_{+}\wedge
s_{+}'),\overline{\pi_{\perp}(s_{+}\wedge s_{+}')}\,)$$ and,
therefore,
$(\pi_{\oplus}^{r},\overline{\pi_{\oplus}^{r}})=(\pi_{\perp}(s_{+}\wedge
s_{+}'),\overline{\pi_{\perp}(s_{+}\wedge s_{+}')}\,)$, or,
equivalently, $$(\pi_{\oplus}\overline{r(0)(s_{+}\wedge
s_{+}')},\pi_{\oplus}r(0)(s_{+}\wedge s_{+}'))=
(\pi_{\perp}r(0)(s_{+}\wedge
s_{+}'),\pi_{\perp}\overline{r(0)(s_{+}\wedge s_{+}')}\,).$$We
conclude that $\xi=\lambda$, completing the proof.
\end{proof}

At each point, the non-$j$-stability of the bundle $r(0)S_{+}$, of
$2$-planes in $\C^{4}$, is equivalent to its complementarity to
$jr(0)S_{+}=r(\infty)S_{-}$ in $\underline{\C}^{4}$. Choose
$L^{\alpha}$ for which, furthermore, locally, $r(0)S_{+}$ and
$r(\infty)S_{-}$ are complementary in $\underline{\C}^{4}$. The
existence of such a choice is established in the following lemma.

\begin{Lemma}
Let $l^{\alpha}$ be a non-zero section of $\wedge^{2}S_{+}$. Let
$L^{\alpha}\subset\wedge^{2}\underline{\C}^{4}$ be a
$d^{\alpha,q}_{V}$-parallel null line bundle defined naturally by
$d^{\alpha,q}_{V}$-parallel transport of $l^{\alpha}_{p}$, for some
point $p\in M$. Then there is some open set containing $p$ in which
$\tau(\alpha)L^{\alpha}$ is never real and $r(0)S_{+}$ and
$r(\infty)S_{-}$ are complementary in $\underline{\C}^{4}$.
\end{Lemma}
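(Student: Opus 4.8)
The plan is to obtain the statement as a direct consequence of the two preceding lemmas together with the openness of the relevant conditions. By Lemma \ref{tauLneverreal}, the hypothesis on $L^{\alpha}$ already guarantees an open set $U_{1}\ni p$ on which $\tau(\alpha)L^{\alpha}$ is never real; on $U_{1}$ the decomposition \eqref{eq:taudecomp}, and hence the map $r=r_{\alpha,L^{\alpha}}$, are well-defined. It therefore remains only to produce an open set, contained in $U_{1}$ and containing $p$, on which $r(0)S_{+}$ and $r(\infty)S_{-}$ are complementary in $\underline{\C}^{4}$, and then to intersect the two.

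First I would reduce the complementarity to non-$j$-stability. By \eqref{eq:r0jjrinfty} one has $r(0)\circ j=j\circ r(\infty)$, whence $j\,r(0)S_{+}=r(\infty)S_{-}$; invoking the dichotomy that a $2$-plane in $\C^{4}$ is either $j$-stable or complementary to its $j$-image, the complementarity of $r(0)S_{+}$ and $r(\infty)S_{-}$ is precisely the failure of $r(0)S_{+}$ to be $j$-stable. This is manifestly an open condition on points of $M$, since it is characterized by $\wedge^{2}r(0)S_{+}\cap(\wedge^{2}r(\infty)S_{-})^{\perp}=\{0\}$, i.e.\ by the non-vanishing of a pairing of decomposable vectors. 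Thus it suffices to verify the non-$j$-stability of $r(0)S_{+}$ at the single point $p$.

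The key step is the evaluation at $p$. By construction $L^{\alpha}_{p}=\langle l^{\alpha}_{p}\rangle$ with $l^{\alpha}_{p}\in\wedge^{2}S_{+}$ non-zero, so in the decomposition $l^{\alpha}=v_{0}+v_{+}+v_{-}$ of Lemma \ref{whenr0S+jstable} one has, at $p$, $v_{0}=0$, $v_{-}=0$ and $v_{+}=l^{\alpha}_{p}\neq 0$. Since Lemma \ref{whenr0S+jstable} asserts that $r(0)S_{+}$ can be $j$-stable at a point only if there $v_{+}\neq 0\neq v_{-}$ (condition \eqref{eq:v+v-not0}), the vanishing of $v_{-}$ at $p$ forces $r(0)S_{+}$ not to be $j$-stable at $p$. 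Combining this with the openness observation yields an open neighbourhood $U_{2}\subset U_{1}$ of $p$ on which $r(0)S_{+}$ and $r(\infty)S_{-}$ are complementary, and $U:=U_{1}\cap U_{2}$ is the desired open set on which both conclusions hold.

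I expect no genuine obstacle beyond the bookkeeping already resolved in Lemma \ref{whenr0S+jstable}: all the analytic content of the $j$-stability criterion is encoded there, and this final statement amounts to the observation that a section of $\wedge^{2}S_{+}$, transported within $L^{\alpha}$, begins at $p$ with vanishing $\wedge^{2}S_{-}$-component, thereby landing outside the $j$-stable locus. The only remaining care is to keep track of domains, intersecting the neighbourhood on which $r$ is defined (from Lemma \ref{tauLneverreal}) with the neighbourhood of non-$j$-stability, so that the two geometric conclusions are valid simultaneously.
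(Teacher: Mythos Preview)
Your proposal is correct and follows essentially the same approach as the paper: both verify non-$j$-stability of $r(0)S_{+}$ at $p$ via Lemma~\ref{whenr0S+jstable} (your observation that $v_{-}=0$ at $p$ violates condition~\eqref{eq:v+v-not0} is exactly the mechanism), and both invoke openness of the non-reality and non-$j$-stability conditions to pass to a neighbourhood. The paper phrases the openness of non-$j$-stability via the non-vanishing of $(r(0)u_{+},\overline{r(0)u_{+}})$ for a fixed non-zero $u_{+}\in\Gamma(\wedge^{2}S_{+})$, which is the same as your characterization through~\eqref{eq:jstablityofr(0)=S+}.
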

\begin{proof}
By construction,
$$L^{\alpha}_{p}=\langle l^{\alpha}_{p}\rangle,$$
so that, at the point $p$, $\tau(\alpha)L^{\alpha}$ is not real and,
therefore, $r$ is, indeed, defined; whilst Lemma
\ref{whenr0S+jstable} ensures that, at $p$, $r(0)S_{+}\cap
r(\infty)S_{-}=\{0\}$. On the other hand, according to
\eqref{eq:jstablityofr(0)=S+}, at each point, the non-$j$-stability
of $r(0)S_{+}$ is characterized by
$$(r(0)\wedge^{2}S_{+})\cap
(\,\overline{r(0)\wedge^{2}S_{+}}\,)^{\perp}=\{0\},$$ or,
equivalently, by $(r(0)\,u_{+},\overline{r(0)\,u_{+}}\,)\neq 0$,
fixing $u_{+}\in\Gamma(\wedge^{2}S_{+})$ non-zero; which shows that
the non-$j$-stability of $(r(0)S_{+})_{x}$ is an open condition on
$x\in M$. And so is the non-reality of
$(\tau(\alpha)L^{\alpha})_{x}$, as remarked previously.
\end{proof}

Set
$$S^{*}_{+}:=r(0)S_{+},\,\,\,S^{*}_{-}:=jS^{*}_{+}=r(\infty)S_{-}$$
and, considering projections
$\pi_{S^{*}_{+}}:\underline{\C}^{4}\rightarrow S^{*}_{+}$ and
$\pi_{S^{*}_{-}}:\underline{\C}^{4}\rightarrow S^{*}_{-}$ with
respect to the decomposition
$$\underline{\C}^{4}=S^{*}_{+}\oplus S^{*}_{-},$$
define two line bundles by
\begin{equation}\label{eq:Lspm}
L_{+}^{*}:=\pi_{S^{*}_{+}}r(\infty)L_{+},\,\,\,\,\,\,L_{-}^{*}:=jL_{+}^{*}=\pi_{S^{*}_{-}}r(0)L_{-}.
\end{equation}
As we know, $L^{\alpha}$ is a $d^{\alpha,q}_{V}$-parallel null line
bundle if and only if $\rho_{V} L^{\alpha}$ is a
$d^{-\alpha,q}_{V}$-parallel null line bundle. Note that the reality
of $\tau(\alpha)L^{\alpha}$ is equivalent to that of
$\tau(-\alpha)\rho_{V} L^{\alpha}$, and that
$$r_{\alpha,L^{\alpha}}=r_{-\alpha,\rho_{V} L^{\alpha}}.$$
We define the \textit{untwisted} \textit{B\"{a}cklund}
\textit{transform} \textit{$L^{*}$} \textit{of} \textit{$L$}
\textit{of} \textit{parameters} \textit{$\alpha,L^{\alpha}$},
\textit{or}, \textit{equally}, \textit{$-\alpha,\rho_{V}
L^{\alpha}$}, by setting
$$\wedge^{2}L^{*}:=L^{*}_{+}\wedge L^{*}_{-}.$$ The real line bundle
$\wedge^{2}L^{*}$ determines a $j$-stable bundle $L^{*}$ of
$2$-planes in $\underline{\C}^{4}$.
\begin{thm}\label{L*CWpowihjugfvbaq9873tfgvwbuyqwrf}
$L^{*}$ is a constrained Willmore surface in $S^{4}$, provided that
it immerses.
\end{thm}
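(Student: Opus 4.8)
The plan is to exhibit the enveloping $2$-sphere congruence $V^{*}:=S_{+}^{*}\wedge S_{-}^{*}$ as the central sphere congruence of $\Lambda^{*}:=\wedge^{2}L^{*}$ and as a constrained harmonic bundle for a real multiplier $q^{*}$; the conclusion then follows from the characterisation of constrained Willmore surfaces by constrained harmonicity of the sphere congruence (Theorems \ref{ffffffffadeaeddaeae} and \ref{CWinS4vkutywsfgdredhgvcertyq2932h}). The engine is a dressing argument, modelled on Section \ref{sec:dress} but carried out with the \emph{untwisted} factor $r=r_{\alpha,L^{\alpha}}$ acting on the family $d^{V}_{\mu,q}$ of flat metric connections on $\wedge^{2}\underline{\C}^{4}=(\underline{\R}^{5,1})^{\C}$ (Lemma \ref{d^S_lambda^2orm} and the discussion following it), whose flatness for all $\mu\in\C\backslash\{0\}$ is equivalent to the $q$-constrained Willmore condition of $L$. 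Here $r$ is, as a function of $\mu$, a rational factor with a zero and a pole at $\mu=\alpha^{2}$ and $\mu=\overline{\alpha}\,^{-2}$ and eigenbundles $\tau(\alpha)L^{\alpha}$ and $\overline{\tau(\alpha)L^{\alpha}}$.

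First I would transport the parallelness hypotheses to this family. Since $\tau(\alpha)$ intertwines $d^{\alpha,q}_{V}$ with $d^{V}_{\alpha^{2},q}$ and $L^{\alpha}$ is $d^{\alpha,q}_{V}$-parallel, the line bundle $\tau(\alpha)L^{\alpha}$ is $d^{V}_{\alpha^{2},q}$-parallel; and, because $q$ is $j$-commuting (hence real), so that $\overline{d^{V}_{\mu,q}}=d^{V}_{\overline{\mu}\,^{-1},q}$, its conjugate $\overline{\tau(\alpha)L^{\alpha}}$ is $d^{V}_{\overline{\alpha}\,^{-2},q}$-parallel. Thus $r$ is precisely an untwisted analogue of a Terng--Uhlenbeck simple factor adapted to $d^{V}_{\mu,q}$, whose two eigenbundles are parallel for the connections at the two parameter values where $r$ degenerates.

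The crux is a holomorphic-extension statement modelled on Lemma \ref{lemmaext} and Proposition \ref{cruxh}: the conjugated family $\hat{d}^{V}_{\mu}:=r(\mu)\circ d^{V}_{\mu,q}\circ r(\mu)^{-1}$ extends holomorphically and invertibly to $\mu\in\C\backslash\{0\}$ through flat metric connections. I would prove this by splitting $d^{V}_{\mu,q}$ near $\mu=\alpha^{2}$ and $\mu=\overline{\alpha}\,^{-2}$ along $\wedge^{2}\underline{\C}^{4}=\tau(\alpha)L^{\alpha}\oplus(\tau(\alpha)L^{\alpha}\oplus\overline{\tau(\alpha)L^{\alpha}})^{\perp}\oplus\overline{\tau(\alpha)L^{\alpha}}$ and checking, exactly as in Lemma \ref{lemmaext}, that the parallelness from the previous step forces the apparent poles to cancel; flatness is inherited since $r(\mu)$ is an isometry of $(\wedge^{2}\underline{\C}^{4},d^{V}_{\mu,q})$ onto $(\wedge^{2}\underline{\C}^{4},\hat{d}^{V}_{\mu})$ for generic $\mu$, and persists by continuity. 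The reality relation $\overline{r(\overline{\mu}\,^{-1})}=r(\mu)$ --- playing the role that the twisting \eqref{eq:RcommuteRho} played in Section \ref{sec:dress} --- keeps $\hat{d}^{V}_{\mu}$ a real family. A Proposition \ref{cruxh}-type inspection of the $(1,0)$- and $(0,1)$-parts near $\mu=0,\infty$ then identifies $\hat{d}^{V}_{\mu}$ as the constrained-harmonicity family of a new sphere congruence whose complex structure $S^{*}$ has eigenbundles $S_{+}^{*}=r(0)S_{+}$ and $S_{-}^{*}=r(\infty)S_{-}$, together with a new multiplier $\hat{q}$ built from $r(0),r(\infty)$. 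Here the choices of $L^{\alpha}$ secured by Lemmas \ref{tauLneverreal} and \ref{whenr0S+jstable} guarantee that $S_{+}^{*}$ is non-$j$-stable and complementary to $S_{-}^{*}=jS_{+}^{*}$ (cf. \eqref{eq:r0jjrinfty}, \eqref{eq:jstablityofr(0)=S+}), so that $V^{*}=S_{+}^{*}\wedge S_{-}^{*}$ is a genuine \emph{real} $2$-sphere congruence.

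Finally I would transfer the result to the trivial connection. Writing $\hat{d}:=\hat{d}^{V}_{1}$, a flat metric connection, and composing the connection-preserving isometry $(\wedge^{2}\underline{\C}^{4},\hat{d})\to(\wedge^{2}\underline{\C}^{4},d)$ with the $SO(\underline{\R}^{5,1})$-untwisting $g$ of the chapter's factor $P=g^{-1}R$, Theorem \ref{thm5.2} and the transfer Propositions \ref{prop5.5} and \ref{CWunderphi} show that $V^{*}$ is $q^{*}$-constrained harmonic with a real $q^{*}$ and that $\Lambda^{*}=\wedge^{2}L^{*}$ is enveloped by $V^{*}$ with the centrality conditions $\mathcal{N}^{1,0}_{V^{*}}(\Lambda^{*})^{0,1}=0=\mathcal{N}^{0,1}_{V^{*}}(\Lambda^{*})^{1,0}$; once $L^{*}$ immerses, $V^{*}$ is the central sphere congruence of $\Lambda^{*}$ and the bundles $L_{\pm}^{*}$ of \eqref{eq:Lspm} are not merely formal, so $L^{*}$ is a $q^{*}$-constrained Willmore surface. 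The main obstacle is this holomorphic-extension and identification step: beyond the pole cancellation one must confirm that $\hat{q}$, and hence $q^{*}$, is real and takes values in $\mathrm{End}_{j}(\underline{\C}^{4}/L^{*},L^{*})$ (equivalently in $\Lambda^{*}\wedge(\Lambda^{*})^{(1)}$), which is where the untwisted factor departs most sharply from the twisted dressing of Section \ref{sec:dress} and where the precise form of $r_{\alpha,L^{\alpha}}$ and the reality relation $\overline{r(\overline{\mu}\,^{-1})}=r(\mu)$ must be used in full.
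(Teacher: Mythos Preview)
Your approach is essentially correct and shares its core computation with the paper: both prove the theorem by a dressing argument whose crux is showing that conjugation by the simple factor $r=r_{\alpha,L^{\alpha}}$ of the family $d^{V}_{\mu,q}$ extends holomorphically across $\mu=\alpha^{2}$ and $\mu=\overline{\alpha}\,^{-2}$, via the splitting of $d^{V}_{\alpha^{2},q}$ along the decomposition $\tau(\alpha)L^{\alpha}\oplus(\tau(\alpha)L^{\alpha}\oplus\overline{\tau(\alpha)L^{\alpha}})^{\perp}\oplus\overline{\tau(\alpha)L^{\alpha}}$, exactly as in Lemma \ref{lemmaext}.

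The difference is one of packaging, and the paper's is more economical. You propose to work with $r$ acting on the untwisted family $d^{V}_{\mu,q}$, then do a Proposition \ref{cruxh}-style identification from scratch to recognise the dressed family as a constrained-harmonicity family for $V^{*}$, and finally invoke $g$ at the end. The paper instead observes at the outset that $P(\lambda)=g^{-1}\tau^{*}(\lambda)^{-1}r(\lambda^{2})\tau(\lambda)$ satisfies the \emph{twisting} relation $\rho_{V}P(\lambda)\rho_{V}=P(-\lambda)$ (this is precisely why $g$ with $g\rho_{V}g^{-1}=\rho_{V^{*}}$ was introduced), and since $P(\lambda)\circ d^{\lambda,q}_{V}\circ P(\lambda)^{-1}=g^{-1}\tau^{*}(\lambda)^{-1}\bigl(r(\lambda^{2})\circ d^{V}_{\lambda^{2},q}\circ r(\lambda^{2})^{-1}\bigr)\tau^{*}(\lambda)g$, the same pole-cancellation computation you describe shows that $\lambda\mapsto P(\lambda)\circ d^{\lambda,q}_{V}\circ P(\lambda)^{-1}$ extends holomorphically to $\C\backslash\{0\}$. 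With $P$ twisted, Theorem \ref{eq:thm8.4.2paraja} applies directly and delivers everything at once: the constrained harmonicity of $V^{*}=P(1)^{-1}V$, the centrality, and the form of the multiplier $q^{*}$. Your plan to replace the twisting by the reality relation $\overline{r(\overline{\mu}\,^{-1})}=r(\mu)$ and to rebuild Proposition \ref{cruxh} in the untwisted setting would work, but it means re-deriving structural facts (the form of $\hat{\mathcal{N}}$, that $\hat{q}$ lands in $\wedge^{2}\hat{\Delta}^{0,1}$ and $\wedge^{2}\hat{\Delta}^{1,0}$, reality of $q^{*}$) that the paper gets for free by reducing to the already-established twisted dressing framework of Chapter \ref{transformsofCW}.
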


The proof of the theorem will follow some preliminaries, presented
next.

At each point, the complementarity of $S^{*}_{+}$ and $S^{*}_{-}$ in
$\underline{\C}^{4}$ establishes a decomposition
$$
\wedge ^{2}\underline{\C}^{4}=\wedge^{2}S^{*}_{+}\oplus
S^{*}_{+}\wedge S^{*}_{-} \oplus \wedge^{2}S^{*}_{-}.
$$
Define then, for $\lambda\in\C\backslash\{0\}$, another orthogonal
transformation of $\wedge^{2}\underline{\C}^{4}$ by
$$\tau ^{*} (\lambda)
:=I\left\{
\begin{array}{ll} \lambda^{-1} & \mbox{$\mathrm{on}\,\wedge^{2}S^{*}_{+}$}\\ 1 &
\mbox{$\mathrm{on}\,S^{*}_{+}\wedge S^{*}_{-}$}\\
\lambda & \mbox{$\mathrm{on}\,\wedge
^{2}S^{*}_{-}$}\end{array}\right..$$ Set
\begin{equation}\label{eq:VstarvsSstar}
V^{*}:=S^{*}_{+}\wedge S^{*}_{-}
\end{equation}
and let $\rho_{V^{*}}$ denote reflection across $V^{*}$. Observe
that $\tau(-1)=\rho_{V}$ and $\tau^{*}(-1)=\rho_{V^{*}}$, and that,
for $\lambda\in\C\backslash\{0\}$,
\begin{equation}\label{eq:tauand-}
\tau(-\lambda)=\tau(\lambda)\rho_{V},\,\,\,\,\,\,\tau^{*}(-\lambda)=\tau^{*}(\lambda)\rho_{V^{*}}
\end{equation}
and
\begin{equation}\label{eq:taueoverline}
\overline{\tau(\lambda)}=\tau(\overline{\lambda}\,^{-1}),\,\,\,\,\overline{\tau^{*}(\lambda)}=\tau^{*}(\overline{\lambda}\,^{-1}).
\end{equation}
For $\lambda\in\C\backslash\{0\}$, set
$$R(\lambda):=\tau
^{*}(\lambda)^{-1}\,r_{\alpha,L^{\alpha}}(\lambda^{2})\,\tau(\lambda)\in\Gamma(O(\wedge^{2}\underline{\C}^{4})),$$which
will play a crucial role on what follows. Fix
$g\in\Gamma(SO(\underline{\R}^{5,1}))$ such that
$$g\rho_{V}g^{-1}=\rho_{V^{*}}$$ and
set also
$$P(\lambda):=g^{-1}R(\lambda).$$
The holomorphicity of $P(\lambda)$ at $\lambda=0$, equivalent to
that of $R(\lambda)$, is directly ensured by Lemma
\ref{TrioDeHolomorfia}, and, similarly, after an appropriate change
of variable, so is the holomorphicity of $P(\lambda)$ at
$\lambda=\infty$. Set then
$$
(\Lambda^{*})^{1,0}:=P(1)^{-1}P(\infty)\,\Lambda^{1,0}$$ and
$$(\Lambda^{*})^{0,1}:=P(1)^{-1}P(0)\,\Lambda^{0,1}.$$
Equivalently,
$$(\Lambda^{*})^{1,0}=R(\infty)\,(L_{+}\wedge S_{-}),\,\,\,\,\,\,(\Lambda^{*})^{0,1}=R(0)\,(L_{-}\wedge S_{+}),$$as $R(1)=I$.
Observe that $(\Lambda^{*})^{1,0}$ and $ (\Lambda^{*})^{0,1}$ are
complex conjugate of each other: having in consideration
\eqref{eq:taueoverline},
\begin{eqnarray*}
\overline{(\Lambda^{*})^{1,0}}&=&\mathrm{lim}_{\lambda\rightarrow
\infty}\,\tau^{*}(\overline{\lambda})\,\overline{r(\lambda^{2})}\,\tau(\overline{\lambda}\,^{-1})\,\overline{L_{+}\wedge
S_{-}}\\&=&\mathrm{lim}_{\lambda\rightarrow
0}\,\tau^{*}(\lambda^{-1})\,r(0)\,\tau(\lambda)\,(L_{-}\wedge
S_{+})\\&=&(\Lambda^{*})^{0,1}.
\end{eqnarray*}
Fixing a choice of $\sqrt{\lambda}$,
$$\tau (\lambda)
=I\left\{
\begin{array}{ll} \sqrt{\lambda}\,^{-1} & \mbox{$\mathrm{on}\,S_{+}$}\\ \sqrt{\lambda} & \mbox{$\mathrm{on}\,S_{-}$}\end{array}\right.$$
and
$$\tau ^{*}(\lambda)
=I\left\{
\begin{array}{ll} \sqrt{\lambda}\,^{-1} & \mbox{$\mathrm{on}\,S^{*}_{+}$}\\ \sqrt{\lambda} & \mbox{$\mathrm{on}\,S^{*}_{-}$}\end{array}\right.,$$
and, therefore, given $s_{+}\in\Gamma(S_{+})$,
$R(0)s_{+}=\mathrm{lim}_{\lambda\rightarrow
0}\,(\pi_{S^{*}_{+}}\,r(\lambda^{2})\,s_{+}+\lambda^{-1}\pi_{S^{*}_{-}}\,r(\lambda^{2})\,s_{+})$.
On the other hand, as $r(\lambda^{2})$ is holomorphic at
$\lambda=0$, it admits a Taylor expansion
$$
r(\lambda^{2})=r(0)+\lambda^{2}\frac{d}{d\lambda}_{\vert_{\lambda=0}}r(\lambda^{2})+\frac{\lambda^{4}}{2}\frac{d^{2}}{d\lambda^{2}}_{\vert_{\lambda=0}}r(\lambda^{2})+....$$
around $0$, making clear that
\begin{equation}\label{eq:conclusaodeTaylorexparound0}
\mathrm{lim}_{\lambda\rightarrow
0}\,\lambda^{-1}r(\lambda^{2})=\mathrm{lim}_{\lambda\rightarrow
0}\,\lambda^{-1}r(0).
\end{equation}
Hence
$R(0)s_{+}=r(0)s_{+}+\pi_{S^{*}_{-}}(\mathrm{lim}_{\lambda\rightarrow
0}\,\lambda^{-1} \,r(0)\,s_{+})$. Now the holomorphicity of
$R(\lambda)$ at $\lambda=0$ forces $\pi_{S^{*}_{-}}\,r(0)s_{+}$ to
be $0$, establishing $R(0)s_{+}=r(0)s_{+}$. Thus
$R(0)S_{+}=S^{*}_{+}.$ A similar, even slightly simpler computation
establishes $R(0)L_{-}=\pi_{S^{*}_{-}}r(0)L_{-}.$ We conclude that
\begin{equation}\label{eq:Lambda01}
(\Lambda^{*})^{0,1}=\pi_{S^{*}_{-}}r(0)L_{-}\wedge S^{*}_{+},
\end{equation}
or, equivalently, by complex conjugation,$$
(\Lambda^{*})^{1,0}=\pi_{S^{*}_{+}}r(\infty)L_{+}\wedge S^{*}_{-}.$$
Thus
$$(\Lambda^{*})^{1,0}\cap(\Lambda^{*})^{0,1}=\wedge^{2}L^{*}=:\Lambda^{*}.$$
The choice of the notation $\Lambda^{*}$, already attributed to a
twisted B\"{a}cklund transform of $\Lambda$, is not casual, as we
shall verify in the next section.

Now we proceed to the proof of Theorem
\ref{L*CWpowihjugfvbaq9873tfgvwbuyqwrf}.
\begin{proof}
Note that, according to \eqref{eq:tauand-},
\begin{equation}\label{eq:P-lambda}
P(-\lambda)=g^{-1}\tau^{*}(-\lambda)\,r_{\alpha,L^{\alpha}}(\lambda^{2})\,\tau(-\lambda)=
g^{-1}\rho_{V^{*}}R(\lambda)\rho_{V}=\rho_{V}P(\lambda)\rho_{V}.
\end{equation}
The proof will consist of showing that
$$\lambda\mapsto d^{\lambda,q}_{P}:=P(\lambda)\circ
d^{\lambda,q}_{V}\circ P(\lambda)^{-1}$$admits a holomorphic
extension to $\lambda\in\C\backslash\{0\}$ through metric
connections on $\wedge^{2}\underline{\C}^{4}$. It will then follow
from Theorem \ref{eq:thm8.4.2paraja} that $\Lambda^{*}$ is a
constrained Willmore surface, provided that it immerses, with no
need for condition \eqref{eq:condondet} to be verified, as it would
solely intend to ensure that $(\Lambda^{*})^{1,0}$ and
$(\Lambda^{*})^{0,1}$ intersect in a rank $1$ bundle, a fact that is
already known to us.

Since $d_{\alpha^{2},q}^{V}$ is metric,
$$d_{\alpha^{2},q}^{V}\,\Gamma (\,\overline{\tau(\alpha)L^{\alpha}}\,)\subset \Omega^{1}
(\,\overline{\tau(\alpha)L^{\alpha}}\,^{\perp}),$$ as well as, in
view of the parallelness of $\tau(\alpha)L^{\alpha}$ with respect to
$d_{\alpha^{2},q}^{V}$,
$$d_{\alpha^{2},q}^{V}\,\Gamma ((\tau(\alpha)L^{\alpha}\oplus\overline{\tau(\alpha)L^{\alpha}})^{\perp})\subset \Omega^{1}
((\tau(\alpha)L^{\alpha})^{\perp}).$$ Let $\pi_{\tau
L}:\wedge^{2}\underline{\C}^{4}\rightarrow\tau(\alpha)L^{\alpha}$
and  $\pi_{\overline{\tau
L}}:\wedge^{2}\underline{\C}^{4}\rightarrow
\overline{\tau(\alpha)L^{\alpha}}$ be projections with respect to
the decomposition \eqref{eq:taudecomp}. Let $\pi_{\oplus}$ and
$\pi_{\perp}$ be as in Lemma \ref{whenr0S+jstable}. As
$$(\tau(\alpha)L^{\alpha})^{\perp}=\tau(\alpha)L^{\alpha}\oplus(\tau(\alpha)L^{\alpha}\oplus
\overline{\tau(\alpha)L^{\alpha}})^{\perp}$$ and
$$\overline{\tau(\alpha)L^{\alpha}}\,^{\perp}=\overline{\tau(\alpha)L^{\alpha}}\oplus(\tau(\alpha)L^{\alpha}\oplus
\overline{\tau(\alpha)L^{\alpha}})^{\perp},$$ we conclude that
$\pi_{\tau L}\circ d_{\alpha^{2},q}^{V}\circ\pi_{\overline{\tau
L}}=0=\pi_{\overline{\tau L}}\circ d_{\alpha^{2},q}^{V}\circ
\pi_{\perp}$, showing that $d_{\alpha^{2},q}^{V}$ splits as
$$d_{\alpha^{2},q}^{V}=D_{\alpha^{2}}^{q}+\beta_{\alpha^{2}}^{q},$$ for the connection
$$D_{\alpha^{2}}^{q}:=d_{\alpha^{2},q}^{V}\circ\pi_{\tau L}+\pi_{\overline{\tau L}}\circ
d_{\alpha^{2},q}^{V}\circ\pi_{\overline{\tau L}}+\pi_{\perp}\circ
d_{\alpha^{2},q}^{V}\circ\pi_{\perp},$$ on
$\wedge^{2}\underline{\C}^{4}$, and
$$\beta_{\alpha^{2}}^{q}:=\pi_{\perp}\circ
d_{\alpha^{2},q}^{V}\circ\pi_{\overline{\tau L}}+\pi_{\tau L}\circ
d_{\alpha^{2},q}^{V}\circ\pi_{\perp}\in\Omega^{1}(\tau(\alpha)L^{\alpha}\wedge(\tau(\alpha)L^{\alpha}\oplus\overline{\tau(\alpha)L^{\alpha}}\,)
\,^{\perp}).$$ For simplicity, denote $r_{\alpha}(\lambda^{2})\in\C$
(as defined in \eqref{eq:ralphalambdathe number}) by
$\alpha_{\lambda^{2}}$. Clearly, for each $\lambda$,
$$r(\lambda^{2})\circ D_{\alpha^{2}}^{q}\circ r(\lambda^{2})^{-1}=D_{\alpha^{2}}^{q},\,\,\,\,
r(\lambda^{2})\,\beta_{\alpha^{2}}^{q}\,r(\lambda^{2})^{-1}=\alpha_{\lambda^{2}}\,\beta_{\alpha^{2}}^{q}.$$
Now decompose $d_{\lambda^{2},q}^{V}$ as
$$d_{\lambda^{2},q}^{V}=d_{\alpha^{2},q}^{V}+(\lambda^{2}-\alpha^{2})A(\lambda^{2}),$$
for $\lambda\in\C\backslash \{0,\pm\alpha\}$, with $\lambda\mapsto
A(\lambda^{2})\in\Omega^{1}(o(\wedge^{2}\underline{\C}^{4}))$
holomorphic. It follows that
\begin{eqnarray*}
d^{\lambda,q}_{P}&=&g^{-1}\tau^{*}(\lambda)^{-1}r_{\alpha,L^{\alpha}}(\lambda^{2})\circ
d^{V}_{\lambda^{2},q}\circ
r_{\alpha,L^{\alpha}}(\lambda^{2})^{-1}\tau^{*}(\lambda)g\\&=&
g^{-1}\tau^{*}(\lambda)^{-1}D_{\alpha^{2}}^{q}\,\tau^{*}(\lambda)\,g+\alpha_{\lambda^{2}}\,g^{-1}\tau^{*}(\lambda)^{-1}\beta_{\alpha^{2}}^{q}\,\tau^{*}(\lambda)\,g+\Psi(\lambda),
\end{eqnarray*}
for
$\Psi(\lambda):=g^{-1}\tau^{*}(\lambda)^{-1}r(\lambda^{2})(\lambda^{2}-\alpha^{2})A(\lambda^{2})r(\lambda^{2})^{-1}\tau^{*}(\lambda)\,g$
and
$\lambda\in\C\backslash\{0,\pm\alpha,\pm\overline{\alpha}\,^{-1}\}$.
Set
$\Upsilon(\lambda):=(\lambda^{2}-\alpha^{2})r(\lambda^{2})A(\lambda^{2})r(\lambda^{2})^{-1}.$
The skew-symmetry of $A(\lambda^{2})$ establishes
$$A(\lambda^{2})\tau(\alpha)L^{\alpha}\subset(\tau(\alpha)L^{\alpha})\,^{\perp},\,\,\,\,\,A(\lambda^{2})\overline{\tau(\alpha)L^{\alpha}}\subset\overline{\tau(\alpha)L^{\alpha}}\,^{\perp}$$
and, consequently,
$$\pi_{\overline{\tau L}}\,A(\lambda^{2})\,\pi_{\tau L}=0=\pi_{\tau L}\,A(\lambda^{2})\,\pi_{\overline{\tau L}}.$$
On the other hand, it is clear that $$\pi_{\overline{\tau
L}}\,\,r(\lambda^{2})\,A(\lambda^{2})\,r(\lambda^{2})^{-1}\pi_{\tau
L}=a_{\lambda^{2}}^{-1}\,\pi_{\overline{\tau
L}}\,\,A(\lambda^{2})\,a_{\lambda^{2}}^{-1}\,\pi_{\tau
L}=a_{\lambda^{2}}^{-2}\,\pi_{\overline{\tau
L}}\,A(\lambda^{2})\pi_{\tau L}$$and, similarly,
$$\pi_{\tau
L}\,r(\lambda^{2})\,A(\lambda^{2})\,r(\lambda^{2})^{-1}\pi_{\overline{\tau
L}}=a_{\lambda^{2}}^{2}\,\pi_{\tau
L}\,A(\lambda^{2})\pi_{\overline{\tau L}}.$$ Hence
$\pi_{\overline{\tau L}}\,\Upsilon(\lambda)\,\pi_{\tau
L}=0=\pi_{\tau L}\Upsilon(\lambda)\,\pi_{\overline{\tau L}}.$ It
follows that
\begin{eqnarray*}
\Upsilon(\lambda)&=& (\lambda^{2}-\alpha^{2})\,(\pi_{\tau
L}\,A(\lambda^{2})\,\pi_{\tau L}+\pi_{\overline{\tau
L}}\,\,A(\lambda^{2})\,\pi_{\overline{\tau
L}}+\pi_{\perp}A(\lambda^{2})\,\pi_{\perp})\\ & &
\mbox{}+\frac{1-\alpha^{2}}{1-\overline{\alpha}\,^{-2}}\,(\lambda^{2}-\overline{\alpha}\,^{-2})\,(\pi_{\perp}\,A(\lambda^{2})\,\pi_{\tau
L}+\pi_{\overline{\tau L}}\,\,A(\lambda^{2})\,\pi_{\perp})\\ & &
\mbox{}+
 \frac{1-\overline{\alpha}\,^{-2}}{1-\alpha^{2}}\frac{(\lambda^{2}-\alpha^{2})^{2}}{\lambda^{2}-\overline{\alpha}\,^{-2}}\,(\pi_{\perp}\,A(\lambda^{2})\,\pi_{\overline{\tau
L}}+\pi_{\tau L}\,A(\lambda^{2})\,\pi_{\perp}).
\end{eqnarray*}
Hence, by setting
\begin{eqnarray*}
d^{\alpha,q}_{P}&:=&g^{-1}\tau^{*}(\alpha)^{-1}D_{\alpha^{2}}^{q}\,\tau^{*}(\alpha)\,g
\\ & & \mbox{}+g^{-1}\tau^{*}(\alpha)^{-1}\frac{1-\alpha^{2}}{1-\overline{\alpha}\,^{-2}}\,(\alpha^{2}-\overline{\alpha}\,^{-2})\,(\pi_{\perp}\,A(\alpha^{2})\,\pi_{\tau
L}+\pi_{\overline{\tau
L}}\,\,A(\alpha^{2})\,\pi_{\perp})\tau^{*}(\alpha)g
\end{eqnarray*}
and
\begin{eqnarray*}
d^{-\alpha,q}_{P}&:=&g^{-1}\tau^{*}(-\alpha)^{-1}D_{\alpha^{2}}^{q}\,\tau^{*}(-\alpha)\,g
\\ & & \mbox{}+g^{-1}\tau^{*}(-\alpha)^{-1}\frac{1-\alpha^{2}}{1-\overline{\alpha}\,^{-2}}\,(\alpha^{2}-\overline{\alpha}\,^{-2})\,(\pi_{\perp}\,A(\alpha^{2})\,\pi_{\tau
L}+\pi_{\overline{\tau
L}}\,\,A(\alpha^{2})\,\pi_{\perp})\tau^{*}(-\alpha)g,
\end{eqnarray*}
we extend holomorphically $(\lambda\mapsto d^{\lambda,q}_{P})$ to
$\lambda\in\C\backslash\{0,\pm\overline{\alpha}\,^{-1}\}$ through
what, by continuity, are metric connections on
$\wedge^{2}\underline{\C}^{4}$.

The existence of a holomorphic extension to $\C\backslash\{0\}$,
through metric connections on $\wedge^{2}\underline{\C}^{4}$, can be
proved analogously, having in consideration the parallelness of
$\overline{\tau(\alpha)L^{\alpha}}$ with respect to the connection
$$d_{\overline{\alpha}\,^{-2},q}^{V}=\overline{\tau(\alpha)\circ d^{\alpha,q}_{V}\circ \tau(\alpha)^{-1}}.$$

\end{proof}

Suppose $L^{*}$ immerses. Note that
$P(1)^{-1}P(-1)=R(-1)=\rho_{V^{*}}\rho_{V}$, whilst, on the other
hand, according to \eqref{eq:P-lambda},
$P(1)^{-1}P(-1)=P(1)^{-1}\rho_{V}P(1)\rho_{V}$. We conclude that
$\rho_{V^{*}}P(1)^{-1}=P(1)^{-1}\rho_{V}$ and, therefore, as
$\rho_{V}V=V,$ that $\rho_{V^{*}}P(1)^{-1}V=P(1)^{-1}V$.
Equivalently, $$V^{*}=P(1)^{-1}V,$$$V^{*}$ is the complexification
of the central sphere congruence of $\Lambda^{*}$. Denote the mean
curvature sphere of $L^{*}$ by $S^{*}$. In view of
\eqref{eq:VstarvsSstar}, the eigenspace of $S^{*}$ associated to the
eigenvalue $i$ is either $S^{*}_{+}$ or $S_{-}^{*}$. Suppose it is
$S^{*}_{-}$. In that case, $(\Lambda^{*})^{0,1}=(L^{*}\cap
S^{*}_{+})\wedge S^{*}_{-},$ which, in view of the complementarity
of $S_{+}^{*}$ and $S_{-}^{*}$ in $\underline{\C}^{4}$, contradicts
\eqref{eq:Lambda01}. Hence $S^{*}_{+}$ is the eigenspace of $S^{*}$
associated to $i$. The choice of notation is consistent. It follows,
in particular, that the equalities \eqref{eq:Lspm} are not merely
formal either.

\subsection{Twisted vs. untwisted B\"{a}cklund transformation of constrained Willmore surfaces in $4$-space}

Twisted B\"{a}cklund transformation of constrained Willmore surfaces
in $4$-space is closely related to untwisted B\"{a}cklund
transformation. As we verify in this section, when twisted
B\"{a}cklund transformation parameters constitute untwisted
B\"{a}cklund transformation parameters, the corresponding twisted
and untwisted B\"{a}cklund transforms coincide.\footnote{In Appendix
B below, we verify that twisted and untwisted B\"{a}cklund
transformation parameters conditions at a point are not
equivalent.}\newline

Choose a non-zero $\alpha\in\C\backslash S^{1}$ and a
$d^{\alpha,q}_{V}$-parallel null line subbundle $L^{\alpha}$ of
$\wedge^{2}\underline{\C}^{4}$ such that, locally,
$\tau(\alpha)L^{\alpha}$ is never real, $r(0)S_{+}$ and
$r(\infty)S_{-}$ are complementary in $\underline{\C}^{4}$, and
condition \eqref{eq:BTparameterscondition} is satisfied. The
existence of such a choice of $L^{\alpha}$ is established in the
next lemma. First observe that, according to
\eqref{eq:characterizdenull}, given $w:=v_{0}+v_{+}+v_{-}$ null,
with $v_{0}\in\Gamma(S_{+}\wedge S_{-})$,
$v_{+}\in\Gamma(\wedge^{2}S_{+})$ and
$v_{-}\in\Gamma(\wedge^{2}S_{-})$, $\rho_{V} w$ is orthogonal to
$w$, at some point, if and only if, at that point,
$(v_{0},v_{0})=0,$ or, equivalently, as $\wedge^{2}S_{+}\cap
(\wedge^{2}S_{-})^{\perp}=\{0\}$, either $v_{+}=0$ or $v_{-}=0$.

\begin{Lemma}\label{buildlalphataunotreal}
Let $v_{-}$ be a non-zero section of $\wedge^{2}S_{-}$ with
$(v_{-},\overline{v_{-}})\neq 1.$ Let $v_{0}$ be a section of
$S_{+}\wedge S_{-}$ with $(v_{0},v_{0})=(v_{-},\overline{v_{-}})$
and
$(v_{0},\overline{v_{0}})=\frac{1}{4}\,(v_{-},\overline{v_{-}}).$
Define a null section of $\wedge^{2}\underline{\C}^{4}$ by
$l^{\alpha}:=v_{0}-\frac{1}{2}\,\overline{v_{-}}+v_{-}.$ Let
$L^{\alpha}\subset\wedge^{2}\underline{\C}^{4}$ be a
$d^{\alpha,q}_{V}$-parallel null line bundle defined naturally by
$d^{\alpha,q}_{V}$-parallel transport of $l^{\alpha}_{p}$, for some
point $p\in M$. Then there is a (non-empty) open set where
$L^{\alpha}$ is never orthogonal to $\rho_{V} L^{\alpha}$,
$\tilde{L}^{\alpha}$ is never orthogonal to
$\rho_{V}\tilde{L}^{\alpha}$, $\tau(\alpha)L^{\alpha}$ is never real
and $r(0)S_{+}$ and $r(\infty)S_{-}$ are complementary in
$\underline{\C}^{4}$.
\end{Lemma}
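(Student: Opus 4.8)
The plan is to verify the four asserted properties at the single base point $p$, where $L^{\alpha}_{p}=\langle l^{\alpha}_{p}\rangle$, and then to invoke openness: each condition is the non-vanishing of a continuous scalar (respectively $(\rho_{V}l^{\alpha},l^{\alpha})$, the orthogonality obstruction for $\tilde{L}^{\alpha}$, the reality obstruction of Lemma~\ref{whentauLreal}, and the $j$-stability obstruction $(r(0)u_{+},\overline{r(0)u_{+}})$), so each holds on an open neighbourhood of $p$, and the finite intersection of these neighbourhoods is the desired open set. Before anything else I would record that $l^{\alpha}=v_{0}+v_{+}+v_{-}$ with $v_{+}=-\tfrac12\overline{v_{-}}\in\wedge^{2}S_{+}$ is null: by \eqref{eq:characterizdenull} this amounts to $(v_{0},v_{0})=-2(v_{+},v_{-})=(v_{-},\overline{v_{-}})$, exactly the first hypothesis on $v_{0}$. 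Throughout I abbreviate $t:=(v_{-},\overline{v_{-}})$, strictly positive by Remark~\ref{inprodvoverlv} (as $v_{-}\ne0$ is decomposable), and $m:=|\alpha|^{2}$, with $m\ne1$ since $\alpha\notin S^{1}$.

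For the non-orthogonality of $L^{\alpha}$ and $\rho_{V}L^{\alpha}$ I would compute, using the orthogonality of $V$ and $V^{\perp}$ and the nullity of $v_{\pm}$, that $(\rho_{V}l^{\alpha},l^{\alpha})=(v_{0},v_{0})-2(v_{+},v_{-})=2t>0$. For the non-reality of $\tau(\alpha)L^{\alpha}$ I would apply Lemma~\ref{whentauLreal}: since $v_{-}\ne0$ its first alternative is excluded, while its second alternative requires $|(v_{0},v_{0})|=1$; but $(v_{0},v_{0})=t$ is a positive real with $t\ne1$ by the standing hypothesis $(v_{-},\overline{v_{-}})\ne1$, so that alternative fails as well.

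For the non-orthogonality of $\tilde{L}^{\alpha}=q_{\overline{\alpha}^{\,-1},\overline{L^{\alpha}}}(\alpha)L^{\alpha}$ and $\rho_{V}\tilde{L}^{\alpha}$, I would reuse the purely algebraic identity derived in the proof of the lemma that establishes a choice of parameters satisfying \eqref{eq:rhonotorthogonalnht236whum}: at $p$, with $A:=(m-1)/(m+1)$, the pairing $(\rho_{V}q(\alpha)l^{\alpha},q(\alpha)l^{\alpha})$ vanishes iff $|(\rho_{V}l^{\alpha},l^{\alpha})|^{2}+(A^{2}-1)(l^{\alpha},\rho_{V}\overline{l^{\alpha}})^{2}+(A^{-2}-1)(l^{\alpha},\overline{l^{\alpha}})^{2}=0$. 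A short computation using the second hypothesis $(v_{0},\overline{v_{0}})=\tfrac14 t$ gives $(l^{\alpha},\overline{l^{\alpha}})=\tfrac32 t$ and $(l^{\alpha},\rho_{V}\overline{l^{\alpha}})=-t$, so the left-hand side equals $t^{2}\bigl(A^{2}+\tfrac94 A^{-2}+\tfrac34\bigr)$, manifestly strictly positive (here $A\ne0$ because $m\ne1$); hence \eqref{eq:rhonotorthogonalnht236whum} holds at $p$.

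The main obstacle is the fourth condition, the complementarity of $r(0)S_{+}$ and $r(\infty)S_{-}=jr(0)S_{+}$, i.e. the non-$j$-stability of $r(0)S_{+}$. Rather than checking the four-fold criterion of Lemma~\ref{whenr0S+jstable} I would use the cleaner equivalent form \eqref{eq:jstablityofr(0)=S+}: non-$j$-stability is equivalent to $(r(0)u_{+},\overline{r(0)u_{+}})\ne0$ for a nonzero $u_{+}\in\wedge^{2}S_{+}$. Taking $u_{+}=\overline{v_{-}}$ and decomposing it against the null basis $\tau(\alpha)l^{\alpha},\overline{\tau(\alpha)l^{\alpha}}$ of $\tau(\alpha)L^{\alpha}\oplus\overline{\tau(\alpha)L^{\alpha}}$, I would compute the two coefficients, the Gram quantity $X:=(\tau(\alpha)l^{\alpha},\overline{\tau(\alpha)l^{\alpha}})=t\bigl(m+\tfrac14 m^{-1}+\tfrac14\bigr)$, which is real and positive precisely because $(v_{0},\overline{v_{0}})=\tfrac14 t$, and $|r_{\alpha}(0)|=|\alpha|^{2}$ from \eqref{eq:ralphalambdathe number}; together with the perpendicular contribution $(\pi_{\perp}u_{+},\overline{\pi_{\perp}u_{+}})=t^{2}/4X>0$ these assemble to $(r(0)u_{+},\overline{r(0)u_{+}})=X^{-1}t^{2}\bigl(\tfrac14 m+m^{-1}+\tfrac14\bigr)>0$, nonzero for every admissible $\alpha$. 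The delicate point, and where the exact value $(v_{0},\overline{v_{0}})=\tfrac14 t$ is indispensable, is this bookkeeping of the $\pi_{\perp}$-component together with the reality of $X$: it is what yields a manifestly positive answer valid for all $\alpha$ simultaneously, whereas the naive route through Lemma~\ref{whenr0S+jstable} only rules out $j$-stability away from $|\alpha|^{2}=2$. With all four properties verified at $p$, openness finishes the argument.
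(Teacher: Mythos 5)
Your proof is correct, and for three of the four conditions it runs parallel to the paper's own argument: the pairing $(\rho_{V}l^{\alpha},l^{\alpha})=2(v_{0},v_{0})=2t$ at $p$; Lemma \ref{whentauLreal} combined with $|(v_{0},v_{0})|=t\neq 1$; and, for $\tilde{L}^{\alpha}$, a computation equivalent to the paper's. Your reuse of the pointwise identity from the proof of the lemma (in the real B\"{a}cklund transformation subsection) that constructs parameters satisfying \eqref{eq:rhonotorthogonalnht236whum} is legitimate, since that identity only needs $L^{\alpha}=\langle l^{\alpha}\rangle$ and the non-orthogonality of $\overline{L^{\alpha}}$ and $\rho_{V}\overline{L^{\alpha}}$ at $p$, not the special form of $l^{\alpha}$ assumed in that lemma; moreover your value $t^{2}(A^{2}+\tfrac{9}{4}A^{-2}+\tfrac{3}{4})$ equals $\overline{(\rho_{V}l^{\alpha},l^{\alpha})}=2t$ times the paper's $t(\tfrac{1}{2}A^{2}+\tfrac{3}{8}+\tfrac{9}{8}A^{-2})$, which the paper obtains by redoing the projections from scratch (its coefficients $a=-\tfrac{1}{2}$, $b=\tfrac{3}{4}$ are exactly your $(l^{\alpha},\rho_{V}\overline{l^{\alpha}})/2t$ and $(l^{\alpha},\overline{l^{\alpha}})/2t$). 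Where you genuinely diverge is the complementarity of $r(0)S_{+}$ and $r(\infty)S_{-}$: the paper goes through Lemma \ref{whenr0S+jstable} and shows that $\pi_{\perp}v_{-}$ and $\pi_{\perp}\overline{v_{-}}$ span distinct lines for every admissible $\alpha$ (linear dependence would force $\alpha^{2}$ real and $\tfrac{1-2\alpha^{2}}{1+2\alpha^{2}}=\pm 1$), whereas you evaluate the scalar obstruction $(r(0)u_{+},\overline{r(0)u_{+}})$ with $u_{+}=\overline{v_{-}}$ directly, obtaining $X^{-1}t^{2}(\tfrac{1}{4}m+m^{-1}+\tfrac{1}{4})$ with $X=t(m+\tfrac{1}{4}m^{-1}+\tfrac{1}{4})>0$; I checked these values, the norm $|r_{\alpha}(0)|=|\alpha|^{2}$, and the perpendicular contribution $t^{2}/4X$, and they are all right. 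Your route buys a single, manifestly positive closed-form scalar, uniform in $\alpha$, and bypasses the four-condition case analysis; the paper's route stays inside its structure lemma and requires less raw computation. One correction: your closing claim that the route through Lemma \ref{whenr0S+jstable} ``only rules out $j$-stability away from $|\alpha|^{2}=2$'' is inaccurate — that limitation applies only to its norm-ratio condition (which with $u_{+}=\overline{v_{-}}$ reads $2=|\alpha|^{2}$), whereas the paper checks the projective condition $\langle\pi_{\perp}u_{+}\rangle=\langle\overline{\pi_{\perp}u_{+}}\rangle$, which fails for all admissible $\alpha$; so the paper's argument is not deficient in the way you suggest, and the hypothesis $(v_{0},\overline{v_{0}})=\tfrac{1}{4}t$ is used there as well.
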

\begin{proof}
At the point $p$, $L^{\alpha}$ is spanned by $l^{\alpha}_{p}$. The
fact that, at the point $p$, in particular, $(v_{0},v_{0})$ is not
zero establishes the non-orthogonality of $L^{\alpha}$ and $\rho_{V}
L^{\alpha}$ at this point. The non-reality of
$\tau(\alpha)L^{\alpha}$ at $p$ follows then, according to Lemma
\ref{whentauLreal}, from the fact that $\mid
(v_{0},v_{0})\mid=(v_{-},\overline{v_{-}})\neq 1.$ Let $\pi_{\perp}$
denote the orthogonal projection of $\wedge^{2}\underline{\C}^{4}$
onto
$(\tau(\alpha)L^{\alpha}\oplus\overline{\tau(\alpha)L^{\alpha}})^{\perp}$.
The non-$j$-stability of $r(0)S_{+}$ at the point $p$ is
established, according to Lemma \ref{whenr0S+jstable}, by the fact
that $\langle \pi_{\perp}v_{-}\rangle\neq \langle
\pi_{\perp}\overline{v_{-}}\rangle,$ as we verify next. First of
all, note that
$\tau(\alpha)l^{\alpha}=-\frac{1}{2}\alpha\,^{-1}\overline{v_{-}}+v_{0}+\alpha
v_{-}.$ Set
$X:=(\tau(\alpha)l^{\alpha},\overline{\tau(\alpha)l^{\alpha}}\,)\neq
0.$ Then
$\overline{v_{-}}=X^{-1}(\overline{v_{-}},\overline{\tau(\alpha)l^{\alpha}}\,)\tau(\alpha)l^{\alpha}+X^{-1}(\overline{v_{-}},\tau(\alpha)l^{\alpha})\overline{\tau(\alpha)l^{\alpha}}+\pi_{\perp}\overline{v_{-}},$
or, equivalently,
\begin{eqnarray*}
(v_{-},\overline{v_{-}}\,)^{-1}X\,\overline{v_{-}}&=&\alpha\overline{\alpha}\,^{-1}v_{-}+(\frac{1}{4}\mid\alpha\mid^{-2}+\mid\alpha\mid^{2})\overline{v_{-}}\\
& & \mbox{}-\frac{1}{2}\,\overline{\alpha}\,^{-1}v_{0}+\alpha\,
\overline{v_{0}}+(v_{-},\overline{v_{-}}\,)^{-1}X\pi_{\perp}\overline{v_{-}},
\end{eqnarray*}
and, therefore,
\begin{eqnarray*}
(v_{-},\overline{v_{-}}\,)^{-1}X\,\pi_{\perp}\overline{v_{-}}&=&
-\alpha\overline{\alpha}\,^{-1}v_{-}+\frac{1}{2}\,\overline{\alpha}\,^{-1}v_{0}-\alpha\,
\overline{v_{0}}\\& &
\mbox{}-(\frac{1}{4}\mid\alpha\mid^{-2}+\mid\alpha\mid^{2}-(v_{-},\overline{v_{-}})^{-1}X)\overline{v_{-}}.
\end{eqnarray*}
Hence the linear dependency of $\pi_{\perp}\overline{v_{-}}$ and its
complex conjugate implies, in particular,
\begin{equation}\label{eq:realityofalphasquare,7654VDSH}
\alpha\overline{\alpha}\,^{-1}=\overline{\alpha}\alpha^{-1}
\end{equation}
and
\begin{equation}\label{eq:nbs4ffcaSCF6SY7VD48W3222DNBV 892}
(\frac{1}{2}\,\alpha^{-1}+\alpha)\overline{v_{0}}=(\frac{1}{2}\,\overline{\alpha}\,^{-1}-\overline{\alpha})v_{0}.
\end{equation}
Condition \eqref{eq:realityofalphasquare,7654VDSH} consists of the
reality of $\alpha^{2}$, whereas equation
\eqref{eq:nbs4ffcaSCF6SY7VD48W3222DNBV 892} forces, in particular,
$$v_{0}=\frac{(1-2\overline{\alpha}^{2})(1-2\alpha^{2})}{(1+2\alpha^{2})(1+2\overline{\alpha}^{2})}\,v_{0}.$$
As $v_{0}$ is non-zero, we conclude that, if $\langle
\pi_{\perp}\overline{v_{-}}, \pi_{\perp}v_{-}\rangle$ has rank $1$,
then $\frac{1-2\alpha^{2}}{1+2\alpha^{2}}=\pm 1,$ which contradicts
the fact that $\alpha$ is non-zero.

Now consider projections
$\pi_{\overline{L^{\alpha}}}:\wedge^{2}\underline{\C}^{4}\rightarrow
\overline{L^{\alpha}}$,
$\pi_{\rho_{V}\overline{L^{\alpha}}}:\wedge^{2}\underline{\C}^{4}\rightarrow
\rho_{V}\overline{L^{\alpha}}$ and
$\pi_{\perp'}:\wedge^{2}\underline{\C}^{4}\rightarrow
(\overline{L^{\alpha}}\oplus\rho_{V}\overline{L^{\alpha}})^{\perp}$
with respect to the decomposition
$$\wedge^{2}\underline{\C}^{4}= \overline{L^{\alpha}}\oplus
(\overline{L^{\alpha}}\oplus\rho_{V}\overline{L^{\alpha}})^{\perp}\oplus\rho_{V}\overline{L^{\alpha}},$$
provided by the non-orthogonality of $\overline{L^{\alpha}}$ and
$\rho_{V}\overline{L^{\alpha}}$, consequent to the non-orthogonality
of $L^{\alpha}$ and $\rho_{V} L^{\alpha}$. Set
$$A:=\frac{\alpha-\overline{\alpha}\,^{-1}}{\alpha+\overline{\alpha}\,^{-1}}=\frac{\mid\alpha\mid
^{2}-1}{\mid\alpha\mid ^{2}+1}\in\R.$$ Then
$$
\rho_{V} q(\alpha)l^{\alpha}
=A\,\rho_{V}\pi_{\overline{L^{\alpha}}}\,l^{\alpha}+\rho_{V}\pi_{\perp'}\,l^{\alpha}+A^{-1}\rho_{V}\pi_{\rho_{V}\overline{L^{\alpha}}}\,l^{\alpha},\
$$
and, therefore,
\begin{eqnarray*}
(\rho_{V} q(\alpha)l^{\alpha},q(\alpha)l^{\alpha})&=&
A^{2}(\pi_{\overline{L^{\alpha}}}\,l^{\alpha},\rho_{V}\pi_{\overline{L^{\alpha}}}\,l^{\alpha})
\\ & & \mbox{}+
(\pi_{\perp '}l^{\alpha},\rho_{V}\pi_{\perp '}\,l^{\alpha})\\ & &
\mbox{}+A^{-2}(\pi_{\rho_{V}\overline{L^{\alpha}}}\,l^{\alpha},\rho_{V}\pi_{\rho_{V}\overline{L^{\alpha}}}\,l^{\alpha}).
\end{eqnarray*}
At the point $p$,
$$\pi_{\overline{L^{\alpha}}}\,l^{\alpha}=a(\overline{v_{0}}-\frac{1}{2}\,v_{-}+\overline{v_{-}}),\,\,\,\,\,\,\,\,\pi_{\rho_{V}\overline{L^{\alpha}}}\,l^{\alpha}=b(\overline{v_{0}}+\frac{1}{2}\,v_{-}-\overline{v_{-}}),$$
for some $a,b\in\C$, and, therefore,
$$\pi_{\perp '}l^{\alpha}=v_{0}-(a+b)\overline{v_{0}}-(\frac{1}{2}+a-b)\overline{v_{-}}+(1+\frac{1}{2}(a-b))v_{-}.$$
The orthogonality relations $(\pi_{\perp
'}l^{\alpha},\overline{l^{\alpha}})=0=(\pi_{
\perp^{\alpha}}l^{\alpha},\rho_{V}\overline{l^{\alpha}})$ establish
then $a=\frac{-1}{2}$ and $b=\frac{3}{4}.$ It follows that, at the
point $p$,
$$(v_{-},\overline{v_{-}})^{-1}(\rho_{V} q(\alpha)l^{\alpha},q(\alpha)l^{\alpha})=\frac{1}{2}A^{2}+\frac{3}{8}+\frac{9}{8}A^{-2}.$$
Ultimately, the fact that $A$ is real, and, therefore, $A^{2}$ and
$A^{-2}$ are positive, shows that, at $p$,
$$(\rho_{V} q(\alpha)l^{\alpha},q(\alpha)l^{\alpha})\neq
0.$$

The proof is complete by observing that the non-orthogonality of
$\rho_{V} L^{\alpha}$ and $L^{\alpha}$, characterized by $(\rho_{V}
l,l)\neq 0,$ fixing $l\in\Gamma(L^{\alpha})$ non-zero, is an open
condition on the points in $M$. And so is, similarly, the
non-orthogonality of $\rho_{V} \tilde{L}^{\alpha}$ and
$\tilde{L}^{\alpha}$. And so are the non-reality of
$\tau(\alpha)L^{\alpha}$ and the non-$j$-stability of $r(0)S_{+}$,
as observed previously.
\end{proof}

For such a choice of parameters, we are in a position to refer to
both $R(\lambda)$, as defined in the previous section, and
$(pq)^{-1}(1)pq(\lambda)$.
\begin{prop}
\begin{equation}\label{eq:taupqs}
R(\lambda)=(pq)^{-1}(1)(pq)(\lambda),
\end{equation}
for all $\lambda$.
\end{prop}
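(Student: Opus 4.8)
The plan is to show that the $O(\wedge^{2}\underline{\C}^{4})$-valued rational map
$$M(\lambda):=R(\lambda)\,(pq)(\lambda)^{-1}$$
extends to a holomorphic map $\mathbb{P}^{1}\rightarrow O(\wedge^{2}\underline{\C}^{4})$, hence is constant (a holomorphic map from the Riemann sphere into a finite-dimensional space is constant), and then to evaluate at $\lambda=1$. Since $\tau(1)=\tau^{*}(1)=I$ and $r_{\alpha}(1)=\frac{1-\overline{\alpha}\,^{-2}}{1-\alpha^{2}}\frac{1-\alpha^{2}}{1-\overline{\alpha}\,^{-2}}=1$, so that $r_{\alpha,L^{\alpha}}(1)=I$, we have $R(1)=I$; thus once $M$ is known to be constant it equals $M(1)=R(1)(pq)(1)^{-1}=(pq)(1)^{-1}$, which is exactly $R(\lambda)=(pq)^{-1}(1)(pq)(\lambda)$.

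The candidate poles of $M$ are $\lambda\in\{0,\infty,\pm\alpha,\pm\overline{\alpha}\,^{-1}\}$, coming from the poles of $\tau,\tau^{*}$ and $r_{\alpha,L^{\alpha}}(\lambda^{2})$ in $R$ and from those of $p=p_{\alpha,\tilde{L}^{\alpha}}$ and $q=q_{\overline{\alpha}\,^{-1},\overline{L^{\alpha}}}$ in $pq$. At $\lambda=0,\infty$ there is nothing to do: $R(\lambda)=\tau^{*}(\lambda)^{-1}r_{\alpha,L^{\alpha}}(\lambda^{2})\tau(\lambda)$ is already known to be holomorphic and invertible there (this is the holomorphicity established, via Lemma \ref{TrioDeHolomorfia}, in the construction preceding Theorem \ref{L*CWpowihjugfvbaq9873tfgvwbuyqwrf}), while $p$ and $q$, hence $pq$, are holomorphic and invertible at $0$ and $\infty$ by construction; so $M$ is holomorphic and invertible at $0,\infty$.

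The substance lies at the four finite points. Consider $\lambda=\alpha$. Near $\alpha$ the factors $\tau(\lambda)$, $\tau^{*}(\lambda)$ and $q(\lambda)$ are holomorphic and invertible, so the poles of $M$ at $\alpha$ are carried by $r_{\alpha,L^{\alpha}}(\lambda^{2})$ — which has a simple zero on $\tau(\alpha)L^{\alpha}$ and a simple pole on $\overline{\tau(\alpha)L^{\alpha}}$ — and by $p(\lambda)^{-1}=p_{-\alpha,\tilde{L}^{\alpha}}(\lambda)$ (cf.\ \eqref{eq:pinvs}), which has a simple zero on $\rho_{V}\tilde{L}^{\alpha}$ and a simple pole on $\tilde{L}^{\alpha}$. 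Writing $M=\tau^{*}(\lambda)^{-1}(r_{\alpha,L^{\alpha}}(\lambda^{2})\,\xi(\lambda)\,p(\lambda)^{-1})$ with $\xi(\lambda):=\tau(\lambda)q(\lambda)^{-1}$ holomorphic and invertible at $\alpha$, I would apply Lemma \ref{TrioDeHolomorfia} (after the standard change of variable) to $\gamma=r_{\alpha,L^{\alpha}}(\lambda^{2})$, $\hat{\gamma}=p(\lambda)$ and $\xi$: its matching hypothesis $L_{1}=\xi(\alpha)\hat{L}_{1}$ reads $\tau(\alpha)L^{\alpha}=\tau(\alpha)q(\alpha)^{-1}\tilde{L}^{\alpha}$, i.e.
$$\tilde{L}^{\alpha}=q_{\overline{\alpha}\,^{-1},\overline{L^{\alpha}}}(\alpha)\,L^{\alpha},$$
which holds by the very definition of $\tilde{L}^{\alpha}$ (with $\beta=\overline{\alpha}\,^{-1}$, $L^{\beta}=\overline{L^{\alpha}}$). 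Hence $r_{\alpha,L^{\alpha}}(\lambda^{2})\xi(\lambda)p(\lambda)^{-1}$, and so $M$, is holomorphic and invertible at $\alpha$. The point $-\alpha$ follows from the twisting relation \eqref{eq:rhopqinv}, and $\pm\overline{\alpha}\,^{-1}$ analogously, now with $q$ carrying the poles, using the reality relations \eqref{eq:conj} and $\overline{\tilde{L}^{\alpha}}=\hat{L}^{\overline{\alpha}\,^{-1}}$ from \eqref{eq:tildevsbarL}.

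Granting these four local statements, $M$ is holomorphic on all of $\mathbb{P}^{1}$, hence constant, and evaluation at $\lambda=1$ completes the proof. I expect the main obstacle to be precisely this pole analysis at $\pm\alpha,\pm\overline{\alpha}\,^{-1}$: one must track the zero- and pole-lines of $r_{\alpha,L^{\alpha}}(\lambda^{2})$ through the two \emph{distinct} conjugations $\tau$ and $\tau^{*}$ (built from $S$ and $S^{*}$, respectively) and reconcile them with the eigenline data of $p$ and $q$ so as to verify the hypothesis of Lemma \ref{TrioDeHolomorfia}; the identities $\tilde{L}^{\alpha}=q(\alpha)L^{\alpha}$ and \eqref{eq:tildevsbarL}, together with \eqref{eq:rhopqinv} and \eqref{eq:conj}, are exactly what make this reconciliation succeed.
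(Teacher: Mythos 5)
Your proposal is correct and is essentially the paper's own proof: the paper likewise shows that $R(\lambda)\,(pq)(\lambda)^{-1}$ (composed with the constant $(pq)(1)$) is holomorphic on $\mathbb{P}^{1}$, hence constant, using the previously established holomorphicity at $0,\infty$ and, at the four finite candidate poles, Lemma \ref{TrioDeHolomorfia} after suitable changes of variable (made explicit in the paper by the factorization $r(\lambda^{2})=r_{1}(\lambda)\,r_{2}(\lambda)$), with the matching hypothesis supplied by the definition $\tilde{L}^{\alpha}=q_{\overline{\alpha}\,^{-1},\overline{L^{\alpha}}}\,(\alpha)L^{\alpha}$ together with the twisting and reality symmetries. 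The only difference is bookkeeping: the paper invokes \eqref{eq:tauand-} and \eqref{eq:taueoverline} where you invoke \eqref{eq:rhopqinv}, \eqref{eq:conj} and \eqref{eq:tildevsbarL} to transfer the analysis from $\lambda=\alpha$ to the remaining points, and either set of symmetries suffices.
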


The proof of the proposition is based on Lemma
\ref{TrioDeHolomorfia}.

\begin{proof}
Equation \eqref{eq:taupqs}  holds for $\lambda=1$. The proof will
consist of showing that $\xi:=\tau ^{*}(\lambda)^{-1}\,r(\lambda
^{2})\,\tau(\lambda)q^{-1}(\lambda)p^{-1}(\lambda)(pq)(1)$ is
holomorphic in $\mathbb{P}^{1}$ (and, therefore, constant). For
that, first note that, at most, $\xi$ has simple poles at
$0,\infty,\pm\alpha$ and $\pm\overline{\alpha}^{-1}$. The
holomorphicity of $\xi$ at $0$, equivalent to the holomorphicity of
$\tau ^{*}(\lambda)^{-1}\,r(\lambda ^{2})\,\tau(\lambda)$ at
$\lambda=0$, is already known to us, from the previous section, as
well as the holomorphicity of $\xi$ at $\lambda=\infty$. Observe, on
the other hand, that we can decompose $r(\lambda^{2})$ as
$$r(\lambda^{2})=r_{1}(\lambda)r_{2}(\lambda)=r_{2}(\lambda)r_{1}(\lambda),$$
for
$$r_{1} (\lambda)
:=I\left\{
\begin{array}{ll} c\,\frac{\lambda-\alpha}{\lambda-\overline{\alpha}^{-1}}& \mbox{$\mathrm{on}\,\tau (\alpha)L^{\alpha}$}\\ 1 &
\mbox{$\mathrm{on}\,(\tau (\alpha)L^{\alpha}+\overline{\tau (\alpha)L^{\alpha}})^{\perp}$}\\
c^{-1} \frac{\lambda-\overline{\alpha}^{-1}}{\lambda-\alpha}&
\mbox{$\mathrm{on}\,\overline{\tau
(\alpha)L^{\alpha}}$}\end{array}\right.$$ and
$$r_{2} (\lambda)
:=I\left\{
\begin{array}{ll} \frac{\lambda+\alpha}{\lambda+\overline{\alpha}^{-1}}& \mbox{$\mathrm{on}\,\tau (\alpha)L^{\alpha}$}\\ 1 &
\mbox{$\mathrm{on}\,(\tau (\alpha)L^{\alpha}+\overline{\tau (\alpha)L^{\alpha}})^{\perp}$}\\
 \frac{\lambda+\overline{\alpha}^{-1}}{\lambda+\alpha}&
\mbox{$\mathrm{on}\,\overline{\tau
(\alpha)L^{\alpha}}$}\end{array}\right.,$$ with
$c:=\frac{1-\overline{\alpha}^{-2}}{1-\alpha^{2}}$. After
appropriate changes of variable and having in consideration
\eqref{eq:tauand-} and \eqref{eq:taueoverline}, we conclude the
holomorphicity of $\xi$ at any of the other candidates for poles.
\end{proof}

Thus:
\begin{thm}
When both are defined, twisted B\"{a}cklund transformation of
parameters $\alpha,L^{\alpha}$ coincides with untwisted B\"{a}cklund
transformation of parameters $\alpha,L^{\alpha}$.
\end{thm}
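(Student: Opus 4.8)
The plan is to deduce the theorem directly from the identity \eqref{eq:taupqs}, which identifies the untwisting data $R(\lambda)$ with the twisting data $(pq)^{-1}(1)(pq)(\lambda)$. Recall that the untwisted transform of parameters $\alpha,L^{\alpha}$ is built from $P(\lambda)=g^{-1}R(\lambda)$, with $g\in\Gamma(SO(\underline{\R}^{5,1}))$ satisfying $g\rho_{V}g^{-1}=\rho_{V^{*}}$, via $(\Lambda^{*})^{0,1}=P(1)^{-1}P(0)\,\Lambda^{0,1}$, whereas the twisted transform of the same parameters is $(\Lambda^{*})^{0,1}=(pq)^{-1}(1)(pq)(0)\,\Lambda^{0,1}$, by \eqref{eq:BTofCWdef}. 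Since in both constructions the resulting $j$-stable bundle is $\Lambda^{*}=(\Lambda^{*})^{1,0}\cap(\Lambda^{*})^{0,1}$ with $(\Lambda^{*})^{1,0}$ and $(\Lambda^{*})^{0,1}$ complex conjugate of each other, it suffices to match the $(0,1)$-parts.

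First I would compute the untwisting operator $P(1)^{-1}P(0)$. Because the factor $g^{-1}$ occurs on the left of both $P(1)$ and $P(0)$, it cancels: $P(1)^{-1}P(0)=R(1)^{-1}g\,g^{-1}R(0)=R(1)^{-1}R(0)$. Next, evaluating $R(\lambda)=\tau^{*}(\lambda)^{-1}\,r_{\alpha,L^{\alpha}}(\lambda^{2})\,\tau(\lambda)$ at $\lambda=1$ gives $R(1)=I$, since $\tau(1)=\tau^{*}(1)=I$ and $r_{\alpha}(1)=1$ by \eqref{eq:ralphalambdathe number}; hence $P(1)^{-1}P(0)=R(0)$. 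Applying \eqref{eq:taupqs} at $\lambda=0$ then yields $R(0)=(pq)^{-1}(1)(pq)(0)$, so the $(0,1)$-part of the untwisted transform coincides with that of the twisted transform, and the two transforms agree. (Alternatively, one may match the $(1,0)$-parts directly, noting $P(1)^{-1}P(\infty)=R(\infty)=(pq)^{-1}(1)(pq)(\infty)$ by \eqref{eq:taupqs} at $\lambda=\infty$.)

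The content of the argument is entirely concentrated in Proposition \eqref{eq:taupqs}, which I may assume; the remaining steps are the purely formal cancellation of the factor $g$ and the normalization $R(1)=I$. The only point requiring care is that the theorem is asserted precisely when both transforms are defined, that is, for parameters $\alpha,L^{\alpha}$ simultaneously satisfying condition \eqref{eq:BTparameterscondition} and the non-reality of $\tau(\alpha)L^{\alpha}$ together with the complementarity of $r(0)S_{+}$ and $r(\infty)S_{-}$ in $\underline{\C}^{4}$; this is exactly the regime in which $R(\lambda)$, $(pq)(\lambda)$ and the identity \eqref{eq:taupqs} are all simultaneously available, as secured by Lemma \ref{buildlalphataunotreal}. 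Thus there is no genuine obstacle beyond invoking the proposition: the whole difficulty of relating the two constructions was already absorbed into the holomorphicity argument establishing \eqref{eq:taupqs}.
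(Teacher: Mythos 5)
Your proposal is correct and follows exactly the paper's own route: the paper states the theorem as an immediate consequence ("Thus:") of Proposition \eqref{eq:taupqs}, and your explicit bookkeeping — cancellation of $g$, the normalization $R(1)=I$, evaluation of \eqref{eq:taupqs} at $\lambda=0$ (or $\lambda=\infty$), and the reduction to matching $(0,1)$-parts via complex conjugation — is precisely the deduction the paper leaves implicit. Your remark on the domain of validity (the parameter conditions under which both $R(\lambda)$ and $(pq)(\lambda)$ are simultaneously defined, as secured by Lemma \ref{buildlalphataunotreal}) also matches the paper's setup.
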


Proposition \ref{D-BT+BTforalpha2real} below provides a
characterization, for the particular case $\alpha^{2}$ is real, of
untwisted B\"{a}cklund transformations of parameters
$\alpha,L^{\alpha}$ defining twisted B\"{a}cklund transformations of
the same parameters.

\subsection{Darboux transformation of constrained Willmore surfaces in
$4$-space}\label{DTsCWS4}

Characterized by the equation $d*(Q+q)=0$, for some
$q\in\Omega^{1}(\mathrm{End}_{j}(\underline{\C}^{4}/L,L))$ in
certain conditions, a constrained Willmore surface $L$ in $S^{4}$
ensures the existence of
$G\in\Gamma(\mathrm{End}_{j}(\underline{\C}^{4}))$ with
$dG=2*(Q+q)$, as well as the integrability of the Riccati equation
$dT=\rho T(dG)T-dF+4\rho qT$, for each $\rho\in\R\backslash\{0\}$,
fixing such a $G$ and setting $F:=G-S$. For a local solution
$T\in\Gamma(Gl_{j}(\underline{\C}^{4}))$ of the $\rho$-Ricatti
equation, we define the \textit{constrained Willmore Darboux
transform} of $L$ of parameters $\rho,T$ by setting
$\hat{L}:=T^{-1}L$, and extend, in this way, the Darboux
transformation of Willmore surfaces in $S^{4}$ presented in
\cite{quaternionsbook} to a transformation of constrained Willmore
surfaces in $4$-space.\newline

Consider $G\in\Gamma(\mathrm{End}_{j}(\underline{\C}^{4}))$ with
$$dG=2*\,(Q+q)$$ (cf. Proposition \ref{CWinS4charact}) and set $$F:=G-S.$$
For $\rho\in\R\backslash\{0\}$, consider the $\rho$-Riccati equation
\begin{equation}\label{eq:Ricattieq}
dT=\rho T(dG)T-dF+4\rho qT.
\end{equation}
Because $L$ is $q$-constrained Willmore, we have
$$dq=[\mathcal{N}_{S}\wedge q]=\mathcal{N}_{S}\wedge q + q\wedge\mathcal{N}_{S},$$so that the integrability condition for equation \eqref{eq:Ricattieq},
\begin{eqnarray*}
0&=&d(\rho T(dG)T-dF+4\rho qT)\\&=&\rho \,dT\wedge (dG)T+\rho
T(d^{2}G)T-\rho\,T(dG)\wedge dT-d^{2}F+4\rho\, dq\, T-4\rho\,
q\wedge dT\\&=&\rho (\rho T(dG)T-dF+4\rho\, qT)\wedge
(dG)T-\rho\,T(dG)\wedge (\rho T(dG)T-dF+4\rho\, qT)\\ & & \mbox{}+
4\rho\, dq\, T-4\rho \,q\wedge (\rho T(dG)T-dF+4\rho\, qT)\\&=&-\rho
dF\wedge (dG)T+4\rho^{2}qT\wedge (dG)T+\rho T(dG)\wedge
dF-4\rho^{2}T(dG)\wedge qT\\ & &
\mbox{}+4\rho\,[\mathcal{N}_{S}\wedge q]T-4\rho^{2}q\wedge
T(dG)T+4\rho\,q\wedge dF\\&=&-\rho dF\wedge (dG)T+\rho T(dG)\wedge
dF-4\rho^{2}T(dG)\wedge qT+4\rho\,[\mathcal{N}_{S}\wedge
q]T+4\rho\,q\wedge dF\
\end{eqnarray*}
is, equivalently, characterized by
\begin{eqnarray*}
0&=&-4\rho\,(*A\wedge *\,Q+*A\wedge *\,q+*\,q\wedge *\,Q)T\\ & &
\mbox{}+
 4\rho\,
T(*\,Q\wedge *A+*\,Q\wedge *\,q+*\,q\wedge *A)\\ & &
\mbox{}-8\rho^{2}T*(Q+q)\wedge qT+8\rho\,q\wedge *(A+q)\\ & &
\mbox{}+ 4\rho\,(A\wedge q+Q\wedge q+q\wedge A+q\wedge Q)T.
\end{eqnarray*}
Obviously, given $\omega$ and $\gamma$ $1$-forms with values in a
same bundle over $M$, $*\,\omega\wedge \gamma=-\omega\wedge
*\,\gamma$ and, in particular, $ *\,\omega\wedge*\,
\gamma=\omega\wedge \gamma.$ Hence equation \eqref{eq:Ricattieq} is
integrable if and only if \begin{eqnarray*} 0&=&-4\rho\,*A\wedge
*\,QT+ 4\rho\, T(*\,Q\wedge *A+*\,Q\wedge *\,q+*\,q\wedge *A)\\ & &
\mbox{}-8\rho^{2}T*(Q+q)\wedge qT+8\rho\,q\wedge *(A+q)+
4\rho\,(Q\wedge q+q\wedge A)T.
\end{eqnarray*}
We introduce now the concept of left and right-$K$ and
$\overline{K}$ type, which will prove very efficient in showing the
vanishing of each of the terms above. Given
$\xi\in\Omega^{1}(\mathrm{End}(\C^{4}))$, we say that $\xi$ is of
\textit{left}-$K$ (respectively, \textit{right}-$K$) \textit{type}
if $*\,\xi=S\xi$ (respectively, $*\,\xi=\xi S$), referring to
\textit{left}-$\overline{K}$ and \textit{right}-$\overline{K}$ type
in the case $*\,\xi=-S\xi $ or, respectively, $*\,\xi=-\xi S$. For
example, $q$ is of both left-$K$ and right-$K$ type, whilst $A$ is
of left-$K$ type and, therefore, given that it anti-commutes with
$S$, of right-$\overline{K}$ type, as well; whereas $Q$ is of both
left-$\overline{K}$ and right-$K$ type. It is obvious that the Hodge
$*\,$-$\,$operator preserves types. Observe also that, given
$\xi_{1}$ of right-K (respectively, right-$\overline{K}$) type and
$\xi_{2}$ of left-K (respectively, left-$\overline{K}$) type,
$\xi_{1}\wedge\xi_{2}=*\,\xi_{1}\wedge*\,\xi_{2}=\xi_{1}S\wedge
S\xi_{2}=-\xi_{1}\wedge\xi_{2},$ and, therefore,
$\xi_{1}\wedge\xi_{2}=0.$ We conclude in this way the integrability
of equation \eqref{eq:Ricattieq}, ensuring the existence, for each
$\rho\in\R\backslash\{0\}$, of a solution
$T\in\Gamma(\mathrm{End}_{j}(\underline{\C}^{4}))$. Observe that the
Riccati equation \eqref{eq:Ricattieq} has a conserved quantity,
namely, given a solution $T$, if $(T-S)^{2}(p_{0})=(\rho^{-1}-1)I,$
at some $p_{0}\in M$, then
\begin{equation}\label{eq:conservedquantity}
(T-S)^{2}=(\rho^{-1}-1)I
\end{equation}
everywhere. In fact, setting
$$X_{0}:=(T-S)^{2}-\rho^{-1}I+I=T^{2}-TS-ST-\rho^{-1}I,$$we have
\begin{eqnarray*}
d X_{0}&=&(dT-dS)(T-S)+(T-S)(dT-dS)\\&=&(\rho T(dG)T-dF+4\rho
qT-dS)T-(\rho T(dG)T-dF+4\rho qT-dS)S\\ & & \mbox{}+T(\rho
T(dG)T-dF+4\rho qT-dS)-S(\rho T(dG)T-dF+4\rho qT-dS)\
\end{eqnarray*}
or, equivalently, as $dS=dG-dF$,
\begin{eqnarray*}
d X_{0}&=&(\rho T(dG)T-dG+4\rho qT)T-(\rho T(dG)T-dG+4\rho qT)S\\
& & \mbox{}+T(\rho T(dG)T-dG+4\rho qT)-S(\rho T(dG)T-dG+4\rho
qT)\\&=&\rho\,T(dG)(T^{2}-TS-\rho^{-1}I)+(T^{2}-ST-\rho^{-1}I)\rho
(dG)T\\ & & \mbox{}+4\rho\,TqT+4\rho\,q(T^{2}-TS-ST-\rho^{-1}I),\
\end{eqnarray*}
having in consideration that $S*Q=S(-SQ)=SQS=-(*\,Q)S$ and that,
according to \eqref{eq:Sq=qS=*q},
\begin{equation}\label{eq:scommuteswith*qvs-q}
S*\,q=-q=(*\,q)\,S.
\end{equation}
These relations make clear, on the other hand, that
$$\rho\,T(dG)ST+\rho\,TS(dG)T=\rho\,T(-4q)T.$$ We conclude that
$X_{0}$ solves the first order linear equation
$$dX=\rho\,T(dG)X+X\rho
(dG)T+4\rho\,qX,$$ on $X$, and, therefore, that, if $X_{0}(p_{0})=0$
at some point $p_{0}\in M$, then $X_{0}=0$. It follows that, by
imposing, as initial condition,
$$T(p_{0})=S(p_{0})+I
\left\{\begin{array}{ll} \sqrt{\rho^{-1}-1} &
\mbox{$\mathrm{on}\,\,W(p_{0})$}\\ \overline{\sqrt{\rho^{-1}-1}} &
\mbox{$\mathrm{on}\,\,jW(p_{0})$}\end{array}\right.,$$ for some
$p_{0}\in M$, some choice of $ \sqrt{\rho^{-1}-1}$, and some
subspace $W(p_{0})$ of $\C^{4}$, chosen as $\C^{4}$ itself, in the
case $\rho\leq 1$, and as a non-j-stable $2$-plane, otherwise; we
define a local solution
$T\in\Gamma(\mathrm{Gl}_{j}(\underline{\C}^{4}))$ of equation
\eqref{eq:Ricattieq} with a conserved quantity satisfying equation
\eqref{eq:conservedquantity}. For such a solution $T$ of the
$\rho$-Riccati equation \eqref{eq:Ricattieq}, we define the
\textit{constrained} \textit{Willmore} \textit{Darboux}
\textit{transform} \textit{of} $L$ \textit{of} \textit{parameters}
$\rho,T$ by setting
$$\hat{L}:=T^{-1}L,$$
for $T^{-1}$ the section of $\mathrm{End}_{j}(\underline{\C}^{4})$
given by $T^{-1}(p):=T(p)^{-1}$, for all $p$. Observe that
constrained Willmore Darboux transformation of parameters $\rho,T$
with $\rho \leq 1$ is trivial. In fact, if $\rho ^{-1}-1\geq0$, then
$T=S+\sqrt{\rho ^{-1}-1}\,I$, for one of the square roots of $\rho
^{-1}-1$, and, therefore, $\hat{L}=L$, by equation \eqref{eq:SL=L}.
Of course, since $L$ is a $j$-stable $2$-plane, so is $\hat{L}$. As
$$dT^{-1}=-T^{-1}(dT)T^{-1}=-\rho dG+T^{-1}(dF)T^{-1}-4\rho
T^{-1}q$$ and both $Q$ and $q$ vanish on $L$, whilst
$$\mathrm{Im}\,dF\subset L,$$we have
$$(dT^{-1})L\subset \hat{L}$$and, therefore, given $l\in\Gamma(L)$,
$$d(T^{-1}l)+\hat{L}=(dT^{-1})l+T^{-1}dl+\hat{L}=T^{-1}dl+\hat{L},$$showing
that the derivative $\hat{\delta}$ of $\hat{L}$ relates to the one
of $L$ by $$\hat{\delta}=T^{-1}\delta T_{\vert_{\hat{L}}}.$$In
particular, $\hat{L}$ is immersed. Equation
\eqref{eq:conservedquantity} establishes, in particular,
$$0=(T-S)^{2}S-S(T-S)^{2}=T^{2}S-ST^{2}$$and, therefore,
$$T^{2}S=ST^{2}.$$Thus
$$T^{-1}ST=T^{-1}ST^{2}T^{-1}=TST^{-1}.$$Set then
$$\hat{S}:=TST^{-1}=T^{-1}ST,$$which, as we verify next, consists of the mean curvature sphere of $\hat{L}$. Obviously,
$\hat{S}\hat{L}=\hat{L}$ and $*\,\hat{\delta}=\hat{S}\circ
\hat{\delta}$. According to \eqref{eq:conservedquantity}, on the
other hand,
\begin{equation}\label{eq:Tsquare}
T^{2}=TS+ST+\rho^{-1}I,
\end{equation}
so that
$$\hat{S}=T^{-1}(T^{2}-TS-\rho^{-1})=T-S-\rho^{-1}
T^{-1}=F+T-(G+\rho^{-1} T^{-1})$$ and then $$d\hat{S}=\rho
(T(dG)T+4qT)-(\rho^{-1}T^{-1}(dF)T^{-1}-4T^{-1}q).$$ As $QL=0=qL$
and $\mathrm{Im}\,q,\, \mathrm{Im}\,A\subset L,$ it is now clear
that $(d\hat{S})\hat{L}\subset\hat{L},$ as well as, from $S*\,Q=Q$
and $S*A=-A$, that
\begin{eqnarray*}
\hat{S}d\hat{S}-*d\hat{S}&=&\rho TS(dG)T+4\rho
TST^{-1}qT-\rho^{-1}T^{-1}S(dF)T^{-1}+4T^{-1}Sq\\ & & \mbox{}-\rho
T*(dG)T-4\rho*\,qT+\rho^{-1}T^{-1}(*dF)T^{-1}-4T^{-1}*q\\&=&
4\rho(TQT+TST^{-1}qT-*\,qT).
\end{eqnarray*}
Thus $(\hat{S}d\hat{S}-*d\hat{S})\hat{L}=0.$ This proves that
$\hat{S}$ is the mean curvature sphere of $\hat{L}$, as well as that
the Hopf fields of $\hat{L}$ relate to the Hopf fields of $L$ by
$$\hat{Q}=\rho(TQT+TST^{-1}qT-*qT)$$ and, in view of the fact that $\hat{A}=\frac{1}{2}*d\hat{S}+\hat{Q}$,
$$\hat{A}=\rho
TST^{-1}qT-\rho*\,qT-\rho TqT+2\rho
*\,qT+\rho^{-1}T^{-1}AT^{-1}+\rho^{-1}T^{-1}qT^{-1}+2T^{-1}*\,q.$$
Aiming for a simpler relation between $\hat{A}$ and $A$, observe
that, since $d\hat{S}$ is $\hat{S}$-anti-commuting,
$$\hat{S}d\hat{S}=\hat{S}(d\circ \hat{S}-\hat{S}\circ
d)=-(d\hat{S})\hat{S},$$ $d\hat{S}$ reduces to the difference of the
$\hat{S}$-anti-commuting parts of $\rho (T(dG)T+4qT)$ and
$\rho^{-1}T^{-1}(dF)T^{-1}-4T^{-1}q$. The $\hat{S}$-anti-commuting
part of $T(dG)T$ is
$$\frac{1}{2}(T(dG)T+\hat{S}T(dG)T\hat{S})=
T(*\,Q+S*\,QS+*\,q+S*\,qS)T=2 T*\,QT,$$and, similarly, that of
$T^{-1}(dF)T^{-1}$ is $2 T^{-1}*AT^{-1};$ while the
$\hat{S}$-anti-commuting parts of $ qT$ and $T^{-1}q$ are
$\frac{1}{2}(qT+TST^{-1}qST)$ and
$\frac{1}{2}(T^{-1}q+T^{-1}SqT^{-1}ST),$ respectively. It follows
that
$$d\hat{S}=2(*\,\hat{Q}-(\rho^{-1}T^{-1}*AT^{-1}-T^{-1}q-T^{-1}*\,qT^{-1}ST))$$
and, ultimately, that
$$\hat{A}=\rho^{-1}T^{-1}AT^{-1}-T^{-1}*\,q+T^{-1}qT^{-1}ST.$$

Set
$$\hat{q}:=T^{-1}qT\in\Omega^{1}(\mathrm{End}(\underline{\C}^{4}/\hat{L},\hat{L})).$$

\begin{thm}\label{DTofCW}
$\hat{L}$ is a $\hat{q}$-constrained Willmore surface in $S^{4}$.
\end{thm}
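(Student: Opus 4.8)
The plan is to invoke Theorem \ref{CWinS4charact}. The mean curvature sphere of $\hat{L}$ has already been identified as $\hat{S}=T^{-1}ST$ in the construction preceding the statement, and $\hat{L}$ has been shown to immerse; so it remains only to check that $\hat{q}=T^{-1}qT$ is an admissible multiplier, i.e. that $\hat{q}\in\Omega^{1}(\mathrm{End}_{j}(\underline{\C}^{4}/\hat{L},\hat{L}))$ with $\hat{S}\hat{q}=*\,\hat{q}=\hat{q}\hat{S}$, and that one of the equivalent equations of Theorem \ref{CWinS4charact}, say $d*(\hat{Q}+\hat{q})=0$, holds.

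The algebraic conditions on $\hat{q}$ are immediate. Since $qL=0$ and $\mathrm{Im}\,q\subset L$, we get $\hat{q}\hat{L}=T^{-1}q(TT^{-1})L=T^{-1}qL=0$ and $\mathrm{Im}\,\hat{q}\subset T^{-1}\mathrm{Im}\,q\subset T^{-1}L=\hat{L}$, while $j$-commutativity of $\hat{q}$ follows from that of $q$ and $T$. Using $\hat{S}=T^{-1}ST$ together with the multiplier relations $Sq=qS=*\,q$ from \eqref{eq:Sq=qS=*q}, I would compute $\hat{S}\hat{q}=T^{-1}(Sq)T=T^{-1}(*\,q)T=*\,\hat{q}$ and likewise $\hat{q}\hat{S}=T^{-1}(qS)T=*\,\hat{q}$, so that $\hat{q}$ satisfies \eqref{eq:Sq=qS=*q} relative to $\hat{S}$.

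For the differential condition I would exhibit an explicit potential. Recalling the decomposition $\hat{S}=F+T-(G+\rho^{-1}T^{-1})$ established earlier, together with $dG=2*(Q+q)$ and $dF=2*(A+q)$ (the latter from \eqref{eq:dSvsAeQ} and $F=G-S$), the natural candidate is $\hat{G}:=F+T$. The Riccati equation \eqref{eq:Ricattieq} makes the $dF$-terms cancel: $d\hat{G}=dF+dT=\rho\,T(dG)T+4\rho\,qT=2\rho\,T*(Q+q)T+4\rho\,qT$. Substituting the Hopf field $\hat{Q}=\rho(TQT+TST^{-1}qT-*\,qT)$ computed before the statement and using $*{*q}=-q$, the equality $d\hat{G}=2*(\hat{Q}+\hat{q})$ reduces, after cancelling the common summand $2\rho\,T(*Q)T$, to the single identity $\rho\,T(*q)T+\rho\,qT=\rho\,TST^{-1}(*q)T+T^{-1}(*q)T$.

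The only real obstacle is closing this last identity, and this is where the conserved quantity enters. I would right-multiply by $T^{-1}$ and substitute $*\,q=Sq$, reducing it to $\rho\,TSq+\rho\,q=\bigl(\rho\,TST^{-1}+T^{-1}\bigr)Sq$. Now the conserved quantity \eqref{eq:conservedquantity}, equivalently \eqref{eq:Tsquare} in the form $T^{2}-TS-ST=\rho^{-1}I$, yields upon right-multiplication by $\rho\,T^{-1}$ the relation $T^{-1}=\rho(T-S-TST^{-1})$, i.e. $\rho\,TST^{-1}+T^{-1}=\rho(T-S)$. The right-hand side then collapses to $\rho(T-S)Sq=\rho(TSq-S^{2}q)=\rho\,TSq+\rho\,q$ since $S^{2}=-I$, matching the left-hand side. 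This proves $d\hat{G}=2*(\hat{Q}+\hat{q})$, whence $d*(\hat{Q}+\hat{q})=0$, and Theorem \ref{CWinS4charact} concludes that $\hat{L}$ is a $\hat{q}$-constrained Willmore surface.
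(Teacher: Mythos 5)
Your proposal is correct and follows essentially the same route as the paper: both exhibit $F+T$ as a potential, i.e. prove $d(F+T)=2*(\hat{Q}+\hat{q})$, and then conclude via Theorem \ref{CWinS4charact}. The only difference is bookkeeping — the paper identifies the $\hat{S}$-anti-commuting and $\hat{S}$-commuting parts of $d(F+T)$ as $2*\hat{Q}$ and $2*\hat{q}$ respectively, whereas you substitute the explicit formula for $\hat{Q}$ and close the residual identity directly; both verifications hinge on the same conserved-quantity relation \eqref{eq:Tsquare}.
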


\begin{proof}
Obviously, $\hat{S}\hat{q}=*\,\hat{q}=\hat{q}\hat{S}$. As we have
previously observed, the $\hat{S}$-anti-commuting part of
$d(F+T)=\rho (T(dG)T+4qT)$ is $2*\,\hat{Q}$. On the other hand, as
$T^{2}$ commutes with $S$, so does $T^{-2}$,
$$ST^{-2}=T^{-2}T^{2}ST^{-2}=T^{-2}S,$$which, together with
equation \eqref{eq:Tsquare}, shows that
$$I=(TS+ST+\rho^{-1}I)T^{-2}=T^{-1}S+ST^{-1}+\rho^{-1}T^{-2}.$$Thus
\begin{eqnarray*}
\hat{S}(T(dG)T+4qT)\hat{S}&=&TS(dG)ST+4T(I-T^{-1}S-\rho^{-1}T^{-2})qST\\&=&TS(dG)ST+4(TqST-SqST-\rho^{-1}T^{-1}qST).
\end{eqnarray*}
It follows that the $\hat{S}$-commuting part of $d(F+T)$ is
\begin{eqnarray*}
\frac{1}{2}\,(d(F+T)-\hat{S}d(F+T)\hat{S})&=&\rho
T(*\,Q-S*\,QS+*\,q-S*\,qS)T\\
& & \mbox{}+2\rho qT-2\rho T*\,qT+2\rho
SqST+2T^{-1}qST\\&=&2\,T^{-1}*\,q T\\&=&2*\,\hat{q}.
\end{eqnarray*}
Hence $$d(F+T)=2*(\hat{Q}+\hat{q}),$$and, therefore,
$$d*(\hat{Q}+\hat{q})=0,$$completing the proof.
\end{proof}
We complete this section by noting that
$$\hat{S}_{+}=T^{-1}S_{+}=TS_{+},\,\,\,\,\,\hat{S}_{-}=T^{-1}S_{-}=TS_{-}$$ and, consequently, $$\hat{L}_{+}=T^{-1}L_{+},\,\,\,\,\,\hat{L}_{-}=T^{-1}L_{-}.$$

\subsection{B\"{a}cklund transformation vs. Darboux
transformation of constrained Willmore surfaces in
$4$-space}\label{BTvsDT}

Constrained Willmore Darboux transformation of parameters $\rho,T$
with $\rho \leq 1$ is trivial. We establish a correspondence between
constrained Willmore Darboux transformation parameters $\rho,T$ with
$\rho
> 1$ and pairs $\alpha,L^{\alpha};\,-\alpha,\rho_{V} L^{\alpha}$ of
untwisted B\"{a}cklund transformation parameters with $\alpha^{2}$
real, and show that the corresponding transformations coincide.
Non-trivial Darboux transformation of constrained Willmore surfaces
in $4$-space is, in this way, established as a particular case of
constrained Wilmore B\"{a}cklund transformation.
\newline

Suppose $\rho
>1$ and $T$ be constrained Willmore Darboux transformation parameters to $L$. Fix a choice of $\sqrt{\rho^{-1}-1}$.
Then
$$T-S=I\left\{
\begin{array}{ll} \sqrt{\rho^{-1}-1} &
\mbox{$\mathrm{on}\,W$}\\-\sqrt{\rho^{-1}-1} &
\mbox{$\mathrm{on}\,jW$}\end{array}\right.,$$for some non-$j$-stable
bundle $W$ of $2$-planes in $\C^{4}$. Set
\begin{equation}\label{eq:alphasquarerho}
\alpha^{2}:=2i\rho\sqrt{\rho^{-1}-1}-2\rho+1.
\end{equation}

\begin{Lemma}\label{Wd...parallel}
The bundle $W$ is $d^{S}_{\alpha^{2},q}$-parallel.
\end{Lemma}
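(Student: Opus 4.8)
The plan is to reduce the parallelism statement to a single pointwise identity and then verify that identity from the Riccati equation and the structural relations for the Hopf fields. Write $\Phi:=T-S$ and let $\mu$ denote the chosen square root of $\rho^{-1}-1$, so that by Lemma \ref{formadosendomorfismoscomsquare} and equation \eqref{eq:conservedquantity} we have $\Phi^{2}=\mu^{2}I$ globally and $W=\ker(\Phi-\mu I)$ is exactly the $\mu$-eigenbundle of $\Phi$. By Lemma \ref{d^S_lambda^2orm}, $d^{S}_{\alpha^{2},q}=d+\beta$ with
$$\beta:=(\alpha^{-2}-1)(Q^{1,0}+q^{1,0})+(\alpha^{2}-1)(Q^{0,1}+q^{0,1}).$$

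First I would record the general principle that a subbundle cut out as a constant-eigenvalue eigenbundle of an endomorphism $\Phi$ is $(d+\beta)$-parallel if and only if the induced covariant derivative $d\Phi+[\beta,\Phi]$ annihilates it. Indeed, applying the Leibniz rule to a section $w$ of $W$ and using $\Phi w=\mu w$ gives $(\Phi-\mu)(d w+\beta w)=-\big(dF_{0}\,w+[\beta,\Phi]w\big)$, where $F_{0}:=T-S$. Hence $W$ is $d^{S}_{\alpha^{2},q}$-parallel precisely when
$$(T-S-\mu)\,\beta\,w=(dF_{0})\,w,\qquad w\in\Gamma(W),$$
and, by the $\rho$-Riccati equation \eqref{eq:Ricattieq} together with $dS=dG-dF$, the right-hand side is $dF_{0}=\rho\,T(dG)T-dG+4\rho\,qT$.

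Then I would substitute $dG=2*(Q+q)$ (which exists by Theorem \ref{CWinS4charact}), expand $T=S+F_{0}$, and simplify all products using $F_{0}^{2}=\mu^{2}I$, equation \eqref{eq:Tsquare}, and the eigenvalue relation $F_{0}w=\mu w$ on $W$. Both sides are then decomposed into their $(1,0)$- and $(0,1)$-parts and into $S_{+}/S_{-}/L$ components, using that $Q$ and $A$ anti-commute with $S$ (so $QS_{\pm}\subset S_{\mp}$, cf. \eqref{eq:AQSpm}), the vanishing relations $Q^{1,0}S_{+}=0=Q^{0,1}S_{-}$ and $\operatorname{Im}q^{1,0}\subset S_{-}$, $\operatorname{Im}q^{0,1}\subset S_{+}$, together with $QL=0=qL$, $\operatorname{Im}q\subset L$, and $qS=*\,q=Sq$ (whence $*\,q^{1,0}=-iq^{1,0}$, etc.). Matching the resulting type components, the identity holds exactly for the value of $\alpha^{2}$ that balances the $(1,0)$- and $(0,1)$-coefficients, and this value is precisely $\alpha^{2}=2i\rho\sqrt{\rho^{-1}-1}-2\rho+1$ of \eqref{eq:alphasquarerho}; one checks that it is real, consistent with the correspondence with $\alpha^{2}$-real B\"acklund parameters in Section \ref{BTvsDT}.

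The main obstacle I expect is the bookkeeping in this last step: in the terms $\rho\,T(dG)T$ and $(T-S)\beta$ the leftmost factor $T=S+F_{0}$ acts on vectors that are \emph{not} eigenvectors of $F_{0}$, so one cannot simply replace $F_{0}$ by $\pm\mu$ there. Instead one must resolve these factors using $F_{0}^{2}=\mu^{2}I$ and the $S_{\pm}$-decomposition of the images of $Q$ and $q$. Tracking the precise scalar coefficients through this expansion is what forces, and simultaneously confirms, the specific value of $\alpha^{2}$ fixed in \eqref{eq:alphasquarerho}.
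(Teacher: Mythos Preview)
Your reduction to the eigenvalue criterion $(T-S-\mu)\,\beta\,w=(d(T-S))\,w$ on $W$, together with $d(T-S)=\rho\,T(dG)T-dG+4\rho\,qT$ from the Riccati equation, is correct and is essentially how the paper's argument concludes. The execution differs, however. Rather than decomposing into $(1,0)/(0,1)$ types and $S_{\pm}$-components, the paper first plugs the specific value of $\alpha^{2}$ from \eqref{eq:alphasquarerho} into Lemma~\ref{d^S_lambda^2orm} to obtain the clean form $d^{S}_{\alpha^{2},q}=d+\rho\mu\,dG+\rho\,{*}dG$ (and $d^{S}_{\alpha^{-2},q}=d-\rho\mu\,dG+\rho\,{*}dG$), and then verifies the \emph{global} operator identity
\[
(\mu I-X)\circ d^{S}_{\alpha^{2},q}\circ(X+\mu I)+(\mu I+X)\circ d^{S}_{\alpha^{-2},q}\circ(X-\mu I)=2\mu\bigl(dX-\rho X(dG)X-\rho[X,{*}dG]-(\rho-1)dG\bigr),
\]
with $X:=T-S$; the Riccati equation makes the right-hand side vanish, and restriction to $W$ kills the $\alpha^{-2}$ summand since $(X-\mu I)\vert_{W}=0$. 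The symmetric pairing of $\alpha^{2}$ with $\alpha^{-2}$ is precisely what sidesteps your bookkeeping obstacle: one never has to evaluate $X$ on non-eigenvectors. Your direct route would also succeed once you adopt the $dG,{*}dG$ rewriting of $\beta$: the $X$-linear terms on both sides of $(X-\mu)\beta w=(dX)w$ then coincide formally (both equal $\rho X({*}dG)w+\rho\mu X(dG)w$), and the residual scalar condition reduces to $\mu^{2}=\rho^{-1}-1$. The type-by-type matching you propose is therefore unnecessary, and is in fact where the difficulty you flag originates.
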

\begin{proof}
For simplicity, set $\mu:=\sqrt{\rho^{-1}-1}$ and $X:=T-S$. Note
that $$\alpha^{-2}=-2i\rho\mu-2\rho+1.$$ Hence, according to Lemma
\ref{d^S_lambda^2orm},
\begin{eqnarray*}
d^{S}_{\alpha^{2},q}&=&d+2\rho\mu*(Q+q)-2\rho\,(Q+q)\\&=&d+\rho\mu\,dG+\rho\,*dG,\
\end{eqnarray*}
as well as
$$d^{S}_{\alpha^{-2},q}=d-\rho\mu\,dG+\rho\,*dG.$$
On the one hand, straightforward computation establishes
$$
(\mu I-X)\circ d^{S}_{\alpha^{2},q}\circ(X+\mu I)+(\mu I+X)\circ
d^{S}_{\alpha^{-2},q}\circ (X-\mu I)=$$ $$=2\mu(
dX+\rho(*\,dG)X+\rho\,\mu^{2}dG-\rho X(dG)X-\rho X*\,dG),$$ or,
equivalently,
\begin{equation}\label{eq:8udcvbns5wedfghjx9owq}
(\mu I-X)\circ\, d^{S}_{\alpha^{2},q}\circ(X+\mu I)+(\mu I+X)\circ\,
d^{S}_{\alpha^{-2},q}\circ (X-\mu I)=2\mu(dX-\Phi),
\end{equation}
for $$\Phi:=\rho\,X(dG)X+\rho[X,*dG]+(\rho-1)dG.$$ On the other
hand, in view of \eqref{eq:scommuteswith*qvs-q}, together with
$S*Q=Q=-(*\,Q)S,$ we have
\begin{eqnarray*}
\Phi&=&\rho\,T(dG)T-\rho\,T(dG)S-\rho\,S(dG)T+\rho\,S(dG)S\\
& &
\mbox{}+\rho\,T*dG-\rho\,S*dG-\rho\,(*\,dG)T+\rho\,(*\,dG)S+\rho\,dG-dG\\&=&
\rho\,T(dG)T+2\rho\,TQ+2\rho\,Tq-2\rho\,QT+2\rho\,qT-2\rho\,SQ-2\rho\,Sq-2\rho\,TQ\\
& &
\mbox{}-2\rho\,Tq+2\rho\,SQ+2\rho\,Sq+2\rho\,QT+2\rho\,qT-2\rho\,QS-2\rho*\,q+2\rho\,QS\\
& & \mbox{}+2\rho*\,q-2\,*\,Q-2*\,q\
\end{eqnarray*}
and, therefore,
\begin{equation}\label{eq:njhy73r890rhjsk}
\Phi=\rho\,T(dG)T-dG+4\rho\,qT,
\end{equation}
or, equivalently, $dX=\Phi.$ Thus
$$
(\mu I-X)\circ d^{S}_{\alpha^{2},q}\circ(X+\mu I)+(\mu I+X)\circ
d^{S}_{\alpha^{-2},q}\circ (X-\mu I)=0.$$Obviously,
$$X+\mu I=2\mu\pi_{W},\,\,\,\,X-\mu I=-2\mu\pi_{jW},$$ for
$\pi_{W}:\underline{\C}^{4}\rightarrow W$ and
$\pi_{jW}:\underline{\C}^{4}\rightarrow jW$ projections with respect
to the decomposition $\underline{\C}^{4}=W\oplus jW.$ It follows
that, given $\sigma\in\Gamma(W)$, $(\mu I-X)\circ
d^{S}_{\alpha^{2},q}\sigma=0,$ or, equivalently,
$$Xd^{S}_{\alpha^{2},q}\sigma=\mu\, d^{S}_{\alpha^{2},q}\sigma,$$completing the proof.
\end{proof}

For either choice of $\alpha=\sqrt{\alpha^{2}}$, define a null line
subbundle of $\wedge^{2}\underline{\C}^{4}$ by
\begin{equation}\label{eq:alphaM}
L^{\alpha}:=\tau(\alpha)^{-1}\wedge^{2}W.
\end{equation}
Note that $L^{-\alpha}=\rho_{V} L^{\alpha}$. The
$d^{S}_{\alpha^{2},q}$-parallelness of $W$ is equivalent to the
$d^{\alpha,q}_{V}$-parallelness of $L^{\alpha}$. The complementarity
of $W$ and $jW$ in $\underline{\C}^{4}$,
$\wedge^{2}jW\cap(\wedge^{2}W)^{\perp}=\{0\}$, is equivalent to the
non-reality of $\tau(\alpha)L^{\alpha}$. Observe that
$r(0):=r_{\sqrt{\alpha^{2}},\tau(\sqrt{\alpha^{2}}\,)^{-1}\wedge^{2}W}(0)$
and $T-S$ share eigenspaces:
$$r(0)
=I\left\{
\begin{array}{ll} \sqrt{r_{\alpha}(0)} & \mbox{$\mathrm{on}\,W$}\\
\sqrt{r_{\alpha}(0)}\, ^{-1} &
\mbox{$\mathrm{on}\,jW$}\end{array}\right..$$ Fixing a choice of
$\sqrt{-\rho}$ according to the choices of $\sqrt{\rho^{-1}-1}$ and
$\sqrt{r_{\alpha}(0)}$, we get
$\sqrt{r_{\alpha}(0)}=\sqrt{-\rho}\,(\sqrt{\rho^{-1}-1}+i),$ and,
consequently,
$\sqrt{r_{\alpha}(0)}\,^{-1}=\sqrt{-\rho}\,(-\sqrt{\rho^{-1}-1}+i).$
We conclude that
\begin{equation}\label{eq:r0vsT(veryclose)}
r(0)=\sqrt{-\rho}\,(T-S+i).
\end{equation}
Hence
$$r(0)S_{+}=\hat{S}_{+},$$establishing, in particular, the
non-$j$-stability of $r(0)S_{+}$.

Note that, since $\sqrt{\rho^{-1}-1}$ and $-\sqrt{\rho^{-1}-1}$ are
complex conjugate of each other, $\alpha^{2}$ is real. Thus
$\alpha^{2}$ is unit if and only $\alpha^{2}=\pm 1$, which, in view
of $\rho\neq 1$, is impossible. It is immediate to verify that
$\alpha^{2}$ is non-zero.

Conversely, given a non-zero $\alpha\in\C\backslash S^{1}$, with
$\alpha^{2}$ real, and $L^{\alpha}$ a $d^{\alpha,q}_{V}$-parallel
null line subbundle of $\wedge^{2}\underline{\C}^{4}$,  with
$\tau(\alpha)L^{\alpha}$ non-real, equation
\eqref{eq:alphasquarerho} determines
$$\rho=\frac{2\alpha^{2}-1-\alpha^{4}}{4\alpha^{2}}>1,$$as well as a choice of
$\sqrt{\rho^{-1}-1}$, whereas equation \eqref{eq:alphaM} determines
a non-$j$-stable $d^{S}_{\alpha^{2},q}$-parallel bundle $W$ of
$2$-planes in $\C^{4}$. Obviously, the pair ($-\alpha,\rho_{V}
L^{\alpha})$ determines the same pair $(\rho,W)$.  Set
$$T :=I\left\{
\begin{array}{ll} \sqrt{\rho^{-1}-1} & \mbox{$\mathrm{on}\,W$}\\
-\sqrt{\rho^{-1}-1} &
\mbox{$\mathrm{on}\,jW$}\end{array}\right.+S.$$Observe that
$$\overline{\tau(\alpha)\circ
d^{\alpha,q}_{V}\circ\tau(\alpha)^{-1}
\wedge^{2}W}=\overline{\tau(\alpha)\circ
d^{\alpha,q}_{V}\circ\tau(\alpha)^{-1}} \,\wedge^{2}jW,$$the
$d^{S}_{\alpha^{2},q}$-parallelness of $W$ is equivalent to the
parallelness of $jW$ with respect to the connection
$$\overline{\tau(\alpha)\circ
d^{\alpha,q}_{S}\circ\tau(\alpha)^{-1}}=\tau(\overline{\alpha}\,^{-1})\circ
d^{\overline{\alpha}\,^{-1},q}_{S}\circ\tau(\overline{\alpha})=d^{S}_{\overline{\alpha}\,^{-2},q}.$$
Given that $\alpha^{2}$ is real, we conclude that $jW$ is
$d^{S}_{\alpha^{-2},q}$-parallel. It follows, in particular, that,
for $\mu:=\sqrt{\rho^{-1}-1}$ and $X:=T-S$,
$$d^{S}_{\alpha^{2},q}\circ (X+\mu I)\Gamma(\underline{\C}^{4})\subset\Omega^{1} (W),\,\,\,\,d^{S}_{\alpha^{-2},q}\circ (X-\mu I)\Gamma(\underline{\C}^{4})\subset\Omega^{1} (jW),$$
or, equivalently, $$(\mu I-X)\circ d^{S}_{\alpha^{2},q}\circ(X+\mu
I)=0=(\mu I+X)\circ d^{S}_{\alpha^{-2},q}\circ (X-\mu I).$$ From
equations \eqref{eq:8udcvbns5wedfghjx9owq} and
\eqref{eq:njhy73r890rhjsk} - which derive solely from the fact that
$T$ satisfies equation \eqref{eq:conservedquantity}, independently
of $T$ being a solution of equation \eqref{eq:Ricattieq} or not - we
conclude that $T$ is a solution of $\rho$-Riccati equation
\eqref{eq:Ricattieq} (with a conserved quantity satisfying equation
\eqref{eq:conservedquantity}).

This correspondence between constrained Willmore Darboux
transformation parameters $\rho,T$ with $\rho > 1$ and pairs
$\alpha,L^{\alpha};\,-\alpha,\rho_{V} L^{\alpha}$ of untwisted
B\"{a}cklund transformation parameters, with $\alpha^{2}$ real,
establishes, furthermore, a correspondence between transforms, as we
verify next.

Suppose that the parameters $\alpha,L^{\alpha}$ define an untwisted
B\"{a}cklund transform of $L$ (i.e., that $L^{*}$ immerses).
Following \eqref{eq:r0vsT(veryclose)}, and in view of
$$\mathrm{Im}(-S+i)\subset S_{-},$$we conclude that, with respect to
the decomposition of $\underline{\C}^{4}$ into the direct sum of
$S_{+}$ and $S_{-}$,
$$\pi_{S_{+}}r(0)T^{-1}L_{+}=L_{+}.$$ Equivalently,
$$\pi_{S_{+}^{*}}r(\infty)L_{+}=T^{-1}L_{+},$$ with respect to
the decomposition of $\underline{\C}^{4}$ into the direct sum of
$S_{+}^{*}$ and $S_{-}^{*}$. In fact, given $l_{+}\in\Gamma(L_{+})$,
$\pi_{S_{+}^{*}}r(\infty)l_{+}$ is a section of $T^{-1}L_{+}$ if and
only if, for some $\lambda\in\C$, $r(\infty)l_{+}-\lambda
T^{-1}l_{+}$ is a section of $r(\infty)S_{-}$, or, equivalently,
recalling \eqref{eq:rinf=r0-1},
$$l_{+}=\lambda\pi_{S_{+}}( r(0)T^{-1}l_{+}).$$We conclude that
$L^{*}_{+}=\hat {L}_{+},$ or, equivalently,
$$L^{*}=\hat{L}.$$

If $L^{*}$ does not immerse, we define the untwisted B\"{a}cklund
transform of $L$ of parameters $\alpha,L^{\alpha}$ to be $\hat{L}$.
In this sense, we have just proven the following:
\begin{thm}\label{DvsDBT}
Constrained Willmore Darboux transformation of parameters $\rho,T$
with $\rho
>1$ is equivalent to untwisted B\"{a}cklund transformation of
parameters $\alpha, L^{\alpha}$ with $\alpha^{2}$ real. Constrained
Willmore Darboux transformation of parameters $\rho,T$ with $\rho
\leq 1$ is trivial.
\end{thm}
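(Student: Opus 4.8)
The plan is to treat the two regimes $\rho\leq 1$ and $\rho>1$ separately. The triviality statement for $\rho\leq 1$ is immediate: the conserved quantity \eqref{eq:conservedquantity} forces $(T-S)^2=(\rho^{-1}-1)I$ with $\rho^{-1}-1\geq 0$, so by Lemma \ref{formadosendomorfismoscomsquare} we have $T-S=\sqrt{\rho^{-1}-1}\,I$ for one of the (real) square roots, whence $T=S+\sqrt{\rho^{-1}-1}\,I$; since $SL=L$ by \eqref{eq:SL=L}, this gives $\hat L=T^{-1}L=L$. All the genuine content therefore lies in the regime $\rho>1$, where I would establish a two-way correspondence between Darboux data $(\rho,T)$ and pairs $\alpha,L^{\alpha};\,-\alpha,\rho_{V}L^{\alpha}$ of untwisted B\"acklund parameters with $\alpha^{2}\in\R$, and then show that the two constructions produce the same surface.

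For the forward direction, starting from $(\rho,T)$ with $\rho>1$, the scalar $\sqrt{\rho^{-1}-1}$ is purely imaginary, so Lemma \ref{formadosendomorfismoscomsquare} splits $\underline{\C}^{4}=W\oplus jW$ into the $\pm\sqrt{\rho^{-1}-1}$-eigenbundles of $T-S$, with $W$ a non-$j$-stable $2$-plane bundle. I would then define $\alpha^{2}$ by \eqref{eq:alphasquarerho} and $L^{\alpha}:=\tau(\alpha)^{-1}\wedge^{2}W$ as in \eqref{eq:alphaM}, and check that $(\alpha,L^{\alpha})$ are admissible parameters: Lemma \ref{Wd...parallel} supplies the $d^{S}_{\alpha^{2},q}$-parallelness of $W$, equivalently the $d^{\alpha,q}_{V}$-parallelness of $L^{\alpha}$; the non-$j$-stability of $W$ is precisely the non-reality of $\tau(\alpha)L^{\alpha}$; and one verifies directly that $\alpha^{2}$ is real, non-zero, and not unit (the last because $\rho\neq 1$). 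The computational heart here is Lemma \ref{Wd...parallel}, whose argument rewrites $d^{S}_{\alpha^{2},q}$ and $d^{S}_{\alpha^{-2},q}$ through Lemma \ref{d^S_lambda^2orm} and feeds in that $T$ solves the Riccati equation via the simplification \eqref{eq:njhy73r890rhjsk} of $\Phi$.

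For the reverse direction I would run the same formulas backwards: given $(\alpha,L^{\alpha})$ with $\alpha^{2}$ real, \eqref{eq:alphasquarerho} determines $\rho=(2\alpha^{2}-1-\alpha^{4})/(4\alpha^{2})>1$ together with a branch of $\sqrt{\rho^{-1}-1}$, and \eqref{eq:alphaM} recovers a non-$j$-stable $d^{S}_{\alpha^{2},q}$-parallel bundle $W$; setting $T:=S+\sqrt{\rho^{-1}-1}\,(\pi_{W}-\pi_{jW})$, I must verify that $T$ solves the $\rho$-Riccati equation. The point is that $jW$ is $d^{S}_{\alpha^{-2},q}$-parallel (using reality of $\alpha^{2}$ and that conjugation carries $d^{S}_{\alpha^{2},q}$ to $d^{S}_{\overline{\alpha}\,^{-2},q}$), so that $(\mu I-X)\circ d^{S}_{\alpha^{2},q}\circ(X+\mu I)$ and its $\alpha^{-2}$-analogue both vanish; combining these through \eqref{eq:8udcvbns5wedfghjx9owq} and \eqref{eq:njhy73r890rhjsk} yields $dX=\Phi$, i.e. the Riccati equation. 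To identify the transforms I would then exploit the explicit relation \eqref{eq:r0vsT(veryclose)}, $r(0)=\sqrt{-\rho}\,(T-S+i)$, which gives $r(0)S_{+}=\hat S_{+}$ and hence $S^{*}_{+}=\hat S_{+}$; tracing $L_{+}$ through the defining formulas \eqref{eq:Lspm} of $L^{*}_{+}$, using $r(\infty)=r(0)^{-1}$ (valid since $\alpha^{2}\in\R$, cf. \eqref{eq:rinf=r0-1}) together with $\mathrm{Im}(-S+i)\subset S_{-}$, yields $L^{*}_{+}=\hat L_{+}$, hence $L^{*}=\hat L$ when $L^{*}$ immerses, and by definition otherwise.

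I expect the main obstacle to be the bookkeeping underlying Lemma \ref{Wd...parallel} and its converse: matching the Riccati nonlinearity $\rho T(dG)T-dF+4\rho qT$ against the conjugated connections $d^{S}_{\alpha^{\pm 2},q}$ demands the simplification \eqref{eq:njhy73r890rhjsk} of $\Phi$, which in turn uses $Sq=*\,q=qS$, $S*Q=Q$, $S*A=-A$ and $QL=qL=0$; and one must track the three compatible square-root branches ($\sqrt{\rho^{-1}-1}$, $\sqrt{r_{\alpha}(0)}$ and $\sqrt{-\rho}$) carefully so that \eqref{eq:r0vsT(veryclose)} emerges with the correct sign and the identification $r(0)S_{+}=\hat S_{+}$ is genuine rather than merely formal.
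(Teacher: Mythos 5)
Your proposal is correct and follows essentially the same route as the paper's own proof: the same eigenbundle decomposition of $T-S$ via Lemma \ref{formadosendomorfismoscomsquare}, the same dictionary \eqref{eq:alphasquarerho}, \eqref{eq:alphaM} run in both directions (with Lemma \ref{Wd...parallel} forward and, for the converse, the observation that equations \eqref{eq:8udcvbns5wedfghjx9owq} and \eqref{eq:njhy73r890rhjsk} depend only on \eqref{eq:conservedquantity}, so the parallelness of $W$ and $jW$ yields the Riccati equation), and the same identification of transforms through \eqref{eq:r0vsT(veryclose)}, \eqref{eq:rinf=r0-1} and \eqref{eq:Lspm}. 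Your handling of the non-immersed case (taking $\hat L$ as the definition of the untwisted transform) and of the trivial regime $\rho\leq 1$ also coincides with the paper's, so nothing is missing.
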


In particular, twisted B\"{a}cklund transforms of parameters
$\alpha, L^{\alpha}$ with $\alpha^{2}$ real,
$\tau(\alpha)L^{\alpha}$ non-real and $r(0)S_{+}$ non-j-stable, are
constrained Willmore Darboux transforms. Next we examine what they
correspond to under the correspondence established above between
untwisted B\"{a}cklund transformation of parameters $\alpha,
L^{\alpha}$ with $\alpha^{2}$ real and constrained Willmore Darboux
transformation of parameters $\rho,T$ with $\rho
>1$.

In what follows, let $\rho,T$ be constrained Willmore Darboux
transformation parameters, with $\rho>1$, and $\alpha,L^{\alpha}$ be
corresponding untwisted B\"{a}cklund transformation parameters,
under the correspondence established above.

\begin{Lemma}\label{5esdfcgvhjj766wrdxu7892q.'v}
$L^{\alpha}$ is orthogonal to $\rho_{V} L^{\alpha}$ at a point in
$M$ if and only if, at that point, either $W\cap S_{+}\neq\{0\}$ or
$W\cap S_{-}\neq\{0\}$.
\end{Lemma}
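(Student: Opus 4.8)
The plan is to reduce the orthogonality of $L^{\alpha}$ and $\rho_{V}L^{\alpha}$ to that of $\wedge^{2}W$ and $\rho_{V}\wedge^{2}W$, and then read off the conclusion from the remark preceding Lemma \ref{buildlalphataunotreal}. The argument is pointwise, so I work in a single fibre. Recall that $L^{\alpha}=\tau(\alpha)^{-1}\wedge^{2}W$, cf. \eqref{eq:alphaM}, so a generator of $L^{\alpha}$ is $\ell:=\tau(\alpha)^{-1}w$, for $w$ a generator of $\wedge^{2}W$, which is null since it is decomposable. The key observation is that $\tau(\alpha)$ and $\rho_{V}$ both act by scalars on each summand of the decomposition $\wedge^{2}\C^{4}=\wedge^{2}S_{+}\oplus(S_{+}\wedge S_{-})\oplus\wedge^{2}S_{-}$ — namely $\tau(\alpha)$ by $\alpha^{-1},1,\alpha$ and $\rho_{V}$ by $-1,1,-1$ — hence they commute; and $\tau(\alpha)$ is orthogonal. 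Using both facts,
\[
(\ell,\rho_{V}\ell)=(\tau(\alpha)^{-1}w,\rho_{V}\tau(\alpha)^{-1}w)=(\tau(\alpha)^{-1}w,\tau(\alpha)^{-1}\rho_{V}w)=(w,\rho_{V}w),
\]
so that $L^{\alpha}$ is orthogonal to $\rho_{V}L^{\alpha}$ if and only if $\wedge^{2}W$ is orthogonal to $\rho_{V}\wedge^{2}W$.

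Next I would invoke the remark preceding Lemma \ref{buildlalphataunotreal}: writing $w=w_{0}+w_{+}+w_{-}$ with $w_{0}\in S_{+}\wedge S_{-}$, $w_{+}\in\wedge^{2}S_{+}$ and $w_{-}\in\wedge^{2}S_{-}$, the nullity of $w$ gives $(w_{0},w_{0})=-2(w_{+},w_{-})$, cf. \eqref{eq:characterizdenull}, and $(w,\rho_{V}w)=0$ holds if and only if $(w_{0},w_{0})=0$, or, equivalently, since the pairing between the null lines $\wedge^{2}S_{+}$ and $\wedge^{2}S_{-}$ is non-degenerate ($\wedge^{2}S_{+}\cap(\wedge^{2}S_{-})^{\perp}=\{0\}$), if and only if $w_{+}=0$ or $w_{-}=0$.

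Finally, I would translate the vanishing of $w_{\pm}$ into the intersection conditions. Fixing a basis $w_{1},w_{2}$ of $W$ and setting $w_{i}^{\pm}:=\pi_{S_{\pm}}w_{i}$, one has $w_{+}=w_{1}^{+}\wedge w_{2}^{+}$ and $w_{-}=w_{1}^{-}\wedge w_{2}^{-}$; hence $w_{-}=0$ if and only if $w_{1}^{-},w_{2}^{-}$ are linearly dependent, i.e. if and only if $\pi_{S_{-}}\vert_{W}$ has non-trivial kernel $W\cap S_{+}$, i.e. $W\cap S_{+}\neq\{0\}$, and symmetrically $w_{+}=0$ if and only if $W\cap S_{-}\neq\{0\}$. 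Combining the three steps yields the claim. The only point requiring care — and the step I would carry out most explicitly — is the reduction $(\ell,\rho_{V}\ell)=(w,\rho_{V}w)$, which rests entirely on $\tau(\alpha)$ being orthogonal and commuting with $\rho_{V}$; beyond this the lemma is essentially a transport of the remark preceding Lemma \ref{buildlalphataunotreal} through the orthogonal map $\tau(\alpha)$, so I anticipate no genuine obstacle.
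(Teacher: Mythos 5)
Your proof is correct and follows essentially the same route as the paper's: the paper likewise reduces orthogonality of $L^{\alpha}$ and $\rho_{V}L^{\alpha}$ to that of $\wedge^{2}W$ and $\rho_{V}\wedge^{2}W$, uses nullity together with the non-degeneracy of the pairing between $\wedge^{2}S_{+}$ and $\wedge^{2}S_{-}$ to get $w_{1}^{+}\wedge w_{2}^{+}=0$ or $w_{1}^{-}\wedge w_{2}^{-}=0$, and then translates this into the intersection conditions (via its Lemma \ref{bbbbvzbxxxxx5x5x5x5x4x3f}, for which your rank-nullity argument is an equivalent substitute). The only cosmetic difference is that you spell out the transport step $(\ell,\rho_{V}\ell)=(w,\rho_{V}w)$, which the paper asserts implicitly, and you invoke the remark preceding Lemma \ref{buildlalphataunotreal} rather than recomputing the pairing.
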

Before proceeding to the proof of the lemma, it is opportune to
emphasize the following fact.
\begin{Lemma}\label{bbbbvzbxxxxx5x5x5x5x4x3f}
Let $w_{1}:=w_{1}^{+}+w_{1}^{-},w_{2}:=w_{2}^{+}+w_{2}^{-}$ be a
frame of $W$ with $w_{i}^{\pm}\in\Gamma(S_{\pm})$, respectively, for
$i=1,2$. Then, at each point, $W\cap S_{\pm}\neq\{0\}$ if and only
if $w_{1}^{\mp}\wedge w_{2}^{\mp}=0,$ respectively.
\end{Lemma}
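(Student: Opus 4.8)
The plan is to argue pointwise, working in a single fibre, and to reduce the statement to an elementary observation about the direct sum $\underline{\C}^{4}=S_{+}\oplus S_{-}$. I will prove the claim for $S_{+}$; the statement for $S_{-}$ then follows by interchanging the roles of $S_{+}$ and $S_{-}$, equivalently of the superscripts $+$ and $-$, so no separate work is needed for it.

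First I would write a general element of $W$ as $w=\lambda_{1}w_{1}+\lambda_{2}w_{2}$, with $\lambda_{1},\lambda_{2}\in\C$, and decompose it according to $\underline{\C}^{4}=S_{+}\oplus S_{-}$ as $w=(\lambda_{1}w_{1}^{+}+\lambda_{2}w_{2}^{+})+(\lambda_{1}w_{1}^{-}+\lambda_{2}w_{2}^{-})$, the first summand lying in $S_{+}$ and the second in $S_{-}$. Since the sum is direct, $w$ lies in $S_{+}$ if and only if its $S_{-}$-component vanishes, i.e. $\lambda_{1}w_{1}^{-}+\lambda_{2}w_{2}^{-}=0$. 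Hence $W\cap S_{+}\neq\{0\}$ holds precisely when there is a \emph{nonzero} element of $W$ whose $S_{-}$-component is zero.

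The key step is then to observe that any nontrivial pair $(\lambda_{1},\lambda_{2})$ automatically produces a nonzero vector. Indeed, if $(\lambda_{1},\lambda_{2})\neq(0,0)$ satisfies $\lambda_{1}w_{1}^{-}+\lambda_{2}w_{2}^{-}=0$, then $w=\lambda_{1}w_{1}+\lambda_{2}w_{2}$ cannot vanish: were $w=0$, the linear independence of the frame $w_{1},w_{2}$ of $W$ would force $\lambda_{1}=\lambda_{2}=0$. Therefore $W\cap S_{+}\neq\{0\}$ if and only if the equation $\lambda_{1}w_{1}^{-}+\lambda_{2}w_{2}^{-}=0$ admits a nontrivial solution, that is, if and only if $w_{1}^{-}$ and $w_{2}^{-}$ are linearly dependent. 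Recalling that two vectors are linearly dependent exactly when their wedge vanishes, this is equivalent to $w_{1}^{-}\wedge w_{2}^{-}=0$, which completes the argument.

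There is no genuine obstacle here; the only point requiring a little care is precisely this last implication, namely that a nontrivial solution of $\lambda_{1}w_{1}^{-}+\lambda_{2}w_{2}^{-}=0$ yields a \emph{nonzero} element of $W\cap S_{+}$, which is exactly where the hypothesis that $w_{1},w_{2}$ constitute a frame (and not merely a spanning pair of sections) is used. Everything is fibrewise linear algebra over $\C$, so no analytic, differential, or conformal input enters, and the symmetric statement for $W\cap S_{-}$ requires only the obvious relabelling.
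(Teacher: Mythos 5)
Your proof is correct and takes essentially the same route as the paper's: both arguments are pointwise linear algebra in the fibre, writing a general element of $W$ in the frame $w_{1},w_{2}$, noting that membership in $S_{\pm}$ is exactly the vanishing of the $S_{\mp}$-component, and using the linear independence of the frame to guarantee that a nontrivial coefficient pair yields a nonzero element of $W\cap S_{\pm}$. The only cosmetic difference is that you pass directly through the equivalence ``nontrivial solution of $\lambda_{1}w_{1}^{\mp}+\lambda_{2}w_{2}^{\mp}=0$ $\Leftrightarrow$ linear dependence $\Leftrightarrow$ vanishing wedge,'' whereas the paper spells out the same content by a short case analysis on the coefficients.
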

\begin{proof}
Let $p$ be a point in $M$. Let $w:=aw_{1}+bw_{2}\in\Gamma(W)$, with
$a,b\in\Gamma(\underline{\C})$, be not-zero at $p$. Then $w(p)$ is
in $S_{\pm}(p)$ if and only if, at the point $p$, one has either
$b\neq 0$ and $w_{2}^{\mp}=-\frac{a}{b}\,w_{1}^{\mp}$ or $b=0\neq a$
and $w_{1}^{\mp}=0$, respectively. On the other hand, if, at $p$,
$w_{1}^{\mp}\wedge w_{2}^{\mp}=0,$ then either $w_{i}(p)\in W(p)\cap
S_{\pm}(p)$, respectively, for some $i=1,2$; or
$w_{2}^{\mp}(p)=\lambda w_{1}^{\mp}(p)$, for some
$\lambda\in\C\backslash\{0\}$, in which case $-\lambda
w_{1}(p)+w_{2}(p)\in W(p)\cap S_{\pm}(p)\backslash\{0\}$,
respectively.

\end{proof}
Now we proceed to the proof of Lemma
\ref{5esdfcgvhjj766wrdxu7892q.'v}.
\begin{proof}
Let $w_{1}:=w_{1}^{+}+w_{1}^{-},w_{2}:=w_{2}^{+}+w_{2}^{-}$ be a
frame of $W$ with $w_{i}^{\pm}\in\Gamma(S_{\pm})$, respectively, for
$i=1,2$. Recall that $\rho_{V}=\tau(-1)$. At each point, the
orthogonality of $L^{\alpha}$ to $\rho_{V} L^{\alpha}$ is
characterized by
$$(\rho(w_{1}\wedge w_{2}),w_{1}\wedge w_{2})=0,$$or,
equivalently,
\begin{equation}\label{eq:ms1e2+e-vsrho}
(w_{1}^{+}\wedge w_{2}^{+},w_{1}^{-}\wedge w_{2}^{-})=0.
\end{equation}
On the other hand, at each point, equation \eqref{eq:ms1e2+e-vsrho}
holds if and only if either $w_{1}^{+}\wedge w_{2}^{+}=0$ or
$w_{1}^{-}\wedge w_{2}^{-}=0$, which, according to Lemma
\ref{bbbbvzbxxxxx5x5x5x5x4x3f}, completes the proof.
\end{proof}
\begin{Lemma}\label{qnotorthtorhoq}
If, at some point , $W\cap S_{+}=\{0\}=W\cap S_{-}$, then, at that
point,
$q_{\overline{\alpha}\,^{-1},\overline{L^{\alpha}}}\,(\alpha)L^{\alpha}$
is not orthogonal to $\rho_{V}
q_{\overline{\alpha}\,^{-1},\overline{L^{\alpha}}}\,(\alpha)L^{\alpha}$.
\end{Lemma}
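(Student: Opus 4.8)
The plan is to reduce the claimed non-orthogonality to the non-vanishing of a single scalar and then to compute that scalar explicitly from the components of $L^{\alpha}$ along the three summands of \eqref{eq:w2C4w2s+w2s-s+s-}. Fix $l^{\alpha}\in\Gamma(L^{\alpha})$ never-zero and abbreviate $q:=q_{\overline{\alpha}\,^{-1},\overline{L^{\alpha}}}$. Non-orthogonality of $q(\alpha)L^{\alpha}$ and $\rho_{V}q(\alpha)L^{\alpha}$ means exactly $(\rho_{V}q(\alpha)l^{\alpha},q(\alpha)l^{\alpha})\neq 0$. Since, by \eqref{eq:rhopqinv} and \eqref{eq:pinvs}, $\rho_{V}q(\alpha)\rho_{V}=q(-\alpha)=q(\alpha)^{-1}$, we have $\rho_{V}q(\alpha)=q(\alpha)^{-1}\rho_{V}$; and as $q(\alpha)$ is orthogonal this gives
$$(\rho_{V}q(\alpha)l^{\alpha},q(\alpha)l^{\alpha})=(q(\alpha)^{-1}\rho_{V}l^{\alpha},q(\alpha)l^{\alpha})=(\rho_{V}l^{\alpha},q(\alpha)^{2}l^{\alpha}),$$
so it suffices to show $(\rho_{V}l^{\alpha},q(\alpha)^{2}l^{\alpha})\neq 0$ at the point in question.

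Next I would unwind the two operators. Writing $w_{1},w_{2}$ for a frame of $W$ and $\omega=w_{1}\wedge w_{2}$, the definition $L^{\alpha}=\tau(\alpha)^{-1}\wedge^{2}W$ gives $l^{\alpha}=\tau(\alpha)^{-1}\omega=\alpha\,v_{+}+v_{0}+\alpha^{-1}v_{-}$, where $v_{\pm}\in\wedge^{2}S_{\pm}$ and $v_{0}\in S_{+}\wedge S_{-}$ are the components of $\omega$. By Lemma \ref{bbbbvzbxxxxx5x5x5x5x4x3f} the hypothesis $W\cap S_{+}=\{0\}=W\cap S_{-}$ is precisely $v_{+}\neq 0\neq v_{-}$. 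The same hypothesis forces, via Lemma \ref{5esdfcgvhjj766wrdxu7892q.'v} together with the reality of $\rho_{V}$, that $\overline{L^{\alpha}}$ is not orthogonal to $\rho_{V}\overline{L^{\alpha}}$, so $q(\alpha)$ really is the sum of its three eigenprojections, $q(\alpha)=A\,\pi_{\overline{L^{\alpha}}}+\pi_{(\overline{L^{\alpha}}\oplus\rho_{V}\overline{L^{\alpha}})^{\perp}}+A^{-1}\pi_{\rho_{V}\overline{L^{\alpha}}}$, with $A=\frac{|\alpha|^{2}-1}{|\alpha|^{2}+1}\in\R\setminus\{0\}$; hence $q(\alpha)^{2}$ is obtained by replacing $A,A^{-1}$ with $A^{2},A^{-2}$.

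Substituting $\overline{L^{\alpha}}=\langle\overline{l^{\alpha}}\rangle$ and $\rho_{V}\overline{L^{\alpha}}=\langle\rho_{V}\overline{l^{\alpha}}\rangle$, I would expand $(\rho_{V}l^{\alpha},q(\alpha)^{2}l^{\alpha})$ using the eigenprojection formula $\pi_{\overline{L^{\alpha}}}x=\frac{(x,\rho_{V}\overline{l^{\alpha}})}{(\overline{l^{\alpha}},\rho_{V}\overline{l^{\alpha}})}\overline{l^{\alpha}}$ and the orthogonality relations of the decomposition: the lines $\wedge^{2}S_{\pm}$ are null, mutually non-orthogonal, and orthogonal to $S_{+}\wedge S_{-}$. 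Abbreviating $P:=(v_{+},\overline{v_{+}})$, $Q:=(v_{-},\overline{v_{-}})$, $R:=(v_{0},\overline{v_{0}})$ and $K:=(v_{+},v_{-})$, this is exactly the computation carried out in the proof of Lemma \ref{buildlalphataunotreal}, and it yields an explicit value whose vanishing is governed by $P,Q,R,K$ and $A$; along the way one uses $P,Q>0$ (Remark \ref{inprodvoverlv}, since $v_{\pm}\neq 0$), $K\neq 0$ (non-degeneracy of $\wedge^{2}S_{+}\oplus\wedge^{2}S_{-}$), and the nullity $(l^{\alpha},l^{\alpha})=0$, i.e. $(v_{0},v_{0})=-2K$.

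The hard part will be the final step, showing this scalar is nonzero. In contrast to the proof of Lemma \ref{buildlalphataunotreal}, where the specific $l^{\alpha}$ chosen there made every summand positive, here the sign pattern is mixed (the coefficient $A^{2}-1$ is negative), so the non-vanishing is not formal and cannot follow from positivity alone. To close it I would bring in the extra structure available in the present Darboux context: that $|\alpha|^{2}>1$ (read off from $\alpha^{2}=2i\rho\sqrt{\rho^{-1}-1}-2\rho+1$ with $\rho>1$), the non-reality of $\tau(\alpha)L^{\alpha}=\wedge^{2}W$ (which gives $P+Q+R\neq 0$), and, most importantly, the non-$j$-stability of $r(0)S_{+}=\hat{S}_{+}$ fed through the numerical conditions of Lemma \ref{whenr0S+jstable}, in order to exclude the one exceptional relation among $P,Q,R,K$ for which the scalar would degenerate to zero.
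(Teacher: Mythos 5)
Your opening reduction is sound: passing from $(\rho_{V}q(\alpha)l^{\alpha},q(\alpha)l^{\alpha})\neq 0$ to $(\rho_{V}l^{\alpha},q(\alpha)^{2}l^{\alpha})\neq 0$ via \eqref{eq:rhopqinv}, \eqref{eq:pinvs} and the orthogonality of $q(\alpha)$ is a legitimate first move, the identification of the hypothesis with $v_{+}\neq 0\neq v_{-}$ (Lemma \ref{bbbbvzbxxxxx5x5x5x5x4x3f}) is correct, and Lemma \ref{5esdfcgvhjj766wrdxu7892q.'v} does legitimize the three-eigenspace decomposition of $q(\alpha)$. But the proposal stops exactly where the content of the lemma lies: you never show the scalar is nonzero. ``To close it I would bring in\dots'' is a plan, not an argument, and the plan rests on claims that fail. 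First, $\mid\alpha\mid^{2}>1$ is not forced by the Darboux correspondence: for $\rho>1$ one has $\alpha^{2}=-\mid\alpha\mid^{2}<0$ and $\rho=(\mid\alpha\mid^{2}+1)^{2}/(4\mid\alpha\mid^{2})$, which is invariant under $\mid\alpha\mid^{2}\mapsto\mid\alpha\mid^{-2}$; the two choices of $\sqrt{\rho^{-1}-1}$ produce reciprocal values of $\mid\alpha\mid^{2}$, one larger and one smaller than $1$. Second, the non-$j$-stability of $r(0)S_{+}$ and the numerical conditions of Lemma \ref{whenr0S+jstable} play no role in this lemma, and you give no indication of how they would exclude the alleged degenerate relation among $P,Q,R,K$. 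Finally, the appeal to Lemma \ref{buildlalphataunotreal} is inaccurate: the computation there exploits the special $l^{\alpha}$ constructed in that lemma (the projection coefficients come out as the explicit constants $-\frac{1}{2}$ and $\frac{3}{4}$); for a general $l^{\alpha}$ the coefficients are not constants and the expansion is substantially more involved.

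The deeper misstep is the assertion that the non-vanishing ``cannot follow from positivity alone'': the paper's own proof is precisely a positivity argument, made visible by a better choice of frame. Working with the $\mathrm{Sl}(\C^{4})$ representative of $q(\alpha)$ (eigenvalues $\sqrt{A}^{\,\pm 1}$ on the complementary $2$-planes $\tau(\overline{\alpha})jW$ and $\tau(-\overline{\alpha})jW$), expressing $w_{i}^{\pm}$ in a frame of those planes, and using the reality of $\alpha^{2}$ (so $\alpha$ is real or purely imaginary), the orthogonality condition reduces to the vanishing of $\hat{Y}\,(\alpha^{2}Y_{+}+\alpha^{-2}Y+\alpha^{-6}Y_{-})$, where in your notation $Y_{+}=P$, $Y_{-}=Q$, $Y=R$ and $\hat{Y}=P+Q+R$; note that $K$ drops out entirely. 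Remark \ref{inprodvoverlv} and the hypothesis give $Y_{\pm}>0$, inequality \eqref{eq:vconjvispositive} gives $Y\geq 0$, $\hat{Y}\neq 0$ since $W$ is not $j$-stable, and the reality of $\alpha^{2}$ makes the three coefficients $\alpha^{2},\alpha^{-2},\alpha^{-6}$ of one sign --- so the product cannot vanish. The mixed signs you observed (the coefficient $A^{2}-1<0$) are an artifact of organizing the computation by the eigenprojections of $q(\alpha)$ acting on $\wedge^{2}\underline{\C}^{4}$; they show only that your frame hides the factorization, not that positivity fails. As it stands, the proposal is a correct reduction followed by a missing, and mis-aimed, core computation.
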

\begin{proof}

Our argument is pointwise, so we work in a single fibre.

Suppose $W\cap S_{+}=\{0\}=W\cap S_{-}$. In that case, according to
Lemma \ref{5esdfcgvhjj766wrdxu7892q.'v}, $\overline{L^{\alpha}}$ is
not orthogonal to $\rho_{V}\overline{L^{\alpha}}$, or, equivalently,
$(\wedge^{2}\tau(\overline{\alpha})jW)\cap(\wedge^{2}\tau(-\overline{\alpha})jW)^{\perp}=\{0\},$
which characterizes the complementarity of
$\tau(\overline{\alpha})jW$ and $\tau(-\overline{\alpha})jW$ in
$\underline{\C}^{4}$. Set
$A:=\frac{\alpha-\overline{\alpha}\,^{-1}}{\alpha+\overline{\alpha}\,^{-1}}$
and fix a choice of $\sqrt{A}$. Then
$$q:=q_{\overline{\alpha}\,^{-1},\overline{L^{\alpha}}}\,(\alpha)=I\left\{
\begin{array}{ll} \sqrt{A} & \mbox{$\mathrm{on}\,\tau(\overline{\alpha})jW$}\\ \sqrt{A}\,^{-1} &
\mbox{$\mathrm{on}\,\tau(-\overline{\alpha})jW$}\end{array}\right..$$
Let $w_{1}:=w_{1}^{+}+w_{1}^{-},w_{2}:=w_{2}^{+}+w_{2}^{-}$ be a
frame of $W$ with $w_{i}^{\pm}\in\Gamma(S_{\pm})$, respectively, for
$i=1,2$. $L^{\alpha}$ is spanned by
\begin{eqnarray*}
l^{\alpha}&:=&\tau(\alpha)^{-1}(w_{1}\wedge w_{2})\\&=&\alpha
\,w_{1}^{+}\wedge w_{2}^{+}+w_{1}^{+}\wedge
w_{2}^{-}+w_{1}^{-}\wedge w_{2}^{+}+\alpha^{-1}w_{1}^{-}\wedge
w_{2}^{-}.
\end{eqnarray*}
The proof will consist of showing that $(\rho_{V}
ql^{\alpha},ql^{\alpha})\neq 0$, or, equivalently, in view of the
nullity of $ql^{\alpha}$, that
$$(\pi_{\wedge^{2}S_{+}}ql^{\alpha},\pi_{\wedge^{2}S_{-}}ql^{\alpha})\neq
0,$$ for
$\pi_{\wedge^{2}S_{+}}:\wedge^{2}\underline{\C}^{4}\rightarrow\wedge^{2}S_{+}$
and $\pi_{\wedge^{2}S_{-}}:\wedge^{2}\underline{\C}^{4}\rightarrow
\wedge^{2}S_{-}$ projections with respect to decomposition
\eqref{eq:w2C4w2s+w2s-s+s-}.

Fix a choice of $\sqrt{\alpha}$. Then
$$\tau(\overline{\alpha}) =I\left\{
\begin{array}{ll} \overline{\sqrt{\alpha}}\,^{-1} & \mbox{$\mathrm{on}\,S_{+}$}\\ \overline{\sqrt{\alpha}}&
\mbox{$\mathrm{on}\,S_{-}$}\end{array}\right.,$$ whereas
$$\tau(-\overline{\alpha}) =I\left\{
\begin{array}{ll} i\overline{\sqrt{\alpha}}\,^{-1} & \mbox{$\mathrm{on}\,S_{+}$}\\ -i\overline{\sqrt{\alpha}}&
\mbox{$\mathrm{on}\,S_{-}$}\end{array}\right.,$$and, therefore,
$$\tau(\overline{\alpha})jW=\langle
u_{1}:=\overline{\alpha}jw_{1}^{+}+jw_{1}^{-},u_{2}:=\overline{\alpha}jw_{2}^{+}+jw_{2}^{-}\rangle,$$
whereas $$\tau(-\overline{\alpha})jW=\langle
v_{1}:=-\overline{\alpha}jw_{1}^{+}+jw_{1}^{-},v_{2}:=-\overline{\alpha}jw_{2}^{+}+jw_{2}^{-}\rangle.$$
Given $i=1,2$ and
$a_{i}^{\pm},b_{i}^{\pm},c_{i}^{\pm},d_{i}^{\pm}\in\Gamma(\underline{\C})$
for which
$$w_{i}^{\pm}=a_{i}^{\pm}u_{1}+b_{i}^{\pm}u_{2}+c_{i}^{\pm}v_{1}+d_{i}^{\pm}v_{2},$$
respectively, the fact that $W\cap S_{+}=\{0\}=W\cap S_{-}$, or,
equivalently,
\begin{equation}\label{eq:jm12+jm12-neq0}
jw_{1}^{+}\wedge jw_{2}^{+}\neq 0\neq jw_{1}^{-}\wedge jw_{2}^{-},
\end{equation}
forces
\begin{equation}\label{eq:coeficoemntsforcados}
c_{i}^{+}=a_{i}^{+},\,\,d_{i}^{+}=b_{i}^{+},\,\,c_{i}^{-}=-a_{i}^{-},\,\,d_{i}^{-}=-b_{i}^{-}.
\end{equation}
Hence
\begin{eqnarray*}
qw_{i}^{\pm}&=&a_{i}^{\pm}\overline{\alpha}(\sqrt{A}\mp\sqrt{A}\,^{-1})jw_{1}^{+}+a_{i}^{\pm}(\sqrt{A}\pm\sqrt{A}\,^{-1})jw_{1}^{-}
\\ & &
\mbox{}+b_{i}^{\pm}\overline{\alpha}(\sqrt{A}\mp\sqrt{A}\,^{-1})jw_{2}^{+}+b_{i}^{\pm}(\sqrt{A}\pm\sqrt{A}\,^{-1})jw_{2}^{-},
\end{eqnarray*}
respectively, for $i=1,2$; and, therefore,
\begin{eqnarray*}
qw_{1}^{\pm}\wedge
qw_{2}^{\pm}&=&(a_{1}^{\pm}b_{2}^{\pm}-b_{1}^{\pm}a_{2}^{\pm})\,\alpha^{2}(\sqrt{A}\mp\sqrt{A}\,^{-1})^{2}jw_{1}^{+}\wedge
jw_{2}^{+}\\ & &
\mbox{}+(a_{1}^{\pm}b_{2}^{\pm}-b_{1}^{\pm}a_{2}^{\pm})\,\overline{\alpha}\,(A-A^{-1})(jw_{1}^{+}\wedge
jw_{2}^{-}+jw_{1}^{-}\wedge jw_{2}^{+})\\ & & \mbox{}+
(a_{1}^{\pm}b_{2}^{\pm}-b_{1}^{\pm}a_{2}^{\pm})\,(\sqrt{A}\pm\sqrt{A}\,^{-1})^{2}jw_{1}^{-}\wedge
jw_{2}^{-},
\end{eqnarray*}
respectively, as well as
\begin{eqnarray*}
qw_{i}^{+}\wedge
qw_{k}^{-}&=&(A-A^{-1})(a_{i}^{+}b_{k}^{-}-b_{i}^{+}a_{k}^{-})(\alpha^{2}jw_{1}^{+}\wedge
jw_{2}^{+}+jw_{1}^{-}\wedge jw_{2}^{-})\\ & &
\mbox{}+\overline{\alpha}\,a_{i}^{+}a_{k}^{-}((\sqrt{A}-\sqrt{A}\,^{-1})^{2}-(\sqrt{A}+\sqrt{A}\,^{-1})^{2})jw_{1}^{+}\wedge
jw_{1}^{-}\\ & &
\mbox{}+\overline{\alpha}\,(a_{i}^{+}b_{k}^{-}(\sqrt{A}-\sqrt{A}\,^{-1})^{2}-b_{i}^{+}a_{k}^{-}(\sqrt{A}+\sqrt{A}\,^{-1})^{2})jw_{1}^{+}\wedge
jw_{2}^{-}\\ & &
\mbox{}+\overline{\alpha}\,(a_{i}^{+}b_{k}^{-}(\sqrt{A}+\sqrt{A}\,^{-1})^{2}-b_{i}^{+}a_{k}^{-}(\sqrt{A}-\sqrt{A}\,^{-1})^{2})jw_{1}^{-}\wedge
jw_{2}^{+}\\ & &
\mbox{}+\overline{\alpha}\,b_{i}^{+}b_{k}^{-}((\sqrt{A}-\sqrt{A}\,^{-1})^{2}-(\sqrt{A}+\sqrt{A}\,^{-1})^{2})jw_{2}^{+}\wedge
jw_{2}^{-},
\end{eqnarray*}
for $i\neq k$. It follows that
$$\pi_{\wedge^{2}S_{+}}ql^{\alpha}=X_{+}\,jw_{1}^{-}\wedge jw_{2}^{-}$$and
$$\pi_{\wedge^{2}S_{-}}ql^{\alpha}=X_{-}\,jw_{1}^{+}\wedge
jw_{2}^{+},$$for
\begin{eqnarray*}
X_{+}&=&\alpha(\sqrt{A}+\sqrt{A}\,^{-1})^{2}(a_{1}^{+}b_{2}^{+}-b_{1}^{+}a_{2}^{+})+\alpha^{-1}(\sqrt{A}-\sqrt{A}\,^{-1})^{2}(a_{1}^{-}b_{2}^{-}-b_{1}^{-}a_{2}^{-})
\\ & & \mbox{}+
(A-A^{-1})(a_{1}^{+}b_{2}^{-}-b_{1}^{+}a_{2}^{-}+a_{1}^{-}b_{2}^{+}-b_{1}^{-}a_{2}^{+})
\end{eqnarray*}
and
\begin{eqnarray*}
\alpha^{-2}X_{-}&=&\alpha(\sqrt{A}-\sqrt{A}\,^{-1})^{2}(a_{1}^{+}b_{2}^{+}-b_{1}^{+}a_{2}^{+})+\alpha^{-1}(\sqrt{A}+\sqrt{A}\,^{-1})^{2}(a_{1}^{-}b_{2}^{-}-b_{1}^{-}a_{2}^{-})
\\ & & \mbox{}+
(A-A^{-1})(a_{1}^{+}b_{2}^{-}-b_{1}^{+}a_{2}^{-}+a_{1}^{-}b_{2}^{+}-b_{1}^{-}a_{2}^{+}).
\end{eqnarray*}
According to \eqref{eq:jm12+jm12-neq0}, $$(jw_{1}^{+}\wedge
jw_{2}^{+},jw_{1}^{-}\wedge jw_{2}^{-})\neq 0,$$so that $(\rho_{V}
ql^{\alpha},ql^{\alpha})$ vanishes if and only if
$X_{+}\,\alpha^{-2}X_{-}$ does. As $\alpha^{2}$ is real, or,
equivalently, $\alpha$ is either real or pure imaginary, we have
$$\alpha^{\pm 1}(\sqrt{A}\pm\sqrt{A}\,^{-1})^{2}=\frac{4\alpha^{\pm
3}}{\alpha^{2}-\alpha^{-2}}$$ and
$$\alpha^{\pm 1}(\sqrt{A}\mp\sqrt{A}\,^{-1})^{2}=\frac{4\alpha^{\mp 1}}{\alpha^{2}-\alpha^{-2}},$$
respectively, whilst, depending, respectively, on $\alpha$ being
real or pure imaginary,
$$A-A^{-1}=\mp\,\frac{4}{\alpha^{2}-\alpha^{-2}}.$$ Hence the
orthogonality of $\rho_{V} ql^{\alpha}$ to $ql^{\alpha}$ is
characterized, equivalently, by the equation
\begin{eqnarray*}\label{eq:gd}
0&=&
\alpha^{2}(a_{1}^{+}b_{2}^{+}-b_{1}^{+}a_{2}^{+})^{2}+(\alpha^{4}+\alpha^{-4})(a_{1}^{+}b_{2}^{+}-b_{1}^{+}a_{2}^{+})(a_{1}^{-}b_{2}^{-}-b_{1}^{-}a_{2}^{-})\\
& &
\mbox{}\mp(\alpha^{3}+\alpha^{-1})(a_{1}^{+}b_{2}^{+}-b_{1}^{+}a_{2}^{+})(a_{1}^{+}b_{2}^{-}-b_{1}^{+}a_{2}^{-}+a_{1}^{-}b_{2}^{+}-b_{1}^{-}a_{2}^{+})
\\ & &
\mbox{}\mp(\alpha^{-3}+\alpha)(a_{1}^{-}b_{2}^{-}-b_{1}^{-}a_{2}^{-})(a_{1}^{+}b_{2}^{-}-b_{1}^{+}a_{2}^{-}+a_{1}^{-}b_{2}^{+}-b_{1}^{-}a_{2}^{+})
\\ & &
\mbox{}+\alpha^{-2}(a_{1}^{-}b_{2}^{-}-b_{1}^{-}a_{2}^{-})^{2}+(a_{1}^{+}b_{2}^{-}-b_{1}^{+}a_{2}^{-}+a_{1}^{-}b_{2}^{+}-b_{1}^{-}a_{2}^{+})^{2},
\end{eqnarray*}
depending on $\alpha$ being real or pure imaginary, respectively.
Observe now that, on the other hand, according to
\eqref{eq:coeficoemntsforcados},
$$w_{i}^{+}=2\,(a_{i}^{+}jw_{1}^{-}+b_{i}^{+}jw_{2}^{-}),\,\,\,\,\,\,w_{i}^{-}=2\,\overline{\alpha}\,(a_{i}^{-}jm_{1}^{+}+b_{i}^{-}jw_{2}^{+}),$$
and, therefore,
$$jw_{i}^{+}=-2\,(\overline{a_{i}^{+}}\,w_{1}^{-}+\overline{b_{i}^{+}}\,w_{2}^{-}),\,\,\,\,\,\,jw_{i}^{-}=-2\,\alpha\,(\overline{a_{i}^{-}}\,w_{1}^{+}+\overline{b_{i}^{-}}\,w_{2}^{+}),$$
for $i=1,2$. It follows, in particular, that
$$jw_{1}^{+}\wedge
jw_{2}^{+}=4\,\overline{(a_{1}^{+}b_{2}^{+}-b_{1}^{+}a_{2}^{+})}\,w_{1}^{-}\wedge
w_{2}^{-}$$and, consequently, that $$(jw_{1}^{+}\wedge
jw_{2}^{+},w_{1}^{+}\wedge
w_{2}^{+})=4\,\overline{(a_{1}^{+}b_{2}^{+}-b_{1}^{+}a_{2}^{+})}\,(w_{1}^{-}\wedge
w_{2}^{-},w_{1}^{+}\wedge w_{2}^{+}),$$or, equivalently,
$$a_{1}^{+}b_{2}^{+}-b_{1}^{+}a_{2}^{+}=\frac{1}{4}\frac{(w_{1}^{+}\wedge
w_{2}^{+},jw_{1}^{+}\wedge jw_{2}^{+})}{(jw_{1}^{-}\wedge
jw_{2}^{-},jw_{1}^{+}\wedge jw_{2}^{+})}.$$Similarly, from
$$jw_{1}^{-}\wedge
jw_{2}^{-}=4\,\alpha^{2}\overline{(a_{1}^{-}b_{2}^{-}-b_{1}^{-}a_{2}^{-})}\,w_{1}^{+}\wedge
w_{2}^{+},$$ we conclude that
$$a_{1}^{-}b_{2}^{-}-b_{1}^{-}a_{2}^{-}=\frac{1}{4\alpha^{2}}\frac{(w_{1}^{-}\wedge
w_{2}^{-},jw_{1}^{-}\wedge jw_{2}^{-})}{(jw_{1}^{+}\wedge
jw_{2}^{+},jw_{1}^{-}\wedge jw_{2}^{-})}.$$On the other hand,
$$jw_{1}^{+}\wedge
jw_{2}^{-}=4\,\alpha\,(\overline{a_{1}^{+}a_{2}^{-}}w_{1}^{-}\wedge
w_{1}^{+}+\,\overline{a_{1}^{+}b_{2}^{-}}w_{1}^{-}\wedge
w_{2}^{+}+\,\overline{b_{1}^{+}a_{2}^{-}}w_{2}^{-}\wedge
w_{1}^{+}+\,\overline{b_{1}^{+}b_{2}^{-}}w_{2}^{-}\wedge
w_{2}^{+}),$$ showing that
$$a_{1}^{+}b_{2}^{-}=-\frac{1}{4\overline{\alpha}}\frac{(w_{1}^{+}\wedge
w_{2}^{-},jw_{1}^{+}\wedge jw_{2}^{-})}{(jw_{1}^{+}\wedge
jw_{2}^{+},jw_{1}^{-}\wedge jw_{2}^{-})},$$as well as
$$b_{1}^{+}a_{2}^{-}=\frac{1}{4\overline{\alpha}}\frac{(w_{1}^{+}\wedge
w_{2}^{-},jw_{1}^{-}\wedge jw_{2}^{+})}{(jw_{1}^{+}\wedge
jw_{2}^{+},jw_{1}^{-}\wedge jw_{2}^{-})}.$$ Similarly, the fact that
$$jw_{1}^{-}\wedge
jw_{2}^{+}=4\,\alpha\,(\overline{a_{1}^{-}a_{2}^{+}}w_{1}^{+}\wedge
w_{1}^{-}+\,\overline{a_{1}^{-}b_{2}^{+}}w_{1}^{+}\wedge
w_{2}^{-}+\,\overline{b_{1}^{-}a_{2}^{+}}w_{2}^{+}\wedge
w_{1}^{-}+\,\overline{b_{1}^{-}b_{2}^{+}}w_{2}^{+}\wedge
w_{2}^{-})$$ establishes
$$a_{1}^{-}b_{2}^{+}=-\frac{1}{4\overline{\alpha}}\frac{(w_{1}^{-}\wedge
w_{2}^{+},jw_{1}^{-}\wedge jw_{2}^{+})}{(jw_{1}^{+}\wedge
jw_{2}^{+},jw_{1}^{-}\wedge jw_{2}^{-})}$$and
$$b_{1}^{-}a_{2}^{+}=\frac{1}{4\overline{\alpha}}\frac{(w_{1}^{-}\wedge
w_{2}^{+},jw_{1}^{+}\wedge jw_{2}^{-})}{(jw_{1}^{+}\wedge
jw_{2}^{+},jw_{1}^{-}\wedge jw_{2}^{-})}.$$Set
$$Y_{\pm}:=(w_{1}^{\pm}\wedge w_{2}^{\pm},jw_{1}^{\pm}\wedge
jw_{2}^{\pm}),$$respectively, and
\begin{eqnarray*}
Y&:=&(w_{1}^{+}\wedge w_{2}^{-},jw_{1}^{+}\wedge
jw_{2}^{-})+(w_{1}^{+}\wedge w_{2}^{-},jw_{1}^{-}\wedge jw_{2}^{+})\\
& & \mbox{}+(w_{1}^{-}\wedge w_{2}^{+},jw_{1}^{-}\wedge jw_{2}^{+})+
(w_{1}^{-}\wedge w_{2}^{+},jw_{1}^{+}\wedge jw_{2}^{-}).
\end{eqnarray*}
It follows that $\rho_{V} l^{\alpha}$ is orthogonal to $ql^{\alpha}$
if and only if
$$\alpha^{2}Y_{+}^{2}+(\alpha^{2}+\alpha^{-6})Y_{+}Y_{-}+\alpha^{-6}Y_{-}^{2}\pm\frac{\alpha^{3}+\alpha^{-1}}{\overline{\alpha}}Y_{+}Y\pm\frac{\alpha^{-3}+\alpha}{\alpha^{2}\overline{\alpha}}Y_{-}Y+\alpha^{-2}Y^{2}=0,$$
depending on $\alpha$ being real or pure imaginary, respectively.
But, obviously, depending on $\alpha$ being real or pure imaginary,
respectively,
$$\frac{\alpha^{3}+\alpha^{-1}}{\overline{\alpha}}=\pm\,(\alpha^{2}+\alpha^{-2}),\,\,\,\,\,
\frac{\alpha^{-3}+\alpha}{\alpha^{2}\overline{\alpha}}=\pm\,(\alpha^{-6}+\alpha^{-2}).$$
Set $$\hat{Y}:=Y+Y_{+}+Y_{-}=(w_{1}\wedge w_{2},jw_{1}\wedge
jw_{2})\neq 0.$$To complete the proof, we are left to verify that
$$\alpha^{2}Y_{+}^{2}+\alpha^{-6}Y_{-}^{2}+\alpha^{-2}Y^{2}+\alpha^{2}Y_{+}(\hat{Y}-Y_{+})+\alpha^{-2}Y(\hat{Y}-Y)+\alpha^{-6}Y_{-}(\hat{Y}-Y_{-})\neq 0,$$
or, equivalently, as $\hat{Y}$ is not zero, that
$$\alpha^{2}Y_{+}+\alpha^{-2}Y+\alpha^{-6}Y_{-}\neq 0.$$
According to \eqref{eq:vconjvispositive}, for $i\neq k$,
$$(w_{i}^{+}\wedge w_{k}^{-},jw_{i}^{+}\wedge
jw_{k}^{-})\geq 0,$$ and, therefore,
$$Y\geq 0.$$On the other hand, the fact that
$W\cap S_{+}=\{0\}=W\cap S_{-}$, or, equivalently,
$$w_{1}^{+}\wedge w_{2}^{+}\neq 0\neq w_{1}^{-}\wedge w_{2}^{-},$$
intervenes, yet again, to ensure, according to Remark
\ref{inprodvoverlv}, that
$$Y_{\pm}>0. $$ The fact that $\alpha^{2}\in\R\backslash\{0\}$ completes
the proof.
\end{proof}

\begin{Lemma}
The following are equivalent, respectively and pointwise:

$i)$\,\,$W\cap S_{\pm}\neq \{0\};$

$ii)$\,\,$L^{\alpha}\subset\wedge^{2}S_{\pm}\oplus S_{\pm}\wedge
S_{\mp};$

$iii)$\,\,$\wedge^{2}S_{\pm}\cap (L^{\alpha})^{\perp}\neq\{0\}.$
\end{Lemma}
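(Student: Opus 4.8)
The plan is to reduce all three conditions, pointwise, to a single algebraic condition on a frame of $W$, and then read off the pairwise equivalences. Working in a single fibre, I would pick a frame $w_1,w_2$ of the non-$j$-stable $2$-plane $W$ and decompose $w_i=w_i^++w_i^-$ with $w_i^\pm\in S_\pm$, using $\underline{\C}^4=S_+\oplus S_-$. Since $L^\alpha=\tau(\alpha)^{-1}\wedge^2 W$ by \eqref{eq:alphaM}, and $\tau(\alpha)^{-1}$ acts as $\alpha,1,\alpha^{-1}$ on the three summands of the decomposition \eqref{eq:w2C4w2s+w2s-s+s-}, the line $L^\alpha$ is spanned by
$$l^\alpha=\alpha\,w_1^+\wedge w_2^+ + (w_1^+\wedge w_2^- + w_1^-\wedge w_2^+) + \alpha^{-1}\,w_1^-\wedge w_2^-,$$
whose three summands lie in $\wedge^2 S_+$, $S_+\wedge S_-$ and $\wedge^2 S_-$ respectively. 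I expect each of the three conditions, for the top sign, to be equivalent to $w_1^-\wedge w_2^-=0$ (and, for the bottom sign, to $w_1^+\wedge w_2^+=0$).

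For the equivalence of $i)$ and $ii)$ this is immediate. Condition $ii)$, namely $L^\alpha\subset\wedge^2 S_+\oplus S_+\wedge S_-$, holds exactly when the $\wedge^2 S_-$-component of $l^\alpha$ vanishes, that is when $w_1^-\wedge w_2^-=0$ (as $\alpha\neq0$); and by Lemma \ref{bbbbvzbxxxxx5x5x5x5x4x3f} this is precisely $W\cap S_+\neq\{0\}$, which is $i)$. The bottom-sign case is identical with the roles of $+$ and $-$ interchanged.

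The step requiring the orthogonality bookkeeping is $iii)$. Here I would use that $\wedge^2 S_+$ is a null line, and that by \eqref{eq:MwedgehatMistheorthogonaltowedges} (with $W=S_+$, $\hat W=S_-$) the middle summand $S_+\wedge S_-$ is orthogonal to both $\wedge^2 S_+$ and $\wedge^2 S_-$; hence $\wedge^2 S_+$ is orthogonal to all of $\wedge^2 S_+\oplus S_+\wedge S_-$ and pairs only against $\wedge^2 S_-$, the two null lines $\wedge^2 S_+$ and $\wedge^2 S_-$ being dually paired by the non-degenerate metric on their sum. Since $\wedge^2 S_+$ is one-dimensional, $\wedge^2 S_+\cap (L^\alpha)^\perp\neq\{0\}$ is equivalent to $\wedge^2 S_+\subset (L^\alpha)^\perp$, i.e. to $(v_+,l^\alpha)=0$ for a spanning $v_+$ of $\wedge^2 S_+$; and by the orthogonality just described this pairing equals $\alpha^{-1}(v_+,w_1^-\wedge w_2^-)$, which vanishes for all $v_+$ iff $w_1^-\wedge w_2^-=0$. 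Thus $iii)$ too reduces to $w_1^-\wedge w_2^-=0$, completing the three-fold equivalence.

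The argument is essentially bookkeeping, so the only real care needed is in matching the $\pm$ signs correctly across the three conditions — in particular checking, in $iii)$, that the line $\wedge^2 S_+$ detects exactly the $\wedge^2 S_-$-component of $l^\alpha$ (not the $\wedge^2 S_+$-component, against which it is isotropic), so that all three top-sign conditions genuinely land on the same equation $w_1^-\wedge w_2^-=0$.
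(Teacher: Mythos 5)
Your proof is correct and follows essentially the same route as the paper's: both rest on the decomposition \eqref{eq:w2C4w2s+w2s-s+s-}, the fact that $\tau(\alpha)$ preserves its three summands, and the orthogonality relations among them (the middle summand $S_{+}\wedge S_{-}$ is orthogonal to both null lines $\wedge^{2}S_{\pm}$, which pair non-degenerately with each other). The only difference is presentational: you reduce all three conditions to the single frame equation $w_{1}^{\mp}\wedge w_{2}^{\mp}=0$ via Lemma \ref{bbbbvzbxxxxx5x5x5x5x4x3f}, whereas the paper chains $i)\Leftrightarrow ii)\Leftrightarrow iii)$ directly, without choosing a frame, using only that $\wedge^{2}W$ and $L^{\alpha}$ are line bundles.
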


\begin{proof}
Since $W$ has rank $2$, or, equivalently, $\wedge^{2}W$ has rank
$1$, it is clear that $W\cap S_{\pm}\neq \{0\}$ if and only if
$\wedge^{2}W\subset\wedge^{2}S_{\pm}\oplus S_{\pm}\wedge S_{\mp}$,
respectively. The fact that $\tau(\alpha)$ preserves
$\wedge^{2}S_{\pm}$ and $S_{\pm}\wedge S_{\mp}$ establishes then the
equivalence of $i)$ and $ii)$. The equivalence of $ii)$ and $iii)$
is obvious, in view of the complementarity of $S_{+}$ and $S_{-}$ in
$\C^{4}$. Lastly, it is obvious that a subbundle of
$\wedge^{2}\underline{\C}^{4}$ is orthogonal to $\wedge^{2}S_{\pm}$
if and only if its projection onto  $\wedge^{2}S_{\mp}$,
respectively, with respect to the decomposition
\eqref{eq:w2C4w2s+w2s-s+s-}, is zero. The fact that $L^{\alpha}$ is
a line bundle completes the proof.
\end{proof}

We refer to an untwisted B\"{a}cklund transformation of parameters
$\alpha,L^{\alpha}$ for which, locally,
$$L^{\alpha}\nsubseteq\wedge^{2}S_{+}\oplus S_{+}\wedge S_{-}\,\,\,\,\mathrm{and}\,\,\,\,L^{\alpha}\nsubseteq \wedge^{2}S_{-}\oplus S_{+}\wedge S_{-},$$
as \textit{regular}; as well as to a constrained Willmore Darboux
transformation of parameters $\rho,T$ for which, locally,
$$W\cap S_{+}=\{0\}=W\cap S_{-},$$
for $W$ the eigenspace of $T-S$ associated to the eigenvalue
$\sqrt{\rho^{-1}-1}$. The regularity of an untwisted B\"{a}cklund
transformation of parameters $\alpha,L^{\alpha}$ with $\alpha^{2}$
real is equivalent to the regularity of the corresponding Darboux
transformation of parameters $\rho,T$ with $\rho>1$. The fact that
regularity can, equivalently, be characterized by
$$\wedge^{2}S_{+}\cap (L^{\alpha})^{\perp}=\{0\}=\wedge^{2}S_{-}\cap
(L^{\alpha})^{\perp}$$ makes clear that it is an open condition on
the points of $M$.

According to Lemmas \ref{5esdfcgvhjj766wrdxu7892q.'v} and \ref
{qnotorthtorhoq}, we have:

\begin{prop}\label{D-BT+BTforalpha2real}
An untwisted B\"{a}cklund transformation of parameters
$\alpha,L^{\alpha}$ with $\alpha^{2}$ real defines a twisted
B\"{a}cklund transformation of parameters $\alpha,L^{\alpha}$ if and
only if it is regular.
\end{prop}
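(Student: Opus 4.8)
The plan is to reduce the statement entirely to the two lemmas immediately preceding it, by first recasting both the regularity hypothesis and the defining conditions of the twisted Bäcklund transformation in terms of the intersections $W\cap S_{+}$ and $W\cap S_{-}$, where $W$ denotes the eigenspace of $T-S$ associated to $\sqrt{\rho^{-1}-1}$ for the Darboux parameters $\rho,T$ (with $\rho>1$) corresponding to $\alpha,L^{\alpha}$ under the correspondence of Section \ref{BTvsDT}, which is available precisely because $\alpha^{2}$ is real.

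First I would recall that the twisted Bäcklund transformation of parameters $\alpha,L^{\alpha}$ is defined exactly when condition \eqref{eq:BTparameterscondition} holds, i.e.\ when $\rho_{V}L^{\alpha}\cap(L^{\alpha})^{\perp}=\{0\}$ and $\rho_{V}\tilde{L}^{\alpha}\cap(\tilde{L}^{\alpha})^{\perp}=\{0\}$; as observed in Section \ref{sec:dress}, each of these is equivalent to the corresponding non-orthogonality $(\rho_{V}l,l)\neq 0$, since the lines involved are null. On the regularity side, by the equivalence lemma above (items $i)$ and $ii)$), the regularity of the transformation, namely $L^{\alpha}\nsubseteq\wedge^{2}S_{+}\oplus S_{+}\wedge S_{-}$ and $L^{\alpha}\nsubseteq\wedge^{2}S_{-}\oplus S_{+}\wedge S_{-}$, is exactly the condition $W\cap S_{+}=\{0\}=W\cap S_{-}$.

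For the forward implication (regular $\Rightarrow$ twisted transformation defined) I would assume $W\cap S_{+}=\{0\}=W\cap S_{-}$. The contrapositive of Lemma \ref{5esdfcgvhjj766wrdxu7892q.'v} then yields that $L^{\alpha}$ is non-orthogonal to $\rho_{V}L^{\alpha}$, which is the first half of \eqref{eq:BTparameterscondition}; and Lemma \ref{qnotorthtorhoq} applies verbatim to give that $\tilde{L}^{\alpha}=q_{\overline{\alpha}\,^{-1},\overline{L^{\alpha}}}(\alpha)L^{\alpha}$ is non-orthogonal to $\rho_{V}\tilde{L}^{\alpha}$, which is the second half. Hence $\alpha,L^{\alpha}$ are twisted Bäcklund transformation parameters. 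For the converse, if $\alpha,L^{\alpha}$ do define a twisted transformation, then in particular $\rho_{V}L^{\alpha}\cap(L^{\alpha})^{\perp}=\{0\}$, so $L^{\alpha}$ is non-orthogonal to $\rho_{V}L^{\alpha}$, and Lemma \ref{5esdfcgvhjj766wrdxu7892q.'v} (again through its contrapositive) forces $W\cap S_{+}=\{0\}=W\cap S_{-}$, i.e.\ regularity. All of these are open conditions and the theory is local, so these pointwise equivalences hold on the non-empty open set to which we restrict.

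The argument is short precisely because the genuine content has been absorbed into Lemmas \ref{5esdfcgvhjj766wrdxu7892q.'v} and \ref{qnotorthtorhoq}, the latter being a substantial pointwise computation. The only point requiring care, and the mild asymmetry worth flagging, is that the forward direction needs \emph{both} halves of \eqref{eq:BTparameterscondition} (hence both lemmas), whereas the converse extracts regularity from the first half alone; so the one thing I would check carefully is that Lemma \ref{qnotorthtorhoq} exactly supplies the second condition under the regularity hypothesis, which it does by construction.
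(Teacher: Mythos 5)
Your proposal is correct and coincides with the paper's own argument: the paper's proof of Proposition \ref{D-BT+BTforalpha2real} consists precisely of invoking Lemmas \ref{5esdfcgvhjj766wrdxu7892q.'v} and \ref{qnotorthtorhoq}, and your write-up simply makes explicit how they combine — the contrapositive of the first lemma handling the equivalence of regularity with the first half of \eqref{eq:BTparameterscondition} (and hence the converse direction), the second lemma supplying the remaining non-orthogonality of $\tilde{L}^{\alpha}$ and $\rho_{V}\tilde{L}^{\alpha}$ in the forward direction. The asymmetry you flag is real and is exactly why the paper needs both lemmas rather than just the first.
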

Hence, following Theorem \ref{DvsDBT}:
\begin{prop}
Twisted B\"{a}cklund transformation of parameters $\alpha,
L^{\alpha}$ with $\alpha^{2}$ real, $\tau(\alpha)L^{\alpha}$
non-real and $r(0)S_{+}$ non-j-stable is equivalent to regular
constrained Willmore Darboux transformation of parameters $\rho,T$
with $\rho>1$.
\end{prop}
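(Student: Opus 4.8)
The plan is to obtain this statement as a direct assembly of three facts already in hand: the parameter dictionary of Theorem~\ref{DvsDBT} between Darboux parameters $\rho,T$ with $\rho>1$ and untwisted B\"{a}cklund parameters $\alpha,L^{\alpha}$ with $\alpha^{2}$ real; the regularity criterion of Proposition~\ref{D-BT+BTforalpha2real}, which says that an untwisted B\"{a}cklund transformation of parameters $\alpha,L^{\alpha}$ with $\alpha^{2}$ real defines a twisted B\"{a}cklund transformation of the same parameters precisely when it is regular; and the coincidence, when both are defined, of the twisted and untwisted transforms of a common pair of parameters. First I would observe that the standing hypotheses in the statement---$\tau(\alpha)L^{\alpha}$ non-real and $r(0)S_{+}$ non-$j$-stable---are exactly the non-degeneracy conditions under which $r=r_{\alpha,L^{\alpha}}$, and hence the untwisted transform, is defined; so a twisted B\"{a}cklund transformation meeting these hypotheses is in particular given by a pair $\alpha,L^{\alpha}$ that also carries an untwisted transform, which is what lets the two transformation theories be compared at all. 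The goal is then to show that the two parameter classes correspond under the dictionary and that the transforms they produce coincide.

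For the first direction I would start from a twisted B\"{a}cklund transformation of parameters $\alpha,L^{\alpha}$ with $\alpha^{2}$ real under the stated hypotheses. Since the untwisted transform of these parameters is defined, Proposition~\ref{D-BT+BTforalpha2real} forces that untwisted transform to be regular, and the coincidence theorem identifies the twisted transform with it. Applying Theorem~\ref{DvsDBT} then converts this regular untwisted B\"{a}cklund transformation with $\alpha^{2}$ real into the Darboux transformation of the corresponding parameters $\rho,T$ with $\rho>1$; and because regularity of an untwisted B\"{a}cklund transformation with $\alpha^{2}$ real is equivalent to regularity of the Darboux transformation it corresponds to (as recorded just before Proposition~\ref{D-BT+BTforalpha2real}), the resulting Darboux transformation is regular. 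Conversely, a regular Darboux transformation with $\rho>1$ yields, via Theorem~\ref{DvsDBT}, a regular untwisted B\"{a}cklund transformation with $\alpha^{2}$ real, and Proposition~\ref{D-BT+BTforalpha2real} upgrades regularity to the existence of the twisted transform of the same parameters, the two agreeing by the coincidence theorem.

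The one point requiring genuine care---rather than new argument---is the bookkeeping of the non-degeneracy conditions across the dictionary. I must check that the $L^{\alpha}$ produced by the parameter correspondence of Theorem~\ref{DvsDBT} does satisfy $\tau(\alpha)L^{\alpha}$ non-real and $r(0)S_{+}=\hat{S}_{+}$ non-$j$-stable (both of which were in fact verified in the construction surrounding \eqref{eq:r0vsT(veryclose)}), and, in the opposite direction, that regularity is an open condition matched on the two sides via Lemma~\ref{qnotorthtorhoq} and its companion. I expect this matching of standing hypotheses and of regularity, and not any substantial fresh computation, to be the only place where attention is needed; once it is in place the equivalence is the tautological composition of Theorem~\ref{DvsDBT}, Proposition~\ref{D-BT+BTforalpha2real}, and the twisted-untwisted coincidence.
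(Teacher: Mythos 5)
Your proposal is correct and follows the paper's own route exactly: the paper derives this proposition by combining Theorem \ref{DvsDBT}, Proposition \ref{D-BT+BTforalpha2real}, and the twisted--untwisted coincidence theorem, precisely the three ingredients you assemble. Your added care about matching the standing hypotheses ($\tau(\alpha)L^{\alpha}$ non-real, $r(0)S_{+}$ non-$j$-stable) and the regularity correspondence across the parameter dictionary is exactly the bookkeeping the paper records around \eqref{eq:r0vsT(veryclose)} and just before Proposition \ref{D-BT+BTforalpha2real}.
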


\appendix\chapter{Hopf differential and
umbilics}\label{appHopf+umbilics}

\markboth{\tiny{A. C. QUINTINO}}{\tiny{CONSTRAINED WILLMORE
SURFACES}}

According to \eqref{eq:komega}, given $z$ and $\omega$ holomorphic
charts of $(M,\mathcal{C}_{\Lambda})$, $k^{z}$ vanishes if and only
if $k^{\omega}$ does.

\begin{prop}\label{umbilicvshopfprop}
Suppose $\Lambda\subset\underline{\R}^{4,1}$ is an isothermic
surface in $3$-space. Then, given $v_{\infty}\in\R^{4,1}$
light-like, the umbilic points of the surface in $S_{v_{\infty}}$
defined by $\Lambda$ are the points at which the Hopf differential
of $\Lambda$ vanishes.
\end{prop}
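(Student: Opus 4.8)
The plan is to pass to the Euclidean model $S_{v_\infty}\cong\R^3$ (which is what a light-like $v_\infty$ produces) and to reduce the statement to a comparison between the Hopf differential and the trace-free part of the second fundamental form of the surface $\sigma_\infty:M\to S_{v_\infty}$ defined by $\Lambda$. First I would fix a holomorphic chart $z=x+iy$ of $(M,\mathcal{C}_\Lambda)$; by \eqref{eq:komega} the vanishing locus of $k^z$ does not depend on this choice, so it suffices to argue pointwise for one such $z$. From Section \ref{isoandhopf}, combining \eqref{eq:kzpiSperpsigmainfzz}, \eqref{piSperppiNinf} and $\pi_{N_\infty}(\sigma_\infty)_{zz}=\Pi_\infty(\delta_z,\delta_z)$ (cf. Proposition \ref{identofnormals}), one has $k^z=\lambda\,\mathcal{Q}(\Pi_\infty(\delta_z,\delta_z))$ for a never-zero real function $\lambda$ and the isometric isomorphism $\mathcal{Q}:N_\infty\to S^\perp$. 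Since $\lambda\neq 0$ and $\mathcal{Q}$ is an isomorphism, $k^z=0$ at a point if and only if $\Pi_\infty(\delta_z,\delta_z)=0$ there.

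Second, I would identify $\Pi_\infty(\delta_z,\delta_z)$ with the $(2,0)$-component of the trace-free second fundamental form $\Pi^0_\infty=\Pi_\infty-g_\infty\otimes\mathcal{H}_\infty$. As $z$ is conformal, $T^{1,0}M$ is isotropic, so $g_\infty(\delta_z,\delta_z)=0$ and hence $\Pi^0_\infty(\delta_z,\delta_z)=\Pi_\infty(\delta_z,\delta_z)$; thus the previous step gives $k^z=0\iff\Pi^0_\infty(\delta_z,\delta_z)=0$.

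The remaining, and main, point is to show that the vanishing of the single component $\Pi^0_\infty(\delta_z,\delta_z)$ at a point is equivalent to umbilicity there. Working in codimension $1$, fix a local unit normal $\xi$; then $N_\infty$ is a line bundle and, by \eqref{eq:secffvsshapeop}, $\Pi^0_\infty(X,Y)=g_\infty(A^0 X,Y)\,\xi$, where $A^0:=A^\xi-H^\xi I$ is the trace-free part of the shape operator. By \eqref{eq:mcineucld3}, a point is umbilic exactly when the principal curvatures coincide, i.e. when $A^0=0$, which settles one implication. For the converse, I use that $A^0$ is symmetric and trace-free, hence anti-commutes with the canonical complex structure $J$ — exactly as established for $A^\nu_0$ in the proof of Theorem \ref{CWeq} — so that $A^0$ interchanges the eigenbundles $T^{1,0}M$ and $T^{0,1}M$. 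Writing $A^0\delta_z=\mu\,\delta_{\bar z}$, one gets $\Pi^0_\infty(\delta_z,\delta_z)=\mu\,g_\infty(\delta_{\bar z},\delta_z)\,\xi$; since $g_\infty$ is positive definite, $g_\infty(\delta_z,\delta_{\bar z})$ is never zero, so $\Pi^0_\infty(\delta_z,\delta_z)=0$ forces $\mu=0$, i.e. $A^0\delta_z=0$. Reality of $A^0$ then gives $A^0\delta_{\bar z}=\bar\mu\,\delta_z=0$, whence $A^0=0$ on $TM^{\C}=\langle\delta_z,\delta_{\bar z}\rangle$ and the point is umbilic.

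Chaining the three equivalences yields $k^z=0\iff A^0=0\iff$ the point is umbilic. I would remark that this argument never actually invokes isothermicity: that hypothesis merely fixes the context in which umbilics are discussed (and used in Section \ref{sec:CMC}), while the restriction to light-like $v_\infty$ only serves to realise $S_{v_\infty}$ as Euclidean $3$-space, the home of the classical umbilic notion — conformal invariance of $\Pi^0$ shows the locus is independent of $v_\infty$ in any case. The only genuinely delicate step is the last equivalence, and its difficulty is entirely contained in the anti-commutation $A^0J=-JA^0$, which is already available from the proof of Theorem \ref{CWeq}.
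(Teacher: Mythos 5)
Your proof is correct, but its decisive step is genuinely different from the paper's. The paper uses the isothermic hypothesis from the outset: since $\sigma_\infty$ is isothermic (Theorem \ref{isoLambequivisosigmainf}), it admits conformal curvature line coordinates $x,y$, in which the shape operator diagonalises with eigenvalues the principal curvatures $k_1,k_2$; combining $\Pi_\infty(\delta_x,\delta_y)=0$ with Lemma \ref{hopfdiffervsclassical}, the paper then derives the quantitative identity $e^{u}(k_{1}-k_{2})=4(k^{z},\mathcal{Q}\xi)$ for that particular chart $z=x+iy$, so that vanishing of $k^{z}$ is literally the equality $k_{1}=k_{2}$, and chart-independence of the vanishing locus, via \eqref{eq:komega}, finishes the argument. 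You share the same preliminary input (the relation $k^{z}=\lambda\,\mathcal{Q}(\Pi_\infty(\delta_z,\delta_z))$ from \eqref{eq:kzpiSperpsigmainfzz} and \eqref{piSperppiNinf}), but you work in an arbitrary holomorphic chart and replace the curvature-line computation by the pointwise algebraic fact that the symmetric trace-free operator $A^{0}=A^{\xi}-H^{\xi}I$ anti-commutes with $J$ (exactly as for $A^{\nu}_{0}$ in the proof of Theorem \ref{CWeq}), hence is determined by $g_\infty(A^{0}\delta_z,\delta_z)$, which the positive definiteness of $g_\infty$ turns into the equivalence $\Pi^{0}_\infty(\delta_z,\delta_z)=0\iff A^{0}=0$. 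What your route buys: it makes transparent that isothermicity is never needed (in the paper it only serves to produce the special coordinates) and that the restriction to light-like $v_\infty$ is inessential, by conformal invariance of $\Pi^{0}$ --- so the equivalence of the two loci holds for any surface in $3$-space and any ambient space-form. What the paper's route buys: the sharper identity $e^{u}(k_{1}-k_{2})=4(k^{z},\mathcal{Q}\xi)$, recorded as a lemma, which exhibits the Hopf differential as an explicit measure of the difference of principal curvatures rather than merely matching vanishing sets.
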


To prove the proposition, we start by establishing a relation
between the Hopf differential of $\Lambda$ and the (classical) Hopf
differential of $\sigma^{z}$:

\begin{Lemma}\label{hopfdiffervsclassical}
Let $v_{\infty}\in\R^{n+1,1}$ be non-zero, $\sigma_{\infty}$ be the
surface defined by $\Lambda$ in $S_{v_{\infty}}$, and $\xi$ be a
normal vector field to $\sigma_{\infty}$. Let $z$ be a holomorphic
chart of $M$. Then
$$(\sigma_{zz}^{z},\xi)=\lambda(k^{z},\mathcal{Q}\xi),$$for $\mathcal{Q}:N_{\infty}\rightarrow S^{\perp}$ the
isometry defined in Section \ref{normalbundle} and
$\lambda\in\Gamma(\underline{\R})$ defined by
$\sigma^{z}=\lambda\sigma_{\infty}$.
\end{Lemma}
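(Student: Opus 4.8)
The plan is to reduce the pairing $(\sigma^{z}_{zz},\xi)$ to a pairing taking place entirely inside the normal bundle $N_{\infty}$, and then to transport it isometrically into $S^{\perp}$ via $\mathcal{Q}$, where the Hopf differential $k^{z}$ lives. First I would differentiate the defining relation $\sigma^{z}=\lambda\sigma_{\infty}$ twice with respect to $z$, obtaining
$$\sigma^{z}_{zz}=\lambda(\sigma_{\infty})_{zz}+2\lambda_{z}(\sigma_{\infty})_{z}+\lambda_{zz}\sigma_{\infty},$$
exactly the identity used in the computation preceding \eqref{eq:kzpiSperpsigmainfzz}.

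Pairing with a normal field $\xi\in\Gamma(N_{\infty})$, the last two terms drop out: since $\xi$ is normal to $\sigma_{\infty}$ it is orthogonal to $(\sigma_{\infty})_{z}\in\Gamma(d\sigma_{\infty}(TM)^{\C})$ and to $\sigma_{\infty}$ itself, whence $(\sigma^{z}_{zz},\xi)=\lambda\,((\sigma_{\infty})_{zz},\xi)$. Next I would decompose $(\sigma_{\infty})_{zz}$ according to the orthogonal splitting $\underline{\R}^{n+1,1}=d\sigma_{\infty}(TM)\oplus N_{\infty}\oplus\langle v_{\infty},\sigma_{\infty}\rangle$ used in Proposition \ref{identofnormals}. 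Because $\xi$ is orthogonal to $d\sigma_{\infty}(TM)$ and, being a section of $N_{\infty}\subset\langle v_{\infty},\sigma_{\infty}\rangle^{\perp}$, also to $\langle v_{\infty},\sigma_{\infty}\rangle$, only the normal component survives, giving $((\sigma_{\infty})_{zz},\xi)=(\pi_{N_{\infty}}(\sigma_{\infty})_{zz},\xi)$, where $\pi_{N_{\infty}}(\sigma_{\infty})_{zz}=\Pi_{\infty}(\delta_{z},\delta_{z})$ is precisely the classical Hopf differential of $\sigma_{\infty}$.

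Finally I would move this pairing into $S^{\perp}$. Since $\mathcal{Q}\colon N_{\infty}\to S^{\perp}$ is an isometric isomorphism (Proposition \ref{identofnormals}), we have $(\pi_{N_{\infty}}(\sigma_{\infty})_{zz},\xi)=(\mathcal{Q}\,\pi_{N_{\infty}}(\sigma_{\infty})_{zz},\mathcal{Q}\xi)$, and by \eqref{piSperppiNinf} the left factor equals $\pi_{S^{\perp}}(\sigma_{\infty})_{zz}$; the normalization relation \eqref{eq:kzpiSperpsigmainfzz}, which identifies $k^{z}$ with $\lambda\,\pi_{S^{\perp}}(\sigma_{\infty})_{zz}$, then converts this pairing into one involving $k^{z}$ and $\mathcal{Q}\xi$, yielding the claimed relation. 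The whole argument is a short chain of substitutions, so there is no structural difficulty; the only genuine care is the bookkeeping of the scalar $\lambda$, which enters both through the second derivative of $\sigma^{z}=\lambda\sigma_{\infty}$ and through the normalization \eqref{eq:kzpiSperpsigmainfzz} of $k^{z}$. The main obstacle, such as it is, is thus simply to keep track of that factor correctly as one passes through the isometric transport by $\mathcal{Q}$, and one should double-check the exact power of $\lambda$ that survives this cancellation against the stated form of the identity.
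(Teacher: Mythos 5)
You follow exactly the same route as the paper: differentiate $\sigma^{z}=\lambda\sigma_{\infty}$ twice, pair with $\xi$, kill the lower-order terms by orthogonality, pass to $\pi_{N_{\infty}}(\sigma_{\infty})_{zz}$, transport by the isometry $\mathcal{Q}$ via \eqref{piSperppiNinf}, and finish with \eqref{eq:kzpiSperpsigmainfzz}. The problem is that the one step you defer --- ``double-check the exact power of $\lambda$'' --- is precisely where the content of the lemma lies, and carrying it out shows that this chain does \emph{not} produce the stated identity. Your substitutions give $(\sigma^{z}_{zz},\xi)=\lambda\,(\pi_{S^{\perp}}(\sigma_{\infty})_{zz},\mathcal{Q}\xi)$, while \eqref{eq:kzpiSperpsigmainfzz} reads $k^{z}=\lambda\,\pi_{S^{\perp}}(\sigma_{\infty})_{zz}$, i.e. $\pi_{S^{\perp}}(\sigma_{\infty})_{zz}=\lambda^{-1}k^{z}$; the two factors of $\lambda$ cancel and you end with $(\sigma^{z}_{zz},\xi)=(k^{z},\mathcal{Q}\xi)$, not $\lambda\,(k^{z},\mathcal{Q}\xi)$.

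The $\lambda$-free identity is in fact the correct one, as an independent check confirms: by \eqref{eq:sigmazzck}, $\sigma^{z}_{zz}=-\tfrac{1}{2}\,c^{z}\sigma^{z}+k^{z}$, and $(\sigma^{z},\xi)=\lambda(\sigma_{\infty},\xi)=0$, so $(\sigma^{z}_{zz},\xi)=(k^{z},\xi)$; moreover $\mathcal{Q}\xi-\xi=(\xi,\mathcal{H}_{\infty})\sigma_{\infty}$ is orthogonal to $k^{z}$, since $k^{z}$ takes values in (the complexification of) $S^{\perp}$ and $\sigma_{\infty}$ in $S$, whence $(k^{z},\xi)=(k^{z},\mathcal{Q}\xi)$. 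So the factor $\lambda$ in the statement is spurious: it arises from counting $\lambda$ twice, once in $(\sigma^{z}_{zz},\xi)=\lambda((\sigma_{\infty})_{zz},\xi)$ and once more when $k^{z}$ is substituted for $\pi_{S^{\perp}}(\sigma_{\infty})_{zz}$ as if they were equal, forgetting \eqref{eq:kzpiSperpsigmainfzz}. (The paper's own proof, read literally, commits the same slip, so this is a defect of the statement rather than of your strategy; the discrepancy is harmless downstream, since $\lambda$ is never-zero and the lemma is used only through vanishing statements and through the companion lemma on $k_{1}-k_{2}$.) Your instinct to distrust the power of $\lambda$ was exactly right, but a proof has to resolve it: as written, your argument establishes the $\lambda$-free relation and therefore does not establish the lemma as stated.
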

\begin{proof}
Recall, yet again, that $\sigma_{\infty}^{*}TS_{v_{\infty}}$
consists of the orthogonal complement in $\underline{\R}^{n+1,1}$ of
the non-degenerate bundle
$\langle\sigma_{\infty},v_{\infty}\rangle$. Let $\pi _{N_{\infty}}$
denote the orthogonal projection of $\underline
{\mathbb{R}}^{n+1,1}=d\sigma _{\infty}(TM)\oplus N _{\infty}\oplus
\langle v_{\infty},\sigma _{\infty}\rangle $ onto $N_{\infty}$.
Following equation \eqref{piSperppiNinf}, we have
$((\sigma_{\infty})_{zz},\xi)=(\pi_{N_{\infty}}(\sigma_{\infty})_{zz},\xi)=(\pi_{S^{\perp}}(\sigma_{\infty})_{zz},\mathcal{Q}\xi).$
On the other hand, writing $\sigma^{z}=\lambda\sigma_{\infty}$ with
$\lambda\in\Gamma(\underline{\R})$, we have
$\sigma_{zz}^{z}=\lambda(\sigma_{\infty})_{zz}+2\lambda
_{z}(\sigma_{\infty})_{z}+\lambda_{zz}\sigma_{\infty}$ and,
therefore,
\begin{equation}\label{eq:8765r4edfghjkl?76543ewsdcvbnm}
(\sigma_{zz}^{z},\xi)=\lambda ((\sigma_{\infty})_{zz},\xi),
\end{equation}
which, together with equation \eqref{eq:kzpiSperpsigmainfzz},
completes the proof.
\end{proof}

Now suppose $\Lambda\subset\underline{\R}^{4,1}$ is an isothermic
surface in $3$-space. Let $v_{\infty}\in\R^{4,1}$ be light-like and
$\sigma_{\infty}$ be the surface in $S_{v_{\infty}}$ defined by
$\Lambda$. Cf. Theorem \ref{isoLambequivisosigmainf},
$\sigma_{\infty}$ is isothermic. Let $x,y$ be conformal curvature
line coordinates of $\sigma_{\infty}$. Fix $\xi\in
\Gamma(N_{\infty})$ unit. Let $A_{\infty}^{\xi}$ denote the shape
operator of $\sigma_{\infty}$ with respect to $\xi$. Then
$$A^{\xi}_{\infty}(\delta_{x})=k_{1}\delta_{x},\,\,\,\,A^{\xi}_{\infty}(\delta_{y})=k_{2}\delta_{y},$$
for some $k_{1},k_{2}\in\Gamma(\underline{\R})$, the principal
curvatures of $\sigma_{\infty}$, locally, in the domain of the chart
$z:=x+iy$ of $M$. In view of the conformality of the coordinates $x$
and $y$, $z$ is, up to a change of orientation in $M$, a holomorphic
chart of $(M,\mathcal{C}_{\Lambda})$. In these conditions:
\begin{Lemma}
The (local) principal curvatures, $k_{1}$ and $k_{2}$, of
$\sigma_{\infty}$ relate to the Hopf differential of $\Lambda$ by
$$e^{u}(k_{1}-k_{2})=4(k^{z},\mathcal{Q}\xi),$$with
$u\in\Gamma(\underline{\R})$.
\end{Lemma}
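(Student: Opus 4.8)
The plan is to trace the Hopf differential $k^{z}\in\Gamma(S^{\perp})$ back to the second fundamental form $\Pi_{\infty}$ of $\sigma_{\infty}$ and then read off the principal curvatures $k_{1},k_{2}$. First I would pin down the conformal bookkeeping. Since $x,y$ are conformal (curvature line) coordinates, the induced metric $g_{\infty}=g_{\sigma_{\infty}}$ is conformally equivalent to $g_{z}=dx^{2}+dy^{2}$, so I write $g_{\infty}=e^{2u}g_{z}$ for some $u\in\Gamma(\underline{\R})$. The normalized section $\sigma^{z}$, characterized by $g_{\sigma^{z}}=g_{z}$, then satisfies $\sigma^{z}=\lambda\sigma_{\infty}$ with $\lambda=e^{-u}$: from $\sigma^{z}=\lambda\sigma_{\infty}$ and the nullity of $\sigma_{\infty}$ one has $g_{z}=g_{\sigma^{z}}=\lambda^{2}g_{\sigma_{\infty}}=\lambda^{2}g_{\infty}=\lambda^{2}e^{2u}g_{z}$, whence $\lambda^{2}e^{2u}=1$ (taking $\sigma^{z},\sigma_{\infty}$ in the same component of $\mathcal{L}$ so that $\lambda>0$). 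In particular $g_{\infty}(\delta_{x},\delta_{x})=g_{\infty}(\delta_{y},\delta_{y})=e^{2u}=\lambda^{-2}$.

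Next I would compute $(\Pi_{\infty}(\delta_{z},\delta_{z}),\xi)$. Expanding $\delta_{z}=\tfrac12(\delta_{x}-i\delta_{y})$ gives $\Pi_{\infty}(\delta_{z},\delta_{z})=\tfrac14\bigl(\Pi_{\infty}(\delta_{x},\delta_{x})-2i\,\Pi_{\infty}(\delta_{x},\delta_{y})-\Pi_{\infty}(\delta_{y},\delta_{y})\bigr)$, and the curvature line condition kills the mixed term, $\Pi_{\infty}(\delta_{x},\delta_{y})=0$. Pairing with $\xi$, invoking \eqref{eq:secffvsshapeop} together with $A^{\xi}_{\infty}\delta_{x}=k_{1}\delta_{x}$ and $A^{\xi}_{\infty}\delta_{y}=k_{2}\delta_{y}$, I get $(\Pi_{\infty}(\delta_{x},\delta_{x}),\xi)=g_{\infty}(A^{\xi}_{\infty}\delta_{x},\delta_{x})=k_{1}e^{2u}$ and likewise $(\Pi_{\infty}(\delta_{y},\delta_{y}),\xi)=k_{2}e^{2u}$, so that $(\Pi_{\infty}(\delta_{z},\delta_{z}),\xi)=\tfrac14 e^{2u}(k_{1}-k_{2})$.

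Finally I would relate this to $k^{z}$. Combining \eqref{eq:kzpiSperpsigmainfzz}, which gives $k^{z}=\lambda\,\pi_{S^{\perp}}(\sigma_{\infty})_{zz}$, with \eqref{piSperppiNinf}, which identifies $\pi_{S^{\perp}}(\sigma_{\infty})_{zz}=\mathcal{Q}(\pi_{N_{\infty}}(\sigma_{\infty})_{zz})=\mathcal{Q}(\Pi_{\infty}(\delta_{z},\delta_{z}))$, and using that $\mathcal{Q}$ is an isometry, I obtain $(k^{z},\mathcal{Q}\xi)=\lambda(\Pi_{\infty}(\delta_{z},\delta_{z}),\xi)=\lambda\cdot\tfrac14 e^{2u}(k_{1}-k_{2})=\tfrac14 e^{u}(k_{1}-k_{2})$, using $\lambda=e^{-u}$. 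This is exactly $e^{u}(k_{1}-k_{2})=4(k^{z},\mathcal{Q}\xi)$. (Equivalently, one may route the argument through Lemma \ref{hopfdiffervsclassical}, after computing $(\sigma^{z}_{zz},\xi)=\lambda(\Pi_{\infty}(\delta_{z},\delta_{z}),\xi)$ from the expansion of $\sigma^{z}_{zz}$ recorded there.)

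The computation is routine once the conformal factors are fixed; the one delicate point — which I expect to be the main obstacle — is precisely this bookkeeping, namely keeping straight the three scales $g_{\infty}=e^{2u}g_{z}$, $\lambda=e^{-u}$, and the extra factor $\lambda$ coming from \eqref{eq:kzpiSperpsigmainfzz}, so that the net power of $e^{u}$ comes out to one rather than two. Since $e^{u}>0$, the resulting identity shows that $(k^{z},\mathcal{Q}\xi)$ vanishes precisely at the umbilics $k_{1}=k_{2}$; and because we are in codimension one, $S^{\perp}=\langle\mathcal{Q}\xi\rangle$ is a real line bundle with $\mathcal{Q}\xi$ a unit section, so $k^{z}=0$ if and only if $(k^{z},\mathcal{Q}\xi)=0$ — which is exactly what Proposition \ref{umbilicvshopfprop} requires.
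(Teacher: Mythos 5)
Your proof is correct, and its skeleton matches the paper's: use the curvature-line condition \eqref{eq:Pidiagonal} to kill the mixed term, use \eqref{eq:secffvsshapeop} and the diagonalized shape operator so that $4(\Pi_{\infty}(\delta_{z},\delta_{z}),\xi)$ equals a conformal factor times $k_{1}-k_{2}$, and then convert to $(k^{z},\mathcal{Q}\xi)$ via $\sigma^{z}=\lambda\sigma_{\infty}$ and the isometry property of $\mathcal{Q}$. The one substantive difference is the conversion step: the paper passes through $((\sigma_{\infty})_{zz},\xi)=\lambda^{-1}(\sigma^{z}_{zz},\xi)$ and then cites Lemma \ref{hopfdiffervsclassical} as stated, namely $(\sigma^{z}_{zz},\xi)=\lambda(k^{z},\mathcal{Q}\xi)$, whereas you inline the conversion directly from \eqref{eq:kzpiSperpsigmainfzz} and \eqref{piSperppiNinf}. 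This difference is not cosmetic: your route identifies $e^{u}=\lambda^{-1}$ (in your convention $g_{\infty}=e^{2u}g_{z}$), while the paper's identifies $e^{u}=g_{\infty}(\delta_{x},\delta_{x})=\lambda^{-2}$, and at a non-umbilic point these cannot both satisfy the displayed identity — and it is yours that does. Indeed, combining the three identities actually established inside the proof of Lemma \ref{hopfdiffervsclassical} gives $(\sigma^{z}_{zz},\xi)=(k^{z},\mathcal{Q}\xi)$ with no factor of $\lambda$, so the $\lambda$ in that lemma's statement (and hence in the paper's identification of $u$ here) is spurious; a cylinder of radius $R\neq 1$ in $\R^{3}$, where $\lambda=1/R$, $k_{1}-k_{2}=1/R$ and $4(k^{z},\mathcal{Q}\xi)=1$, confirms that $e^{u}=\lambda^{-1}$ is the correct factor while $e^{u}=\lambda^{-2}$ fails. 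Since the lemma's statement leaves $u$ unspecified, and since Proposition \ref{umbilicvshopfprop} only needs that $k_{1}-k_{2}$ and $(k^{z},\mathcal{Q}\xi)$ vanish simultaneously, both versions serve the intended purpose; but your bookkeeping is the internally consistent one, and what you flagged as the delicate point of the argument is precisely where the paper's own chain slips.
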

\begin{proof}
The fact that $x,y$ are conformal,
$dx^{2}+dy^{2}\in\mathcal{C}_{\Lambda}\ni g_{\infty},$ establishes,
in particular,
$$g_{\infty}(\delta_{x},\delta_{x})=e^{u}=g_{\infty}(\delta_{y},\delta_{y}),$$
for some $u\in\Gamma(\underline{\R})$, and, therefore,
$$g_{\infty}(A^{\xi}_{\infty}(\delta_{x}),\delta_{x})-g_{\infty}(A^{\xi}_{\infty}(\delta_{y}),\delta_{y})=e^{u}(k_{1}-k_{2}).$$
On the other hand, according to equation \eqref{eq:secffvsshapeop},
$$g_{\infty}(A^{\xi}_{\infty}(\delta_{x}),\delta_{x})-g_{\infty}(A^{\xi}_{\infty}(\delta_{y}),\delta_{y})=(\Pi_{\infty}(\delta_{x},\delta_{x}),\xi)-(\Pi_{\infty}(\delta_{y},\delta_{y}),\xi).$$
Since the conformal coordinates $x,y$ are curvature line, it
follows, by equation \eqref{eq:Pidiagonal}, that
$$e^{u}(k_{1}-k_{2})=4(\Pi_{\infty}(\delta_{z},\delta_{z}),\xi)=4(\pi_{N_{\infty}}(\sigma_{\infty})_{zz},\xi),$$
for $\pi_{N_{\infty}}$ the orthogonal projection of $\underline
{\mathbb{R}}^{4,1}=d\sigma _{\infty}(TM)\oplus N _{\infty}\oplus
\langle v_{\infty},\sigma _{\infty}\rangle $ onto
$$N_{\infty}\subset\sigma_{\infty}^{*}TS_{v_{\infty}}=\langle v_{\infty},\sigma
_{\infty}\rangle^{\perp},$$ and, consequently,
$e^{u}(k_{1}-k_{2})=4((\sigma_{\infty})_{zz},\xi).$ Now write
$\sigma^{z}=\lambda\,\sigma_{\infty}$, with
$\lambda\in\Gamma(\underline{\R})$. By equation
\eqref{eq:8765r4edfghjkl?76543ewsdcvbnm}, we get
$e^{u}(k_{1}-k_{2})=4\lambda^{-1}(\sigma_{zz}^{z},\xi)$ and the
conclusion follows then from Lemma \ref{hopfdiffervsclassical}.
\end{proof}
Proposition \ref{umbilicvshopfprop} follows.

\chapter{Twisted vs. untwisted B\"{a}cklund transformation parameters}

\markboth{\tiny{A. C. QUINTINO}}{\tiny{CONSTRAINED WILLMORE
SURFACES}}

Twisted and untwisted B\"{a}cklund transformation parameters
conditions at a point are not equivalent. On the one hand, the
choice of $L^{\alpha}$ as a null line bundle defined naturally by
$d^{\alpha,q}_{V}$-parallel transport of $l^{\alpha}_{p}$, for
$l^{\alpha}$ a non-zero section of $\wedge^{2}S_{+}$ and $p$ a point
in $M$, establishes the existence of untwisted B\"{a}cklund
transformation parameters $\alpha, L^{\alpha}$, at the point $p$,
with $\rho_{V}L^{\alpha}$ orthogonal to $L^{\alpha}$ at $p$. On the
other hand, the choice of $L^{\alpha}$ as a null line bundle defined
naturally by $d^{\alpha,q}_{V}$-parallel transport of
$l^{\alpha}_{p}$, for some point $p\in M$ and some
$l^{\alpha}:=v_{0}+v_{+}+v_{-}\in\Gamma(\wedge^{2}\underline{\C}^{4})$
with $v_{0}\in\Gamma(S_{+}\wedge S_{-})$ purely imaginary unit,
$v_{-}\in\Gamma(\wedge^{2}S_{-})$ with
\begin{equation}\label{eq:v_-relatedtov_0}
(v_{-},\overline{v_{-}})=\frac{1}{2}\mid\alpha\mid^{-2},
\end{equation}
and
$v_{+}=-\frac{1}{2}\,(v_{-},\overline{v_{-}})^{-1}\overline{v_{-}}$,
establishes the existence of twisted B\"{a}cklund transformation
parameters $\alpha, L^{\alpha}$, at the point $p$, with
$\tau(\alpha)L^{\alpha}$ real at $p$. Indeed, the conditions on
$v_{0}$, $v_{-}$ and $v_{+}$ ensure immediately the
non-orthogonality of $L^{\alpha}$ and $\rho_{V}L^{\alpha}$ and the
reality of $\tau(\alpha)L^{\alpha}$, at the point $p$, which,
together, ensure the non-orthogonality of $\tilde{L}^{\alpha}$ and
$\rho_{V}\tilde{L}^{\alpha}$ at $p$, as we verify next. We work in
the fibre at $p$. Computation shows that, as $L^{\alpha}$ is null
and not orthogonal to $\rho_{V}L^{\alpha}$,\footnote{The
intervention of the reality of $\tau(\alpha)L^{\alpha}$ is
deliberately left to the next stage.}
$$q(\alpha)l^{\alpha}=v_{0}+X_{0}\overline{v_{0}}+X_{-}v_{-}+X_{+}\overline{v_{-}}$$
for
\begin{eqnarray*}
X_{0}&=&A(\frac{(v_{0},\overline{v_{0}})}{\overline{2(v_{0},v_{0})}}-\frac{(v_{-},\overline{v_{-}})}{2\overline{(v_{0},v_{0})}}-\frac{(v_{0},v_{0})}{8(v_{-},\overline{v_{-}})})\\
& &
\mbox{}-\frac{(v_{0},\overline{v_{0}})}{\overline{(v_{0},v_{0})}}+
A^{-1}(\frac{(v_{0},\overline{v_{0}})}{2\overline{(v_{0},v_{0})}}+\frac{(v_{-},\overline{v_{-}})}{2\overline{(v_{0},v_{0})}}+\frac{(v_{0},v_{0})}{8(v_{-},\overline{v_{-}})}),
\end{eqnarray*}
\begin{eqnarray*}
X_{-}&=&A(
\frac{\mid(v_{0},v_{0})\mid^{2}}{16(v_{-},\overline{v_{-}})^{2}}-\frac{(v_{0},\overline{v_{0}})}{4(v_{-},\overline{v_{-}})}+\frac{1}{4})
+\frac{1}{2}\\& &
\mbox{}-\frac{\mid(v_{0},v_{0})\mid^{2}}{8(v_{-},\overline{v_{-}})^{2}}+A^{-1}(\frac{\mid(v_{0},v_{0})\mid^{2}}{16(v_{-},\overline{v_{-}})^{2}}+\frac{(v_{0},\overline{v_{0}})}{4(v_{-},\overline{v_{-}})}+\frac{1}{4})
\end{eqnarray*}
and
\begin{eqnarray*}
X_{+}&=&A(\frac{(v_{0},\overline{v_{0}})}{2\overline{(v_{0},v_{0})}}-\frac{(v_{-},\overline{v_{-}})}{2\overline{(v_{0},v_{0})}}
-\frac{(v_{0},v_{0})}{8(v_{-},\overline{v_{-}})})+\frac{(v_{-},\overline{v_{-}})}{\overline{(v_{0},v_{0})}}\\&
&
\mbox{}-\frac{(v_{0},v_{0})}{2(v_{-},\overline{v_{-}})}+\frac{(v_{0},v_{0})}{4(v_{-},\overline{v_{-}})}-A^{-1}(\frac{(v_{0},\overline{v_{0}})}{2\overline{(v_{0},v_{0})}}+\frac{(v_{0},v_{0})}{8(v_{-},\overline{v_{-}})}+\frac{(v_{-},\overline{v_{-}})}{2\overline{(v_{0},v_{0})}});
\end{eqnarray*}
and, therefore,
\begin{eqnarray*}
(\rho_{V}q(\alpha)l^{\alpha},q(\alpha)l^{\alpha})&=&
A^{2}\frac{(v_{0},\overline{v_{0}})^{2}-2(v_{-},\overline{v_{-}})(v_{0},\overline{v_{0}})+(v_{-},\overline{v_{-}})^{2}}{2\overline{(v_{0},v_{0})}}+A^{2}\frac{(v_{0},v_{0})}{4}\\&
&
\mbox{}-A^{2}\frac{(v_{0},\overline{v_{0}})(v_{0},v_{0})}{4(v_{-},\overline{v_{-}})}
+A^{2}\frac{(v_{0},v_{0})^{2}\overline{(v_{0},v_{0})}}{32(v_{-},\overline{v_{-}})^{2}}
\\ & &
\mbox{}-\frac{(v_{0},\overline{v_{0}})^{2}+(v_{-},\overline{v_{-}})^{2}}{\overline{(v_{0},v_{0})}}+\frac{3(v_{0},v_{0})}{2}-\frac{(v_{0},v_{0})^{2}\overline{(v_{0},v_{0})}}{16(v_{-},\overline{v_{-}})^{2}}
\\& &
\mbox{}+A^{-2}\frac{(v_{0},\overline{v_{0}})^{2}+2(v_{0},\overline{v_{0}})(v_{-},\overline{v_{-}})+(v_{-},\overline{v_{-}})^{2}}{2\overline{(v_{0},v_{0})}}+
\\ & &
+A^{-2}\frac{(v_{0},v_{0})}{4}+A^{-2}\frac{(v_{0},v_{0})(v_{0},\overline{v_{0}})}{4(v_{-},\overline{v_{-}})}+A^{-2}\frac{(v_{0},v_{0})^{2}\overline{(v_{0},v_{0})}}{32(v_{-},\overline{v_{-}})^{2}};
\end{eqnarray*}
with
$$A:=\frac{\alpha-\overline{\alpha}\,^{-1}}{\alpha+\overline{\alpha}\,^{-1}}=\frac{\mid\alpha\mid
^{2}-1}{\mid\alpha\mid ^{2}+1}\in\R.$$ On the other hand, in view of
the reality of  $\tau(\alpha)l^{\alpha}$, or, equivalently, of the
fact that $v_{0}$ is a purely imaginary unit, together with equation
\eqref{eq:v_-relatedtov_0}, the coefficients $X_{0}$, $X_{-}$ and
$X_{+}$ simplify to
$$X_{0}+1=A(\frac{1}{2}\mid\alpha\mid^{-1}+\frac{1}{2}\mid\alpha\mid)^{2}-A^{-1}(\frac{1}{2}\mid\alpha\mid^{-1}-\frac{1}{2}\mid\alpha\mid)^{2},$$
$$X_{-}=A(\frac{1}{2}\mid\alpha\mid^{2}+\frac{1}{2})^{2}+A^{-1}(\frac{1}{2}\mid\alpha\mid^{2}-\frac{1}{2})^{2}$$
and
$$X_{+}=A(\frac{1}{2}\mid\alpha\mid^{-1}+\frac{1}{2}\mid\alpha\mid)^{2}+A^{-1}(\frac{1}{2}\mid\alpha\mid-\frac{1}{2}\mid\alpha\mid^{-1})^{2},$$
respectively. In that case,
$$
(v_{0},v_{0})^{-1}(\rho_{V}q(\alpha)l^{\alpha},q(\alpha)l^{\alpha})=(X_{0}+1)^{2}+\mid\alpha\mid^{-2}X_{-}X_{+},$$
whilst the reality of $A$ ensures the reality of $X_{0}$, as well as
the positiveness of $X_{-}X_{+}$, leading us to conclude that
$$(\rho_{V}q(\alpha)l^{\alpha},q(\alpha)l^{\alpha})\neq
0.$$

\backmatter

\renewcommand{\bibname}{References}

$\newline$

\noindent\tiny{CENTRO DE MATEM\'{A}?TICA E APLICA\c{C}\~{O}ES
FUNDAMENTAIS DA UNIVERSIDADE DE LISBOA, PORTUGAL}

$\newline$

\noindent\textit{E-mail address:} aurea@ptmat.fc.ul.pt

\newpage
$\newline\newline$

\begin{center}
\Large{ERRATA}
\end{center}
\begin{center}
January 2010
\end{center}

$\newline\newline\newline$

Page $132$, lines $18-21$: replace "The existence of a conserved
quantity $p(\lambda)$ of $\hat{\Lambda}$ establishes (...)
establishing $\hat{\Lambda}$ as a CMC surface in the space-form
$S_{\hat{v}_{\infty}}$." by "The existence of a conserved quantity
$p(\lambda)=\lambda^{-1}v+v_{0}+\lambda\overline{v}$ of
$\hat{\Lambda}$ establishes, in particular, the constancy of
$\hat{v}_{\infty}:=p(1)$. To complete the proof, we establish
$d(\hat{v}_{\infty}^{\perp},\hat{v}_{\infty}^{\perp})=0$, or,
equivalently,
$d^{0,1}(\hat{v}_{\infty}^{\perp},\hat{v}_{\infty}^{\perp})=0$, for
the case $\hat{v}_{\infty}^{\perp}\neq 0$. First note that, if
$\hat{v}_{\infty}^{\perp}\neq 0$, then $v\neq 0$ and, therefore,
$S^{\perp}=\langle v\rangle$ (locally), so that
$\overline{v}=\lambda v$, with
$\lambda\in\Gamma(\underline{\mathbb{C}})$. According to Remark
7.5., it follows that
\begin{eqnarray*}
d(\hat{v}_{\infty}^{\perp},\hat{v}_{\infty}^{\perp})&=&d(v,v)+2d(v,\overline{v})+\overline{d(v,v)}\\&=&
2d(v,\lambda v)\\&=&2(d\lambda)(v,v).
\end{eqnarray*}
On the other hand, the constancy of $\hat{v}_{\infty}$,
$(d\lambda)v=-dv_{0}-(\lambda+1)dv$, establishes, in particular,
$(d\lambda)v=-\mathcal{N}v_{0}-(\lambda+1)\mathcal{D}v$, by
orthogonal projection onto the normal bundle to the central sphere
congruence of $\hat{\Lambda}$. Considering $(0,1)$-parts, we
conclude then, according to Theorem $7.2.$, that
$(d^{0,1}\lambda)v=-\mathcal{N}^{0,1}v_{0}$. Hence
$$d^{0,1}(\hat{v}_{\infty}^{\perp},\hat{v}_{\infty}^{\perp})=-2(\mathcal{N}^{0,1}v_{0},v)=2(v_{0},\mathcal{N}^{0,1}v)=2\overline{\lambda(v_{0},\mathcal{N}^{1,0}v)},$$
having in consideration the reality of $v_{0}$. The fact that
$(v_{0},\mathcal{N}^{1,0}v)=0$, as observed in Remark $7.5.$,
completes the proof."$\newline$

$\newline\newline$


\begin{thebibliography}{99}

\markboth{\tiny{A. C. QUINTINO}}{\tiny{CONSTRAINED WILLMORE
SURFACES}}

\bibitem{backlund} Albert B\"{a}cklund, \emph{Concerning surfaces with constant negative curvature}, New Era,
Lancaster, Pa., original 1883, translation 1905.

\bibitem{bianchi} Luigi Bianchi, \emph{Ricerche sulle superficie
isoterme e sulla deformazione delle quadriche}, Annali di Matematica
11 (1905), 93-157.

\bibitem{bianchi2} Luigi Bianchi, \emph{Complementi alle ricerche sulle superficie
isoterme}, Annali di Matematica 12 (1905), 19-54.

\bibitem{blaschke} Wilhelm Blaschke, \emph{Vorlesungen \"{u}ber
Differentialgeometrie III: Differentialgeometrie der Kreise und
Kugeln}, Grundlehren XXIX, Springer, Berlin (1929).

\bibitem{bobenko} Alexander Bobenko, \emph{Surfaces in Terms of 2 by 2 Matrices.
Old and New Integrable Cases}, SFB 288 Preprint N? 66, Berlin
(1993).

\bibitem{christoph} Christoph Bohle, \emph{M\"{o}bius Invariant Flows in
$S^{4}$}, PhD thesis, Technischen Universit\"{a}t Berlin (2003).

\bibitem{christoph2} Christoph Bohle, G\"{u}nter Paul Peters and Ulrich Pinkall,
\emph{Constrained Willmore surfaces}, arXiv:math.DG/0411479 v1
(2004).

\bibitem{bonnet} Ossian Bonnet, \emph{M?moire sur la th?orie des
surfaces applicables}, Journal de l'?cole Polytechnique 42 (1867),
72-92.

\bibitem{bryant} Robert Bryant, \emph{A duality theorem for Willmore
surfaces}, Journal of Differential Geometry 20 (1984), 23-53.

\bibitem {IS} Francis Burstall, \emph{Isothermic surfaces: conformal
geometry, Clifford algebras and integrable systems}, American
Mathematical Society/International Press Studies in Mathematics,
Volume 36 (2006).

\bibitem{burstall+calderbank} Francis Burstall and David Calderbank, \emph{Conformal Submanifold
Geometry} (in preparation).

\bibitem {quaternionsbook} Francis Burstall, Dirk Ferus, Katrin Leschke, Franz Pedit,
and Ulrich Pinkall, \emph{Conformal Geometry of Surfaces in $S^{4}$
and Quaternions}, Lecture Notes in Mathematics 1772, Springer
(2002).

\bibitem{FPink} Francis Burstall, Neil Donaldson, Franz Pedit and Ulrich Pinkall, \emph{Isothermic
submanifolds of symmetric R-spaces}, arXiv:math.DG/0906.1692 (2009).


\bibitem{SD} Francis Burstall, Franz Pedit and Ulrich Pinkall, \emph{Schwarzian
Derivatives and Flows of Surfaces}, arXiv:math.DG/0111169 v2 (2002).

\bibitem{burstall+rawnsley} Francis Burstall and John Rawnsley, \emph{Twistor theory for Riemannian symmetric
spaces}, Lecture Notes in Mathematics 1424, Springer-Verlag (1990).

\bibitem{calapso} Pasquale Calapso, \emph{Sulle superficie a linee di curvatura
isoterme}, Rendiconti Circolo Matematico di Palermo 17 (1903),
275-286.

\bibitem{calapso2} Pasquale Calapso, \emph{Sulle transformazioni delle
superficie isoterme}, Annali di Matematica 24 (1915), 11-48.

\bibitem{chen} Bang-Yen Chen, \emph{Some conformal invariants of
submanifolds and their applications}, Bollettino dell'Unione
Matematica Italiana, Serie IV, 10 (1974).

\bibitem{chern} Shiing-Shen Chern, \emph{Deformations of surfaces
preserving principal curvatures}, Differential Geometry and Complex
Analysis, H. E. Rauch memorial volume (Springer Verlag, 1985),
155-163.

\bibitem{christoffel} Elwin Christoffel, \emph{Ueber einige allgemeine eigenshaften der
minimumsfl\"{a}chen}, Crelle's Journal 67 (1867), 218-228.

\bibitem{darbouxsphere} Jean-Gaston Darboux, \emph{Le\c{c}ons sur la th\'{e}orie
g\'{e}n\'{e}rale des surfaces et les applications g\'{e}ometriques
du calcul infinit\'{e}simal, Parts 1 and 2}, Gauthier-Villars, Paris
(1887).

\bibitem{darboux} Jean-Gaston Darboux, \emph{Sur les surfaces
isothermiques}, C. R. Acad. Sci. Paris 128 (1899), 1299-1305.

\bibitem{eells+lemaire} James Eells and Luc Lemaire, \emph{Selected
topics in Harmonic Maps}, Conference Board of the Mathematical
Sciences, Regional Conference Series in Mathematics, number 50,
American Mathematical Society (1983).

\bibitem{eells2} James Eells and Luc Lemaire, \emph{A report on
harmonic maps}, Bulletin of the London Mathematical Society 10
(1978), 1-68.

\bibitem{eells3} James Eells and Luc Lemaire, \emph{Another report on
harmonic maps}, Bulletin of the London Mathematical Society 20
(1988), 385-524.

\bibitem{eells+sampson} James Eells and Joseph Sampson,
\emph{Harmonic mappings of Riemannian manifolds}, American Journal
of Mathematics 86 (1964), 109-160.

\bibitem{ejiri} Norio Ejiri, \emph{Willmore surfaces with a duality in
$S^{n}(1)$}, Proceedings of the London Mathematical Society (3) 57
(1988), 383-416.

\bibitem{germain1} Sophie Germain, \emph{Recherches sur la th\'{e}orie des
surfaces  \'{e}lastiques}, Courcier, Paris (1821).

\bibitem{germain2} Sophie Germain, \emph{Remarques sur la nature, les bornes et l'etendue de la question des surfaces \'{e}lastiques, et equation generale de ces surfaces}, Courcier, Paris
(1826).

\bibitem{germain3} Sophie Germain, \emph{M\'{e}moire sur la courbure des
surfaces}, Crelle's Journal 7 (1831), 1-29.

\bibitem{helein} Fr\'{e}d\'{e}ric H\'{e}lein, \emph{Harmonic maps,
Conservation Laws and Moving Frames}, Cambridge University Press
(2002).

\bibitem{udo} Udo Hertrich-Jeromin, \emph{Introduction to M\"{o}bius
Differential Geometry}, Cambridge University Press (2003).

\bibitem{udo+pedit} Udo Hertrich-Jeromin and Franz Pedit, \emph{Remarks on Darboux
Transforms of Isothermic Surfaces}, Documenta Mathematica 2 (1997),
313-333.\newline
(www.mathematik.uni-bielefeld.de/documenta/vol-02/vol-02.html)

\bibitem{jost} J\"{u}rgen Jost, \emph{Compact Riemann Surfaces - An
Introduction to Contemporary Mathematics}, Universitext, Springer
(2006).

\bibitem{kobayashi} Shimpei Kobayashi and Jun-ichi Inoguchi,
\emph{Characterizations of Bianchi-B\"{a}cklund transformations of
constant mean curvature surfaces}, International Journal of
Mathematics 16, N? 2 (2005), 101-110.

\bibitem{kobayashi2} Shoshichi Kobayashi and Katsumi Nomizu,
\emph{Foundations of Differential Geometry}, Vol.s 1,2,
Wiley-Interscience, New York (1963).

\bibitem{landau+lifschitz} L. Landau and E. Lifschitz,
\emph{Lehrbuch der theoretischen Physic, Band VII.
Elastizit\"{a}tstheorie}, Akademie-Verlag, Berlin (1965).

\bibitem{langer+singer} Joel Langer and David Singer, \emph{Curves in the hyperbolic plane and
mean curvature of tori in $\R^{3}$ and $S^{3}$}, Bulletin of the
London Mathematical Society 16 (1984), 531-4.

\bibitem{lawson} Blaine Lawson,  \emph{Complete minimal surfaces in
$S^{3}$}, Annals of Mathematics 92 (1970), 335-74.

\bibitem{lipowsky} R. Lipowsky, \emph{Kooperatives Verhalten von
Membranen}, Physics Bulletin 52 (1996), 555-560.

\bibitem{ma} Xiang Ma, \emph{Willmore Surfaces  in $S^{n}$:
Transforms and Vanishing Theorems}, PhD thesis, Technischen
Universit\"{a}t Berlin (2005).

\bibitem{am1} Armando Machado, \emph{T\'{o}picos de an\'{a}lise e
topologia em variedades}, Departamento de Matem\'{a}tica, Faculdade
de Ci\^{e}ncias da Universidade de Lisboa (1991).

\bibitem{am2} Armando Machado, \emph{Geometria diferencial - uma
introdu\c{c}\~{a}o fundamental}, Departamento de Matem\'{a}tica,
Faculdade de Ci\^{e}ncias da Universidade de Lisboa (1997).

\bibitem{livroweyl} Raghavan Narasimhan, \emph{Compact Riemann
Surfaces}, Lectures in Mathematics, ETH Z\"{u}rich (1992).

\bibitem{oneill} Barrett O'Neill, \emph{Elementary Differential
Geometry}, 2nd edition, Academic Press (1997).

\bibitem{oneill} Barrett O'Neill, \emph{Semi-Riemannian geometry
with applications to relativity}, Pure and Applied Mathematics, vol.
103, Academic Press (1983).

\bibitem{noether} Emmy Noether, \emph{Invariante Variationsprobleme}, Nachr. D. K\"{0}nig. Gesellsch. D. Wiss. Zu G\"{0}ttingen, Math-phys. Klasse (1918), 235-257.

\bibitem{rui} Rui Pacheco, \emph{Harmonic maps and Loop Groups},
PhD thesis, University of Bath (2004).

\bibitem{palmer} B. Palmer, \emph{Isothermic surfaces and the Gauss map}, Proceedings of the
American Mathematical Society 104 (1988), 876-884.

\bibitem{pinkall} Ulrich Pinkall, \emph{Hopf tori in $S^{3}$},
Inventiones mathematicae 81 (1985), 379-86.

\bibitem{richter} J\"{o}rg Richter, \emph{Conformal maps of a Riemann
surface into the space of quaternions}, PhD thesis, Technischen
Universit\"{a}t Berlin (1997).

\bibitem{rigoli} Marco Rigoli, \emph{The conformal Gauss map of
submanifolds of the M\"{o}bius space}, Annals of Global Analysis and
Geometry 5, N?. 2 (1987), 97-116.

\bibitem{susana} Susana Santos, \emph{Special Isothermic
Surfaces}, PhD thesis, University of Bath (2008).

\bibitem {uhlenbeck} Chuu-Lian Terng and Karen Uhlenbeck,
\emph{B\"{a}cklund transformations and loop group actions},
Communications on Pure and Applied Mathematics 53 (2000), 1-75.

\bibitem{thomsen} G. Thomsen, Ueber konforme Geometrie I,
\emph{Grundlagen der konformen Flaechentheorie}, Abhandlungen aus
dem Mathematischen Seminar der Universit?t Hamburg 3 (1923), 31-56.

\bibitem {uhlenbeck 89} Karen Uhlenbeck,
\emph{Harmonic maps into Lie groups (classical solutons of the
chiral model)}, Journal of Differential Geometry 30 (1989), 1-50.

\bibitem{weiner} Joel Weiner, \emph{On a problem of Chen, Willmore,
et al.}, Indiana University Mathematics Journal, Vol. 27, N?. 1
(1978), 19-35.

\bibitem{white} James White, \emph{A global invariant of conformal
mappings in space}, Proceedings of the American Mathematical
Society, 38 (1973), 162-4.

\bibitem{willmore} Thomas Willmore, \emph{Riemannian Geometry},
Oxford Science Publications (1993).

\bibitem{willmore2} Thomas Willmore, \emph{Note on embedded
surfaces}, Analele Stiintifice ale Universitatii ``Alexandru Ioan
Cuza" din Iasi, N. Ser., Sect. Ia 11B (1965), 493-496.

\end{thebibliography}
\end{document}